\newtheorem{thm}{Theorem}[section]
\newtheorem*{thm*}{Theorem}
\newtheorem{prop}[thm]{Proposition}
\newtheorem*{prop*}{Proposition}
\newtheorem{lem}[thm]{Lemma}
\newtheorem*{lem*}{Lemma}
\newcommand{\wh}{\widehat}
\newcommand{\wt}{\widetilde}
\theoremstyle{definition}
\newtheorem{definition}[thm]{Definition}
\newtheorem*{definition*}{Definition}
\newtheorem{example}[thm]{Example}
\theoremstyle{remark}
\newtheorem{remark}[thm]{Remark}
\title[Polyfolds and SFT]{Lectures on POLYFOLDS and Symplectic Field Theory}
\author{Joel W.  Fish and Helmut Hofer}
\newcommand{\rup}{\rotatebox[origin=c]{180}{{$\Upsilon$}}}
\begin{document}
\maketitle
\vspace{5cm}
 \begin{figure}[h]
\begin{center}
\includegraphics[width=7.5cm]{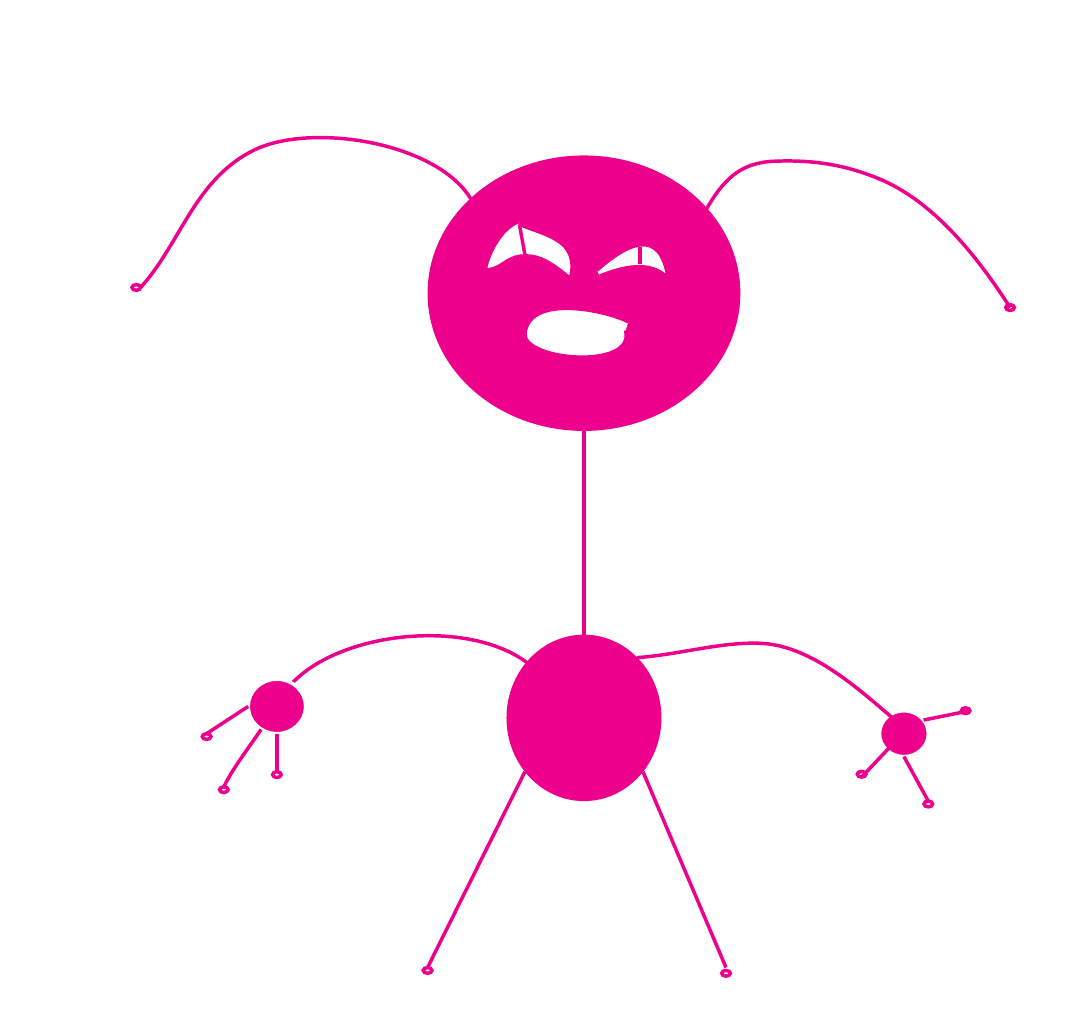}
\end{center}
\end{figure}
\newpage

\tableofcontents
\pagebreak

\part*{Introduction}
 \section*{Background}
 The following material has been compiled for the \textbf{SFT 9 Workshop} in Augsburg, Germany, taking place Monday, 27 August 2018 - Friday, 31 August 2018. The necessary background material 
 will be provided in  a pre-course, Saturday, 25 August 2018 - Sunday, 26 August 2018. For those not able to participate in the pre-course we shall give references to the assumed material.
 Particularly we mention the primer \cite{FH-primer}, the local-local constructions, \cite{FH-local-local}, and the appendices in the present text.

 This present  text describes a polyfold approach to the construction of symplectic field theory (SFT), introduced in \cite{EGH},  based on the technology developed 
 in \cite{HWZ2017} and \cite{FH-book}. The present draft is a shortened version of the upcoming lecture note \cite{FH-Lecturenote} which will provide more details and will, among other things, construct SFT in detail.  In the current text we end by giving the perturbation and transversality 
 results needed. A reader familiar with \cite{EGH,BM,Wendl} should be able to work out the orientations and, using the fact that the pull-back of differential forms by the evaluation maps are sc-differential forms, integrate them over the moduli spaces, see \cite{HWZ2017}.
The material presented here then leads to a construction of a {Hamiltonian} $\bm{H}_{\mathfrak{p}}$ in the context of a given closed manifold
$(Q,\lambda)$ equipped with a non-degenerate contact form associated to a careful choice of perturbation $\mathfrak{p}$, and  how these are related for different choices of the perturbations.  The transversality methods described here, cover also the 
cases which we have not explicitly studied in the lecture series, namely the behavior under symplectic cobordisms, their compositions, as well as the change 
of complex multiplication $J$.

Polyfold theory by itself has nothing to do with symplectic geometry. It is rather a mixture of a generalized differential geometry, a generalization of classical nonlinear Fredholm theory,
and some category theory. This theory has been developed by Hofer, Wysocki and Zehnder in a series of papers \cite{HWZ2,HWZ3,HWZ3.5}. Subsequently the theory was further generalized and a comprehensive treatment 
is contained in \cite{HWZ2017}, which is a reference monograph on which further developments rest. 
In \cite{HWZ5} the theory was applied to derive  Gromov-Witten theory, an  application, which was more a proof of concept.  In \cite{HWZ5} the  authors did not try to utilize some 
of the important features of the polyfold theory, namely that one can use the language to construct powerful methods and tools for the construction of concrete polyfolds. These ideas 
have been developed in the upcoming monograph \cite{FH-book}. The available references \cite{FH-primer,FH-local-local}  contain  part one and two of \cite{FH-book}. Again \cite{FH-book} is primarily not a book about symplectic geometry, but it explains the ideas  using examples arising from analytical problems in symplectic geometry. One of the ideas put forward and carried out is the development of a \textbf{modular theory} which allows to \textbf{recycle} constructions. 
This is an extremely important point in applications given the fact that the constructions get longer and longer 
and are becoming increasingly difficult to check.
The modular features also include
a pre-Fredhom theory, which in applications considerably simplifies the task of showing that a sc-smooth section is sc-Fredholm.  We shall not discuss this important point in this lecture, but it will feature prominently in \cite{FH-book}.

In these lectures we shall show, how a very complicated nonlinear problem arising in symplectic geometry can be dealt with quite efficiently. Using the results in \cite{FH-book,HWZ2017}
takes care of the basic  analytical problems, so that one can concentrate without distraction on the transversality theory, the study of moduli spaces and ultimately the construction of SFT. In particular,  the Nonlinear Functional Analysis/ Nonlinear Analysis in 
\cite{HWZ2017}/\cite{FH-book} can be used  in applications to address problems by general principles rather than ad hoc methods: this will shorten proofs and adds to the clarity. In the current text 
we shall particularly describe the \textbf{big picture} and 
 the \textbf{modular aspects} of the theory. 

There are other methods which have been developed to study moduli spaces in symplectic geometry. They all  originate in the underlying idea of finite-dimensional reductions 
put forward in the original work \cite{FO} by Fukaya and Ono.  We particularly mention \cite{FOOO,FOOO1}, \cite{McW1,McW2,McW3}, and \cite{FOOO2013,FOOO2015I,FOOO2017II}. These ideas have been used (sometimes in a modified way) to deal with problems 
in symplectic geometry, see the work \cite{FOOO2010,FOOO2010b},
\cite{Pardon1,Pardon2}, \cite{BH}, and very recently
\cite{IS}.

In some sense the theory of Kuranshi structures and Polyfold theory are equivalent as studied in detail 
in  \cite{Yang1,Yang2,Yang3,Yang4}. This means that in many cases ideas in one theory can be transported to the other.

We also would like to mention that the polyfold theory goes well with geometric perturbation methods,  as in \cite{HuN,HuN2,Wendl2}. This means that one is able
to stay within the framework of geometric perturbations as long as these methods work before switching to more general perturbations. 
SFT is an example which shows that the perturbation theory has to use special geometric features and this will be discussed 
later on. The thrust of the current lectures 
lies on the transversality and perturbation theory. 

We shall refer to \cite{HWZ2017} as the  \textbf{Polyfold Book} and to \cite{FH-book}  as the \textbf{Construction Book}.
The Polyfold book is about `developing'
    an abstract non-linear functional analysis, and the Construction Book
    is about `using' that non-linear functional analysis to build specific
    (concrete) function spaces, in a modular way, which are very useful for
    moduli problems in Symplectic Field Theory.

\subsection*{Remarks by the Second Author}
Finally two important remarks. The first is  about multisections, which are important in both approaches and originate in \cite{FO}. It is a 
non-trivial matter to extend multisections defined on a boundary with corners and the difficulties are described  and dealt with in the polyfold framework  in \cite{HWZ2017}, and in a form more useful in inductive constructions in  \cite{HS}. 
In the work by Fukaya et al,  \cite{FOOO1}, Part II, it is said without comment that they extend (see after Lemma 7.2.121), which one might consider a gap, since a proof
is considerably involved. Even in later works this point is not addressed until the paper \cite{FOOO2017II}, where 
a method is described which is only applicable to certain type of multisections (Fukaya refers to this method as the \textbf{Collar Method}).  This method  used in an application requires one to show 
that one can work with these special sections.  In a recent  email exchange of Fukaya with the second author (henceforth  HH),\cite{Fuk},  
Fukaya describes a different way to deal with the issue, which
    turns out to address the issues in a way similar to the approach used
     in \cite{HWZ2017} and \cite{HS}, albeit in a somewhat different language.
Dusa McDuff also has a method, and HH almost expects that it is is also related. 

 In \cite{IS} Ishikawa introduces a somewhat more structured version of multisections, called \textbf{grouped multisections}, which is geared towards his constructions. The version given in \cite{IS} v1 has an incomplete list of requirements as HH pointed out to the author, but this particular problem already has been rectified in v2.
  Since the lectures, besides explaining many aspects of the polyfold 
theory also   deal with the transversality issues of SFT, a word she be provided about \cite{IS} by HH.
It seems at first glance that the issues which have to be addressed are indeed discussed. Of course, the details 
have to be carefully checked.  Although the approach to SFT in \cite{IS} and the approach given here are quite different there is a common thread given by the issues to be addressed.  
We shall point out these issues and the reader might find it helpful when reading \cite{IS}.\\

\section*{The Lectures and Their Prerequisites}
We have added some useful material as appendices and give some reference to other used  material.
For the initial lectures we list some of required  background material. Moreover, the lecture not contains somewhat more material 
than can be delivered in lecture. Usually the material is needed in a later, but not the next, lecture. We expect the reader to familiar with the additional material when it is needed.\\

\noindent$\bullet$ Lecture 1 requires the knowledge from the Primer on Polyfolds, see \cite{FH-primer}, pages 8-23. Comprehensive background material about polyfold structures on groupoidal categories can be found in \cite{HWZ2017}.\\

\noindent{$\bullet$} Lecture 2 requires   some of the material  on DM-Theory, see \cite{HWZ5}, pages 11-28,  \cite{Hofer} and Appendix \ref{Sec16}.
The cited references take material from the upcoming \cite{HWZ-DM}, which develops DM-theory in such a way that it fits with the polyfold theory. \\

\noindent$\bullet$ Lecture 3 requires some basic facts on  Stable Hamiltonian Structures, see \cite{CV1} or Appendix \ref{APP11}.\\

\noindent$\bullet$ Lecture 4 requires some basic facts on  M-polyfolds, see \cite{FH-primer}. A more comprehensive background is given in 
\cite{HWZ2017}.\\

\noindent$\bullet$ Lecture 5 requires some basic facts on  M-polyfolds and covers material from  \cite{FH-primer}.\\

\noindent$\bullet$ Lecture 6, 7 and 8 require some basic facts on M-polyfolds and the imprinting method introduced in Lecture 5.
For background material see \cite{FH-primer}. A detailed discussion is contained in \cite{FH-book}.

\newpage
\part{Big Picture}
We begin by describing the general set-up for constructing the moduli spaces of SFT.

\part*{Lecture 1}
\section{Smooth Structures on Certain Categories}
We describe a categorical framework which we shall apply to the category of stable maps as they occur in symplectic field theory. Here a stable map is not necessarily pseudoholomorphic. The original idea
of a stable pseudoholomorphic map or curve evolved from the work of Gromov, \cite{G}, and  was formulated by Kontsevich, \cite {Kon}. The appropriate notion for symplectic field theory was given in 
\cite{EGH} and its compactness properties were studied in \cite{BEHWZ}. 

We start with some abstract notions which are not too difficult. 
\subsection{GCT's}
The notions introduced in this and the following subsections are treated in detail in \cite{HWZ2017}.
We shall consider  groupoidal categories ${\mathcal C}$ which are defined as follows.
\begin{definition}
A {\bf groupoidal category} ${\mathcal C}$ is a category with the following properties.
\begin{itemize}
\item[(1)]  Every morphism is an isomorphism.
\item[(2)]  Between any two objects there are only finitely many morphisms.
\item[(3)] The orbit space $|{\mathcal C}|$, i.e. the collection of isomorphism classes, is a set.
\end{itemize}
\qed
\end{definition}
Further, the objects we shall consider are what we call GCT's      (GC=groupoidal category,  T=Topology)
\begin{definition}
A {\bf GCT} is a pair $({\mathcal C},{\mathcal T})$ where ${\mathcal C}$ is a groupoidal category and ${\mathcal T}$ a metrizable topology on the orbit space 
$|{\mathcal C}|$.
\qed
\end{definition}
Given a GCT $({\mathcal C},{\mathcal T})$ we can talk about objects in the category whose isomorphism classes are close to a given isomorphism class.
Using polyfold theory we can introduce the notion of  some kind of smooth structure on a GCT.

\subsection{Uniformizers}
Denote by  $\mathsf{M}$ the category whose objects are  M-polyfolds and the morphisms are the sc-smooth maps. By $\mathsf{M}_{\text{tame}}$ we denote the full subcategory of tame M-polyfolds.  Let $O$ be a M-polyfold and $G$ a finite  group acting on $O$ by sc-diffeomorphisms written as 
$$
G\times O\rightarrow O: (g,o)\rightarrow g\ast o.
$$
Associated to $(O,G)$ we have the {\bf translation groupoid} which is the category denoted by $G\ltimes O$. Its objects are the points $o\in O$ and the morphisms are the pairs $(g,o)$ viewed as morphisms
$$
o\xrightarrow{(g,o)} g\ast o.
$$
The following generalizes some ideas which also occur when studying orbifolds.
\begin{definition}
Given a GCT $({\mathcal C},{\mathcal T})$ a {\bf local uniformizer} at the object $\alpha$ with automorphism group $G$ is a covariant functor $\Psi:G\ltimes O\rightarrow {\mathcal C}$ with the following properties:
\begin{itemize}
\item[(1)] $O$ is a M-polyfold and $G$ acts by sc-diffeomorphisms and $G\ltimes O$ is the associated translation groupoid.
\item[(2)] The functor $\Psi$ is injective on objects and $\alpha$ lies in the image, i.e. there is a unique $\bar{o}\in O$ with $\Psi(\bar{o})=\alpha$.
\item[(3)] The functor $\Psi$ is full and faithful.
\item[(4)] Passing to orbit spaces $|\Psi|:|G\ltimes O|\rightarrow |{\mathcal C}|$ is a homeomorphism onto an open neighborhood of $|\alpha|$. Here $|\alpha|$ denotes the isomorphism class of the object $\alpha$.
\end{itemize}
A {\bf tame uniformizer} is a uniformizer where $O$ is a tame M-polyfold.
\qed
\end{definition}
The first crucial definition is that of a local uniformizer construction. 
\begin{definition}
A {\bf local uniformizer construction} is a functor $F:{\mathcal C}\rightarrow \text{SET}$ such that 
$F(\alpha)$ is a set of uniformizers at $\alpha$. We call $F$ a {\bf tame local unifomizer construction} provided for every $\alpha$ the set $F(\alpha)$ consists of tame uniformizers.
\qed
\end{definition}

\subsection{Compatibility}
Assume we are given $(({\mathcal C},{\mathcal T}),F)$, where $({\mathcal C},{\mathcal T})$ is a GCT and $F:{\mathcal C}\rightarrow \text{SET}$ a local uniformizer construction.
Compatibility of the various $\Psi$ requires an additional construction $\bm{M}$.
Given $\Psi\in F(\alpha)$ and $\Psi'\in F(\alpha')$ consider
$$
G\ltimes O\xrightarrow{\Psi} {\mathcal C} \xleftarrow{\Psi'} G'\ltimes O'
$$
The {\bf associated weak fibered product} is the set 
$$
\bm{M}(\Psi,\Psi')=\{(o,\Phi,o')\ |\ o\in O,\ o'\in O',\ \Phi\in \text{mor}_{\mathcal C}(\Psi(o),\Psi'(o'))\}.
$$
\begin{definition}
We have the following structural maps for $\Psi,\Psi',\Psi''$ coming from $F$.
\begin{itemize}
\item[(1)] $ O\xleftarrow {s} \bm{M}(\Psi,\Psi')\xrightarrow{t}O'$, {\bf source map} and {\bf target map}, i.e. $s(o,\Phi,o')=o$, $t(o.\Phi,o')=o'$.
\item[(2)] $u:O\rightarrow \bm{M}(\Psi,\Psi):o\rightarrow (o,1_{\Psi(o)},o)$, {\bf unit map}.
\item[(3)] $\iota:\bm{M}(\Psi,\Psi')\rightarrow \bm{M}(\Psi',\Psi):(o,\Phi,o')\rightarrow(o',\Phi^{-1},o)$, {\bf inversion maps}.
\item[(4)] $m:\bm{M}(\Psi',\Psi''){_{s}\times_t} \bm{M}(\Psi,\Psi')\rightarrow \bm{M}(\Psi,\Psi'')$, {\bf multiplication map},
defined by 
$$
m((o',\Phi',o''),(o,\Phi,o'))=(o,\Phi'\circ\Phi,o'').
$$
\end{itemize}
We call $\bm{M}(\Psi,\Psi')$ a \textbf{transition set}.
\qed
\end{definition}
Now comes the second important construction given $(({\mathcal C},{\mathcal T}), F)$.
\begin{definition}
A {\bf transition construction} $\bm{M}$ for $(({\mathcal C},{\mathcal T}),F)$ is a construction which  equips every  $\bm{M}(\Psi,\Psi')$  with a M-polyfold structure so that $s$ and $t$ are local sc-diffeo\-morphisms and all other structure maps are sc-smooth. Then $(({\mathcal C},{\mathcal T}),F,\bm{M})$ is called a {\bf polyfold structure}  on the GCT $({\mathcal C},{\mathcal T})$. If $F$ is a tame local uniformizer construction 
we call $(({\mathcal C},{\mathcal T}),F,\bm{M})$ a {\bf tame polyfold structure}.
\qed
\end{definition}

\subsection{Bundle Structure}
Denote by $\text{\textbf{Ban}}$ the category of Banach spaces.
Assume we are given a GCT ${\mathcal C}$ and a functor $\mu:{\mathcal C }\rightarrow \text{\bf Ban}$ and assume 
we can define a new GCT ${\mathcal E}_\mu$, where the objects are $(\alpha,e)$, $e\in \mu(\alpha)$ and the morphisms 
are of the form $(\Phi,e):(s(\Phi),e)\rightarrow (t(\Phi),\mu(\Phi)(e))$. Then we obtain
$$
P:{\mathcal E}_\mu\rightarrow {\mathcal C}.
$$
We can introduce the notion of a strong Bundle uniformizers which is given by a diagram
$$
\begin{CD}
G\ltimes K @> \bar{\Psi} >>{\mathcal E}_\mu\\
@VVV  @V P VV\\
G\ltimes O @> \Psi>>   {\mathcal C}
\end{CD}
$$
where $\bar{\Psi}$ is fiberwise a linear bijection.
All the previous discussions can be extended.  For example a {\bf  transition construction}
produces strong bundle structure on 
$$
\bm{M}(\bar{\Psi},\bar{\Psi}')\rightarrow \bm{M}(\Psi,\Psi').
$$
Structure maps fit into diagrams like this 
$$
\begin{CD}
K @< s << \bm{M}(\bar{\Psi},\bar{\Psi}') @> t >> K'\\
@V p VV     @V  \bm{P} VV  @V p' VV\\
O  @< s << \bm{M}(\Psi,\Psi') @> t >>  O
\end{CD}
$$
The idea of a polyfold structure on ${\mathcal C}$  can be generalized as follows.
Assume  we are given a polyfold construction
for the GCT $({\mathcal C},{\mathcal T})$, say
$$
(({\mathcal C},{\mathcal T}), F,\bm{M}),
$$
and a functor $\mu:{\mathcal C}\rightarrow \text{\bf Ban}$ defining  ${\mathcal E}_{\mu}$. We assume that the latter has been equipped with a metrizable topology 
so that it is turned into a GCT as well, for which the map $|P|:|{\mathcal E}_\mu|\rightarrow |{\mathcal C}|$ is continuous.
Then a strong bundle uniformizer construction is a functor $\bar{F}:{\mathcal C}\rightarrow \text{SET}$ associating to $\alpha$ a set
$\bar{F}(\alpha)$ of strong bundle uniformizers $\bar{\Psi}:G\ltimes K\rightarrow {\mathcal E}$ with the obvious properties.
Similarly we have a notion of a transition construction.

Having the strong bundle structure for $P:{\mathcal E}_\mu\rightarrow {\mathcal C}$ one can introduce new smooth objects.
\begin{definition}
Consider a  section functor $A$ of $P$, i.e. a functor $A:{\mathcal C}\rightarrow {\mathcal E}_\mu$ with $P\circ A=\text{Id}$.
We can distinguish several classes of such functors.
\begin{itemize}
\item[(1)]  $A$ is a \textbf{sc-smooth section functor} provided for $\bar{\Psi}\in \bar{F}(\alpha)$ the local representation 
$\bar{\Psi}^{-1}\circ A\circ \Psi$ is sc-smooth.
\item[(2)]  $A$ is a \textbf{sc$^+$-smooth section functor} provided for $\bar{\Psi}\in \bar{F}(\alpha)$ the local representation 
$\bar{\Psi}^{-1}\circ A\circ \Psi$ is sc$^+$-smooth.
\item[(2)] $A$ is a \textbf{sc-Fredholm functor} provided for $\bar{\Psi}\in \bar{F}(\alpha)$ the local representation 
$\bar{\Psi}^{-1}\circ A\circ \Psi$ is sc-Fredholm.
\end{itemize}
\qed
\end{definition}
The take away from this first lecture is that for groupoidal categories ${\mathcal C}$ which have an orbit space equipped with a metrizable topology there is a notion of a smooth structure, i. e.
a polyfold structure and there is a strong bundle version as well
$$
\boxed{
\begin{array}{cc}
\begin{CD}
K @< s << \bm{M}(\bar{\Psi},\bar{\Psi}') @> t >> K'\\
@V p VV     @V  \bm{P} VV  @V p' VV\\
O  @< s << \bm{M}(\Psi,\Psi') @> t >>  O
\end{CD}
&\phantom{XXXXXXX} \begin{CD}
\bm{M}(\bar{\Psi},\bar{\Psi}')\\
@VVV\\
\bm{M}(\Psi,\Psi')
\end{CD}
\end{array}
}
$$
 Our goal is to apply this to the category of stable maps ${\mathcal S}$, the bundle of $(0,1)$-forms along them ${\mathcal E}_{J}\rightarrow {\mathcal S}$, and $\bar{\partial}_{\widetilde{J}}$.
 
Given the GCT $({\mathcal C},{\mathcal T})$ every local uniformizer $\Psi$ has a \textbf{footprint} which is the open subset $|\Psi(O)|$ of $|{\mathcal C}|$. 
We can take a set $\bm{\Psi}$ of uniformizers such that the footprints cover $|{\mathcal C}|$. We can take the disjoint union of all domains defining a M-polyfold $X$ and one can 
use the $\bm{M}(\Psi,\Psi')$ to defined as a disjoint union a M-polyfold $\bm{X}$. Then $X\equiv (X,\bm{X})$ is an ep-groupoid and there is natural functor 
$\beta_{\bm{\Psi}}:X_{\bm{\Psi}}\rightarrow {\mathcal C}$ which is an equivalence of categories. In some sense $X_{\bm{\Psi}}$ is a sc-smooth model of ${\mathcal C}$. 
For two different choices there is a precise relationship between the associated $X_{\bm{\Psi}}$. For certain constructions these smooth models are useful, see \cite{HWZ2017}.

\newpage

\part*{Lecture 2}
\section{The Category of Stable Riemann Surfaces}
This example deals with a version of the Deligne-Mumford theory, \cite{DM}. For a geometric approach to the DM-theory see \cite{RS}.
For the use of the DM-theory within the later constructions it is important 
that we can use different gluing profiles, see \cite{HWZ5}. We assume the reader to be familiar with some Riemann surface theory, see \cite{HWZ5,Hofer} and Appendix \ref{Sec16}.
\subsection{Basic Notions and Concepts}
First we introduce the notion of a compact nodal Riemann surface with (unordered) marked points.
\begin{definition}
Consider a tuple $(S,j,M,D)$, where $(S,j)$ is a compact Riemann surface 
possibly having different connected components, $M$ is a finite subset of so-called {\bf marked points},  and $D$ is a finite collection of unordered pairs $\{x,y\}$, called {\bf nodal pairs}, where $x,y\in S$.
We abbreviate the union $\bigcup_{\{x,y\}\in D} \{x,y\}$ by $|D|$. We assume that the data satisfies the following additional properties.
\begin{itemize}
\item  For $\{x,y\}\in D$ we have that $x\neq y$.
\item If $\{x,y\}\cap \{x',y'\}\neq \emptyset$ then $\{x,y\}=\{x',y'\}$.
\item $M\cap |D| =\emptyset$
\end{itemize}
We call $(S,j,MD)$ a {\bf nodal  Riemann surface with marked points} 
(We allow, of course, the possibility $D=\emptyset$ and $M=\emptyset$.). We say that $(S,j,M,D)$ is \textbf{stable} if for every connected component $C$ of $S$ the genus $g_C$ of $C$ 
satisfies the inequality $2\cdot g_C +\sharp (C\cap (M\cup |D|))\geq 3$.  We generally assume that the set of marked points is un-ordered.
\qed
\end{definition}
 \begin{figure}[h]
\begin{center}
\includegraphics[width=6.5cm]{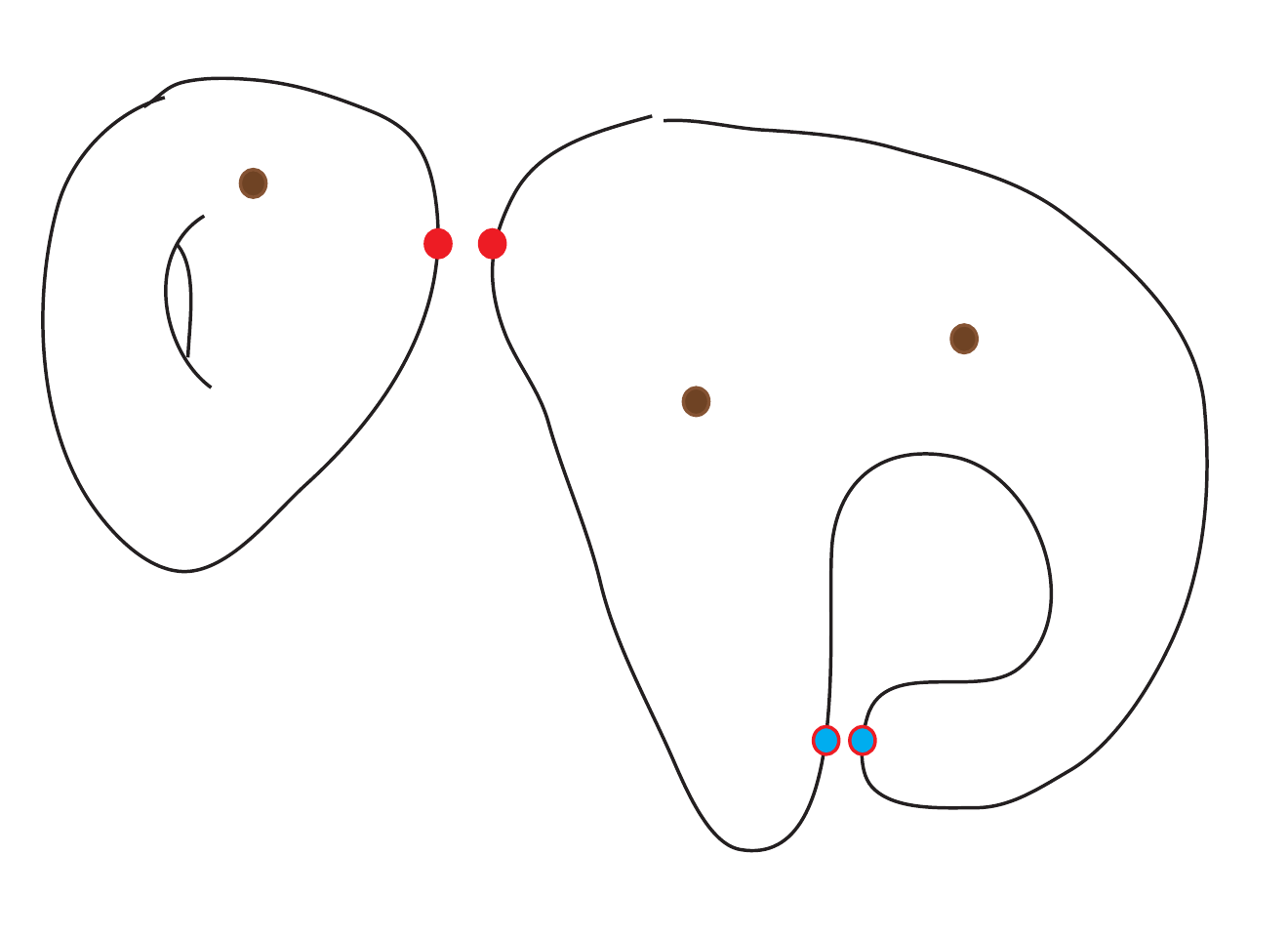}
\end{center}
\caption{A stable Riemann surface with two nodal pairs and three marked points.}\label{FIG 1}
\end{figure}
Given $\tau=(S,j,M,D)$ and $\tau'=(S',j',M,,D')$ we define a \textbf{morphism}
$\Phi:\tau\rightarrow \tau'$ as a tuple $(\tau,\phi,\tau')$ where $\phi:(S,j)\rightarrow (s',j')$
is a biholomorphic map having the following properties.
\begin{itemize}
\item $\phi(M)=M'$.
\item $\phi_\ast (D)=D'$, where $\phi_\ast D=\{\{\phi(x),\phi(y)\}\ |\ \{x,y\}\in D\}.$
\end{itemize}
Finally we are able to define the category of stable Riemann surfaces.
\begin{definition}
The {\bf category ${\mathcal R}$ of stable Riemann surfaces} has as objects 
the nodal stable Riemann surfaces with marked points and as morphisms 
the previously introduced $\Phi$.\qed
\end{definition}
We observe that every morphism is in fact an isomorphism.
It requires more work to show that between two objects 
there are at most finitely morphisms.    In fact, the orbit space $|{\mathcal R}|$
even has a natural metrizable topology. Hence  
\begin{thm}
The category ${\mathcal R}$ admits a natural metrizable  topology ${\mathcal T}$
and $({\mathcal R},{\mathcal T})$ is a GCT. For this topology every connected component of $|{\mathcal R}|$ is compact.
\qed
\end{thm}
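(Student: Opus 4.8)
The plan is to read the statement as the Deligne--Mumford theory recast in the language of GCT's, and to proceed in four steps: verify the three groupoidal-category axioms; construct the topology $\mathcal{T}$ on $|\mathcal{R}|$ via the plumbing (gluing) construction; deduce metrizability by elementary point-set topology; and reduce compactness of the connected components to the Deligne--Mumford compactness theorem, leaning throughout on \cite{DM,RS} and Appendix~\ref{Sec16}. For the groupoidal axioms, the first (every morphism is an isomorphism) has already been observed. For finiteness of $\mathrm{mor}_{\mathcal R}(\tau,\tau')$ I would note that this set is either empty or a torsor over $\mathrm{Aut}(\tau)$, so it suffices to bound $\sharp\,\mathrm{Aut}(\tau)$: an automorphism permutes the finitely many connected components of $S$ and restricts on each component $C$ to a biholomorphism fixing the finite set $C\cap(M\cup|D|)$; by stability $2g_C+\sharp(C\cap(M\cup|D|))\ge 3$, and the biholomorphism group of a pointed surface with that many special points is finite (trivial for $g_C=0$ since $\mathrm{Aut}(\mathbb{P}^1)$ is simply transitive on triples, finite for $g_C=1$ since fixing a point leaves the finite stabiliser in the elliptic curve, finite for $g_C\ge 2$ by Hurwitz). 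That $|\mathcal R|$ is a set is routine, since up to isomorphism a stable surface is determined by a labelled dual graph together with a point in the associated moduli set.

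Next I would build $\mathcal{T}$. Around a stable $\tau_0$, separate the nodes to obtain the normalisation, take for each component a finite-dimensional slice transverse to the $\mathrm{Aut}$-action deforming its complex structure relative to the special points, attach to each nodal pair a small complex gluing parameter, and plumb: small parameter values produce a stable nodal surface. Declaring the images of these families, modulo the morphisms of $\mathcal R$, to be a neighbourhood basis of $|\tau_0|$ defines $\mathcal T$. I would then check that this is the base of a genuine topology (overlaps are open, independence of the auxiliary choices up to refinement), that $\mathcal T$ is Hausdorff --- distinct isomorphism classes of stable surfaces are separated because a convergent sequence of isomorphisms subconverges to an isomorphism, a rigidity consequence of stability --- and that $\mathcal T$ is second countable, since the parameter spaces are.

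Metrizability then follows from the Urysohn metrization theorem, because $(|\mathcal R|,\mathcal T)$ is Hausdorff, second countable, and regular, being locally a finite quotient of a finite-dimensional ball stratified by combinatorial type; alternatively one can write an explicit distance from the hyperbolic length data of the complete finite-area metrics on $S\setminus(M\cup|D|)$, with vanishing geodesic lengths recording approach to the nodal strata. For compactness I would attach to the dual graph of $\tau$ --- vertices the components of $S$, edges the nodal pairs --- the unordered tuple $((g_1,n_1),\dots,(g_r,n_r))$, where $g_i$ and $n_i$ are the arithmetic genus and number of marked points of the $i$-th connected piece of the graph (so $g_i$ is the sum of the component genera in that piece plus its first Betti number). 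This tuple is locally constant on $|\mathcal R|$, since smoothing nodes and deforming do not change it, and its level sets are finite quotients of products $\prod_i\overline{\mathcal M}_{g_i,n_i}$; each is connected because every $\overline{\mathcal M}_{g,n}$ is connected, and each is compact by the Deligne--Mumford compactness theorem. Hence the level sets are precisely the connected components of $|\mathcal R|$, and they are compact.

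The main obstacle is the pair of analytic facts underlying the second and fourth steps: that the plumbing charts assemble into a single well-defined Hausdorff topology, and that a degenerating sequence of stable curves of fixed discrete type subconverges, after reparametrisation, to a stable nodal limit (equivalently, sequential compactness of $\overline{\mathcal M}_{g,n}$). Once these are available, the groupoidal axioms, the Urysohn deduction of metrizability, and the identification of the connected components are comparatively routine.
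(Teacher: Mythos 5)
Your sketch is essentially the argument the paper has in mind: the text states this theorem without proof, deferring to \cite{Hummel,H2014,HWZ5,HWZ-DM}, and those references establish exactly the points you outline --- finiteness of automorphisms from stability, the plumbing/uniformizer construction of the topology (cf.\ Theorem~\ref{DM_MAIN}), and Deligne--Mumford compactness identifying each connected component with a finite quotient of a product of spaces $\overline{\mathcal M}_{g,n}$ (with marked points unordered and identical factors permuted). So your proposal is correct in outline and follows the same route; the two analytic facts you flag as the real content --- well-definedness and Hausdorffness of the plumbed topology, and sequential compactness of stable curves of fixed type --- are precisely what the cited sources supply.
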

This can be proved with the results presented in \cite{Hummel}. A proof will be given in \cite{HWZ-DM} and there is also a discussion in 
\cite{H2014,HWZ5}.
\subsection{Good Deformations}
A version of Deligne-Mumford theory can be obtained in the framework of uniformizers 
as follows, where we 
 need the following definition.
\begin{definition}
Let $\tau=(S,j,M,D)$ be a stable Riemann surface with automorphism group $G$.  A {\bf small disk structure} $\bm{D}$
for $\tau$ consists of a choice of a smooth closed disk-like neighborhood $D_x$ for every $x\in |D|$ 
such that 
\begin{itemize}
\item $D_x\cap (|D|\cup M)=\{x\}$.
\item $ D_x\cap D_y= \emptyset$ for $x\neq y$.
\item $\bigcup_{x\in |D|} D_x $ is invariant under $G$.
\end{itemize}
\qed
\end{definition}
 \begin{figure}[h]
\begin{center}
\includegraphics[width=14.5cm]{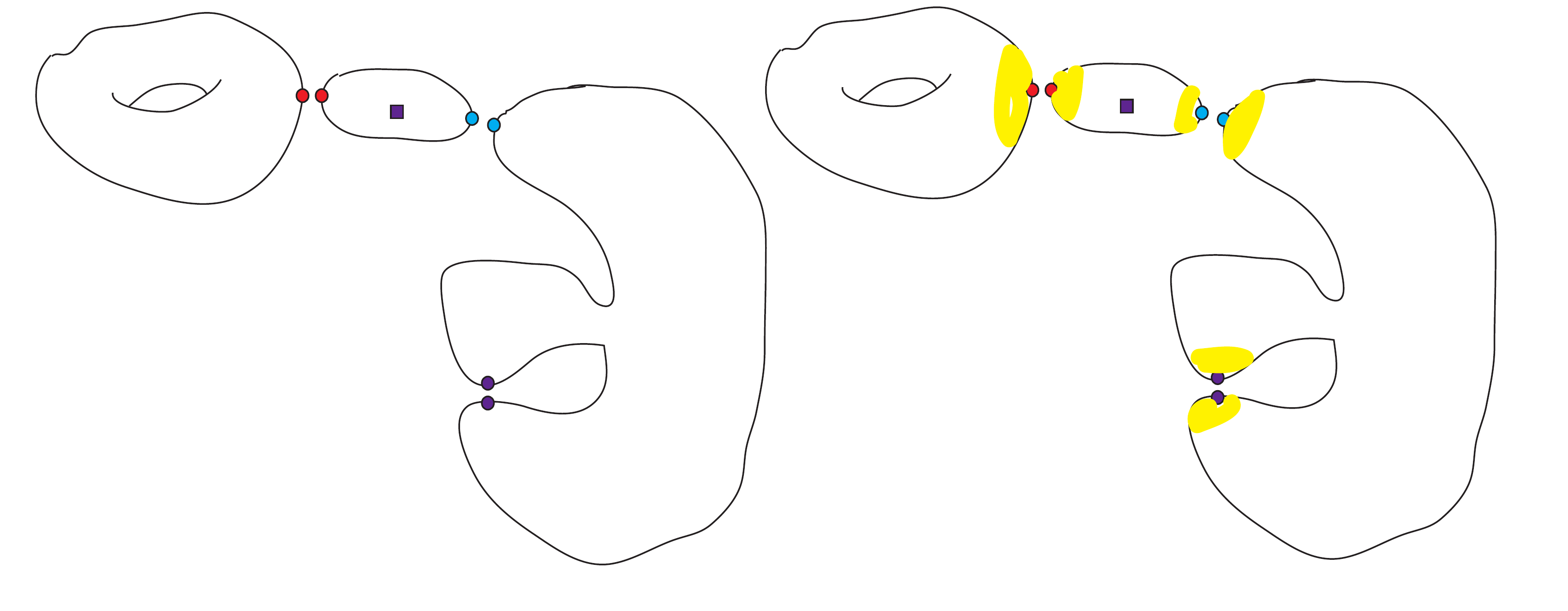}
\end{center}
\caption{A stable Riemann surface and adding a small disk structure.}\label{FIG 01}
\end{figure}

We need to introduce the complex vector space  $H^1(\tau)$ obtained as follows.
Recall that the Cauchy-Riemann operator can be defined on the complex vector space $\Gamma_0(\tau)$ consisting of smooth section $u$ of $TS\rightarrow S$ which vanish at the nodal points 
and the points in $M$, so that $\bar{\partial}(u)$ is a  $TS$-valued $(0,1)$-form, i.e. $\bar{\partial}(u)(z):T_zS\rightarrow T_zS$ is complex anti-linear for every $z$. Denote by $\Omega^{0,1}(\tau)$
the complex vector space of such $(0,1)$-forms. Then $\bar{\partial}:\Gamma_0(\tau)\rightarrow \Omega^{0,1}(\tau)$ is a complex linear injective operator with finite dimensional cokernel.
We define $H^1(\tau) =\Omega^{0,1}(\tau)/ \bar{\partial}(\Gamma_0(\tau))$. This is a complex vector space of dimension $3g_a-3+\sharp M-\sharp D$, where $g_a$ is the arithmetic genus defined by $g_a=1+\sharp D +\sum_{C}[g(C)-1]$.
\begin{definition}
Let $\tau$ be an object in ${\mathcal R}$ with automorphism group $G$ and assume that a small disk structure ${\bm{D}}$ has been fixed. A {\bf good deformation}\index{good deformation} $\mathfrak{j}$  for $\tau$ with given ${\bm{D}}$ consists of a $G$-invariant open neighborhood
$V$ of $0\in H^1(\tau)$ and a smooth family $V\ni v\rightarrow j(v)$ of almost complex structures on $S$
so that the following holds.
\begin{itemize}
\item[(1)] $ j(0)=j$.
\item[(2)] $j(v)=j$ on all $D_x$ for $x\in |D|$ and $v\in V$.
\item[(3)] $[Dj(v)]:H^1(\tau)\rightarrow H^1(\tau_v)$ is a complex linear isomorphism for every $v\in V$.
\item[(4)] For every $g\in G$ and $v\in V$ the map $g:\tau_v\rightarrow \tau_{g\ast v}$ is biholomorphic.
\end{itemize}
\qed
\end{definition}
It is a known fact that for given $\tau$ and small disk structure a good deformation always exists.
\subsection{Uniformizers for ${\mathcal R}$}
Fix a good deformation $\mathfrak{j}$ for $\tau$ and ${\bm{D}}$. Then we can define for a natural gluing parameter $\mathfrak{a}\in {\mathbb B}_{\tau}$ and $v\in V$ the objects $\tau_{v,\mathfrak{a}}$  by
$$
\tau_{v,\mathfrak{a}} =(S_\mathfrak{a},j(v)_\mathfrak{a},M_\mathfrak{a},D_\mathfrak{a}).
$$
On $V\times {\mathbb B}_\tau$ we have the natural action of $G$ by diffeomorphisms
defining us the translation groupoid $G\ltimes (V\times {\mathbb B}_\tau)$. We also obtain a functor
\begin{eqnarray}\label{DM-ppsi}
\Psi:G\ltimes (V\times {\mathbb B}_\tau)\rightarrow {\mathcal R}
\end{eqnarray}
which on objects maps 
$$
(v,\mathfrak{a})\rightarrow \tau_{v,\mathfrak{a}}
$$  
and on  morphisms
 $$
 (g,(v,\mathfrak{a}))\rightarrow (\tau_{v,\mathfrak{a}},g_\mathfrak{a},\tau_{g\ast v,g\ast\mathfrak{a}}).
 $$
The following theorem is well-known and a consequence of standard DM-theory and holds for every gluing profile.

\begin{thm}\label{DM_MAIN}
The orbit space $|{\mathcal R}|$ has a natural metrizable topology for which every connected component is compact. 
Moreover, the following holds, which also characterizes the topology.
Let $\tau$, ${\bm{D}}$, and $\mathfrak{j}$ as described above and let $ \Psi$ be the associated functor defined in {\em (\ref{DM-ppsi})}.
Then there exists a $G$-invariant open neighborhood $O$ of $(0,0)$ in $H^1(\tau)\times {\mathbb B}_\tau$
such that the following holds.
\begin{itemize}
\item[(1)] $\Psi:G\ltimes O\rightarrow {\mathcal R}$ is fully faithful and injective on objects, and $\Psi(0,0)=\tau$.
\item[(2)] The map $|\Psi|: |G\ltimes O|\rightarrow |{\mathcal R}|$ induced between orbit spaces
defines a homeomorphism onto an open neighborhood of $|\tau|$.
\item[(3)] For every $(v,\mathfrak{a})$ the Kodaira-Spencer differential associated to $\tau_{v,\mathfrak{a}}$
is an isomorphism.
\end{itemize}
\qed
\end{thm}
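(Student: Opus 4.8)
The plan is to split the statement into three essentially independent pieces: (a) the existence of the metrizable topology on $|{\mathcal R}|$ with compact connected components; (b) the uniformizer properties (1)--(2) of the functor $\Psi$ attached to a good deformation; and (c) the Kodaira--Spencer assertion (3). For (a) I would use the Gromov-type convergence for nodal Riemann surfaces via hyperbolic geometry as developed in \cite{Hummel}: equip each stable curve with its complete finite-volume hyperbolic metric, decompose it by the collar lemma into a thick part and thin collars around the nodes and cusps, and declare $\tau_k\to\tau$ when, after removing small neighborhoods of the nodes, there are diffeomorphisms of the thick parts converging in $C^\infty$ (matching marked points and nodal disks) while the collar moduli converge appropriately. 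One checks this convergence is first countable and Hausdorff, hence induces a metrizable topology on $|{\mathcal R}|$; compactness of connected components is the Deligne--Mumford compactness theorem, which in this language is a normal-families argument on the thick part together with the observation that only finitely many topological types of nodal surface occur in a fixed component. For the geometric picture I would point to \cite{RS}, with the detailed write-up being \cite{HWZ-DM,HWZ5}.

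For (b), fix $\tau$, a small disk structure $\bm{D}$, and a good deformation $\mathfrak{j}$, giving the family $\tau_{v,\mathfrak{a}}=(S_\mathfrak{a},j(v)_\mathfrak{a},M_\mathfrak{a},D_\mathfrak{a})$ for $(v,\mathfrak{a})\in H^1(\tau)\times{\mathbb B}_\tau$. Injectivity on objects is straightforward after shrinking $O$: $\mathfrak{j}$ is a genuine family of complex structures on the fixed surface $S$, distinct values $v\neq v'$ give complex structures that are not identically equal (using property (3) of a good deformation to see that the family is a genuine embedding transverse to the $\bar\partial$-image), and distinct gluing parameters $\mathfrak{a}$ either change the topological type or change the conformal modulus of the corresponding neck. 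Full faithfulness is the heart of the matter: given $(v,\mathfrak{a}),(v',\mathfrak{a}')\in O$ and a morphism $\Phi:\tau_{v,\mathfrak{a}}\to\tau_{v',\mathfrak{a}'}$ one must show it equals $g_\mathfrak{a}$ for a unique $g\in G$. I would argue by contradiction and compactness: if $O$ could not be shrunk to make this true, there would be sequences $(v_k,\mathfrak{a}_k),(v'_k,\mathfrak{a}'_k)\to(0,0)$ and biholomorphisms $\phi_k$ between the corresponding curves, respecting marked points and nodal disks, none of which is of the form $g_{\mathfrak{a}_k}$. Biholomorphisms solve $\bar\partial$, so on compact subsets of the thick part a $C^0$-bound upgrades by elliptic regularity to $C^\infty$-bounds; together with the convergence in (a) and control of the behavior near the necks (where $j(v_k)$ is frozen to $j$ on the disks $D_x$), a subsequence of $\phi_k$ converges to an automorphism $\phi_\infty$ of $\tau$, i.e. to some $g\in G$. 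But then for large $k$ the maps $\phi_k$ and $g_{\mathfrak{a}_k}$ are $C^\infty$-close biholomorphisms inducing the same correspondence on $M\cup|D|$, hence equal by rigidity of holomorphic maps $C^0$-close to a fixed one and pinned down on the marked data --- a contradiction. Uniqueness of $g$ is the injectivity of $G\hookrightarrow\mathrm{Aut}(\tau)$. Once $\Psi$ is fully faithful and injective on objects, $|\Psi|$ is injective; by the definition of the topology in (a) it is an open map onto its image, hence a homeomorphism onto an open neighborhood of $|\tau|$, giving (2), and (1) is what we just proved. That $O$ may be chosen $G$-invariant and that the produced morphisms are precisely the $g_\mathfrak{a}$ is routine $G$-equivariant bookkeeping on top of the non-equivariant statement.

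For (c), property (3) of a good deformation says $[Dj(v)]:H^1(\tau)\to H^1(\tau_v)$ is a complex linear isomorphism for the unglued curves. It remains to see that the gluing map $\mathfrak{a}\mapsto\tau_{v,\mathfrak{a}}$ supplies the missing $\sharp D$ complex parameters transversally: differentiating the gluing construction in $\mathfrak{a}$ at a node produces, inside $H^1(\tau_{v,\mathfrak{a}})$, exactly the class smoothing that node, and these together with $[Dj(v)]\big(H^1(\tau)\big)$ span $H^1(\tau_{v,\mathfrak{a}})$ by the dimension count $\dim_{\mathbb C}H^1(\tau)=3g_a-3+\sharp M-\sharp D$ versus $3g_a-3+\sharp M$ after all nodes are smoothed (same arithmetic genus $g_a$). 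Hence the Kodaira--Spencer differential at $\tau_{v,\mathfrak{a}}$ --- the composite of the differential of $(v,\mathfrak{a})\mapsto\tau_{v,\mathfrak{a}}$ with the canonical identification of the tangent space to the base --- is an isomorphism; this step is insensitive to the choice of gluing profile.

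I expect the main obstacle to be the full-faithfulness step, and within it the analysis near the nodes. Away from the thin parts everything is classical normal-families and elliptic regularity; the delicate point is to prevent a biholomorphism between two nearly-nodal curves from "rotating uncontrollably through the neck," and this is exactly where the small disk structure, the freezing of $j(v)$ on the disks $D_x$, and the precise asymptotics of the gluing profile enter --- the reason the theorem is asserted "for every gluing profile" only once $\tau$, $\bm{D}$, and $\mathfrak{j}$ have been fixed.
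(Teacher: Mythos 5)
The paper does not prove Theorem \ref{DM_MAIN} at all: it is stated as a well-known consequence of standard Deligne--Mumford theory, with the remark that it can be proved using \cite{Hummel}, that the detailed proof (adapted to arbitrary gluing profiles) will appear in \cite{HWZ-DM}, and that discussions are in \cite{H2014,HWZ5,RS}. Your outline follows exactly the standard route those references take --- Gromov/hyperbolic thick--thin convergence for the topology and DM compactness, a normal-families plus elliptic-regularity contradiction argument for full faithfulness, and the plumbing-plus-deformation count for the Kodaira--Spencer statement --- so there is no divergence of method to report, only the fact that the paper outsources the argument.

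Two places where your sketch is thinner than what the cited proofs actually supply. First, in the full-faithfulness step, concluding $\phi_k=g_{\mathfrak{a}_k}$ from ``$C^\infty$-closeness plus agreement on the marked data'' is not a generic rigidity statement: it uses that stable curves carry no nonzero holomorphic vector fields vanishing at the special points (trivial identity component of the automorphism group) together with the local effectiveness/universal property of the family $(v,\mathfrak{a})\mapsto\tau_{v,\mathfrak{a}}$, which is precisely where condition (3) for the \emph{glued} curves enters; and the convergence of $\phi_k$ across the necks is the genuinely delicate part (controlling the twist through long annuli), which you correctly flag but do not carry out --- this is the content of the neck analysis in \cite{RS,HWZ-DM}. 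Second, in (c), a dimension count gives only equality of dimensions; you still need linear independence of the node-smoothing classes from $[Dj(v)]\bigl(H^1(\tau)\bigr)$, which is the versality statement of the plumbing construction rather than arithmetic. Also note that with the exponential gluing profile the family is only smooth, not holomorphic, in $\mathfrak{a}$, so ``the Kodaira--Spencer differential associated to $\tau_{v,\mathfrak{a}}$'' has to be interpreted through the smooth plumbing family; this is exactly the adaptation carried out in \cite{HWZ5,HWZ-DM} and is why the paper insists the statement holds for every gluing profile.
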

We note that on objects $\Psi(0,0)=\tau$ and on morphisms $\Psi(g,(0,0))=(\tau,g,\tau)$. This is the point where 
the current topic connects with the topic discussed in Lecture 1.
As described in the  polyfold theory for groupoidal category,  a method to explore the richness
of such a category is the construction of uniformizers. The  functors $\Psi$ we just introduced give a  natural uniformizer construction for ${\mathcal R}$. 
We just mention  there are also natural constructions of a similar  kind for the category of stable maps in Gromov-Witten theory, \cite{H2014},  and, which is the topic of this text, also for SFT.
In view of Theorem \ref{DM_MAIN} we can make the following definition.
\begin{definition}
Let $\tau$ be an object in ${\mathcal R}$ with automorphism group $G$.
We say that 
$$
\Psi:G\ltimes O\rightarrow {\mathcal R}
$$
 is a {\bf good uniformizer} \index{good uniformizer} associated to $\tau$ if
it is obtained, after a choice of small disk structure ${\bm{D}}$ and deformation $\mathfrak{j}$, 
as a restriction satisfying the properties (1)--(3). Given $\tau$ there is a set of choices we can make, resulting 
in a set of associated uniformizers denoted by $F(\tau)$. 
\qed
\end{definition}
If $\Phi:\tau\rightarrow \tau'$ is a morphism it is clear that there is a 1-1 correspondence between the 
choices for $\tau$ and $\tau'$, respectively. Hence we obtain a natural bijection $F(\Phi):F(\tau)\rightarrow F(\tau')$. This defines a functor
$$
F_{\text{DM}}:{\mathcal R}\rightarrow \text{SET}
$$
associating to an object $\tau$ a set of good uniformizers having $\tau$ in the image.
As was already said the above construction works for every gluing profile. However, if we would like 
to have a smooth transition between two uniformizers we have to be careful in the choice of the gluing profile.
In fact the gluing profile $r\rightarrow -\frac{1}{2\pi}\cdot \ln(r)$ gives a construction which is equivalent to the classical DM-theory. However, this gluing profile is not as useful  for SFT. If we use the exponential gluing profile $\varphi: r\rightarrow e^{\frac{1}{r}}-e$ we loose the holomorphicity, but otherwise the associated DM-theory has the same features and the methods extend to SFT. These details are described in \cite{HWZ-DM}.
The following is a very important result.
\begin{thm}
Let $\varphi$ be the exponential gluing profile.  Associated to $({\mathcal R},{\mathcal T},F^{\varphi}_{\text{DM}})$ there is a transition construction $\bm{M}^{\varphi}_{\text{DM}}$, where each $\bm{M}^{\varphi}_{\text{DM}}(\Psi,\Psi')$ is equipped
with a smooth manifold structure.
\qed
\end{thm}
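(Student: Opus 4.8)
The plan is to build $\bm M^{\varphi}_{\mathrm{DM}}(\Psi,\Psi')$ chart by chart around each of its points, using the source map to transport a smooth structure, and then to reduce everything to the regularity of the Deligne--Mumford transition maps in the exponential-gluing-profile coordinates. So fix good uniformizers $\Psi\colon G\ltimes O\to{\mathcal R}$ and $\Psi'\colon G'\ltimes O'\to{\mathcal R}$ obtained, using $\varphi$, from data $(\tau,{\bm D},\mathfrak j)$ and $(\tau',{\bm D}',\mathfrak j')$, with $O\subset H^1(\tau)\times{\mathbb B}_\tau$ and $O'\subset H^1(\tau')\times{\mathbb B}_{\tau'}$ the open invariant neighbourhoods of Theorem~\ref{DM_MAIN}. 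Let $(o_0,\Phi_0,o_0')\in\bm M(\Psi,\Psi')$; thus $\Phi_0\colon\Psi(o_0)\to\Psi'(o_0')$ is a biholomorphism of nodal stable Riemann surfaces respecting marked points and nodal pairs, and in particular $|\Psi(o_0)|=|\Psi'(o_0')|$ in $|{\mathcal R}|$.

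First I would show that $s$ is \'etale near $(o_0,\Phi_0,o_0')$ and use this to define the smooth structure. Consider the morphisms $\Phi\colon\Psi(o)\to\Psi'(o')$, with $o$ near $o_0$ and $o'$ near $o_0'$, that stay $C^{\infty}_{\mathrm{loc}}$-close to $\Phi_0$ (measuring biholomorphisms by how far they move points away from the necks, together with the matching of gluing data). Via $s$ these form a local section $\sigma$, $\sigma(o)=(o,\Phi(o),\theta(o))$, $\sigma(o_0)=(o_0,\Phi_0,o_0')$, over a neighbourhood $U$ of $o_0$: surjectivity of $s|_{\sigma(U)}$ onto $U$ follows from Theorem~\ref{DM_MAIN}(2) after absorbing the isotropy of $o_0'$ in $G'$, and injectivity follows because a morphism $\Psi'(o_1')\to\Psi'(o_2')$ that is $C^{\infty}_{\mathrm{loc}}$-close to $1_{\Psi'(o_0')}$ must, by fullness, be $\Psi'$ of a translation-groupoid morphism near the unit, hence the unit itself --- here one uses rigidity of biholomorphisms between stable nodal surfaces. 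The section $\sigma$ is the unique local right inverse of $s$ through the given point, so any two such charts $s\colon\sigma(U)\to U\subset O$ agree where both are defined; they are therefore pairwise compatible and cover $\bm M(\Psi,\Psi')$, giving it a smooth manifold structure for which $s$ is a local diffeomorphism (Hausdorffness being inherited from the metrizability of $|{\mathcal R}|$, and $s$ being finite-to-one since $|\Psi'|$ is injective on $O'/G'$).

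Next --- and this is the \emph{main obstacle} --- I would show that in these charts the target map is again a local diffeomorphism, i.e.\ that $\theta=t\circ\sigma\colon U\to O'$ is $C^{\infty}$ with $C^{\infty}$ inverse. By construction $\theta$ is exactly the change of coordinates between the two Deligne--Mumford parametrizations $(v,\mathfrak a)\mapsto\tau_{v,\mathfrak a}$ and $(v',\mathfrak a')\mapsto\tau'_{v',\mathfrak a'}$ of the common neighbourhood of $|\tau|=|\tau'|$ selected by the continuation of $\Phi_0$. On the locus where all gluing parameters are nonzero $\theta$ is a composition of honest plumbing and biholomorphism data, and on each nodal stratum it restricts to the classical transition map between the corresponding unglued families; the real point is \emph{uniform} $C^{\infty}$-control across these strata, where the two coordinate systems are tied together through the gluing profile. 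For the logarithmic profile $r\mapsto-\tfrac1{2\pi}\ln r$ this transition is even real-analytic --- this is classical Deligne--Mumford theory --- but that profile is incompatible with the sc-calculus that SFT needs later; the content of the exponential profile $\varphi\colon r\mapsto e^{1/r}-e$ is precisely that, at the cost of losing holomorphicity, the plumbing maps, and hence $\theta$, stay $C^{\infty}$ (indeed sc-smooth in the infinite-dimensional analogue), uniformly up to and along the nodal strata. I would invoke this analytic fact, which is developed in \cite{HWZ-DM} and already used in \cite{HWZ5,Hofer}. Interchanging the roles of $\Psi$ and $\Psi'$ exhibits $\theta^{-1}$ as a transition map of the same kind, so $\theta$ is a diffeomorphism and $t$ is a local diffeomorphism.

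Finally I would check the remaining structure maps, reading them in the $s$-charts. The unit $u\colon O\to\bm M(\Psi,\Psi)$, $o\mapsto(o,1_{\Psi(o)},o)$, corresponds to the transition map of $\Psi$ with itself through the identity, which is the identity of $O$; the inversion $\iota\colon(o,\Phi,o')\mapsto(o',\Phi^{-1},o)$ carries the section through $\Phi_0$ to the section through $\Phi_0^{-1}$ and in coordinates equals $\theta$, a diffeomorphism by the previous step; and since $s$ and $t$ are local diffeomorphisms, the weak fibered product $\bm M(\Psi',\Psi''){}_{s}\times_t\bm M(\Psi,\Psi')$ is a smooth manifold, locally parametrized by the source coordinate $o$ of its second factor, under which parametrization and the $s$-chart on $\bm M(\Psi,\Psi'')$ the multiplication $m$ becomes the identity, since $\Phi'\circ\Phi$ still has source index $o$. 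Hence $u$, $\iota$, $m$ are all smooth. All ``sc-smoothness'' requirements reduce here to ordinary $C^{\infty}$-regularity because every M-polyfold in sight is a finite-dimensional manifold (modelled on open subsets of the spaces $H^1(\tau)\times{\mathbb B}_\tau$), so ``M-polyfold structure'' just means ``smooth manifold structure''. Therefore $\bm M^{\varphi}_{\mathrm{DM}}$ is a transition construction for $({\mathcal R},{\mathcal T},F^{\varphi}_{\mathrm{DM}})$ and each $\bm M^{\varphi}_{\mathrm{DM}}(\Psi,\Psi')$ carries a smooth manifold structure; the only non-formal ingredient is the $C^{\infty}$-regularity of the exponential-profile plumbing maps in the second step.
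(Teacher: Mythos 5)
The paper itself does not prove this theorem: it is stated with a pointer to \cite{HWZ-DM} (``These details are described in \cite{HWZ-DM}''), so there is no internal argument to compare yours against line by line. Your outline is consistent with the intended proof and, importantly, it isolates the genuinely non-formal ingredient correctly: the only hard content is the smoothness, uniformly across the nodal strata, of the plumbing/transition maps when the coordinates are tied together through the exponential profile $\varphi(r)=e^{1/r}-e$, and you defer exactly that to \cite{HWZ-DM}, which is what the paper does. The formal scaffolding you add (charts on $\bm{M}(\Psi,\Psi')$ transported through the source map, target map read as the Deligne--Mumford coordinate change, structure maps checked in $s$-charts) is the standard way to organize the statement, and it buys a clear reduction of the theorem to one analytic fact; the paper, by contrast, buys brevity by citing the whole package.

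Two points in your scaffolding are thinner than your wording suggests. First, the existence (not just the smoothness) of the local continuation $o\mapsto\Phi(o)$ through $\Phi_0$ is not a consequence of Theorem~\ref{DM_MAIN}(2) alone: statement (2) only says that for $o$ near $o_0$ there is \emph{some} $o'$ and \emph{some} morphism $\Psi(o)\rightarrow\Psi'(o')$; selecting a distinguished one near $\Phi_0$, in a way that survives the opening of nodes, is the versality of the family $\Psi'$, i.e.\ exactly what condition (3) of Theorem~\ref{DM_MAIN} (the Kodaira--Spencer isomorphism at \emph{every} $(v,\mathfrak a)$, not only at the center) is there for --- this is the ``universal property'' the paper invokes later in the uniformizer/transition-germ construction, and it is part of what \cite{HWZ-DM} supplies together with the regularity. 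Second, Hausdorffness of $\bm{M}(\Psi,\Psi')$ is not inherited from metrizability of $|{\mathcal R}|$; two distinct morphisms with the same source and target (finite isotropy) map to the same point of $|{\mathcal R}|$, and separating them is the finite-rigidity statement (the ``M-Hausdorff property'' of the abstract germ construction), which again comes from versality rather than from the topology on the orbit space. Neither point invalidates your sketch, but both should be attributed to the same Deligne--Mumford input rather than to the purely topological parts of Theorem~\ref{DM_MAIN}.
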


\subsection{More on DM-Theory}
We  shall describe some results in the DM-context which we shall use later. When we discuss in Lecture 3 buildings of stable maps we have to consider their underlying domains.
What we shall obtain is the following data. For each $i\in \{0,...,k\}$ a tuple $\sigma_i:=(\Gamma^-_i,S_i,j_i,M_i,D_i,\Gamma^+_i)$ and for $i\in \{1,...,k\}$ a bijection 
$b_i:\Gamma^+_{i-1}\rightarrow \Gamma^-_i$. The points in $M_i$ are called marked points, the points in $\Gamma^\pm_i$ positive and negative punctures.
In general the $\sigma_i$ are not stable. They come, however, with decompositions $S_i=S^{tc}_i\sqcup S^{ntc}_i$, where 
 $S^{nt}_i$ is a finite union of simply connected closed Riemann surfaces  containing  no point from $M_i$ and containing  precisely one point from $\Gamma^-_i$ and $\Gamma^+_i$.
 However, not every connected component having the latter property belongs to $S^{tc}$. In addition we are given a finite group $G$ acting by biholomorphic maps
 preserving the floors, i.e. $(\phi^g_0,...,\phi^g_k)$ such that for $z\in \Gamma_{i-1}^+$ we have that $b_i\circ \phi_{i-1}(z)=\phi_i\circ b_i(z)$.
In addition $\phi_i(S_i^{tc})=S^{tc}_i$ and $\phi_i(S_i^{ntc})=S^{ntc}_i$.  Given this data we shall add so-called {\bf stabilization points}. As we shall see later on they have to be picked carefully
and are also associated to other data, transversal constraints, which will be introduced later.  The set of stabilization points $\Xi$  is assumed to be invariant under the $G$-action.
After adding the stabilization points it is assumed that each domain component together with all the special points on it becomes stable.
Consider $\bar{\sigma}$ which consists of the Riemann surface $(S,j)$, the marked points $\bar{M}=\Gamma^-_0\sqcup\Gamma^+_k\sqcup M\sqcup\Xi$, where $\Xi$ are the stabilization points. 
We denote by $\bar{D}$ the union of the $D_i$ and all $\{(z,b_i(z)\}$, where $i\in \{1,...,k\}$ and $z\in \Gamma^+_{i-1}$. Then $\bar{\sigma}=(S,j,\bar{M},\bar{D})$ is a stable Riemann surface
with automorphism group $G^\ast$ containing $G$. We shall refer to $\bar{\sigma}$ as the associated \textbf{DM-Data}.

\begin{figure}[h]
\begin{center}
\includegraphics[width=7.0cm]{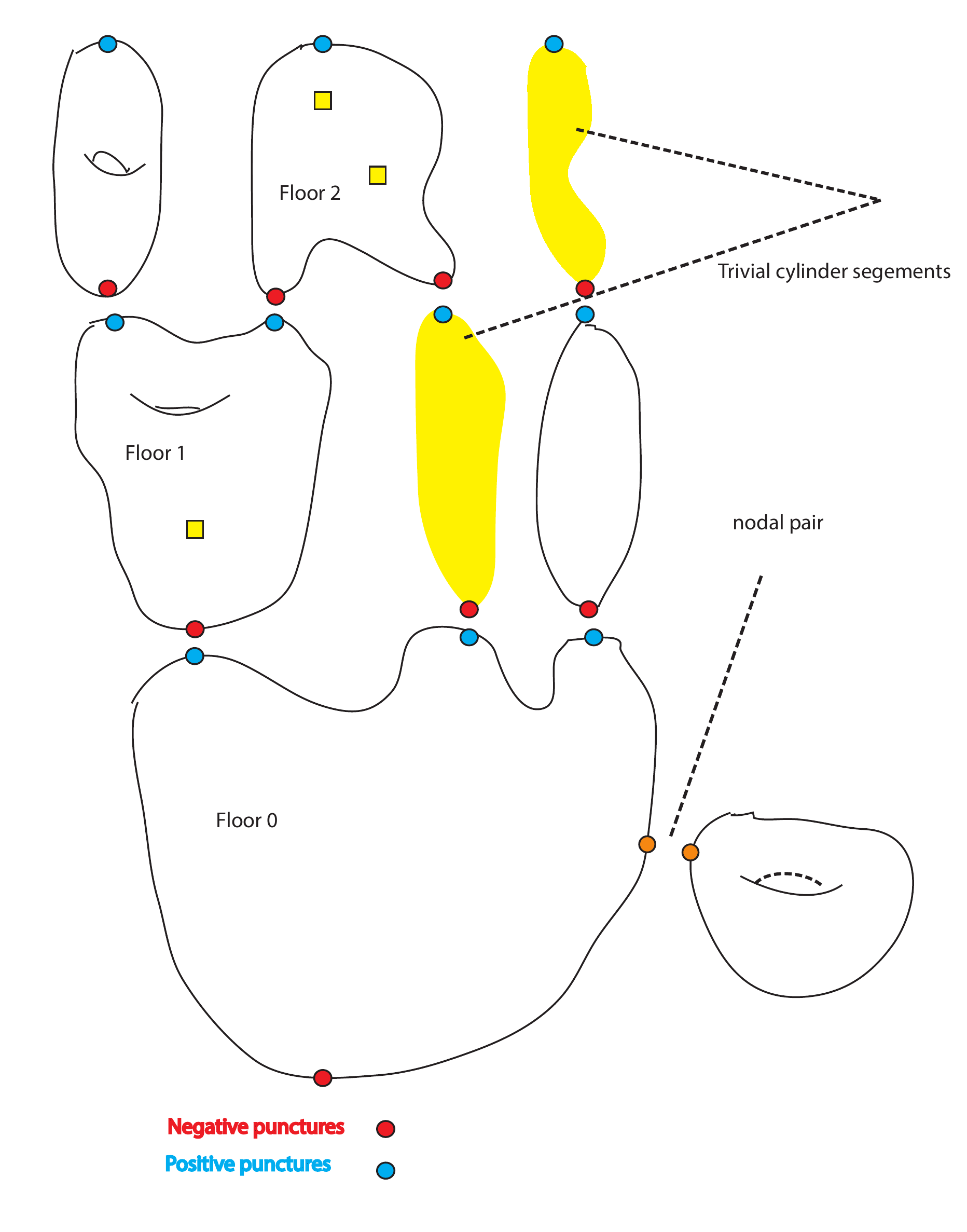} \includegraphics[width=7.0cm]{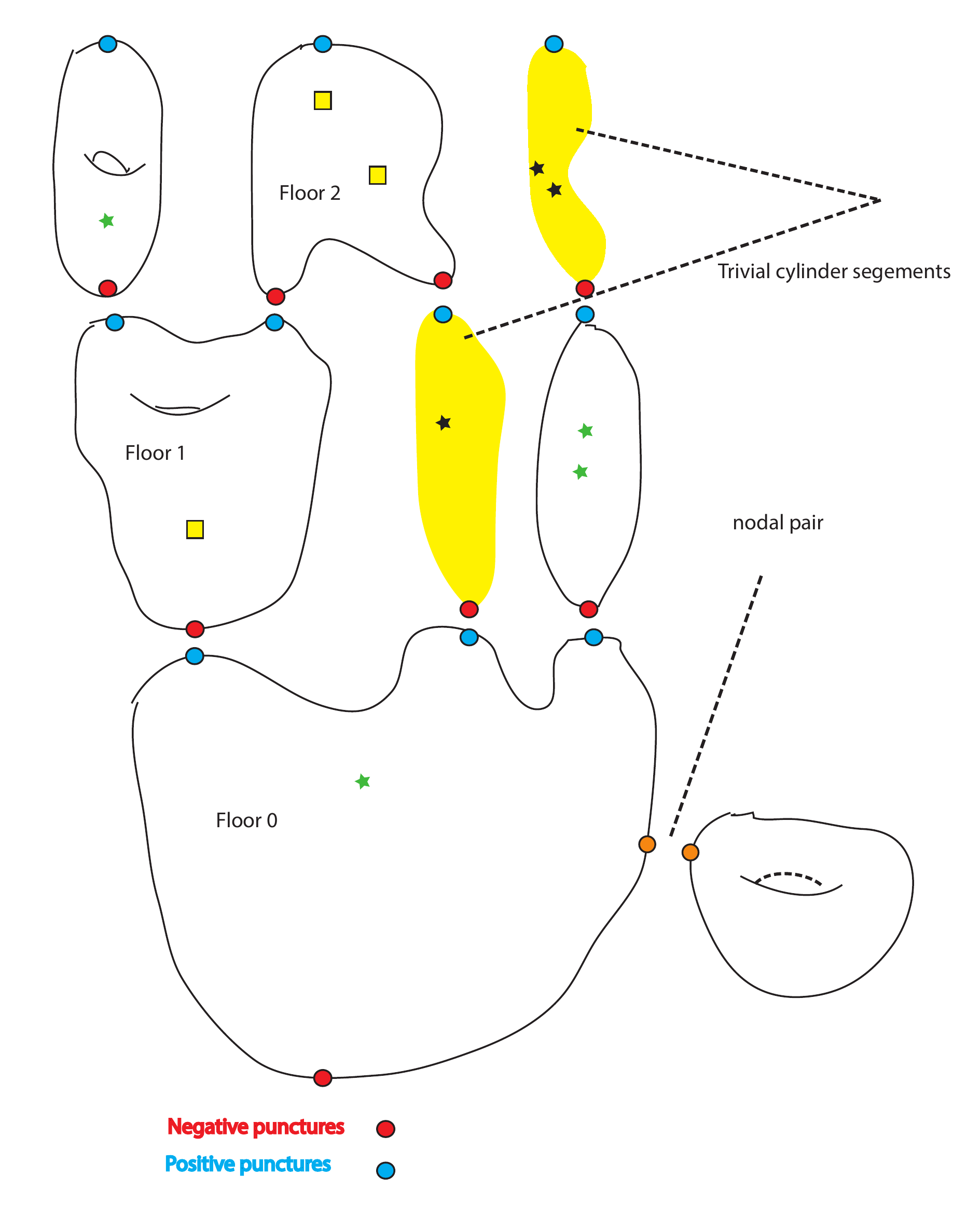}
\end{center}
\caption{A  Riemann surface building with three floors with and without stabilization points added.}\label{FIG 00123}
\end{figure}

There is  a natural action of $G$ on $H^1(\bar{\sigma})$. This finite-dimensional complex vector space
splits naturally according to the partition of $S$ into
$$
S=(S_0^{tc}\cup S_0^{ntc})\cup...\cup (S_{k}^{tc}\cup S_{k}^{ntc})
$$
and the action by $G$ preserves this splitting.  This is generally not the case for the $G^\ast$-action.
We obtain the 
natural identification
\begin{eqnarray}\label{DM-COMP}
&H^1(\bar{\sigma})\equiv H^1(\bar{\sigma}_0^{tc})\oplus H^1(\bar{\sigma}_0^{ntc})\oplus..\oplus H^1(\bar{\sigma}_{k}^{tc})\oplus H^1(\bar{\sigma}_{k}^{ntc}):&\\
&v\equiv(v_0^{tc},v_0^{ntc},..., v_{k}^{tc},v_{k}^{ntc}).&\nonumber
\end{eqnarray}
Here $\bar{\sigma}_i^{tc}$ is $S_i^{tc}$ equipped with the special points coming from $\bar{M}$ and $|\bar{D}|$, and similarly for
$\sigma_{i}^{ntc}$. We have a natural action of $G^\ast$ on $H^1(\bar{\sigma}_0^{tc})\oplus H^1(\bar{\sigma}_0^{ntc})\oplus..\oplus H^1(\bar{\sigma}_{k}^{tc})\oplus H^1(\bar{\sigma}_{k}^{ntc})$ in view of (\ref{DM-COMP}) whose restriction to $G$ has a diagonal form with respect to the indicated decomposition.
\begin{definition}\label{GOODDM}
We assume we started with $\sigma=(\sigma_0,b_1,...,b_k,\sigma_k)$, were $\sigma_i$ and $b_i$ are as described before equipped
with the action of a finite group $G$ by biholomorphic maps preserving the floors $\sigma_i$ and the other data.
After fixing a $G$-invariant stabilization set $\Xi$ we denote the associated DM-data by $\bar{\sigma}$. Assume we have picked  the small disk structure $\bm{D}$ so that the union of disks is invariant under $G^\ast$, where $G^\ast$ and $G$  act in the way
as just described, a {\bf good $\tau$-adapted 
deformation}\index{good $\alpha$-adapted deformation} of $j$ consists of an  open $G^\ast$-invariant neighborhood ${\mathcal V}$ of $0\in
H^1(\bar{\sigma}_0^{tc})\oplus H^1(\bar{\sigma}_0^{ntc})\oplus..\oplus H^1(\bar{\sigma}_{k}^{tc})\oplus H^1(\bar{\sigma}_{k}^{ntc})$
and a family $\mathfrak{j}:{\mathcal V}\ni v\rightarrow j(v)$  of almost complex structures on $\bar{\sigma}$ so that the following properties hold.
\begin{itemize}
\item[(1)] For $v\in {\mathcal V}$ it holds that $j(v)|S_{i}^{tc}=j_{S_{i}^{tc}}(v_i^{tc})$ and
$j(v)|S_{i}^{ntc}=j_{S_{i}^{ntc}}(v_i^{ntc})$, where $v$ decomposes into the direct sum of the $v_i^{tc}$ and $v_i^{ntc}$.
\item[(2)]  For all $g\in G^\ast$ the map $g:(S,j(v),\bar{M}, \bar{D})\rightarrow (S,j(g\ast v),\bar{M},\bar{D})$
is biholomorphic.
\item[(3)] $j(0)=j$ and  for $v\in {\mathcal V}$ it holds $j(v)=j$ on the disks
of the  small disk structure $\bm{D}$.
\item[(4)] For every $v\in {\mathcal V}$ the Kodaira-Spencer differential $[Dj(v)]:H^1(\bar{\sigma})\rightarrow H^1(\bar{\sigma}_v)$
is a complex linear isomorphism, where 
$$
\bar{\sigma}_v=(S,j(v),\bar{M},\bar{D}).
$$
 Moreover the map
${\mathcal V}\ni v\rightarrow j(v)$ is injective. 
\item[(5)] The map $v\rightarrow \bar{\sigma}_v$ defines a good uniformizer for the DM-spaces of fixed nodal type.
More precisely, denoting by $\tau$ the nodal type of $\bar{\sigma}$
$$
 \psi: G^\ast\ltimes {\mathcal V}\rightarrow {\mathcal R}_{\tau},
 $$
 where on objects $\psi(v)=\bar{\sigma}_v$ and on morphisms 
 $$
 \psi(g,v)=(\sigma_v,g,\bar{\sigma}_{g\ast v}),
 $$
  is a good uniformizer.
Here  ${\mathcal R}_{\tau}$ has  its orbit space
equipped with the topology coming from the one on the orbit space of ${\mathcal R}$. 
\end{itemize}
\qed
\end{definition}
The {\bf nodal type}\index{nodal type} of a stable Riemann surface in ${\mathcal R}$ is obtained as follows.
Starting  with $\bar{\sigma}=(S,j,\bar{M},\bar{D})$ we first produce a decorated graph where we take a vertex $v=v(C)$ for every 
domain component $C$ of $S$ which we label with its genus $g(v):=g(C)$ and the number of marked points $m(v):=m(C)$ on $C$.
For every nodal pair $\{x,y\}\in\bar{D}$ we have that $x\in C$ and $y\in C'$ (where $C'=C$ is a possibility), and we draw
an edge connecting the vertices associated to $C$ and $C'$. The stability of $\bar{\sigma}$ is equivalent to the statement 
that for every vertex $v$ the {\bf edge number}\index{edge number} $e(v)$ satisfies the inequality
$$
2g(v)+m(v)+e(v)\geq 3.
$$
There is an obvious notion of isomorphism for two such decorated graphs. By definition the nodal type $\tau(\bar{\sigma})$ 
\index{$\tau(\bar{\sigma})$} of $\bar{\sigma}$  is the isomorphism class
of the associated decorated graph.
We need the following result.
\begin{thm}
Given the DM-data $\bar{\sigma}$ associated  and the auxiliary structure,
there exists for given small disk structure $\bm{D}$ as previously described 
a good deformation $\mathfrak{j}$.
\end{thm}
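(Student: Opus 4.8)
The plan is to reduce the existence of a good $\tau$-adapted deformation in the sense of Definition \ref{GOODDM} to the already-quoted fact that a good deformation exists for a single stable Riemann surface with a prescribed small disk structure, applied separately to each block of the splitting (\ref{DM-COMP}), and then to reassemble the blocks $G^\ast$-equivariantly. First I would make the block decomposition precise: since $\bar\partial$ on $\Gamma_0(\bar\sigma)$ is local and restricts to the Cauchy--Riemann operator on each connected component of $S$, its cokernel $H^1(\bar\sigma)$ splits as the direct sum (\ref{DM-COMP}) over the pieces $\bar\sigma_i^{tc}$, $\bar\sigma_i^{ntc}$, each of which (after adding the stabilization points $\Xi$) is a stable Riemann surface, possibly disconnected, carrying the special points inherited from $\bar M$, $|\bar D|$ and the inherited disks from $\bm D$. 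Applying the known existence statement to each block $\bar\sigma_j$ yields a good deformation $v_j\mapsto j_{S_j}(v_j)$ on a neighborhood $V_j$ of $0$ in $H^1(\bar\sigma_j)$, with $j_{S_j}(0)=j|_{S_j}$, constant $=j$ on the inherited disks, and with Kodaira--Spencer differential at $0$ the canonical identification $T_0V_j\cong H^1(\bar\sigma_j)$. Putting $j(v)|_{S_j}=j_{S_j}(v_j)$ and $\mathcal V=\prod_j V_j$ gives a family satisfying conditions (1), (3) and the first-order part of (4) by construction.

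The heart of the argument is to upgrade this to a genuinely $G^\ast$-equivariant family, condition (2), even though $G^\ast$---unlike $G$---need not preserve the splitting (\ref{DM-COMP}). The induced $G^\ast$-action on $H^1(\bar\sigma)$ permutes the summands of (\ref{DM-COMP}), so I would fix a set of representatives for the $G^\ast$-orbits on the set of blocks; on each representative block $\bar\sigma_j$ build the deformation equivariantly under its stabilizer $\mathrm{Stab}_{G^\ast}(j)$, which is possible by averaging over this finite group while using that the union of disks is $G^\ast$-invariant, so the normalization ``$j(v)=j$ on the disks'' survives the averaging; and then transport the deformation to the remaining blocks in the orbit through fixed elements $g\in G^\ast$, which map $S_j$ biholomorphically to $S_{g\cdot j}$ carrying special points to special points and disks to disks, simultaneously identifying $H^1(\bar\sigma_j)\cong H^1(\bar\sigma_{g\cdot j})$. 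A direct verification then shows the assembled family $v\mapsto\bar\sigma_v=(S,j(v),\bar M,\bar D)$ is $G^\ast$-equivariant, and since $G$ does preserve the splitting, its restriction to $G$ is diagonal with respect to (\ref{DM-COMP}) as asserted.

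Finally I would shrink $\mathcal V$. By construction $[Dj(0)]\colon H^1(\bar\sigma)\to H^1(\bar\sigma)$ is the identity, hence it remains a complex linear isomorphism and $v\mapsto j(v)$ remains injective on a neighborhood of $0$; intersecting the finitely many $G^\ast$-translates makes this neighborhood $G^\ast$-invariant, yielding (4). Condition (5)---that the resulting $\psi\colon G^\ast\ltimes\mathcal V\to\mathcal R_\tau$ is a good uniformizer for the Deligne--Mumford space of fixed nodal type $\tau$---then follows from the versality provided by the nondegenerate Kodaira--Spencer map together with standard Deligne--Mumford theory, equivalently from Theorem \ref{DM_MAIN} restricted to the closed stratum $\mathcal R_\tau$ on which no gluing parameters appear, after one further shrinking of $\mathcal V$ to secure that $|\psi|$ is a homeomorphism onto an open neighborhood of $|\bar\sigma|$.

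The main obstacle is exactly the tension in the second paragraph: reconciling the block-diagonal normalization (1), which only respects the $G$-structure, with full $G^\ast$-equivariance (2), which forbids a naive averaging over all of $G^\ast$ and forces the orbit-representative-and-transport argument; keeping track of the stabilizer subgroups, of the $G^\ast$-invariant disk structure, and of the compatibility of all the shrinkings of $\mathcal V$ is where the care is needed, whereas the analytic input on each single block is entirely the classical existence result invoked earlier.
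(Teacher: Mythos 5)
The paper states this theorem without proof (the details are deferred to \cite{HWZ-DM}), so your proposal has to be judged on its own terms. Its architecture --- build the family from the already-quoted single-surface existence result, reassemble $G^\ast$-equivariantly via orbit representatives and transport, then shrink to secure (4) and quote DM-theory for (5) --- is the standard and essentially correct route, but two steps are flawed as written. The first is the claim that the $G^\ast$-action permutes the summands of (\ref{DM-COMP}). What is true is that $G^\ast$ permutes the connected components of $S$, hence the finer decomposition of $H^1(\bar\sigma)$ by components; but a block such as $H^1(\bar\sigma_i^{tc})$ is a sum over several components, which an element $g\in G^\ast$ may scatter across different floors and across the tc/ntc division (a stabilized trivial-cylinder sphere can be isomorphic, as a marked nodal surface, to a nontrivial component). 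So there is no induced action on the set of blocks, and ``the stabilizer of a block'' is not defined. The repair is to run your orbit-representative-and-transport argument on connected components (equivalently on $G^\ast$-orbits of components); since the construction is then componentwise, the restriction of $j(v)$ to $S_i^{tc}$ automatically depends only on the coordinates $v_C$ with $C\subset S_i^{tc}$, i.e.\ on $v_i^{tc}$, so property (1) of Definition \ref{GOODDM} survives the change of bookkeeping.

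The second problem is the phrase ``possible by averaging over this finite group'': averaging is not a defined operation on almost complex structures or on families $v\mapsto j(v)$, because the space of complex structures on a surface is not linear, so this step fails as literally stated. Equivariance under $\text{Stab}_{G^\ast}(C_0)$ has to be arranged at the level of the linear data from which the family is manufactured --- for instance a $\text{Stab}$-invariant lift of $H^1(C_0)$ into $\Omega^{0,1}(C_0)$ by forms supported away from the ($G^\ast$-invariant) disks and the special points, where averaging projections or inner products is legitimate --- or, more simply, by applying the already-quoted existence statement for a single stable surface with finite group action to $C_0$ together with the image of $\text{Stab}_{G^\ast}(C_0)$ in the automorphism group of $C_0$; that statement already delivers a stabilizer-equivariant family equal to $j$ on the disks, and no averaging is needed. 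With these two repairs, the well-definedness of the transport (checked exactly on the stabilizer), the openness argument giving (4) after a $G^\ast$-invariant shrinking, and the appeal to Theorem \ref{DM_MAIN} restricted to the stratum ${\mathcal R}_\tau$ of fixed nodal type for (5) go through in the way you describe.
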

\subsection{Further Concepts}\label{covering}
Finally we would like to introduce the category $\wt{\mathcal R}$.
Its objects are closed stable Riemann surfaces $\wt{\tau}$ with an ordered set of marked points, i.e. $\wt{M}= (m_1,m_2,...,m_{\ell})$. 
Then such a stable Riemann surface takes the form $\wt{\tau}=(S,j,\wt{M},D)$. A morphism $\wt{\Phi}:\wt{\tau}\rightarrow \wt{\tau}'$ is a tuple
$(\wt{\tau},\phi,\wt{\tau}')$ where $\phi:(S,j)\rightarrow (S',j')$ is a biholomorphic map satisfying $\phi_{\ast}D=D'$ and $\phi_\ast\wt{M}=\wt{M}'$.
Here $\phi_{\ast}\wt{M}=(\phi(m_1),...,\phi(m_{\ell}))$. We have a forgetful functor 
$$
\mathsf{f}: \wt{\mathcal R}\rightarrow {\mathcal R}
$$
which on objects maps $\wt{\tau}=(S,j,\wt{M},D)$ to $\tau=(S,j,M,D)$ where $M$ is the set of un-ordered marked points underlying $\wt{M}$.
The morphism $\wt{\Phi}=(\wt{\tau},\phi,\wt{\tau}')$ is mapped to $\Phi=(\tau,\phi,\tau')$. The forgetful functors is an example of a proper covering functor.
and they will occur quite frequently.  Below we give the general definition.
\begin{definition}
 Let $\pi: \wt{\mathcal C}\rightarrow {\mathcal C}$ a functor between groupoidal categories.
 We say $\pi$ is a \textbf{proper covering functor}
provided the following holds.
\begin{itemize}
\item[(1)] $\pi$ is surjective on objects and finite to one.
\item[(2)] $\text{mor}(\wt{\mathcal C})\xrightarrow{\langle\pi,s\rangle} {\mathcal C}{_{s}\times_\pi} \text{obj}(\wt{\mathcal C}): \phi\rightarrow (\pi(\phi),s(\phi))
$ is a bijection.
\end{itemize}
\qed
\end{definition}
We note that $\mathsf{f}: \wt{\mathcal R}\rightarrow {\mathcal R}$ is a proper covering functor. 
Using the uniformizers from the construction $(F,\bm{M})$ we can make the following construction. Let $\tau=(S,j,M,D)$. An ordering of $M$
is a bijection $\mathsf{o}:I_k:=\{1,...,k\}\rightarrow M$, where $k=\sharp M$. We shall write $M^{\mathsf{o}}$ for the tuple $(\mathsf{o}(1),...,\mathsf{o}(k))$.
We observe that $G$ acts on $\text{Bij}(I_k,M)$ by $g\ast\mathsf{o} = g\circ \mathsf{o}$. 
Consider  $\Psi\in F_{DM}(\tau)$,  say $\Psi:G\ltimes O\rightarrow {\mathcal R}$,
defined by $(v,\mathfrak{a})\rightarrow \tau_{v,\mathfrak{a}}=(S_{\mathfrak{a}},j(v)_{\mathfrak{a}},M_{\mathfrak{a}},D_{\mathfrak{a}})$.
We consider the smooth manifold $\text{Bij}(I_k,M)\times O$ with the action of $G$ by 
$$
g\ast(\mathsf{o},(v,\mathfrak{a}))= (g\circ\mathsf{o},g\ast(v,\mathfrak{a})).
$$
We obtain the translation groupoid $G\ltimes (\text{Bij}(I_k,M)\times O)$ and the forgetful functor 
$$
G\ltimes (\text{Bij}(I_k,M)\times O)\rightarrow G\ltimes O
$$
This functor is a proper covering functor which has the additional properties that between objects it is a surjective local diffeomorphism
and the obvious map 
$$
\text{mor}(G\ltimes (\text{Bij}(I_k,M)\times O)\rightarrow \text{mor}(G\ltimes O){_{s}\times_{\mathsf{f}}}\text{obj}(G\ltimes (\text{Bij}(I_k,M)\times O))
$$
is a diffeomorphism. There exists a commutative diagram
$$
\begin{CD}
G\ltimes (\text{Bij}(I_k,M)\times O)@>\wt{\Psi}>> \wt{\mathcal R}\\
@V\mathsf{f}VV  @VVV\\
G\ltimes O@>\Psi>> {\mathcal R}
\end{CD}
$$
where $\wt{\Psi}(\mathsf{o},(v,\mathfrak{a}))=(S_{\mathfrak{a}},j(v)_{\mathfrak{a}},M^{\mathsf{o}}_{\mathfrak{a}},D_{\mathfrak{a}})$ on objects 
and on morphisms 
$$
\wt{\Psi}(g,\mathsf{o},(v,\mathfrak{a}))= (\wt{\Psi}(\mathsf{o},(v,\mathfrak{a})), g_{\mathfrak{a}},\wt{\Psi}(g\circ \mathsf{o},g\ast (v,\mathfrak{a})))
$$
This is an example for a uniformizer construction for a proper covering. This also works for the transition construction.
\newpage

\part*{Lecture 3}

\section{The Example of Stable Maps in SFT}
The reader is assumed to be familiar with the basic concepts about `Stable Hamiltonian Structures', see \cite{CV1} or Appendix \ref{APP11}.
We start with $(Q,\lambda,\omega)$, a non-degenerate stable Hamiltonian structure $(\lambda,\omega)$  on the closed odd-dimensional manifold $Q$.
We obtain the \textbf{Reeb vectorfield} $R$ defined by $\lambda(R)\equiv 1$ and $d\lambda(R,\ .)\equiv 0$.  Associated to $R$ we have the set of periodic orbits 
${\mathcal P}={\mathcal P}(Q,\lambda,\omega)$ and define ${\mathcal P}^\ast=\{\emptyset\}\cup {\mathcal P}$. Given an admissible complex
multiplication $J$  for $\xi=\text{ker}(\lambda)$, there exists a spectral gap function $\bar{\delta}_J:{\mathcal P}^\ast \rightarrow (0,2\pi]$, which comes 
from a self-adjoint operator associated to $J$ and $R$, which plays a role in the Fredholm theory.
We fix an admissible  weight function $\delta_0$ define on ${\mathcal P}^\ast$ such that $0<\delta_0<\bar{\delta}_J$.

\subsection{Stable Maps of Height  One and Regularity $(3,\delta_0)$}
A \textbf{stable map of height one}  is a tuple $\alpha=(\Gamma^-,S,j,D,M,[\widetilde{u}],\Gamma^+)$. Here $(S,j)$ is a closed Riemann surface,
$D$ a set of nodal pairs, $M$ unordered marked points,  $\Gamma^\pm$ positive and negative punctures, respectively.
By  $[\widetilde{u}]$ we denote an  equivalence class of maps $\widetilde{u}:S\setminus(\Gamma^+\cup\Gamma^-)\rightarrow {\mathbb R}\times Q$
of class $(3,\delta_0)$. Two maps are {\bf equivalent} provided they differ by a constant ${\mathbb R}$-shift.
At every positive puncture $z\in \Gamma^+$ the map $\widetilde{u}$ is asymptotic in the $(3,\delta_0)$ sense to a periodic orbit $\gamma_z$,
and similarly at $z\in \Gamma^-$. The map $\widetilde{u}$ is continuous over nodal pairs.  
A {\bf trivial cylinder component} is a connected component $C$ of $S$ with exactly one positive and negative puncture asymptotic to the same orbit and no marked points and nodal points. Moreover it is homotopic, while being asymptotic to the periodic orbits, to a standard cylinder parameterization. 
In addition the following stability condition holds.
\begin{definition}
We say $\alpha=  (\Gamma^-,S,j,D,M,[\widetilde{u}],\Gamma^+)$ is {\bf stable} provided   $S$ has at least one connected component $C$, which is not a trivial cylinder component. This component has  at least one of the following properties, where 
we write $\widetilde{u}=(a,u)$.
\begin{itemize}
\item[(1)] $\int_{\dot{C}}u^\ast\omega>0$, $\dot{C}=C\setminus(\Gamma^+\cup\Gamma^-)$.
\item[(2)] $2g(C) + \sharp (C\cap (M\cup |D|))\geq 3$.
\end{itemize}
\qed
\end{definition}
 \begin{figure}[h]
\begin{center}
\includegraphics[width=6.5cm]{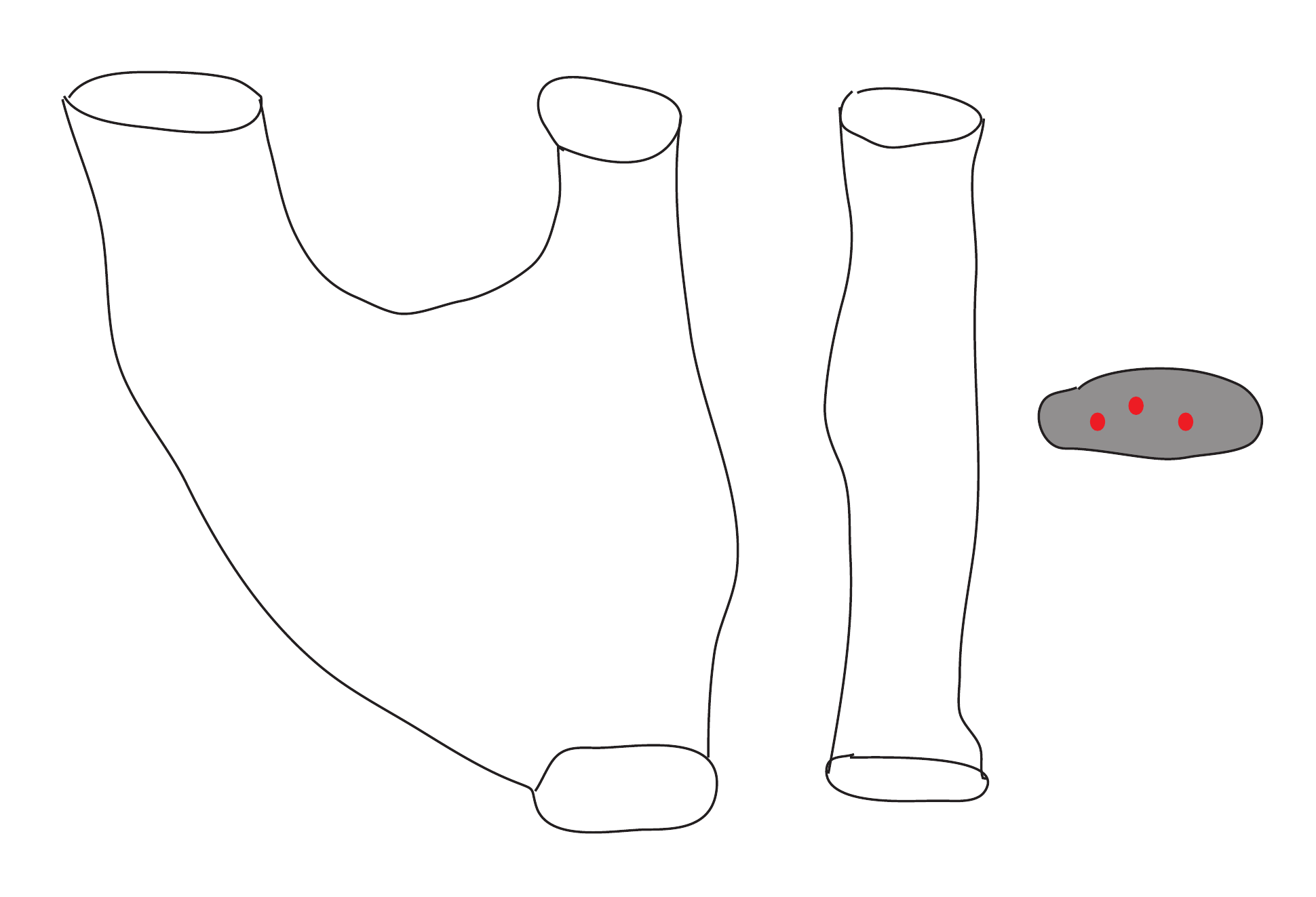}
\end{center}
\caption{A stable building of height one. One trivial cylinder component.}\label{FIG 122}
\end{figure}
\subsection{Stable Buildings}
Given a stable map of height $1$ we can take at positive punctures the real projectification $P_{\mathbb R}(T_z,j)$ 
and at  negative punctures we  take $P_{\mathbb R}(T_z,-j)$.  We obtain $ \widehat{\Gamma}^\pm\rightarrow \Gamma^\pm$  which is  a principle $S^1$-bundle naturally. The stable buldings of height $k+1$ have the following form
$$
\alpha=(\alpha_0,\widehat{b}_1,\alpha_1,...,\widehat{b}_k,\alpha_k)
$$
Here each $\alpha_i$ is a stable map of height $1$ and $\widehat{b}_i$ fits into the commutative diagram
$$
\begin{CD}
\widehat{\Gamma}_{i-1}^+@>\widehat{b}_i>> \widehat{\Gamma}^-_i\\
@VVV @VVV\\
\Gamma^+_{i-1} @>b_i>>  \Gamma^-_{i}
\end{CD}
$$
for $i\in \{1,...,k\}$.
The map $b_i$ is a bijection and the periodic orbits associated to $z\in\Gamma^+_{i-1}$ and $b_i(z)$ coincide.
Further the asymptotic limits are $\widehat{b}_i(z)$ matching. With $\alpha=(\alpha_0,\wh{b}_1,...,\wh{b}_k,\alpha_k)$
and $\alpha_i=(\Gamma^-_i,S_i,j_i,M_i,[\wt{u}_i],\Gamma^+_i)$ we sometimes collect all the $[\wt{u}_i]$ as $[\wt{u}]$
and the interface nodal pairs as $\Theta$ consisting of all $\wh{b}_i(z)$ for $z\in \Gamma^+_{i-1}$ and $i\in \{1,...,k\}$.
With other words $[\wt{u}]=([\wt{u}_0],\wh{b}_1,..,\wh{b}_k,[\wt{u}_k])$.
Alternatively we can identify $\wh{b}_i(z)$
with a decorated node $[\wh{z},\wh{z}']$, which is an equivalence class of oriented real lines in the tangent space $T_zS_{i-1}$ and $T_{b_i(z)}S_{i}$.
Moreover, we put $S=S_0\sqcup..\sqcup S_k$,
$D=D_0\sqcup..\sqcup D_k$, and $M=M_0\sqcup..\sqcup M_k$,
Then we can write $\alpha$ as
$$
\alpha=(\Gamma^-_0,S,j,M,D,\Theta,[\wt{u}],\Gamma^+_k),
$$
which sometimes is the more convenient notation.
\subsection{The Category of Stable Maps}
The category of stable maps of class $(3,\delta_0)$ denoted by ${\mathcal S}={\mathcal S}^{3,\delta_0}(Q,\lambda,\omega)$
has the stable buildings of arbitrary height as objects. A morphism $\Phi:\alpha\rightarrow \alpha'$ is given by 
$(\alpha,\phi,\alpha')$, where $\phi:(\Gamma^-_0,S,j,M,D,\Theta,\Gamma^+_k)\rightarrow (\Gamma'^-_0,S',j',M',\Theta',\Gamma'^+_k)$
such that $[\widetilde{u}'_i\circ\phi_i]=[\widetilde{u}_i]$.
An already nontrivial result is given by the following theorem.
\begin{thm}
${\mathcal S}$ is a groupoidal category and its orbit space has a natural metrizable topology ${\mathcal T}$ and 
$({\mathcal S},{\mathcal T})$ is a GCT.
\qed
\end{thm}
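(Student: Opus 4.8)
The plan is to verify the three axioms of a groupoidal category and then produce the metrizable topology, the bulk of the work going into the finiteness of morphism sets. That every morphism $\Phi=(\alpha,\phi,\alpha')$ is an isomorphism is immediate: each component $\phi_i$ is a biholomorphism, the collection $(\phi_i^{-1})$ still intertwines the $\widehat b_i$ and the floor structure, and $[\widetilde u'_i\circ\phi_i]=[\widetilde u_i]$ yields $[\widetilde u_i\circ\phi_i^{-1}]=[\widetilde u'_i]$, so $(\alpha',\phi^{-1},\alpha)$ is a two-sided inverse. For the finiteness of $\mathrm{mor}_{\mathcal S}(\alpha,\alpha')$ I would first reduce to automorphism groups: if this set is nonempty, fixing one element $\Phi_0$ and composing on the left gives a bijection $\mathrm{Aut}(\alpha)\to\mathrm{mor}_{\mathcal S}(\alpha,\alpha')$, so it suffices to prove $\#\mathrm{Aut}(\alpha)<\infty$ for every stable building $\alpha$.

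To bound $\mathrm{Aut}(\alpha)$ I would argue component by component. An automorphism $g$ permutes the connected components of $S=S_0\sqcup\cdots\sqcup S_k$, preserves the floors, the marked points $M$, the punctures $\Gamma^\pm$ with their periodic-orbit labels, and the nodal and interface pairs $D\cup\Theta$ with their decorations, and satisfies $[\widetilde u_i\circ g_i]=[\widetilde u_i]$ on each floor. By the stability condition, read off component by component, each connected component $C$ is of one of three kinds: a stable pointed nodal curve, i.e. $2g(C)+\sharp\bigl(C\cap(M\cup|D|\cup|\Theta|\cup\Gamma^+\cup\Gamma^-)\bigr)\ge 3$; a trivial cylinder component; or a component carrying positive $\omega$-energy. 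On a component of the first kind the restriction of $g$ lies in the finite automorphism group of that pointed curve. On a trivial cylinder component the a priori infinite biholomorphism group of $\mathbb C^\ast$ is rigidified: $g$ cannot interchange the positive and negative ends, it must respect the decorated node at each attaching point (so its derivative there is prescribed up to the floorwise $\mathbb R$-shift ambiguity), and since the floor $\alpha_i$ containing $C$ possesses, by stability, a non-trivial-cylinder component, that $\mathbb R$-shift is already pinned down on $\alpha_i$; the residual freedom is the finite rotational ambiguity allowed by the covering multiplicity of the asymptotic orbit. For the positive-energy components I would add a finite $\mathrm{Aut}(\alpha)$-invariant set $\Xi$ of stabilization points, as in the DM-data construction of Lecture 2 and chosen one $\mathrm{Aut}(\alpha)$-orbit of components at a time, so that every component of $(S,j,M\cup\Xi,D\cup\Theta)$ becomes a stable pointed curve; then $\mathrm{Aut}(\alpha)$ embeds into the finite automorphism group of the resulting stable Riemann surface. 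This gives $\#\mathrm{Aut}(\alpha)<\infty$, hence $\#\mathrm{mor}_{\mathcal S}(\alpha,\alpha')<\infty$.

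For the orbit space itself, $|{\mathcal S}|$ is a set because every isomorphism class has a representative whose underlying topological type lies in a fixed countable list and whose map lies in a separable space of $(3,\delta_0)$-maps; alternatively this follows once the topology is in place. To produce the metrizable topology ${\mathcal T}$ I would invoke the SFT compactness theory of \cite{BEHWZ}, declaring $[\alpha_n]\to[\alpha]$ when, after stabilizing the domains, the underlying stable Riemann surfaces converge in the Deligne--Mumford sense (allowing nodes to degenerate and new floors to form), the maps converge in $C^\infty_{\mathrm{loc}}$, respectively in the $(3,\delta_0)$-sense, on the complement of the nodes, and the matching and energy conditions hold across the breaking. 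One then checks that this convergence is topological, and that the resulting topology is Hausdorff — two limits of a single sequence are isomorphic, by the uniqueness part of the compactness theorem — second countable and regular, so that it is metrizable by the Urysohn metrization theorem; it is precisely the topology that will later be recovered, locally, from the good uniformizers of ${\mathcal S}$, which is why the theorem is stated before those uniformizers are built.

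The main obstacle is the finiteness of morphism sets, i.e. ruling out infinite automorphism groups on the domain components that are not DM-stable. Trivial cylinders and the near-trivial low-energy components have large biholomorphism groups and the map data alone does not obviously cut them down; the argument must exploit the interplay of the stability condition applied floorwise, the asymptotic periodic orbits, the decorated-node structure, the single $\mathbb R$-shift per floor, and, for the remaining components, a stabilization-points construction carried out compatibly with the automorphism action. Getting this bookkeeping right — in particular making the choice of $\Xi$ canonical enough to work in families — is where the real effort lies; the metrizability statement, by contrast, is essentially a repackaging of \cite{BEHWZ}.
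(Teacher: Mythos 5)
The paper itself contains no proof of this theorem: it is stated as a fact (``an already nontrivial result'') whose proof is deferred, and the route the text has in mind is visible in Lectures 9 and 10. There the topology on $|{\mathcal S}|$ is not obtained by declaring SFT-convergence of sequences; it is produced simultaneously with the uniformizer construction $F$ and the transition germ construction $\mathscr{F}$, so that the $|\Psi|$ become homeomorphisms onto open sets, and metrizability is then a property one verifies for the resulting natural topology (the ``chicken or egg'' discussion at the start of Lecture 9 explains exactly why the paper proceeds this way). Your plan --- define convergence in the sense of \cite{BEHWZ}, check that it is topological, then apply Urysohn --- is a legitimate alternative route, but be aware that ``one then checks that this convergence is topological'', together with Hausdorffness, regularity and second countability, is not a repackaging of \cite{BEHWZ}: it is essentially the same local analysis near a nodal or broken building (pregluing neighborhoods) that the uniformizer construction packages, which is why the theorem is nontrivial and why the paper postpones its proof to the construction machinery.

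The one genuine gap is in your finiteness argument for $\operatorname{Aut}(\alpha)$ on components that are not DM-stable but carry positive $\omega$-energy. You propose to add a finite $\operatorname{Aut}(\alpha)$-invariant stabilization set $\Xi$; but invariance of a hand-chosen finite set under $\operatorname{Aut}(\alpha)$ is exactly what you cannot arrange before knowing the group is finite (an infinite group need not admit any finite invariant set meeting the unstable components, and choosing ``one orbit of components at a time'' does not help, since the stabilizer of a single component could a priori be infinite). Moreover, since objects of ${\mathcal S}$ are only of class $(3,\delta_0)$ and not pseudoholomorphic, you cannot appeal to the classical finiteness of automorphisms commuting with a nonconstant holomorphic map. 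Two ways to close this: either make $\Xi$ intrinsic to the map, for instance the fiber of $\widetilde{u}$ over a regular value away from the asymptotic orbits, which is finite by the asymptotic decay and is automatically permuted by every automorphism; or argue directly that a positive-energy component has finite stabilizer, since invariance of $\widetilde{u}$ under a one-parameter group of biholomorphisms forces $u^{\ast}\omega$ to vanish, while invariance under an infinite cyclic group replicates the $\omega$-energy of a fundamental domain infinitely often, contradicting finiteness of energy, the asymptotic and nodal constraints handling the remaining noncompact directions. With one of these in place, your reduction of $\operatorname{mor}(\alpha,\alpha')$ to $\operatorname{Aut}(\alpha)$ and your treatment of trivial cylinders (the ${\mathbb R}$-shift pinned floorwise by a nontrivial component, leaving only the finite rotation group of order the covering number, further constrained by the decorations and the interface maps $\widehat{b}_i$) is sound.
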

Given $J:\xi\rightarrow \xi$ we have the functor 
$$
\mu_J:{\mathcal S}\rightarrow \text{\bf Ban},
$$
which  associates to $\alpha$ the Hilbert space of $(T({\mathbb R}\times Q),\widetilde{J})$-valued $(0,1)$-forms  of class $(2,\delta_0)$ along the underlying  stable map.
We obtain a new groupoidal category ${\mathcal E}_J$, where the objects are pairs $(\alpha,e)$, $e\in \mu_J(\alpha)$, and morphisms
are $(\Phi,e)$, with $\Phi$ a morphism in ${\mathcal S}$ satisfying  $s(\Phi)=\alpha$, and where we view the morphism as 
$$
(\Phi,e):(\alpha,e)\rightarrow (t(\Phi),\mu(\Phi)(e)).
$$
It turns out that the orbit space of ${\mathcal E}_J$ has a natural metrizable topology, so that $({\mathcal E}_J,{\mathcal T})$ is a GCT.
We obtain 
$$
P:{\mathcal E}_J\rightarrow {\mathcal S}
$$
and can define  the section functor $\bar{\partial}_{\widetilde{J}}$ by 
$$
\bar{\partial}_{\widetilde{J}}(\alpha)=\left(\alpha,\frac{1}{2}[T\tilde{u}+\widetilde{J}\circ T\widetilde{u}\circ j]\right).
$$
\begin{definition}
The \textbf{(coarse) moduli space} $\overline{\mathcal M}_J$  associated to $\bar{\partial}_{\widetilde{J}}$ is the orbit space 
of the full subcategory of $\widetilde{J}$-holomorphic objects, i.e. those $\alpha$ for which 
$$
\bar{\partial}_{\widetilde{J}}(\alpha)=(\alpha,0).
$$
The \textbf{moduli category} is the full subcategory associated to $\wt{J}$-holomorphic objects.
\qed
\end{definition}
The following holds.
\begin{thm}
After fixing a suitable gluing profile $\varphi$ and a strictly increasing sequence $\delta$ of admissible weight functions,
 $P$ has naturally the structure of a strong bundle over the tame polyfold ${\mathcal S}$ and 
$\bar{\partial}_{\widetilde{J}}$ is Fredholm.  The orbit space $|\bar{\partial}_{\widetilde{J}}^{-1}(0)|$ intersected with every connected component
of $|{\mathcal S}|$ is compact.
\qed
\end{thm}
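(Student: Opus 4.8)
The plan is to build the polyfold structure on ${\mathcal S}$ locally, one uniformizer at a time, in the spirit of Lecture 1 and the Gromov--Witten construction of \cite{HWZ5}, and then to repackage the linear and nonlinear Cauchy--Riemann analysis as the Fredholm and compactness assertions. First I fix a stable building $\alpha$ with automorphism group $G$ and pass to the associated DM-data $\bar{\sigma}$ by adding a $G$-invariant set of stabilization points $\Xi$ making every domain component stable (as in Section \ref{covering} and Definition \ref{GOODDM}); I choose a small disk structure $\bm{D}$ and a good $\tau$-adapted deformation $\mathfrak{j}$, and form the natural gluing parameters using the exponential gluing profile $\varphi\colon r\mapsto e^{1/r}-e$. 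The domain $O$ of the uniformizer is then assembled from the deformation parameters $v\in{\mathcal V}\subset H^1(\bar{\sigma})$, the gluing parameters at the nodes and at the breaking levels, a finite-dimensional parameter encoding the transversal constraints attached to $\Xi$, and the infinite-dimensional map component obtained by gluing the maps $\widetilde{u}_i$ in the $(3,\delta_0)$-sense, with the scale structure on scale $m$ built from the weight $\delta_m$ of the increasing sequence $\delta=(\delta_m)$, $0<\delta_0<\delta_1<\cdots<\bar{\delta}_J$. One checks that $O$ carries an M-polyfold structure on which $G$ acts by sc-diffeomorphisms, yielding a functor $\Psi\colon G\ltimes O\to{\mathcal S}$; that $\Psi$ is injective on objects, full and faithful, with $|\Psi|$ a homeomorphism onto an open neighborhood of $|\alpha|$, follows from the description of the topology ${\mathcal T}$ together with the gluing and compactness analysis of \cite{BEHWZ}.

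Next I verify tameness and compatibility. Tameness of $O$ --- that the stratification by broken and nodal configurations is exactly the boundary-with-corners structure of a tame M-polyfold --- is where the choice of gluing profile is decisive: the exponential profile makes all gluing maps sc-smooth up to and including the corners, exactly as in the DM-situation of the preceding section, while the logarithmic profile fails. For two uniformizers $\Psi,\Psi'$ one equips $\bm{M}(\Psi,\Psi')$ with an M-polyfold structure for which $s$ and $t$ are local sc-diffeomorphisms and the remaining structure maps are sc-smooth; after matching up stabilization points this reduces to the DM transition statement plus the sc-smooth dependence of the glued maps on all parameters, giving the tame polyfold structure $(({\mathcal S},{\mathcal T}),F,\bm{M})$. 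For the bundle, the fibers $\mu_J(\alpha)$ of $(0,1)$-forms of class $(2,\delta_0)$ are glued in parallel; since a naive gluing of these spaces does not have locally constant dimension, one uses a \emph{filling}: a complementary anti-glued summand is added to produce an honest strong bundle uniformizer $\bar{\Psi}\colon G\ltimes K\to{\mathcal E}_J$ over $\Psi$, and a transition construction upgrades this to the strong bundle $P\colon{\mathcal E}_J\to{\mathcal S}$.

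With the strong bundle available, the Fredholm property is checked on the local representation $\bar{\Psi}^{-1}\circ\bar{\partial}_{\widetilde{J}}\circ\Psi$, which must be shown to be sc-Fredholm. The linearized Cauchy--Riemann operator along a $\widetilde{J}$-holomorphic building is sc-Fredholm: on each stable domain component this is elliptic regularity together with the asymptotic analysis at the punctures, where exponential decay with any weight strictly below the spectral gap $\bar{\delta}_J$ makes the operator Fredholm with index computed from the Conley--Zehnder data; under gluing the Fredholm property is preserved and the index adds, by the linear gluing estimates; and the added ghost ($ntc$) components together with the filling summand contribute trivially to the index by construction. One then verifies the contraction-germ normal form near a zero, so that $\bar{\partial}_{\widetilde{J}}$ is an sc-Fredholm functor. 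Finally $|\bar{\partial}_{\widetilde{J}}^{-1}(0)|$ is the coarse moduli space $\overline{\mathcal M}_J$, and its compactness on each connected component of $|{\mathcal S}|$ is precisely the SFT compactness theorem of \cite{BEHWZ}: the $\omega$-energy and the relevant homological data are locally constant on $|{\mathcal S}|$, hence bounded on a component, so any sequence of holomorphic buildings there has a Gromov--Hofer convergent subsequence with limit again such a building.

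The main obstacle is the analytic core: constructing the local M-polyfold and strong-bundle models with the correct sc- and tame structure --- the splicing, gluing, and filling constructions --- and pushing the linear and nonlinear gluing analysis all the way to the sc-Fredholm property, in the presence of stabilization points and ghost components. By comparison the GCT/topology input and the compactness statement, though substantial, enter as citable results from \cite{HWZ2017}, \cite{FH-book}, and \cite{BEHWZ}.
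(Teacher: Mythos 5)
Your outline follows the splicing/filling template of \cite{HWZ5} for Gromov--Witten theory, whereas the paper builds its charts by the imprinting method and construction functors (Lectures 5--11) and obtains the Fredholm property from the modular pre-Fredholm theory of \cite{FH-book}; that difference of route is legitimate in itself. However, two SFT-specific points are genuinely missing from your argument, and they are exactly where this theorem differs from the Gromov--Witten case. First, the map component of your chart is not yet well defined: the objects carry equivalence classes $[\widetilde{u}_i]$ modulo a \emph{separate} ${\mathbb R}$-shift on each floor, and when a breaking level is glued the relative shift must be absorbed into the interface gluing parameter through the asymptotic constants (in the paper via $T\cdot\varphi(|a|)=\varphi(r_i)+c^{b_i(z)}(\widetilde{u})-c^{z}(\widetilde{u})$ with interface parameters $r_i\in[0,1)$, and anchor averages normalizing each floor). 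Without such a normalization, your ``infinite-dimensional map component obtained by gluing the maps $\widetilde{u}_i$'' is not a point of a fixed function space and the extraction of the gluing parameter is not a function of the object; this is precisely the role of the anchor sets and of the workhorse space of Lecture 8. (A minor related point: the bundle of $(0,1)$-forms over the glued domains is a strong bundle as it stands; the filler/anti-gluing enters in establishing the sc-Fredholm property of the section, not in defining the fibers $K$.)

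Second, your claim that the transition construction ``reduces to the DM transition statement plus the sc-smooth dependence of the glued maps on all parameters'' is exactly the step the paper warns against. The transversal constraints attached to $\Xi$ cut out a sub-M-polyfold (they are constraints, not extra parameters), and an element of the slice for $\Psi$ near the matching point, transported by the DM-reparameterization, will in general \emph{not} satisfy the constraints defining the slice of $\Psi'$; one needs an implicit-function-theorem correction of the stabilization points (the stabilization deformation and the sd-retraction of Theorem \ref{good-sd-nei}), and this must be formulated as a property of each individual uniformizer (the ``good open neighborhoods'' of Lecture 10), since a uniformizer construction may not refer to pairs $(\Psi,\Psi')$. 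The delicacy comes from trivial cylinder components, where the constraints are not ${\mathbb R}$-invariant, so the correction interacts with the floor shifts and the gluing parameters; in the Gromov--Witten setting this issue does not arise, which is why the reduction you propose is insufficient as stated. With these two ingredients supplied (and compactness quoted from \cite{BEHWZ}, as you do), your route would indeed deliver the tame polyfold structure, the strong bundle, the sc-Fredholm property, and the compactness assertion.
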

The theorem means that after fixing $\delta$ and $\varphi$ we have a natural uniformizer construction  $\bar{F}:{\mathcal S}\rightarrow \text{SET}$  producing tame strong bundle uniformizers 
$$
\begin{CD}
G\ltimes K @>\bar{\Psi}>>  {\mathcal E}_J\\
@V p VV @V P VV\\
G\ltimes O   @>\Psi>>   {\mathcal S}
\end{CD}
$$
and a transition construction which produces tame strong bundles
$$
\bm{M}(\bar{\Psi},\bar{\Psi}')\rightarrow \bm{M}(\Psi,\Psi').
$$
The sc-Fredholm property of $\bar{\partial}_{\widetilde{J}}$ means that the local representatives of the latter with respect to
a $\bar{\Psi}$
$$
\bar{\partial}_{\wt{J},\bar{\Psi}} := \bar{\Psi}^{-1}\circ\bar{\partial}_{\wt{J}}\circ\Psi:  O\rightarrow K
$$
are sc-Fredholm.  
We have a grading functor $d:{\mathcal S}\rightarrow {\mathbb N}=\{0,1,...\}$ associating to $\alpha=(\alpha_0,\widehat{b}_1,..,\widehat{b}_k,\alpha_k)$
its top floor number which is $k$, i.e. the number of floors minus one. Having the tame  polyfold structure we pick for $\alpha$ a uniformizer
$\Psi\in F(\alpha)$ and $\bar{o}\in O$ with $\Psi(\bar{o})=\alpha$. Then we can take $d_O(\bar{o})$ which does not depend on the choice of $\Psi$
and the polyfold structure has the property that $d(\alpha)=d_O(\bar{o})$. This map descends 
to the orbit space $Z=|{\mathcal S}|$, giving us the \textbf{degeneracy index} $d_Z:Z\rightarrow {\mathbb N}$.
\begin{prop}
Every $z\in Z$ has an open neighborhood $U(z)$ such that $d_Z|U(z)\leq d_Z(z)$.
\qed
\end{prop}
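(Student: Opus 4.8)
The plan is to reduce the statement to the (well-known) upper semicontinuity of the degeneracy index on a tame M-polyfold and then transport that property through a uniformizer. So first I would fix $z\in Z$ and pick an object $\alpha$ of ${\mathcal S}$ with $|\alpha|=z$. Using the tame polyfold structure on ${\mathcal S}$, choose $\Psi\in F(\alpha)$, say $\Psi:G\ltimes O\rightarrow {\mathcal S}$, and $\bar{o}\in O$ with $\Psi(\bar{o})=\alpha$. As recalled just before the statement, the degeneracy index $d_O$ of the tame M-polyfold $O$ satisfies $d(\alpha)=d_O(\bar{o})$, and since $d_Z$ is by construction the descent of $d$, we have $d_Z(z)=d_O(\bar{o})$.

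The core step is then the local analysis on $O$. By definition $O$ is, near $\bar{o}$, sc-diffeomorphic to a tame sc-retract of an open subset of a partial quadrant $C=\{w\in W:\lambda_1(w)\geq 0,\ldots,\lambda_n(w)\geq 0\}$ in an sc-Banach space $W$, with $\lambda_1,\ldots,\lambda_n$ linearly independent sc-functionals. On $C$ the degeneracy index is $d_C(w)=\#\{i:\lambda_i(w)=0\}$, which is obviously upper semicontinuous: if $\lambda_i(w_0)>0$ then $\lambda_i>0$ on a neighborhood of $w_0$. The tameness condition is exactly what makes the degeneracy index of $O$ compatible with this corner stratification under the local model, so that $d_O$ is likewise upper semicontinuous; I would cite the Polyfold Book \cite{HWZ2017} for this rather than reprove it. Hence there is an open neighborhood $U_0\subseteq O$ of $\bar{o}$ with $d_O|U_0\leq d_O(\bar{o})$. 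Since $G$ is finite, acts by sc-diffeomorphisms, and the degeneracy index is an sc-diffeomorphism invariant (so $d_O(g\ast o)=d_O(o)$ for all $g\in G$), the set $U:=\bigcap_{g\in G} g\ast U_0$ is an open, $G$-invariant neighborhood of $\bar{o}$ still satisfying $d_O|U\leq d_O(\bar{o})$.

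Finally I would push this forward to $Z$. By property (4) in the definition of a uniformizer the induced map $|\Psi|:|G\ltimes O|\rightarrow |{\mathcal S}|$ is a homeomorphism onto an open neighborhood of $z=|\alpha|$, so $U(z):=|\Psi|(|U|)$ is an open neighborhood of $z$ in $Z$. Given $z'\in U(z)$, pick $o'\in U$ with $|\Psi|(|o'|)=z'$; then $\Psi(o')$ is an object of ${\mathcal S}$ representing $z'$, and therefore $d_Z(z')=d_O(o')\leq d_O(\bar{o})=d_Z(z)$, as desired.

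The only genuinely substantive ingredient is the upper semicontinuity of the degeneracy index on a \emph{tame} M-polyfold; everything else is routine bookkeeping with uniformizers and with the finite automorphism group. I expect that ingredient to be the main obstacle in a self-contained treatment, and it is precisely the place where tameness of the polyfold structure on ${\mathcal S}$ is used: without tameness one has no control over how the boundary-and-corner stratification interacts with the sc-retractions defining the M-polyfold charts, and the conclusion can fail.
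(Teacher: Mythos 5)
Your argument is correct and is essentially the intended one: the paper states this proposition without proof, and the expected argument is precisely your reduction via a uniformizer $\Psi:G\ltimes O\rightarrow {\mathcal S}$ to the upper semicontinuity of the degeneracy index in the local partial-quadrant model, combined with $d_Z(|\Psi(o)|)=d_O(o)$ and the fact that $|\Psi|$ is a homeomorphism onto an open neighborhood (the $G$-invariance step is harmless but not even needed, since the saturation of an open set under the finite group action is open). The only inessential point is your closing remark: upper semicontinuity of $d_O$ already holds for general M-polyfolds, since it follows from chart-independence of the degeneracy index together with semicontinuity of $d_C$ on the partial quadrant, so tameness is not actually what this particular statement hinges on, although it is of course available here and is what gives the finer face structure used elsewhere.
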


\begin{remark}
We can also consider $\wt{\mathcal S}$ which consists of stable buildings where the marked points are ordered, the negative punctures on floor $0$ are ordered
and the positive punctures on the top floor are numbered as well. Then we have a forgetful functor $\wt{\mathcal S}\rightarrow {\mathcal S}$ which is also
a proper covering functor, similarly as in the case of stable Riemann surfaces.
\qed
\end{remark}

\subsection{Basic Topological Structure}
The orbit space $Z:=|{\mathcal S}|$ is metrizable and we can consider $\pi_0(Z)$. Given $a\in \pi_0(Z)$
we can represent it by a building of height $1$ with no nodal points, say $\alpha=(\Gamma^-,S,j,M,\emptyset,[\widetilde{u}],\Gamma^+)$.
The compact space $S$ might  have different components and can be written as $S= S^{ntc}\sqcup S^{tc}$ splitting it into trivial cylinder components
and the rest. We can consider $\pi_0(S)$ and $\pi^{\text{ntriv}}_0(S): = \pi_0(S^{ntc})$. We make the following rough classification.
\begin{definition}
We say
\begin{itemize}
\item[$\bullet$]  $a\in \pi_0(Z)$ is a  \textbf{parent} provided $\sharp\pi_0(S)=1$.
\item[$\bullet$] $a\in \pi_a(Z)$ is a \textbf{descendent} provided $\sharp\pi^{\text{ntriv}}_0(S)=1$ and $\sharp\pi_0(S)>1$.
\item[$\bullet$] $a\in \pi_a(Z)$ is a \textbf{disjoint union } provided $\sharp\pi^{\text{ntriv}}_0(S)\geq 2$. 
\end{itemize}
Within the disjoint union we can also distinguish \textbf{parent classes}  $a$ which are characterized by 
$2\leq  \sharp\pi^{\text{ntriv}}_0(S)=\sharp\pi_0(S)$ and their \textbf{descendents} defined by $2\leq  \sharp\pi^{\text{ntriv}}_0(S)<\sharp\pi_0(S)$.
\qed
\end{definition}
 \begin{figure}[h]
\begin{center}
\includegraphics[width=9.5cm]{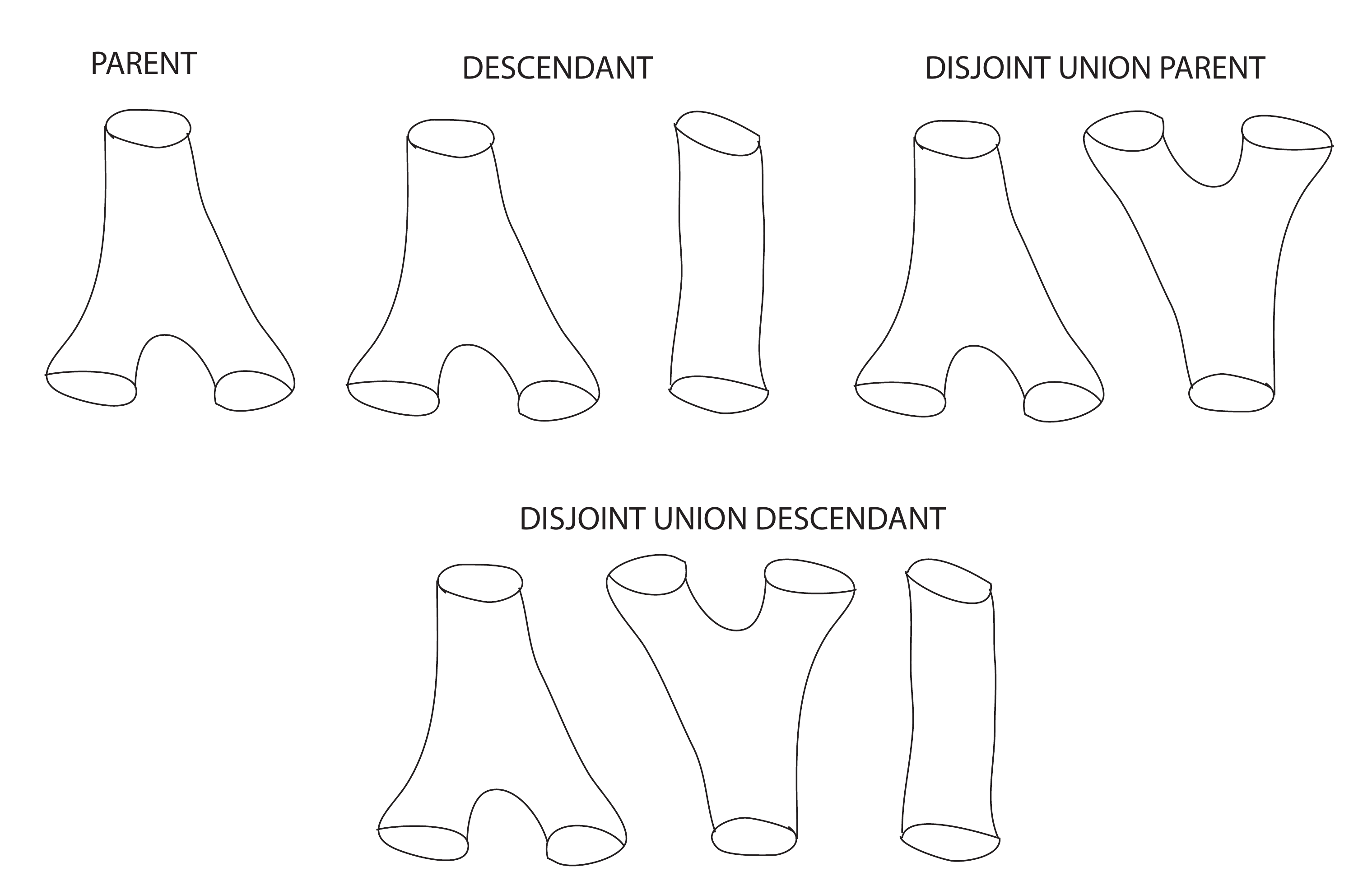}
\end{center}
\caption{Classification of classes in $\pi_0(Z)$.}\label{FIG 1122}
\end{figure}
Hence we have a splitting 
$$
\pi_0(Z) =\pi_0^p(Z)\sqcup \pi_0^d(Z)\sqcup \pi_0^u(Z)
$$
and a further splitting 
$$
\pi_0^u(Z)=\pi^{up}_0(Z)\sqcup \pi^{ud}_0(Z).
$$
These distinctions are important for the perturbation theory and its inductive treatment. 
\begin{definition}
Given $a\in \pi_0(Z)$ we denote by ${\mathcal S}_a$ the full subcategory of ${\mathcal S}$
associated to the objects $\alpha$ with $|\alpha|\in a$.
\qed
\end{definition}
A very important concept is that of a face.
\begin{definition}
A \textbf{face} of $Z$ is the closure $\theta$ of a connected component $\dot{\theta}$ in $\{z\in Z\ |\ d_Z(z)=1\}$.
\qed
\end{definition}
More about this later.
\begin{definition}
Let $a\in \pi_0(Z)$ and define $\text{face}_a$ to be the collection of faces contained in $a$.
\qed
\end{definition}
\begin{definition}
For a face $\theta$ we denote by ${\mathcal S}_\theta$ the full subcategory associated to objects
with isomorphism class in $\theta$.
\qed
\end{definition}
An important property is the following, which we shall refer to as the polyfold structure on ${\mathcal S}$ being \textbf{face-structured}. 
\begin{thm}
Every object $\alpha$ belongs to precisely $d(\alpha)$-many different ${\mathcal S}_\theta$, i.e.
there exist precisely $d(\alpha)$ many $\theta$ with $|\alpha|\in\theta$.
\qed
\end{thm}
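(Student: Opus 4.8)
The plan is to work locally using the tame polyfold structure on ${\mathcal S}$ and reduce the statement to a combinatorial count inside a single uniformizer. Fix an object $\alpha$ with $d(\alpha) = k$ and pick $\Psi \in F(\alpha)$, say $\Psi: G\ltimes O \to {\mathcal S}$, with $\bar{o}\in O$ such that $\Psi(\bar{o}) = \alpha$. By the definition of the polyfold structure we have $d(\alpha) = d_O(\bar{o})$, where $d_O$ is the degeneracy index of the tame M-polyfold $O$. The first step is therefore to recall the local structure of a tame M-polyfold near a point of degeneracy index $k$: in suitable tame local coordinates $O$ looks like an open subset of a partial quadrant in a sc-Banach space, and the point $\bar{o}$ sits at the intersection of exactly $k$ local boundary hypersurfaces $H_1,\dots,H_k$ (the local faces through $\bar{o}$). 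This is exactly the content of the tameness hypothesis, and it is the reason the polyfold structure is \textbf{face-structured}.

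Next I would show that the local faces $H_1,\dots,H_k$ through $\bar{o}$ descend, under $|\Psi|$, to (germs of) the faces of $Z$ passing through $|\alpha|$, and that this correspondence is a bijection. The key points here are: (i) a local face $H_i$ is precisely a connected component of $\{o\in O\ |\ d_O(o) = 1\}$ near $\bar{o}$ restricted appropriately — more precisely, $H_i$ is contained in the closure of such a component; (ii) since $|\Psi|$ is a homeomorphism onto an open neighborhood of $|\alpha|$ that respects the degeneracy index ($d_Z\circ|\Psi| = d_O$ on objects), it carries the set $\{o\ |\ d_O(o) = 1\}$ homeomorphically onto $\{z\in U\ |\ d_Z(z) = 1\}$ for the corresponding neighborhood $U$ of $|\alpha|$; (iii) connected components of $\{d_O = 1\}$ near $\bar{o}$ thus biject with connected components of $\{d_Z = 1\}$ near $|\alpha|$, and taking closures gives a bijection between the $k$ local faces of $O$ through $\bar{o}$ and the faces $\theta$ of $Z$ with $|\alpha|\in\theta$. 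One must check that two distinct local faces $H_i\neq H_j$ cannot lie in the closure of the same connected component of $\{d_Z = 1\}$ — this follows because locally a partial quadrant of index $k$ has its $\{d = 1\}$ locus splitting into exactly $k$ connected pieces, one per coordinate hyperplane, and distinct pieces stay distinct after taking closures in a neighborhood. It also must be checked that no face $\theta$ through $|\alpha|$ comes from "far away," i.e. every $\theta$ with $|\alpha|\in\theta$ meets $U$ in a set whose germ at $|\alpha|$ is one of the $k$ local pieces; this is immediate since $\theta$ is closed and $\dot\theta$ is connected, so $\dot\theta\cap U \neq \emptyset$ and it lies in $\{d_Z = 1\}\cap U$.

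Finally, I would assemble the count: the number of $\theta$ with $|\alpha|\in\theta$ equals the number of connected components of $\{d_Z = 1\}$ accumulating at $|\alpha|$, which equals the number of local faces of $O$ through $\bar{o}$, which equals $d_O(\bar{o}) = d(\alpha)$. The independence of everything from the choice of $\Psi$ is guaranteed because $d$ and $d_Z$ are intrinsically defined (the excerpt states $d_O(\bar{o})$ does not depend on the choice of $\Psi$), and faces of $Z$ are defined purely in terms of the topology of $Z$ and $d_Z$.

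\medskip

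The main obstacle I anticipate is step two — specifically, verifying that the passage to orbit spaces does not merge or split local faces, and that the $G$-action does not cause a local face of $O$ through $\bar{o}$ to be identified with itself or with another in a way that changes the count. Because $G$ is the (finite) automorphism group of $\alpha$ and $\bar{o}$ is a fixed point of the $G$-action with $\Psi$ injective on objects, $G$ permutes the local faces through $\bar{o}$; one needs that the quotient count $\sharp(\{H_1,\dots,H_k\}/G)$ is \emph{not} what we want, but rather that distinct $G$-orbits of local faces give distinct faces of $Z$ while $G$ actually does \emph{not} act freely in a way that reduces the number — in fact for the degeneracy structure to be well-behaved one uses that the $G$-action on a partial quadrant permuting the coordinate hyperplanes still yields, on the quotient, exactly as many boundary strata locally as the index. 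Pinning down this orbifold-boundary bookkeeping carefully — ensuring the $k$ faces survive as $k$ distinct faces downstairs — is the delicate part; the rest is a routine translation between the M-polyfold degeneracy index and the topology of $Z$.
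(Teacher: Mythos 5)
There is a genuine gap, and it sits exactly where you locate the ``delicate part'' --- but the delicate part is not the $G$-action, it is the passage from local faces to global faces. Recall the definition used in the text: a face $\theta$ of $Z$ is the closure of a connected component $\dot\theta$ of the \emph{global} set $\{z\in Z\ |\ d_Z(z)=1\}$. Your argument correctly produces, via tameness and $d_Z\circ|\Psi|=d_O$, exactly $k=d(\alpha)$ local boundary germs at $|\alpha|$, and these are indeed pairwise distinct as subsets of a small neighborhood $U$. But when you then assert that ``distinct pieces stay distinct after taking closures in a neighborhood,'' you are comparing connected components of $\{d_Z=1\}\cap U$, whereas faces are built from connected components of $\{d_Z=1\}$ in all of $Z$. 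Two locally distinct boundary germs at $|\alpha|$ can perfectly well be joined by a path inside $\{d_Z=1\}$ far away from $|\alpha|$, in which case they belong to one and the same face, and $\alpha$ then lies in fewer than $d(\alpha)$ faces. This is not a hypothetical worry: already for an honest $2$-dimensional manifold with boundary and corners (a ``drop''-shaped region with a single corner point and a single boundary arc running from that corner back to itself) one has $d=2$ at the corner but only one face. So tameness plus the local partial-quadrant picture --- which is all your proof uses --- cannot imply the statement; being face-structured is an additional global property that must be verified for ${\mathcal S}$ specifically.

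Consequently the proof needs an input you do not supply: a reason why, for the SFT category, the $d(\alpha)$ local boundary germs at $|\alpha|$ lie in $d(\alpha)$ distinct global components of $\{d_Z=1\}$. The natural source of such a reason is the floor/concatenation structure: points of $\dot\theta$ are two-floor buildings $(\alpha',\wh{b},\alpha'')$, and data attached to this splitting (for instance the pair of classes $(a',a'')$ and the asymptotic/interface data, which vary continuously, hence are locally constant along $\{d_Z=1\}$) must be shown to separate the germs coming from different interfaces of $\alpha$. Your treatment of the $G$-action, by contrast, is essentially fine and is the easier point, since automorphisms of a building preserve the floor structure and hence preserve each interface; but even granting it, the local count does not descend to the global count of faces without the missing global argument. (For what it is worth, the lecture notes state this theorem without proof, as part of the ``face-structured'' property of the SFT polyfold structure whose verification is deferred to the construction references; so the comparison here is with what a complete argument would require rather than with a written proof in the text.)
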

A face $a$ determines $(a',a'')$ as follows. Take a representative $\alpha$, $|\alpha|\in a$,  with $d(\alpha)=1$
written as $\alpha=(\alpha',\widehat{b},\alpha'')$. Then $(a',a'')$ is determined by $|\alpha'|\in a'$ and $|{\alpha}''|\in a''$.
Given $a\in \pi_0^{up}(Z)$ take a representative $\alpha$ with $d(\alpha)=0$.  Then the components 
$\mathfrak{c}\in \pi^{\text{ntriv}}_0(\bar{S})$ determine $\alpha_{\mathfrak{c}}$ which determine $a_{\mathfrak{c}}$.
\subsection{Grading and Organization}
For the inductive procedures of perturbation theory later on we need a grading which is adjusted to the problem at hand.  In case of a stable Hamiltonian structure the organizational issues are somewhat more complicated 
than in the case of a contact structure. A organizational scheme for the the more general stable Hamiltonian case is based on the use of of the following map $a\rightarrow d^J_a$.
\begin{definition}
Define $d^J:\pi_0(Z)\rightarrow \{-1,0,1,2,..\}:a\rightarrow d^J_a$ by 
$$
d^J(a)=\left[
\begin{array}{cc}
-1 &  \text{if} \ a\cap \overline{\mathcal M}_J=\emptyset.\\
\max\{d_Z(z)\ |\ z\in a\cap \overline{\mathcal M}_J\}&\text{if}\ a\cap \overline{\mathcal M}_J\neq \emptyset.
\end{array}\right.
$$
\qed
\end{definition}
This organizes the elements in $\pi_0(Z)$. The following is very important.
\begin{thm}
\begin{itemize}
\item[(1)] If  $\bar{a}\in \pi^p_0(Z)$ and $a\in \pi^d_0(Z)$ is a descendent then $d_{\bar{a}}^J=d^J_a$.
\item[(2)] If $\theta\in \text{face}_a$ for some $a\in \pi_0(Z)$ and $\theta$ comes from $(a',a'')$, then 
$d^J_a\geq 1+ d^J_{a'} +d^J_{a'}$.
\item[(3)] For $a\in \pi_0^{up}(Z)$ we have the equality  $(d^J_a +1)=\sum_{\mathfrak{c}\in \pi_0(\bar{S})}  (d^J_{a_{\mathfrak{c}}}+1)$.
\item[(4)] If $\bar{a}\in \pi_0^{up}$ and $a\in \pi_0^{ud}(Z)$ is a descendent then $d^J_{\bar{a}}=d^J_a$.
\end{itemize}
\qed
\end{thm}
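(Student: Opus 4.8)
First I would reduce all four assertions to combinatorics of $\widetilde J$-holomorphic \emph{buildings} inside a fixed connected component $a$ of $Z$, using two inputs from the excerpt: since $\overline{\mathcal M}_J\cap a$ is compact and $d_Z$ satisfies $d_Z|U(z)\le d_Z(z)$ on a suitable neighborhood of each $z$ (the Proposition on $d_Z$), whenever $a\cap\overline{\mathcal M}_J\ne\emptyset$ the maximum defining $d^J_a$ is finite and attained by a genuine $\widetilde J$-holomorphic building with $d^J_a+1$ levels; and $d_Z$ of a building is its number of levels minus one. The whole argument then rests on one \emph{counting principle}: if $b\in\pi_0(Z)$ and the non-trivial components of the flattened domain of a representative of $b$ have classes $c_1,\dots,c_m\in\pi_0^p(Z)$, all meeting $\overline{\mathcal M}_J$, then $d^J_b+1=\sum_{i=1}^m(d^J_{c_i}+1)$. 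For ``$\le$'' one uses that every level of a building must carry a non-trivial component (the stability axiom for height-one stable maps), so the number of levels equals the number of levels carrying a non-trivial piece of some $c_i$; and a single $c_i$ can be non-trivial on at most $d^J_{c_i}+1$ levels, since restricting the building to those levels and collapsing the intervening trivial-cylinder levels of $c_i$ produces a $\widetilde J$-holomorphic building in the class $c_i$. For ``$\ge$'' one pads, for each $i$, the extra levels of $c_i$ with trivial-cylinder copies of its interface orbits (these are $\widetilde J$-holomorphic, and the levels stay valid because some non-trivial piece always remains on them) and stacks maximally broken $\widetilde J$-holomorphic representatives of the $c_i$ on pairwise disjoint consecutive blocks of levels. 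Throughout I use only two moves on buildings — (A) adjoining or deleting trivial-cylinder components, and (B) stacking two buildings along a bijective, orbit-preserving matching $\widehat b$ of punctures — each preserving $\widetilde J$-holomorphicity and each realized by a continuous family in $Z$ along which the discrete asymptotic and combinatorial data, hence the component $a$, are locally constant.

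Claims (1), (3) and (4) are then consequences of the counting principle. Claim (3) is exactly the principle applied to $b=a\in\pi_0^{up}(Z)$, which by definition has no trivial-cylinder components. For (1) and (4), a descendent $a$ of $\bar a$ is, by definition of the relation, represented by a representative of $\bar a$ with finitely many trivial cylinders adjoined, so $a$ and $\bar a$ have the same non-trivial flattened components; applying the principle once to $a$ and once to $\bar a$ gives $d^J_a+1=\sum_{\mathfrak c}(d^J_{a_{\mathfrak c}}+1)=d^J_{\bar a}+1$, the first sum running over those common components. In case (1) this sum has a single term, the component class being $\bar a$ itself, so one reads off $d^J_a=d^J_{\bar a}$ directly. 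When $\bar a\cap\overline{\mathcal M}_J=\emptyset$, move (A) shows $a\cap\overline{\mathcal M}_J=\emptyset$ as well and both sides equal $-1$; and if some $a_{\mathfrak c}$ misses $\overline{\mathcal M}_J$ the same holds of $a$ and $\bar a$, so all the relevant quantities degenerate consistently.

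Claim (2) is a one-sided bound and is handled by a single instance of move (B). By hypothesis $\theta$ has a two-level representative $\alpha=(\alpha',\widehat b,\alpha'')$ with $|\alpha'|\in a'$, $|\alpha''|\in a''$, $|\alpha|\in\theta\subset a$. Assuming $a'$ and $a''$ both meet $\overline{\mathcal M}_J$, I would choose maximal $\widetilde J$-holomorphic buildings $\beta'$ for $a'$ and $\beta''$ for $a''$; these carry the same free-end orbit data as $\alpha',\alpha''$, so the matching $\widehat b$ still applies, and $\beta=(\beta',\widehat b,\beta'')$ is a $\widetilde J$-holomorphic building with $(d^J_{a'}+1)+(d^J_{a''}+1)$ levels. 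Deforming $\beta'\rightsquigarrow\alpha'$ inside $a'$ and $\beta''\rightsquigarrow\alpha''$ inside $a''$ and stacking along $\widehat b$ — legitimate because the matched periodic orbits, being elements of the discrete set $\mathcal P$, cannot vary along a path — gives a path in $Z$ from $|\beta|$ to $|\alpha|\in a$, so $|\beta|\in a$ and $d^J_a\ge d(\beta)=1+d^J_{a'}+d^J_{a''}$. If $a'$ or $a''$ misses $\overline{\mathcal M}_J$ the inequality is read with the convention that a $-1$ summand renders the right-hand side degenerate, and there is nothing to prove.

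The step I expect to be the real obstacle is the ``$\ge$'' (sharpness) half of the counting principle, i.e.\ the trivial-cylinder padding that also underlies (1), (4) and especially (3). One must verify that a non-trivial component can be extended by trivial cylinders over its interface (and asymptotic) orbits across arbitrarily many additional levels \emph{coherently}: compatibly with all the matchings $\widehat b_i$ of the ambient building, with the finite automorphism group acting on it, and so that the enlarged object genuinely lies in the component $a$ rather than in an adjacent stratum of $Z$; and, dually, that the ``restrict-and-collapse'' operation used for ``$\le$'' really returns a legitimate $\widetilde J$-holomorphic building in the smaller class with exactly the expected number of levels. These level-bookkeeping points are precisely where the structure of stable buildings and the face-structured property of $\mathcal S$ (every object lies in exactly $d(\alpha)$ of the $\mathcal S_\theta$) must be invoked; the remainder is chasing the definitions.
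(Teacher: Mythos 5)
Your proof is essentially correct and follows the same mechanism the paper itself relies on: the theorem is stated in the text without proof, but the analogous identity for $\bar{d}_a$ in Lecture 13 is justified there by exactly your two moves --- bounding the number of floors using the fact that every floor of a stable building must carry a component that is not a trivial cylinder, and realizing the bound by stacking maximally broken $\widetilde{J}$-holomorphic representatives of the pieces, padded across the remaining floors by $\widetilde{J}$-holomorphic trivial cylinders over the asymptotic orbits. The only caveat concerns the degenerate $d^J=-1$ cases: items (2) and (3) are to be read under the implicit assumption that the relevant classes meet $\overline{\mathcal M}_J$ (your ``nothing to prove'' remark for (2) is not accurate when exactly one of $d^J_{a'},d^J_{a''}$ equals $-1$, and in (3) one side can vanish while the other does not), but this is a looseness in the statement's conventions rather than in your argument.
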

The induction usually goes in the following order.
\begin{eqnarray*}
\begin{array} {cccccccc}
& \pi^{p,(-1)}_0(Z)   &\Longrightarrow&  \pi^{d,(-1)}_0(Z) &\Longrightarrow&   \pi^{up,(-1)}_0(Z) &\Longrightarrow&   \pi^{ud,(-1)}_0(Z) \\
\Longrightarrow & \pi^{p,(0)}_0(Z)   &\Longrightarrow&  \pi^{d,(0)}_0(Z) &\Longrightarrow&   \pi^{up,(0)}_0(Z) &\Longrightarrow&   \pi^{ud,(0)}_0(Z) \\
\Longrightarrow&...&...&...&...\\
\Longrightarrow&\pi^{p,(\ell)}_0(Z)   &\Longrightarrow&  \pi^{d,(\ell)}_0(Z) &\Longrightarrow&   \pi^{up,(\ell)}_0(Z) &\Longrightarrow&   \pi^{ud,(\ell)}_0(Z)\\
\Longrightarrow&...&...&...&...
\end{array}
\end{eqnarray*}
As we shall see the only freedom to pick data is for the $a\in \pi^{p,\ell}_0(Z)$ subject to some boundary compatibility, i.e. faces,
which have data determined by the previous steps,  i.e. associated to $\pi_0^{\leq \ell-1}(Z)$. The amount 
of construction at each step using $d^J$ is more involved compared to another possibility which arises
dealing with a contact form.  In case $(Q,\lambda,d\lambda)$ is a manifold with a non-degenerate contact form 
one can take another organizing princple, which has in some sense more steps, which, however is less demanding on 
the level of constructing the multi-sections.  Both lead to the same result for contact manifolds, but the latter cannot be 
used for stable Hamiltonian structures directly. The problem is that it might happen that $d_Z:Z\rightarrow [0,\infty]$
is unbounded on many $a\in \pi_0(Z)$. When we describe the inductive scheme we shall restrict ourselves
to the case of contact forms. In the expanded version of this lecture note we shall describe the more general case
as well.

\begin{definition}
The subset $\wh{Z}$ of $Z$ consists of all $|\alpha|$, so that all its trivial cylinder parts are $\wt{J}$-holomorphic. 
\qed
\end{definition}
We note that the definition does not depend on the choice of $J$. We observe that the interiors of 
$\wh{Z}$ and $Z$ are the same. 
We shall introduce later on a topology $\wh{\mathcal T}$ on $\wh{Z}$, called the strong topology, which is finer than ${\mathcal T}|\wh{Z}$
and which will play a important role in the perturbation and compactness theory and we shall explain why it is important later on. 
We note  at this point that $\overline{\mathcal M}_J\subset \wh{Z}$. The perturbation will be done in such a way that 
the modified coarse moduli space still belongs to $\wh{Z}$. 
That we  introduce this topology has to do with some subtleties which we explain now.
 \begin{figure}[h]
\begin{center}
\includegraphics[width=15.5cm]{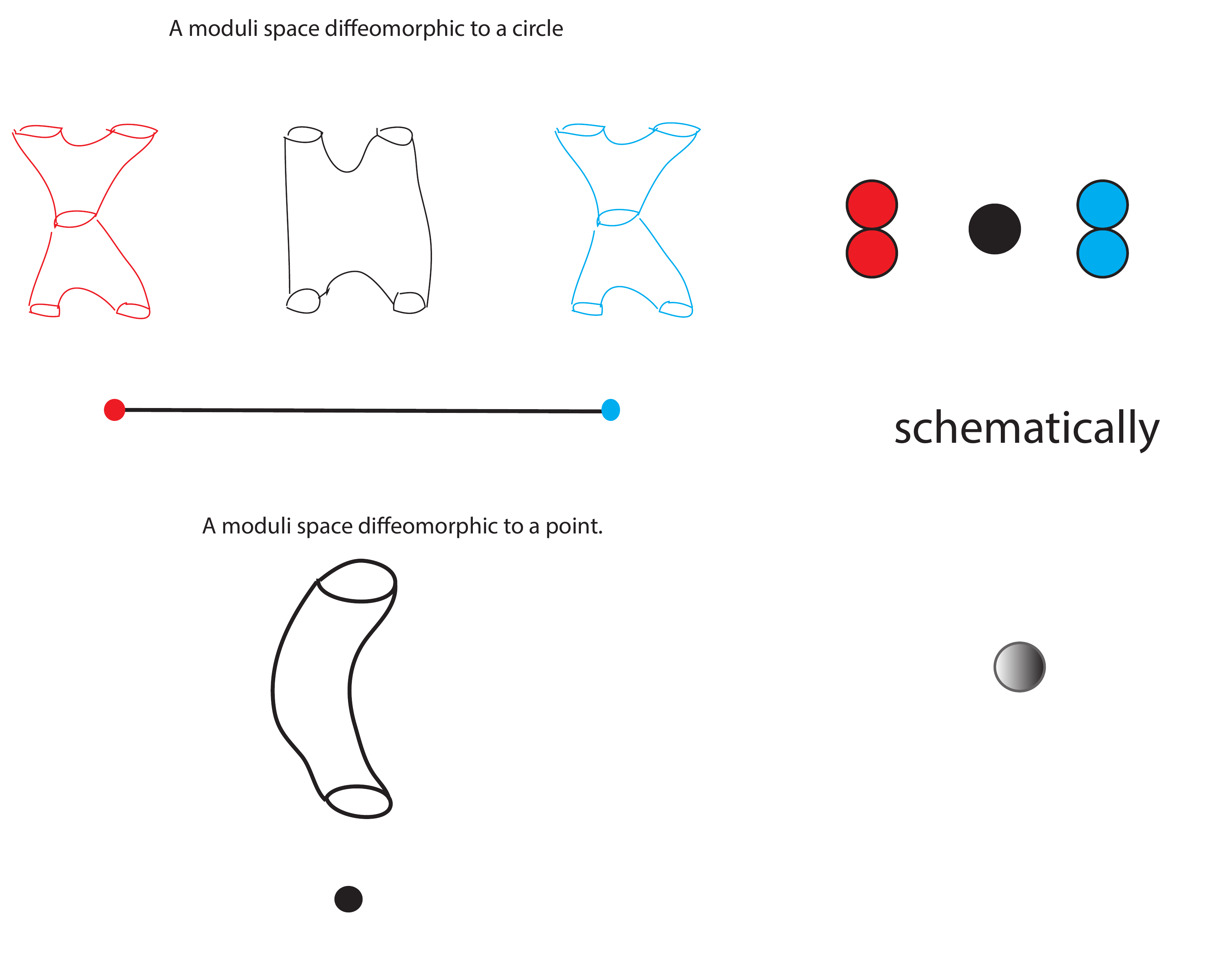}
\end{center}
\caption{A moduli space of a union class, part 1.}\label{FIG 11122}
\end{figure}

The Figure \ref{FIG 11122} gives two moduli spaces, one diffeomorphic to a closed interval, the other to a point.
Assume that one belongs to $a'$ and the other to $a''$. Denote the union class by $a$ which will have a two-dimensional moduli space
and is more complicated than one might think. It is illustrated in the following Figure \ref{FIG 111122}.
 \begin{figure}[h]
\begin{center}
\includegraphics[width=16.5cm]{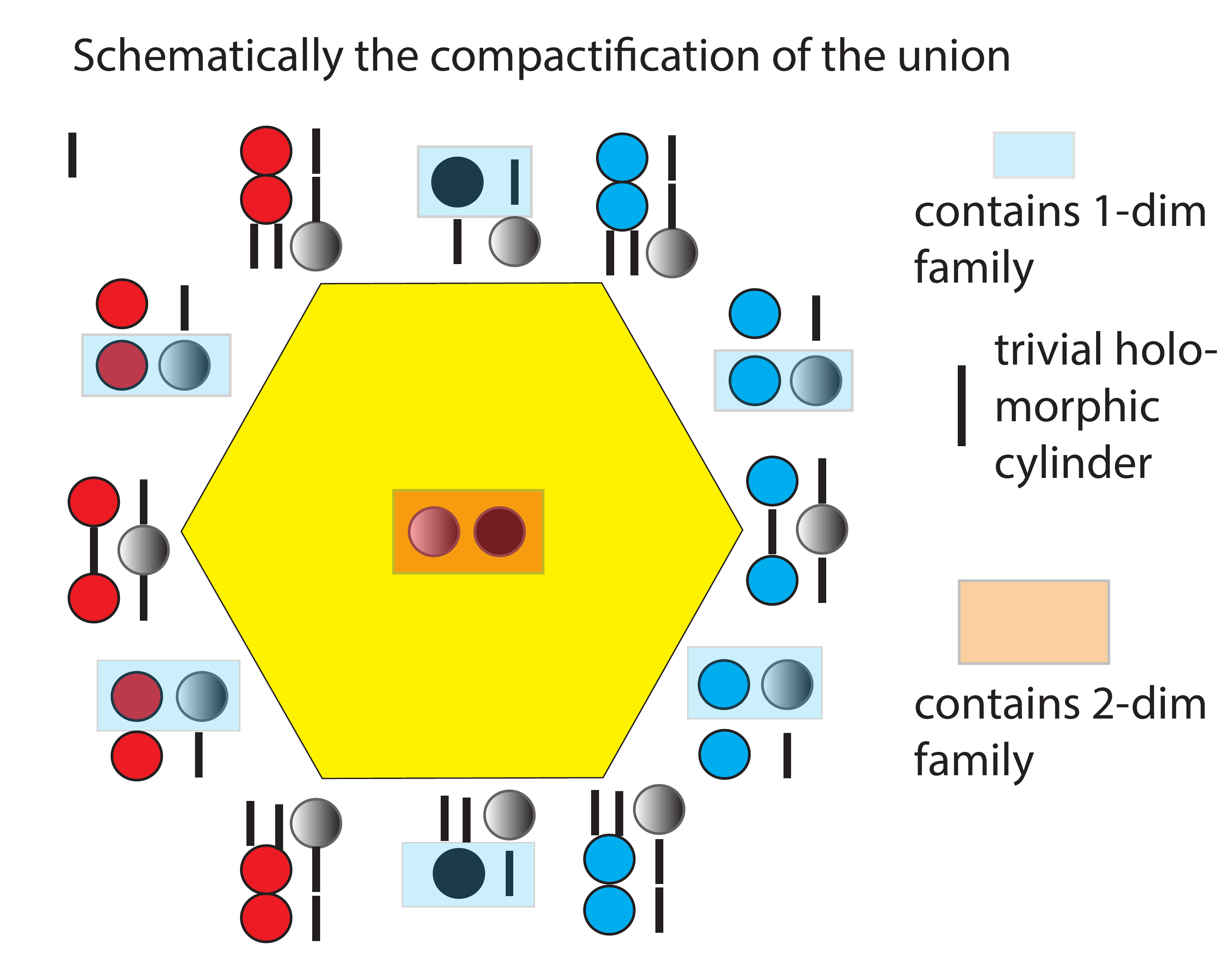}
\end{center}
\caption{A moduli space of a union class, part 2.}\label{FIG 111122}
\end{figure}

A problem arising in the inductive perturbation theory is that the perturbations up to level $\ell$ determine 
for a disjoint union class on level $\ell+1$ the perturbation at the boundary as well as the perturbation in the interior.  The basic 
question is,  if they fit smoothly together? The answer is that it depends on the sets on which the perturbations were carried out. This is the reason for introducing $\wh{\mathcal T}$ on $\wh{Z}$.

\newpage

\part*{Lecture 4}

\section{Structures on the Category of Stable Maps}
\subsection{Background}
Assume we have are given a GCT ${\mathcal C} $ equipped with a polyfold structure and an associated strong polyfold bundle
$P:{\mathcal E}_{\mu}\rightarrow {\mathcal C}$.
This allows to talk about interesting 
smooth objects.  We have seen that we can immediately distinguish three classes of sc-smooth section functors:\\

\textbf{sc-smooth, sc$^+$-smooth, sc-Fredholm}.\\

\noindent It also allows to distinguish interesting sc-smooth saturated full subcategories.  \textbf{Saturated} means that if an object is isomorphic to 
an object in the subcategory it also belongs to it.  Such subcategories have the form ${\mathcal C}_U$, where 
$U\subset |{\mathcal C}|$, and are generated by the objects with isomorphism class in $U$.  Interesting examples
are in the stable maps case:
\begin{itemize}
\item[-]  ${ \mathcal S}_a,\ a\in \pi_0(Z)$
\item[-]  ${\mathcal S}_{\theta},\ \theta\in \text{face}_a$
\end{itemize}
Recall that a \textbf{submanifold} $M$ of a M-polyfold $O$  is a subset such that every point $m\in M$ has an open neighborhood
$U(m)$ and a sc$^+$-smooth retraction $r:U\rightarrow U$ with $r(U)=M\cap U$.   Recall that sc$^+$-smooth means 
that $r:U\rightarrow U^1$ is defined and sc-smooth. $M$ has an induced M-polyfold structure which on the subset of points with $d_M(m)=0$
is the same as a classical smooth manifold structure. If $M$ is tame it is a classical smooth manifold with boundary with corners.

 Consider a functor $\Theta:{\mathcal C}\rightarrow [0,1]\cap {\mathbb Q}^+$.
It defines a full subcategory $\text{supp}(\Theta)$ associated to all objects $c$ with $\Theta(c)>0$. The subcategory
is saturated in the sense that all isomorphic objects belong to it. Each object $c$ in this subcategory carries the weight $\Theta(c)\in (0,1]$.
We shall call $\Theta$ a full weighted subcategory. It defines a subset $|\text{supp}(\Theta)|\subset |{\mathcal C}|$.
\begin{definition}
We say $\Theta$ is of \textbf{manifold-type} provided the following holds.
\begin{itemize}
\item[(1)] $\Theta$   only takes values in $\{0,1\}$
\item[(2)]  Each element with positive weight 
has trivial isotropy.
\item[(3)] For every object $c$ with positive weight pick a uniformizer $\Psi\in F(c)$ with $\Psi:G\ltimes O\rightarrow {\mathcal C}$.
Then there exist a submanifold $M$ of $O$ containing $\bar{o}$ and an open neighborhood $U=U(\bar{o})$ such that 
$\Theta\circ\Psi|U: U\rightarrow [0,1]\cap {\mathbb Q}$ satisfies
\begin{itemize}
\item[$\bullet$] $\Theta\circ \Psi (o)=1$ for $o\in U\cap M$ and $0$ otherwise.
\end{itemize}
\end{itemize}
\qed
\end{definition}
\begin{thm}
If $\Theta$ is of manifold type  $|\text{supp}(\Theta)|$ has in a natural way the structure of a smooth manifold. 
\end{thm}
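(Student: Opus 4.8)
The plan is to produce a smooth atlas on $|\text{supp}(\Theta)|$ whose charts are the finite-dimensional submanifolds $M$ furnished by condition (3), with the transition maps computed from the transition construction $\bm{M}$.

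\emph{Reduction to the isotropy-free case.} Let $c$ be an object with positive weight. By condition (2) its automorphism group $G$ is trivial, so a uniformizer $\Psi\in F(c)$ may be written $\Psi:O\rightarrow {\mathcal C}$ with $O$ an M-polyfold and no group action (alternatively, if one keeps $G$, the stabilizer of $\bar o$ is trivial, and after shrinking to a $G$-invariant neighborhood on which $G$ acts freely the argument below runs with $O/G$ in place of $O$). Since $\Psi$ is full and faithful, every object in its image also has trivial automorphism group, and by property (4) of a uniformizer $|\Psi|:O\rightarrow |{\mathcal C}|$ is a homeomorphism onto an open neighborhood of $|c|$. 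Now invoke condition (3): after shrinking we obtain a submanifold $M\subset O$ with $\bar o\in M$ (where $\Psi(\bar o)=c$) and an open neighborhood $U=U(\bar o)$ such that $\Theta\circ\Psi|_U$ equals $1$ exactly on $U\cap M$ and $0$ on $U\setminus M$; recall that $M\cap U$ carries a classical smooth manifold structure on the locus $d_M=0$ (a manifold with boundary and corners if $M$ is tame), and we may arrange $U$ so that $M\cap U$ is of this form. Because $\Theta$ is a functor into a discrete target it is constant on isomorphism classes, so $\Theta\circ\Psi$ descends through $|\Psi|$; together with condition (1) this gives $|\text{supp}(\Theta)|\cap |\Psi|(U)=|\Psi|(U\cap M)$. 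Hence $|\Psi|$ restricts to a homeomorphism of the manifold $U\cap M$ onto the open subset $|\text{supp}(\Theta)|\cap |\Psi|(U)$ of $|\text{supp}(\Theta)|$; its inverse $\kappa_c$ is declared a chart, and these charts cover $|\text{supp}(\Theta)|$ since every positive-weight object provides one.

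\emph{Compatibility.} Let $(\Psi,M,U)$ and $(\Psi',M',U')$ be the data at objects $c,c'$, and let $|\alpha|$ lie in the overlap of the two chart images, say $\alpha=\Psi(o)$ with $o\in U\cap M$, and with a morphism $\Phi:\Psi(o)\rightarrow \Psi'(o')$ for some $o'\in U'\cap M'$. Triviality of isotropy makes $\Phi$ unique, so $(o,\Phi,o')$ is the unique point of $\bm{M}(\Psi,\Psi')$ over $|\alpha|$ with coordinates in the relevant neighborhoods, and by the transition construction the source and target maps $s,t$ are local sc-diffeomorphisms near it. Hence the composite $\tau:=t\circ s^{-1}$ is defined and sc-smooth on a neighborhood of $o$ in $O$, and $|\Psi'|\circ\tau=|\Psi|$ there; that is, $\tau$ represents $\kappa_{c'}\circ\kappa_c^{-1}$. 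Using once more that $\Theta$ is invariant under isomorphism: if $o_1\in M$ is near $o$ then $\Theta(\Psi(o_1))=1$, so $\Theta(\Psi'(\tau(o_1)))=1$, which — after shrinking so that $\tau(o_1)\in U'$ — forces $\tau(o_1)\in M'$. Thus $\tau$ restricts to a map of the submanifold $M$ into the submanifold $M'$, and by symmetry its inverse does likewise; since the restriction of an sc-smooth map to submanifolds is sc-smooth and the submanifolds are classical smooth manifolds on the locus $d=0$, these restrictions — which are exactly the coordinate changes $\kappa_{c'}\circ\kappa_c^{-1}$ — are classically smooth with smooth inverses. The same argument applied to two uniformizers in $F(c)$ at a single object shows independence of that choice, so we obtain a well-defined maximal smooth atlas. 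Finally the manifold topology coincides with the subspace topology inherited from $|{\mathcal C}|$, which is metrizable; hence $|\text{supp}(\Theta)|$ is Hausdorff and paracompact, and the atlas makes it a smooth manifold (with boundary and corners in the tame case).

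\emph{Main obstacle.} The heart of the proof is the compatibility step, where three things must be combined: (i) extracting from the transition construction the sc-smooth coordinate change $t\circ s^{-1}$ and checking it genuinely computes $\kappa_{c'}\circ\kappa_c^{-1}$; (ii) using the functoriality and isomorphism-invariance of $\Theta$ to see that this coordinate change carries $M$ \emph{onto} a piece of $M'$, not merely into $O'$; and (iii) invoking the polyfold fact that such restrictions to submanifolds are smooth in the induced — classical on the locus $d=0$ — structures. A more bookkeeping-level point is keeping the successive shrinkings (of $O$, of $U$ and $U'$, and of the domain of $s^{-1}$) mutually consistent, and deciding whether to phrase the conclusion for a boundaryless manifold (when $\bar o$ can always be taken interior to $M$) or to allow boundary and corners when the $M$'s are merely tame.
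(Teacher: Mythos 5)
The paper states this theorem without proof (deferring the background to \cite{HWZ2017}), and your argument is exactly the expected one: charts on $|\text{supp}(\Theta)|$ obtained from the local submanifolds $M$ via the homeomorphisms $|\Psi|$, transition maps extracted from the local sc-diffeomorphisms $s,t$ of the transition sets $\bm{M}(\Psi,\Psi')$, isomorphism-invariance of $\Theta$ to force these to carry $M$ into $M'$ (and back), and classical smoothness of sc-smooth maps restricted to finite-dimensional submanifolds of smooth points; this is correct. The only point deserving a half-line more care is the corestriction step: that $\tau|_M$ is smooth as a map \emph{into} $M'$ with its induced structure follows because that structure is defined through the sc$^+$-retraction $r'$ (so one may write $\tau|_M=r'\circ\tau|_M$ and use that a map into the retract is smooth iff its composition with the inclusion is), which your blanket appeal to "restriction to submanifolds" slightly glosses.
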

\begin{definition}
 $\Theta$ is of \textbf{orbifold type} provided the following holds.
\begin{itemize}
\item[(1)] $\Theta $ only takes values in $\{0,1\}$. 
\item[(2)] For every object $c$ with positive weight pick a uniformizer $\Psi\in F(c)$ with $\Psi:G\ltimes O\rightarrow {\mathcal C}$.
Then there exist a submanifold $M$ of $O$ containing $\bar{o}$ and an open $G_c$-invariant neighborhood $U=U(\bar{o})$ such that 
$\Theta\circ \Psi (o)=1$ for $o\in U\cap M$ and $0$ otherwise.
\end{itemize}
\qed
\end{definition}
\begin{thm}
If $\Theta$ is of orbifold type  the orbit space of $\text{supp}(\Theta)$ has in a natural way the structure of a smooth orbifold.
\qed
\end{thm}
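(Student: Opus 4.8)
The strategy is to patch together the local smooth charts supplied by condition (2) of the definition into an étale proper groupoid presentation of $|\text{supp}(\Theta)|$, and then invoke the fact that the orbit space of such a groupoid carries a canonical orbifold structure. First I would fix, for each object $c$ with $\Theta(c)=1$, a uniformizer $\Psi_c\in F(c)$, $\Psi_c:G_c\ltimes O_c\to{\mathcal C}$, a submanifold $M_c\subset O_c$ and a $G_c$-invariant neighborhood $U_c=U_c(\bar o_c)$ as in the definition. Shrinking $U_c$ if necessary (using that $G_c$ is finite, so we may average to keep $G_c$-invariance), I would arrange that $M_c\cap U_c$ is $G_c$-invariant — this uses that $\Theta$ is a functor, hence $G_c$ preserves the level set $\{\Theta\circ\Psi_c=1\}$, so $g\ast(M_c\cap U_c)=M_c\cap U_c$ for all $g\in G_c$. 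Thus $G_c$ acts on the honest finite-dimensional manifold $N_c:=M_c\cap U_c$ by diffeomorphisms (the restriction of sc-diffeomorphisms to a submanifold on which $d$ vanishes is a classical diffeomorphism, as recalled in the excerpt), and $|\Psi_c|$ restricts to a homeomorphism of $|G_c\ltimes N_c|$ onto an open neighborhood of $|c|$ inside $|\text{supp}(\Theta)|$ (the footprints cover because every positive-weight object is handled).

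Next I would assemble these into a groupoid: take $X:=\bigsqcup_c N_c$ as the object manifold, and for morphisms use the transition sets $\bm M(\Psi_c,\Psi_{c'})$ restricted to the submanifolds, i.e. those $(o,\Phi,o')$ with $o\in N_c$, $o'\in N_{c'}$. The transition construction $\bm M$ for $(({\mathcal C},{\mathcal T}),F)$ makes each $\bm M(\Psi_c,\Psi_{c'})$ an M-polyfold with $s,t$ local sc-diffeomorphisms; since $s^{-1}(N_c)$ then maps by $t$ into a set whose $|\cdot|$ lies in $|\text{supp}(\Theta)|$ and $t$ is a local sc-diffeomorphism onto its image, the relevant piece of $\bm M$ is a submanifold and $s,t$ restrict to étale (local diffeomorphism) maps of classical manifolds. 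One checks that unit, inversion, and multiplication maps restrict correctly — this is immediate from their formulas. Properness of the resulting groupoid (finiteness of morphisms between objects, closedness of $(s,t)$) follows from ${\mathcal C}$ being groupoidal together with the metrizability of ${\mathcal T}$, exactly as in the ep-groupoid discussion at the end of Lecture 1; so $\bm X:=(X,\text{restricted }\bm M)$ is a proper étale Lie groupoid.

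Finally I would observe that $|\bm X|$ is homeomorphic to $|\text{supp}(\Theta)|$ (the footprints of the $\Psi_c$ restricted to $N_c$ cover, and the transition sets identify exactly the overlaps, by fullness/faithfulness of the uniformizers), and that the orbifold structure is independent of the choices of $\Psi_c$, $M_c$, $U_c$ up to the standard Morita-equivalence notion of orbifold: any two such choices are related by a common refinement, again using the transition construction. Invoking the classical theorem that the orbit space of a proper étale Lie groupoid is a smooth orbifold completes the argument, and this orbit space is by construction the orbit space of $\text{supp}(\Theta)$.

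The main obstacle is the passage from the sc-smooth/M-polyfold world to the classical finite-dimensional one: one must check carefully that restricting the transition sets $\bm M(\Psi_c,\Psi_{c'})$ to the submanifolds $N_c, N_{c'}$ really yields a classical smooth manifold on which $s$ and $t$ are local diffeomorphisms — a priori $\bm M$ is only an M-polyfold and the $N_c$ are submanifolds in the M-polyfold sense, so one needs that $s^{-1}(N_c)\cap t^{-1}(N_{c'})$ is cut out by an sc$^+$-retraction landing in the boundary-free locus, where M-polyfold submanifolds are genuine manifolds. Once that local identification is in place, the groupoid bookkeeping (properness, étaleness, independence of choices) is routine.
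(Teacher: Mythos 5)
The paper itself gives no proof of this statement: it is asserted with a \qed and the underlying theory is deferred to \cite{HWZ2017}, so there is nothing in the text to compare against line by line. Your argument is the expected one — promote the local data $G_c\ltimes (M_c\cap U_c)$ to a proper \'etale groupoid presentation using the restricted transition sets, noting that on the relevant piece $s^{-1}(N_c)=t^{-1}(N_{c'})$ because morphisms preserve $\Theta$, so that $s$ and $t$ restrict to local diffeomorphisms of classical manifolds, and then quote the standard fact that orbit spaces of proper \'etale groupoids are orbifolds — and the $G_c$-invariance of $N_c$ and the chart-independence via common refinement are handled correctly. The only caveat is that a submanifold in the M-polyfold sense is a classical smooth manifold only on its degeneracy-zero locus (and a manifold with boundary and corners when tame), so your phrase ``honest finite-dimensional manifold'' implicitly uses tameness or a restriction to $d=0$ points; this matches the level of precision of the statement itself and does not affect the argument.
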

The most general class is given in the  following definition.
\begin{definition}
$\Theta$ is of \textbf{branched weighted orbifold type} provided  
for every object $c$ with positive weight the following holds. For a uniformizer $\Psi\in F(c)$ with $\Psi:G\ltimes O\rightarrow {\mathcal C}$ there exist finitely many  submanifolds $M_i$ of $O$ containing $\bar{o}$ and an open $G_c$-invariant neighborhood $U=U(\bar{o})$ such that 
$$
\Theta\circ \Psi (o)=\frac{1}{\sharp I}\cdot \sharp \{i\in I\ |\ o\in M_i\}
$$
 for $o\in U\cap M$ and $0$ otherwise.  
\qed
\end{definition}
One can show that the orbit space of $\text{supp}(\Theta)$ has in a natural way the structure of  what is called a weighted branched orbifold.
There is a whole theory, \cite{HWZ2017}, about orientations,  about sc-smooth differential forms on ${\mathcal C}$, a de Rham complex 
and a Stokes formula. There are related ideas in \cite{MCDUFF}.
\subsection{Preliminaries for a Perturbation Theory of $\bar{\partial}_{\widetilde{J}}$}
Consider the functor $\Lambda_0:{\mathcal E}\rightarrow {\mathbb Q}^+$ (the latter only has the identities as morphisms)
defined by $\Lambda_0(\alpha,0)=1$ and $\Lambda_0(\alpha,e)=0$ otherwise. Then $\Lambda_0\circ\bar{\partial}_{\widetilde{J}}(\alpha)>0$
means that $\bar{\partial}_{\widetilde{J}}(\alpha)=(\alpha,0)$, i.e. $\alpha$ is a $\widetilde{J}$-holomorphic object. 
In this case
$$
|\text{supp}(\Lambda_0\circ\bar{\partial}_{\widetilde{J}})| =\overline{\mathcal M}_J.
$$
For a uniformizer $\bar{\Psi}:G\ltimes K\rightarrow {\mathcal E}$ consider $\Lambda_0\circ\bar{\Psi}: G\ltimes K\rightarrow {\mathbb Q}^+$.
For the tame strong bundle $p:K\rightarrow O$ let $s$ be the zero-section which is a sc$^+$-section, i.e. in particular sc-smooth.
Then 
$$
\Lambda_0\circ\bar{\Psi}(k) = \sharp \{s\ |\ s\circ p(k)=k\}
$$
Next we observe that 
$$
\Lambda_0\circ\bar{\partial}_{\widetilde{J}}\circ \Psi =(\Lambda_0\circ\bar{\Psi})\circ \bar{\partial}_{\bar{\Psi}}.
$$
The set of points $o\in O$ for which this is positive consists precisely of all $o$ with $s(o)=\bar{\partial}_{\bar{\Psi}}(o)$.
This a polyfold Fredholm problem and if we have tranversality it defines a smooth submanifold $M$ of $O$. Then the functor 
$\Theta_0= \Lambda_0\circ\bar{\partial}_{\widetilde{J}}$ over $O$ can be written as $\Theta_0\circ\Psi$
and it takes value $1$ if and only if it lies on the submanifold.  In this case $\Theta_0$ is a functor which is manifold like 
over the full subcategory ${\mathcal S}_{\Psi(O)}$.
Perturbation theory and transversality theory in the presence of symmetries is always difficulty and we shall use 
as Fukaya-Ono, \cite{FO},  the idea of multisections.  In functorial terms we need to introduce fractional objects with the assumption that 
any underlying object can only occur once.  This is accomplished by functors 
$$
\Theta: {\mathcal S} \rightarrow [0,1]\cap {\mathbb Q}^+.
$$
These come together with the functors
$$
\Lambda:{\mathcal E}\rightarrow [0,1]\cap {\mathbb Q}^+,
$$
which we shall call \textbf{partition of unity},
having the property that $\sum_{e\in \mu(\alpha)}\Lambda(\alpha,e)=1$.  Note that for 
$\Lambda$ as above the functor $\Theta=\Lambda\circ \bar{\partial}_{\widetilde{J}}$ has the before-mentioned
properties. As we shall see there is a large world of sc-smooth $\Lambda$ in general position to $\bar{\partial}_{\widetilde{J}}$
such that $\Theta$ is sc-smooth and of branched orbifold type and $a\cap \text{supp}(\Theta)$ is for every $a$ compact.
It is, of course, absolutely crucial that the perturbations $\Lambda$ are compatible with the algebraic structures 
on ${\mathcal S}$ responsible for the ability to encode certain data in the SFT-formalism.

\subsection{Structures}
Denote by $\mathfrak{P}$ the category whose objects are maps  $m:I\rightarrow {\mathcal P}$, where $I$ is a finite set.
A morphism $b:m\rightarrow m'$ is a bijection $b:I\rightarrow I'$ such that $m'\circ b=m$.
$$
\textbf{Concatenation Structure}\ \ \ \ \ \ \ \mathfrak{P}\xleftarrow{\text{ev}^-} {\mathcal S} \xrightarrow{\text{ev}^+} \mathfrak{P}
$$
For example $\text{ev}^+(\alpha)$ is the map $\Gamma^+_k\rightarrow {\mathcal P}$, which associates to a puncture the periodic orbit.
Similarly $\text{ev}^-(\alpha):\Gamma_0^-\rightarrow {\mathcal P}$.
We now can build a new category using the  weak fibered product
$$
 {\mathcal S}\times_{\mathfrak{P}} {\mathcal S} 
$$
Observe that if $\alpha',\alpha''$ are objects in ${\mathcal S}$ and there exists a morphism 
$b:\text{ev}^+(\alpha')\rightarrow \text{ev}^-(\alpha'')$ we obtain the object $(\alpha',b,\alpha'')$ in the weak fibered product.
Next we observe that this object is related to a finite number of objects in $\partial{\mathcal S}$. Namely
there are finitely many lifts $\widehat{b}$ such that 
$(\alpha',\widehat{b},\alpha'')$ is an object in ${\mathcal S}$ and since $d(\alpha',\wh{b},\alpha'')\geq 1$, it belongs to the boundary
$\partial{\mathcal S}$ of the category ${\mathcal S}$.  There is also an obvious associativity when considering these type of lifts,
or the other way round when forgetting decorations
\begin{eqnarray*}
&(\alpha',\wh{b},\alpha'',\wh{b}',\alpha''')\rightarrow ((\alpha',\wh{b},\alpha''),b',\alpha'')\rightarrow (\alpha',b,\alpha'',b',\alpha'')&\\
&(\alpha',\wh{b},\alpha'',\wh{b}',\alpha''')\rightarrow (\alpha',{b},(\alpha'',\wh{b}',\alpha''))\rightarrow (\alpha',b,\alpha'',b',\alpha'')&
\end{eqnarray*}
In the perturbation theory we would like not(!) to perturb over trivial cylinder segments. There the Cauchy-Riemann operator
is automatically transversal and in addition the $\wt{J}$-holomorphic trivial cylinders play an important role  as we shall see.
Let us define the functor
$$
\Pi:{\mathcal E}\rightarrow {\mathcal E}
$$
 by setting $e$ equal to zero on trivial cylinder components.
This functor covers the identity. \\

\noindent\textbf{Warning:} The functor is not sc-smooth in the sense that 
$\bar{\Psi}^{-1}\circ\Pi\circ\bar{\Psi}$ is a strong bundle map. However, it has some  `sc-smoothness properties' in the sense  that there are many (local)  sc$^+$-sections 
which satisfy $\Pi_{\bar{\Psi}}\circ s=s$  over ${\mathcal S}_{\wh{Z}}$.   Using sc$^+$-multisection functors build on such special sc$^+$-sections will turn out to be  enough 
 to deal with all occurring 
transversality issues around $\bar{\partial}_{\widetilde{J}}$ . We shall introduce 
this class  of functors $\Lambda:{\mathcal E}_{\wh{U}}\rightarrow {\mathbb Q}^+\cap [0,1]$, $\wh{U}\in \wh{\mathcal T}$,  later on.
Hence, measured in this way, $\Pi$ has some sc-smoothness properties by defining a sufficiently large class of 
sc$^+$-smooth sections.  There is indeed a philosophical point here: there are many types of natural functors, which show very specific `smoothness properties' in the weak sense similarly to the property of $\Pi$ just discussed.\\

Having $\Pi$ we impose on  a partition of unity
$\Lambda:{\mathcal E}\rightarrow [0,1]\cap {\mathbb Q}^+$, the following first requirement
\begin{eqnarray*}
\Lambda(\alpha,e) =(\Lambda_0\circ (\text{Id}-\Pi)(\alpha,e))\cdot \Lambda(\alpha,e).
\end{eqnarray*}
At this point our condition requires a partition of unity functor $\Lambda$ to have the property that 
if it  takes positive values on $(\alpha,e)$ it must hold true that $e$ vanishes on trivial cylinder segments.
There is an other quite obvious requirement relating to the concatenation which we shall use to upgrade the requirement.  With
$\alpha=(\alpha_0,\widehat{b}_1,...,\widehat{b}_k,\alpha_k)$ and the obvious meaning of $e_i$
\begin{eqnarray*}
\Lambda(\alpha,e) =(\Lambda_0\circ (\text{Id}-\Pi)(\alpha,e))\cdot \prod_{i=0}^{d(\alpha)} \Lambda(\alpha_i,e_i).
\end{eqnarray*}
Another crucial condition is connected with the occurrence of disjoint unions.  Let $\alpha$ be a stable map with $d(\alpha)=0$. 
and denote by $\bar{S}$ the space obtained by identifying $x\equiv y$ for a nodal disk pair $\{x,y\}$.
Then $\bar{S}$ decomposes as  $\bar{S}= S^{tc} \sqcup \bar{S}^{ntc}$ and each connected component of $ \bar{S}^{ntc}$ has an associated stable map.
$$
\textbf{Disjoint Union Structure}\ \ \alpha\ \  \stackrel{\text{forgetful}}{\Longrightarrow} \ \  \{ \alpha_{\mathfrak{c}}\ |\ \mathfrak{c}\in \pi_0^{\text{ntriv}}(\bar{S})\}.
$$
The  $\alpha_{\mathfrak{c}}$ are  $\widetilde{J}$-holomorphic if the $\alpha$ are. This leads to the following requirement which encapsulates SFT,
and which has to hold over a suitable full category ${\mathcal S}_{\wh{U}}$, where $\wh{U}$ is a so called strong neighborhood of $\overline{\mathcal M}_J$ 
in $\wh{Z}$, which in addition satisfies certain compatibility conditions with the structures at play.
\\
\begin{eqnarray}\label{Good}
\resizebox{0.81\hsize}{!}{\boxed{
\Lambda(\alpha,e) =(\Lambda_0\circ (\text{Id}-\Pi)(\alpha,e))\cdot \prod_{i=0}^{d(\alpha)} \prod_{\mathfrak{c}\in \pi_0^{\text{ntriv}}(\bar{S}_i)}\Lambda(\alpha_{i,\mathfrak{c}},e_{i,\mathfrak{c}})\ \ \text{for}\ \ |\alpha|\in \wh{U}}}
\end{eqnarray}
This formula relates the value of $\Lambda(\alpha,e)$ to the value on smaller pieces provided $|\alpha|$ belongs to 
some set $\wh{U}$. The inductive content of this formula is obvious. It is also clear that in the inductive construction 
of $\Lambda$ with respect to some organization of $\pi_0(Z)$, the sets $\wh{U}\cap a$ have to be constructed inductively too.

The goal of these lectures is to show that there are enough $\Lambda$ which are sc$^+$-smooth, i.e. 
sc$^+$-multisections,  which are so small that, in some sense to be made precise,
the orbit space of the support of $\Theta:=\Lambda\circ \bar{\partial}_{\widetilde{J}}$ has a compact intersection with every $a\in \pi_0(Z)$,
and the local problems are all in general position, so that $\Theta:{\mathcal S}\rightarrow [0,1]\cap {\mathbb Q}^+$
is a weighted a branched orbifold in general position to the boundary of ${\mathcal S}$.

Once this is achieved 
we can study orientation questions for $\Theta=\Lambda\circ \bar{\partial}_{\widetilde{J}}$. One also needs to study the relationship 
between different $\Theta$ obtained this way. Of course, if we start turning the outline into a reality we need to know 
more about the polyfold structure for ${\mathcal S}$ and the strong bundle structure for $P:{\mathcal E}\rightarrow {\mathcal S}$, f.e.
we need to understand enough about the smoothness properties of $\Pi$ to describe the sc$^+$-sections compatible with $\Pi$.

Before we go into the necessary constructions we shall describe a result which will be an outcome. For this 
we need to present some additional ideas.  Given a face $\theta$ consider the saturated full subcategory ${\mathcal S}_{\theta}$.\
One can show that given $\alpha$ in ${\mathcal S}_{\theta}$ and $\Psi\in F(\alpha)$, say $\Psi:G\ltimes O\rightarrow {\mathcal S}$,
 the preimage of ${\mathcal S}_{\theta}$ is a tame sub-M-polyfold $X_O\subset O$ invariant under $G$.
Without going more into details we obtain the diagram
$$
\begin{array}{cc}
\begin{CD}
G\ltimes O @>\Psi>>  {\mathcal S}\\
@AAA     @A \text{incl} AA\\
G\ltimes X^{\theta}_O @>\Psi|{X^{\theta}_O} >> {\mathcal S}_{\theta}
\end{CD}
&\ \ \ \ \  \text{tame sub-M-polyfold}\ \ 
\bm{M}(\Psi|X^{\theta}_O,\Psi'|X^{\theta}_{O'})\subset \bm{M}(\Psi,\Psi')
\end{array}
$$
This can be interpreted as ${\mathcal S}_{\theta}$ being a subpolyfold, and that it inherits a polyfold structure as well.  Given 
$\Theta:{\mathcal S}\rightarrow {\mathbb Q}^+\cap [0,1]$ of branched weighted orbifold type, we can formulate what it means 
that it is tame and in general position to the boundary of ${\mathcal S}$ which is the union of all ${\mathcal S}_{\theta}$, see \cite{HWZ2017}.
\begin{thm}
There are arbitrarily small sc$^+$-multisection functors  $\Lambda$ satisfying the \textbf{\em Requirement} over a suitable strong neighborhood $ \wh{U}$ of $\overline{\mathcal M}_J$, in general position to $\bar{\partial}_{\widetilde{J}}$ so that 
$\Theta:=\Lambda\circ \bar{\partial}_{\widetilde{J}}:{\mathcal S}\rightarrow {\mathbb Q}^+\cap [0,1]$ is of weighted branched orbifold type, in general position 
to $\partial{\mathcal S}$, and tame.  Further for every $a\in \pi_0(Z)$ the intersection $a\cap |\text{supp}(\Theta)|$ is compact.
\qed
\end{thm}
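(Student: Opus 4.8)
The plan is to construct $\Lambda$ by an induction over $\pi_0(Z)$ following the organizational scheme displayed above: over the grading $d^J$, and within each value $\ell=d^J_a$ over the four types $\pi_0^{p,\ell}\Rightarrow\pi_0^{d,\ell}\Rightarrow\pi_0^{up,\ell}\Rightarrow\pi_0^{ud,\ell}$. At each stage the inductive hypothesis is that an sc$^+$-multisection functor $\Lambda$ has been defined over $\mathcal S_{\wh U\cap a}$ for all classes $a$ treated earlier, together with a strong neighborhood $\wh U\cap a\in\wh{\mathcal T}$ of $\overline{\mathcal M}_J\cap a$, such that the \textbf{Requirement} \eqref{Good} holds, $\Lambda$ is in general position to $\bar\partial_{\wt{J}}$, the associated $\Theta=\Lambda\circ\bar\partial_{\wt{J}}$ is tame and of weighted branched orbifold type in general position to the faces it meets, and $a\cap|\text{supp}(\Theta)|$ is compact.

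For a \emph{parent} class $a\in\pi_0^{p,\ell}(Z)$ --- the only place where data is genuinely chosen --- I first note that by property (2) of the theorem on $d^J$ every face $\theta\in\text{face}_a$ comes from a pair $(a',a'')$ with $d^J_{a'}+d^J_{a''}\le\ell-1$, hence already processed; the Requirement \eqref{Good} then \emph{prescribes} $\Lambda$ on a strong neighborhood of $\partial\mathcal S_a\cap\wh U$ in terms of the lower data, using the concatenation structure and the associativity of the lift decorations. One checks, using the weak smoothness properties of $\Pi$ (existence of enough $\Pi$-compatible sc$^+$-sections over $\mathcal S_{\wh Z}$) and the description of the polyfold structure of $\mathcal S$ near its faces via the subpolyfolds $\mathcal S_\theta$, that this prescribed boundary datum is a genuine sc$^+$-multisection. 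Then I invoke the polyfold perturbation and transversality theorem of \cite{HWZ2017}, in its version relative to the boundary with corners and with extension of multisections across corners (compare \cite{HS}): one extends the boundary datum to an arbitrarily small generic sc$^+$-multisection on a strong neighborhood $\wh U\cap a$ of $\overline{\mathcal M}_J\cap a$, arranging that $\Theta=\Lambda\circ\bar\partial_{\wt{J}}$ is of weighted branched orbifold type, tame, and in general position to $\partial\mathcal S$. Compactness of $a\cap|\text{supp}(\Theta)|$ follows from the compactness of $|\bar\partial_{\wt{J}}^{-1}(0)|$ on each component of $|\mathcal S|$, the smallness of $\Lambda$, and properness, after shrinking $\wh U\cap a$ within $\wh{\mathcal T}$.

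For the remaining three types there is no new freedom and $\Lambda$ is forced. For a \emph{descendent} $a\in\pi_0^{d,\ell}(Z)$ (respectively $a\in\pi_0^{ud,\ell}(Z)$) one has $d^J_a=d^J_{\bar a}$ for the associated parent $\bar a$ by properties (1) and (4); the building over $a$ differs from that over $\bar a$ only by trivial cylinder components, on which $\bar\partial_{\wt{J}}$ is automatically transversal and on which any admissible $\Lambda$ vanishes through the factor $\Lambda_0\circ(\text{Id}-\Pi)$, so $\Lambda$ over $\mathcal S_a$ is pulled back from $\mathcal S_{\bar a}$ and inherits all properties, including compactness of $a\cap|\text{supp}(\Theta)|$. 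For a \emph{disjoint-union parent} $a\in\pi_0^{up,\ell}(Z)$, evaluating \eqref{Good} at $d(\alpha)=0$ forces $\Lambda(\alpha,e)=(\Lambda_0\circ(\text{Id}-\Pi))(\alpha,e)\cdot\prod_{\mathfrak c}\Lambda(\alpha_{\mathfrak c},e_{\mathfrak c})$, and by property (3) of the $d^J$-theorem each component class satisfies $d^J_{a_{\mathfrak c}}<\ell$, hence has already been constructed; one verifies that the product of sc$^+$-multisections is again an sc$^+$-multisection --- this is where the strong topology $\wh{\mathcal T}$ and the strong neighborhood $\wh U$ enter, to guarantee that the product structure of $\mathcal S$ near the union stratum and the interior/boundary perturbations glue sc-smoothly --- that general position of the factors yields general position of the product (using automatic transversality along trivial cylinders), and that $a\cap|\text{supp}(\Theta)|$, being a quotient of a product of the compact sets $a_{\mathfrak c}\cap|\text{supp}(\Theta)|$, is compact.

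Finally one assembles the pieces: since every construction was made $G$-invariantly and functorially with respect to morphisms of $\mathcal S$, the local data glue to a global sc$^+$-multisection functor $\Lambda$ over $\mathcal S_{\wh U}$, $\wh U=\bigcup_a(\wh U\cap a)$, satisfying \eqref{Good} and as small as prescribed; that $\mathcal S$ is face-structured then yields tameness and the general position to $\partial\mathcal S$ globally, and $\Theta=\Lambda\circ\bar\partial_{\wt{J}}$ is of weighted branched orbifold type with $a\cap|\text{supp}(\Theta)|$ compact for every $a\in\pi_0(Z)$. \textbf{The main obstacle} is the verification, in the parent and disjoint-union steps, that the datum assembled from lower strata through \eqref{Good} --- a boundary multisection in one case, a product of multisections in the other --- is actually sc$^+$-smooth and fits sc-smoothly with the interior perturbation: this is precisely the extension-across-corners problem highlighted in the introduction, and it forces the use of the weak smoothness properties of $\Pi$, a precise local model of the polyfold $\mathcal S$ near its disjoint-union and boundary strata in terms of the pieces, and the carefully chosen strong topology $\wh{\mathcal T}$ with its strong neighborhoods $\wh U$.
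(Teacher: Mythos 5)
Your plan reproduces the paper's architecture: an induction over classes in $\pi_0(Z)$ split into parent/descendent/union-parent/union-descendent types, with the boxed Requirement (\ref{Good}) and the concatenation and disjoint-union structures forcing the data on faces and on union classes from earlier stages, the extension-across-corners results of \cite{HWZ2017,HS} used only at the parent step, special $\Pi$-compatible sc$^+$-sections, and the strong topology $\wh{\mathcal T}$ to make the forced interior/boundary data on union classes fit. One organizational difference: you grade by $d^J$, whereas the paper's actual induction (Lectures 13--14) is carried out under the standing contact assumption and graded by the complexity $\bar d_a=\max\{d_Z(z)\mid z\in a\}$, which is finite only in the contact case; the $d^J$ scheme is the one the paper reserves for the general stable Hamiltonian case and defers, so your choice is legitimate but you are signing up for the ``more involved'' bookkeeping the paper avoids.

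The genuine gap is in the compactness and smallness control. You dispose of compactness with ``smallness of $\Lambda$, properness, after shrinking $\wh U\cap a$ within $\wh{\mathcal T}$'', but at the union-parent and higher parent steps $\Lambda$ is \emph{forced} on $\partial a$ and on the whole interior union stratum $\dot a$ by the earlier choices; at that point you can neither shrink the set on which it is prescribed nor re-calibrate its size, so an after-the-fact appeal to smallness does not close the argument. The paper resolves this by a separate prior induction (Lecture 13) constructing, for every $a$, a pair $(\wh U_a,\wh N_a)$ that ``controls compactness'': an open-in-$\wh{\mathcal T}$ neighborhood of $a\cap\overline{\mathcal M}_J$ together with a penalizing (ps-) auxiliary norm, compatible across faces and built on union classes as $\wh U_a=\wh U_a^{\partial}\cup\wh U_{\dot a}$ with $\wh N_a$ given by the maximum over nontrivial components and the value $\infty$ on data nonzero over trivial cylinders, plus an extension result for such pairs from $\partial a$ into $a$. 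Only with this in place does ``arbitrarily small'' have a meaning (a fixed sequence $\varepsilon_0<\varepsilon_1<\cdots<\varepsilon$ measured by $\wh N$) for which the forced boundary and union data automatically satisfy the bound at the next level, and only then does the sublevel-set property of $\wh N\circ\bar\partial_{\wt J}$ yield compactness of $a\cap|\text{supp}(\Theta)|$ at every stage. Note also that the strong topology enters here in an essential way you do not use: the paper points out that $\wh N_a$ would in general fail to be a ps-norm over an ordinary open neighborhood of $a\cap\overline{\mathcal M}_J$ in $Z$, so $\wh{\mathcal T}$ is needed for the compactness bookkeeping, not only for the sc-smooth matching of interior and boundary perturbations that you mention. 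Without supplying this inductive construction of $(\wh U_a,\wh N_a)$ (or an equivalent quantitative mechanism), the compactness claims in your parent and union steps are unsupported.
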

What means arbitrarily small has to be made precise.  We shall introduce a quantitative criterion.
If we have $\Lambda$ and $\Lambda'$ obtained by making different choices the other result needed is to understand generic enough homotopies between them. This will be addressed later and as we shall see this point is subtle
due to ``transversality to the diagonal problems'' arising from the fact that one needs to consider certain types
of fibered products. We have now described many of the results we are aiming at and will start with the 
construction of the polyfold and strong bundle structure.

\subsection{Additional Structures}
There are some additional structural features which are important, see \cite{HWZ2017}
for background material. 
Starting with ${\mathcal S}$ we can construct a category $\wt{\mathcal S}$ 
with a surjective forgetful functor $\pi: \wt{\mathcal S}\rightarrow {\mathcal S}$ and we do this as follows.
First of all we mark every periodic orbit$([\gamma],T,k)$  in ${\mathcal P}$  by taking a specific representative $(\gamma,T,k)$.
This choice specifies the distinguished point $\gamma(0)\in \gamma(S^1)$, referred to as \textbf{marker}.
The objects of $\wt{\mathcal S}$ are constructed from objects $\alpha$ in ${\mathcal S}$ by adding 
some additional features. Start with $\alpha=(\alpha_0,\wh{b}_1,..,\wh{b}_k,\alpha_k)$.
First of all we order the marked points in $M$, i.e. replace the set $M$ by  $(m_1,...,m_{\ell})$ by an ordered list.
Then we number the positive punctures on the top floor and we also number the negative punctures on the bottom floor.
Finally we add a decoration to each puncture $z$ in $\Gamma_0^-\cup\Gamma^+_k$, i.e. pick an oriented real line in $T_zS$ such that the directional limit 
hits the asymptotic periodic orbit at the previously fixed marker. Note that the number of possible decorations for a bottom or top punctures is the covering number of the associated periodic orbit.
 In the future, when we talk about a \textbf{decorated puncture} 
we mean the negative ones on floor $0$ and the the positive ones on the top floor. Recall that the interface nodal pairs are decorated. Further we shall consider matching punctures coming from the floor interfaces and we shall
refer to them as \textbf{decorated nodal pairs}.

A morphism $\wt{\Phi}:\wt{\alpha}\rightarrow \wt{\alpha}'$ has the form $\wt{\Phi}=(\wt{\alpha},\phi,\wt{\alpha}')$ where $\phi$ is a biholomorphic 
respecting the data, in particular the numberings of the top punctures, the bottom punctures as well as of the marked points.
Moreover $T\phi$ is assumed to map decoration to decoration. The forgetful functor 
$$
\pi:\wt{\mathcal S}\rightarrow {\mathcal S}
$$
is finite to one.  Let $\alpha$ be an object in ${\mathcal S}$ and denote by $\ell^\pm$ the number of positive and negative punctures on the top and bottom floor, respectively.
Let $m$ be the number of marked points and denote by $k^+_1,...,k^+_{\ell^\pm}$ the covering numbers of the asymptotic periodic orbits. 
Then the preimage of $\alpha$ contains $(m! )\cdot( \ell^-!)\cdot (\ell^+!)\cdot (k^-_1!)\cdot..\cdot (k^-_{\ell^-}!)\cdot (k^+_1!)\cdot..\cdot (k^+_{\ell^+}!)$ many elements.
The functor $\pi$ is an example of a proper covering functor. It is surjective on objects and finite to one and further the following property holds.
The map
$$
\text{mor}({\wt{\mathcal S}})\rightarrow \text{mor}({\mathcal S}){_{s}\times_\pi} \text{obj}(\wt{\mathcal S}): \wt{\Phi} \rightarrow (\Phi, s(\wt{\Phi}))
$$
is a bijection, here $\Phi=\pi(\wt{\Phi})$.  Let us note that on trivial cylinder segments the two asymptotic markers 
are not correlated. For example if we have a $\wt{J}$-holomorphic cylinder of covering number $k$
there are $k^2$ different constellations, however, there are $k$-many isomorphism classes. 
Using $(F,\bm{M})$ associated
to ${\mathcal S}$ one can construct for every $\Psi\in F(\alpha)$, $\alpha$  in ${\mathcal S}$, 
an ep-groupoid $X_{\Psi}$ with a proper covering functor fitting into the commutative diagram
$$
\begin{CD}
X_{\Psi} @>>>  \wt{\mathcal S}\\
@V\bm{\pi}VV  @V \pi VV\\
G\ltimes O @>\Psi >>  {\mathcal S}
\end{CD}
$$ 
The top arrow is injective on objects and fully faithful.
The functor $\bm{\pi}$ is a proper covering functor, namely it is a surjective local sc-diffeomorphism and 
$$
\bm{X_{\Psi}}\xrightarrow{\langle \bm{\pi},s\rangle} \text{mor}(G\ltimes O){_{s}\times_{\bm{\pi}}} X_{\Psi}
$$
is a sc-diffeomorphism.

A polyfold has a tangent bundle. In our case this applies to ${\mathcal S}$ and $\wt{\mathcal S}$.  A uniformizer $\Psi: G\ltimes O\rightarrow {\mathcal S}$ defines a bundle unifomizer
$T\Psi: G\ltimes TO\rightarrow T{\mathcal S}$ fitting into the commutative diagram
$$
\begin{CD}
G\ltimes TO @>T\Psi >> T {\mathcal S}\\
@V \text{Id}\ltimes p VV @V VV\\
G\ltimes O^1@>\Psi^1>>   {\mathcal S}^1
\end{CD}
$$
This allows to define sc-differential forms on ${\mathcal S}$ and in fact the De Rham complex.
The same holds for $\wt{\mathcal S}$.   Details are in \cite{HWZ2017}.
Working with $\wt{\mathcal S}$ we have the evaluation at marked points functor
$$
\wt{\mathcal S}\rightarrow Q^{\times} :=\{\ast\}\sqcup Q\sqcup (Q\times Q)\sqcup (Q\times Q\times Q)...
$$
The pull-back of a differential form on $Q$ etc is a sc-differential form on $\wt{\mathcal S}$.
There is also a sc-smooth forgetful functor in the DM-space associated to ordered marked points
$$
\text{forget}:\wt{\mathcal S}\rightarrow \wt{\mathcal R}
$$
Of course, it has to be made precise what sc-smoothness means. 
These structures are important for the construction of SFT.
\newpage
\part{Local-Local Constructions}
A stable map is defined on a Riemann surface and as we have seen in Part 1
we are interested  in defining something like a smooth neighborhood of this object. Since the object class
of a groupoidal category is not a set in general a notion of a neighborhood literally does not make sense
and has to be substituted by an appropriate concept, namely that of a uniformizer. In some 
sense a uniformizer construction is  the \textbf{local theory}.
However, an object can be cut into smaller pieces since the Riemann surface can be `chopped up'. We see
that the object is a fibered product of smaller pieces. Hence we might expect that the local pictures 
is obtained as a fibered product of a local theory around the  local pieces. The latter is what we refer
to as the \textbf{local-local theory} and as we have just discussed  gluing the local-local pieces we obtain the local theory. Of course, there is still the question how the uniformizers interact (recall the notion of transition construction),
and this can be considered as the \textbf{global theory}.
\part*{Lecture 5}
\section{Tools for Local-Local Constructions}
We begin with a method which is useful constructing new M-polyfolds  from old ones. Comprehensive references are \cite{FH-primer,FH-book}.
\subsection{The Imprinting-Method}
The imprinting-method based on the following theorem is very powerful.
\begin{thm}
Assume that $X$ is a M-polyfold and $Y$ a set and $\oplus: X\rightarrow Y$ a surjective map with the following properties.
\begin{itemize}
\item[(1)]   The quotient topology ${\mathcal T}_{\oplus}$ on $Y$ is metrizable.
\item[(2)]   For every $y\in Y$ there exists $U(y)\in {\mathcal T}_{\oplus}$ and $H:U(y)\rightarrow X$ such that 
\begin{itemize}
\item[(a)] $\oplus\circ H=Id_{U(y)}$.
\item[(b)] $H\circ\oplus:\oplus^{-1}(U(y))\rightarrow X$ is sc-smooth.
\end{itemize}
\end{itemize}
Then $Y$ has a unique M-polyfold structure characterized by the following properties.
\begin{itemize}
\item[(i)]  $\oplus:X\rightarrow Y$ is sc-smooth.
\item[(ii)] Every $H:U(y)\rightarrow X$ which has the properties as in (2) is sc-smooth.
\end{itemize}
\qed
\end{thm}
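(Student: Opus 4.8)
The plan is to construct the M-polyfold structure on $Y$ directly from the local sections $H$, verify the two characterizing properties, and then argue uniqueness. The key observation is that each $H:U(y)\to X$ is a section of $\oplus$, and the composition $r_y:=H\circ\oplus$, restricted to $\oplus^{-1}(U(y))$, is by hypothesis (2b) an sc-smooth map into $X$. I would first check that $r_y$ is an sc-smooth retraction onto its image: since $\oplus\circ H=\mathrm{Id}$ by (2a), we get $r_y\circ r_y=H\circ(\oplus\circ H)\circ\oplus=H\circ\oplus=r_y$, so $r_y$ is idempotent, and its image $r_y(\oplus^{-1}(U(y)))=H(U(y))$ is an sc-smooth retract inside the M-polyfold $X$, hence carries an induced M-polyfold structure. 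The map $H:U(y)\to H(U(y))$ is then a homeomorphism (continuous because $\oplus$ is an identification map and $H\circ\oplus$ is continuous; its inverse is $\oplus|_{H(U(y))}$, continuous as a restriction of $\oplus$), so we may transport the M-polyfold structure on $H(U(y))$ to $U(y)$ via $H$.

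**Next I would** check that these chart structures on the various $U(y)$ are sc-smoothly compatible, so that they glue to a global M-polyfold structure on $Y$. Given $y,y'$ with overlapping $U(y)\cap U(y')$, the transition map is $\oplus|_{H'(\,\cdot\,)}\circ H$ (read through the two retract models), and sc-smoothness of this transition follows from sc-smoothness of $H'\circ\oplus$ on $\oplus^{-1}(U(y'))$ composed with the sc-smooth inclusion $H(U(y)\cap U(y'))\hookrightarrow X$ and the sc-smooth retraction $r_{y'}$; one uses that a restriction of an sc-smooth map to a sub-M-polyfold retract, followed by a retraction, is again sc-smooth. Metrizability of the underlying topology is given by hypothesis (1), and the charts are compatible with this topology because each $H$ is a homeomorphism onto an open subset of a retract whose topology refines to the subspace topology from $X$, which in turn maps continuously to $Y$ via $\oplus$. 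This produces the M-polyfold structure on $Y$.

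**Then I would verify the two characterizing properties (i) and (ii).** For (i): in the chart at $y$, the map $\oplus:\oplus^{-1}(U(y))\to U(y)$ is read as $x\mapsto H^{-1}(r_y(x))$, i.e. as the sc-smooth retraction $r_y$ followed by the chart homeomorphism $H^{-1}$; since $r_y:\oplus^{-1}(U(y))\to H(U(y))$ is sc-smooth by (2b) and $H^{-1}$ is an sc-diffeomorphism of the chart, $\oplus$ is sc-smooth. For (ii): if $\widetilde H:U(y)\to X$ is any map satisfying the analogues of (2a)–(2b), then in the chart at $y$ it is read as $H^{-1}\circ\oplus\circ\widetilde H=H^{-1}\circ r_y\circ\widetilde H$ up to identifying $U(y)$ with its retract model; composing the sc-smooth $\widetilde H$ with $r_y$ (sc-smooth) and the chart sc-diffeomorphism shows $\widetilde H$ is sc-smooth. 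In particular the original $H$'s are sc-smooth, confirming consistency.

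**The main obstacle, and the heart of the argument, is uniqueness**, together with the bookkeeping of \emph{scales}. For uniqueness: suppose $Y$ carries another M-polyfold structure satisfying (i) and (ii). Then (ii) applied to that structure forces each $H$ to be sc-smooth into $X$; combined with $\oplus\circ H=\mathrm{Id}$ and (i), the map $H$ becomes an sc-smooth section of an sc-smooth submersion-like map, and $r_y=H\circ\oplus$ is an sc-smooth retraction in \emph{both} structures whose image, chart-mapped by $H^{-1}$, must agree; a standard argument shows the identity $Y\to Y$ is then sc-smooth from one structure to the other and vice versa, hence they coincide. The scale subtlety is that hypothesis (2b) only guarantees $H\circ\oplus$ lands in $X$ (not in a shifted $X^{1}$), so $r_y$ is a genuine sc-smooth retraction of level $0$, which is exactly what is needed for the retract to be an M-polyfold (as opposed to merely an sc$^+$-retract); I would flag this as the one place where the hypotheses must be used precisely as stated. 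I expect the routine-but-careful part to be checking that "restriction to a retract plus retraction" preserves sc-smoothness at all levels simultaneously, which is where the chain rule for sc-calculus and the definition of sub-M-polyfold via retracts do all the work.
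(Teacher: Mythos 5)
Your construction is exactly the intended argument (the paper itself defers the proof to \cite{FH-primer,FH-book}): the hypotheses make $r_y=H\circ\oplus$ an sc-smooth idempotent on the open saturated set $\oplus^{-1}(U(y))$, its image $H(U(y))$ is a sub-M-polyfold of $X$, and $H$ is a homeomorphism onto it, so one transports charts, checks transitions via restriction of $H'\circ\oplus$ to the retract, and gets uniqueness from the identity $\mathrm{Id}_{U(y)}=\oplus\circ H$ read in both structures using (i) and (ii). In substance this is correct and complete.

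One local slip worth fixing: in your verification of (ii) the chart representative of a section $\widetilde H$ is $\widetilde H\circ\bigl(\oplus|_{H(U(y))}\bigr)$, i.e. the restriction of the sc-smooth map $\widetilde H\circ\oplus$ (hypothesis (2b) for $\widetilde H$) to the sub-M-polyfold $H(U(y))$; the expression you wrote, $H^{-1}\circ\oplus\circ\widetilde H$, collapses to $H^{-1}$ by (2a) and is not the relevant map, and the sentence as phrased invokes sc-smoothness of $\widetilde H$, which is what is being proved. With that representative the argument is immediate, and the same mechanism gives your uniqueness step cleanly: $\mathrm{Id}=\oplus\circ H$ is sc-smooth from either structure to the other, since $H$ is sc-smooth by (ii) for the source structure and $\oplus$ is sc-smooth by (i) for the target structure.
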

The theorem motivates the following definition.
\begin{definition}
A M-polyfold structure on a set $Y$ defined by the imprinting-method is given by a diagram $\oplus:X\rightarrow Y$
which has the properties (1) and (2) stated  in the theorem.
\qed
\end{definition}
\subsection{The Gluing Example}
We illustrate this by an example.
Take a compact nodal Riemann  surface $(S,j,D)$,  perhaps with smooth boundary, and consider the sc-Hilbert space $E$  of maps $u:S\rightarrow {\mathbb R}^N$ 
consisting of maps of class $H^{3,\delta_0}_c$, which means away from nodal points of class $H^3_{loc}$ and at nodes
exponentially asymptotic with decay-rate $\delta_0$ to the matching nodal value.  Level $m$ is defined by regularity $(m+3,\delta_m)$.
We  fix disks around the nodal points and  use the exponential gluing profile $\varphi(r)=e^{\frac{1}{r}}-e$.
We  have a smooth manifold ${\mathbb B}$  of natural gluing parameters and for every such $\mathfrak{a}$ we can consider $(S_{\mathfrak{a}},j_{\mathfrak{a}},D_{\mathfrak{a}})$.
Define $Y$ to be the union of all maps of class $(3,\delta_0)$ defined on the various $(S_{\mathfrak{a}},j_{\mathfrak{a}},D_{\mathfrak{a}})$ and introduce
$$
\oplus: {\mathbb B}\times E\rightarrow Y:\oplus(\mathfrak{a},u)=w,
$$
where $w$ is obtained as follows. 
We shall use a smooth cut-off function $\beta:{\mathbb R}\rightarrow [0,1]$ satisfying $\beta(s)=1$ for $s\leq -1$, $\beta'(s)<0$ for $s\in (-1,1)$ and $\beta(s)+\beta(-s)=1$.
If $z\in S_{\mathfrak{a}}$ belongs to the core region, which can be identified naturally with a subset of $S_{\mathfrak{a}}$ we define
$\oplus(\mathfrak{a},u)(z)=u(z)$.   
On the glued necks we define $\oplus(\mathfrak{a},u)$ as follows, where for convenience we shall use standard coordinates.\\

As a model for a disk pair we take ${\mathbb R}^+\times S^1 \coprod {\mathbb R}^-\times S^1$.
Given a gluing parameter $|a|<1/4$ we define $Z_0$ to be the above space and for nonzero $a$ 
we set $R=\varphi(|a|)$ and $a=|a|\cdot e^{2\pi i\theta}$ and define 
$Z_a$ to consist of all $\{(s,t),(s',t')\}$, $(s,t)\in [0,R]\times S^1$, $(s',t')\in [-R,0]\times S^1$ and $s=s'+R$, $t=t'+\theta$.
Now the gluing of $(u^+,u^-)$ over $Z_a$ is defined by 
$$
\oplus_a(u^+,u^-)(\{(s,t),(s',t')\}) =\beta(|s|-R/2)\cdot u^+(s,t) + \beta(|s'|-R/2)\cdot u^-(s',t')
$$
Let us write $Y_N$ if the target is ${\mathbb R}^N$. The construction is functorial in the sense that 
for a smooth map $f:{\mathbb R}^N\rightarrow {\mathbb R}^M$ the map
$$
Y_N\rightarrow Y_M: u\rightarrow f\circ u
$$
is sc-smooth. This cane be checked using the results in \cite{HWZ8.7}.   Hence $Y$ can be viewed as  functor which associates to ${\mathbb R}^N$ the M-polyfold $Y_N$ and to a smooth map 
$f:{\mathbb R}^N\rightarrow {\mathbb R}^M$ a sc-smooth map.  
The fact that $Y$ is such functor implies quite easily that it has an extension where the image is any smooth manifold $Q$, see \cite{FH-primer} for a proof.
We summarize the discussion as follows.  Denote by ${\mathcal N}$ the category whose objects are the various ${\mathbb R}^N$ and 
the morphisms are the smooth maps.  We denote by $\mathsf{M}$ the category of M-polyfolds and by $\mathsf{M}_{tame}$
the full subcategory of tame M-polyfolds.
\begin{thm}
The construction which associates to ${\mathbb R}^N$ the M-polyfold $Y_N$ and to a smooth map $f:{\mathbb R}^N\rightarrow {\mathbb R}^M$
the map $Y(f): Y_N\rightarrow Y_M: u\rightarrow f\circ u$ is a covariant functor into $\mathsf{M}_{tame}$.
\qed
\end{thm}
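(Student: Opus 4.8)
The plan is to break the statement into three parts: (a) each $Y_N$ carries a canonical tame M-polyfold structure; (b) for every smooth $f\colon{\mathbb R}^N\to{\mathbb R}^M$ the map $Y(f)\colon Y_N\to Y_M,\ u\mapsto f\circ u$ is well defined and sc-smooth; (c) the assignments $N\mapsto Y_N$ and $f\mapsto Y(f)$ obey the two functor axioms. Parts (a) and (c) are essentially bookkeeping given the earlier results, and the analytic heart of the matter is part (b).

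For (a) I would apply the Imprinting Theorem to the surjection $\oplus_N\colon{\mathbb B}\times E_N\to Y_N$ of the gluing example (here $E_N$ denotes the sc-Hilbert space $E$ with target ${\mathbb R}^N$, and ${\mathbb B}\times E_N$ is an M-polyfold). One checks that the quotient topology is metrizable, and then, for $w\in Y_N$ realized over $(S_{\mathfrak a_0},j_{\mathfrak a_0},D_{\mathfrak a_0})$, constructs a local section $H\colon U(w)\to{\mathbb B}\times E_N$ by keeping the gluing-parameter component fixed near $\mathfrak a_0$ and \emph{anti-gluing}, i.e. cutting the data off near the glued necks with the function $\beta$ and transplanting the pieces onto the model half-cylinders ${\mathbb R}^\pm\times S^1$; then $\oplus_N\circ H=\mathrm{id}$ by construction, and the substantive point is that $H\circ\oplus_N$ is sc-smooth, which is the gluing/anti-gluing estimate of \cite{FH-primer,FH-book}. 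The Imprinting Theorem then yields the M-polyfold structure on $Y_N$, unique and characterized by $\oplus_N$ and all such $H$ being sc-smooth; tameness follows from the product form of the local models (and the boundary-with-corners structure of $S$, when present) as in \cite{FH-primer}.

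For (b) I would first record well-definedness: away from the nodes $f\circ u$ is $H^3_{\mathrm{loc}}$ by the chain rule together with $H^3\hookrightarrow C^1$ in real dimension two, and near a node where $u$ converges with exponential rate $\delta_0$ to the matching value $c$, Taylor expansion of $f$ at $c$ shows $f\circ u$ converges to $f(c)$ with the same rate, so $f\circ u$ is again of class $(3,\delta_0)$ on $(S_{\mathfrak a},j_{\mathfrak a},D_{\mathfrak a})$ and hence defines an object of $Y_M$. For sc-smoothness, fix $w_0\in Y_N$ and local sections $H_N$ of $\oplus_N$ near $w_0$ and $H_M$ of $\oplus_M$ near $Y(f)(w_0)$; on a sufficiently small neighbourhood one has $Y(f)=\oplus_M\circ\big(H_M\circ Y(f)\circ\oplus_N\big)\circ H_N$, and since $\oplus_M$ and $H_N$ are sc-smooth it suffices to show that $G:=H_M\circ Y(f)\circ\oplus_N$, a map between open subsets of the concrete M-polyfolds ${\mathbb B}\times E_N$ and ${\mathbb B}\times E_M$, is sc-smooth. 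Unwinding the definitions, $G$ preserves the ${\mathbb B}$-component and on the Hilbert factor is, over the core region, the composition operator $u\mapsto f\circ u$, and, over the glued necks, the composite ``glue the half-cylinder data by $\oplus_{\mathfrak a}$, postcompose with $f$, anti-glue'' — in particular the $Y_M$-valued intermediate stage cancels and no circularity arises. The sc-smoothness of the gluing and anti-gluing operations is part of (a); the decisive input is the sc-smoothness of the composition operator $u\mapsto f\circ u$ on the relevant scales of exponentially weighted Sobolev spaces, which is exactly what \cite{HWZ8.7} provides. I expect this composition-operator result, together with the bookkeeping showing that $G$ is assembled from it and from gluing as claimed, to be the main obstacle.

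Finally, for (c), functoriality is formal: $Y(\mathrm{id}_{{\mathbb R}^N})(u)=u$, so $Y(\mathrm{id})=\mathrm{id}_{Y_N}$, and $Y(g\circ f)(u)=(g\circ f)\circ u=g\circ(f\circ u)=Y(g)\big(Y(f)(u)\big)$, so $Y(g\circ f)=Y(g)\circ Y(f)$; compositions of sc-smooth maps are sc-smooth and, by (a), every $Y_N$ is a tame M-polyfold, so $Y$ is a covariant functor ${\mathcal N}\to\mathsf{M}_{\mathrm{tame}}$.
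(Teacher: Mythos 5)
Your proposal is correct and follows essentially the route the paper intends: the paper gives no written proof, stating only that the claim ``can be checked using the results in \cite{HWZ8.7}'' after the imprinting construction of $Y_N$, and your decomposition --- M-polyfold structures via the imprinting theorem with anti-gluing sections, sc-smoothness of $Y(f)$ reduced through $\oplus_M\circ(H_M\circ Y(f)\circ\oplus_N)\circ H_N$ to the sc-smoothness of the composition operator and of gluing/anti-gluing from \cite{HWZ8.7,FH-primer}, plus formal verification of the functor axioms and tameness --- is exactly that argument fleshed out. In particular you correctly avoid the tempting but false shortcut $Y(f)\circ\oplus_N=\oplus_M\circ(\mathrm{id}\times C_f)$ (gluing by cut-off interpolation does not commute with postcomposition by $f$), handling the necks by the glue--postcompose--anti-glue composite instead.
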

That such functors have natural extensions to the category of smooth manifolds with smooth maps between them is not difficult to
see and we shall give the argument later on.
\subsection{More Properties of Imprinting}
Imprinting has some naturality properties.

\begin{thm}\label{THMP1.4}
Assume that $X$ is a M-polyfold and $Y$ and $Z$ are sets and the maps  $\oplus_1$ and $\oplus_2$ in the diagram
$$
X\xrightarrow{\oplus_1} Y\xrightarrow{\oplus_2} Z
$$
are surjective. Define $\oplus:X\rightarrow Z$ by $\oplus =\oplus_2\circ\oplus_1$.  Assume further that $\oplus_1:X\rightarrow Y$ is an imprinting
and assume that $Y$ is equipped with the associated M-polyfold structure. Then the following two statements are equivalent.
\begin{itemize}
\item[(1)] $\oplus:X\rightarrow Z$ is an imprinting.
\item[(2)] $\oplus_2:Y\rightarrow Z$ is an imprinting.
\end{itemize}
Moreover the induced M-polyfold structures (and topology) on $Z$ by both constructions coincide.
\qed
\end{thm}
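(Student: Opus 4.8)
The plan is to verify, in each of the two cases, the two defining conditions of the imprinting theorem for the map in question, after first isolating a \emph{quotient (universal) property} of imprintings that does the real work. So I would begin by recording that an imprinting $\oplus_1:X\to Y$ is in particular a topological quotient map (the M-polyfold topology on $Y$ is the quotient topology $\mathcal T_{\oplus_1}$), and that for every M-polyfold $N$, every open $W\subseteq Y$, and every map $f:W\to N$ one has: $f$ is sc-smooth $\iff$ $f\circ\oplus_1:\oplus_1^{-1}(W)\to N$ is sc-smooth. The implication ``$\Rightarrow$'' is immediate from sc-smoothness of $\oplus_1$ (property (i)); for ``$\Leftarrow$'' one covers $W$ by the domains $U_1(y)$ of the local sections $H_1^y$ of $\oplus_1$ and uses $f|_{U_1(y)\cap W}=(f\circ\oplus_1)\circ H_1^y$ together with sc-smoothness of $H_1^y$ (property (ii)) and the fact that $H_1^y$ carries $U_1(y)\cap W$ into $\oplus_1^{-1}(W)$. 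This is a standard consequence of the imprinting theorem, so I would cite it from \cite{FH-primer,FH-book} or include this two-line argument. It also follows (from sc-smoothness, or directly from the quotient map property) that the local sections appearing in the imprinting theorem are continuous, which I will use below.

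Next I would settle the topology. Since $\oplus_1$ is a quotient map and composites of quotient maps are quotient maps, the quotient topology on $Z$ induced by $\oplus=\oplus_2\circ\oplus_1$ agrees with that induced by $\oplus_2$ whenever $\oplus_2$ is a quotient map; conversely, if $\oplus_2\circ\oplus_1$ is a quotient map then, $\oplus_1$ being a surjective quotient map, so is $\oplus_2$. Hence in both case (1) and case (2) the set $Z$ carries the same topology $\mathcal T_Z:=\mathcal T_{\oplus}=\mathcal T_{\oplus_2}$, so that ``$\mathcal T_Z$ metrizable'' is literally the same hypothesis in (1) and in (2). It remains to match the local-section conditions.

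For $(2)\Rightarrow(1)$: given $z\in Z$, fix imprinting data $(U_2(z),H_2)$ for $\oplus_2$, set $y:=H_2(z)\in Y$, choose imprinting data $(U_1(y),H_1)$ for $\oplus_1$, shrink to the open neighbourhood $U(z):=U_2(z)\cap H_2^{-1}(U_1(y))$ of $z$ (open since $H_2$ is continuous), and put $H:=H_1\circ H_2|_{U(z)}:U(z)\to X$. Then $\oplus\circ H=\oplus_2\circ\oplus_1\circ H_1\circ H_2=\oplus_2\circ H_2=\mathrm{Id}_{U(z)}$, and on $\oplus^{-1}(U(z))$ one has $H\circ\oplus=H_1\circ(H_2\circ\oplus_2)\circ\oplus_1$, where $(H_2\circ\oplus_2)\circ\oplus_1$ is sc-smooth (composite of the sc-smooth map $H_2\circ\oplus_2$ with $\oplus_1$), carries $\oplus^{-1}(U(z))$ into $U_1(y)$ (using surjectivity of $\oplus_1$ and $\oplus_2$), and $H_1$ is sc-smooth on $U_1(y)$ by property (ii) for $\oplus_1$; so $H\circ\oplus$ is sc-smooth, proving $\oplus$ is an imprinting. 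For $(1)\Rightarrow(2)$: given $z$, fix imprinting data $(U_0(z),H_0)$ for $\oplus$ and set $H_2:=\oplus_1\circ H_0:U_0(z)\to Y$; then $\oplus_2\circ H_2=\oplus\circ H_0=\mathrm{Id}$, and to see that $H_2\circ\oplus_2$ is sc-smooth on $\oplus_2^{-1}(U_0(z))$ I would invoke the quotient property of $\oplus_1$: it suffices that $(H_2\circ\oplus_2)\circ\oplus_1=H_2\circ\oplus=\oplus_1\circ(H_0\circ\oplus)$ be sc-smooth, which it is, being a composite of the sc-smooth maps $H_0\circ\oplus$ and $\oplus_1$.

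Finally, when both (1) and (2) hold, I would check that the M-polyfold structure on $Z$ coming from $\oplus_2$ satisfies the two properties characterizing the one coming from $\oplus$: $\oplus=\oplus_2\circ\oplus_1$ is sc-smooth as a composite of sc-smooth maps, and any local section $H$ of $\oplus$ with $H\circ\oplus$ sc-smooth is sc-smooth for the $\oplus_2$-structure since, applying the quotient property first to $\oplus_1$ and then to $\oplus_2$, $H\circ\oplus_2$ is sc-smooth (its precomposite with $\oplus_1$ being $H\circ\oplus$). By the uniqueness clause of the imprinting theorem the two M-polyfold structures, and hence their topologies, coincide. The only genuine obstacle in the whole argument is the quotient/universal property of imprintings; once that is available, each remaining step is a diagram chase that merely keeps track of the domains of the local sections and of the subsets into which they map.
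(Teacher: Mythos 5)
Your proof is correct, and since the paper states Theorem \ref{THMP1.4} without proof (deferring to \cite{FH-primer,FH-book}), the comparison can only be with the intended mechanism: isolating the universal (quotient) property of an imprinting from properties (i)--(ii) and then chasing local sections through the composite is exactly the standard route, and your verification of the section conditions in both directions, as well as the uniqueness argument for the ``moreover'' clause, is complete. One small simplification: in your topology paragraph the case distinction is unnecessary, since the identity $\mathcal T_{\oplus}=\mathcal T_{\oplus_2}$ holds unconditionally --- $V\subset Z$ satisfies $\oplus^{-1}(V)=\oplus_1^{-1}\bigl(\oplus_2^{-1}(V)\bigr)$ open in $X$ if and only if $\oplus_2^{-1}(V)$ is open in $Y$, because $\mathcal T_{\oplus_1}$ is the quotient topology --- so the metrizability hypotheses in (1) and (2) coincide without any appeal to ``composites of quotient maps''; this is a matter of streamlining, not a gap.
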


Here is another result which is  very important for our approach. 
\begin{thm}\label{CORTY2.8}
Assume that $\oplus:X\rightarrow Y$ is a M-polyfold construction by the imprinting-method and $Y'$ a subset of $Y$.
If $X':=\oplus^{-1}(Y')$ is a sub-M-polyfold of $X$ then $\oplus':X'\rightarrow Y'$, where $\oplus'=\oplus|X'$, is an imprinting.
The associated M-polyfold construction on $Y'$ defines the topology induced from $Y$. Moreover $Y'$ is a sub-M-polyfold of $Y$ 
and the induced  M-polyfold from $Y$ and the $\oplus'$-structure coincide.
\qed
\end{thm}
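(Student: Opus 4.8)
The plan is to verify the two defining properties of an imprinting for $\oplus'$, to identify its quotient topology with the topology induced from $Y$, and finally to transport the sc$^+$-retraction exhibiting $X'\subseteq X$ as a sub-M-polyfold to one exhibiting $Y'\subseteq Y$ as a sub-M-polyfold, checking along the way that the resulting two M-polyfold structures on $Y'$ agree. Throughout I use the standard facts that the inclusion of a sub-M-polyfold is sc-smooth, that a map with image in a sub-M-polyfold is sc-smooth if and only if it is sc-smooth into the ambient M-polyfold, that a sub-M-polyfold carries the induced topology, and that an sc-smooth map induces an sc-smooth map on the level-shifted M-polyfolds.

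\emph{Topology and axiom (1).} Since $\oplus$ is sc-smooth, hence continuous, $\oplus'=\oplus|_{X'}$ is continuous into $Y'$ with the subspace topology, so the quotient topology ${\mathcal T}_{\oplus'}$ is at least as fine. For the reverse, fix $y\in Y'$ and let $H\colon U(y)\to X$ be a local section for $\oplus$ furnished by the imprinting of $Y$; $H$ is sc-smooth, hence continuous, and $H$ maps $U(y)\cap Y'$ into $X'$ because $\oplus(H(y''))=y''\in Y'$. If $W\subseteq Y'$ is ${\mathcal T}_{\oplus'}$-open, write $(\oplus')^{-1}(W)=\widetilde W\cap X'$ with $\widetilde W$ open in $X$; then $H^{-1}(\widetilde W)$ is open in $Y$ and one checks $H^{-1}(\widetilde W)\cap Y'\cap U(y)=W\cap U(y)$, so $W$ is subspace-open in $Y'$. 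Hence ${\mathcal T}_{\oplus'}$ equals the topology induced from $Y$, which is metrizable; this is axiom (1) for $\oplus'$ and proves the claim about the topology.

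\emph{Axiom (2), hence $\oplus'$ is an imprinting.} For $y\in Y'$ put $U'=U(y)\cap Y'$ and $H'=H|_{U'}\colon U'\to X'$. Then $\oplus'\circ H'=\mathrm{Id}_{U'}$, and on $(\oplus')^{-1}(U')=\oplus^{-1}(U(y))\cap X'$ the map $H'\circ\oplus'$ is the restriction of the sc-smooth map $H\circ\oplus\colon\oplus^{-1}(U(y))\to X$ to the sub-M-polyfold $\oplus^{-1}(U(y))\cap X'$, with image in $X'$, so it is sc-smooth into $X'$. Thus $\oplus'\colon X'\to Y'$ satisfies (1) and (2) and is an imprinting. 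Now I turn to the sub-M-polyfold claim. Fix $y'\in Y'$, set $x'=H(y')\in X'$, and choose an sc$^+$-retraction $r\colon V\to V$ in $X$ with $x'\in V$ and $r(V)=X'\cap V$. Shrinking $U(y')$ so that $H(U(y'))\subseteq V$ and $(\oplus\circ r\circ H)(U(y'))\subseteq U(y')$, define $\rho=\oplus\circ r\circ H\colon U(y')\to U(y')$. Since $r$ fixes $X'\cap V$ and $H$ maps $Y'$ into $X'$, $\rho$ fixes $Y'\cap U(y')$; since $r\circ H$ has image in $X'$, $\rho$ has image in $Y'$; hence $\rho$ is idempotent with $\rho(U(y'))=Y'\cap U(y')$. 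As $r\circ H\colon U(y')\to X^1$ and $\oplus\colon X^1\to Y^1$ are sc-smooth, $\rho$ is sc$^+$-smooth, so $Y'$ is a sub-M-polyfold of $Y$. Write $Y'_{\mathrm{ind}}$ for the induced structure and $Y'_{\oplus'}$ for the imprinting structure; both carry the topology induced from $Y$. The identity $Y'_{\oplus'}\to Y'_{\mathrm{ind}}$ is sc-smooth because near each point it factors as $\oplus|_{X'}\circ H'$ with $H'$ sc-smooth into $X'$ (by property (ii) of the imprinting $\oplus'$) and $\oplus|_{X'}\colon X'\to Y$ sc-smooth, so the inclusion $Y'_{\oplus'}\hookrightarrow Y$ is sc-smooth, hence so is the map into the sub-M-polyfold $Y'_{\mathrm{ind}}$. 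Conversely, near $y'$ the structure $Y'_{\mathrm{ind}}$ is the image of $\rho$, so it suffices that $\rho\colon U(y')\to Y'_{\oplus'}$ be sc-smooth; but $\rho=\oplus'\circ(r\circ H)$ with $r\circ H\colon U(y')\to X'$ sc-smooth and $\oplus'\colon X'\to Y'_{\oplus'}$ sc-smooth. Hence the identity is an sc-diffeomorphism and $Y'_{\mathrm{ind}}=Y'_{\oplus'}$.

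I expect the last step to be the main obstacle: producing the retraction $\rho$ on $Y$ — transporting $r$ through the local section $H$ and then through $\oplus$, and checking that it is idempotent with the right image and lands in the level-one space — and then keeping straight, in each direction, which structural feature (the quotient-type property of an imprinting, or the retract description of a sub-M-polyfold) licenses the sc-smoothness of the identity map. By contrast, the topological identification and axioms (1)–(2) for $\oplus'$ are routine once the local sections $H$ of the ambient imprinting are brought into play.
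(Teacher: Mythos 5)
Your proof is correct, and since the paper states this result without proof (deferring to \cite{FH-primer,FH-book}), your route is exactly the intended one: restrict the local sections $H$ to get the imprinting axioms for $\oplus'$, identify the quotient topology with the subspace topology via $H$, and transport the retraction for $X'$ to the retraction $\rho=\oplus\circ r\circ H$ for $Y'$, then check the identity on $Y'$ is sc-smooth in both directions using properties (i)--(ii) of the two imprintings. One small caveat: a sub-M-polyfold is cut out by an sc-smooth (not necessarily sc$^+$-smooth) retraction, so you should only claim $r$, and hence $\rho$, to be sc-smooth --- the composition argument is unchanged --- and the shrinking needed to arrange $\rho(U(y'))\subseteq U(y')$ is the standard adjustment of replacing $U(y')$ by $U''\cap\rho^{-1}(U'')$ for a smaller neighborhood $U''$.
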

The imprinting method  is well-behaved with certain trivial procedures.
\begin{thm}[Product]\label{product12.3}
Let $\oplus:X\rightarrow Y$ and $\oplus':X'\rightarrow Y'$ be two M-polyfold constructions by imprinting.
Then $\oplus\times\oplus':X\times X'\rightarrow Y\times Y'$ is a M-polyfold construction by imprinting.
The induced M-polyfold structure on $Y\times Y'$ is the product structure. In particular the quotient topology ${\mathcal T}_{\oplus\times \oplus'}$
on $Y\times Y'$ is the product topology ${\mathcal T}_{\oplus}\times {\mathcal T}_{\oplus'}$, which is the topology having as basis the products of open sets.
\qed
\end{thm}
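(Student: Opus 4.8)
The plan is to check directly that the diagram $\oplus\times\oplus':X\times X'\to Y\times Y'$ satisfies the two hypotheses of the imprinting theorem, and then to identify the resulting M-polyfold structure with the product structure by the uniqueness clause of that theorem. Throughout I would use two standard facts of the M-polyfold calculus: a finite product of M-polyfolds is an M-polyfold, and a product of sc-smooth maps is sc-smooth. With these in hand the only genuinely non-formal point is the topology, so I would begin there.

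First I would prove that the quotient topology ${\mathcal T}_{\oplus\times\oplus'}$ on $Y\times Y'$ equals the product topology ${\mathcal T}_{\oplus}\times{\mathcal T}_{\oplus'}$, hence is metrizable. One inclusion is automatic: ${\mathcal T}_{\oplus}\times{\mathcal T}_{\oplus'}$ makes $\oplus\times\oplus'$ continuous (its two components are $\oplus\circ\mathrm{pr}_1$ and $\oplus'\circ\mathrm{pr}_2$), so the finer quotient topology contains it. For the reverse inclusion I would use the local sections. Since $\oplus$ restricted to the saturated open set $\oplus^{-1}(U(y))$ is a quotient map onto $U(y)$ and $H\circ\oplus$ is sc-smooth, hence continuous, each section $H:U(y)\to X$ is continuous for ${\mathcal T}_{\oplus}$; likewise for $H'$. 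Given $W\in{\mathcal T}_{\oplus\times\oplus'}$ and $(y_0,y_0')\in W$, the map $\sigma(y,y'):=(H_{y_0}(y),H'_{y_0'}(y'))$ on $U(y_0)\times U'(y_0')$ is continuous for ${\mathcal T}_{\oplus}\times{\mathcal T}_{\oplus'}$ and satisfies $(\oplus\times\oplus')\circ\sigma=\mathrm{id}$, so $W_0:=\sigma^{-1}\big((\oplus\times\oplus')^{-1}(W)\big)$ is a ${\mathcal T}_{\oplus}\times{\mathcal T}_{\oplus'}$-open neighborhood of $(y_0,y_0')$ contained in $W$. As $(y_0,y_0')$ was arbitrary, $W\in{\mathcal T}_{\oplus}\times{\mathcal T}_{\oplus'}$. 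Since a product of metrizable spaces is metrizable, this gives hypothesis (1).

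Hypothesis (2) is then immediate: for $(y,y')$ one takes the open set $U(y)\times U'(y')$ together with $H\times H'$, for which $(\oplus\times\oplus')\circ(H\times H')=\mathrm{id}$, while $(H\times H')\circ(\oplus\times\oplus')$, restricted to $\oplus^{-1}(U(y))\times(\oplus')^{-1}(U'(y'))$, equals $(H\circ\oplus)\times(H'\circ\oplus')$, a product of sc-smooth maps. By the imprinting theorem, $Y\times Y'$ thus carries a unique M-polyfold structure $P$ for which $\oplus\times\oplus'$ and all admissible local sections are sc-smooth. To see that $P$ equals the product structure $Q$ (the underlying topologies already agree, by the previous step), I would show that $\mathrm{id}:P\to Q$ and $\mathrm{id}:Q\to P$ are both sc-smooth. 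For $\mathrm{id}:P\to Q$ I would use that a map $f$ out of an imprinting is sc-smooth as soon as $f\circ(\oplus\times\oplus')$ is, since locally $f=(f\circ(\oplus\times\oplus'))\circ(H\times H')$ with $H\times H'$ sc-smooth for $P$ by part (ii) of the imprinting theorem; applied to $f=\mathrm{id}$ this requires $\oplus\times\oplus':X\times X'\to Q$ to be sc-smooth, which holds because it is the product of the sc-smooth maps $\oplus$ and $\oplus'$. For $\mathrm{id}:Q\to P$, near $(y_0,y_0')$ I would factor it on $U(y_0)\times U'(y_0')$ as $H_{y_0}\times H'_{y_0'}$ followed by $\oplus\times\oplus':X\times X'\to P$; the first map is sc-smooth for $Q$ (a product of the sc-smooth maps $H_{y_0},H'_{y_0'}$, each sc-smooth by the imprinting theorem for $Y$ resp.\ $Y'$), the second is sc-smooth for $P$, and the composite is the identity since $\oplus\circ H_{y_0}=\mathrm{id}$ and $\oplus'\circ H'_{y_0'}=\mathrm{id}$. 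Hence $P=Q$, and the assertion about the topology was obtained along the way. (Alternatively, via Theorem~\ref{THMP1.4} and $\oplus\times\oplus'=(\mathrm{id}_Y\times\oplus')\circ(\oplus\times\mathrm{id}_{X'})$, one could reduce the whole statement to the special case in which one factor is the identity.)

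I expect the main obstacle to be exactly the topology step: the quotient topology of a product of two quotient maps need not be the product of the quotient topologies, so one cannot just invoke generalities about quotient maps — the argument essentially uses the continuous local sections that the imprinting hypothesis supplies. Once that is settled, the rest is a formal application of the imprinting theorem together with the stability of sc-smoothness under products.
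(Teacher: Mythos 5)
Your argument is correct and complete modulo the standard facts you invoke (products of M-polyfolds and of sc-smooth maps, continuity of sc-smooth maps, metrizability of products, restriction of a quotient map to a saturated open set); in particular you correctly isolate the only non-formal point, namely that the quotient topology of a product of quotient maps need not be the product topology, and you resolve it with the continuous local sections furnished by the imprinting hypothesis, then identify the induced structure with the product structure by showing the identity is sc-smooth in both directions. The paper states Theorem \ref{product12.3} without proof (the details being deferred to \cite{FH-primer,FH-book}), so there is no in-text argument to compare with, but your route is the expected one.
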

Here is another result.
\begin{thm}[Disjoint Union]\label{sum12.3}
The disjoin union $\oplus\sqcup\oplus'$ \index{$\oplus\sqcup\oplus'$} of two imprintings  is an imprinting.
\qed
\end{thm}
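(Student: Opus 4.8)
The plan is to check directly that $\oplus\sqcup\oplus'\colon X\sqcup X'\to Y\sqcup Y'$ satisfies the two hypotheses (1) and (2) of the Imprinting Theorem, and then to invoke that theorem. First I would record the elementary inputs: $X\sqcup X'$ is an M-polyfold (M-polyfolds are closed under disjoint unions), $\oplus\sqcup\oplus'$ is surjective since $\oplus$ and $\oplus'$ are, and the two inclusions $X\hookrightarrow X\sqcup X'$, $X'\hookrightarrow X\sqcup X'$ are sc-smooth open embeddings. With these in hand the verification becomes a short topological bookkeeping argument.

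For hypothesis (1) I would identify the quotient topology explicitly. Every subset of $Y\sqcup Y'$ has the form $W=V\sqcup V'$ with $V\subseteq Y$, $V'\subseteq Y'$, and $(\oplus\sqcup\oplus')^{-1}(W)=\oplus^{-1}(V)\sqcup\oplus'^{-1}(V')$ is open in $X\sqcup X'$ precisely when $\oplus^{-1}(V)$ is open in $X$ and $\oplus'^{-1}(V')$ is open in $X'$, i.e. when $V\in\mathcal T_{\oplus}$ and $V'\in\mathcal T_{\oplus'}$. Hence $\mathcal T_{\oplus\sqcup\oplus'}$ is exactly the disjoint-union topology $\mathcal T_{\oplus}\sqcup\mathcal T_{\oplus'}$, in which $Y$ and $Y'$ are clopen. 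Metrizability then follows by choosing metrics $d,d'$ inducing $\mathcal T_{\oplus},\mathcal T_{\oplus'}$, truncating each at $1$, and declaring the distance between a point of $Y$ and a point of $Y'$ to be $1$; the triangle inequality is immediate and the resulting metric induces $\mathcal T_{\oplus\sqcup\oplus'}$.

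For hypothesis (2), let $z\in Y\sqcup Y'$; by symmetry assume $z\in Y$. By hypothesis there are $U(z)\in\mathcal T_{\oplus}$ and $H\colon U(z)\to X$ with $\oplus\circ H=\mathrm{Id}_{U(z)}$ and $H\circ\oplus\colon\oplus^{-1}(U(z))\to X$ sc-smooth. Since $Y$ is clopen in $Y\sqcup Y'$ we have $U(z)\in\mathcal T_{\oplus\sqcup\oplus'}$, and I would simply reuse the same $H$, now valued in $X\sqcup X'$ via the inclusion. Then $(\oplus\sqcup\oplus')\circ H=\oplus\circ H=\mathrm{Id}_{U(z)}$; moreover $(\oplus\sqcup\oplus')^{-1}(U(z))=\oplus^{-1}(U(z))\subseteq X$ and $\oplus\sqcup\oplus'$ restricts to $\oplus$ there, so $H\circ(\oplus\sqcup\oplus')$ equals the inclusion composed with $H\circ\oplus$, hence is sc-smooth. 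This gives (2), so $\oplus\sqcup\oplus'$ is an imprinting. Finally, to pin down the resulting structure I would appeal to the uniqueness clause of the Imprinting Theorem: the disjoint-union M-polyfold structure on $Y\sqcup Y'$ makes $\oplus\sqcup\oplus'$ sc-smooth and makes every admissible local section $H$ sc-smooth, hence it coincides with the imprinting structure.

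There is no genuine analytic obstacle here; the entire content is organizing disjoint unions correctly. The one step I would state with care is the identification $\mathcal T_{\oplus\sqcup\oplus'}=\mathcal T_{\oplus}\sqcup\mathcal T_{\oplus'}$ together with its explicit metrizability, since once that is in place every ingredient of hypothesis (2) transfers verbatim from the two given imprintings. (Alternatively one could deduce the result from Theorem \ref{product12.3} and the fact that a disjoint union looks locally like one of its factors, but the direct verification above is cleaner.)
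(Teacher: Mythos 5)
Your verification is correct: identifying $\mathcal{T}_{\oplus\sqcup\oplus'}$ with the disjoint-union topology, checking metrizability via truncated metrics, and transferring each local section $H$ through the clopen decomposition is exactly the intended argument, and the paper itself omits the proof precisely because it reduces to this routine bookkeeping. Nothing is missing.
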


\subsection{Operations}\label{qsec3.1}
Using the previously established results we can take the {\bf product} $\oplus\times\oplus'$ and the {\bf disjoint union} $\oplus\sqcup\oplus'$ of two given
imprintings. Given $\oplus:X\rightarrow Y$ and an injective map $\phi:Y'\rightarrow Y$ with the property that $X':=\oplus^{-1}(\phi(Y'))$
is a sub-M-polyfold of $X$ we can consider the commutative diagram
$$
\begin{CD}
X @>\oplus >>  Y\\
@A\text{incl} AA   @A\phi AA\\
X' @> \phi^{-1}\circ \oplus|X' >>   Y'
\end{CD}
$$
In this case with $\oplus':=\phi^{-1}\circ \oplus|X'$ we see by a previous discussion that $\oplus':X'\rightarrow Y'$ is an imprinting.
\begin{definition}
Given an imprinting $\oplus:X\rightarrow Y$ and an injective map between sets $\phi:Y'\rightarrow Y$, we say that
$\phi$ is {\bf admissible} provided 
$$
X':=\oplus^{-1}(\phi(Y))
$$
 is a sub-M-polyfold of $X$. In this case we define the {\bf pull-back}
$\phi^\ast\oplus $ by 
$$
\phi^{-1}\circ\oplus|(\oplus^{-1}(\phi(Y'))): \oplus^{-1}(\phi(Y'))\rightarrow Y'.
$$
\qed
\end{definition}
The following is an easy exercise.
\begin{lem}
Assume that $\oplus:X\rightarrow Y$ is an imprinting and $\phi:Y'\rightarrow Y$ is admissible so that $\oplus'=\phi^{\ast}\oplus$\index{$\oplus'=\phi^{\ast}\oplus$} is defined.
Suppose further $\psi:Y''\rightarrow Y'$ is admissible for $\oplus'$ defining $\psi^\ast\oplus'$.  Then $\phi\circ\psi$ is admissible for 
$\oplus$ and naturally $(\phi\circ\psi)^\ast\oplus = \psi^\ast(\phi^\ast\oplus)$.
\qed
\end{lem}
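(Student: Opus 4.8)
The plan is to reduce everything to one set-theoretic identity together with the transitivity of the sub-M-polyfold relation. First I would unwind the definitions. Admissibility of $\phi$ for $\oplus$ means $X' := \oplus^{-1}(\phi(Y'))$ is a sub-M-polyfold of $X$, and by Theorem \ref{CORTY2.8} the map $\oplus' = \phi^{\ast}\oplus = \phi^{-1}\circ(\oplus|X'):X'\to Y'$ is an imprinting; in particular $\oplus = \phi\circ\oplus'$ holds pointwise on $X'$. Admissibility of $\psi$ for $\oplus'$ means $X'' := (\oplus')^{-1}(\psi(Y''))$ is a sub-M-polyfold of $X'$.

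The key step is the identity
$$
\oplus^{-1}\big((\phi\circ\psi)(Y'')\big) = X''.
$$
Indeed, since $\psi(Y'')\subset Y'$ we have $(\phi\circ\psi)(Y'') = \phi(\psi(Y''))\subset\phi(Y')$, so the left-hand side is contained in $X'$. For $x\in X'$ one has $\oplus(x) = \phi(\oplus'(x))$, and because $\phi$ is injective, the condition $\oplus(x)\in\phi(\psi(Y''))$ is equivalent to $\oplus'(x)\in\psi(Y'')$, i.e. to $x\in X''$. This proves the display.

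From here the lemma follows quickly. Since $X''$ is a sub-M-polyfold of $X'$ and $X'$ is a sub-M-polyfold of $X$, transitivity of the sub-M-polyfold relation gives that $X'' = \oplus^{-1}((\phi\circ\psi)(Y''))$ is a sub-M-polyfold of $X$; hence $\phi\circ\psi$, which is injective as a composition of injections, is admissible for $\oplus$, and by Theorem \ref{CORTY2.8} the pull-back $(\phi\circ\psi)^{\ast}\oplus = (\phi\circ\psi)^{-1}\circ(\oplus|X'')$ is an imprinting. On the other hand, $\psi$ being admissible for $\oplus'$ gives, again by Theorem \ref{CORTY2.8}, that $\psi^{\ast}(\phi^{\ast}\oplus) = \psi^{\ast}\oplus' = \psi^{-1}\circ(\oplus'|X'')$ is an imprinting. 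For $x\in X''$ we compute $\psi^{-1}(\oplus'(x)) = \psi^{-1}(\phi^{-1}(\oplus(x))) = (\phi\circ\psi)^{-1}(\oplus(x))$, so these two maps coincide as maps $X''\to Y''$; since the M-polyfold structure produced by an imprinting is uniquely determined by the imprinting map, the two pull-backs agree as M-polyfold constructions, which is the claim.

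The only non-formal ingredient is the transitivity of the sub-M-polyfold relation, and that is the step I expect to require the most care: one needs that if $N\subset M\subset O$ with $M$ a sub-M-polyfold of $O$, hence locally the image of an sc$^{+}$-retraction of $O$, and $N$ a sub-M-polyfold of $M$ for its induced structure, then $N$ is a sub-M-polyfold of $O$ — which follows from the fact that a composition of sc$^{+}$-retractions is again sc$^{+}$, keeping track of the level shift. This is standard, and it is the one place where the ``easy exercise'' hides a little M-polyfold bookkeeping; everything else is bookkeeping with injective maps and the definition of the pull-back.
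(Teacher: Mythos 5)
The paper gives no proof of this lemma (it is explicitly left as an easy exercise), so there is no argument of the authors to compare against; your proof is correct and is the intended one: the set identity $\oplus^{-1}\bigl((\phi\circ\psi)(Y'')\bigr)=X''$ via injectivity of $\phi$, together with transitivity of the sub-M-polyfold relation obtained by composing the local retractions, is exactly what is needed. One point worth making explicit in your last step: when you conclude that $(\phi\circ\psi)^{\ast}\oplus$ and $\psi^{\ast}(\phi^{\ast}\oplus)$ coincide because they have the same underlying set map $X''\to Y''$, you are also using that the M-polyfold structure induced on $X''$ as a sub-M-polyfold of $X'$ (which itself carries the structure induced from $X$) agrees with the structure induced on $X''$ directly as a sub-M-polyfold of $X$; otherwise the two imprintings would share a set map but have a priori different domain structures, and the uniqueness of the induced structure on $Y''$ would not immediately apply. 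This agreement comes out of the same retraction bookkeeping you already invoke for transitivity (the composed retraction $s\circ r$ cuts out $X''$ in $X$ with the same induced structure), so it is a one-line addition rather than a gap.
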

These are some basic operations which one can carry out to stay within the scope of imprinting constructions.
The playing field can be vastly extended by adding what we call restriction maps. This is done in the next subsection.

\subsection{Restrictions}  \label{qsec3.2}
We start with a definition adding an additional piece of structure to the imprinting method.
\begin{definition}
An {\bf imprinting with restriction} is a pair $(\oplus,\bm{p})$, \index{$(\oplus,\bm{p})$} where
 $\oplus:X\rightarrow Y$ is a M-polyfold construction by imprinting, and $\bm{p}$ is a finite family of maps
 $p_i:Y\rightarrow A_i$, $i\in I$, where the $A_i$ are M-polyfolds and the compositions  $p_i\circ\oplus:X\rightarrow A_i$ are sc-smooth.
\qed
\end{definition}
The following definition will be crucial for fibered product constructions.
\begin{definition}
Assume that $(\oplus,\bm{p})$ and $(\oplus',\bm{p}')$ are imprintings  with restrictions, and $i_0\in I$ and $i_0'\in I'$ are given
so  that $A_{i_0}=A_{i_0'}'$. We say that $(\oplus,\bm{p})$ and $(\oplus',\bm{p}')$ are $(i_0,i_0')$-{\bf plumbable} provided the subset
$$
X{_{i_0}\times_{i_0'}} X':=\{(x,x')\in X\times X'\ |\ p_{i_0}\circ\oplus(x)=p_{i_0}'\circ\oplus'(x')\}
$$
of $X\times X'$ is a sub-M-polyfold.  
\qed
\end{definition}
Define  $Y{_{i_0}\times_{i_0'}} Y'=\{(y,y')\in Y\times Y'\ |\ p_{i_0}(y)=p_{i_0}'(y')\}$  and denote by 
$$
\phi: Y{_{i_0}\times_{i_0'}} Y'\rightarrow Y\times Y'
$$
the inclusion map.  Take the product imprinting  $\oplus\times\oplus'$ and observe that
$$
(\oplus\times\oplus')^{-1}(\phi( Y{_{i_0}\times_{i_0'}} Y')) = X{_{i_0}\times_{i_0'}} X'
$$
which by assumption is a sub-M-polyfold.   Define $\bm{p}'' = \bm{p}{_{i_0}\sqcup_{i'_0}}\bm{p}'$ by 
\begin{eqnarray}
p_{j}'' =\left[ \begin{array}{cc}
p_j\circ\pi_1&\ \text{for}\ j\in I\setminus\{i_0\}\\
p_{j}'\circ\pi_2& \ \text{for}\ j'\in I'\setminus\{i_0'\}
\end{array}\right.,
\end{eqnarray}
where the $\pi_i$ are the projections from the fibered product onto the first and second factor, respectively.
\begin{definition}
If $(\oplus,\bm{p})$ and $(\oplus',\bm{p}')$ are $(i_0,i_0')$-{plumbable}\index{plumbable} we define the imprinting with restriction $(\oplus,\bm{p}){_{i_0}\times_{i_0'}}(\oplus',\bm{p}')$
where
$$
\oplus{_{i_0}\times_{i_0'}}\oplus' : =\phi^{\ast}(\oplus\times\oplus')
$$
and call it the $(i_0,i_0')$-{\bf plumbing}\index{plumbing} of $(\oplus,\bm{p})$ and $(\oplus',\bm{p}')$.
\qed
\end{definition}
For imprinting  constructions with restrictions $(\oplus,\bm{p})$ and $(\oplus',\bm{p}')$ we can define 
 first the disjoint union $\oplus\sqcup \oplus'$.  For a pair $(i,i')\in I\times I'$ we define 
$p''_{(i,i')}: Y\sqcup Y'$ by $p''_{(i,i')}(y)=p_i(y)$ for $y\in Y$ and $p''_{(i,i')}(y')=p_{i'}'(y')$. Then 
we call $(\oplus \sqcup\oplus',\bm{p}'')$ the disjoint union of $(\oplus,\bm{p})$ and $(\oplus',\bm{p}')$ and write
it as $(\oplus,\bm{p})\sqcup(\oplus',\bm{p}')$. The following is obvious.
\begin{thm}[Disjoin Union]
The disjoint union of two imprinting construc\-tions with restrictions is an imprinting construction with restrictions.
\qed
\end{thm}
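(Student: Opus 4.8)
The plan is to reduce the claim to the already-established disjoint-union statement for plain imprintings, Theorem~\ref{sum12.3}, and then to check that the extra datum --- the finite family of restriction maps --- survives the disjoint union by a summand-wise argument.

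First I would invoke Theorem~\ref{sum12.3}: the map $\oplus\sqcup\oplus':X\sqcup X'\to Y\sqcup Y'$ is an M-polyfold construction by imprinting, and $X\sqcup X'$ carries the disjoint-union M-polyfold structure in which $X$ and $X'$ sit as open sub-M-polyfolds. This takes care of the first component of the pair $(\oplus\sqcup\oplus',\bm{p}'')$, so it remains only to verify that $\bm{p}''=(p''_{(i,i')})_{(i,i')\in I\times I'}$ is a legitimate family of restriction maps for $\oplus\sqcup\oplus'$. The index set $I\times I'$ is finite, being a product of two finite sets. For a fixed pair $(i,i')$ I would take the target of $p''_{(i,i')}$ to be the M-polyfold $A_i\sqcup A'_{i'}$ (a disjoint union of M-polyfolds is again an M-polyfold), and read $p''_{(i,i')}:Y\sqcup Y'\to A_i\sqcup A'_{i'}$ as the map given by $p_i$ on $Y$ and $p'_{i'}$ on $Y'$, composed with the respective canonical inclusions.

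The one thing to check is then that $p''_{(i,i')}\circ(\oplus\sqcup\oplus'):X\sqcup X'\to A_i\sqcup A'_{i'}$ is sc-smooth. Since $X$ and $X'$ are open in $X\sqcup X'$ and sc-smoothness is local, it is enough to test this on each summand. On $X$ the composition equals $\iota_i\circ(p_i\circ\oplus)$, where $\iota_i:A_i\to A_i\sqcup A'_{i'}$ is the canonical open inclusion, which is sc-smooth; and $p_i\circ\oplus$ is sc-smooth because $(\oplus,\bm{p})$ is an imprinting with restriction. The symmetric argument applies on $X'$ using sc-smoothness of $p'_{i'}\circ\oplus'$. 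Hence every $p''_{(i,i')}$ maps into an M-polyfold and has sc-smooth pre-composition with $\oplus\sqcup\oplus'$, so $(\oplus\sqcup\oplus',\bm{p}'')$ is an imprinting construction with restrictions. There is no genuine obstacle here --- the statement is routine, as is observed above --- the only mild care being to pin down the target of $p''_{(i,i')}$ as $A_i\sqcup A'_{i'}$ and to recall that sc-smoothness out of a disjoint union is detected summand-wise.
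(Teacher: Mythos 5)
Your argument is correct and is precisely the routine verification the paper has in mind when it declares the statement obvious and omits the proof: Theorem~\ref{sum12.3} handles $\oplus\sqcup\oplus'$, and sc-smoothness of each $p''_{(i,i')}\circ(\oplus\sqcup\oplus')$ is checked summand-wise on the open pieces $X$ and $X'$. Your explicit choice of target $A_i\sqcup A'_{i'}$ for $p''_{(i,i')}$ is a reasonable way to pin down what the text leaves unstated, and nothing in the argument is affected by it.
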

\subsection{Submersive Imprinting Constructions with Restrictions}
We also need some extensions.
\begin{definition}
Consider a triple $(\oplus,\bm{p},f)$, where $(\oplus,\bm{p})$ 
is an imprinting  construction with restrictions,
say $\oplus :X\rightarrow Y$, $p_i:Y\rightarrow A_i$ for $i\in I$ and 
$f:Y\rightarrow Z$ a surjective map onto a M-polyfold such that $f\circ\oplus :X\rightarrow Z$ is submersive.
We shall say that $(\oplus,\bm{p},f)$ is a \textbf{submersive imprinting  with restrictions}\index{submersive imprinting with restrictions} provided with $\bar{f}=f\circ\oplus$, $\bar{p}_i=p_i\circ \oplus$ the following additional compatibility condition holds.
For every $x_0\in X$ with $z_0=\bar{f}(x_0)$ there exists an open neighborhood $W$ of $(x_0,z_0)$  in $X\times Z$
and a sc-smooth retraction $\rho:W\rightarrow W$ of the form $\rho(x,z)=(\bar{\rho}(x,z),z)$ with $\rho(W)=W\cap \text{gr}(\bar{f})$
and
$$
\bar{p}_i\circ \bar{\rho}(x,z) = \bar{p_i}(x),\ \ i\in I,\ (x,z)\in W.
$$
\qed
\end{definition}
 \begin{figure}[h]
\begin{center}
\boxed{\includegraphics[width=8.5cm]{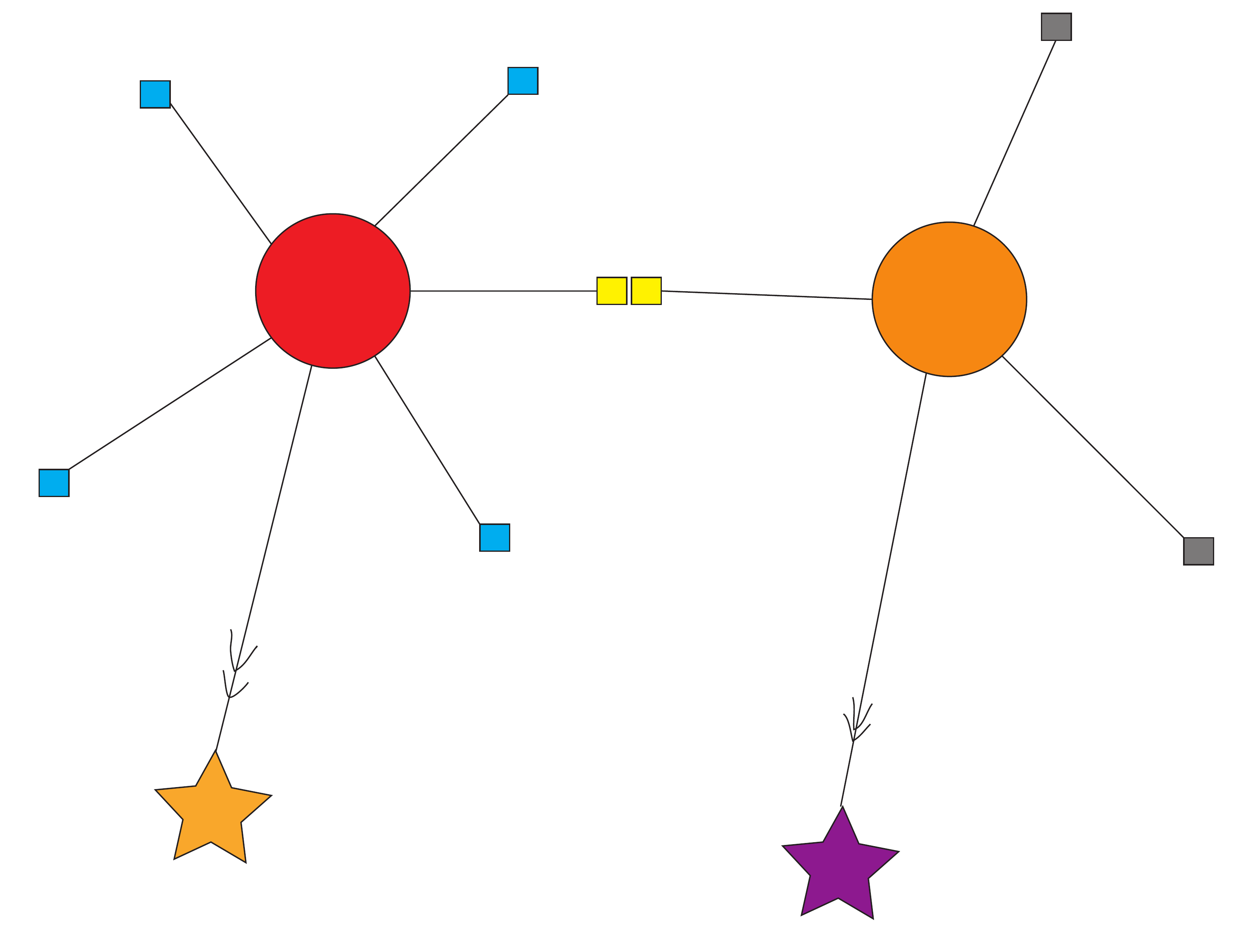}}
\end{center}
\caption{A Plumbing}\label{LEGO}
\end{figure}

\begin{definition}
Given two submersive imprinting constructions with restrictions 
 $(\oplus,\bm{p},f)$ and $(\oplus',\bm{p}',f')$ and $i_0\in I$, $i_0'\in I'$ the $(i_0,i_0')$-fibered product is defined by 
 $$
 (\oplus,\bm{p},f){_{i_0}\times_{i_0'}}(\oplus',\bm{p}',f'):=(\oplus{_{i_0}\times_{i_0'}}\oplus', \bm{p}{_{i_0}\sqcup_{i_0'}}\bm{p}',f{_{i_0}\times_{i_0'}}f').
 $$
It is again a submersive imprinting construction with restrictions.
\qed
\end{definition}
The following result will be particularly useful.
\begin{thm}\label{THMXX-2-12}
Assume that $X$ is a tame M-polyfold and $V$ a smooth finite dimensional manifold with boundary with corners. Suppose that $p:X\rightarrow V$ is a sc-smooth submersive map and  assume that the equality $d_V(p(x))=d_X(x)$ holds for all $x\in X$
and $\oplus:X\rightarrow Y$ is an imprinting and $p':Y\rightarrow V$ a surjective map fitting into the commutative diagram
$$
\begin{CD}
X  @> \oplus >> Y\\
@V p VV   @V p' VV\\
V @= V
\end{CD}
$$
Then the induced M-polyfold structure on $Y$ is tame, $p'$ is sc-smooth and submersive,  and 
$d_V(p'(y))=d_Y(y)$ for all $y\in Y$.
\end{thm}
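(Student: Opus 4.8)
The plan is to establish all three conclusions locally around an arbitrary point $y_0\in Y$ and then observe that tameness, submersivity and the equality of degeneracy indices are all local (resp.\ pointwise) properties. The one global input needed up front is that $p'$ is sc-smooth, and this is immediate from the imprinting theorem: for $y\in Y$ choose $U(y)$ and $H\colon U(y)\to X$ as in its condition (2); since the induced structure makes $H$ sc-smooth, $p'|_{U(y)}=p'\circ\oplus\circ H=p\circ H$ is sc-smooth, and these neighbourhoods cover $Y$.

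Now fix $y_0\in Y$, put $x_0:=H(y_0)$ and $v_0:=p(x_0)=p'(y_0)$. Since $X$ is tame and $p$ is a sc-smooth submersion, I would apply the local normal form for sc-smooth submersions (\cite{HWZ2017}), preceded by a corner chart of $V$ at $v_0$: there is an open neighbourhood $\Omega$ of $x_0$ and a sc-diffeomorphism $\Omega\cong V_{\mathrm{loc}}\times O$ taking $p|_\Omega$ to the projection $\pi_1$, where $V_{\mathrm{loc}}\subset V$ is open around $v_0$ and $O$ is a tame M-polyfold. Comparing $d_{V_{\mathrm{loc}}\times O}(v,o)=d_{V_{\mathrm{loc}}}(v)+d_O(o)$ with the hypothesis $d_V(p(x))=d_X(x)$ — which under the chart says $d_X$ corresponds to $d_{V_{\mathrm{loc}}}\circ\pi_1$ — forces $d_O\equiv 0$, so $O$ has empty boundary. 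After shrinking $U(y_0)$ (legitimate since $H$ is continuous and $H(y_0)=x_0\in\Omega$) we may assume $A:=H(U(y_0))\subset\Omega$; then also $U(y_0)\subset\oplus(\Omega)$, because $y=\oplus(H(y))$ for all $y\in U(y_0)$.

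The core step concerns the retract $A$. On the open set $\Omega':=\Omega\cap\oplus^{-1}(U(y_0))$, which contains $A$, the map $r:=H\circ\oplus|_{\Omega'}$ is a sc-smooth retraction with image $A$. Writing points of $\Omega$ as $(v,o)$ and using $\pi_1(H(y))=p(H(y))=p'(y)$ and $p'(\oplus(v,o))=p(v,o)=v$, one gets $H(y)=(p'(y),H_2(y))$ and
\[
r(v,o)=\bigl(v,\ \rho_v(o)\bigr),\qquad \rho_v:=H_2\circ\oplus(v,\,\cdot\,),
\]
with $\rho_v\circ\rho_v=\rho_v$ (from $r\circ r=r$), i.e.\ $\rho_v$ is a sc-smoothly $v$-dependent retraction of the boundaryless $O$. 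Thus $r$ is the identity in the corner-carrying factor $V_{\mathrm{loc}}$ and modifies only $O$; one checks directly that $r$ is then a tame sc-retraction and $d\circ r=d_{V_{\mathrm{loc}}}\circ\pi_1$. Hence $A$, with its retract M-polyfold structure, is tame with $d_A(v,o')=d_{V_{\mathrm{loc}}}(v)$, and $\pi_1|_A\colon A\to V_{\mathrm{loc}}$ is submersive — its linearization is split surjective, a sc-smooth right inverse through a point $(v_1,o_1')\in A$ being $v\mapsto(v,\rho_v(o_1'))$, which also displays the product normal form.

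To finish, transport through the imprinting: $\oplus|_A\colon A\to U(y_0)$ and $H\colon U(y_0)\to A$ are mutually inverse sc-smooth maps, a sc-diffeomorphism under which $p'|_{U(y_0)}$ corresponds to $\pi_1|_A$. Therefore $Y$ is tame near $y_0$, $p'$ is submersive near $y_0$, and for $y\in U(y_0)$ we have $d_Y(y)=d_A(H(y))=d_{V_{\mathrm{loc}}}(p'(y))=d_V(p'(y))$; since $y_0$ was arbitrary, the theorem follows. I expect the main obstacle to be precisely the core step: arranging the submersion normal form so that the fibre factor $O$ is genuinely boundaryless — which is exactly where $d_V\circ p=d_X$ enters — and then verifying that a retraction of the special ``$(v,o)\mapsto(v,\rho_v(o))$'' form is a tame sc-retraction in the technical sense of \cite{HWZ2017}; these two points carry all the weight, the rest being formal bookkeeping with the imprinting universal property.
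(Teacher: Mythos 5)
The paper itself gives no proof of Theorem \ref{THMXX-2-12}; it is one of the results quoted from \cite{FH-primer,FH-book}, so your argument can only be measured against what that framework supplies. Your overall strategy is the natural one and almost certainly the intended one: identify the local structure of $Y$ near $y_0$ with the retract $A=H(U(y_0))$ of the sc-smooth retraction $H\circ\oplus$, observe that $p'$ corresponds to the restriction of $p$ to $A$, and read off tameness, submersivity and the degeneracy identity from a product picture in which $V$ carries all the corners. The pieces of your argument that are genuinely verifiable are fine: sc-smoothness of $p'$ from $p'|_{U(y)}=p\circ H$; the deduction $d_O\equiv 0$ from additivity of the degeneracy index together with $d_V\circ p=d_X$; the algebra showing that a retraction of the form $(v,o)\mapsto(v,\rho_v(o))$, with the corner factor untouched and the fibre factor boundaryless, admits at smooth points of its retract an sc-complement of the tangent space inside $\{0\}\oplus(\text{fibre directions})$ (take $\{0\}\oplus\ker M$ where $M$ is the fibre component of the sc-projection $Dr$), which is exactly what tameness asks for; and the transport through the mutually inverse sc-smooth maps $\oplus|_A$ and $H$.

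The one step that carries real weight, and which you yourself flag, is the invocation of a ``local normal form for sc-smooth submersions'' $\Omega\cong V_{\mathrm{loc}}\times O$ with $p$ becoming the projection. This is not a consequence of surjectivity (even split surjectivity) of the tangent maps: scale calculus has no general implicit function theorem, so a pointwise-submersive sc-smooth map need not be locally a projection, and this is precisely why the notion of ``submersive'' used in \cite{FH-primer,FH-book} is stronger --- it is set up so that such local product data (equivalently, sc-smooth local sections, or the graph-retraction property that this paper builds explicitly into the definition of a submersive imprinting with restrictions) is available. So your proof is either complete-but-definitional at that point (if ``submersive'' is taken in the sense of those references, your normal form is part of the hypothesis and should be cited as such, after composing with a corner chart of $V$) or it has a genuine gap (if ``submersive'' is read merely as fibrewise split surjectivity of $Tp$, in which case the normal form for finite-dimensional targets with corners would itself require a separate argument, e.g.\ an sc-smooth implicit-function argument in finitely many variables, which is nontrivial and is where the actual content of the theorem lives). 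Everything downstream of that step is correct bookkeeping, so the proposal stands or falls with how that single input is justified.
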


\newpage
\part*{Lecture 6}
\section{Concrete Local-Local Constructions and Operations}
We shall describe the local-local constructions we are going to need for the later discussed polyfold constructions.
The basic input is the gluing construction described 
in the previous Lecture 5. 
\subsection{Nodal}
Given a nodal disk pair $(D_x\cup D_y,\{x,y\})$ and  weight sequence $\delta: 0<\delta_0<\delta_1<...$ 
we define the sc-Hilbert space $E_{\mathcal D}^{\delta}$ of maps of class $(3,\delta_0)$ which are continuous over the node,  and the set $X^{\delta_0}_{{\mathcal D},\varphi}({\mathbb R}^N)$
by
$$
X^{\delta_0}_{{\mathcal D},\varphi}({\mathbb R}^N) = E^{\delta_0}_{\mathcal D}\sqcup \left( \bigcup_{0<|a|<1/4} H^3(Z_a,{\mathbb R}^N)\right).
$$
We fix smooth compact  concentric annuli $A_x\subset D_x$ and $A_y\subset D_y$ of the same modulus and can associate 
to them the sc-Hilbert spaces $H^3(A_x,{\mathbb R}^N)$ and $H^3(A_y,{\mathbb R}^N)$. We assume that $A_x$ corresponds under 
holomorphic polar coordinates to a subset contained in $[0,20]\times S^1$.  We use the previously introduced
$$
\oplus : {\mathbb B}_{\mathcal D}\times E^{\delta}_{\mathcal D}\rightarrow X^{\delta_0}_{{\mathcal D},\varphi}.
$$
We also have the extraction of gluing parameter
$p_{{\mathbb B}_{\mathcal D}}:X^{\delta_0}_{{\mathcal D},\varphi}\rightarrow {\mathbb B}_{\mathcal D}$. Then 
we also obtain the restriction map
$$
H^3(A_x,{\mathbb R}^N)\xleftarrow{p_x} X^{\delta_0}_{{\mathcal D},\varphi}({\mathbb R}^N)\xrightarrow{p_y} H^3(A_y,{\mathbb R}^N)
$$
\begin{thm}
The tuple $(\oplus,\{p_x,p_y\},p_{{\mathbb B}_{\mathcal D}})$ is a submersive imprinting  with restrictions.
\qed
\end{thm}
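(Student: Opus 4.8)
The plan is to verify, in order, the three conditions that constitute a submersive imprinting with restrictions: that $\oplus$ is an imprinting, that $\{p_x,p_y\}$ are restriction maps for it, and that $p_{{\mathbb B}_{\mathcal D}}$ plays the role of the submersive map $f$ with the required retraction compatibility. The first two are essentially already established: $\oplus:{\mathbb B}_{\mathcal D}\times E^\delta_{\mathcal D}\to X^{\delta_0}_{{\mathcal D},\varphi}$ is the gluing construction of Lecture 5 (specialized to a single nodal disk pair), which is an imprinting by the gluing example and the functoriality statement there, and the extraction-of-gluing-parameter map $p_{{\mathbb B}_{\mathcal D}}$ precomposed with $\oplus$ is just the projection onto the first factor, hence manifestly sc-smooth. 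For the restriction maps, one must check that $p_x\circ\oplus$ and $p_y\circ\oplus$ are sc-smooth as maps $\,{\mathbb B}_{\mathcal D}\times E^\delta_{\mathcal D}\to H^3(A_x,{\mathbb R}^N)$; since $A_x$ sits in the core region $[0,20]\times S^1$ (away from where cutoff-gluing modifies anything, for $|a|<1/4$), the composite is just $(\mathfrak a,u)\mapsto u|_{A_x}$, and sc-smoothness of restriction to a compact subannulus follows from the standard sc-calculus facts underlying the gluing construction (the same estimates used in \cite{HWZ8.7}).

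The real work is the submersion/retraction compatibility condition for the triple $(\oplus,\{p_x,p_y\},p_{{\mathbb B}_{\mathcal D}})$. Writing $\bar f = p_{{\mathbb B}_{\mathcal D}}\circ\oplus:{\mathbb B}_{\mathcal D}\times E^\delta_{\mathcal D}\to{\mathbb B}_{\mathcal D}$, which is the first projection, and $\bar p_x,\bar p_y$ the restriction composites, I need: for each $(\mathfrak a_0,u_0)$ with $z_0=\mathfrak a_0$, an open neighborhood $W$ of $((\mathfrak a_0,u_0),\mathfrak a_0)$ in $({\mathbb B}_{\mathcal D}\times E^\delta_{\mathcal D})\times{\mathbb B}_{\mathcal D}$ and an sc-smooth retraction $\rho(x,z)=(\bar\rho(x,z),z)$ onto $W\cap\operatorname{gr}(\bar f)$ with $\bar p_x\circ\bar\rho(x,z)=\bar p_x(x)$ and likewise for $y$. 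Since $\bar f$ is a linear projection, the graph is a linear subspace and there is an obvious candidate: $\bar\rho((\mathfrak a,u),\mathfrak b) = (\mathfrak b, u)$ — that is, replace the gluing parameter by $z=\mathfrak b$ and keep the map $u$ unchanged. This visibly lands in the graph, fixes it, and is sc-smooth (it is essentially the identity on the $E$-factor combined with a coordinate swap on the finite-dimensional ${\mathbb B}_{\mathcal D}$-factor). The compatibility $\bar p_x\circ\bar\rho((\mathfrak a,u),\mathfrak b)=\bar p_x((\mathfrak a,u))$ then reduces to the observation that the restriction $u|_{A_x}$ does not depend on the gluing parameter at all — which is exactly why $A_x$ was placed inside the core region $[0,20]\times S^1$, disjoint from the gluing neck for all relevant $|a|<1/4$.

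The one subtlety, and the step I expect to be the main obstacle, is the coordinate bookkeeping for the map $u$ near a nonzero gluing parameter: on $X^{\delta_0}_{{\mathcal D},\varphi}$ an element over $\mathfrak a\neq 0$ is a genuine $H^3$-map on the glued cylinder $Z_{\mathfrak a}$, not an element of the fixed model space $E^\delta_{\mathcal D}$, so ``keeping $u$ unchanged'' while changing $\mathfrak a\mapsto\mathfrak b$ is only literally meaningful at the level of the imprinting domain ${\mathbb B}_{\mathcal D}\times E^\delta_{\mathcal D}$, where both $x=(\mathfrak a,u)$ and $\bar\rho(x,z)=(z,u)$ live. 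One must check that this $\bar\rho$ is sc-smooth as a map of M-polyfolds and that $W$ can be chosen open — which is immediate since $\bar\rho$ is the restriction of a smooth map on a product of a smooth finite-dimensional manifold with a sc-Hilbert space — and, crucially, that $W$ can be taken small enough that $W\cap\operatorname{gr}(\bar f)$ is genuinely the full graph locally and $\bar\rho(W)\subset W$; shrinking $W$ to a product neighborhood on which $|\mathfrak a|,|\mathfrak b|<1/4$ handles this. With these choices the three defining properties hold, so $(\oplus,\{p_x,p_y\},p_{{\mathbb B}_{\mathcal D}})$ is a submersive imprinting with restrictions, which is what was to be shown.
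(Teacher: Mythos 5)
Your argument is correct, and it is the intended one: the paper states this theorem with no proof at all (it is deferred to the local-local construction references), and the verification you give --- the imprinting property quoted from the Lecture 5 gluing example, the observation that $\bar{p}_x(\mathfrak{a},u)=u|_{A_x}$ and $\bar{p}_y(\mathfrak{a},u)=u|_{A_y}$ are independent of the gluing parameter because $A_x,A_y\subset [0,20]\times S^1$ while the glued neck has length $R=\varphi(|\mathfrak{a}|)>e^4-e>50$ for $|\mathfrak{a}|<1/4$ (so both cutoffs are locally constant there), and the resulting retraction $\bar{\rho}((\mathfrak{a},u),\mathfrak{b})=(\mathfrak{b},u)$ over a product neighborhood --- is exactly the mechanism that the placement of the annuli and the bound $|\mathfrak{a}|<1/4$ are engineered to produce. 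Your closing remark is also the right reading of the definition: the compatibility condition is formulated on the imprinting domain ${\mathbb B}_{\mathcal D}\times E^{\delta}_{\mathcal D}$ (via $\bar{f}=f\circ\oplus$, $\bar{p}_i=p_i\circ\oplus$), so ``keep $u$, replace $\mathfrak{a}$ by $\mathfrak{b}$'' is legitimate there; note only that this simple choice of $\bar{\rho}$ is special to the nodal case, since in the periodic-orbit analogue $p_y$ involves the ${\mathbb R}$-shift by $-\varphi(r)$ and the corresponding check is less immediate.
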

We shall denote by $X^{\delta}_{{\mathcal D},\varphi}({\mathbb R}^N)$ the set $X^{\delta_0}_{{\mathcal D},\varphi}({\mathbb R}^N)$ equipped with the M-polyfold structure coming from $\oplus$.  Denote by ${\mathcal N}$ the category whose objects are the real vector spaces ${\mathbb R}^N$ and the morphisms are the smooth maps $f:{\mathbb R}^N\rightarrow {\mathbb R}^M$.  We associate to $f$ the map $X^{\delta}_{{\mathcal D},\varphi}(f): X^{\delta}_{{\mathcal D},\varphi}({\mathbb R}^N)\rightarrow X^{\delta}_{{\mathcal D},\varphi}({\mathbb R}^M)$ defined by
$u\rightarrow f\circ u$. 
\begin{thm}
The construction $X^{\delta}_{{\mathcal D},\varphi}$ defines a functor ${\mathcal N}\rightarrow \mathsf{M}_{\text{tame}}$.
\qed
\end{thm}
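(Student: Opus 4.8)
The plan is to verify the three defining properties of a covariant functor into $\mathsf{M}_{\text{tame}}$: that each $X^{\delta}_{{\mathcal D},\varphi}({\mathbb R}^N)$ is a tame M-polyfold, that each $X^{\delta}_{{\mathcal D},\varphi}(f)$ is sc-smooth, and that identities and composition are preserved. The third point is immediate at the level of sets, since $X^{\delta}_{{\mathcal D},\varphi}(f)$ is just post-composition $u\mapsto f\circ u$: one has $X^{\delta}_{{\mathcal D},\varphi}(\mathrm{id}_{{\mathbb R}^N})(u)=u$ and $X^{\delta}_{{\mathcal D},\varphi}(g\circ f)(u)=g\circ(f\circ u)=X^{\delta}_{{\mathcal D},\varphi}(g)\big(X^{\delta}_{{\mathcal D},\varphi}(f)(u)\big)$. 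So the substantive work is tameness and sc-smoothness, treated in turn.

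For tameness I would invoke Theorem \ref{THMXX-2-12}. By the preceding theorem, $\oplus\colon{\mathbb B}_{\mathcal D}\times E^{\delta}_{\mathcal D}\to X^{\delta_0}_{{\mathcal D},\varphi}({\mathbb R}^N)$ is an imprinting, and together with the extraction map $p_{{\mathbb B}_{\mathcal D}}$ it fits into a commutative square over ${\mathbb B}_{\mathcal D}$ whose other vertical arrow is the projection. Here ${\mathbb B}_{\mathcal D}\times E^{\delta}_{\mathcal D}$ is a tame M-polyfold (the finite-dimensional smooth manifold of natural gluing parameters times an sc-Hilbert space), the projection to ${\mathbb B}_{\mathcal D}$ is sc-smooth and submersive, and since the sc-Hilbert factor has vanishing degeneracy index the projection preserves degeneracy indices. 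Hence Theorem \ref{THMXX-2-12} applies with $X={\mathbb B}_{\mathcal D}\times E^{\delta}_{\mathcal D}$, $V={\mathbb B}_{\mathcal D}$ and $p'=p_{{\mathbb B}_{\mathcal D}}$, and shows that the imprinted M-polyfold structure on $X^{\delta}_{{\mathcal D},\varphi}({\mathbb R}^N)$ is tame.

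For sc-smoothness of $X^{\delta}_{{\mathcal D},\varphi}(f)$ the plan is to use the universal property of the imprinting method to reduce to a single post-composition estimate. Since the source carries the imprinting structure coming from $\oplus$, a map out of $X^{\delta}_{{\mathcal D},\varphi}({\mathbb R}^N)$ into an M-polyfold is sc-smooth precisely when its pre-composition with $\oplus$ is sc-smooth (the nontrivial implication using that the local sections $H$ furnished by the imprinting are themselves sc-smooth). Taking the target to be $X^{\delta}_{{\mathcal D},\varphi}({\mathbb R}^M)$, it therefore suffices to show that
\[
{\mathbb B}_{\mathcal D}\times E^{\delta}_{\mathcal D}\longrightarrow X^{\delta}_{{\mathcal D},\varphi}({\mathbb R}^M),\qquad (\mathfrak{a},u)\longmapsto f\circ\oplus(\mathfrak{a},u),
\]
is sc-smooth, where $\oplus$ is the imprinting for the ${\mathbb R}^N$-valued maps. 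This is exactly the functoriality already established for the basic gluing construction of Lecture 5, applied to the nodal disk pair $(D_x\cup D_y,\{x,y\})$, which is a compact nodal Riemann surface with boundary of precisely the kind treated there; in fact the present theorem is a special case of that one, and it rests on the nonlinear superposition (composition-operator) results of \cite{HWZ8.7}. The auxiliary restriction maps $p_x,p_y,p_{{\mathbb B}_{\mathcal D}}$ are not part of this statement and can be ignored.

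The main obstacle is this last step. Post-composition with a nonlinear $f$ does not commute with the cut-off gluing, so $f\circ\oplus(\mathfrak{a},u)$ is not $\oplus$ of any simple glued datum and cannot be handled by factoring through the target's imprinting map; instead one must control it in the M-polyfold charts of $X^{\delta}_{{\mathcal D},\varphi}({\mathbb R}^M)$ near glued configurations, where the needed input is the sc-smoothness of nonlinear composition operators on the glued sc-Sobolev spaces carrying the exponential weight sequence $\delta$. That is the genuinely analytic ingredient, supplied by \cite{HWZ8.7} (see also \cite{FH-primer}); once it is in hand, assembling tameness, sc-smoothness, and the categorical identities is routine.
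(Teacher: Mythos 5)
Your proposal is correct and matches the paper's (essentially implicit) treatment: the paper states this theorem without proof, relying on the imprinting machinery of Lecture 5 and deferring the one genuinely analytic point --- sc-smoothness of the post-composition $u\mapsto f\circ u$ on the glued weighted spaces --- to \cite{HWZ8.7} (with details in \cite{FH-primer,FH-book}), exactly as you do. Your additional scaffolding (tameness via Theorem \ref{THMXX-2-12} applied to $p_{{\mathbb B}_{\mathcal D}}$, and the reduction of sc-smoothness of the induced map to sc-smoothness of $f\circ\oplus$ via the sc-smooth local sections of the imprinting) is a sound and slightly more explicit organization of the same argument.
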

We shall call a functor like $X^{\delta}_{{\mathcal D},\varphi}$ a {\bf construction functor}. Their importance becomes clear in the next subsection.
\subsection{Extension of Construction Functors}

The following definition applies to the classical and nodal case. A somewhat modified version would apply to the periodic orbit case.
Versions of the theorem below for various situations all rely on the same idea.
\begin{definition}\label{def:poly construction}
An {\bf M-polyfold construction}\index{M-polyfold construction} over ${\mathcal N}$ consists of a covariant
  functor $X$, which, to each ${\mathbb R}^N$, associates an
  M-polyfold, $X(N)$, and to each morphism $f:{\mathbb R}^N\rightarrow {\mathbb R}^L$ it associates
  an sc-smooth map $X(f):X(N)\rightarrow X(L)$.
Moreover we require that the M-polyfolds come with an additional
  structure, namely that for each object ${\mathbb R}^N$ we have a map which associates
  to a point $u\in X(N)$ a subset ${\rm im}(u)\subset {\mathbb R}^N$,
  which we call the image of $u$.
The following is assumed to hold:
  \begin{itemize}
    \item[(1)] 
      Given an open subset $U$ of ${\mathbb R}^N$ the subset of $X(N)$
      consisting of all $u$ with ${\rm im}(u)\subset U$ is open
      in $X(N)$.
    \item[(2)] 
      We have ${\rm im}(X(f)(u))=f({\rm im}(u))$.
    \item[(3)] 
      If $f,g:N\rightarrow L$ are morphisms and $u\in X(N)$, then 
      \begin{equation*}                                                   
	f\big|_{{\rm im}(u)}=g	\big|_{{\rm
	im}(u)}\qquad\text{implies}\qquad X(f)(u)=X(g)(u)
	\end{equation*}
    \end{itemize}
    \qed
\end{definition}
Denote by ${\mathcal M}$ the category of smooth manifolds with the  smooth maps as morphisms.
The proof of the following theorem is given in \cite{FH-primer}.
\begin{thm}\label{EXTEND}
The functor $X:{\mathcal N}\rightarrow \mathsf{M} $ from a M-polyfold construction over  ${\mathcal  N}$ has a natural extension
$X:{\mathcal M}\rightarrow \mathsf{M}$,  which
  associates to a manifold $M$ in ${\mathcal M}$ a  M-polyfold $X(M)$ and to
  a smooth map $f:M\rightarrow M'$ an sc-smooth map between the M-polyfolds
  \begin{equation*}                                                       
    X(f): X(M)\rightarrow X(M').
    \end{equation*}
Further we have a natural sc-diffeomorphism $X(N)\rightarrow X({\mathbb
  R}^N)$.
  \qed
\end{thm}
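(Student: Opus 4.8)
The plan is to build $X(M)$ for a general smooth manifold $M$ by gluing together the pieces $X(U)$ over coordinate charts $U\cong{\mathbb R}^n$, using the naturality axioms (1)--(3) of Definition \ref{def:poly construction} to make the gluing well-defined and sc-smooth. First I would fix a countable atlas $\{(U_\lambda,\varphi_\lambda)\}$ of $M$ with $\varphi_\lambda:U_\lambda\to{\mathbb R}^{n}$ a diffeomorphism onto an open subset $\Omega_\lambda\subset{\mathbb R}^n$ (shrinking so that $\Omega_\lambda$ is, say, an open ball, hence diffeomorphic to ${\mathbb R}^n$ itself). By axiom (1), the subset $X(n)_{\Omega_\lambda}\subset X(n)$ of points $u$ with $\mathrm{im}(u)\subset\Omega_\lambda$ is open, hence an M-polyfold in its own right; call it $X^{loc}_\lambda$. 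Interpreting $X^{loc}_\lambda$ as ``the local model of $X$ over $U_\lambda$,'' I would define $X(M)$ as a set to be the quotient of $\bigsqcup_\lambda X^{loc}_\lambda$ by the equivalence relation that identifies $u\in X^{loc}_\lambda$ with $u'\in X^{loc}_{\mu}$ whenever $\mathrm{im}(u)$ and $\mathrm{im}(u')$ both lie in $\varphi_\lambda(U_\lambda\cap U_\mu)$, respectively $\varphi_\mu(U_\lambda\cap U_\mu)$, and $u' = X(\varphi_\mu\circ\varphi_\lambda^{-1})(u)$ — here the transition map $\varphi_\mu\circ\varphi_\lambda^{-1}$ is a diffeomorphism between open subsets of ${\mathbb R}^n$, which one extends arbitrarily (using axiom (3), only its restriction to $\mathrm{im}(u)$ matters) to a global smooth map ${\mathbb R}^n\to{\mathbb R}^n$ so that $X$ may be applied to it. Axioms (2) and (3) give precisely that this relation is reflexive, symmetric, and transitive (the cocycle identity for transition functions descends to $X$ because $X$ is a functor and axiom (3) kills the dependence on the chosen global extensions), and that the induced overlap maps between the $X^{loc}_\lambda$ are sc-diffeomorphisms onto open subsets.

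Next I would equip $X(M)$ with the quotient topology from $\bigsqcup_\lambda X^{loc}_\lambda$ and verify it is an M-polyfold: the images of the $X^{loc}_\lambda$ form an open cover, each carries an M-polyfold structure, and the transition maps are sc-diffeomorphisms by the previous paragraph, so the M-polyfold charts glue. One also checks independence of the atlas in the standard way: given two atlases, their common refinement produces sc-diffeomorphic gluings, again via axioms (2)--(3). The image-assignment extends: for $[u]\in X(M)$ represented by $u\in X^{loc}_\lambda$, set $\mathrm{im}([u]) := \varphi_\lambda^{-1}(\mathrm{im}(u))\subset M$, which is well-defined by axiom (2) applied to the transition maps.

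For the functoriality, given a smooth $f:M\to M'$ I would define $X(f):X(M)\to X(M')$ chartwise: near a point $[u]$ with $\mathrm{im}([u])$ contained in one chart $U_\lambda$ of $M$ and $f(\mathrm{im}([u]))$ contained in one chart $U'_\mu$ of $M'$, represent $f$ locally by a smooth map between open subsets of Euclidean spaces, extend it globally (axiom (3) again removes ambiguity), apply $X$, and transport back. Axiom (2) shows these local definitions agree on overlaps and glue to a globally defined sc-smooth map; functoriality $X(g\circ f)=X(g)\circ X(f)$ and $X(\mathrm{id})=\mathrm{id}$ follow from functoriality of $X$ on ${\mathcal N}$ together with axiom (3). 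Finally, the natural sc-diffeomorphism $X(N)\to X({\mathbb R}^N)$ is the identity under this construction when $M={\mathbb R}^N$ with the one-chart atlas, once one checks that adding more charts does not change the glued object — again a refinement argument.

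The main obstacle I expect is not any single deep estimate but the careful bookkeeping showing that all choices (of atlas, of global extensions of locally-defined smooth maps, of subordinate coordinate changes) are irrelevant: this is exactly what axioms (1)--(3) are designed to guarantee, and the proof is really a verification that these three axioms suffice. Concretely, the delicate point is the cocycle/transitivity check for the gluing relation and for $X(f)$: one must ensure that when three charts overlap, the composite of extended transition maps, after applying $X$, equals the extension of the triple composite — this works only because axiom (3) lets one replace any globally-extended smooth map by any other agreeing on the relevant image, so the functor $X$ never ``sees'' the extension. A reader wanting full detail should consult \cite{FH-primer}; the sketch above is the structure of that argument.
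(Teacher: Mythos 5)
There is a genuine gap, and it lies at the very first step: your chart\-/by\-/chart gluing produces the wrong space. By construction, every element of your quotient $X(M)$ is (an equivalence class of) some $u\in X^{loc}_\lambda\subset X(n)$, i.e.\ an element whose image ${\rm im}(u)$ is contained in a \emph{single} chart domain of $M$. But in the situations this theorem is designed for, the points of $X(M)$ are objects such as maps of class $(3,\delta_0)$ from a (glued) surface into $M$, and their images are typically not contained in any coordinate chart; the quotient of $\bigsqcup_\lambda X^{loc}_\lambda$ cannot create such ``global'' elements, so you only recover the open subset of $X(M)$ consisting of elements with image in some chart. The underlying point is that an atlas of the \emph{target} $M$ does not localize the mapping-type space $X(M)$: unlike for the point set of a manifold, a point of $X(M)$ need not ``lie in'' any $X(U_\lambda)$. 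Your use of axiom (3) to remove the dependence on global extensions of locally defined smooth maps is the right instinct, but it is applied to the wrong construction.

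The argument in \cite{FH-primer} avoids this by the embedding method. Choose a closed (Whitney) embedding $M\hookrightarrow {\mathbb R}^N$ and a tubular neighborhood $U$ with smooth retraction $r:U\rightarrow M\subset {\mathbb R}^N$. By axiom (1) the set $X(N)_U=\{u\in X(N)\ |\ {\rm im}(u)\subset U\}$ is open; for $u\in X(N)_U$ pick any smooth $f:{\mathbb R}^N\rightarrow {\mathbb R}^N$ agreeing with $r$ near ${\rm im}(u)$ and set $\rho(u):=X(f)(u)$, which by axiom (3) is independent of the choice of $f$ and hence defines an sc-smooth map $\rho$ on $X(N)_U$. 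Axiom (2) gives ${\rm im}(\rho(u))=r({\rm im}(u))\subset M$, and axiom (3) gives $\rho(u)=u$ whenever ${\rm im}(u)\subset M$, so $\rho$ is an sc-smooth retraction of the open set $X(N)_U$ onto $X(M):=\{u\in X(N)\ |\ {\rm im}(u)\subset M\}$, which therefore inherits an M-polyfold structure containing \emph{all} elements with image in $M$, not just the chart-sized ones. For a smooth $f:M\rightarrow M'$ one defines $X(f)$ by the same device applied to local smooth extensions of $i'\circ f\circ r$, and independence of the embedding, functoriality, and the asserted sc-diffeomorphism $X(N)\rightarrow X({\mathbb R}^N)$ all follow from functoriality on ${\mathcal N}$ together with axiom (3). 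If you want to salvage a gluing picture, it can only be used to compare the structures obtained from different embeddings, not to build $X(M)$ from charts of $M$.
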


\subsection{Periodic Orbit}
Next we consider the periodic orbit local-local construction.  We assume we are given a weighted periodic orbit $\bm{\bar{\gamma}}$ in ${\mathbb R}^N$.
We define the ssc-Hilbert manifold  associate to an ordered nodal disk pair ${\mathcal D}=(D_x\sqcup D_y,(x,y))$ denoted by 
$Z_{\mathcal D}({\mathbb R}\times {\mathbb R}^N,\bm{\bar{\gamma}})$. It consists of tuples $\widetilde{u}=(\widetilde{u}^x,[\widehat{x},\widehat{y}],\widetilde{u}^y)$,
where $\widetilde{u}^x$ is a map on the punctured $D_x\setminus\{x\}$ and similarly for $y$. Moreover the following holds.  Pick
a representative $(\widehat{x},\widehat{y})$  and take  holomorphic polar coordinates $\sigma_{\widehat{x}}^+$ and $\sigma^-_{\widehat{y}}$.  Then for a suitable 
$\gamma\in [\gamma]$.
$$
\widetilde{u}^x\circ \sigma_{\widehat{x}}^+(s,t) = (Ts+c^x, \gamma(kt))+\widetilde{r}^x(s,t)\ \ \text{and}\ \ \widetilde{u}^y\circ\sigma_{\widehat{y}}^-(s',t')=(Ts'+c^y,\gamma(kt'))+\widetilde{r}^y(s',t').
$$
Here $\widetilde{r}^x$ and $\widetilde{r}^y$ are in $H^{3,\delta_0}$, where $\delta_0$ is an exponential weight.  We define level $m$ in our space as such tuples
where the latter is of regularity $(3+m,\delta_m)$.
\begin{thm}
$Z_{\mathcal D}({\mathbb R}\times {\mathbb R}^N,\bm{\bar{\gamma}})$ is a ssc-Hilbert manifold.
\qed
\end{thm}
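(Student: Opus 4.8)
The plan is to construct $Z_{\mathcal D}({\mathbb R}\times{\mathbb R}^N,\bm{\bar\gamma})$ by hand from explicit charts and to check that the transition maps are sc-diffeomorphisms. What makes this work is that, since the target ${\mathbb R}\times{\mathbb R}^N$ is a vector space, a map with the prescribed asymptotics near a puncture is a finite-dimensional \emph{asymptotic model} plus an honest element of a fixed scale of exponentially weighted Sobolev spaces, with ``plus'' literal addition.

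First I would fix the model data. Let $\Sigma$ be the compact one-dimensional manifold of decoration classes $[\widehat x,\widehat y]$ of the node; over an open $U\subset\Sigma$ choose a smooth family $\zeta\mapsto(\widehat x_\zeta,\widehat y_\zeta)$ of representatives, and with it a smooth family of matching holomorphic polar coordinates $\sigma^+_{\widehat x_\zeta}\colon[0,\infty)\times S^1\to D_x\setminus\{x\}$, $\sigma^-_{\widehat y_\zeta}\colon(-\infty,0]\times S^1\to D_y\setminus\{y\}$, together with the parametrization $\gamma_\zeta\in[\gamma]$ pinned down by the marker of $\bm{\bar\gamma}$ (and $T$, $k$ its period and multiplicity), the conventions being chosen consistently so that under a change of representative the leading periodic-orbit term is preserved and only the remainder gets transformed. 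Fix a cutoff $\beta$ supported near the puncture with $\beta\equiv 1$ close to it and set, with $(s,t)=(\sigma^+_{\widehat x_\zeta})^{-1}(z)$,
\[ m^x_{\zeta,c}(z)=\beta(z)\cdot\bigl(Ts+c,\ \gamma_\zeta(kt)\bigr),\qquad\text{extended by }0\text{ off the collar},\]
and analogously $m^y_{\zeta,c'}$; these depend smoothly on $\zeta$ and linearly on $c,c'\in{\mathbb R}$. Let ${\mathcal H}^x$ be the sc-Hilbert space of maps $D_x\setminus\{x\}\to{\mathbb R}\times{\mathbb R}^N$ of class $H^3_{\mathrm{loc}}$ that lie in $H^{3,\delta_0}$ near the puncture, with level $m$ given by regularity $(3+m,\delta_m)$, and likewise ${\mathcal H}^y$. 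The candidate chart is
\[ \Theta\colon\ U\times{\mathbb R}^2\times{\mathcal H}^x\times{\mathcal H}^y\longrightarrow Z_{\mathcal D},\qquad (\zeta,c,c',h^x,h^y)\longmapsto\bigl(m^x_{\zeta,c}+h^x,\ [\widehat x_\zeta,\widehat y_\zeta],\ m^y_{\zeta,c'}+h^y\bigr).\]
One checks $\Theta$ is a bijection onto an open set: $\zeta$ is read off the decoration entry, $c$ is the $t$-independent limit $\lim_{s\to\infty}\bigl(\mathrm{pr}_{\mathbb R}(\widetilde u^x\circ\sigma^+_{\widehat x_\zeta})(s,\cdot)-Ts\bigr)$, which exists by the $\delta_0$-decay, and then $h^x=\widetilde u^x-m^x_{\zeta,c}\in{\mathcal H}^x$ by the very definition of the space.

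Next I would compare two such charts; they can differ only by the choice of representative family, the choice of cutoff $\beta$, and the bookkeeping for the polar coordinates. A change of $\beta$, and the ``reference'' part of a change of representatives, alter $m^x_{\zeta,c}$ only by a term supported off the puncture and smooth in $(\zeta,c)$, so the induced transition is a translation of ${\mathcal H}^x$ by a smooth, finite-dimensionally parametrized family, which is sc-smooth; away from the collar the transition is the identity. The real content is that changing the representative of the decoration re-identifies the collar by a rotation of the angular variable, $(s,t)\mapsto(s,t+\vartheta(\zeta))$ with $\vartheta$ smooth in $\zeta$ and correlated between the $x$- and $y$-sides through the matching built into the decoration class; by the consistency of the model data the leading term is preserved and the remainder transforms by $h^x(s,t)\mapsto h^x(s,t+\vartheta(\zeta))$. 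Hence the transition maps are assembled from smooth finite-dimensional pieces, multiplications by smooth cutoffs, and the angular reparametrization map $(\vartheta,h)\mapsto h(\cdot,\cdot+\vartheta)$ on the weighted half-cylinder.

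The main obstacle---and the one place where sc-calculus is genuinely needed rather than ordinary Banach calculus---is the sc-smoothness, jointly in $\vartheta$ and $h$, of this angular reparametrization action on $H^{3+m,\delta_m}$: classically it loses one derivative ($\vartheta\mapsto h(\cdot,\cdot+\vartheta)$ is continuous into $H^{3+m,\delta_m}$ but differentiable only into $H^{3+m-1,\delta_m}$), which is exactly why one works on the scale. The statement to invoke here is the reparametrization/translation sc-smoothness lemma of \cite{HWZ2017} (see also \cite{FH-primer}), applied with the exponential weights carried along. Granting it, the transition maps are sc-diffeomorphisms, and what remains is routine: the levels $(3+m,\delta_m)$ with strictly increasing $\delta_m$ form a scale of Hilbert spaces (the inclusions $H^{3+m+1,\delta_{m+1}}\hookrightarrow H^{3+m,\delta_m}$ are dense and compact, by Rellich on the compact pieces together with the weight gain $\delta_{m+1}>\delta_m$ on the cylindrical end), the finite-dimensional factors $U\times{\mathbb R}^2$ carry the constant scale, and the resulting topology is metrizable and Hausdorff; assembling these yields the asserted ssc-Hilbert manifold structure on $Z_{\mathcal D}({\mathbb R}\times{\mathbb R}^N,\bm{\bar\gamma})$.
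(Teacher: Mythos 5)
Your overall strategy (explicit affine charts: finite\--dimensional asymptotic data plus a fixed scale of exponentially weighted spaces, then check transitions) is the right one, but the chart you write down is not a bijection onto an open set, and this is a genuine gap. The definition of $Z_{\mathcal D}({\mathbb R}\times{\mathbb R}^N,\bm{\bar{\gamma}})$ only asks that, for a representative $(\wh{x},\wh{y})$ of the decoration class, there exist \emph{some} $\gamma\in[\gamma]$ (the same on both sides) with $\wt{u}^x\circ\sigma^+_{\wh{x}}(s,t)=(Ts+c^x,\gamma(kt))+\wt{r}^x$ and the analogous formula for $\wt{u}^y$. The common asymptotic phase is therefore a free parameter: if you replace $\gamma$ by $\gamma(\cdot+\epsilon)$ in the leading terms on \emph{both} sides (same $\epsilon$), the matching condition is untouched and the decoration class stays admissible, since the class only constrains the \emph{difference} of the two phases (changing the representative by $\theta$ rotates both polar coordinates by the same amount). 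Such elements are arbitrarily close to points of your chart image but are not in it, because your model term forces the asymptotic parametrization to be the particular $\gamma_\zeta$ ``pinned down by the marker''; pinning the phase to a marker is a condition imposed later in the construction of $\wt{\mathcal S}$, not in the definition of $Z_{\mathcal D}$. Concretely, the finite\--dimensional part of the chart must be $4$\--dimensional (decoration $\zeta$, common phase $\phi$, constants $c,c'$), not $3$\--dimensional: the correct model term is $\beta\cdot(Ts+c,\gamma(kt+\phi))$ on the $x$\--side and $\beta\cdot(Ts'+c',\gamma(kt'+\phi))$ on the $y$\--side, with the same $\phi$ so that the decoration\--matching holds. (One also needs this extra circle for the later gluing $\bm{\bar{\oplus}}$ to hit a neighborhood in $H^3(Z_a,{\mathbb R}\times{\mathbb R}^N)$.)

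Two further remarks. First, with remainders defined, as you do, by subtraction of the model term on the fixed punctured disks, no reparametrization of $h^x$ ever occurs in a transition map: two charts decompose the \emph{same} map $\wt{u}^x$, so the transition is an affine shift $h'^x=h^x+(m^x-m'^x)$ together with a diffeomorphism of the finite\--dimensional parameters, and since $\gamma$ is a smooth loop the family $(\zeta,\phi,c)\mapsto m^x-m'^x$ is smooth into every level. Hence the transitions are level\--wise classically smooth, which is exactly what the ``ssc'' structure requires; the sc\--smoothness lemma for $(\vartheta,h)\mapsto h(\cdot,\cdot+\vartheta)$ that you invoke is not needed for this theorem (and, giving only sc\-- rather than level\--wise smoothness, would not by itself yield an ssc\--manifold). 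That analytic input is genuinely needed later, for the imprinting $\bm{\bar{\oplus}}$ and the shifted restriction $p_y$, not here. Second, the paper itself states this theorem without proof (it is deferred to the local\--local construction references), so the comparison above is with the construction the definition forces rather than with a written argument in this text.
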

Define the ssc-manifold with boundary  $\mathfrak{Z}=[0,1)\times Z_{\mathcal D}({\mathbb R}\times {\mathbb R}^N,\bm{\bar{\gamma}})$
and take the open neighborhood ${\mathcal V}$ of $\partial\mathfrak{Z}$ defined as the collection of all $(r,\widetilde{u})$, where either 
$r=0$ or if $r\in (0,1)$ it holds that 
\begin{eqnarray}
&\varphi(r) +c^y-c^x>0&\\
&\varphi^{-1}\left( \frac{1}{T}\cdot \left( \varphi(r)+c^y-c^x\right)\right)\in (0,1/4).&\nonumber
\end{eqnarray}
Define the set 
$$
Y^{3,\delta_0}_{{\mathcal D},\varphi}= \left(\{0\}\times Z_{\mathcal D}({\mathbb R}\times {\mathbb R}^N,\bm{\bar{\gamma}})\right)\coprod \left((0,1)\times \left(\coprod_{0<|a|<1/4} H^3(Z_a,{\mathbb R}\times {\mathbb R}^N)\right)\right).
$$
We define $\bm{\bar{\oplus}}:{\mathcal V}\rightarrow Y^{3,\delta_0}_{{\mathcal D},\varphi}$ by 
$$
\bm{\bar{\oplus}}(0,(\widetilde{u}^x,[\widehat{x},\widehat{y}],\widetilde{u}^y))= (0,(\widetilde{u}^x,[\widehat{x},\widehat{y}],\widetilde{u}^y))
$$
and 
$$
\bm{\bar{\oplus}}(r,(\widetilde{u}^x,[\widehat{x},\widehat{y}],\widetilde{u}^y))=(r,\oplus_a(\widetilde{u}^x,(\varphi(r)\ast \widetilde{u}^y)),
$$
where $a=|a|\cdot [\widehat{x},\widehat{y}]$ and $T\cdot \varphi(|a|)=\varphi(r)+c^y-c^x.$  We can also fix concentric annuli around the boundaries, say $A_x$ and $A_y$ and define
restriction maps, where we particularly note the form of $p_y$.
$$
p_x(r,\widetilde{w}) = \widetilde{w}|A_x\ \ \text{and}\ \ p_y(r,\widetilde{w})= ((-\varphi(r))\ast \widetilde{w})|A_y.
$$
We also have the map $p_{[0,1)}:Y^{3,\delta_0}_{{\mathcal D},\varphi}\rightarrow [0,1)$.
\begin{thm}
The tuple $(\bm{\bar{\oplus}},\{p_x,p_y\},p_{[0,1)})$ is a submersive imprinting with restrictions.
\qed
\end{thm}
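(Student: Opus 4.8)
The plan is to run, in this setting, the argument that proves the corresponding nodal result above -- that $(\oplus,\{p_x,p_y\},p_{{\mathbb B}_{\mathcal D}})$ is a submersive imprinting with restrictions -- whose analytic core is the gluing example of Lecture~5 together with the operations on imprintings assembled there. The point is that $\bm{\bar\oplus}$ is obtained from the nodal gluing map by three modifications that do not disturb the imprinting structure: the target is enlarged from ${\mathbb R}^N$ to ${\mathbb R}\times{\mathbb R}^N$; the second input is first translated by the ${\mathbb R}$--shift family $r\mapsto\varphi(r)\ast(\,\cdot\,)$; and the otherwise free gluing parameter $\mathfrak a\in{\mathbb B}_{\mathcal D}$ is replaced by the value $a=a(r,\widetilde u)$ forced by $T\varphi(|a|)=\varphi(r)+c^y-c^x$, the extra parameter $r\in[0,1)$ being recorded in the target. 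Here $c^x,c^y$ are sc--smooth functions on the ssc--Hilbert manifold $Z_{\mathcal D}({\mathbb R}\times{\mathbb R}^N,\bm{\bar\gamma})$ by construction of its manifold structure, $\varphi$ restricts to a diffeomorphism of $(0,1)$ onto $(0,\infty)$ and has the boundary behaviour at $r=0$ built into the exponential gluing profile, so $(r,\widetilde u)\mapsto a$ is sc--smooth, and the translation action $\ast$ is sc--smooth; thus each modification amounts to composition with sc--smooth finite--dimensional reparametrizations and a sc--smooth translation, none of which affects whether the composite is an imprinting.

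First I would verify that $\bm{\bar\oplus}:{\mathcal V}\to Y^{3,\delta_0}_{{\mathcal D},\varphi}$ is an imprinting. Metrizability of the quotient topology is obtained as in the gluing example, since the glued necks $Z_a$ vary metrizably and the reparametrization and shifts are continuous. For the required local sections: if $p_{[0,1)}(y)=r_0>0$, then $y=(r_0,\widetilde w_0)$ is a genuine glued map at a nonzero parameter, and the gluing is inverted by plain restriction to the two halves, using $\beta(s)+\beta(-s)=1$, so the section is elementary; if $p_{[0,1)}(y)=0$, then $y=(0,\widetilde u_0)$ and one uses the anti--gluing section, which undoes the cut--off gluing of the remainders $\widetilde r^x,\widetilde r^y$, the translation $\varphi(r)\ast(\,\cdot\,)$, and the reparametrization $r\mapsto a$. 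In both cases $H\circ\bm{\bar\oplus}$ is sc--smooth: the gluing and anti--gluing maps are sc--smooth by the estimates of \cite{HWZ8.7} that power the gluing example, and one then pre-- and postcomposes with the smooth finite--dimensional reparametrizations and the sc--smooth translation action. This endows $Y^{3,\delta_0}_{{\mathcal D},\varphi}$ with its M--polyfold structure.

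Next, the restriction data. Take for $A_x,A_y$ the sc--Hilbert spaces $H^3(A_x,{\mathbb R}\times{\mathbb R}^N)$ and $H^3(A_y,{\mathbb R}\times{\mathbb R}^N)$. Choosing the annuli $A_x,A_y$ inside the region $s\le 20$ (resp.\ $|s'|\le 20$) exactly as in the nodal case, one has $R=\varphi(|a|)>\varphi(1/4)=e^4-e$ for every $(r,\widetilde u)\in{\mathcal V}$, so $A_x$ and $A_y$ always lie in the core region of the glued neck, where the cut--off is trivial; hence $p_x\circ\bm{\bar\oplus}(r,\widetilde u)=\widetilde u^x|A_x$ and $p_y\circ\bm{\bar\oplus}(r,\widetilde u)=\bigl((-\varphi(r))\ast(\varphi(r)\ast\widetilde u^y)\bigr)|A_y=\widetilde u^y|A_y$, the $(-\varphi(r))\ast$ in $p_y$ serving precisely to cancel the $\varphi(r)\ast$ inserted by $\bm{\bar\oplus}$. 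These are sc--smooth on ${\mathcal V}$ (restriction to a fixed compact annulus away from the puncture is sc--smooth on $Z_{\mathcal D}$) and, crucially, independent of the $[0,1)$--coordinate $r$. Finally $\bar f:=p_{[0,1)}\circ\bm{\bar\oplus}$ is $(r,\widetilde u)\mapsto r$, the restriction to ${\mathcal V}$ of the projection $[0,1)\times Z_{\mathcal D}\to[0,1)$; it is surjective, sc--smooth and submersive, and $d_{[0,1)}(\bar f(x))=d_{\mathcal V}(x)$ because the $Z_{\mathcal D}$--factor has no boundary, so Theorem~\ref{THMXX-2-12} in addition yields tameness of $Y^{3,\delta_0}_{{\mathcal D},\varphi}$. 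For the compatibility retraction, given $x_0=(r_0,\widetilde u_0)$ and $z_0=r_0$, choose a product neighborhood $W=(I\times V)\times I$ of $(x_0,z_0)$ in ${\mathcal V}\times[0,1)$ with $I\times V\subset{\mathcal V}$ and put $\rho((r,\widetilde u),z)=((z,\widetilde u),z)$. This is an sc--smooth retraction of $W$ with image $W\cap\mathrm{gr}(\bar f)$, and since $\bar p_x,\bar p_y$ do not depend on $r$, $\bar p_i\circ\bar\rho((r,\widetilde u),z)=\bar p_i(z,\widetilde u)=\bar p_i(r,\widetilde u)$. Hence the compatibility condition holds and the tuple is a submersive imprinting with restrictions.

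The one genuinely analytic step, which I expect to be the main obstacle, is the sc--smoothness near the boundary stratum $r=0$ of the anti--gluing section, now in the presence of the ${\mathbb R}$--translation $\varphi(r)\ast(\,\cdot\,)$ and the point--dependent neck--length reparametrization $(r,\widetilde u)\mapsto a$ -- the latter because the neck length $\varphi(|a|)=\tfrac1T(\varphi(r)+c^y-c^x)$ depends not only on $r$ but also on the varying asymptotic constants. Morally these are the same estimates that establish the gluing--example imprinting (and reduce to \cite{HWZ8.7}); making the argument honest requires verifying that precomposition with the smooth finite--dimensional reparametrization and with the sc--smooth translation action preserves the gluing estimates, which is routine for $r>0$ but needs care as $r\to0$, where the neck becomes infinitely long.
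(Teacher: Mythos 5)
Your proposal is correct and follows what is evidently the intended route: the paper itself states this theorem without proof, deferring to exactly the toolkit you invoke (the gluing example with the sc-smoothness estimates of \cite{HWZ8.7}, the operations on imprintings, and Theorem~\ref{THMXX-2-12} for the submersive/tame part), and your computation of $p_x\circ\bm{\bar\oplus}$, $p_y\circ\bm{\bar\oplus}$ on the annuli, the identification $\bar f(r,\wt{u})=r$, and the coordinate-swap retraction $\rho((r,\wt{u}),z)=((z,\wt{u}),z)$ are precisely the right verifications of the definition. One caution: for $r_0>0$ the section is not literally ``plain restriction to the two halves,'' since elements of $Z_{\mathcal D}({\mathbb R}\times{\mathbb R}^N,\bm{\bar{\gamma}})$ must be defined on the full punctured disks and be $(3,\delta_0)$-asymptotic to the orbit cylinder, and here, unlike the nodal case, the gluing parameter is not a free coordinate; so the section must extend the restricted map by the asymptotic cylinder and normalize the constants $c^x,c^y$ (and the decoration $[\wh{x},\wh{y}]$) so that the forced relation $T\varphi(|a|)=\varphi(r)+c^y-c^x$ reproduces the given glued surface $Z_a$ --- a routine but necessary adjustment within your framework, while the genuinely hard uniform sc-smoothness of $H\circ\bm{\bar{\oplus}}$ across $r=0$ is, as you say, the content carried by \cite{HWZ8.7} and \cite{FH-local-local,FH-book}.
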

Denote by $Y^{3,\delta}_{{\mathcal D},\varphi}({\mathbb R}\times {\mathbb R}^N,\bm{{\gamma}})$ the M-polyfold just constructed.
We can build a category whose objects are pairs $({\mathbb R}^N,\bm{\bar{\gamma}})$ and the morphisms are smooth maps
$f: ({\mathbb R}^N,\bm{\bar{\gamma}})\rightarrow ({\mathbb R}^M,\bm{{\gamma}}')$ where it is assumed $([\gamma'],T',k')=([f\circ\gamma],T,k)$.
Then we can associate to $(r,\widetilde{u})\in Y^{3,\delta}_{{\mathcal D},\varphi}({\mathbb R}\times {\mathbb R}^N,\bm{{\gamma}})$ the element
$(r,(\text{Id}\times f)\circ \widetilde{u})$.   It turns out that this defines a construction functor and using the idea from Theorem \ref{EXTEND} also this
functor has an extension to the category, where the objects are pairs $(Q,([\gamma],T,k))$. That means we consider 
maps into ${\mathbb R}\times Q$ being exponentially asymptotic to a suitable $\wt{J}$-holomorphic parameterization 
of the $k$-fold covered cylinder ${\mathbb R}\times \gamma(S^1)$.
\subsection{Classical}
In this case we consider a compact Riemann surface with smooth boundary $(\Sigma,j)$ and define the ssc-manifold $H^3(\Sigma,{\mathbb R}\times {\mathbb R}^N)$ where level $m$ is regularity $3+m$.  We can take mutually disjoint boundary annuli, say $A_i$, $i\in I$.
We have the restriction maps $p_i: H^3(\Sigma,{\mathbb R}\times {\mathbb R}^N)\rightarrow H^3(A_i)$ and the tautological imprinting 
with $\oplus=Id$.

\subsection{Using Operations}
Assume that $(S,j,D)$ is a compact nodal Riemann surface with smooth boundary and a finite group $G$ acts on $(S,j,D)$ by biholomorphic maps.
We can take a small disk structure $\bm{D}$ so that the associated union of disks is invariant. For each $D_x$ take a concentric compact boundary annulus 
$A_x$ with smooth boundary so that the union of all $A_x$ is invariant under $G$. For every $\{x,y\}\in D$ we have ${\mathcal D}_{\{x,y\}}=(D_x\sqcup D_y,\{x,y\})$
and the associated submersive $\oplus$-construction with restrictions. 
$$
\begin{CD}
@.  {\mathbb B}_{{\mathcal D}_{\{x,y\}}}\times E^{\delta}_{{\mathcal D}_{\{x,y\}}} @.\\
@.   @V \oplus VV@.\\
H^3(A_x,{\mathbb R}^N)@< p_x << X^{\delta_0}_{{\mathcal D},\varphi}({\mathbb R}^N) @> p_y>> H^3(A_y,{\mathbb R}^N)\\
@.   @Vp_{{\mathbb B}_{{\mathcal D}_{\{x,y\}}}}VV @.\\
@. {\mathbb B}_{{\mathcal D}_{\{x,y\}}} @.
\end{CD}
$$
Defining $A=\bigcup_{x\in |D|} A_x$, and  ${\mathbb B}_D$, $E^\delta$  and $X^\delta({\mathbb R}^N)$  as the obvious product
 we obtain 
$$
\begin{CD}
{\mathbb B}_D\times E^\delta @>\oplus >> X^\delta({\mathbb R}^N)@> p >> H^3(A,{\mathbb R}^N)\\
@.   @V p_{{\mathbb B}_D} VV   @.\\
@.    {\mathbb B}_D  @.
\end{CD}
$$
We represent this Lego-type piece by the following picture. It represents a part of the constructions needed to deal with the 
floors of a stable map.
 \begin{figure}[h]
\begin{center}
\boxed{\includegraphics[width=8.5cm]{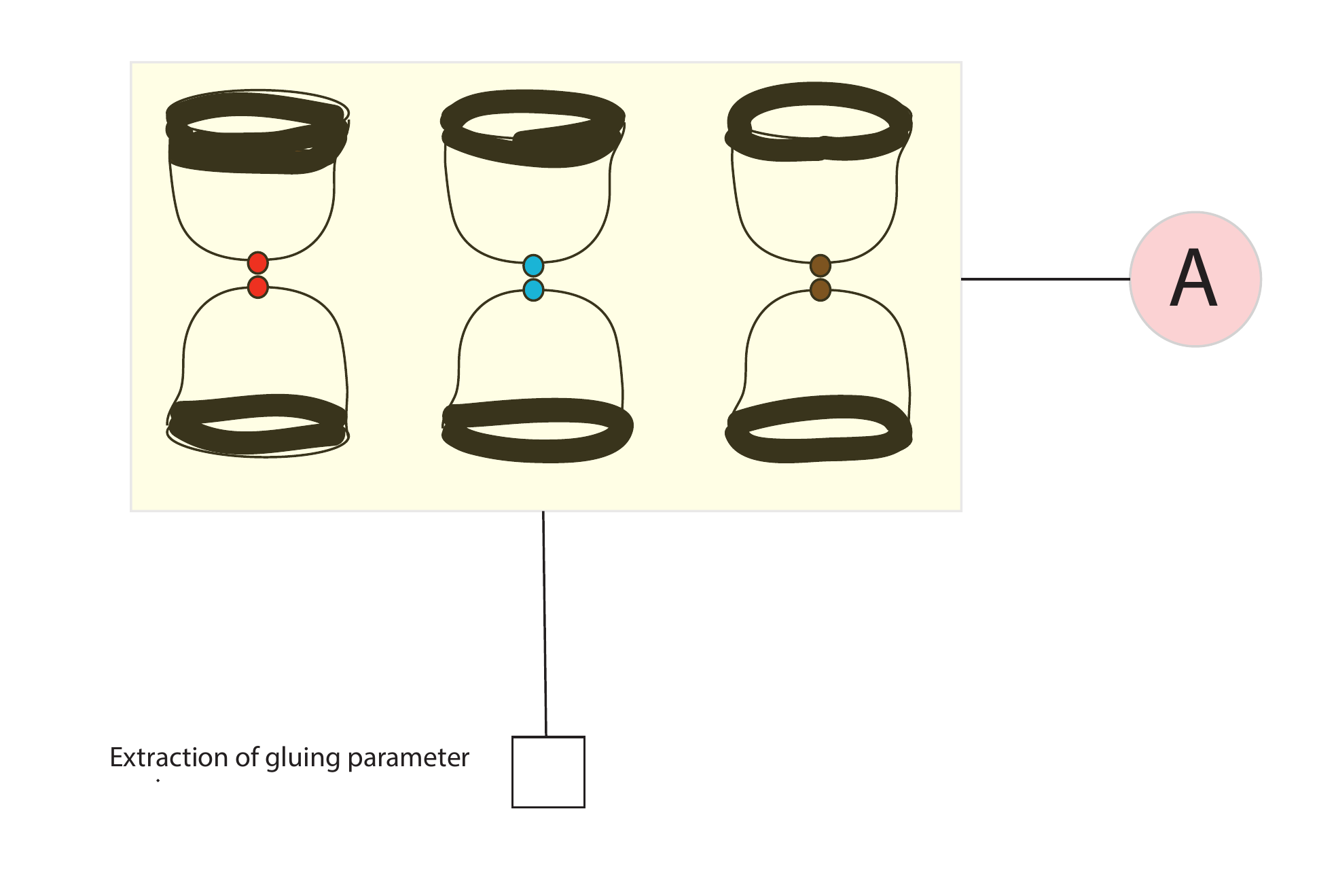}}
\end{center}
\caption{A Lego piece.}\label{FIG 2}
\end{figure}
\newpage
From $(S,j,D)$ and $\bm{D}$ we construct $(R,j)$  as follows. We remove the union of the $D_x$ but glue in the $A_x$.
Then $(R,j)$ has besides it original boundary coming from $S$ also the additional boundary with annuli whose union is $A$.
This gives a diagram as depicted in Figure \ref{FIG 3}.
 \begin{figure}[h]
\begin{center}
\includegraphics[width=7.5cm]{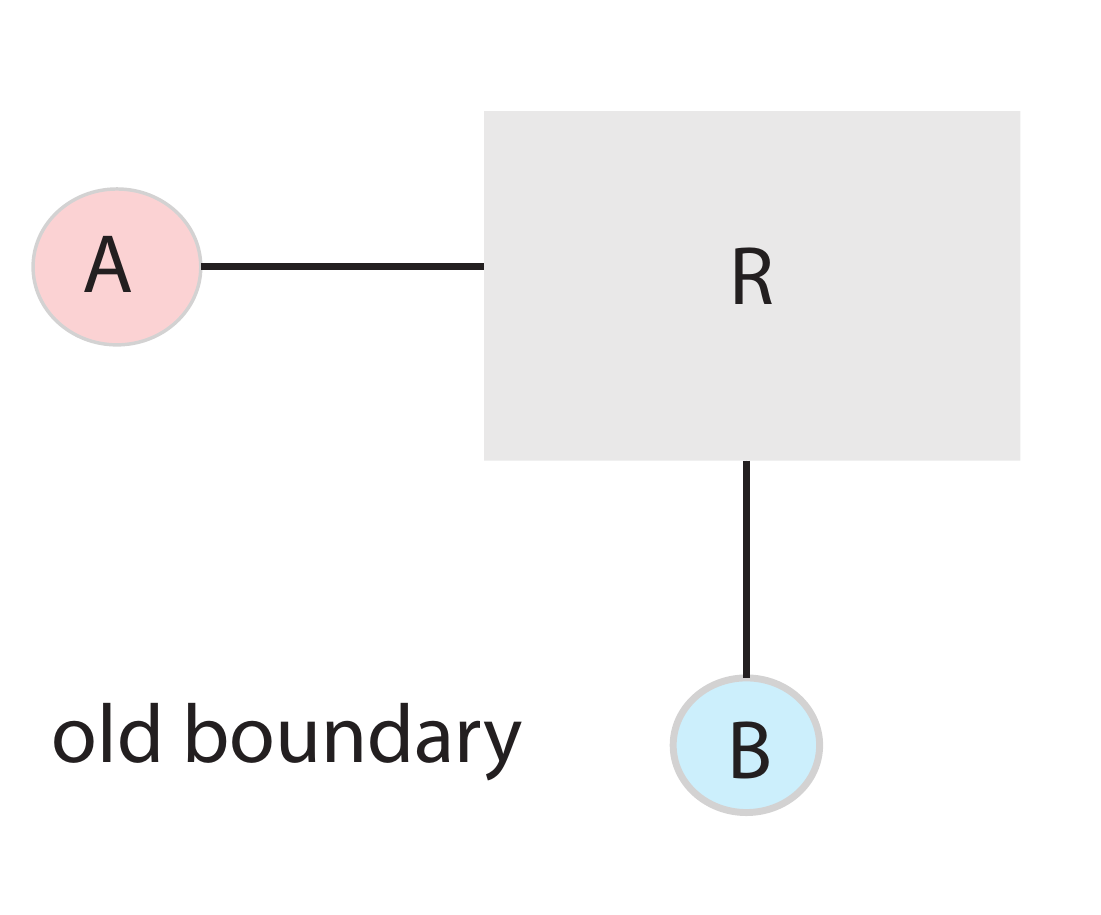}
\end{center}
\caption{Another Lego piece.}\label{FIG 3}
\end{figure}

\noindent The fibered product with respect to the $A$-part gives a M-polyfold construction functor for $(S,j,D)$ with $\bm{D}$, which considers the union of all maps 
of class $(3,\delta_0)$ with domains being the glued $S$. We have a submersion which is the extraction of the gluing parameter. Moreover, if we can partition the union 
of boundary components in unions which are $G$-invariant we have several restrictions. Assume we have a partition of the old boundary into an upper part and a lower part.
In this case we obtain a submersive imprinting with restrictions.  This is represent by the following Figure \ref{FIG 4}.
 \begin{figure}[h]
\begin{center}
\boxed{\includegraphics[width=9.5cm]{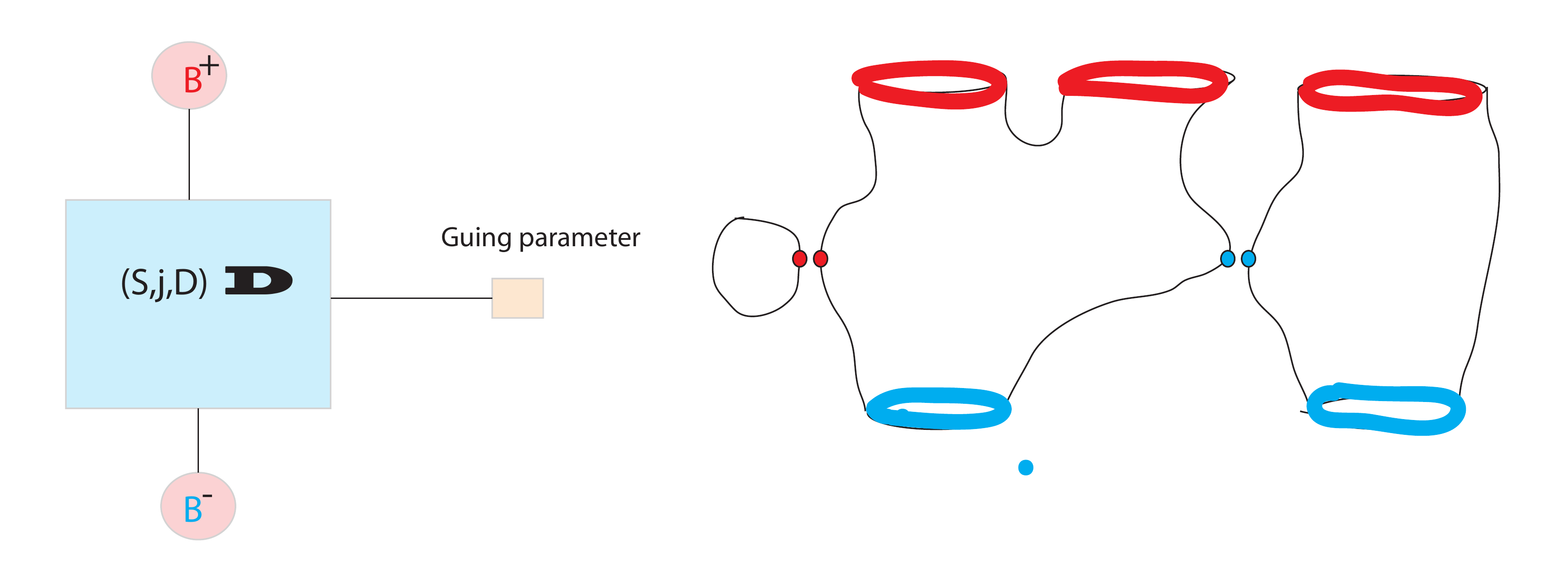}}
\end{center}
\caption{A bigger piece from a fibered product construction.}\label{FIG 4}
\end{figure}
\newpage
We note that the new construction is also a construction functor and we have extensions to manifolds.  In short we have used a fibered product construction to obtain 
from several construction functors a new one. It is based on a compact nodal Riemann surface which has a partition of its boundary into an upper part and a lower part
and a finite group acting by biholomorphic maps preserving the decomposition of the boundary. In addition one has fixed a small disk structure $\bm{D}$ and considers 
the associated glued surfaces. On these we consider maps of some Sobolev class with image in ${\mathbb R}^N$.  This is equipped with a M-polyfold structure 
via the imprinting method. Since everything behaves well with respect to smooth maps $f:{\mathbb R}^N\rightarrow {\mathbb R}^M$ we obtain a construction functor.

Associated to the periodic orbit situation we can make the following construction.  Assume we have several periodic orbits 
$\bm{\bar{\gamma}}_i$, $i=1,..,k$. We assume that the images of the various $\gamma_i$ are disjoint.
We obtain the submersive $Y_{{\mathcal D}_{\{x,y\}},\varphi}({\mathbb R}\times {\mathbb R}^N,\bm{\bar{\gamma}}_i)\rightarrow [0,1)$ and we take the product
and pull-back by the diagonal map $\Delta:[0,1)\rightarrow [0,1)^k$. 
 \begin{figure}[htp]
\begin{center}
\boxed{\includegraphics[width=9.5cm]{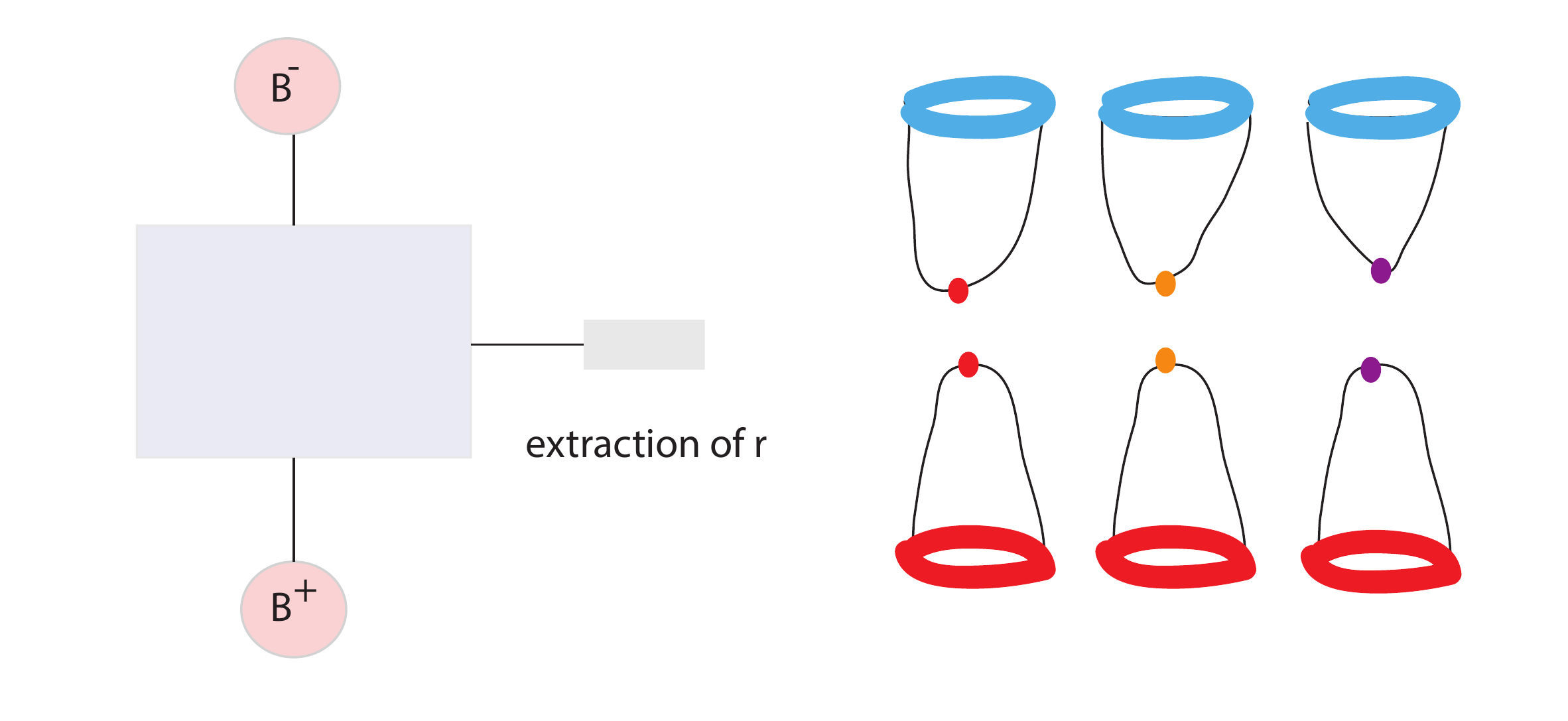}}
\end{center}
\caption{A bigger piece from a fibered product construction and pull-back.}\label{FIG 5}
\end{figure}

The two boxed displays represent construction functors with restrictions and some submersive extractions of data. We shall use them for the constructions 
related to SFT.
\newpage

\part{From Local-Local to Local}
We show how to obtain from the local-local theory the local theory by fibered product constructions.
\part*{Lecture 7}
\section{Constructions Associated to Stable Maps}
We shall use the results obtained so far to carry out constructions associated to stable maps.
\subsection{Data Preparation}
We start with a stable map $\alpha=(\alpha_0,\widehat{b}_1,...,\widehat{b}_k,\alpha_k)$, where each $\alpha_i$ is written 
as 
$$
\alpha_i=(\Gamma^-_i,S_i,j_i,M_i,D_i,[\widetilde{u}_i],\Gamma^+_i).
$$
 \begin{figure}[htp]
\begin{center}
\includegraphics[width=14.5cm]{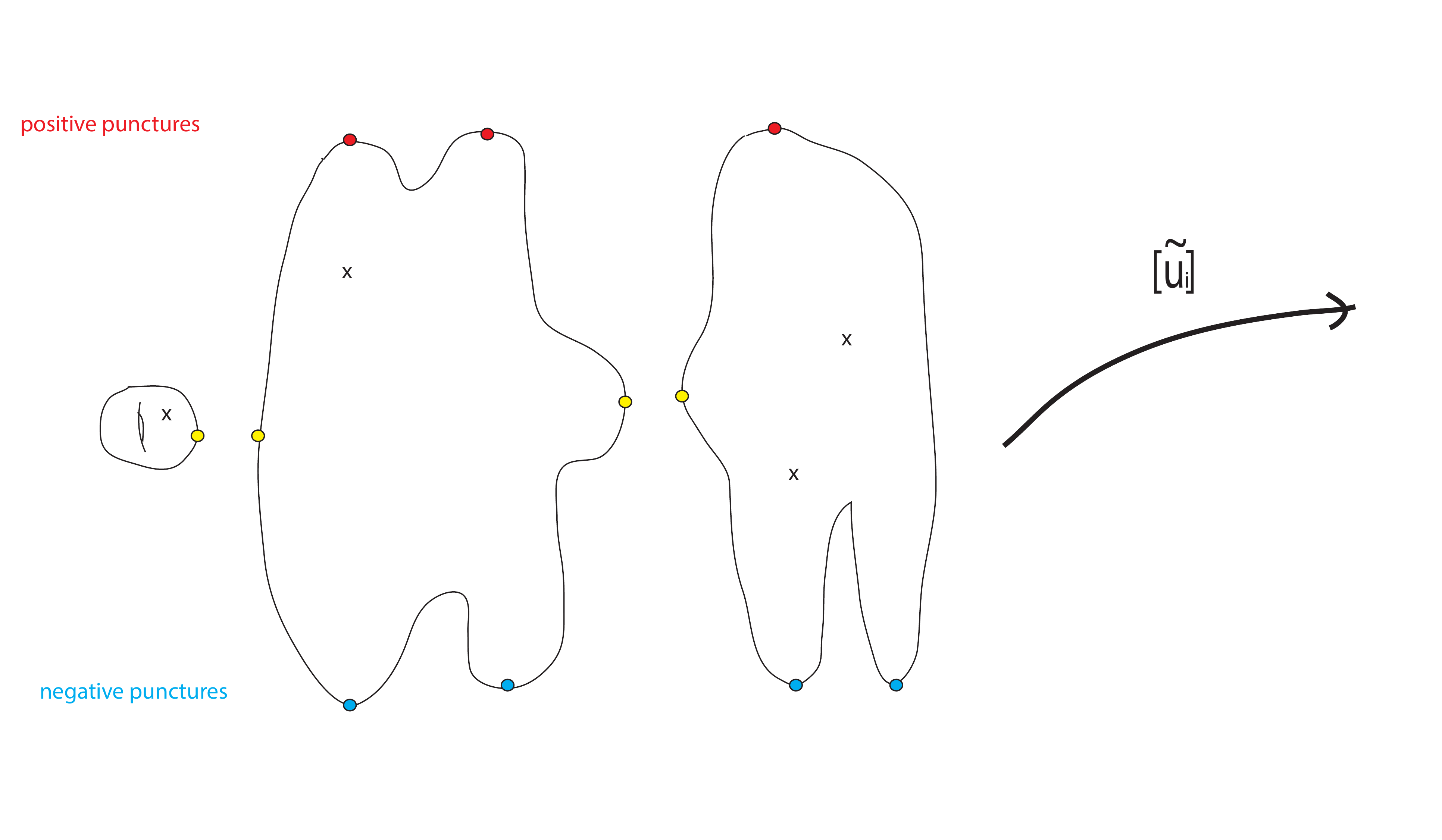}
\end{center}
\caption{A floor of a stable building. A representative of $[\widetilde{u}_i]$ has image in ${\mathbb R}\times Q$.}\label{FIG 6}
\end{figure}
The stable map $\alpha$ comes with a finite automorphism group $G$ which preserves the floor structure. A representative $\widetilde{u}_i$
of $[ \widetilde{u}_i]$ has its image in ${\mathbb R}\times Q$. 
We extract $\sigma=(\sigma_0,b_1,...,b_k,\sigma_k)$ with $\sigma_i=(\Gamma^-_i,S_i,j_i,D_i,\Gamma^+_i)$ essentially ignoring marked points, but we remember where they were.
 \begin{figure}[htp]
\begin{center}
\includegraphics[width=6.5cm]{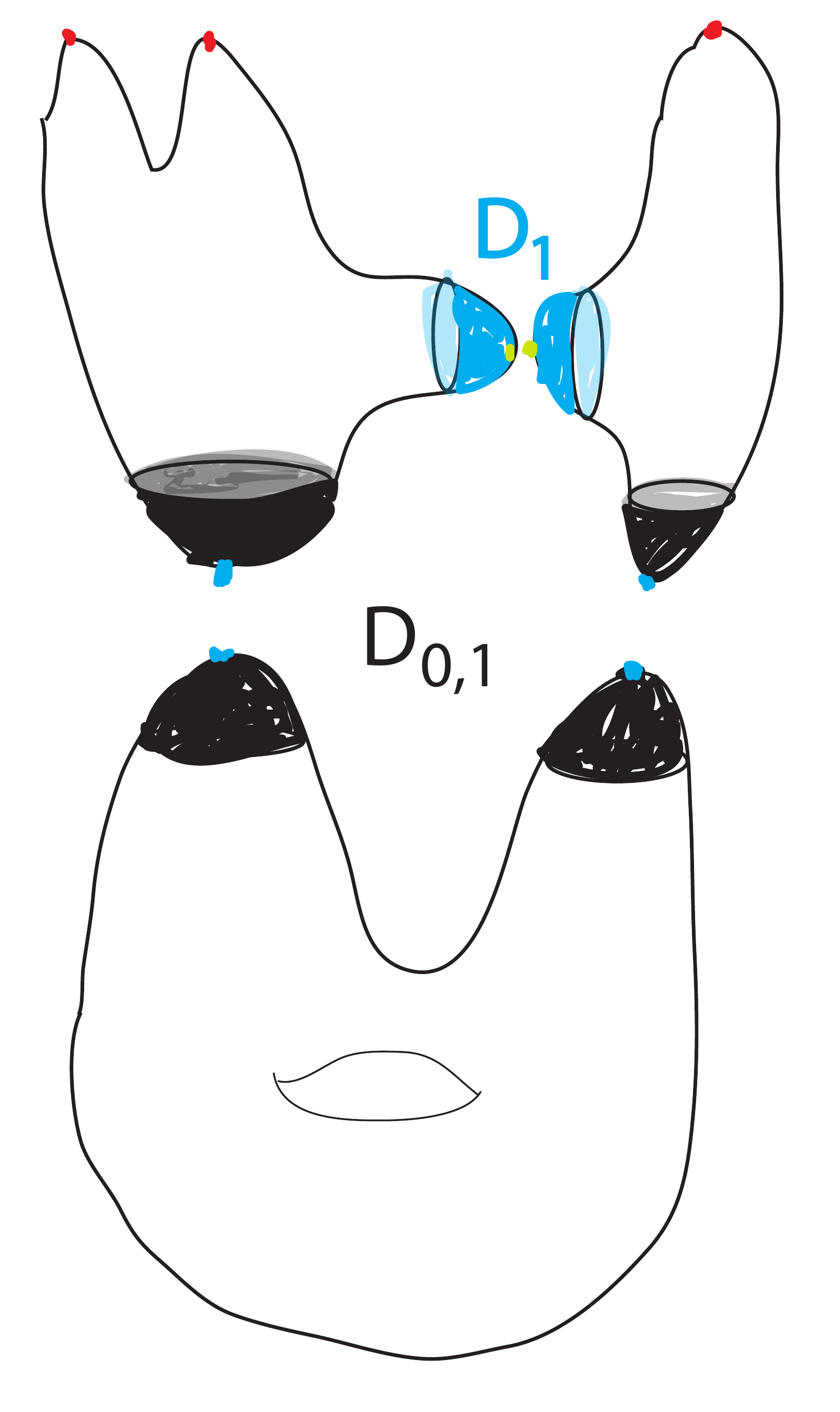}
\end{center}
\caption{The domain of a stable building with two floors and the fixed disks.}
\end{figure}
We have the group $G$ acting on $\sigma_i$ by biholomorphic maps.  We fix a finite $G$-invariant subset $\rup_i$, called an \textbf{anchor set}, which is disjoint from the points in $|D_i| $, $M_i$, 
and $\Gamma_i=\Gamma_i^-\cup\Gamma^+_i$.  We fix closed disks with smooth boundaries around the nodal points in $|D_i|$ for $i\in \{0,...,k\}$,
 so that the union is invariant under
$G$. We denote the collection by $\bm{D}_i$. We fix disks around $\Gamma^+_i$ giving $\bm{D}_i^+$ for $i\in \{0,...,k-1\}$,  and around $\Gamma^-_i$, for 
$i\in \{1,...,k\}$,  giving $\bm{D}^-_i$.
We pick all these  disks small enough so that they are mutually disjoint, and are also disjoint from the points in $\rup_i$ and $M_i$, $\Gamma^-_0$ and $\Gamma^+_k$.
We take compact annuli in the disks 
associated to $\Gamma^\pm_i$ and denote their union by $A^\pm_i$.  Removing $\bm{D}^\pm_i$ from $S_i$ and adding $\bm{A}^\pm_i$ we obtain
\begin{eqnarray}\label{EQNX1}
(A_i^-, R_i,j_i,\bm{D}_i,\rup_i,A^+_i)\ \ \text{for}\ i\in \{1,...,k-1\}
\end{eqnarray}
and
\begin{eqnarray}
(\Gamma^-_0, R_0,j_0,\bm{D}_0,\rup_0,A^+_0)\ \ \text{and}\ \  (A^-_k,R_k,j_k,\bm{D}_k,\rup_k,\Gamma^+_k).
\end{eqnarray}
We also obtain $\bm{D}_{i-1,i}$ for $i\in \{1,...,k\}$ where $\bm{D}_{i-1,i}$ is the union of the ordered disk pairs
$$
(D_z\cup D_{b_i(z)},(z,b_i(z)),\ z\in\Gamma^+_{i-1}
$$
with associated
\begin{eqnarray}\label{EQNP1}
(A_{i-1}^+,\bm{D}_{i-1,i},A^-_i).
\end{eqnarray}
What we are doing is to consider data associated to the floors as (\ref{EQNX1}) and data for the interfaces, which we shall introduce soon.
 \begin{figure}[htp]
\begin{center}
\includegraphics[width=14.5cm]{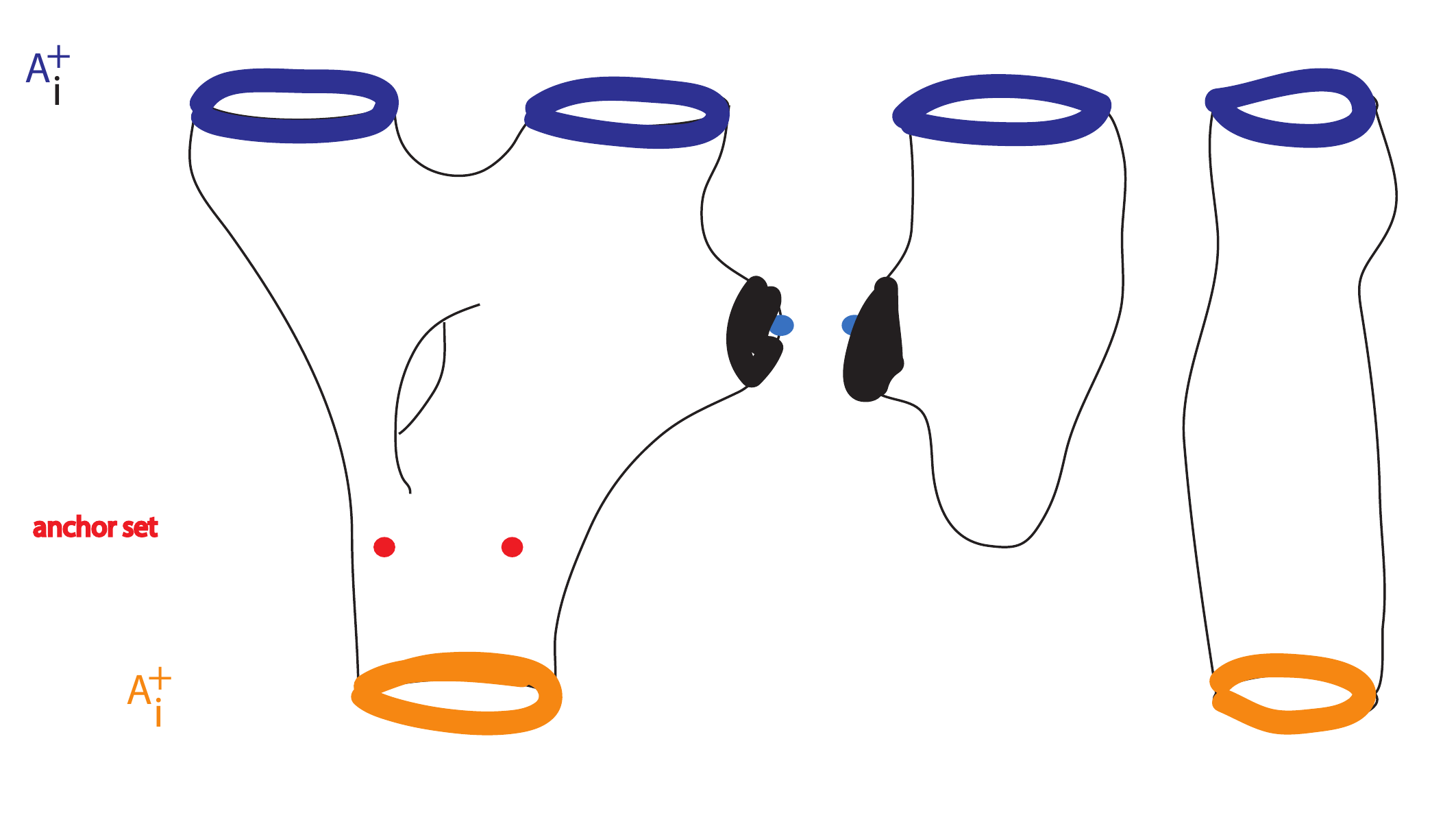}
\end{center}
\caption{This shows the situation for a floor. The interfaces are being dealt with separately.}\label{Fig12-3}
\end{figure}
\subsection{M-Polyfolds for Floors and Interfaces}
We shall first consider the situation on the floors and then on the interfaces. The following are all imprinting constructions previously discussed.
\subsubsection{Floors} Associated to what is represented in Figure \ref{Fig12-3}   we have a construction functor which associates to ${\mathbb R}\times {\mathbb R}^N$ maps on the glued surfaces as discussed earlier. We take the 
sub-M-polyfold, which is a global sc-smooth retraction, consisting of the elements for which the anchor average with respect to $\rup_i$ vanishes.
$$
\text{av}_{\tiny\rup_i}(\widetilde{u}) := \frac{1}{\sharp {\tiny\rup}_i}\cdot \sum_{z\in {\tiny\rup}_i} a_i(z) =0.
$$
Let us denote the functor associated to the $i$-th floor by $\mathsf{F}_i$. There is a submersive map extracting the gluing parameter for the nodal points from the $i$-th floor
$$
p_{D_i}: \mathsf{F}_i\rightarrow {\mathbb B}_{D_i}
$$
and there are the restrictions associated to the lower and upper union of annuli $A^\pm_i$ giving us
$$
\mathsf{A}^-_i:=H^3(A_i^-,{\mathbb R}\times {\mathbb R}^N)\xleftarrow{\text{rest}^-_i} \mathsf{F}_i \xrightarrow{\text{rest}^+_i} H^3(A^+_i,{\mathbb R}\times {\mathbb R}^N)=:\mathsf{A}^+_i
$$
Here are the {\bf submersive $i$-th floor-Lego with restrictions}, where for the middle diagram we have that  $i\in \{1,....,k-1\}$
$$
\begin{array}{ccc}
\boxed{
\begin{CD}
@.\mathsf{A}^+_0\\
@.   @AA\text{rest}^+_0 A\\
Z_0@>\oplus>>\mathsf{F}_0@> p_{D_0} >>   {\mathbb B}_{D_0}\\
@.\\
@.
\end{CD}
}
&
\boxed{
\begin{CD}
@.\mathsf{A}^+_i \\
@.@AA\text{rest}^+_i A\\
Z_i@>\oplus>>\mathsf{F}_i@> p_{D_i} >>   {\mathbb B}_{D_i}\\
@.@VV\text{rest}^-_i V\\
@. \mathsf{A}^-_i
\end{CD}
}
&
\boxed{
\begin{CD}
@.@.\\
@.@.\\
Z_k@>\oplus>>\mathsf{F}_k@> p_{D_k} >>   {\mathbb B}_{D_k}\\
@.@VV\text{rest}^-_k V\\
@. \mathsf{A}^-_k
\end{CD}
}
\end{array}
$$
\subsubsection{Interfaces}
Associated to  $(A_{i-1}^+,\bm{D}_{i-1,i},A^-_i)$ for $i\in \{1,...,k\}$ we have the submersive 
$$
\boxed{
\begin{CD}
@.\mathsf{A}_i^-\\
@.@AA\text{rest}_i^- A\\
Z_{i-1,i}@>\oplus>>\mathsf{F}_{i-1,i}@> p_{[0,1)}>>  [0,1)\\
@.@VV\text{rest}_{i-1}^+ V\\
@.\mathsf{A}_{i-1}^+
\end{CD}
}
$$
Recall that the restriction $\text{rest}^-_i$  is obtained by first restricting the map  to the union of annuli $A^-_i$ and then using the ${\mathbb R}$-action 
shifting by  $-\varphi(r_i)$ if $r_i\neq 0$. We can line up all these construction functors and use fibered product and obtain a new submersive construction functor
$\mathsf{F}_{\bm{\sigma},{\tiny\rup},\varphi}\rightarrow [0,1)^k\times {\mathbb B}_D$ which is a M-polyfold construction for a given ${\mathbb R}\times {\mathbb R}^N$ together with a weighted periodic orbit assignment.

\subsection{Assembling the Pieces}
We line up the diagrams constructed in Lecture 7 and take a fibered product. 
$$
\resizebox{0.53\hsize}{!}{
\boxed{
\begin{CD}
Z_k@>\oplus>>\mathsf{F}_k@> p_{D_k} >>   {\mathbb B}_{D_k}\\
@.@VV\text{rest}^-_k V\\
@.\mathsf{A}^-_k\\
@.  ......\\
@.\mathsf{A}_i^-\\
@.@AA\text{rest}_i^- A\\
Z_{i-1,i}@>\oplus>>\mathsf{F}_{i-1,i}@> p_{[0,1)}>>  [0,1)\\
@.@VV\text{rest}_{i-1}^+ V\\
@.\mathsf{A}_{i-1}^+\\
@.  ....\\
@.\mathsf{A}_1^-\\
@. @AA\text{rest}_1^- A\\
Z_{0,1}@>\oplus>>\mathsf{F}_{0,1}@> p_{[0,1)}>>  [0,1)\\
@.@VV\text{rest}_{0}^+ V\\
@.\mathsf{A}_{0}^+\\
@.@AA\text{rest}^+_0 A\\
Z_0@>\oplus>>\mathsf{F}_0@> p_{D_0} >>   {\mathbb B}_{D_0}
\end{CD}  
}
}
$$
The associated fibered product diagram has the form $Z\xrightarrow{\oplus} X$ and is an imprinting construction.
We shall explain the latter in the following.  Rather than working with $X$ we introduce another set which is in natural bijection 
to $X$ and whose elements  are more intuitive and will be used in the uniformizer construction.
More precisely we shall construct an open subset of a ssc-manifold ${\mathcal O}$
and a set $Z^3_{\bm{\sigma},{\tiny\rup},\varphi}({\mathbb R}\times {\mathbb R}^N,\bar{\digamma})$ fitting into the following diagram
\begin{eqnarray}\label{refdia}
\begin{CD}
Z @>\oplus>>  X\\
@VVV    @VVV\\
{\mathbb B}_D\times{\mathcal O} @>\bar{\oplus}>> Z^3_{\bm{\sigma},{\tiny\rup},\varphi}({\mathbb R}\times {\mathbb R}^N,\bar{\digamma}).
\end{CD}
\end{eqnarray}
where the first vertical arrow is a ssc-diffeomorphism,  and the second one a bijection. Hence $\bar{\oplus}$ is an imprinting.
The diagram ${\mathbb B}_D\times {\mathcal O} \rightarrow Z^3_{\bm{\sigma},{\tiny\rup},\varphi}({\mathbb R}\times {\mathbb R}^N,\bar{\digamma})$ is  more useful
for the further constructions as we already said. We carry this out in the next lecture. We shall refer to it as the work-horse M-polyfold.

\newpage
\part*{Lecture 8}
\section{The Workhorse M-polyfold}
We shall construct the lower part of the diagram (\ref{refdia}). We assume that we started with a stable map
$\alpha=(\alpha_0,\wh{b}_1,...,\wh{b}_k,\alpha_k)$ and extracted the domain data $\sigma$ to carry out the previously described constructions.
\subsection{Two  ssc-Manifolds}
Given $\sigma=(\sigma_0,b_1,...,b_k,\sigma_k)$, where 
$$
\sigma_i=(\Gamma^-_i,S_i,j_i,D_i,\Gamma^+_i)
$$
 we assume that we have an assignment $\bar{\digamma}$ 
which associates to a puncture a weighted periodic orbit. Further we assume that the assignment is compatible with $b_i$.
Finally we assume that we are given an action of a finite group $G$ preserving the floor structure and acting by biholomorphic maps
and preserving the other structure which was given. We fix an anchor set $\rup=\rup_0\sqcup....\sqcup\rup_k$ which is 
invariant under $G$ and make the following definition.
\begin{definition}
The  ssc-manifold $Z^3_{\sigma,{\tiny\rup}}({\mathbb R}\times {\mathbb R}^N,\bar{\digamma})$
consists of tuples
$$
\wt{u}:=(\wt{u}_0,\wh{b}_1,....,\wh{b}_k,\wt{u}_k),
$$
 where $\wt{u}_i$ is of class $(3,\delta_0)$ asymptotic to the weighted
periodic orbits  prescribed by $\bar{\digamma}$ so that 
the data across interfaces is $\wh{b}_i$ matching and the anchor averages vanish.
\qed
\end{definition}

Next we consider the open subset ${\mathcal O}$ of $[0,1)^k\times Z^3_{\sigma,{\tiny\rup}}({\mathbb R}\times {\mathbb R}^N,\bar{\digamma})$ defined as follows.
\begin{definition}\label{DEFX8.2}
The open subset ${\mathcal O}$
consists of all tuples $(r_1,...,r_k,\wt{u})$ where $\wt{u}=(\wt{u}_0,\wh{b}_1,...,\wh{b}_k,\wt{u}_k)$ so that the following holds.
Either $r_i=0$ or if $r_i\in (0,1)$ we have for $z\in \Gamma^+_{i-1}$
\begin{eqnarray}
&\varphi(r_i)-c^z(\wt{u})+c^{b_i(z)}(\wt{u})>0&\\
& \varphi^{-1}\left( \frac{1}{T_z}\cdot \left( \varphi(r_i)-c^z(\wt{u})+c^{b_i(z)}(\wt{u})\right)\right)\in (0,1/4).&\nonumber
\end{eqnarray}
Here $c^z(\wt{u})$ are the asymptotic constants which we obtain introducing holomorphic polar coordinates on the disks of the 
small disk structure. Further $T_z$ is the period associated to the periodic orbit associated to $z$ and $b_i(z)$, respectively.
\qed
\end{definition}
Then ${\mathbb B}_D\times {\mathcal O} $ is a ssc-manfold which will be important for the further constructions.

\subsection{Definition of the Basic Space}
We fix for every nodal pair in $D_i$ and for every interface pair $(z,b_i(z))$, where $z\in \Gamma^+_{i-1}$ 
a compact disk pair with smooth boundary so that the union is invariant under the group action.
Hence we obtain a small disk structure $\bm{D}$.  A gluing parameter $\widetilde{\mathfrak{a}}$ is a map associating to $\{x,y\}\in D_i$ an element $a_{\{x,y\}}\in {\mathbb B}_{\{x,y\}}$ and to $(z,z_i)\in D_{i-1,1}$
an element $a_{(z,z')}\in {\mathbb B}_{\{z,z'\}}$.  We shall write $\mathfrak{a}_i$ for the restriction to $D_i$ and $\mathfrak{a}_{i-1,i}$ for the restriction to $D_{i-1,i}$.  We assume that the disks do not contain points in $\rup$ and $M$.
We denote the smooth manifold of gluing parameters by ${\mathbb B}$. 
\begin{definition}
A gluing parameter $\widetilde{\mathfrak{a}}$ for $\sigma$ is {\bf admissible} provided for every $i\in \{1,...,k\}$ it holds that 
either all $a_{(z,z')}=0$ or all are nonzero for $(z,z')\in D_{i-1,i}$. If for some $i$ we have that $\mathfrak{a}_{i-1,i}\equiv 0$ then we say 
we a {\bf nontrivial $(i-1,i)$-interface}. The subset of ${\mathbb B}$ consisting of  admissible gluing parameter is denoted by ${\mathbb B}_{\text{ad}}$.\qed
\end{definition}
 \begin{figure}[htp]
\begin{center}
\includegraphics[width=7.0cm]{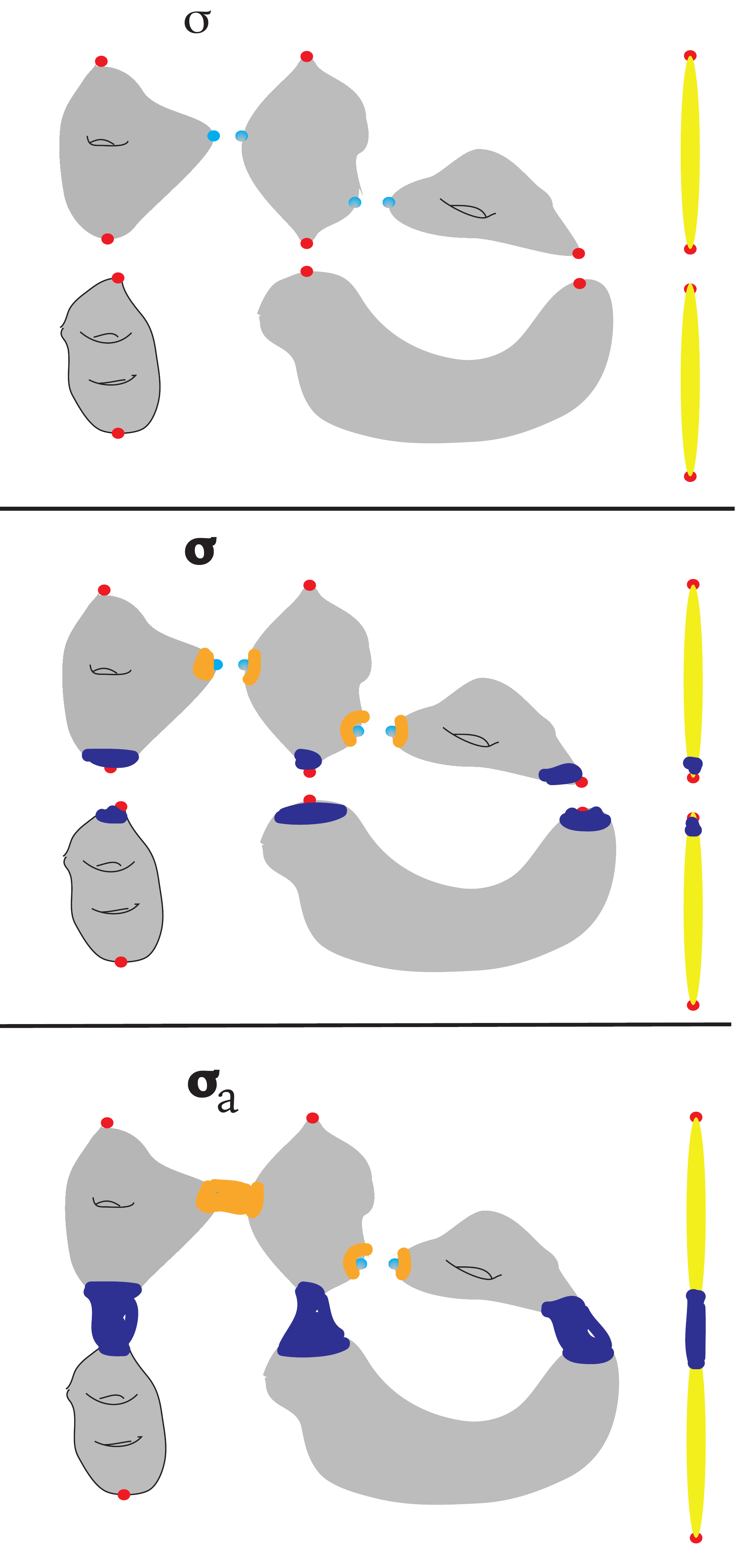}
\end{center}
\caption{$\sigma$, the enhanced $\bm{\sigma}$, and a glued enhanced $\bm{\sigma}_{\mathfrak{a}.}$}\label{FIG 7}
\end{figure}
With the small disk structure we obtain the enhanced $\bm{\sigma}=(\bm{\sigma}_0,b_1,...,b_k,\bm{\sigma}_k)$, where 
$\bm{\sigma}_i=(\Gamma^-_i,S_i,j_i,\bm{D}_i,\Gamma^+_i)$.
Given $\widetilde{\mathfrak{a}}$ we have the associated interface sequence. For this define $\ell=\ell(\widetilde{\mathfrak{a}})$ to be the number
of elements $i\in \{1,...,k\}$ such that $\mathfrak{a}_{i-1,i}\equiv 0$. We can list these elements as $0<i_1<....<i_{\ell}\leq k$ and set $i_0=0$ and $i_{\ell+1}=k+1$.
Hence we obtain the map
$$
\widetilde{\mathfrak{a}}\rightarrow (i_1(\widetilde{\mathfrak{a}}),...,i_{\ell(\widetilde{\mathfrak{a}})}(\widetilde{\mathfrak{a}})).
$$
We define $\sigma_{\widetilde{a}}^e$ for $e=0,...,\ell$ by 
$$
\sigma_{\widetilde{a}}^e=(\Gamma^-_{i_e},S^e_{\widetilde{\mathfrak{a}}},j^e_{\widetilde{\mathfrak{a}}},D^e_{\widetilde{\mathfrak{a}}},\Gamma^+_{i_{e+1}-1}).
$$
Here we glue for $e\in \{0,...,\ell\}$ the surfaces $S_{i_e},...,S_{i_{e+1}-1}$ at their nodes and at their trivial interfaces $(i_e,i_e+1),...,(i_{e+1}-2,i_{e+1}-1)$.
The nodal pairs $D^e_{\widetilde{\mathfrak{a}}}$ consist of those elements in $D_{i_e}\sqcup..\sqcup D_{i_{e+1}-1}$ which have vanishing gluing parameters.
We obtain 
$$
\sigma_{\widetilde{a}}= (\sigma_{\widetilde{\mathfrak{a}}}^0,b_{i_1(\widetilde{\mathfrak{a}})},...,b_{i_{\ell}(\widetilde{\mathfrak{a}})},\sigma^{\ell}_{\widetilde{\mathfrak{a}}}).
$$
The disks of the small disk structure $\bm{D}$ define a small disk structure for $\sigma_{\widetilde{\mathfrak{a}}}$ denoted by $\bm{D}_{\widetilde{\mathfrak{a}}}$ and we obtain the enhanced 
$$
\bm{\sigma}_{\widetilde{\mathfrak{a}}}=(\bm{\sigma}^0_{\widetilde{\mathfrak{a}}},\bm{D}_{i_1-1,i_1},..,\bm{D}_{i_{\ell}-1,i_{\ell}},\bm{\sigma}^{\ell}_{\widetilde{\mathfrak{a}}}).
$$
Given the original periodic orbit assignment $\bar{\digamma}$  the restriction to 
$$
\Gamma_{\widetilde{\mathfrak{a}}} =\Gamma_0^-\cup\Gamma^+_{i_1-1}\cup\Gamma^-_{i_1}.....\cup\Gamma_{i_{\ell}}^-\cup\Gamma^+_{k}
$$
is denoted by $\bar{\digamma}_{\widetilde{\mathfrak{a}}}$.  Hence we obtain  the collection
$$
\boxed{{{\{
(\bm{\sigma_{\widetilde{\mathfrak{a}}}},\bar{\digamma}_{\widetilde{\mathfrak{a}}})\ |\ \widetilde{\mathfrak{a}}\ \text{admissible gluing parameter} \}}}}
$$
We also have the original anchor set $\rup=\rup_0\sqcup...\sqcup \rup_k$.  Given an admissible gluing parameter $\widetilde{\mathfrak{a}}$
we have the associated anchor set $\rup_{\widetilde{\mathfrak{a}}}$ associated to $ \bm{\sigma_{\widetilde{\mathfrak{a}}}}$ defined by 
$$
\boxed{{{\mathlarger{
\rup_{\widetilde{\mathfrak{a}}}=\rup_{i_0}\sqcup \rup_{i_1}\sqcup...\sqcup \rup_{i_{\ell}}}}}}
$$
We shall also use the remnants of the omitted $\rup_i$ which can be naturally identified with subsets of the glued $ \bm{\sigma_{\widetilde{\mathfrak{a}}}}$.
\begin{figure}[htp]
\begin{center}
\includegraphics[width=7.0cm]{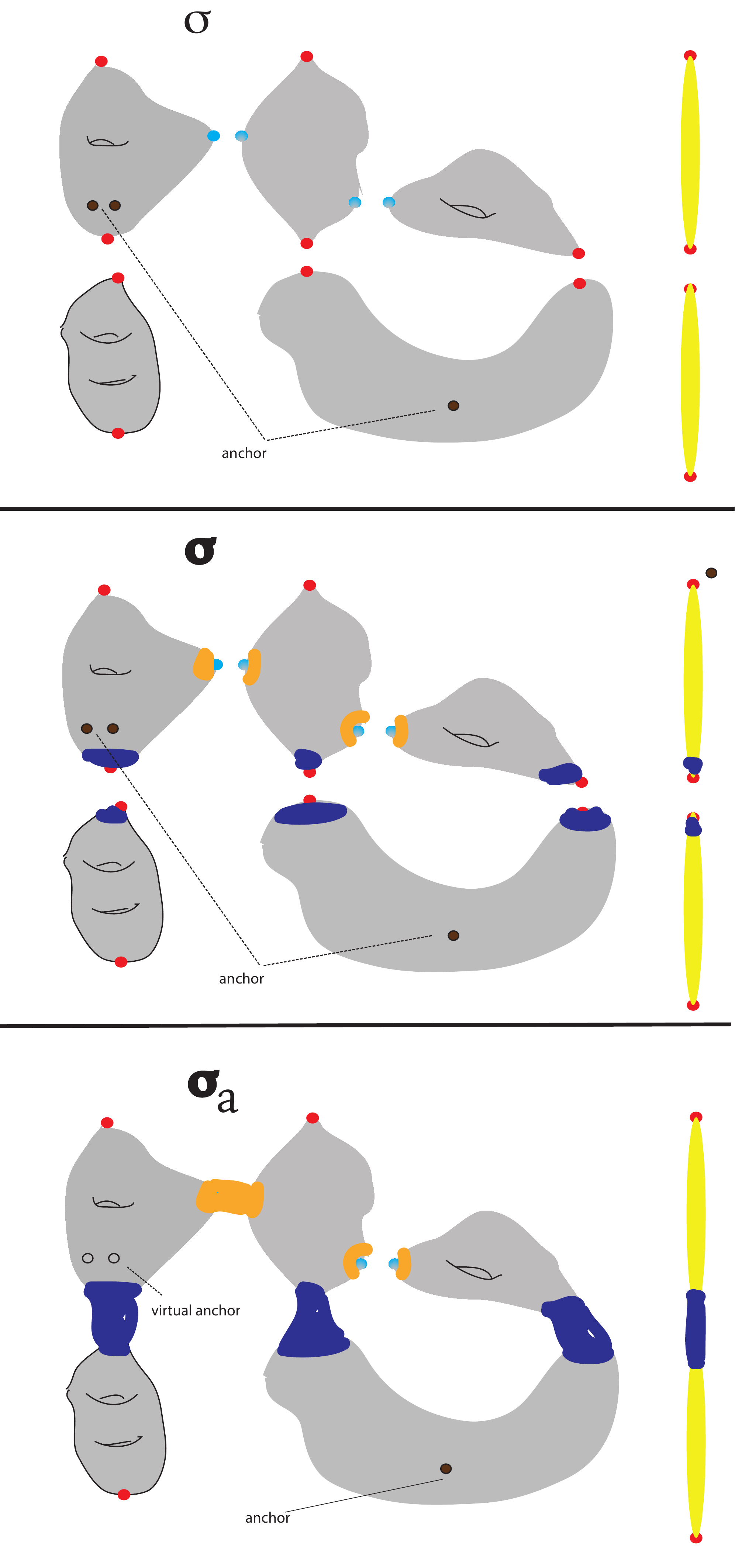}
\end{center}
\caption{Anchor and virtual anchor.}\label{FIG 8}
\end{figure}
For every admissible $\widetilde{\mathfrak{a}}$ we have $\sigma_{\widetilde{\mathfrak{a}}}$ and we consider 
maps on the underlying (punctured) domain of class $(3,\delta_0)$, which are asymptotic to the periodic orbits prescribed by 
$\bar{\digamma}_{\widetilde{\mathfrak{a}}}$. Moreover for the interfaces we have matching data $\wh{b}_{i_e}$. In addition 
we require that the anchor averages vanish for the $\rup_{i_e}$ for $e=1,...\ell$ and the following relationship with virtual anchor values.
Writing the data as $\wt{w}=(\wt{w}_0,\wh{b}_{i_1},...,\wh{b}_{i_{\ell}},\wt{w}_{\ell})$, we require
\begin{itemize}
\item[(1)] $\text{av}_{{\tiny\rup}_{i_e}}(\widetilde{w})=0$ for $e\in \{0,...,\ell\}$.
\item[(2)] For every $e\in \{0,...,\ell\}:$ $\text{av}_{{\tiny\rup}_{i-1}}(\widetilde{w})< \text{av}_{{\tiny\rup}_{i}}(\widetilde{w})$ for $i_e<i<i_{e+1}$.
\end{itemize}
Finally we define the set
$Z^{3,\delta_0}_{\bm{\sigma},{\tiny\rup},\varphi}({\mathbb R}\times {\mathbb R}^N,\digamma)$ to consist of all the tuples of maps just described.
\subsection{The Imprinting}
Recall the ssc-manifold ${\mathbb B}_{D}\times {\mathcal O}$.  Given  $(\mathfrak{a},(r_1,...,r_k,\wt{u}))$ we first construct an admissible gluing parameter $\wt{\mathfrak{a}}$ in ${\mathbb B}_{ad}$ as follows.  
\begin{definition}\label{DEFX8.3.1}
We set $\wt{\mathfrak{a}}|D=\mathfrak{a}$.
For $i\in \{0,...,k\}$ we define for $z\in\Gamma^+_{i-1}$ the gluing parameter $a_{(z,b_i(z))}\cdot \wh{b}(z)$ as follows.  If $r_i=0$ we put
$a_{(z,b_i(z))}=0$ and otherwise we define it by 
$$
T\cdot \varphi(|a_{(z,b_i(z))}|) = \varphi(r_i) + c^{b_i(z)}(\wt{u}) - c^z(\wt{u}).
$$
\qed
\end{definition}
Hence, given $(\mathfrak{a},(r_1,...,r_k,\wt{u}))$ we obtain the total (admissible) gluing parameter $\wt{\mathfrak{a}}$ and an admissible $\sigma_{\wt{\mathfrak{a}}}$.
The imprinting is defined as follows.  Given $(\mathfrak{a},(r_1,...,r_k,\wt{u}))$ we construct $\wt{\mathfrak{a}}$ and consider its sequence 
of nontrivial interface indices  $0=i_0<i_1<...< i_{\ell}<i_{\ell+1}=k+1$.  We take the data $\wt{u}=(\wt{u}_0,\wh{b}_1,....\wh{b}_k,\wt{u}_k)$
and produce the shifted $\wt{u}^\ast$ as follows.  For $i\in \{0,1,...,k\}$ we define with $i_e\leq i <i_{e+1}$, $e\in \{0,...,\ell\}$
$$
\wt{u}^\ast_i = (\varphi(r_{i_e+1})+..+\varphi(r_i))\ast \wt{u}_i.
$$
Then we define $\wt{w}_e$ by ordinary gluing 
$$
\wt{w}_e:= \oplus_{\wt{\mathfrak{a}}_e}(\wt{u}^{\ast}_{i_e},...,\wt{u}^{\ast}_i)
$$
and finally
$$
\wt{w}=(\wt{w}_0,\wh{b}_{i_1},...,\wh{b}_{i_{\ell}},\wt{w}_{i_{\ell}}),
$$
which is an element in $Z^{3,\delta_0}_{\bm{\sigma},{\tiny\rup},\varphi}({\mathbb R}\times {\mathbb R}^N,\digamma)$.
\begin{thm}\label{thmk}
$\bar{\oplus}:{\mathbb B}_D\times {\mathcal O}\rightarrow Z^{3,\delta_0}_{\bm{\sigma},{\tiny\rup},\varphi}({\mathbb R}\times {\mathbb R}^N,\digamma)$ is an imprinting and defines als a construction functor. 
\end{thm}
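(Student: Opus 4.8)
The plan is to obtain $\bar{\oplus}$ as a transport, along the commuting square (\ref{refdia}), of the fibered-product imprinting $\oplus\colon Z\to X$ assembled in Lecture~7. It then suffices to produce the two vertical maps of (\ref{refdia}): a ssc-diffeomorphism $\phi_1\colon Z\to{\mathbb B}_D\times{\mathcal O}$ and a bijection $\phi_2\colon X\to Z^{3,\delta_0}_{\bm{\sigma},\rup,\varphi}({\mathbb R}\times{\mathbb R}^N,\digamma)$ with $\bar{\oplus}\circ\phi_1=\phi_2\circ\oplus$. Granting these, the quotient topology ${\mathcal T}_{\bar{\oplus}}$ is the $\phi_2$-pushforward of the (metrizable) M-polyfold topology of $X$, hence metrizable; and for any $y$ in the target, a local section $H$ of the imprinting $\oplus$ near $\phi_2^{-1}(y)$ gives $\bar{H}:=\phi_1\circ H\circ\phi_2^{-1}$, which satisfies $\bar{\oplus}\circ\bar{H}=\mathrm{Id}$ and $\bar{H}\circ\bar{\oplus}=\phi_1\circ(H\circ\oplus)\circ\phi_1^{-1}$, an sc-smooth map. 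So the hypotheses of the imprinting theorem from Lecture~5 hold for $\bar{\oplus}$; since ${\mathbb B}_D\times{\mathcal O}$ is open in a ssc-manifold and the submersions onto $[0,1)^k\times{\mathbb B}_D$ respect the degeneracy index, Theorem~\ref{THMXX-2-12} additionally gives that the induced M-polyfold structure is tame.

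To build $\phi_1$: the domain $Z$ of the fibered product of the floor- and interface-Lego pieces consists of tuples of maps on the glued floors together with a parameter in $[0,1)^k\times{\mathbb B}_D$, subject to the annulus-matching conditions coming from the restriction maps $\mathrm{rest}^{\pm}_i$ and $p_x,p_y$. Solving these matching conditions is precisely the bookkeeping of Definitions~\ref{DEFX8.2} and \ref{DEFX8.3.1}: the interface gluing parameter $a_{(z,b_i(z))}$ is determined from $r_i$ and the asymptotic constants $c^z(\wt{u}),c^{b_i(z)}(\wt{u})$ by $T\cdot\varphi(|a_{(z,b_i(z))}|)=\varphi(r_i)+c^{b_i(z)}(\wt{u})-c^z(\wt{u})$, which is sc-smooth and invertible exactly over the open region carved out by the inequalities of Definition~\ref{DEFX8.2}; the shifts $\wt{u}^{\ast}_i=(\varphi(r_{i_e+1})+\cdots+\varphi(r_i))\ast\wt{u}_i$ are the unique ${\mathbb R}$-translations reconciling the $-\varphi(r_i)$-shift built into each $\mathrm{rest}^-_i$ along the tower of floors, while the nodal parameter $\mathfrak{a}\in{\mathbb B}_D$ stays free. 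This identifies $Z$ with ${\mathbb B}_D\times{\mathcal O}$ as ssc-manifolds.

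To build $\phi_2$: a point of $X$ is a tuple of maps on the pieces $\mathsf{F}_i,\mathsf{F}_{i-1,i}$ agreeing on the overlap annuli $\mathsf{A}^{\pm}_i$. For the nontrivial-interface decomposition $0=i_0<i_1<\cdots<i_\ell<i_{\ell+1}=k+1$ attached to $\wt{\mathfrak{a}}$, gluing consecutive pieces over their overlaps with the cut-off $\beta$ produces a single map $\wt{w}_e=\oplus_{\wt{\mathfrak{a}}_e}(\wt{u}^{\ast}_{i_e},\dots,\wt{u}^{\ast}_i)$ on each $S^e_{\wt{\mathfrak{a}}}$, hence the tuple $\wt{w}=(\wt{w}_0,\wh{b}_{i_1},\dots,\wh{b}_{i_\ell},\wt{w}_{i_\ell})$; conversely one restricts a tuple in $Z^{3,\delta_0}_{\bm{\sigma},\rup,\varphi}$ back to the pieces. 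The vanishing-anchor-average constraints defining the $\mathsf{F}_i$ together with the virtual-anchor ordering turn into exactly conditions (1) and (2) of the definition of $Z^{3,\delta_0}_{\bm{\sigma},\rup,\varphi}$, so $\phi_2$ is a well-defined bijection; unwinding the definitions shows that $\phi_2\circ\oplus\circ\phi_1^{-1}$ is exactly the explicit recipe just given for $\bar{\oplus}$. With the first paragraph, this shows $\bar{\oplus}$ is an imprinting.

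Finally, $\bar{\oplus}$ defines a construction functor over ${\mathcal N}$: its target $Z^{3,\delta_0}_{\bm{\sigma},\rup,\varphi}({\mathbb R}\times{\mathbb R}^N,\digamma)$ is a set of tuples of genuine maps into ${\mathbb R}\times{\mathbb R}^N$, on which a morphism $f\colon{\mathbb R}^N\to{\mathbb R}^M$ acts by post-composition $\wt{w}\mapsto(\mathrm{Id}\times f)\circ\wt{w}$; this is sc-smooth because each constituent Lego piece (the classical $H^3$-space, the nodal $X^{\delta}_{{\mathcal D},\varphi}$, the periodic-orbit $Y^{3,\delta}_{{\mathcal D},\varphi}$) is a construction functor and fibered products of construction functors are construction functors. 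Taking $\mathrm{im}(\wt{w})$ to be the union of the images of the component maps in ${\mathbb R}\times{\mathbb R}^N$, properties (1)--(3) of Definition~\ref{def:poly construction} are inherited from the pieces, and Theorem~\ref{EXTEND} then upgrades the target from ${\mathbb R}\times{\mathbb R}^N$ to ${\mathbb R}\times Q$. The main obstacle throughout is the identification in the middle two paragraphs --- solving the interface matching equations and tracking the cascade of ${\mathbb R}$-shifts; once (\ref{refdia}) is set up, the imprinting and construction-functor claims follow by transport of structure.
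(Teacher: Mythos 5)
Your proposal is correct and follows essentially the same route the text itself indicates: transporting the fibered-product imprinting $\oplus\colon Z\rightarrow X$ of Lecture~7 across the square (\ref{refdia}) via a ssc-diffeomorphism and a bijection, and then invoking the construction-functor behavior of the constituent Lego pieces (with Theorem~\ref{EXTEND}-type extension), which is precisely the argument the paper sketches and leaves to the reader. The only cosmetic point is that the relevant source category is the one with objects $({\mathbb R}^N,\bar{\digamma})$ (the weighted-periodic-orbit version of ${\mathcal N}$), as your citation of the periodic-orbit piece already implicitly acknowledges.
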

As a consequence we can define $Z^{3,\delta}_{\bm{\sigma},{\tiny\rup},\varphi}({\mathbb R}\times Q,\digamma)$.
Note that Theorem \ref{thmk} is concerned with the bottom horizontal line in (\ref{refdia}) and we leave it to the reader to fill in the 
vertical arrows.
\subsection{Transversal Constraints}
Relevant for us is a suitable sub-M-polyfold associated to picking a stabilization set and suitable transversal constraints.
Recall that we started with a stable map $\alpha$. 
An element  $\wt{w}$ of   the M-polyfold  $Z^3_{\bm{\sigma},{\tiny\rup},\varphi}({\mathbb R}\times Q,{\bar{\digamma}})$ 
 takes the form
$$
\wt{w}=(\wt{w}_0,\wh{b}_{i_1},...,\wh{b}_{i_\ell},\wt{w}_e)
$$
where the   $\wt{w}_e$  are defined on a possibly glued surface $\sigma_{\wt{\mathfrak{a}}}$,
where $\wt{\mathfrak{a}}$ is an sc-smooth function of $\wt{w}$. For unglued interfaces the transition and relationship between
the asymptotic periodic orbits is given by the $\wh{b}_{i_e}$, where $i_1<...<i_{\ell}$ is the sequence of nontrivial interface 
indices. We recall that for such an element the anchor values
$\text{av}_{{\tiny\rup}_i}(\wt{w})$ are defined for $i\in \{0,...,k\}$.  Here the sets $\rup_i$ are naturally identified 
as subsets of the underlying glued surface associated to $\sigma_{\wt{\mathfrak{a}}}$. 
For $\wt{w}$ with nontrivial interface sequence $0<i_1<..< i_{\ell}<k$ it follows by definition of the 
M-polyfold $Z^3_{\bm{\sigma},{\tiny\rup},\varphi}({\mathbb R}\times Q,{\bar{\digamma}})$ that 
$$
\text{av}_{\tiny\rup_{i_e}}(\wt{w})=0\ \ \text{for}\ e\in \{0,...,\ell\}.
$$
 Let $\Xi$ be a finite subset of 
the underlying domain of $\sigma$, which does not belong to the disks of the small disk structure, and is also disjoint 
from the  anchor points and the points in $\Gamma_0^-\sqcup\Gamma^+_k$, the points in $M$ and nodal points.
We assume also  that $\Xi$ is invariant under $G$. Since $G$ preserves floors the decomposition 
$$
\Xi=\Xi_0\cup...\cup\Xi_k,
$$
where $\Xi_i$ consists of the points on the $i$-th floor, is being preserved.
Given $z\in \Xi_i$ we denote by $[z]$ its $G$-orbit. For every $[z]$ we consider two possible cases
of associating to it a co-dimension two constraint. In the first case we consider 
a submanifold $H_{[z]}$ of $Q$ of co-dimension $2$ and define 
\begin{eqnarray}
\wt{H}_{[z]}={\mathbb R}\times H_{[z]},
\end{eqnarray}
which we  shall call a ${\mathbb R}$-invariant constraint. In the second case we take a submanifold of $Q$ of co-dimension $1$ and define
\begin{eqnarray}
\wt{H}_{[z]}:=\{\bar{a}_{[z]}\}\times H_{[z]}.
\end{eqnarray}
 After fixing constraints as described above we obtain a map which associates to $z\in\Xi$ 
the submanifold $\wt{H}_{[z]}$ of codimension two in ${\mathbb R}\times Q$.  It is important that this map factors
through the orbits of $\Xi$. We  shall abbreviate the assignment by ${\mathcal H}$ \index{${\mathcal H}$} and call it a set of {\bf transversal constraints}
\index{transversal constraints}.
\begin{definition}\label{DEFGH2.28}
With $\sigma$, $\bm{D}$, and $\rup$, let ${\mathcal H}$ be a set of transversal constraints.
The subset $Z^3_{\bm{\sigma},{\tiny\rup},{\mathcal H},\varphi}({\mathbb R}\times Q,\bar{\digamma})$
of $Z^3_{\bm{\sigma},{\tiny\rup},\varphi}({\mathbb R}\times Q,\bar{\digamma})$ consists of all
$\wt{w}$  such that the following holds for every $i\in \{0,...,k\}$.
\begin{itemize}
\item[(1)]  $(-\text{av}_{\tiny\rup_i}(\wt{w}))\ast \wt{w}(z)\in \wt{H}_{[z]}$ for $z\in\Xi_i$.
\item[(2)]  The intersection of the shifted map $\wt{w}$  in the above at $z$ is transversal.
\end{itemize}
\qed
\end{definition}
We shall take $\Xi$ later on as above, but assume that $(S,j,\bar{M},\bar{D})$ is a stable Riemann surface, where 
$\bar{M}=M\cup \Gamma^-_0\cup\Gamma^+\cup\Xi$ and $\bar{D}=D\cup \{(z,b_i(z))\ |\ z\in\Gamma^+_{i-1},\ i\in \{1,...,k\}\}.
$ Now we can give the uniformizer construction.

\newpage
\part*{Lecture 9}
\section{Uniformizers and Transition Germs}
We shall describe a very useful variant of the uniformizer and transition construction.  When we introduced the notion of a local uniformizer construction
$F:{\mathcal C}\rightarrow \text{SET}$ we assumed that $({\mathcal C},{\mathcal T})$ was a given GCT. However, very often in applications, the starting point
is just a groupoidal category. These, in fact, have a natural metrizable topology, but constructing it would already require some of the arguments necessarily arising 
in the local uniformizer construction. So it seems to make sense to construct the topology at the same time as the local uniformizers. Since the topology already 
occurs in the definition of a local uniformizers and the topology is determined by all the uniformizers there is something like a `chicken or egg problem'.
For that reason one replaces the notion of a local uniformizer by that of a uniformizer, where local refers to the compatibility with the topology (which we do not have).
A uniformizer has the same properties as a local uniformizer, but we do not require that $|\Psi|$ is a local homeomorphism.
 Of course, the construction $\bm{M}$ has to be replaced by one, say $\mathscr{F}$,  which gives a topology and a transition construction, albeit in a more tricky way.

 \subsection{Abstract Uniformizer Construction}
We first define what we understand by a uniformizer construction.
\begin{definition}
Let ${\mathcal C}$ be a groupoidal category. A {\bf uniformizer} at an object $c$ with automorphism group $G$ 
is a functor $\Psi:G\ltimes O\rightarrow {\mathcal C}$ with the following properties.
\begin{itemize}
\item[(1)] $G\ltimes O$ is the translation groupoid associated to a M-polyfold $O$ equipped with an action of $G$ by sc-diffeomorphisms.
\item[(2)] There exists $\bar{o}\in O$ with $\Psi(\bar{o})=c$.
\item[(3)] $\Psi$ is injective on objects.
\item[(4)] $\Psi$ is full and faithful.
\end{itemize}
We shall call $\Psi$ a tame uniformizer provided $O$ is tame. 
\qed
\end{definition}
The constructions which are important for us are the uniformizer constructions. 
\begin{definition}
A \textbf{uniformizer construction} is a functor $F:{\mathcal C}\rightarrow \text{SET}$ which associates to an object $c$ a set of uniformizers.
If $F(c)$ for every object $c$ only contains tame uniformizers, we shall call $F$ a \textbf{tame uniformizer construction}.
\qed
\end{definition}
As in the case of local uniformizers we can consider the transition sets $\bm{M}(\Psi,\Psi')$. The second important construction is what we call a
transition germ construction $\mathscr{F}$. As we shall see a uniformizer construction $F$ together with a transition germ construction 
$\mathscr{F}$ produces a natural topology ${\mathcal T}$ for the orbit space $|{\mathcal C}|$ and M-polyfold structures for the transition sets 
$\bm{M}(\Psi,\Psi')$. In any given application one needs to verify that the natural topology ${\mathcal T}$ is metrizable. In fact it is not difficult
to come up with examples where the natural topology would not even by Hausdorff.
 Assuming that that the natural topology passes
the ``metrizability test'' the functor  $F$ will become a local uniformizer construction for 
the GCT $({\mathcal C},{\mathcal T})$ and the construction of the M-polyfold structure for $\bm{M}(\Psi,\Psi')$ will be a transition construction.
Hence $(F,\mathscr{F})$ for the groupoidal category ${\mathcal C}$ implies $(F,\bm{M})$ for the GCT $({\mathcal C},{\mathcal T})$ provided 
${\mathcal T}$ is metrizable. We discuss the transition germ construction next.

\subsection{Abstract Transition Germs  Construction}
We assume that at  this point we have the 
uniformizer construction $F:{\mathcal C}\rightarrow \text{SET}$. 
Associated to this uniformizer construction we can build the transition sets
$\bm{M}(\Psi,\Psi')$.   The new type of construction is as follows and is denoted by $\mathscr{F}$ and we assume that we are given 
a groupoidal category ${\mathcal C}$ and have a uniformizer construction $F$ for it.
\begin{definition}
Let $F$ be a uniformizer construction. A transition germ construction $\mathscr{F}$ associates for given $\Psi\in F(c)$ and $\Psi'\in F(c')$
to $h=(o,\Phi,o')\in \bm{M}(\Psi,\Psi')$ a germ of map $F_h: {\mathcal O}(O,o)\rightarrow (\bm{M}(\Psi,\Psi'),h)$ with the following properties,
where $f_h:=t\circ F_h$.
\begin{itemize}
\item[(A)] \textbf{Diffeomorphism Property:} The germ $f_h:{\mathcal O}(O,o)\rightarrow{\mathcal O}(O',o')$ is a local sc-diffeomorphism
and $s(F_h(q))=q$ for $q$ near $o$. If $\Psi=\Psi'$ and $h=(o,\Psi(g,o),g\ast o)$ then $F_h(q)=(q,\Psi(g,q),g\ast q)$ for $q$ near $o$ so that $f_h(q)=g\ast q$.
\item[(B)] \textbf{Stability Property:} $F_{F_h(q)}(p)=F_h(p)$ for $q$ near $o=s(h)$ and $p$ near $q$.
\item[(C)] \textbf{Identity Property:}  $F_{u(o)}(q)=u(q)$ for $q$ near $o$.
\item[(D)] \textbf{Inversion Property:} $F_{\iota(h)}(f_h(q))=\iota(F_h(q))$ for $q$ near $o=s(h)$. Here $\iota(p,\Phi,o')=(o',\Phi^{-1},o)$.
\item[(E)] \textbf{Multiplication Property:} If $s(h')=t(h)$ then $f_{h'}\circ f_h(q)=f_{m(h',h)}(q)$ for $q$ near $o=s(h)$, and $m(F_{h'}(f_h(q)),F_h(q))=F_{m(h',h)}(q)$ for $q$ near $o=s(h)$.
\item[(F)] \textbf{M-Hausdorff Property:}  For different $h_1,h_2\in \bm{M}(\Psi,\Psi')$ with $o=s(h_1)=s(h_2)$ the images under $F_{h_1}$ and 
$F_{h_2}$ of small neighborhoods are disjoint.
\end{itemize}\qed
\end{definition}
As already previously stated, it  is a general fact that the constructions $(F,\mathscr{F})$ define a natural topology ${\mathcal T}$ on $|{\mathcal C}|$
for which the $|\Psi|$ are homeomorphisms onto open subsets, and they define M-polyfold structures on the $\bm{M}(\Psi,\Psi')$.
If ${\mathcal T}$ is metrizable, a fact which has to be proved in any given context, then $(F,\mathscr{F})$ implies a construction $(F,\bm{M})$
for the GCT $ ({\mathcal C},{\mathcal T})$, see \cite{HWZ2017} and \cite{FH-book}.   
\begin{example}
Here is an example how a non-metrizable topology may arise.
Consider the groupoidal category ${\mathcal C}$ with objects being the points in $\{0,1\}\times {\mathbb R}$
and the morphisms besides the identities being the pairs $((0,t),(1,t)):(0,t)\rightarrow (1,t)$ and similarly $((1,t),(0,t))$
for $t<0$. If we equip $\{0,1\}\times{ \mathbb R}$ with the obvious metrizable topology the orbit space obtains a non-Hausdorff topology. One can give a uniformizer and transition germ construction for ${\mathcal C}$ which will yield 
this topology.
\qed
\end{example}

This will apply in the case of the category of stable maps ${\mathcal S}$ and we shall start the associated discussion later on in the present lecture.
 In order to digest the definition of $\mathscr{F}$
one should note that the basic ingredients are that given a morphism $\Phi:\Psi(o)\rightarrow \Psi'(o')$ (defining $h=(o,\Phi,o')$)
there exists an associated sc-diffeomorphism $f_h:(U(o),o)\rightarrow (U(o'),o')$ and a family $q\rightarrow \Phi^h_q$ for $q\in U(o)$ 
so that $F_h(q)=(q,\Phi^h_q,f_h(q))$. The latter gives a notion being able to say that $F_h(p)$ is close to $F_h(q)$ 
if $p\in U(o)$ close to $q\in U(o)$.  The stability conditions then say $F_{F_h(q)}$ for $q\in U(o)$ has the form
$F_{F_h(q)}(p)=(p,F_h(p),f_h(p))$ for $p$ close to $q$. The other properties are self-evident.

\subsection{Preparation for the SFT Uniformizer Construction}
Next we begin the uniformizer construction $F:{\mathcal S}\rightarrow \text{SET}$. It requires some preparation.
We are given the closed odd-dimensional manifold $Q$ equipped with a non-degenerate stable Hamiltonian structure,  i.e. $(Q,\lambda,\omega)$.
Fixing a compatible $J$ we get the spectral gap map $\delta_J: {\mathcal P}^{\ast}\rightarrow (0,2\pi]$ and pick for the periodic orbits weight sequences resulting in $\delta$.
We can define the category of stable maps ${\mathcal S}^{3,\delta_0}(Q,\lambda,\omega)$.  First we shall describe the construction of uniformizers.
We start with an object $\alpha=(\alpha_0,\widehat{b}_1,...,\widehat{b}_k,\alpha_k)$ and underlying $\sigma=(\sigma_0,b_1,...,b_k,\sigma_k)$ having isotropy group $G$.
We fix a stabilization set $\Xi$ which is invariant under $G$ with associated constraints ${\mathcal H}$ and disjoint from nodes and punctures, anchor sets, and 
a small disk structure $\bm{D}$ so that the union of disks associated to $D_i$ and the $D_{(i-1,i)}$ are invariant and do not contain marked points and are mutually disjoint.  We also require that $\bar{\sigma}= (S,j,\bar{M},\bar{D})$ is a stable Riemann surface, where $\bar{M}=M\cup \Gamma^-_0\cup\Gamma^+_k\cup \Xi$ and $\bar{D}=D\cup \{\{z,b_i(z)\}\ |\ z\in \Gamma^+_{i-1},\ i\in \{1,...,k\}\}$. 
 \begin{figure}[htp]
\begin{center}
\includegraphics[width=10.0cm]{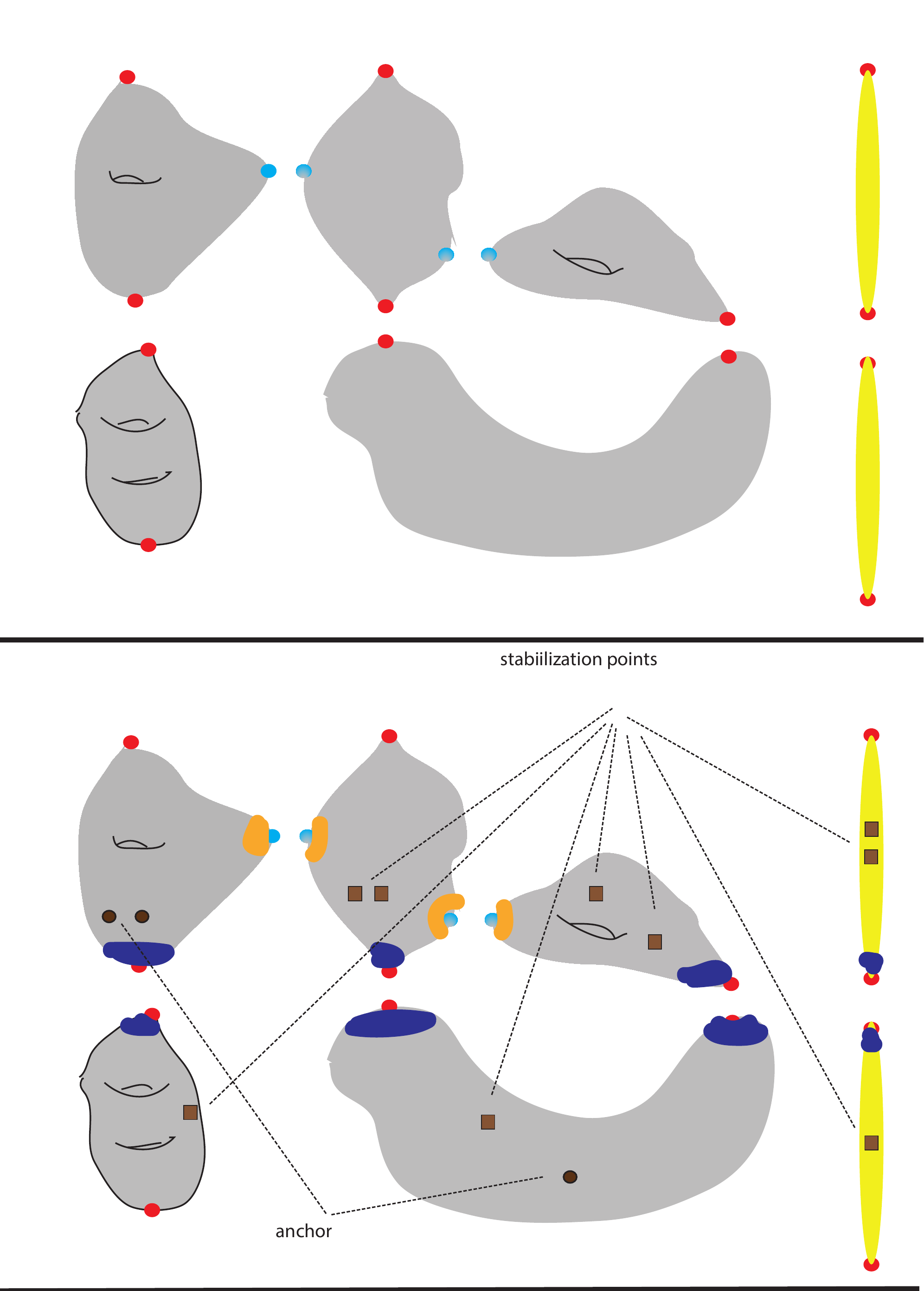}
\end{center}
\caption{Associated to the orbits of stabilization points under $G$ we have transversal constraints}
\end{figure}

The data which we have is then
\begin{itemize}
\item[-] $\alpha$, $\sigma$, and a small disk structure $\bm{D}$.
\item[-] stabilization set $\Xi$  and transversal constraints ${\mathcal H}={(\widetilde{H}_{[z]})}_{[z]}$
\item[-] anchor set $\rup$.
\end{itemize}
\vspace{0.2cm}
\noindent We can build the M-polyfold $Z^3_{\bm{\sigma},{\tiny\rup},{\mathcal H},\varphi}({\mathbb R}\times Q,\bar{\digamma})$ which has a distinguished element $\overline{\widetilde{u}}$ coming from $\alpha$. The M-polyfold structure is induced from the ambient M-polyfold $Z^3_{\bm{\sigma},{\tiny\rup},\varphi}({\mathbb R}\times Q,\bar{\digamma})$ of which our space is a sub-M-polyfold, in  fact a tame one.  The sc-smooth embedding 
\begin{eqnarray}
Z^3_{\bm{\sigma},{\tiny\rup},{\mathcal H}, \varphi}({\mathbb R}\times Q,\bar{\digamma})\xrightarrow{\text{incl}}
Z^3_{\bm{\sigma},{\tiny\rup},\varphi}({\mathbb R}\times Q,\bar{\digamma})
\end{eqnarray}
will be important to us.
We have an action of $G$ on  $(S,j,\bar{M},\bar{D})$ which preserves the floors. Of course $\bar{\sigma}$ has a larger finite automorphism group denoted by  $G^\ast$.
Then  $G\subset G^\ast$ and we have seen in the discussion of the DM-theory that  can take particular deformation $V\ni v\rightarrow j(v)$ of $j$ such that
$$
G^\ast\ltimes (V\times {\mathbb B}_{\bar{D}})\rightarrow {\mathcal R}: (v,\mathfrak{a})\rightarrow (S_{\mathfrak{a}},j(v)_{\mathfrak{a}},\bar{M}_{\mathfrak{a}},D_{\mathfrak{a}})
$$
defines a good unformizer when restricted to $G^\ast\ltimes O^{\ast}$, where $O^{\ast}\subset V\times {\mathbb B}_{\bar{D}}$ is a suitable open $G^{\ast}$-invariant neighborhood
of $(0,0)$. Recall that there exists a uniformizer construction for ${\mathcal R}$.  It is important  that we have the splitting, which was described in the discussion of DM-theory.
\begin{eqnarray}\label{DM-COMPP}
&H^1(\bar{\sigma})\equiv H^1(\bar{\sigma}_0^{tc})\oplus H^1(\bar{\sigma}_0^{ntc})\oplus..\oplus H^1(\bar{\sigma}_{k}^{tc})\oplus H^1(\bar{\sigma}_{k}^{ntc}):&\\
&v\equiv(v_0^{tc},v_0^{ntc},..., v_{k}^{tc},v_{k}^{ntc}).&\nonumber
\end{eqnarray}
The following piece of data is important: The good uniformizer around $\bar{\sigma}$ coming from $\alpha$.

\begin{eqnarray}
& \boxed{G^{\ast}\ltimes O^{\ast}\rightarrow {\mathcal R}:(v,\mathfrak{a})\rightarrow (S_{\mathfrak{a}},j(v)_{\mathfrak{a}},\bar{M}_{\mathfrak{a}},\bar{D}_{\mathfrak{a}})}.&
\end{eqnarray}

\subsection{Pre-Uniformizer}
The first step in the uniformizer construction is the construction of pre-uniformizers. 
We shall construct what we shall call pre-uniformizers. Namely given $\alpha$ and having carried out the preparations described above 
we define
$$
\text{\textbf{pre}}\Psi: G\ltimes (V\times Z^3_{\bm{\sigma},{\tiny\rup},{\mathcal H},\varphi}({\mathbb R}\times Q,\bar{\digamma}))\rightarrow {\mathcal S}
$$
as follows. The element ${\widetilde{u}}\in  Z^3_{\bm{\sigma},{\tiny\rup},{\mathcal H},\varphi}({\mathbb R}\times Q,\bar{\digamma})$ has underlying domain
$S_{\widetilde{\mathfrak{a}}}$  and gluing parameter $\widetilde{\mathfrak{a}}$. We set 
$$
\textbf{pre}\Psi(v,\widetilde{u}) = (S_{\widetilde{\mathfrak{a}}},j(v)_{\widetilde{\mathfrak{a}}}, M_{\widetilde{\mathfrak{a}}},D_{\widetilde{\mathfrak{a}}},\Gamma_{\mathfrak{a}}, [\widetilde{u}]).
$$
This is short-hand for the following: The element $\wt{u}$ decomposes as 
$$
\wt{u}=(\wt{u}_0,\wh{b}_{i_1},...,\wh{b}_{i_\ell},\wt{u}_{i_\ell}),
$$
where $\wt{u}_e$ is defined on the punctures $(\Gamma_{\wt{\mathfrak{a}},e}^-,S_{\wt{\mathfrak{a}}}^e,j(v)^e_{\wt{\mathfrak{a}}},M_{\wt{\mathfrak{a}},e},D_{\wt{\mathfrak{a}},e},[\wt{u}_e],\Gamma^+_{\wt{\mathfrak{a}},e})$.
The uniformizers then will be obtained by restricting pre-uniformizers to suitable subsets.  This will happen in the next lecture.
The basic fact is that the knowledge of ${\mathcal S}$ allows us to formulate a \textbf{Recipe}, i.e. a general rule, 
to characterize for a constructed $\textbf{pre}\Psi$ neighborhoods $G$-invariant neighborhoods $O$ of $(0,\overline{\wt{u}})$ so that 
$\Psi: G\ltimes O\rightarrow {\mathcal S}$ obtained as $\textbf{pre}\Psi|G\ltimes O$ (many different choices of $O$ are possible for 
a given $\textbf{pre}\Psi$), so that the $\Psi$ has very specific properties.  This will define a functor $F:{\mathcal S}\rightarrow \text{SET}$
by associating to $\alpha$ the set of $\Psi$ obtained by constructing a set of all possible pre-uniformizers following the recipe 
for pre-uniformizer constructions, and then using these to take all allowable restrictions according to the recipe we have to define.
The recipe has the feature that for given $\Psi\in F(\alpha)$ and $\Psi'\in F(\alpha')$ one has enough properties to carry the transition germ construction $\mathscr{F}$.
\newpage
\part{From Local to Global}
We shall study the global relationships between the local pieces.
\part*{Lecture 10}
\section{Uniformizers and Transition Germs for SFT}
We  shall describe the construction of a polyfold structure on ${\mathcal S}$.
\subsection{Background for the  Uniformizer Construction}
We have outlined the construction of a pre-uniformizers at an object $\alpha$.
The unformizer construction will define a criterion for picking, for a given \textbf{pre}$\Psi$ at $\alpha$ with special element $\bar{o}=(0,\overline{\wt{u}})$,
a $G$-invariant neighborhood $O$ so that the restriction to $G\ltimes O$, say
$$
\Psi:G\ltimes O\rightarrow {\mathcal S}
$$
 will have desirable properties. The collection of all such $\Psi:G\ltimes O\rightarrow {\mathcal S}$
obtained by taking suitable restrictions of pre-uniformizers will define $F(\alpha)$.  Clearly we want that $\Psi:G\ltimes O\rightarrow {\mathcal S}$ 
is 
\begin{itemize}
\item \textbf{full} and \textbf{faithful} (to reflect the structure of ${\mathcal S}$).
\end{itemize}
We also would like to have 
\begin{itemize}
\item \textbf{injectivity} of $\Psi$ on objects (distinguishing particular full subcategories of ${\mathcal S}$).
\end{itemize}
   \begin{figure}[h]
\begin{center}
\includegraphics[width=8.5cm]{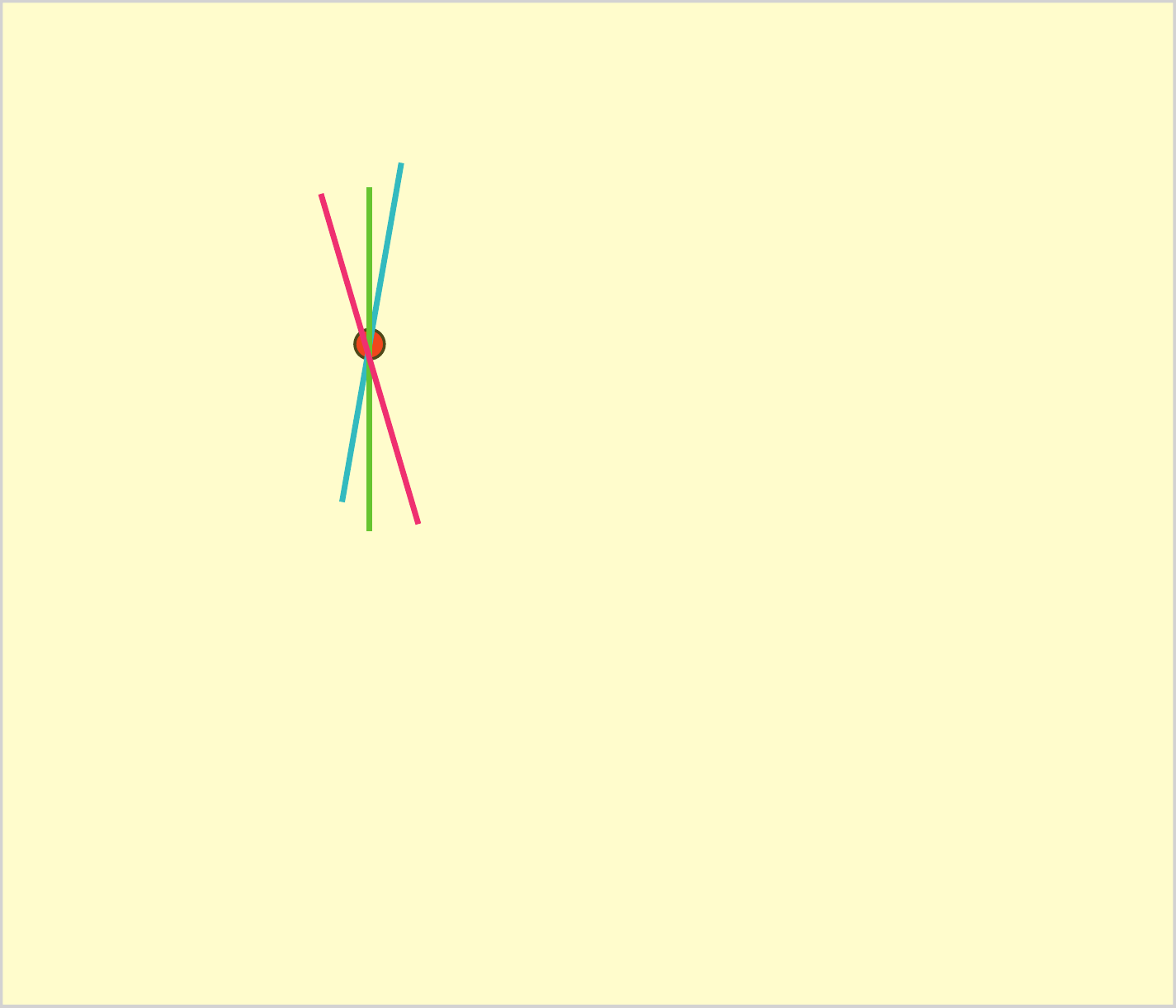}
\end{center}
\caption{The images of three different $\Psi: G\ltimes O\rightarrow {\mathcal S}$ associated to $\alpha$.}
\end{figure}

It is clear that DM-theory for the underlying 
$$
\Psi^{\ast}:G^{\ast}\ltimes O^{\ast}\rightarrow {\mathcal R}.
$$ 
will play a role which require that 
\begin{itemize}
\item for $(v,\wt{u})\in O$ we must have that $(v,\mathfrak{a}(\wt{u}))\in O^{\ast}$. 
\end{itemize}
The first basic result is the following.
\begin{prop}\label{important!}
Given an object $\alpha$ and a pre-uniformizer \textbf{pre}$\Psi$ 
there exists an open $G$-invariant neighborhood $O$  of $\bar{o}$ in $V\times Z^3_{\bm{\sigma},{\tiny\rup},{\mathcal H}, \varphi}({\mathbb R}\times Q,\bar{\digamma})$ with the following properties.
\begin{itemize}
\item[(1)] If $(v,\wt{u})\in O$ then $(v,\mathfrak{a}(\wt{u}))\in O^{\ast}$.
\item[(2)] The restriction of \textbf{pre}$\Psi$  denoted by $\Psi:G\ltimes O\rightarrow {\mathcal S}$ is a fully faithful functor and injective on objects.
\end{itemize}
\qed
\end{prop}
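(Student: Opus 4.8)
The statement asserts that a pre-uniformizer $\textbf{pre}\Psi$ restricts, on a sufficiently small $G$-invariant neighborhood $O$ of $\bar o=(0,\overline{\wt u})$, to a functor $\Psi$ that is (1) compatible with the DM-domain $O^\ast$, (2) fully faithful, and (3) injective on objects. The plan is to obtain these three properties one at a time, shrinking $O$ at each step, and then intersect the finitely many neighborhoods so produced and $G$-average them to restore $G$-invariance. I would begin with (1), which is essentially a continuity/open-map statement: the assignment $\wt u\mapsto \mathfrak a(\wt u)$ extracting the total gluing parameter is sc-smooth (this is part of the content of Theorem~\ref{thmk} and Definition~\ref{DEFX8.3.1}, where $\wt{\mathfrak a}|D=\mathfrak a$ and the interface entries are determined by an explicit formula in the asymptotic constants $c^z(\wt u)$); hence $(v,\wt u)\mapsto (v,\mathfrak a(\wt u))$ is continuous, it sends $\bar o$ to $(0,0)\in O^\ast$ (since $\overline{\wt u}$ has the reference gluing parameter $0$ and $j(0)=j$), and $O^\ast$ is open, so the preimage is an open neighborhood of $\bar o$. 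Intersecting with the given ambient M-polyfold and $G$-averaging gives the first candidate $O_1$.

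For faithfulness I would argue directly from the definition of morphisms in $\mathcal S$: a morphism $\textbf{pre}\Psi(v,\wt u)\to\textbf{pre}\Psi(v',\wt u')$ is a biholomorphism $\phi$ of the underlying glued Riemann surfaces respecting marked points, punctures, decorated nodes and satisfying $[\wt u'_i\circ\phi_i]=[\wt u_i]$. Two morphisms in $G\ltimes O$ with the same source $(v,\wt u)$ are pairs $(g,(v,\wt u))$, $(g',(v,\wt u))$, and their images are $(\textbf{pre}\Psi(v,\wt u),g,\ldots)$, $(\ldots,g',\ldots)$; if these agree as morphisms in $\mathcal S$ then $g=g'$ because distinct elements of $G$ act differently on the data — here one uses that the anchor set $\rup$ and the stabilization points $\Xi$ are chosen so that $\bar\sigma$ is stable, so the $G$-action on $\bar\sigma$ is effective and distinct $g$ induce distinct biholomorphisms of $S_{\wt{\mathfrak a}}$. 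This needs no shrinking. For fullness I would show that for $(v,\wt u),(v',\wt u')\in O$ sufficiently close to $\bar o$, every morphism $\Phi:\textbf{pre}\Psi(v,\wt u)\to\textbf{pre}\Psi(v',\wt u')$ in $\mathcal S$ arises as $\textbf{pre}\Psi(g,(v,\wt u))$ for a unique $g\in G$: the biholomorphism $\phi$ underlying $\Phi$ maps a neighborhood of $\bar\sigma$ in the DM-space to itself, and by the rigidity built into the good uniformizer $\Psi^\ast:G^\ast\ltimes O^\ast\to\mathcal R$ (Theorem~\ref{DM_MAIN}, part (1): fully faithful and injective on objects on $O^\ast$) it must be realized by an element $g^\ast\in G^\ast$; the condition $[\wt u'\circ\phi]=[\wt u]$ together with the requirement that $\phi$ preserve the floor structure and the anchor/stabilization data forces $g^\ast$ to lie in the subgroup $G\subset G^\ast$. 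This gives a second neighborhood $O_2$.

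Injectivity on objects is the delicate step and I expect it to be the main obstacle. One must rule out $\textbf{pre}\Psi(v,\wt u)=\textbf{pre}\Psi(v',\wt u')$ with $(v,\wt u)\neq(v',\wt u')$ both near $\bar o$. Equality of objects means the glued data coincide: same Riemann surface with the same marked points and punctures, and $[\wt u]=[\wt u']$ as equivalence classes under $\mathbb R$-shift. The two sources of non-injectivity to defeat are (a) the DM-directions $v$: here injectivity of $\mathfrak j$, i.e. $v\mapsto j(v)$, and of $|\Psi^\ast|$ near $(0,0)$ (Theorem~\ref{DM_MAIN}(2) and Definition~\ref{GOODDM}(4)) pins down $v$ once the complex structure and gluing parameter are known; (b) the map data $\wt u$ together with the $\mathbb R$-shift ambiguity: two different $\wt u,\wt u'$ could a priori differ by a global $\mathbb R$-shift, but this is exactly why the anchor-average normalizations $\text{av}_{\rup_{i_e}}(\wt u)=0$ and the cascading inequalities $\text{av}_{\rup_{i-1}}<\text{av}_{\rup_i}$ were imposed in the definition of $Z^3_{\bm\sigma,\rup,\mathcal H,\varphi}$ — they fix a unique representative in each $\mathbb R$-shift class and a unique floor-splitting of the glued surface. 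Spelling this out carefully — that equality of the glued objects forces the gluing parameters $\wt{\mathfrak a}$, the interface sequence $i_1<\cdots<i_\ell$, the unglued pieces $\wt u_e$, and hence $\wt u$ itself, and then $v$, to agree — is where the real work lies, and it requires $O$ small enough that the transversal-constraint intersections in Definition~\ref{DEFGH2.28} remain transversal and the identifications of $\rup_i$ as subsets of the glued surface stay unambiguous. This yields $O_3$. Finally, set $O:=\bigcap_{g\in G} g\ast(O_1\cap O_2\cap O_3)$, which is open, $G$-invariant, contains $\bar o$, and on which $\Psi=\textbf{pre}\Psi|_{G\ltimes O}$ has all the asserted properties.
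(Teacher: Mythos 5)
The paper states Proposition \ref{important!} without a written proof, remarking only that ``the only more involved step is the fullness of $\Psi$ for a suitable $O$'' --- and that is exactly the step where your proposal has a genuine gap. Your fullness argument takes the biholomorphism $\phi$ underlying a morphism $\Phi:\textbf{pre}\Psi(v,\wt{u})\rightarrow \textbf{pre}\Psi(v',\wt{u}')$ in ${\mathcal S}$ and feeds it to the rigidity of the good DM-uniformizer $\Psi^{\ast}:G^{\ast}\ltimes O^{\ast}\rightarrow {\mathcal R}$ to produce $g^{\ast}\in G^{\ast}$. But objects of ${\mathcal S}$ do not carry the stabilization set $\Xi$: a morphism in ${\mathcal S}$ only respects $M$, the nodal pairs, the punctures and the map classes $[\wt{u}_i]$, so $\phi$ is in general \emph{not} an isomorphism of the stabilized data $(S_{\mathfrak{a}},j(v)_{\mathfrak{a}},\bar{M}_{\mathfrak{a}},\bar{D}_{\mathfrak{a}})$, and Theorem \ref{DM_MAIN}/Definition \ref{GOODDM} simply do not apply to it. This is not a cosmetic issue: before stabilization the domain components (trivial cylinders, spheres or tori with few special points) can have large or infinite automorphism groups, so there are many biholomorphisms respecting the ${\mathcal S}$-data that are nowhere near $G^{\ast}$. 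The actual work is to use the transversal constraints of Definition \ref{DEFGH2.28} together with $[\wt{u}'\circ\phi]=[\wt{u}]$ and the closeness of $(v,\wt{u}),(v',\wt{u}')$ to $\bar{o}$ to show that $\phi$ (after the appropriate ${\mathbb R}$-shifts) carries points satisfying the constraints transversally to points satisfying them, hence nearly preserves $\Xi$, and only then to invoke DM-rigidity plus a ``close to an automorphism implies equal to an automorphism'' argument to land in $G$. Your sentence ``together with the requirement that $\phi$ preserve the floor structure and the anchor/stabilization data'' assumes precisely what has to be proved.

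Relatedly, your allocation of difficulty is inverted: you flag injectivity on objects as ``the main obstacle,'' but that part is comparatively routine. Equality of objects forces equality of the glued surfaces, which forces $\wt{\mathfrak{a}}=\wt{\mathfrak{a}}'$ (distinct gluing parameters give disjoint glued necks), then $j(v)_{\wt{\mathfrak{a}}}=j(v')_{\wt{\mathfrak{a}}}$ together with $j(v)=j$ on the small disk structure and the injectivity of $v\rightarrow j(v)$ from Definition \ref{GOODDM}(4) gives $v=v'$, and the anchor normalizations pin the representative in the ${\mathbb R}$-shift class so $\wt{u}=\wt{u}'$; no delicate shrinking beyond what you already need for (1) is required. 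Your treatments of property (1) and of faithfulness are fine, and the final $G$-averaging of the intersected neighborhoods is the standard closing move; the proposal stands or falls on supplying the missing fullness argument.
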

The only more involved step is the fullness of $\Psi$ for a suitable $O$. Taking for $\alpha$ the collection $F(\alpha)$ consisting of the $\Psi$ which are obtained 
as restrictions of pre-uniformizers which satisfy (1) and (2) is \textbf{not yet a uniformizer construction}, but a suitable subset of every $F(\alpha)$ will be. We need to show that for a suitable choice of $O$ a third, very important  condition can be  satisfied. 
This  condition is  a kind of transversality condition 
and it  is not so easy to guess, and also is not necessary for $\alpha$ which are not too complicated, so it might be easily overlooked.
In the following we shall refer to the image $\Psi(O)$ as a \textbf{slice}.
  \begin{figure}[h]
\begin{center}
\includegraphics[width=5.2cm]{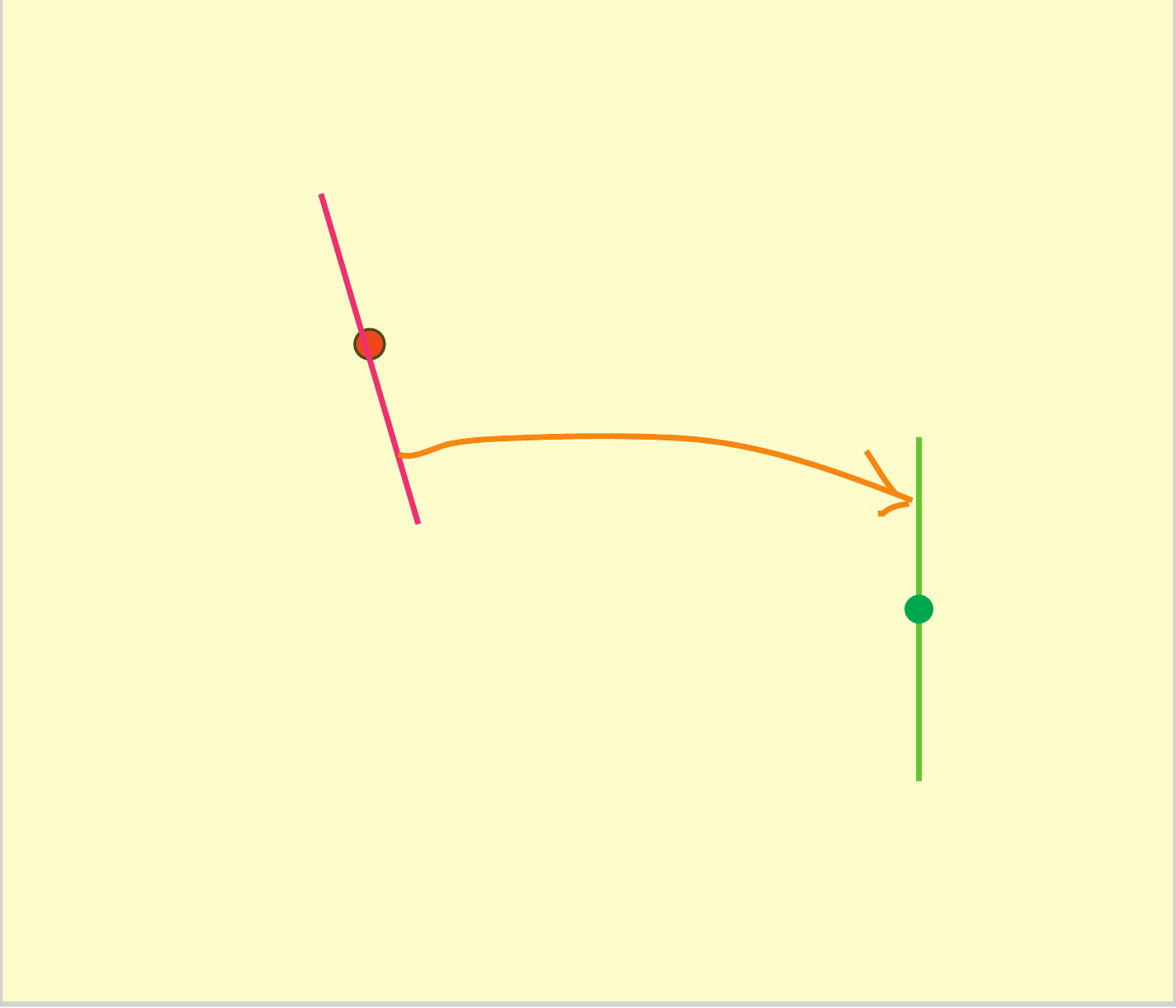}
\end{center}
\caption{The transition construction. One needs a criterion, given an arrow from one $\alpha$-slice to a $\alpha'$-slice, how to distinguish
a family of arrows starting nearby, so that the output varies sc-smoothly. Of course, if the two slices are identical, the families should come from $G$. Moreover, there should be an associativity for three slices and obvious other properties.}\label{FIG200}
\end{figure}

The problem highlighted in the Figure \ref{FIG200} leads to  a transversality question which will be discussed next. A priori such a transversality
question involves obviously the properties of $\Psi$ and $\Psi'$ at the same time. For a uniformizer construction we are forced to give a recipe
defining for a given $\alpha$ the set $F(\alpha)$, i.e. to define properties for its members $\Psi\in F(\alpha)$ without reference 
to elements $\Psi'\in F(\alpha')$.  With other words it is better true that a transversality condition exists which can be formulated 
for every $\alpha$ so that the construction of $F$ achieves the goal formulated in the caption of Figure \ref{FIG200}.

\subsection{A Transversality Question}
In the previous subsection we have learned that one can obtain certain properties for $\Psi:G\ltimes O\rightarrow {\mathcal S}$, when one restricts a pre-uniformizer
to a smaller $G$-invariant neighborhood of the distinguished element. We shall consider two such $\Psi$ and $\Psi'$, i.e. 
restrictions of pre-uniformizers to $G\ltimes O$ and $G'\ltimes O'$, where $O$ and $O'$ are such that the conclusion of Proposition \ref{important!}
holds and motivate why an additional property is needed.
We shall refer to them  for the following discussion as uniformizers. 
We   assume we are given two objects $\alpha$ and $\alpha'$ and associated uniformizers $\Psi$ and $\Psi'$ and consider an element $h\in \bm{M}(\Psi,\Psi')$.
That means we are given $o\in O$, $o'\in O'$ and $\Phi:\Psi(o)\rightarrow \Psi'(o')$. 
We need conditions so that we can give a recipe for the  local germ $F_h$ which, of course, is based on exhibiting a locally unique deformation of $\Phi^h$ 
$$
q\rightarrow \Phi^h_q, \ \ \text{for $q$ close to $o$}.
$$
All these arrows are supposed to start at $\Psi(q)$ and to end in the slice associated to $\Psi'$, so that we can define 
$f_h:({\mathcal O}(O),o)\rightarrow (O',o')$ by   $F_h(q)=(q,\Phi^h_q, f_h(q))$. Of course,  $f_h$ is determined by the recipe 
defining ${\Phi}^h$ and one needs to show that $f_h$  is a germ of sc-diffeomorphism. 
Of course, not surprisingly the criterion for determining such a unique choice is some kind of transversality and, as already pointed out, this transversality condition should \textbf{not!} be a condition on the pair, but a condition on the individual $\Psi$,
so that any such pair has favorable  properties.

For the following note that we shall denote a global gluing parameter by $\mathfrak{a}$ rather than $\wt{\mathfrak{a}}$. (Recall
that our convention was that $\wt{\mathfrak{a}}$ is the $D$-part of $\wt{\mathfrak{a}}$, but this will be ignored for the moment!).
We shall over-line fixed reference data and usually study small variations of the data, so that for example $\mathfrak{a}$ is a variation 
of $\overline{\mathfrak{a}}$.

Assume that $o =(\bar{v},\bar{\wt{u}})$, $o'=(\bar{v}',\bar{\wt{u}}')$ with underlying total gluing parameters denoted by  $\bar{\mathfrak{a}}$ and $\bar{\mathfrak{a}}'$.
The morphism $\Phi$ is represented by a biholomorphic map 
$$
\bar{\phi}:\sigma_{(\bar{{\mathfrak{a}}},\bar{v})}\rightarrow \sigma'_{(\bar{{\mathfrak{a}}}',\bar{v}')},
$$ 
which has to preserve the usual data,
namely sends marked points in $M_{\bar{\mathfrak{a}}}$ to marked points in $M'_{\bar{\mathfrak{a}}'}$, nodal pairs in $D_{\bar{\mathfrak{a}}}$ to nodal pairs in $D'_{\bar{\mathfrak{a}}'}$ and punctures 
in $\Gamma_{\bar{\mathfrak{a}}}$ to those in $\Gamma'_{\bar{\mathfrak{a}}'}$.
 If we consider $\bar{\wt{u}}':=\bar{\wt{u}}\circ \bar{\phi}^{-1}$ we know that adjusting the different floors by using the ${\mathbb R}$-action 
we obtain the map $[\bar{\wt{u}}']$, which satisfies the anchor constraints (not general the virtual ones).  This map $[\bar{\wt{u}}']$ belongs to $Z^3_{\bm{\sigma}',{\tiny\rup'},{\mathcal H}',\varphi}({\mathbb R}\times Q,\bar{\digamma}')$. Of course, $(v',[\bar{\wt{u}}'])\in O'$. For every $i'\in \{0,...,k'\}$ with $z'\in \Xi'_{i'}$ it holds 
$$
(-\text{av}_{{\tiny\rup'_{i'}}}(\bar{\wt{u}}'))\ast \bar{\wt{u}}'(z')\in \wt{H}'_{[z']},\ z'\in \Xi'_{i'}
$$
and varying $z'$ the intersection is transversal.  We note that the left-hand side is ${\mathbb R}$-invariant and therefore we need not to work with $[\bar{\wt{u}}']$.
We rewrite the above as 
$$
(-\text{av}_{{\tiny\rup'_{i'}}}(\bar{\wt{u}}\circ\bar{\phi}^{-1}))\ast \left(\bar{\wt{u}}\circ \bar{\phi}^{-1}(z')\right)\in \wt{H}'_{[z']},\ z'\in \Xi'_{i'}
$$
In order to construct $F_h$ we need to find a map $(v,\wt{u})\rightarrow \phi_{(v,\wt{u})}$, with sufficiently smooth properties,  defined for $(v,\wt{u})$ near $(\bar{v},\bar{\wt{u}})$ so that 
$$
(-\text{av}_{{\tiny\rup'_{i'}}}(\wt{u}\circ\phi_{(v,\wt{u})}^{-1}))\ast \left(\wt{u}\circ \phi_{(v,\wt{u})}^{-1}(z')\right)\in \wt{H}'_{[z']},\ z'\in \Xi'_{i'}
$$
and the intersection is transversal with respect to a variation of $z'$.  If $z'$ belongs to $\Xi'^{ntc}$ the 
 adjustment by the anchor average is not needed since the associated constraint $\wt{H}_{[z']}$ is ${\mathbb R}$-invariant, but the left-hand side will only be a sc-smooth function of input if 
 this adjustment is made due to vanishing gluing parameters.  The unique solvability of this equation will follow from  an implicit function theorem provided the appropriate hypotheses hold. We shall discuss this in the next subsection in more detail.
 Here we only note the following. 
 
 Denote by $\Xi^{\ast}$ the preimage of $\Xi'$ under $\bar{\phi}$. The set $\Xi^{\ast}$ is a subset 
 of  the Riemann surface associated to the parameters $(\bar{v},\bar{\mathfrak{a}})$ and with these two parameters fixed we can consider deformations
 $(\bar{v},\bar{\mathfrak{a}},\bm{x})$ where $\bm{x}$ maps a point $z^{\ast}\in \Xi^{\ast}$ to a nearby point on the same surface.
 We can do the same for nearby parameters  $(v,\mathfrak{a})$ and consider $(v,\mathfrak{a},\bm{x})$ where $\bm{x}$ maps points in $\Xi^{\ast}$ to points 
 in the surface associated to $(v,\mathfrak{a})$. Using the \underline{universal property} of $(v',\mathfrak{a}')\rightarrow \bar{\sigma}'_{(v',\mathfrak{a}')}$
there exists a uniquely defined deformation $(v,\mathfrak{a},\bm{x})\rightarrow \phi_{(v,\mathfrak{a},\bm{x})}$ of $\bar{\phi}$ which maps
$$
(S_{\mathfrak{a}},j(v)_{\mathfrak{a}},{(\Gamma^-_0\cup M\cup \bm{x}(\Xi^{\ast})\cup\Gamma^+_k)}_{\mathfrak{a}}, \bar{D}_{\mathfrak{a}})\xrightarrow{\phi_{(v,\mathfrak{a},\bm{x})}} (S_{\mathfrak{a}'}',j'(v')_{\mathfrak{a}'},\bar{M}'_{\mathfrak{a}'},\bar{D}'_{\mathfrak{a}'}),
$$
where $(v',\mathfrak{a}')$ is a smooth function of $(v,\mathfrak{a},\bm{x})$. If $(v,\wt{u})$ is near $(\bar{v},\bar{\wt{u}})$, which also means that $(v,\mathfrak{a}(\wt{u}))$, is near to
$ (\bar{v},\bar{\mathfrak{a}})$, one \underline{needs!} to show that for $\bm{x}$ close to $\bm{\bar{x}}$ defined by $\bm{\bar{x}}(z^{\ast})=z^{\ast}$ for $z^{\ast}\in \Xi^{\ast}$ it holds that 
there exists a unique $\bm{x}=\bm{x}(v,\mathfrak{a})$, $\mathfrak{a}=\mathfrak{a}(\wt{u})$, with 
$$
\left((-\text{av}_{{\tiny\rup}_i'}(\wt{u}\circ \phi_{(v,\mathfrak{a},\bm{x})}^{-1}))\ast \left(\wt{u}\circ \phi_{(v,\mathfrak{a},\bm{x})}^{-1}\right)\right)(z')\in \wt{H}_{[z']}\ \text{for}\ z'\in\Xi'_i,
$$
which also would be smoothly depending on $(v,\mathfrak{a})$.
Of course, then the germ, where $[.]$ means ${\mathbb R}$-adjustments to satisfy the effective anchor constraints,
\begin{eqnarray}
(v,\wt{u})\rightarrow (v'(v,\wt{u}),[\wt{u}\circ  \phi_{(v,\mathfrak{a},\bm{x}(v,\wt{u}))}^{-1}])
\end{eqnarray}
is $f_h$. At this point one could only claim that $f_h$ is sc-smooth using results about domain transformations, see \cite{FH-book}. It is not clear  that $f_h$ is a local sc-diffeomorphism. This  would follow by interchanging the roles of $\Psi$ and $\Psi'$ if the same discussion
would hold. 

As it turns out,  the fact that $f_h$ is sc-smooth
only depends on an additional property of $\Psi'$ besides the properties already required from $\Psi$ and $\Psi'$. If $\Psi$ also satisfies
such an additional  property (to be stated),  interchanging the roles of $\Psi$ and $\Psi'$ will imply that $f_h$ is in fact a local sc-diffeomorphism.
The germ $F_h$ would be defined by $F_h(v,\wt{u})=((v,\wt{u}),\Phi^h_{(v,\wt{u})},f_h(v,\wt{u}))$, where 
$\Phi^h_{(v,\wt{u})}$ is the morphism  associated to the biholomorphic map $\phi_{(v,\mathfrak{a},\bm{x}(v,\wt{u}))}^{-1}$.  

As we mentioned before the property that $f_h$ is sc-smooth will only depend on an additional requirement on $\Psi'$. The basic reason is the following.
The biholomorphic map $\phi_{(v,\mathfrak{a},\bm{x})}$ can be decomposed as follows.  Take for $(v,\mathfrak{a})$ near $(\bar{v},\bar{\mathfrak{a}})$ a smooth section
$\bm{x}^{\ast}_{(v,\mathfrak{a})}$ with $\bm{x}^{\ast}_{(\bar{v},\bar{\mathfrak{a}})}(z^\ast)=z^{\ast}$. We mean by this 
that $\bm{x}^{\ast}_{(v,\mathfrak{a})}$ belongs to the surface $S_{\mathfrak{a}}$ equipped with $j(v)_{\mathfrak{a}}$ and varies
smoothly as a function of $(v,\mathfrak{a})$.
Associated to this we obtain by the universal property
$$
\psi_{(v,\mathfrak{a})}:=\phi_{(v,\mathfrak{a},\bm{x}^{\ast}_{(v,\mathfrak{a})})},
$$
which maps, preserving the obvious other data, from the surface associated to $(v,\mathfrak{a})$ to the surface associated to $(v'(v,\mathfrak{a}),\mathfrak{a}'(v,\mathfrak{a}))$.
Given $(v,\mathfrak{a},\bm{x})$ we can use $\psi_{(v,\mathfrak{a})}$ to map this data to some $\bm{y}'$ via 
$$
(v,\mathfrak{a},\bm{x})\rightarrow (v',\mathfrak{a}',\psi_{(v,\mathfrak{a})}\circ\bm{x}), \ \text{were $\bm{y}'= \psi_{(v,\mathfrak{a})}\circ\bm{x}$}
$$
Note that the choice of $\bm{x}$, for fixed $(v,\mathfrak{a})$, does not affect $(v',\mathfrak{a}')$. 
The right-hand side is now data on a surface associated to $\Psi'$. We see that for fixed $(v,\mathfrak{a})$ we have a local diffeomorphism between
deformations of $\Xi^{\ast}$ and deformations of $\Xi'$. Given a small open neighborhood of $(\bar{v},\bar{\wt{u}})$ in $O$ we can map 
it via 
\begin{eqnarray}\label{omgh}
(v,\wt{u})\rightarrow (v',[\wt{u}\circ \psi^{-1}_{(v,\mathfrak{a}(\wt{u}))}])
\end{eqnarray}
 into and open neighborhood of $(\bar{v}',\bar{\wt{u}}')$ in $Z^3_{\bm{\sigma}',{\tiny\rup}',\varphi}({\mathbb R}\times Q,\bar{\digamma}')$. Observe that the image of $(\bar{v},\bar{\mathfrak{a}})$
satisfies the constraints associated to ${\mathcal H}'$, but the images of the other elements in the small neighborhood usually do not!

  Next we consider $(v',\mathfrak{a}',\bm{y}')$ and using the \underline{universal property} there exists $\psi_{(v',\mathfrak{a}',\bm{y}')}$
$$
(S'_{\mathfrak{a}'},j'(v')_{\mathfrak{a}'},{({\Gamma'}^-_0\cup M'\cup\bm{y}'(\Xi^{\ast})\cup{\Gamma'}^+_{k'})}_{\mathfrak{a}'},\bar{D}_{\mathfrak{a}'})\xrightarrow{\psi_{(v',\mathfrak{a}',\bm{y}')}} (S'_{\mathfrak{a}''},j'(v'')_{\mathfrak{a}''}, \bar{M}'_{\mathfrak{a}''},\bar{D}'_{\mathfrak{a}''}),
$$
where $(v'',\mathfrak{a}'')$ depend smoothly on the input $(v',\mathfrak{a}',\bm{y}')$.
We note the important fact that  
$$
\phi_{(v,\mathfrak{a},\bm{x})} = \psi_{(v',\mathfrak{a}',\bm{x}\circ \psi_{(v,\mathfrak{a})})}\circ \psi_{(v,\mathfrak{a})}.
$$
This implies the possibility that by picking $(v',\mathfrak{a}',\bm{y}')$ properly we can adjust the image of the map
in (\ref{omgh}) via $ \psi_{(v',\mathfrak{a}',\bm{y}')}$ to satisfy the constraints. The appropriate choice of $\bm{y}'$ then defines a choice of $\bm{x}$.
From this discussion it follows that we only need to map the before-mentioned small open neighborhood in $Z^3_{\bm{\sigma}',{\tiny\rup}',\varphi}({\mathbb R}\times Q,\bar{\digamma}')$
by a suitable choice of $\psi_{(v',\mathfrak{a}',\bm{y}'(v',\mathfrak{a}'))}$ into $Z^3_{\bm{\sigma}',{\tiny\rup}',{\mathcal H}',\varphi}({\mathbb R}\times Q,\bar{\digamma}')$.
Of course, the map $(v',\mathfrak{a}')\rightarrow \bm{y}'(v',\mathfrak{a}')$ has to be found by an implicit function theorem, but the entire procedure only depends on $\Psi'$. This is being discussed in more detail in the next subsection in terms of $\Psi$ to simplify notation,
i.e. getting rid of the primes.

\subsection{The Transversality  Condition}
As we have seen the transversality condition is a property which can be formulated for a single $\Psi$. Rather than with $\Psi'$ we shall work with $\Psi$ to formulate it.
We also consider for simplicity the inverses of the maps  considered in the previous subsection.
The transversality  condition depends on two ingredients. The first one is the universal property from DM-theory.
We started with a stable map $\alpha$, fixed a stabilization set $\Xi$ and a small disk structure $\bm{D}$.
From this we obtain $\sigma=(S,j,\bar{M},\bar{D})$ , where $\bar{M}= M\sqcup\Xi\sqcup\Gamma_0^-\sqcup\Gamma^+_k$
and $\bar{D}= D\sqcup \{ \{z,b_i(z)\}\ |\ z\in\Gamma^+_i,\ i\in \{0,....,k-1\}\}$.  Then taking a good deformation 
$\mathfrak{j}$ with previously described properties we obtain the good uniformizer 
$$
\Psi^{\ast}:G^{\ast}\ltimes O^{\ast}\rightarrow {\mathcal R}: (\mathfrak{a},v)\rightarrow (S_{\mathfrak{a}},j(v)_{\mathfrak{a}},\bar{M}_{\mathfrak{a}},\bar{D}_{\mathfrak{a}}).
$$
\subsubsection{Stabilization Deformation}
We consider the good uniformizer for ${\mathcal R}$ and use its universal property, recalling that the points in $\Xi$ were artificially added via the transversal constraint construction, and that adding $\Xi$ stabilized the Riemann surface.
For given $(\mathfrak{a}_0,v_0)\in O^{\ast}$ the universal property guarantees a $G$-invariant open neighborhood $U(\Sigma)$ of the form 
$$
U:=U(\Sigma)=\coprod_{z\in\Xi} U(z).
$$
We denote by $U^{\Sigma}$ the smooth manifold of maps $\bm{x}:\Xi\rightarrow U$ with $\bm{x}(z)\in U(z)$. For such a $\bm{x}$ we define
$$
\bar{M}(\bm{x}):=\Gamma^+_0\sqcup M\sqcup \bm{x}(\Sigma)\sqcup \Gamma^+_k.
$$
By the universal property there exists for $\bm{x}$ near $\bm{\bar{x}}$ defined by $\bm{\bar{x}}(z)=z$ a uniquely determined biholomorphic map near the identity
$$
\psi_{(\mathfrak{a}_0,v_0,\bm{x})}: (S_{\mathfrak{a}},j(v)_{\mathfrak{a}},\bar{M}_{\mathfrak{a}},\bar{D}_{\mathfrak{a}})\rightarrow (S_{\mathfrak{a}_0},j(v_0)_{\mathfrak{a}_0},\bar{M}(\bm{x})_{\mathfrak{a}_0},\bar{D}_{\mathfrak{a}_0}),
$$
where $(\mathfrak{a},v)$ is a smooth map of $(\mathfrak{a}_0,v_0,\bm{x})$.
\begin{definition}
We shall refer to the data 
$$
(\mathfrak{a}_0,v_0,\bm{x})\rightarrow \psi_{(\mathfrak{a}_0,v_0,\bm{x})}\ \ \text{and}\ \ (\mathfrak{a}(\mathfrak{a}_0,v_0,\bm{x}),v(\mathfrak{a}_0,v_0,\bm{x}))
$$
the \textbf{stabilization deformation}.
\qed
\end{definition}
\begin{tcolorbox}[ams align]\label{Q-oemgaX2XX}
\text{Stabilization Deformation}\nonumber\\
\boxed{(\mathfrak{a}_0,v_0,\bm{x})}   \xrightarrow{\psi\ sd} \boxed{(\mathfrak{a}(\mathfrak{a}_0,v_0,\bm{x}),v(\mathfrak{a}_0,v_0,\bm{x}),\overline{\bm{x}})}\nonumber \\
(\mathfrak{a}_0,v_0,\bm{x})\rightarrow \psi_{(\mathfrak{a}_0,v_0,\bm{x})}\nonumber\\
\psi_{(\mathfrak{a}_0,v_0,\bm{x})}: (S_{\mathfrak{a}},j(v)_{\mathfrak{a}},\bar{M}_{\mathfrak{a}}, \bar{D}_{\mathfrak{a}})\rightarrow (S_{\mathfrak{a}_0},j(v_0)_{\mathfrak{a}_0},
\bar{M}(\bm{x})_{\mathfrak{a}_0},\bar{D}_{\mathfrak{a}_0})
\nonumber
\end{tcolorbox}
\begin{figure}[htp]
\begin{center}
\includegraphics[width=12.0cm]{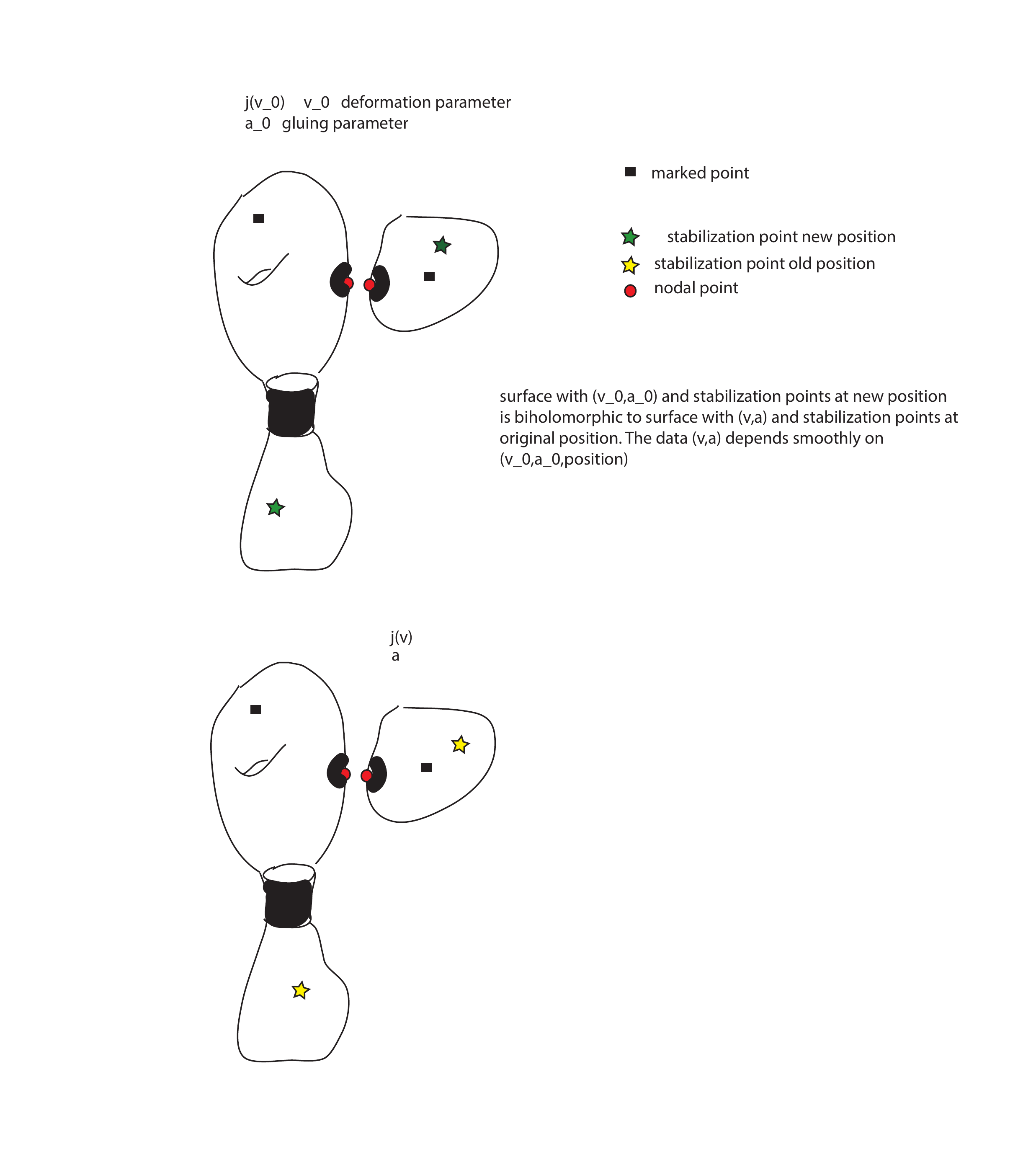}
\end{center}
\caption{Stabilization Deformation}\label{FIG 820}
\end{figure}
If we restrict $O^{\ast}$ to some smaller $G$-invariant neighborhood and $U(\Sigma)$ to a suitable $U^{\ast}$ we can obtain some 
uniformity of the stabilization deformation with respect to the input $(\mathfrak{a}_0,v_0,\bm{x})$. More precisely we obtain.
\begin{prop}
There exists a 
$G$-invariant open neighborhood  $O^{\ast\ast}$ of $(0,0)$ contained in $O^{\ast}$ and a sufficiently small open $G$-invariant neighborhood
$U^{\ast}(\Xi)\subset U(\Xi)$, where $U^\ast$ is again the disjoint union of disk-like $U^\ast(z)$,  such that there exists a smooth map (a uniform version of the sd-transformation)
\begin{eqnarray}
&O^{\ast\ast}\times U^\ast(\Xi)\rightarrow O^\ast&\\
&({\mathfrak{a}},v,\bm{x})\rightarrow ({\mathfrak{a}}',v'):=({\mathfrak{a}}'({\mathfrak{a}},v,\bm{x}),v'({\mathfrak{a}},v,\bm{x}))&\nonumber
\end{eqnarray}   
and a uniquely determined sc-smooth family of biholomorphic $\psi_{({\mathfrak{a}},v,\bm{x})}$
with $\psi_{(0,0,\overline{\bm{x}})}=Id$  
$$
\psi_{({\mathfrak{a}},v,\bm{x})}: (S_{{\mathfrak{a}}'},j(v')_{{\mathfrak{a}}'},\bar{M}_{{\mathfrak{a}}'},\bar{D}_{{\mathfrak{a}}'})\rightarrow (S_{{\mathfrak{a}}},j(v)_{{\mathfrak{a}}},\bar{M}({\bm{x}})_{{\mathfrak{a}}},\bar{D}_{{\mathfrak{a}}})
$$
so that in addition the points in $M\cup\Gamma^+_k\cup\Gamma^-_0\subset S_{{\mathfrak{a}}} $ 
and $M\cup\Gamma^+_k\cup\Gamma^-_0\subset S_{{\mathfrak{a}}'} $ are point-wise fixed
and  $z\in\Xi$ is mapped to  $\bm{x}(z)$. In addition the points of unglued nodal pairs, or the points of unglued ordered interface nodal pairs
are being point-wise preserved.
\qed
\end{prop}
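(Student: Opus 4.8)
The statement is a uniform, jointly smooth version of the universal property recorded just above, and the plan is to deduce it from a single application of the (sc-smooth) implicit function theorem in which $(\mathfrak{a},v,\bm{x})$ enters as a parameter and the pair $(\mathfrak{a}',v')$ together with the biholomorphism $\psi$ is the unknown. First I would fix the function-analytic setup: for parameters near the centre $(\mathfrak{a},v,\bm{x})=(0,0,\overline{\bm{x}})$ and for $(\mathfrak{a}',v')$ near $(0,0)$ I write a candidate $\psi$, in suitable reference coordinates, as the time-one flow of a vector field in an ssc-Hilbert space of sections of $TS_{\mathfrak{a}'}$ of the regularity class used throughout (with exponential decay at unglued nodes), and encode every requirement in one nonlinear operator $F(\mathfrak{a},v,\bm{x};\mathfrak{a}',v',\psi)$: (i) $\psi$ intertwines $j(v')_{\mathfrak{a}'}$ and $j(v)_{\mathfrak{a}}$, i.e.\ $\bar\partial_{(j(v')_{\mathfrak{a}'},\,j(v)_{\mathfrak{a}})}\psi=0$; (ii) $\psi$ carries $\bar D_{\mathfrak{a}'}$ to $\bar D_{\mathfrak{a}}$; (iii) the normalisations $\psi|_{M\cup\Gamma^-_0\cup\Gamma^+_k}=\mathrm{id}$, $\psi(z)=\bm{x}(z)$ for $z\in\Xi$, and $\psi$ fixes the points of the unglued nodal pairs and of the unglued ordered interface pairs. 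The central solution is $(0,0,\overline{\bm{x}};0,0,\mathrm{id})$. Using the standard sc-smoothness of gluing with the exponential profile $\varphi$, of composition, of $\bar\partial$ and of evaluation at points (as in \cite{HWZ-DM,FH-book}, cf.\ \cite{HWZ8.7}), one checks that $F$ is sc-smooth in all of its arguments jointly.

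The heart of the matter is the invertibility of $D_{(\mathfrak{a}',v',\psi)}F$ at the central solution. At $\mathfrak{a}=0$ this linearisation is, modulo the finite-dimensional directions coming from $(\mathfrak{a}',v')$, the Cauchy--Riemann operator $\bar\partial\colon\Gamma_0(\bar\sigma)\to\Omega^{0,1}(\bar\sigma)$ on vector fields vanishing at the special points, cut down by the evaluation conditions in (iii). Because the stabilisation set $\Xi$ was chosen precisely so that $\bar\sigma=(S,j,\bar M,\bar D)$ is a \emph{stable} Riemann surface, $\bar\partial$ is injective with finite-dimensional cokernel canonically identified with $H^1(\bar\sigma)$, there are no nontrivial infinitesimal automorphisms to interfere with the normalisations (iii), and the deformation $\mathfrak{j}$ being \emph{good} means exactly that $[Dj(0)]\colon H^1(\bar\sigma)\to H^1(\bar\sigma)$ is a complex linear isomorphism, so the $(\mathfrak{a}',v')$-directions surject onto the cokernel. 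Hence $D_{(\mathfrak{a}',v',\psi)}F$ is an isomorphism at the centre and, by continuity, on a neighbourhood. The implicit function theorem then yields a \emph{product} neighbourhood $O^{**}\times U^{*}(\Xi)$ of $(0,0,\overline{\bm{x}})$ and, on it, sc-smooth solution maps $(\mathfrak{a},v,\bm{x})\mapsto(\mathfrak{a}',v')$ (the target being finite-dimensional, this is a smooth map in the classical sense) and $(\mathfrak{a},v,\bm{x})\mapsto\psi_{(\mathfrak{a},v,\bm{x})}$, unique near the centre; the ``near identity'' clause selects this branch and forces $\psi_{(0,0,\overline{\bm{x}})}=\mathrm{id}$. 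Since the pointwise universal property already provides existence and uniqueness for each fixed parameter, uniqueness identifies the two, so nothing new has to be checked there.

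It remains to make $O^{**}$ and $U^{*}(\Xi)$ $G$-invariant. All initial choices — the small disk structure $\bm D$, the stabilisation set $\Xi$, the anchor sets, the good deformation $\mathfrak{j}$, the neighbourhoods $O^{*}$ and $U(\Xi)$ — were made $G$-invariantly, so for $g\in G$ the triple $\bigl(g\circ\psi_{(\mathfrak{a},v,\bm{x})}\circ g^{-1},\,g\ast\mathfrak{a}',\,g\ast v'\bigr)$ solves $F=0$ with parameters $(g\ast\mathfrak{a},g\ast v,g\ast\bm{x})$; uniqueness gives the equivariance relations, and since $G$ is finite one replaces $O^{**}$ and $U^{*}(\Xi)$ by the (still open, and now $G$-invariant) intersections of their $G$-translates.

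I expect the genuine obstacle to be the analytic bookkeeping across the gluing region rather than the linear algebra: as $\mathfrak{a}$ varies the underlying surface $S_{\mathfrak{a}}$ itself changes (some nodes glued, some not), so making $F$ and its linearisation sc-smooth \emph{uniformly in the gluing parameter}, and identifying the cokernel of the glued Cauchy--Riemann operator with $H^1(\bar\sigma_{(\mathfrak{a},v)})$ compatibly through the Kodaira--Spencer differential of Theorem~\ref{DM_MAIN}, is exactly the delicate point handled by the exponential-profile gluing analysis of \cite{HWZ-DM} together with the sc-calculus of \cite{FH-book}; modulo those inputs the argument is the parametrised implicit function theorem above. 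An essentially equivalent route would observe that moving the $\Xi$-marked points is a sc-smooth operation on $\wt{\mathcal R}$ and read off the smoothness of $(\mathfrak{a},v,\bm{x})\mapsto(\mathfrak{a}',v')$ and of the $\psi$'s from the smooth transition construction $\bm M^{\varphi}_{\mathrm{DM}}$ for the exponential profile, but the bookkeeping is the same.
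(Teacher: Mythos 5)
The paper itself offers no written proof of this proposition: it is stated as a uniform version of the pointwise stabilization deformation, which in turn is read off from the universal property of the good DM-uniformizer $\Psi^{\ast}:G^{\ast}\ltimes O^{\ast}\rightarrow{\mathcal R}$ (Theorem \ref{DM_MAIN}(3)), with the analytic substance deferred to \cite{HWZ-DM}. Measured against that, the soft parts of your argument are in order: the linear algebra at the nodal centre (injectivity of $\bar{\partial}$ on $\Gamma_0(\bar{\sigma})$ by stability, cokernel $H^1(\bar{\sigma})$ covered by the $v'$-directions because $\mathfrak{j}$ is a good deformation), the use of the normalisations at $\bar{M}$ to kill automorphism ambiguity, and the $G$-invariance argument by uniqueness, equivariance of all choices, and finiteness of $G$. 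Your closing remark, that one could instead quote the uniform universal property of the family $(v,\mathfrak{a})\rightarrow\bar{\sigma}_{(v,\mathfrak{a})}$ together with the smoothness of the DM transition construction for the exponential profile, is in fact closer to the derivation the paper intends.

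The genuine gap is the step ``$D_{(\mathfrak{a}',v',\psi)}F$ is an isomorphism at the centre and, by continuity, on a neighbourhood,'' i.e. the claim that a single classical-style parametrised implicit function theorem at the nodal centre controls a full neighbourhood in $(\mathfrak{a},v,\bm{x})$. First, the unknown $\psi$ and the equation live on surfaces $S_{\mathfrak{a}'}$, $S_{\mathfrak{a}}$ whose topological type changes across the strata where entries of the gluing parameters vanish, so $F$ is not a map between fixed Banach spaces; after rewriting everything through the gluing identifications one lands exactly in the sc-framework, where sc-smoothness does \emph{not} give continuity of linearisations in the operator norm, and pointwise invertibility of the linearisation does not propagate to a neighbourhood by any soft argument -- one needs the sc-Fredholm germ/normal-form machinery or the explicit gluing estimates. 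Second, the quantitative reason this cannot be a continuity statement: for $\mathfrak{a}'$ with glued entries the cokernel of $\bar{\partial}$ on the glued fibre has complex dimension $\dim_{\mathbb C}H^1(\bar{\sigma})+\sharp\{\text{glued nodes}\}$, and the extra directions must be produced by the $\mathfrak{a}'$-derivatives of the family, which (with the exponential profile) degenerate as $\mathfrak{a}'\rightarrow 0$ in any classical norm; uniform invertibility of the linearisation near the centre is therefore precisely the hard gluing estimate of \cite{HWZ-DM}, not a consequence of invertibility at the centre. Since your write-up delegates exactly this point to \cite{HWZ-DM} and \cite{FH-book}, the proposal does not constitute an independent proof of the proposition but rather reduces to the same external input the paper's statement already rests on; to close the gap you would have to either carry out the uniform gluing analysis (a Robbin--Salamon type construction adapted to the profile $\varphi$) or formulate the problem as an sc-Fredholm germ and invoke the sc-implicit function theorem with its hypotheses verified, neither of which is a one-line continuity argument.
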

\subsubsection{A Very Special Retraction}
We have the inclusion 
$$
Z^3_{\bm{\sigma},{\tiny\rup},{\mathcal H},\varphi}({\mathbb R}\times Q,\bar{\digamma})\rightarrow Z^3_{\bm{\sigma},{\tiny\rup},\varphi}({\mathbb R}\times Q,\bar{\digamma})
$$
 as sub-M-polyfold and coming from $\alpha$ there is the special element $\wt{u}^0$ belonging to $Z^3_{\bm{\sigma},{\tiny\rup},{\mathcal H},\varphi}({\mathbb R}\times Q,\bar{\digamma})$.
 Recall that ${\mathcal V}\subset H^1(\sigma)$ is the open neighborhood of $0$ occurring in the deformation $v\rightarrow j(v)$. We define $\bar{q}_0:=(0,\wt{u}^0)$ and shall 
 denote for a given map $\wt{w}$ by $[\wt{w}]$ the map obtained by making suitable ${\mathbb R}$-shifts so that the anchor averages vanish.
 Before we state the result we describe the idea. The element $\bar{q}_0=(0,\wt{u}^0)$ has the property that for $\wt{u}^0$ its anchor averages vanish and 
 the transversal constraints are satisfied in the sense
 $$
 (-\text{av}_{{\tiny\rup}_i}(\wt{u}^0))\ast \wt{u}^0(z)\in \wt{H}_{[z]}\ \text{for}\ z\in\Xi_i
 $$
 and the intersections are transversal. If we take an element $(v,\wt{u})$ near $(0,\wt{u}^0)$ in ${\mathcal V}\times Z^3_{\bm{\sigma},{\tiny\rup},\varphi}({\mathbb R}\times Q,\bar{\digamma})$
 the anchor averages are still vanishing, but the transversal constraints are not satisfied.  Recall that the constraints associated to $[z]$, where $z$ does not lie on a trivial cylinder component
 are ${\mathbb R}$-invariant, which, however is \underline{not!} the case if they lie on trivial cylinder components. 
 However, by a quite subtle implicit function theorem there exists  $(v',\mathfrak{a}')$,  a deformation $\bm{x}$ of $\Xi$ all depending on $(v,\wt{u})$, where we recall that $\mathfrak{a}=\mathfrak{a}(\wt{u})$\
 so that the associated stabilization deformation 
 $$
 \psi: (S_{\mathfrak{a}'},j(v')_{\mathfrak{a}'},\bar{M}_{\mathfrak{a}'},\bar{D}_{\mathfrak{a}'})\rightarrow (S_{\mathfrak{a}},j(v)_{\mathfrak{a}},\bar{M}(\bm{x})_{\mathfrak{a}},\bar{D}_{\mathfrak{a}})
 $$
 has the property that $\wt{w}'= \wt{w}\circ \psi$ satisfies the transversal constraints and adding suitable constants to the floor we obtain $[\wt{w}']$ so that $(v',[\wt{w}'])$ belong to 
 ${\mathcal V}\times Z^3_{\bm{\sigma},{\tiny\rup},{\mathcal H},\varphi}({\mathbb R}\times Q,\bar{\digamma})$.
Again,  the difficulty in the whole argument arises from the fact that the transversal constraints over trivial cylinder domains are \underline{not!} ${\mathbb R}$-invariant.

  \begin{thm}\label{good-sd-nei}
 There exists a $G$-invariant open neighborhood $\bm{O}$ of $\bar{q}_0$ 
 with following properties where $\bm{O}_{\mathcal H}:= \bm{O}\cap ({\mathcal V}\times Z^3_{\bm{\sigma},{\tiny\rup},{\mathcal H},\varphi}({\mathbb R}\times Q,{\bar{\digamma}}))$
 \begin{itemize}
 \item[(1)] We have a well-defined sc-smooth retraction $\rho:\bm{O}\rightarrow \bm{O}$
 with $\rho(\bm{O})=\bm{O}_{\mathcal H}$.
 \item[(2)]  The retraction in (1) has the following form. Given $q=(v,\wt{w})\in \bm{O}$
 the image $\rho(v,\wt{w})$ with ${\mathfrak{a}}$ being the underlying total gluing parameter
 is $(v',[\wt{w}'])$ with underlying total gluing parameter ${\mathfrak{a}}'$, where 
 $$
 \wt{w}' = \wt{w}\circ \psi_{({\mathfrak{a}}({q}),v({q}),\bm{x}({\mathfrak{a}}({q}),v({q}))},
 $$
 where $\bm{x}({\mathfrak{a}}({q}),v({q}))$ is determined by an implicit functions theorem and as a function of $q$
 is sc-smooth.
 \end{itemize}
 \end{thm}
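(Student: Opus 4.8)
The plan is to set this up as a parametrized implicit function theorem problem whose solution germ $\bm{x} = \bm{x}(\mathfrak{a},v)$ produces the retraction $\rho$ by post-composing with the corresponding stabilization deformation $\psi$. First I would fix, for each orbit $[z]\subset\Xi_i$, a local sc-smooth defining map for the constraint $\wt H_{[z]}$: since $\wt H_{[z]}$ has codimension two in $\mathbb R\times Q$, choose a sc-smooth submersion $g_{[z]}:W_{[z]}\to\mathbb R^2$ on a neighborhood of $\bar{\wt{u}}^0(z)$ with $g_{[z]}^{-1}(0)=\wt H_{[z]}\cap W_{[z]}$. Then assemble, from the uniform stabilization-deformation proposition of the previous subsection, the smooth family $(\mathfrak{a},v,\bm{x})\mapsto(\mathfrak{a}'(\mathfrak{a},v,\bm{x}),v'(\mathfrak{a},v,\bm{x}),\psi_{(\mathfrak{a},v,\bm{x})})$ on $O^{\ast\ast}\times U^{\ast}(\Xi)$, and define the evaluation functional
$$
\mathcal G\bigl((v,\wt w),\bm{x}\bigr)=\Bigl(g_{[z]}\bigl((-\mathrm{av}_{\rup_i}(\wt w\circ\psi_{(\mathfrak{a}(\wt w),v,\bm{x})}))\ast(\wt w\circ\psi_{(\mathfrak{a}(\wt w),v,\bm{x})})(z)\bigr)\Bigr)_{[z]},
$$
viewed as a map into $\prod_{[z]}\mathbb R^2$. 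This is $G$-equivariant by construction (the constraint assignment factors through orbits, $\Xi$ is $G$-invariant, and the stabilization deformation is $G$-equivariant), and by the hypothesis that $\bar q_0$ satisfies the constraints transversally we have $\mathcal G(\bar q_0,\bar{\bm{x}})=0$.

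Next I would compute the partial derivative $D_{\bm{x}}\mathcal G$ at $(\bar q_0,\bar{\bm{x}})$ and check it is a linear isomorphism. The key observation is that varying $\bm{x}(z)$ moves the marked point $z$ along the Riemann surface $S$, and since the asymptotic-constant / anchor-average correction is a sc-smooth but ``$\bm{x}$-independent at first order'' adjustment, the leading term of $D_{\bm{x}}\mathcal G$ in the $[z]$-block is $Dg_{[z]}(\bar{\wt u}^0(z))\circ D\bar{\wt u}^0(z)$, composed with the derivative of $z\mapsto\psi$-transported point. Transversality of the intersection of $\wt w^0$ with $\wt H_{[z]}$ at $z$ — which is precisely condition (2) in Definition \ref{DEFGH2.28}, built into $\wt u^0\in Z^3_{\bm\sigma,\rup,\mathcal H,\varphi}$ — says exactly that $Dg_{[z]}(\bar{\wt u}^0(z))\circ D\bar{\wt u}^0(z)$ is surjective, hence (dimension count: the $z$-direction moves in a real-2-dimensional family transverse to a codimension-2 submanifold) an isomorphism on the relevant 2-dimensional subspace. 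Running this block by block and invoking $G$-equivariance to stay in the invariant setting, $D_{\bm{x}}\mathcal G(\bar q_0,\bar{\bm{x}})$ is an isomorphism. One must be slightly careful over trivial cylinder components, where $\wt H_{[z]}$ is $\mathbb R$-invariant and the naive evaluation is not sc-smooth; here the $\mathrm{av}_{\rup_i}$-adjustment is genuinely needed to restore sc-smoothness of $\mathcal G$, and this is where I would invoke the domain-transformation sc-smoothness results of \cite{FH-book} together with the detailed structure of $Z^3_{\bm\sigma,\rup,\varphi}$.

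Now I would apply the sc-implicit function theorem (in the M-polyfold category; note $\bm{x}$ ranges in a finite-dimensional manifold, so the functional-analytic cost is mild) to obtain a $G$-invariant open neighborhood $\bm O$ of $\bar q_0$ and a unique sc-smooth germ $q=(v,\wt w)\mapsto\bm{x}(q)$ with $\bm{x}(\bar q_0)=\bar{\bm{x}}$ solving $\mathcal G(q,\bm{x}(q))=0$; equivariance of $\mathcal G$ and uniqueness force $\bm{x}(g\ast q)=g\ast\bm{x}(q)$. Then define $\rho(v,\wt w)=(v',[\wt w\circ\psi_{(\mathfrak{a}(\wt w),v,\bm{x}(q))}])$ as in (2); this lands in $\bm O_{\mathcal H}$ because $\mathcal G(q,\bm{x}(q))=0$ says precisely the transversal constraints hold after the shift, and the transversality (openness of the transversal locus) persists on a possibly smaller $\bm O$. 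To see $\rho$ is a retraction, I would check $\rho|_{\bm O_{\mathcal H}}=\mathrm{id}$: if $q\in\bm O_{\mathcal H}$ already satisfies the constraints, then $\bm{x}=\bar{\bm{x}}$ solves $\mathcal G(q,\cdot)=0$, so by the uniqueness part $\bm{x}(q)=\bar{\bm{x}}$, $\psi_{(\mathfrak{a}(\wt w),v,\bar{\bm{x}})}=\mathrm{id}$, and $[\wt w']=\wt w$, $v'=v$; idempotence $\rho\circ\rho=\rho$ follows since $\rho(q)\in\bm O_{\mathcal H}$. Finally, sc-smoothness of $\rho$ follows from sc-smoothness of $q\mapsto\bm{x}(q)$ (from the sc-IFT) composed with the sc-smoothness of the uniform stabilization deformation and of the domain-transformation/reparametrization operation $\wt w\mapsto\wt w\circ\psi$, again citing \cite{FH-book}.

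\textbf{Main obstacle.} The hard part is establishing that $\mathcal G$ is genuinely sc-smooth and that $D_{\bm{x}}\mathcal G$ is invertible \emph{including} the trivial-cylinder contributions: over trivial cylinder components the constraints $\wt H_{[z]}$ are $\mathbb R$-invariant, so evaluation loses sc-smoothness unless corrected by the anchor average, yet that correction couples different floors through the strict inequalities $\mathrm{av}_{\rup_{i-1}}<\mathrm{av}_{\rup_i}$ in the definition of $Z^3_{\bm\sigma,\rup,\varphi}$; one must verify the linearization remains triangular/invertible with respect to this floor-coupling and that the implicit solution respects the open conditions cutting out $\bm O$. Interwoven with this is the reparametrization action $\wt w\mapsto\wt w\circ\psi_{(\mathfrak{a},v,\bm{x})}$ on a glued domain with varying gluing parameter $\mathfrak{a}=\mathfrak{a}(\wt w)$, whose sc-smoothness is exactly the kind of delicate statement handled in \cite{FH-book} and which I would cite rather than reprove.
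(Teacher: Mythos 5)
Your overall scheme coincides with the one the paper has in mind: encode the transversal constraints by local defining functions, use the uniform stabilization deformation $(\mathfrak{a},v,\bm{x})\mapsto\psi_{(\mathfrak{a},v,\bm{x})}$ coming from the universal property of the DM-uniformizer, solve for the finite-dimensional unknown $\bm{x}$ by an sc-implicit function theorem at $\bar{q}_0$ using the transversality built into Definition \ref{DEFGH2.28}, obtain $G$-equivariance and the retraction identity $\rho|_{\bm{O}_{\mathcal H}}=\text{id}$ from uniqueness of the solution germ, and cite \cite{FH-book} for sc-smoothness of the reparametrization $\wt{w}\mapsto\wt{w}\circ\psi$. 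This is exactly the ``quite subtle implicit function theorem'' the text alludes to, so as a skeleton the plan is the right one.

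However, your treatment of the crux is inverted, and this is a genuine gap rather than a cosmetic slip. By the definition of transversal constraints, the constraints attached to orbits $[z]$ of stabilization points on trivial cylinder components are of the second type, $\wt{H}_{[z]}=\{\bar{a}_{[z]}\}\times H_{[z]}$ with $H_{[z]}\subset Q$ of codimension one; these are precisely \emph{not} ${\mathbb R}$-invariant, whereas the ${\mathbb R}$-invariant constraints ${\mathbb R}\times H_{[z]}$ sit over nontrivial components, where the anchor-average shift is needed only to make the evaluation sc-smooth across vanishing gluing parameters. You assert the opposite, and the paper underlines twice that the non-${\mathbb R}$-invariance over trivial cylinder domains is exactly where the difficulty lies. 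This matters for your linearization: for those blocks the constrained quantity is $(-\text{av}_{\rup_i}(\wt{w}\circ\psi_{(\mathfrak{a},v,\bm{x})}))\ast(\wt{w}\circ\psi_{(\mathfrak{a},v,\bm{x})})(z)$, and the anchor average itself depends on $\bm{x}$ (the stabilization deformation fixes $M$, the top/bottom punctures, unglued nodal points and sends $z\in\Xi$ to $\bm{x}(z)$, but it does \emph{not} fix the anchor points) and on values of $\wt{w}$ on other components; through the ${\mathbb R}$-levels of trivial cylinder pieces this couples constraint blocks across components and floors. Hence $D_{\bm{x}}\mathcal{G}(\bar{q}_0,\bar{\bm{x}})$ is not block-diagonal with blocks $Dg_{[z]}\circ D\bar{\wt{u}}^0(z)$ as you claim; proving its invertibility in the presence of this coupling (a triangular structure with respect to the floor/anchor organization, or smallness of the off-diagonal terms) is the actual content of the subtle step, and your plan assumes it away by misplacing the ${\mathbb R}$-invariance — your closing paragraph even gestures at the coupling while the main argument ignores it. The remaining steps (uniqueness giving equivariance and idempotence, shrinking $\bm{O}$ so that $\rho(\bm{O})\subset\bm{O}$ and the transversal intersection persists, sc-smoothness via \cite{FH-book}) are fine.
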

As a consequence we can make the following crucial definition.
\begin{definition}\label{DEFR29.2.7xX}
Assume we are given a stable map $\alpha$ and have fixed the auxiliary structures $\bm{D},\rup,\Xi, {\mathcal H}$ and $\mathfrak{j}$ producing the 
DM-uniformizer $\Psi^\ast: G^\ast\ltimes O^{\ast}\rightarrow {\mathcal R}$ at the M-data $\bar{\sigma}$ and the sd-deformation 
$$
O^{\ast\ast}\times U^\ast(\Xi)\rightarrow O^{\ast}.
$$
Denote by $\bar{q}_0$ the element $(0,q_0)\in {\mathcal V}\times Z_{\bm{\sigma},{\tiny\rup},{\mathcal H},\varphi}({\mathbb R}\times Q,\bar{\digamma})$, where $q_0$ is  the element 
associated to $\alpha$. 
Then a  {\bf large sd-retraction neighborhood}\index{large  sd-retraction neighborhood} is a $G$-invariant open neighborhood $\bm{O}=\bm{O}(\bar{q}_0)$ in ${\mathcal V}\times Z_{\bm{\sigma},{\tiny\rup},\varphi}({\mathbb R}\times Q,\bar{\digamma})$ as guaranteed
by Theorem \ref{good-sd-nei}. In particular there exists an sc-smooth retraction $\rho:\bm{O}\rightarrow \bm{O}$
with image $ \bm{O}_{\mathcal H}$. This retraction has the form
$$
\rho(v,\wt{w}) = (v'({\mathfrak{a}}(\wt{w}),v),\wt{w}\circ \psi_{({\mathfrak{a}}(v,\wt{w}),v,\bm{x}({\mathfrak{a}}(\wt{w}),v))}).
$$
associated to the stabilization deformation. 
\qed
\end{definition}

\subsection{The  Uniformizer and Transition Germs}
Let  $\alpha$ be an object and $\textbf{pre}\Psi$ a pre-uniformizer at $\alpha$. 
\begin{definition}
A \textbf{good open neighborhood} $O$ of the special element $\bar{o}$  representing $\alpha$ is a $G$-invariant  open neighborhood
of $\bar{o}$ in $V\times Z^3_{\bm{\sigma},{\tiny\rup},{\mathcal H},\varphi}({\mathbb R}\times Q,\bar{\digamma})$ having the following properties 
\begin{itemize}
\item[(1)] For $(v,\wt{u})\in O$ we have that $(v,\mathfrak{a}(\wt{u}))$ belong to $O^{\ast\ast}$ so that the sd-deformation $O^{\ast\ast}\times U^{\ast}(\Sigma)\rightarrow O^{\ast}$ is defined,
where $\Psi^{\ast}: G^{\ast}\ltimes O^{\ast}\rightarrow {\mathcal R}$ is a good uniformizer for ${\mathcal R}$.
\item[(2)] The restriction of the pre-uniformizer to $O$, say $\Psi$ is injective on objects and otherwise full and faithful.
\item[(3)] $O=\bm{O}_{\mathcal H}$, where $\bm{O}$ is a large sd-retraction neighborhood in the sense of Definition \ref{DEFR29.2.7xX}.
\end{itemize}
\qed
\end{definition}
We define $F(\alpha)$ to be the set of all uniformizers obtained from pre-uniformizers by restricting to a good open neighborhood. 
This gives a uniformizer construction. Then everything is place and from the discussion in this section we obtain a transition germ construction.
Of course, it takes some work to verify the required properties.

\newpage

\part*{Lecture 11}
\section{Strong Polyfold Bundle Structure and Fredholm Theory}
We have concentrated on the polyfold construction for ${\mathcal S}$.  The construction of strong bundle structures for functors $\mu:{\mathcal S}\rightarrow \textbf{Ban}$ can be carried out 
in a similar fashion and we allow ourselves to be brief.  
After having fixed $J$ we have the functor 
$\mu_J:{\mathcal S}\rightarrow \textbf{Ban}$ which associates to an object $\alpha$ the $T({\mathbb R}\times Q)$-valued $(0,1)$-forms of class $(2,\delta_0)$ along the stable map.

   \begin{figure}[h]
\begin{center}
\includegraphics[width=6.5cm]{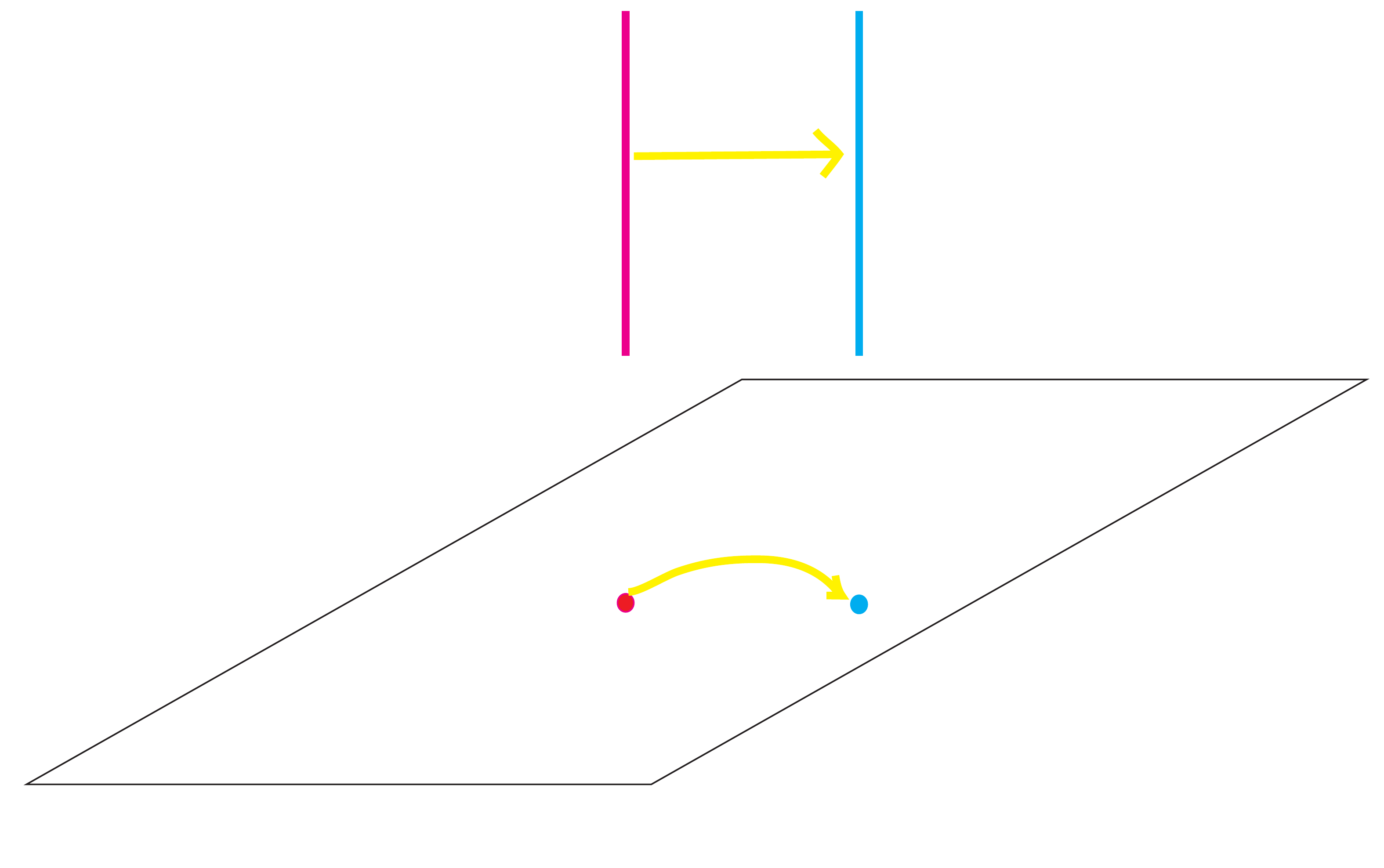}
\end{center}\caption{$\mu_J:{\mathcal S}\rightarrow \textbf{Ban}$.}
\end{figure}

The idea is to construct a strong bundle $K\rightarrow O$ and a lift $\bar{\Psi}$ for $\Psi\in F(\alpha)$ fitting into the commutative diagram
$$
\begin{CD}
G\ltimes K @>\bar{\Psi} >>  {\mathcal E}_J\\
@VVV @VVV\\
G\ltimes O @>\Psi>>   {\mathcal S}.
\end{CD}
$$
  \begin{figure}[h]
\begin{center}
\includegraphics[width=6.5cm]{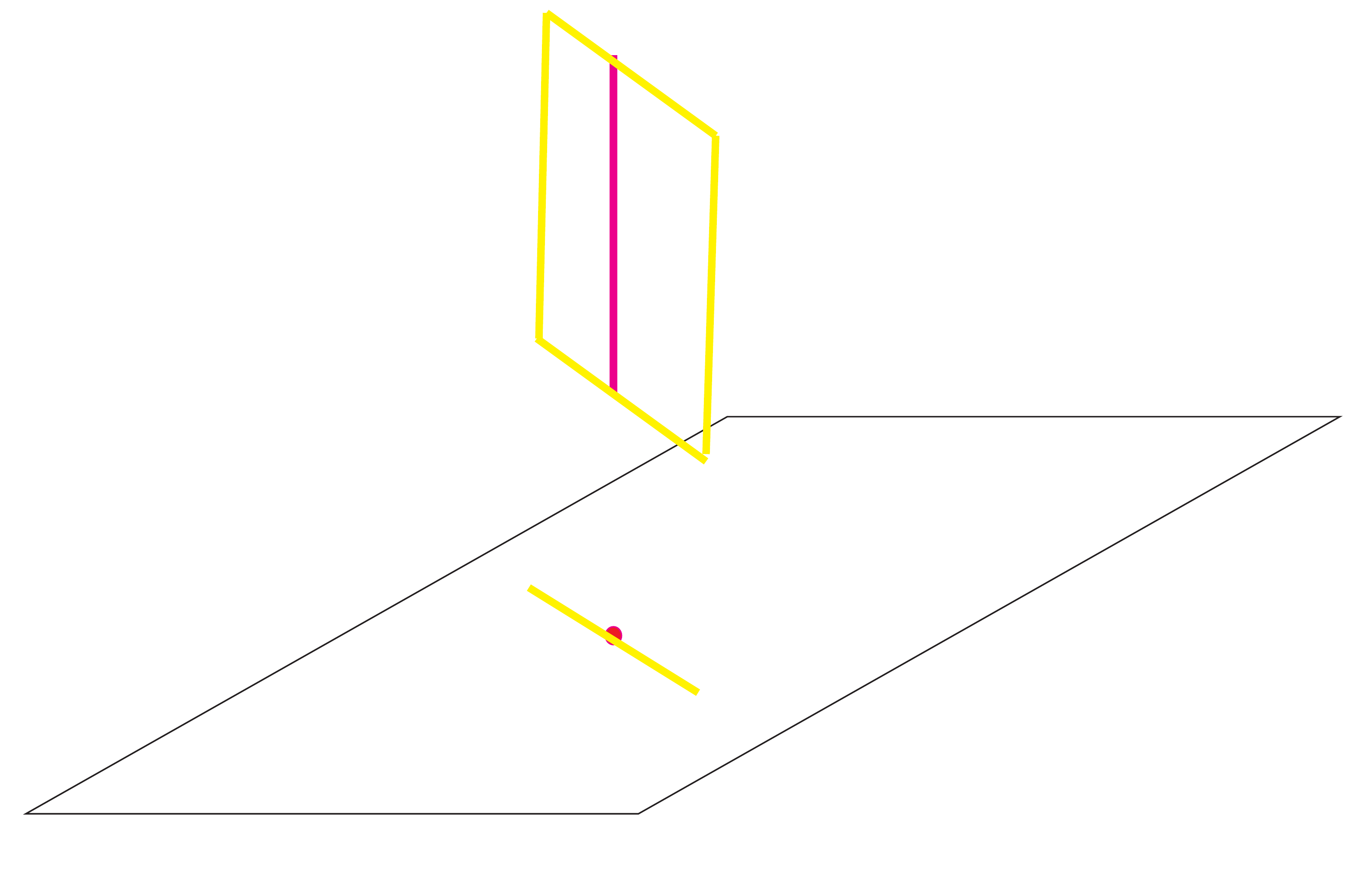}
\end{center}\caption{A $\bar{\Psi}$-slice}
\end{figure}

We shall mention the parts we need to also be able to introduce a special class of sc$^+$-sections, which will be used to define the special sc$^+$-multisection 
functors used for the perturbation theory.
\subsection{Remarks on Construction Functors for Strong Bundles}
We described the construction of the $\Psi$ in great detail and have used a variety of tools  to do so. In particular we showed that we have a construction functor 
$$
({\mathbb R}^N,\bar{\digamma})\rightarrow Z^3_{\bm{\sigma},{\tiny\rup},\varphi}({\mathbb R}\times {\mathbb R}^N,\bar{\digamma}).
$$
 This allowed
the extension to manifolds by a general method resulting in $Z^3_{\bm{\sigma},{\tiny\rup},\varphi}({\mathbb R}\times Q,\bar{\digamma})$.
 Finally having a construction for a manifold $Q$ one can introduce  transversal constraints and obtain a suitable sub-M-polyfold 
$$
Z^3_{\bm{\sigma},{\tiny\rup},{\mathcal H},\varphi}({\mathbb R}\times Q,\bar{\digamma})\subset Z^3_{\bm{\sigma},{\tiny\rup},\varphi}({\mathbb R}\times Q,\bar{\digamma}).
$$
In order to obtain a  construction  for strong complex bundles we again can use the idea of a construction functor. More precisely, we have a category whose objects are
$({\mathbb R}^N\times {\mathbb K}^L,\bar{\digamma})$, where ${\mathbb K}$ is either ${\mathbb R}$ or ${\mathbb C}$. We view ${\mathbb R}^N\times {\mathbb K}^L\rightarrow {\mathbb R}^N$
as the obvious trivial vector bundle and $({\mathbb R}^N,\bar{\digamma})$ is a previously considered object, namely a collection of weighted periodic orbits in ${\mathbb R}^N$.
The morphisms are smooth maps $A:{\mathbb R}^N\times {\mathbb K}^L\rightarrow {\mathbb R}^{N'}\times {\mathbb K}^{L'}$ of the form $(m,\ell)\rightarrow (f(m),A(m)(\ell))$,
where $A(m)$ is ${\mathbb K}$-linear. Moreover, $f:({\mathbb R}^N,\bar{\digamma})\rightarrow ({\mathbb R}^{N'},\bar{\digamma}')$ is a morphism in the obvious category, which was previously introduced.
The new construction functor is build as follows. 
\begin{definition}\label{DEFX11.1}
We  define the sc-Hilbert space $H^{2,\delta}_{\bar{\sigma}}({\mathbb C}^L)$ to  consist of all continuous maps 
$\eta$, which associate to $z\in S$ a complex anti-linear map $\eta(z):(T_zS,j)\rightarrow {\mathbb C}^L$ so that the following holds. 
\begin{itemize}
\item[(1)] $\eta(z)=0$ for $z\in |\bar{D}|\cup\Gamma^-_0\cup\Gamma^+_k$.
\item[(2)] Away from points in $|\bar{D}|\cup\Gamma^-_0\cup\Gamma^+_k$ the map $z\rightarrow \eta(z)$ is of class $H^2_{loc}$.
\item[(3)] For every $x\in |\bar{D}|\cup\Gamma^-_0\cup\Gamma^+_k$ taking positive holomorphic polar coordinates around $x$
the map $(s,t)\rightarrow \eta\circ \frac{\partial\sigma_{\wh{x}}^+}{\partial s}$ belongs to $H^{2,\delta_0}({\mathbb R}^+\times S^1,{\mathbb C}^N)$.
\end{itemize}
The sc-structure is given by defining level $m$ as regularity $(2+m,\delta_m)$.  
\qed
\end{definition}After fixing a small disk structure $\bm{D}$ we can define 
$X^{2,\delta_0}_{\bm{\sigma},\varphi,0}({\mathbb C}^L)$ to consist of all maps $\eta$ on the different glued surfaces so that 
$\eta(z)$ is complex anti-linear, is away from nodes of class $H^2_{loc}$ and has at nodal points or punctures  with respect to holomorphic polar coordinates
the $(2,\delta_0)$ behavior.
This set can be defined easily by the imprinting method
 $$
 \oplus:{\mathbb B}_{\bar{D}}\times H^{2,\delta}_{\bar{\sigma}}({\mathbb C}^L)\rightarrow X^{2,\delta_0}_{\bm{\sigma},\varphi,0}({\mathbb C}^L).
$$
In this case the extraction of gluing parameters is submersive. 
We know also know that the map
$$
\bar{a}:Z_{\bm{\sigma},{\tiny\rup},\varphi}({\mathbb R}\times {\mathbb R}^N,\bar{\digamma}) \rightarrow {\mathbb B}_{\bar{D}},
$$
extracting the global gluing parameter, is sc-smooth.
 We can consider the pull-back diagram
$$
\begin{CD} @.   X^{2,\delta}_{\bm{\sigma},\varphi,0}({\mathbb C}^L)\\
@.   @VVV\\
Z^3_{\bm{\sigma},{\tiny\rup},\varphi}({\mathbb R}\times {\mathbb R}^N,\bar{\digamma})@>>>  {\mathbb B}_{\bar{D}}
\end{CD}
$$
We denote the pull-back functor by 
$$
\Omega^{3,2}_{\bm{\sigma},{\tiny\rup},\varphi}(({\mathbb R}\times {\mathbb R}^N\times {\mathbb C}^L,\bar{\digamma})\rightarrow Z^3_{\bm{\sigma},{\tiny\rup},\varphi}({\mathbb R}\times {\mathbb R}^N,\bar{\digamma}),
$$
which is a strong bundle construction functor.  By using a previous ideas we can use the embedding method to define for the complex vector bundle $({\mathbb R}\times TQ,\wt{J})\rightarrow Q$. 
the strong bundle 
$$
\Omega^{3,2}_{\bm{\sigma},{\tiny\rup},\varphi}(({\mathbb R}\times ({\mathbb R}\times TQ),\bar{\digamma})\rightarrow Z^3_{\bm{\sigma},{\tiny\rup},\varphi}({\mathbb R}\times Q,\bar{\digamma}).
$$
Here ${\mathbb R}\times  ({\mathbb R}\times TQ)\rightarrow {\mathbb R}\times Q$ is the complex vector bundle with fiber over $(a,q)$ to consist
of all $(a,(h,b))$ with $h\in {\mathbb R}$ and $b\in T_qQ$. With other words we take the pull-back of $({\mathbb R}\times TQ,\wt{J})\rightarrow Q$ by the projection ${\mathbb R}\times Q\rightarrow Q$.
Taking transversal constraints for the basis we obtain the pull-back diagram
$$
\begin{CD} @.   \Omega^{3,2}_{\bm{\sigma},{\tiny\rup},\varphi}({\mathbb R}\times ({\mathbb R}\times TQ),\bar{\digamma})\\
@.   @VVV\\
Z^3_{\bm{\sigma},{\tiny\rup},{\mathcal H}, \varphi}({\mathbb R}\times Q,\bar{\digamma})@>>>  Z^3_{\bm{\sigma},{\tiny\rup}, \varphi}({\mathbb R}\times Q,\bar{\digamma}).
\end{CD}
$$
We denote this pull-back bundle by
\begin{eqnarray}\label{eqn18}
\boxed{\Omega^{3,2}_{\bm{\sigma},{\tiny\rup},{\mathcal H},\varphi}(({\mathbb R}\times ({\mathbb R}\times TQ),\bar{\digamma})\rightarrow Z^3_{\bm{\sigma},{\tiny\rup},{\mathcal H},\varphi}({\mathbb R}\times Q,\bar{\digamma})}
\end{eqnarray}
\subsection{Incorporating $\mathfrak{j}$}
In the pre-uniformizer construction we also fixed a deformation $\mathfrak{j}$, $v\rightarrow j(v),\ v\in V$,  of $j$ after having fixed a small disk structure $\bm{D}$.
In (\ref{eqn18}), so far, the construction only uses $j$. We define for $v\in V$ and a gluing parameter $\mathfrak{a}$ the map
$$
\chi_{(v,\mathfrak{a})} =\frac{1}{2}\cdot [\text{Id} - j\circ j(v)]_{\mathfrak{a}}: (TS_{\mathfrak{a}},j(v)_{\mathfrak{a}})\rightarrow (TS_{\mathfrak{a}},j_{\mathfrak{a}}).
$$
Given $(\wt{u},\wt{\eta})\in \Omega^{3,2}_{\bm{\sigma},{\tiny\rup},\varphi}({\mathbb R}\times ({\mathbb R}\times TQ),\bar{\digamma})$ we see that 
$\wt{\eta}\circ \chi_{(v,\mathfrak{a})}$ is point-wise a complex anti-linear map if the domain is equipped with $j(v)_{\mathfrak{a}}$. 
Of course, here we have that $\mathfrak{a}=\mathfrak{a}(\wt{u})$ is the underlying gluing parameter. 

Given the uniformizer $\Psi\in F(\alpha)$ as previously constructed, say $\Psi: G\ltimes O\rightarrow {\mathcal S}$
we define the strong bundle $p:K\rightarrow O$, i.e. $K$,  by first considering the pull-back of (\ref{eqn18}) by 
\begin{eqnarray}\label{DISP19}
V\times Z^3_{\bm{\sigma},{\tiny\rup},{\mathcal H},\varphi}({\mathbb R}\times Q,\bar{\digamma})\rightarrow Z^3_{\bm{\sigma},{\tiny\rup},{\mathcal H},\varphi}({\mathbb R}\times Q,\bar{\digamma})
\end{eqnarray}
and then restricting the result to $O$. Finally we define the lift $\bar{\Psi}$ of $\Psi$ by 
$$
\bar{\Psi}(v,\wt{u},\wt{\eta}) = (\Psi(v,\wt{u}), \wt{\eta}\circ \chi_{(v,\mathfrak{a}(\wt{u}))}).
$$
The lift of the transition germs for ${\mathcal S}$ to ${\mathcal E}$ is straight forward. As a result we obtain
\begin{thm}
The functor which associates to an object $\alpha$ the collection $\bar{F}(\alpha)$ of lifts of uniformizers 
together with the lift of the transition germ construction $\bar{\mathscr{F}}$ defines metrizable topologies on $|{\mathcal S}|$ and $|{\mathcal E}|$
and viewing the categories as GCT's a strong bundle structure for $P:{\mathcal E}_J\rightarrow {\mathcal S}$. 
\qed
\end{thm}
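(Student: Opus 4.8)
The strategy is to reduce the statement to the abstract mechanism of Lecture 9, using the concrete constructions of Lecture 10 for the base and the fiberwise-linear lifts of this section for the bundle, and then to run the ``metrizability test'' on the two orbit spaces. First I would record that the work of Lecture 10 has produced, for the groupoidal category ${\mathcal S}$, a uniformizer construction $F$ (the restrictions of pre-uniformizers to good open neighborhoods) and a transition germ construction $\mathscr{F}$ (the germs $F_h(q)=(q,\Phi^h_q,f_h(q))$ extracted from the universal property of DM-theory and the special sd-retraction $\rho$ of Theorem \ref{good-sd-nei}); properties (A)--(F) of $\mathscr{F}$ are verified from functoriality of the pre-uniformizer construction, uniqueness in the governing implicit function theorem, and injectivity of the $\Psi$ on objects, while tameness of every $O$ follows from $O$ being a sub-M-polyfold of a tame ambient M-polyfold. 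By the general fact quoted after the definition of $\mathscr{F}$, the pair $(F,\mathscr{F})$ then automatically equips $|{\mathcal S}|$ with a topology ${\mathcal T}$ for which each $|\Psi|$ is a homeomorphism onto an open subset, and equips each transition set $\bm{M}(\Psi,\Psi')$ with an M-polyfold structure making $s,t$ local sc-diffeomorphisms and the remaining structure maps sc-smooth.

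The main obstacle is the metrizability test for ${\mathcal T}$ on $|{\mathcal S}|$. Here I would identify ${\mathcal T}$ with the metrizable topology on $|{\mathcal S}|$ whose existence was asserted earlier (the GCT statement for ${\mathcal S}$), using SFT compactness \cite{BEHWZ} to describe convergence: $|\alpha_n|\to|\alpha|$ in the asserted topology if and only if eventually $|\alpha_n|\in|\Psi(O)|$ for some (equivalently every sufficiently small) $\Psi\in F(\alpha)$ and $|\Psi|^{-1}(|\alpha_n|)\to|\bar o|$ in $|G\ltimes O|$, which is exactly the description of ${\mathcal T}$. Hausdorffness of ${\mathcal T}$ is then a consequence of injectivity of $\Psi$ on objects together with the M-Hausdorff property (F), and the remaining conditions for metrizability come from the sc-Hilbert nature of the domains $O$. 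Once ${\mathcal T}$ is known to be metrizable, Lecture 9 upgrades $(F,\mathscr{F})$ to a transition construction $\bm{M}$ for the GCT $({\mathcal S},{\mathcal T})$; since all uniformizers are tame this is a tame polyfold structure on ${\mathcal S}$ in the sense of Lecture 1.

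For the bundle half I would lift every ingredient by linearity. The bundle $p\colon K\to O$ is the restriction to $O$ of the pullback along {\em (\ref{DISP19})} of the strong bundle construction functor {\em (\ref{eqn18})}, hence a genuine strong bundle; and $\bar{\Psi}(v,\wt{u},\wt{\eta})=(\Psi(v,\wt{u}),\wt{\eta}\circ\chi_{(v,\mathfrak{a}(\wt{u}))})$ is fiberwise a complex-linear bijection because $\chi_{(v,\mathfrak{a})}\colon(TS_{\mathfrak{a}},j(v)_{\mathfrak{a}})\to(TS_{\mathfrak{a}},j_{\mathfrak{a}})$ is, for $(v,\mathfrak{a})$ near the base point, a complex-linear isomorphism. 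Thus each $\bar{\Psi}$ is a strong bundle uniformizer: it covers $\Psi$, is injective on objects (because $\Psi$ is and the fiber maps are bijections), and is full and faithful because a morphism $(\alpha,e)\to(\alpha',e')$ in ${\mathcal E}_J$ is precisely a morphism $\Phi\colon\alpha\to\alpha'$ in ${\mathcal S}$ with $\mu_J(\Phi)(e)=e'$. The lifted transition germ construction $\bar{\mathscr{F}}$ is obtained by transporting $F_h$ through these fiberwise linear isomorphisms: to $\bar{h}\in\bm{M}(\bar{\Psi},\bar{\Psi}')$ over $h\in\bm{M}(\Psi,\Psi')$ one assigns the germ whose projection to $\bm{M}(\Psi,\Psi')$ is $F_h$ and whose fiber component is the linear map induced by $\mu_J(\Phi^h_q)$ composed with the $\chi$-conjugations; properties (A)--(F) for $\bar{\mathscr{F}}$ follow from those for $\mathscr{F}$ together with functoriality of $\mu_J$ and linearity of $\chi$, and in particular each $\bm{M}(\bar{\Psi},\bar{\Psi}')$ acquires a strong bundle structure over $\bm{M}(\Psi,\Psi')$ with source and target local strong bundle isomorphisms, fitting the diagrams in the ``Bundle Structure'' subsection. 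Finally I would run the metrizability test on $|{\mathcal E}_J|$: combining the metric on $|{\mathcal S}|$ pulled back by $|P|$ with the fiberwise Hilbert norms produces a metric inducing the natural topology on $|{\mathcal E}_J|$ and makes $|P|$ continuous. With both orbit-space topologies metrizable, Lecture 9 promotes $(\bar{F},\bar{\mathscr{F}})$ to $(\bar{F},\bm{M})$, which is exactly a tame strong polyfold bundle structure for $P\colon{\mathcal E}_J\to{\mathcal S}$, and the proof is complete.
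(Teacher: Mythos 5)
Your proposal is correct in outline and follows essentially the same route as the paper, which itself only records the construction: the pullback bundle $K\to O$ from (\ref{eqn18}) via (\ref{DISP19}), the lift $\bar{\Psi}(v,\wt{u},\wt{\eta})=(\Psi(v,\wt{u}),\wt{\eta}\circ\chi_{(v,\mathfrak{a}(\wt{u}))})$, the remark that lifting the transition germs is straightforward, and then an appeal to the general $(F,\mathscr{F})\Rightarrow(F,\bm{M})$ mechanism of Lecture 9 once metrizability is checked. The only caveat is that your metrizability step leans on the earlier GCT theorem for ${\mathcal S}$, which in the paper's logic is itself delivered by this construction (with details deferred to the Polyfold/Construction Books), so in a self-contained write-up one would verify metrizability directly from the SFT-compactness description of convergence rather than cite it.
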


\subsection{The sc-Fredholm Functor $\bar{\partial}_{\widetilde{J}}$}
Having the strong bundle construction $\bar{F}:{\mathcal S}\rightarrow \textbf{Ban}$ we have for an object $\alpha$ and $\bar{\Psi}\in \bar{F}(\alpha)$
the commutative diagram
$$
\begin{CD}
G\ltimes K @>\bar{\Psi}>> {\mathcal E}_J\\
@V\bar{\partial}_{\wt{J},\bar{\Psi}}VV @V P VV\\
G\ltimes O @>\Psi >>  {\mathcal S}
\end{CD}
$$
where $\bar{\partial}_{\wt{J},\bar{\Psi}}$ is the local representative.  
\begin{thm}
For every $\bar{\Psi}$ the local representative is sc-Fredholm, i.e. by definition $\bar{\partial}_{\wt{J}}$ is a sc-Fredholm functor.
\qed
\end{thm}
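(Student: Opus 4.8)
The plan is to verify sc-Fredholmness of the local representatives $\bar{\partial}_{\wt{J},\bar{\Psi}}$ directly, reducing to the known Fredholm theory for punctured holomorphic maps with exponential weights, and then checking that the imprinting/gluing apparatus built in Lectures 5--8 does not destroy this property. The starting point is the commutative diagram that computes $\bar{\partial}_{\wt{J},\bar{\Psi}} = \bar{\Psi}^{-1}\circ\bar{\partial}_{\wt{J}}\circ\Psi$ as a sc-smooth section of $p:K\to O$. Here $O$ is a good open neighborhood inside $V\times Z^3_{\bm{\sigma},{\tiny\rup},{\mathcal H},\varphi}({\mathbb R}\times Q,\bar{\digamma})$ and $K$ is the pulled-back strong bundle $\Omega^{3,2}_{\bm{\sigma},{\tiny\rup},{\mathcal H},\varphi}$ twisted by $\chi_{(v,\mathfrak{a})}$. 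One first observes that $\bar{\partial}_{\wt{J},\bar{\Psi}}$ is a genuine sc-smooth section (this is the strong bundle + construction functor output of Lecture 11 together with the sc-smoothness of $v\mapsto j(v)$ and of the domain-transformation maps, cf.\ \cite{FH-book}), so the content is the sc-Fredholm estimate.

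First I would treat the case of a single floor with no gluing ($\mathfrak{a}=0$, $k=0$), i.e.\ the classical/periodic-orbit local-local model. On each non-nodal, non-trivial-cylinder component the operator is, in holomorphic polar/cylindrical coordinates near punctures, a compact perturbation of $\partial_s + J_\infty\partial_t + S_\infty(t)$ acting between the weighted Sobolev spaces $H^{3,\delta_0}$ and $H^{2,\delta_0}$; non-degeneracy of the periodic orbits together with $0<\delta_0<\bar\delta_J$ (the spectral gap) makes the asymptotic operator invertible on the weighted spaces, which is the standard mechanism giving a Fredholm Cauchy--Riemann operator. The anchor conditions $\mathrm{av}_{\tiny\rup_i}=0$ and the transversal constraints ${\mathcal H}$ cut down by sub-M-polyfolds (global sc$^+$-retractions, resp.\ the special retraction $\rho$ of Theorem \ref{good-sd-nei}), and restricting a sc-Fredholm section to a sub-M-polyfold defined by sc$^+$-retractions preserves the sc-Fredholm property; so this case follows from existing Fredholm theory for punctured curves plus the retraction stability statement.

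Next I would incorporate gluing. The point is that the imprinting map $\bar{\oplus}:{\mathbb B}_D\times{\mathcal O}\to Z^{3,\delta_0}_{\bm{\sigma},{\tiny\rup},\varphi}$ of Theorem \ref{thmk} realizes the glued M-polyfold as a quotient of a manifold-of-parameters $\times$ sc-Hilbert-space, and the sc-Fredholm property is detected in any M-polyfold chart, so one must produce, near each glued configuration, a filled section in the sense of the polyfold Fredholm definition and verify the contraction-germ estimate. This is exactly where the exponential gluing profile $\varphi(r)=e^{1/r}-e$ is used: it is the hard analytic input that the gluing/anti-gluing decomposition of the linearized operator converges in the sc-sense and yields uniform Fredholm estimates as the gluing parameter degenerates. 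I would invoke the pre-Fredholm/filling technology of \cite{HWZ2017} and \cite{FH-book}, which is designed precisely so that a construction assembled by fibered products of submersive imprintings with restrictions, together with an asymptotically invertible Cauchy--Riemann-type operator on the local-local pieces, produces a sc-Fredholm section on the assembled M-polyfold.

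The main obstacle is this last gluing step: establishing the sc$^0$-continuity and the contraction-germ estimate for $\bar{\partial}_{\wt{J},\bar{\Psi}}$ uniformly across the strata where some gluing parameters vanish and some do not, i.e.\ across the different interface sequences $0<i_1<\dots<i_\ell<k$, while simultaneously respecting the anchor-shift bookkeeping ($\varphi(r_{i_e+1})+\dots+\varphi(r_i))\ast\wt{u}_i$ and the twist $\chi_{(v,\mathfrak{a})}$ coming from the $\mathfrak{j}$-deformation. The twist $\chi_{(v,\mathfrak{a})}$ is a sc$^+$-bundle endomorphism that is the identity at $v=0$, so composing with it does not affect the Fredholm index or the estimates; the genuinely delicate part is the degeneration analysis near the nodes and trivial-cylinder interfaces, which is handled by the standard polyfold gluing estimates once the local models are known to have invertible asymptotic operators. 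I would not reprove those estimates here but cite \cite{HWZ2017,FH-book}; the remaining work, which I would spell out, is the verification that the specific assembled section $\bar{\partial}_{\wt{J},\bar{\Psi}}$ meets the hypotheses of that general machinery, that the retractions defining ${\mathcal H}$ and the anchor conditions are sc$^+$ and hence harmless, and that the index is the expected one computed from Riemann--Roch with the weights $\delta$.
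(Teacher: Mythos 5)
Your plan is correct in outline and follows essentially the route the paper itself indicates: the paper states the theorem without proof and immediately points to the pre-Fredholm/modular machinery of \cite{FH-book} (with the filling and contraction-germ analysis of \cite{HWZ2017}), which is exactly what you invoke, together with the standard weighted asymptotic analysis at non-degenerate orbits and the observation that the anchor/constraint retractions and the twist $\chi_{(v,\mathfrak{a})}$ are finite-codimensional, sc$^+$-type modifications that do not disturb the Fredholm property. So there is no divergence to report; the only caution is that your blanket claim that restriction to any sub-M-polyfold cut out by sc$^+$-retractions preserves sc-Fredholmness should, in a full write-up, be replaced by the specific argument for these finite-codimensional constraints (with the corresponding index shift), as in the cited references.
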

One should mention  that in \cite{FH-book} a pre-Fredholm theory has been developed which is some kind of modular theory
which guarantees the Fredholm property as a consequence of smaller pieces of analysis, which makes the Fredholm theory 
rather straight forward and most importantly the smaller pieces of analysis can be recycled for new constructions. We shall not discuss
these ideas here and refer the reader for a detailed discussion to \cite{FH-book}.

\subsection{Reflexive Local Compactness Property}
We know that for every $a\in \pi_0(Z)$ the intersection $a\cap \overline{\mathcal M}_J$ is compact and that 
$\bar{\partial}_{\wt{J}}$ is a sc-Fredholm functor. We begin by stating facts which follow from standard
polyfold theory. However, it should be pointed out that the category of stable maps has additional features
which have to be exploited in order to construct SFT. First we discuss the standard parts of the polyfold theory,
and the special features will, as we shall see, add to the fine structure.

For the perturbation theory the so-called reflexive local compactness 
property will be very important. An auxiliary norm $N:{\mathcal E}_{J}\rightarrow [0,\infty]$ is a functor 
so that for a local strong bundle uniformizer $\bar{\Psi}$ we have that $N\circ\bar{\Psi}:K\rightarrow [0,\infty]$ is an auxiliary norm.
The $(0,1)$- fibers of $K\rightarrow O$ are Hilbert spaces and therefore reflexive.   As shown in \cite{HWZ3} there is a well-defined notion 
of \textbf{mixed convergence}  for sequences in $K$  which in local coordinates corresponds to convergence in $O$ on level $0$
and in the fiber to weak convergence in the $(0,1)$-fiber. For a  comprehensive treatment see  \cite{HWZ2017}. We write 
$k_i\stackrel{m}{\rightharpoonup} k$ if $(k_i)$ converges in this sense. The notion also descends to orbit spaces.

  There is a particular important class of auxiliary norm 
called {\bf reflexive auxiliary norms}, which have the additional property that for a sequence $(k_i)\subset K$ 
with $p(k_i)\rightarrow x$ in $O$ and $\text{liminf}_{i\rightarrow \infty} N\circ \bar{\Psi}(k_i)<\infty$, there exists a subsequence 
such that $k_i\stackrel{m}{\rightharpoonup} k$ for some $k\in p^{-1}(x)$ in the $(0,1)$-fiber with $N\circ \bar{\Psi}(k)\leq \text{liminf}\ N\circ\bar{\Psi}(k_i)$.
If we pass to orbit space $|{\mathcal E}|$ it still makes sense to talk about mixed convergence and $N$ defines a map
$n$ which restricted to $|{\mathcal E}_{(0,1)}|$ is continuous and verifies the obvious version of the mixed convergence requirement.
The final goal in this lecture is to state important local results for $f:=|\bar{\partial}_{\wt{J}}|$. Recall the abbreviation $Z:=|{\mathcal S}|$.
\begin{thm}
For every reflexive auxiliary norm $N$ with associated $n:|{\mathcal E}|\rightarrow [0,\infty]$ the following holds. Given a point $z\in Z$ there exists 
an open neighborhood $U(z)$ with the property that $\text{cl}_Z(\{y\in U(z)\ |\ n(f(y))\leq 1 \})$ is compact. 
\qed
\end{thm}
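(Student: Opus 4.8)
The plan is to push the statement down into a single strong bundle uniformizer, reduce it to a local assertion about the sc-Fredholm local representative of $\bar\partial_{\wt J}$, and then settle that assertion using the normal form for sc-Fredholm sections together with reflexivity of the $(0,1)$-fibres.

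First I would fix a representative $\alpha$ of $z$, a strong bundle uniformizer $\bar\Psi\in\bar F(\alpha)$, say $\bar\Psi:G\ltimes K\to{\mathcal E}_J$ covering $\Psi:G\ltimes O\to{\mathcal S}$, and the distinguished point $\bar o\in O$ with $\Psi(\bar o)=\alpha$. Set $F:=\bar\partial_{\wt J,\bar\Psi}=\bar\Psi^{-1}\circ\bar\partial_{\wt J}\circ\Psi:O\to K$, which is sc-Fredholm by the theorem of Lecture~11, and $N_{\bar\Psi}:=N\circ\bar\Psi:K\to[0,\infty]$, which by definition of a reflexive auxiliary norm is again a reflexive auxiliary norm for $p:K\to O$. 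Since $|\Psi|$ is a homeomorphism onto an open neighborhood $V$ of $z$ and, for $u\in O$, the relation $\bar\partial_{\wt J}\circ\Psi=\bar\Psi\circ F$ gives $n(f(|\Psi(u)|))=N_{\bar\Psi}(F(u))$, it suffices to produce a $G$-invariant open neighborhood ${\mathcal U}\ni\bar o$ in $O$ for which
\[ A:=\{u\in{\mathcal U}\ |\ N_{\bar\Psi}(F(u))\le 1\} \]
has closure $\mathrm{cl}_O(A)$ compact in the level-$0$ topology of $O$. Indeed, $A$ may then be taken $G$-invariant, $\pi_G:O\to|G\ltimes O|$ is proper, so $\mathrm{cl}_{|G\ltimes O|}(\pi_G(A))=\pi_G(\mathrm{cl}_O(A))$ is compact; applying $|\Psi|$ and using that it is a homeomorphism onto $V$ and that $Z$ is metrizable (hence compact sets are closed) yields $\mathrm{cl}_Z(\{y\in U(z)\,|\,n(f(y))\le1\})=|\Psi|(\pi_G(\mathrm{cl}_O(A)))$ compact, with $U(z):=|\Psi|(\pi_G({\mathcal U}))$.

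For the local assertion I would invoke the local structure theory of sc-Fredholm sections of \cite{HWZ2017}: after an sc-diffeomorphism near $\bar o$ and a strong bundle isomorphism, $O$ is modeled on an sc-retract in a partial quadrant of ${\mathbb R}^{n}\oplus W$, the bundle $K$ on the induced pullback of ${\mathbb R}^{m}\oplus W$ with $(0,1)$-fibre the Hilbert space $W_{1}$ (hence reflexive), and $F$ acquires the basic-germ form $F(a,w)=\big(\lambda(a,w),\,w-P(a,w)\big)$ with $P$ an sc-smooth contraction germ, $P(\bar o)=0$, $DP(\bar o)=0$ and level-$0$ Lipschitz constant $\le\tfrac12$ on a small neighborhood. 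Choosing ${\mathcal U}$ inside that neighborhood and with bounded ${\mathbb R}^{n}$-component, take a sequence $(a_i,w_i)\in A$. The finite-dimensional components subconverge, $a_i\to a$. Writing $\eta_i:=w_i-P(a_i,w_i)$, the bound $N_{\bar\Psi}(\lambda_i,\eta_i)\le1$ bounds $\eta_i$ in the reflexive $(0,1)$-fibre, so along a subsequence $\eta_i$ converges weakly and $(a_i,w_i)$ mixed-converges to some $(a,w_\infty)$. The crux is to upgrade this to level-$0$ convergence of $w_i$: from $w_i=P(a_i,w_i)+\eta_i$ and the contraction estimate one controls $\|w_i-w_j\|_0$ by $\|P(a_i,w_i)-P(a_j,w_j)\|_0+\|\eta_i-\eta_j\|_0$, and sc-continuity of $P$ turns the (mixed, hence level-$0$) convergence of its arguments into norm convergence of $P(a_i,w_i)$; feeding this back gives a genuine level-$0$ Cauchy property for $w_i$, so $w_i\to w_\infty$ on level $0$ and $(a,w_\infty)\in O$. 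Lower semicontinuity of $N_{\bar\Psi}$ on the $(0,1)$-part then yields $N_{\bar\Psi}(F(a,w_\infty))\le\liminf N_{\bar\Psi}(F(a_i,w_i))\le1$, so $\mathrm{cl}_O(A)$ is compact.

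The main obstacle is precisely that last step: manufacturing honest level-$0$ convergence of the infinite-dimensional base component $w_i$ out of the purely weak (mixed) convergence delivered by reflexivity. This forces one to combine the contraction-germ structure with the sc-smoothness (regularizing) properties of $P$ with care, and simultaneously to keep track of the partial-quadrant/corner structure of $O$ encoded by the degeneracy index and of $G$-equivariance, so that the extracted limit lies in $O$ and does not slip off to a boundary stratum of the model or out of $\mathrm{cl}_O(A)$. The remaining ingredients — the descent through the uniformizer, the orbit-space bookkeeping, and the boundedness of the finite-dimensional reduction — are routine.
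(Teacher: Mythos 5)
The paper does not actually prove this theorem: it is quoted (with \qed) as a standard consequence of the sc-Fredholm property of $\bar{\partial}_{\wt{J}}$ and the polyfold theory of \cite{HWZ2017}, so the comparison point is the argument there. Your general strategy --- descend through a strong bundle uniformizer, use properness of $O\to|G\ltimes O|$ and metrizability of $Z$ to reduce to a local statement about the sc-Fredholm local representative, and then exploit the contraction-germ normal form --- is the right one. The reduction step itself is essentially fine.

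The crux, however, is circular as written. The reflexive auxiliary norm property, exactly as defined in this lecture (and exactly as it is used in the proof of Proposition \ref{prop11.5}, where the convergence $z_k\to z$ is part of the hypotheses), yields a mixed-convergent subsequence of $(k_i)$ only under the assumption that the base points $p(k_i)$ already converge in $O$. You apply it to $k_i=F(a_i,w_i)$, whose base points are precisely the points $(a_i,w_i)$ whose convergence is the goal; the phrase ``$(a_i,w_i)$ mixed-converges'' moreover conflates base points with bundle elements, since mixed convergence is a notion for elements of $K$, not of $O$. The subsequent ``upgrade'' inherits the problem: sc-continuity of the contraction germ at the points $(a_i,w_i)$ cannot be invoked before level-$0$ convergence of $w_i$ is known, and weak convergence of the arguments does not give norm convergence of the values by continuity alone. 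The correct mechanism runs in the opposite order: after first incorporating the sc$^+$-section $s$ that appears in the sc-Fredholm normal form (the basic-germ form holds for $F-s$, not for $F$, so one must also shrink ${\mathcal U}$ so that $N\circ\bar{s}$ is bounded there --- a step missing from your sketch), one converts the $N$-bound on the fibre part into norm-precompactness of $\eta_i$ on the $(0,0)$-fibre level, using the passage from the $(0,1)$- to the $(0,0)$-level (the compact inclusion of fibre levels, i.e.\ the fact recorded just before Proposition \ref{prop11.5} that mixed convergence implies $(0,0)$-convergence, together with a local comparison of $N$ with the trivialized $(0,1)$-fibre norm, which you use implicitly but do not justify); only then does the level-$0$ contraction estimate produce the Cauchy property of $w_i$, hence the base convergence, after which reflexivity may be used to identify the limit and control its norm. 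Without an argument supplying that precompactness of $\eta_i$ independently of base convergence, the step you yourself flag as the main obstacle is exactly where the proposal fails; for the complete argument one should follow \cite{HWZ2017}.
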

The other result we need is the following.
\begin{prop}\label{prop11.5}
Assume that $f(z)\in (1,\infty]$.  Then there exists an open neighborhood $U(z)$ such that 
$f(y)>1$ for all $y\in U(z)$.
\end{prop}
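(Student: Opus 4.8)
The statement to prove is Proposition~\ref{prop11.5}: if $f(z) \in (1,\infty]$ then $f(y) > 1$ for all $y$ in some neighborhood $U(z)$. Here $f = |\bar\partial_{\wt J}|$ is the map induced on orbit spaces by the sc-Fredholm functor $\bar\partial_{\wt J}$, and $n\circ f$ enters via the reflexive auxiliary norm; but the cleanest reading is that this is a lower semicontinuity statement for $f$ at points where $f(z)>1$, i.e. essentially an openness statement for the superlevel set $\{f>1\}$.

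\smallskip

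\textbf{Plan of the proof.} The plan is to argue by contradiction using the reflexive local compactness theorem stated just before this proposition (the theorem guaranteeing that $\mathrm{cl}_Z(\{y\in U(z)\ |\ n(f(y))\le 1\})$ is compact for a reflexive auxiliary norm $N$). Suppose the conclusion fails: then there is a sequence $y_i \to z$ in $Z$ with $f(y_i) \le 1$ for all $i$. First I would pass to a reflexive auxiliary norm $N$ and its associated $n:|{\mathcal E}|\to[0,\infty]$, noting that on the $(0,1)$-part $n$ is continuous and compatible with mixed convergence; the point is that $f(y_i)\le 1$ controls $n(\bar\partial_{\wt J}(y_i))$ uniformly (one normalizes so that the relevant bound is $\le 1$). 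Then the $y_i$ eventually lie in the compactness neighborhood $U(z)$ from the preceding theorem, so $\{y\in U(z)\ |\ n(f(y))\le 1\}$ has compact closure; extract a subsequence with $y_i \to y_\infty$, and since $y_i \to z$ already, $y_\infty = z$. Using continuity of $n$ on $|{\mathcal E}_{(0,1)}|$ together with the mixed-convergence / lower-semicontinuity property of the reflexive auxiliary norm, one concludes $n(f(z)) \le \liminf_i n(f(y_i)) \le 1$, hence $f(z)\le 1$, contradicting $f(z)\in(1,\infty]$.

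\smallskip

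A few technical points need care. One must make precise the identification between the value $f(z) = |\bar\partial_{\wt J}|(z)$ and the auxiliary-norm value $n(f(z))$: the cleanest route is to observe that for the zero section the two notions agree up to the normalization of $N$, and more importantly that the preceding theorem is stated precisely in the form needed, $\mathrm{cl}_Z\big(\{y\in U(z)\ |\ n(f(y))\le 1\}\big)$ compact. So I would actually phrase Proposition~\ref{prop11.5} internally in terms of $n\circ f$ rather than $f$ directly, invoking that $n\circ f$ and $f$ have the same superlevel set $\{>1\}$ (this is where one uses that $N$ is chosen so that $N$ vanishes exactly on the zero section and is positive elsewhere, and that the $(0,1)$-fibers are Hilbert, hence the norm is genuine). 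The other point is that mixed convergence must be lifted from orbit space to a strong-bundle uniformizer $\bar\Psi$ around $z$: pick $\bar\Psi\in\bar F(\alpha)$ with $|\alpha|=z$, lift the sequence $y_i$ (for $i$ large, once they lie in the footprint) to points $k_i\in K$ with $p(k_i)\to \bar o$, apply the reflexive-auxiliary-norm defining property ($\liminf N\circ\bar\Psi(k_i)<\infty$ forces a mixed-convergent subsequence $k_i \stackrel{m}{\rightharpoonup} k$ with $N\circ\bar\Psi(k)\le \liminf N\circ\bar\Psi(k_i)$), and push back down.

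\smallskip

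\textbf{Main obstacle.} The genuinely delicate step is the semicontinuity inequality $n(f(z)) \le \liminf_i n(f(y_i))$: it is not pure topology but relies on the reflexive structure, i.e. that weak limits in the Hilbert $(0,1)$-fibers do not increase the norm, combined with the fact that $f(y_i) = \bar\partial_{\wt J}(y_i)$ varies continuously on level $0$ of the base so that the fiber components $k_i$ actually converge (mixed sense) to the fiber component over $z$. One has to know that the local representative $\bar\partial_{\wt J,\bar\Psi}$ is sc-smooth — in particular continuous as a map into level-$0$ of $K$ — so that $p(k_i)\to\bar o$ together with $y_i\to z$ pins down the base limit; the Fredholm property itself is not really needed beyond continuity. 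The remaining steps (passing to $U(z)$ inside the compactness neighborhood, extracting subsequences, identifying $y_\infty=z$) are routine given the metrizability of $Z$ and the compactness theorem quoted immediately above.
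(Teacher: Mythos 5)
Your argument is correct and is essentially the paper's own proof: argue by contradiction, use the reflexive-norm property to extract a mixed-convergent subsequence $f(y_i)\stackrel{m}{\rightharpoonup}k$, identify $k=f(z)$ via continuity of the section on level $0$, and conclude $n(f(z))=n(k)\le \liminf n(f(y_i))\le 1$, contradicting $n(f(z))>1$ (your reading of the statement as a claim about $n\circ f$ matches the paper's proof). The only difference is your detour through the local compactness theorem to extract a limit $y_\infty=z$, which is redundant since $y_i\to z$ is assumed and the reflexive property alone already yields the mixed-convergent subsequence.
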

\begin{proof}
Arguing indirectly we find a sequence $(z_k)$ converging to $z$ in $Z$ and 
$$
\text{liminif}\ n(f(z_k))\leq 1.
$$
We deduce that without loss of generality we may assume that $f(z_k)\stackrel{m}{\rightharpoonup} k\in |{\mathcal E}_{(0,1)}|$, which also implies $f(z_k)\rightarrow k$ in $|{\mathcal E}_{(0,0)}|$. Hence $1<n(f(z))=n(k)\leq 1$ giving a contradiction.
\end{proof}
\newpage

\part*{Lecture 12}
\section{Accommodation of Special Features}

\subsection{A Strong ssc-Bundle}
Recall that $\sigma$ and $\bm{\sigma}$ have a floor structure.  We introduced the open subset ${\mathcal O}$ of $[0,1)^k\times Z^3_{\sigma,{\tiny\rup}}({\mathbb R}\times {\mathbb R}^N,\bar{\digamma})$
in Definition \ref{DEFX8.2}. Recall the sc-Hilbert space $H^{2,\delta}_{\bar{\sigma}}({\mathbb C}^L)$ introduced in Definition \ref{DEFX11.1}.
  Recall that $D$ denotes all nodal pairs occurring on floors and 
 ${\mathbb B}_D$ the manifold of associated gluing parameters.  Then 
 $$
 \left({\mathbb B}_D\times {\mathcal O}\right)\triangleleft  H^{2,\delta}_{\bar{\sigma}}({\mathbb C}^L)\rightarrow {\mathcal O}
 $$
  is a strong ssc-bundle. We recall that there is a sc-smooth map associating the $(\mathfrak{a},(r_1,...,r_k,\wt{u}))$ the global admissible gluing parameter $\wt{\mathfrak{a}}=\wt{\mathfrak{a}}(\mathfrak{a},(r_1,...,r_k,\wt{u}))$, see Definition \ref{DEFX8.3.1}.  Since we started with a stable map $\alpha$ in ${\mathcal S}$ we can distinguish between different types of domain components. 
  \begin{definition}
  Let $\alpha$ be a stable map in ${\mathcal S}$ and $\sigma$ the underlying Riemann surface with floor structure. A domain component $C$ of $\sigma$ is a \textbf{trivial cylinder component}
  provided it harbors a trivial cylinder as part of $\alpha$. All other components are called \textbf{nontrivial components}.
  \qed
  \end{definition}

\subsection{Adapted Auxiliary Norm}
Recall the strong bundle 
$$
\Omega^{3,2}_{\bm{\sigma},{\tiny\rup},\varphi}(({\mathbb R}\times ({\mathbb R}\times {\mathbb R}^N\times 
{\mathbb C}^L,\bar{\digamma})\rightarrow Z^3_{\bm{\sigma},{\tiny\rup},\varphi}({\mathbb R}\times {\mathbb R}^N,\bar{\digamma}) 
$$
which comes from the pull-back diagram
$$
\begin{CD}
@.       X^{2,\delta}_{\bm{\sigma},\varphi,0}({\mathbb C}^L)\\
@.     @VVV\\
Z^3_{\bm{\sigma},{\tiny\rup},\varphi}({\mathbb R}\times {\mathbb R}^N\times {\mathbb C}^L,\bar{\digamma})@>>>{\mathbb B}_{\bar{D}}
\end{CD}
$$
The horizontal arrow has its image in the admissible gluing parameters.
We shall introduce a  map $\wh{N}:  X^{2,\delta}_{\bm{\sigma},\varphi,0}({\mathbb C}^L)\rightarrow [0,\infty]$
with suitable properties. Each domain component has a a floor number coming from the original data.
Given the gluing parameter $\wt{\mathfrak{a}}$ we have the associated domain $\sigma_{\wt{\mathfrak{a}}}$.
\begin{itemize}
\item On the core region we define the weight  function $w\equiv 1$. 
\item On an unglued trivial cylinder segment define $w\equiv \infty$.
\item On an unglued disk associated to a nodal point we extend $w$ by $w=e^{\delta_1 s}$ using positive 
holomorphic polar coordinates. 
\item On a glued disk pair associated to a nodal pair biholomorphic to $[0,R]\times S^1$
we set $w(s,t) = \text{min} \{ e^{\delta_1 s},e^{\delta_1 (R-s)}\}$
\item For a puncture associated to $\Gamma^-_0\cup\Gamma^+_k$ we take the weight
$e^{\delta_1 s}$ for positive holomorphic polar coordinates.
\item For an unglued puncture pair where each of the punctures does not belong 
to a trivial cylinder domain we take on the disks the weight $e^{\delta_1 s}$.
\item  For a glued puncture pair where each of the punctures does not belong 
to a trivial cylinder domain we take on the glued disks which is biholomorphic to  $[0,R]\times S^1$ the weight $\text{min} \{e^{\delta_1 s},e^{\delta_1 (R-s)}\}$.
\end{itemize}
The remaining cases consist of looking at maximal chains of glued trivial cylinder components and we can distinguish
between four cases.

 \begin{figure}[htp]
\begin{center}
\includegraphics[width=9.5cm]{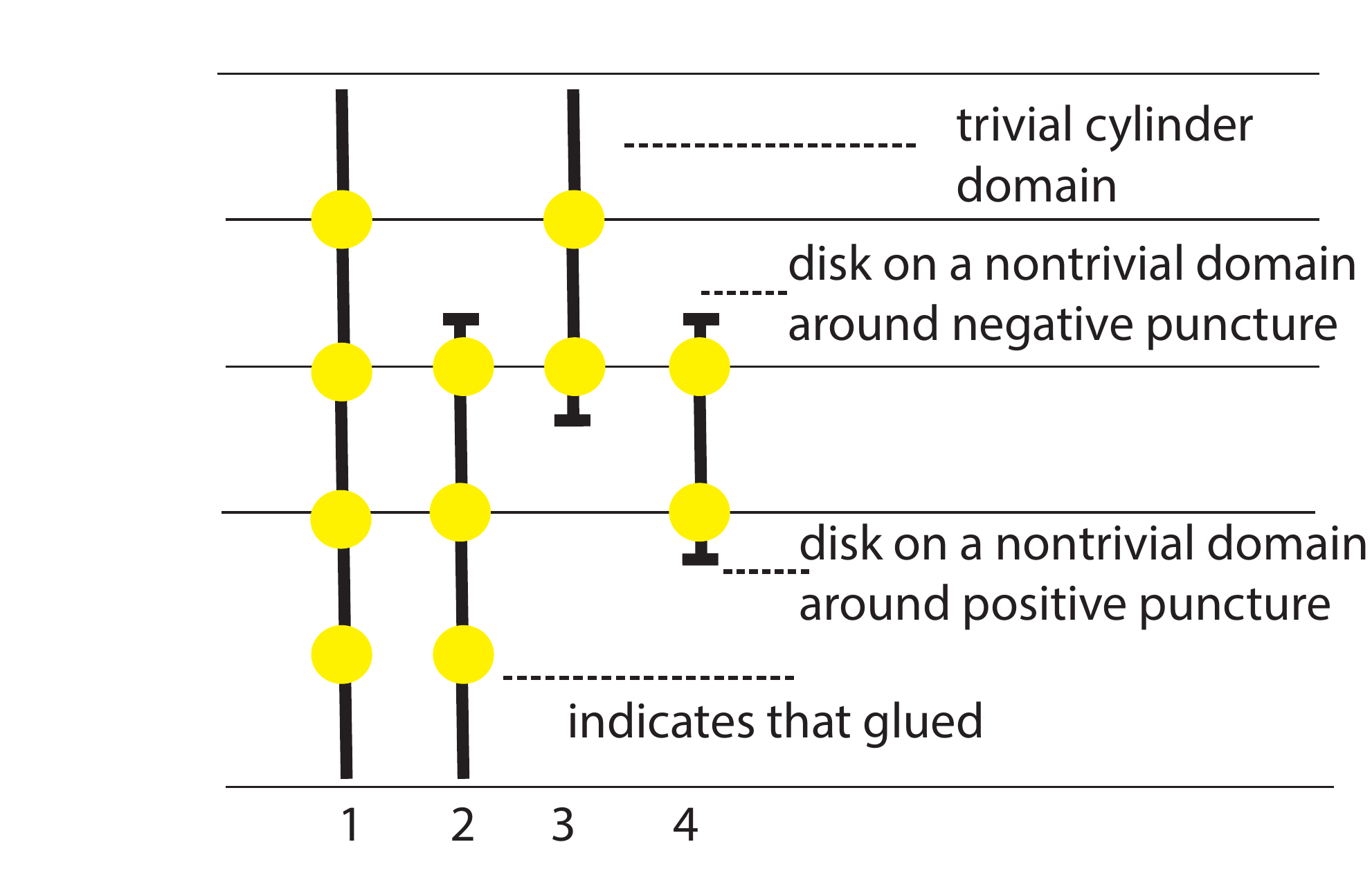}
\end{center}
\caption{Configurations involving trivial cylinder domains}\label{FIG XXX67}
\end{figure}
\begin{itemize}
\item In case (1) we have a finite sequence of glued trivial cylinder components, which, of course, itself is a new trivial cylinder component. We take the weight $w\equiv \infty$.
\item In case (2) we have at the top a disk around a negative puncture and otherwise glued  trivial cylinder components. We just extend the weight $w=e^{\delta_1|s|}$ on the disk with respect to negative  holomorphic polar coordinates.
\item In case (3) similarly a
disk around positive puncture followed by glued trivial cylinder components. Here we extend $w=e^{\delta_1 s}$.
\item  Finally in case (4) two disks at a positive and negative puncture respectively and 
at least one glued trivial cylinder component in between. In this case we have two extensions 
of the weight functions associated to negative (positive) holomorphic polar coordinates on the disk, say
$e^{-\delta_1 s'}$  (negative $s\in [-R,0]$) and $e^{\delta_1 s}$ (positive $s\in [0,R]$). With the relation 
$R+s'=s$ we define the weight function 
$$
\min\ \{ e^{\delta_1(R-s)}, e^{\delta_1 s}\}
$$
\end{itemize}
These choices define for fixed $\wt{\mathfrak{a}}$ a weight function $w_{\wt{\mathfrak{a}}}$.
Then we consider $\wt{h}$  on $S_{\wt{\mathfrak{a}}}$  and use the weighted $H^3$-norm to obtain
$$
|\wt{h}|^2_{S_{\wt{\mathfrak{a}}}}.
$$
This expressions define $\wh{N}: \Omega^{3,2}_{\bm{\sigma},{\tiny\rup},\varphi}\rightarrow [0,\infty]$.
\begin{eqnarray}\label{q17}
\wh{N}(((\mathfrak{a},(r_1,...,r_k,\wt{u})),\wt{h}).
\end{eqnarray}
We note that if $(r_1,...,r_k)=0$ and $\wh{N}(((\mathfrak{a},(r_1,...,r_k,\wt{u})),\wt{h}) <\infty$, then $\wt{h}$ necessarily vanishes on trivial cylinder components.
\begin{definition}
We shall call $\wh{N}$ a \textbf{penalizing adapted auxiliary norm}, and for short a \textbf{ps-norm}.
\qed
\end{definition}
Compared to the usual definition of auxiliary norm it can take on non-zero vectors on the $(0,1)$-fiber the value $\infty$. 
The definition of $\wh{N}$ depends on choices, however, when we construct $\wh{N}'$ making different choices there exists a constant $c>0$ such that $c\cdot \wh{N}\leq \wh{N}'\leq \frac{1}{c}\cdot \wh{N}$. This is the local picture.
Invoking the embedding method and strong bundle uniformizers we can define $\wh{N}:{\mathcal E}\rightarrow [0,\infty]$.  If we obtain $\wh{N}$ and $\wh{N}'$ this way there will be a continuous functor ($f\circ\Psi$ is continuous)
$f:{\mathcal S}\rightarrow (0,\infty)$ satisfying 
\begin{eqnarray}\label{DISPLAY}
f\cdot \wh{N}\leq \wh{N}'\leq \frac{1}{f}\cdot \wh{N}.
\end{eqnarray}
With other words we obtain a class of model ps-norms which show the compatibility 
as in (\ref{DISPLAY}) for certain $f$. We can then consider abstract ps-norms with the obvious expected 
properties and which can be sandwiched between model ps-norms. 
\begin{definition}
A (general) \textbf{ps-norm} is a functor $\wh{N}:{\mathcal E}\rightarrow [0,\infty]$ 
having the following properties.
 \begin{itemize}
 \item[(0)] For a suitable continuous functor $f:{\mathcal S}\rightarrow(0,\infty)$ and a model ps-norm $\wh{N}'$
 it holds $f\cdot \wh{N}'\leq\wh{N}\leq \frac{1}{f}\cdot \wh{N}'$.
 \item[(1)] $\wh{N}:{\mathcal E}\rightarrow [0,\infty]$ is a functor and for a given object $\alpha$
 the subset of ${\mathcal E}$ with $\wh{N}(\alpha,e)<\infty$ is a vector space and the restriction of $\wh{N}$ to it is a complete norm.
 \item[(2)] If $k_j:=|(\alpha_j,e_j)|\stackrel{m}{\rightharpoonup} k:=|(\alpha,e)|$ then $\wh{N}(\alpha,e)\leq \text{liminf}\ \wh{N}(\alpha_j,e_j)$.
 \item[(3)] If $\wh{n}(k_j)$ is bounded and the underlying $z_j$ converges then there exists a mixed convergent 
 subsequence.
 \item[(4)] $\wh{N}(\alpha,e)=\infty$ if there exists a nontrivial cylinder component on which $e$ is nonzero.
 \item[(5)] $\wh{n}:|{\mathcal E}_{(0,1)}|\rightarrow [0,\infty]$ is continuous, where $[0,\infty]$ is equipped with the topology of the 1-point compactification.
 \end{itemize}
\qed
\end{definition}
The ps-norms can be constructed by a continuous partitions of unity using that $Z$ is metrizable.

\subsection{Special  Sc$^+$-Section}
Having the ps-norms at hand we can introduce the sc$^+$-sections we are interested in. 
 We go back to the model
where we defined $\wh{N}$ locally, see (\ref{q17}). Consider a sc$^+$-section $\mathsf{f}$ 
of $\Omega^{3,1}_{\bm{\sigma},{\tiny\rup},{\mathcal H},\varphi}\rightarrow Z^3_{\bm{\sigma},{\tiny\rup},{\mathcal H},\varphi}$.  We say that $\mathsf{f}$ \textbf{vanishes strongly  near special points}, i.e.  punctures and nodal points provided 
for every $\bar{\wt{u}}$ there exists an open neighborhood $U$ of the set of punctures in $\Gamma^-_0\cup\Gamma^+_k$,
the unglued nodal pairs and the unglued interface puncture pairs 
 defining in an obvious sense open subsets 
of the glued surfaces denoted by $U_{\wt{\mathfrak{a}}}$ for $\wt{\mathfrak{a}}=\wt{\mathfrak{a}}(\wt{u})$
and $\wt{u}$ near $\bar{\wt{u}}$ such that for $\wt{u}$ near $\bar{\wt{u}}$ the element $\mathsf{f}(\wt{u})$
vanishes on the just constructed neighborhoods.

\begin{definition}
Consider an sc$^+$-section $\mathsf{f}$  of our strong bundle defined over an open subset  $U$ of the base space $ {\mathbb B}_D\times {\mathcal O}$. 
We say $\mathsf{f}$ is \textbf{special} provided the following holds.
\begin{itemize}
\item[(1)] $N\circ \mathsf{f}(\mathfrak{a},(r_1,...,r_k,\wt{u})) <\infty$.
\item[(2)] $\mathsf{f}$ vanishes strongly near special points.
\end{itemize}
\qed
\end{definition}
The definition of special sc$^+$-sections does not depend on the choices involved by the previously stated facts.
\subsection{Special sc$^+$-Multisection Functors}
We have introduced the notion of a special sc$^+$-section.
Just using such sc$^+$-sections we can defined define associated sc$^+$-multisection functors and 
which we shall call special sc$^+$-multisection.

\begin{definition}
A functor $\Lambda:{\mathcal E}\rightarrow {\mathbb Q}^+\cap [0,1]$ is called a \textbf{special sc$^+$-multisection functor}
provided for an object $\alpha$ and $\bar{\Psi}\in \bar{F}(\alpha)$ there exists an open $G$-invariant neighborhood $U$ of $\bar{o}\in O$ 
and a finite set of special sc$^+$-sections ${(s_i)}_{i\in I}$ of $p:K\rightarrow O$ defined over $U$, together with an action $G$ on $I$ 
satisfying 
$$
s_{g(i)}(g\ast o) = g\ast s_i(o)\ \text{for}\ o\in U,\ g\in G
$$
such that for $k\in K$ with $p(k)\in U$ it holds
$$
\Lambda\circ \bar{\Psi}(k) =\frac{1}{\sharp I} \cdot \sharp\{i\in I\ |\ s_i(p(k))=k\}.
$$
\qed
\end{definition}
We note that this only has to be tested by a set of $\bar{\Psi}$ so that the associated $U_{\bar{\Psi}}$ have the property that $|\Psi(U_{\bar{\Psi}})|$ cover $Z$. 
These sc$^+$-multisections are easy to construct when we have sc-smooth partitions of unity, which happens to be the case in our application.
For this fix any smooth object $\alpha$ and take $\bar{\Psi}\in \bar{F}(\alpha)$, say $\bar{\Psi}:G\ltimes K\rightarrow {\mathcal E}$.
Pick an invariant open neighborhood $U$ of $\bar{o}$ such that $\text{cl}_Z(|\Psi(U)|)\subset |\Psi(O)|$. Then for given smooth vector 
$e\in K_{\bar{o}}$, which vanishes on trivial cylinder segments as well as near punctures and nodal points,   we find an special  sc$^+$-section with support in $U$ and $s(\bar{o})=e$.
Then move this around by $G$ to obtain $s_g$. Define $\Lambda$ over $\bar{\Psi}(K)$ by 
$$
\Lambda\circ \bar{\Psi}(k) =\frac{1}{\sharp G}\cdot \sharp\{g\in G\ |\ s_g(p(k))=k\}
$$
Generally if there exists a morphism $(\alpha',e')\rightarrow \bar{\Psi}(k)$ for some $k\in K$ define $\Lambda(\alpha',e'):=\Lambda\circ\bar{\Psi}(k)$. 
If no such morphism exists define $\Lambda(\alpha',e'):=\Lambda_0(\alpha',e')$.  There are several operations which are important.
\begin{itemize}
\item[(1)] $\Lambda\oplus\Lambda'(\alpha,e) = \sum_{e'+e''=e} \Lambda(\alpha,e')\cdot\Lambda'(\alpha,e')\ \ \textbf{(convolution sum)}$
\item[(2)] For a sc-smooth functor $\beta:{\mathcal S}\rightarrow {\mathbb R}$ define $\beta\odot \Lambda$ by 
$\beta(\alpha)\odot \Lambda(\alpha,e)=\Lambda_0(\alpha,e)$ if $\beta(\alpha)=0$ and otherwise by 
$\beta(\alpha)\odot \Lambda(\alpha,e)= \Lambda(\alpha,(1/\beta(\alpha))e)$.  This is called \textbf{(rescaling)}.
\end{itemize}
They also behave well with respect to proper coverings, which we need, but shall not discuss further.
The upshot of this discussion is that there are many special  sc$^+$-multisections to address all occurring transversality questions.
However, for inductive constructions we need extension theorems for sc$^+$-multisection functors defined on the boundary.
With the definition given above such extension theorems might not exist and the problems are discussed in detail in \cite{HS}.  What we need is a subclass of special sc$^+$-multisection functors which is rich enough to achieve transversality and also admits a controlled extension result.  The notions which are important are that of structured or structurable sc$^+$-multisections introduced in \cite{HWZ2017}
and that of $\bm{V}$-structured or $\bm{V}$-structurable sc$^+$-multisection functors introduced in \cite{HS}. In particular \cite{HS} contains 
a discussion of the relationships between different notions. In fact the notions of being structurable  and being $\bm{V}$-structurable are equivalent,
whereas a structure or a $\bm{V}$-structure being quite different concepts. Moreover there are enough special sc$^+$-multisections to achieve transversality.

\subsection{The Strong Topology $\wh{\mathcal T}$ on $\wh{Z}$}
The strong topology has to be seen in connection with the special type of sc$^+$-sections and multisections we are going to use.
We shall describe $\wh{\mathcal T}$ in detail in \cite{FH-book} and restrict ourselves to describe the properties of the topology
$ \wh{\mathcal T}$.
\begin{itemize}
\item[(1)]  For a parent $a\in \pi^p_0(Z)$ the sets $\{z\in a\cap \wh{Z}\ |\ d_Z(z)=0\}$ and $\{z\in a\ |\ d_Z(z)=0\}$ are the same  the topologies
$\wh{\mathcal T}$ and ${\mathcal T}$ restricted to $\{z\in a\ |\ d_Z(z)=1\}$ coincide.
\item[(2)]  Given $z\in \wh{Z}$, say $z\in a $, where $a\in \pi_0(Z)$,  there exists an open ${\mathcal T}$-neighborhood $U=U(z)\subset a$ and a 
ps-norm $\wh{N}$ defined on ${\mathcal E}_{\text{cl}_Z(U(z))}$ with the following property, where $f=|\bar{\partial}_{\wt{J}}|$ and $\wh{n}$ is induced by $\wh{N}$: Given  $(z_k)\subset \text{cl}_Z(U(z))$ with $\wh{n}\circ f(z_k)\leq 1$  there exists a convergent subsequence with respect
to ${\mathcal T}$. Every such convergent subsequence also converges with respect to $\wh{\mathcal T}$, and in particular the limit belongs to 
$\wh{Z}\cap \text{cl}_Z(U(z))$.
\end{itemize}
Property (2) essentially  characterizes $\wh{\mathcal T}$.  This topology will be discussed in detail in \cite{FH-book}.

\newpage

\part{Perturbation and Transversality Theory}
We shall describe the ingredients of a perturbation and transversality theory in detail.

\part*{Lecture 13}
\section{Inductive Compactness Control}
The perturbation theory will proceed inductively and SFT will exhibit some of the more subtle aspects 
which one can encounter in these kind of problems.  We shall consider here the case of a 
 closed manifold $Q$ equipped with a non-degenerate contact form $\lambda$, i.e. $(Q,\lambda,d\lambda)$.
 In this case we can organize the induction rather than with respect to $d^J$ in a different way, which 
 is less demanding in its constructive aspects.\\
\begin{center}
\textbf{Standing assumption from now on:}\\  
$(Q,\lambda,d\lambda)$ is a non-degenerate contact form.
\end{center}
\vspace{0.5cm}
The consequence of this assumption is the following.
\begin{prop}
For every $a\in \pi_0(Z)$ it holds that 
$$
\bar{d}_a:=\text{\em max}\ \{ d_Z(z)\ |\ z\in a\} <\infty.
$$
\qed
\end{prop}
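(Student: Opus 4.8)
The statement to prove is that for a non-degenerate contact form $(Q,\lambda,d\lambda)$, every connected component $a\in\pi_0(Z)$ of the orbit space $Z=|{\mathcal S}|$ of stable maps has a finite bound $\bar d_a=\max\{d_Z(z)\mid z\in a\}$ on the degeneracy index. The plan is to extract, for each $z\in a$ with $d_Z(z)=k$, a representative building $\alpha=(\alpha_0,\widehat b_1,\dots,\widehat b_k,\alpha_k)$ with $k+1$ floors, and to produce from this an a priori lower bound on the symplectic area $\int_{\dot S} u^\ast\omega=\int_{\dot S}u^\ast d\lambda$ of some representative in the class $a$ that grows linearly (or at least unboundedly) with $k$. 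Since $a$ fixes a homology/homotopy datum — in particular the relative class of $u$ and hence the value $\int u^\ast d\lambda$ (the total $d\lambda$-energy is a topological quantity on a fixed component $a$, being determined by the asymptotic orbits and the class) — this area is a fixed finite number on $a$, forcing $k$ to be bounded.

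The key steps, in order. First, I would recall that in the contact case $\omega=d\lambda$ is exact, so for a stable building with top punctures asymptotic to orbits $\gamma^+_j$ (periods $T^+_j$) and bottom punctures asymptotic to $\gamma^-_i$ (periods $T^-_i$), Stokes' theorem gives $\int_{\dot S}u^\ast d\lambda=\sum_j T^+_j-\sum_i T^-_i-\sum_{\text{interface nodes}}(\text{period jumps})$; but across an interface node at floor $i/i{+}1$ the matched orbit has the same period, so the interface contributions telescope and the total is simply $\sum_j T^+_j-\sum_i T^-_i$, a constant $E(a)$ depending only on $a$ (the asymptotic orbit data is constant on a connected component). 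Second, on each floor $\alpha_i$ there is at least one non–trivial-cylinder component $C_i$ (an interior floor cannot consist entirely of trivial cylinders: a floor built solely of trivial cylinders would be collapsible and the building would not be a genuine $k$-floor object — here I would invoke the stability condition and the structure of buildings from the Deligne–Mumford/SFT compactness setup in the excerpt, which forbids floors that are unions of trivial cylinders). On $C_i$ the $d\lambda$-energy $\int_{\dot C_i}u_i^\ast d\lambda$ is strictly positive, and by non-degeneracy of $\lambda$ there is a uniform lower bound $\hbar>0$ (the minimal $d\lambda$-energy / minimal "bubble" threshold, finite since the periodic orbits of a non-degenerate Reeb flow that can appear as asymptotics in the compact component $a$ are finite in number and have periods bounded below) such that $\int_{\dot C_i}u_i^\ast d\lambda\ge\hbar$. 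Third, summing over the $k+1$ floors, $E(a)=\int_{\dot S}u^\ast d\lambda\ge(k+1)\hbar$, hence $k\le E(a)/\hbar-1$, giving $\bar d_a\le E(a)/\hbar-1<\infty$.

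The main obstacle is making rigorous the uniform energy threshold $\hbar>0$ that is independent of $z\in a$ and of $k$: a priori $\hbar$ could degenerate to $0$ along a sequence of buildings if the relevant orbits or the non-trivial components degenerate. This is where I would lean on the compactness statement already in the excerpt — every connected component of $|{\mathcal R}|$ (and of $|{\mathcal S}|$) is compact, $|\bar\partial_{\widetilde J}^{-1}(0)|$ has compact intersection with each component, and more relevantly for non-holomorphic stable maps one still has that the set of periodic orbits appearing as asymptotics within $a$ is finite (a consequence of non-degeneracy plus the fixed homotopy data of $a$). I would argue that the finitely many orbits occurring in $a$ have periods $\ge T_{\min}>0$, and that a non-trivial component contributes $d\lambda$-energy bounded below either by a positive multiple of a period difference or by the standard monotonicity/isoperimetric threshold for the stable Hamiltonian structure; both quantities are positive and, being drawn from a finite list attached to $a$, admit a common positive lower bound $\hbar=\hbar(a)$. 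Granting this, the telescoping area identity and the per-floor energy bound close the argument; I expect the bulk of the write-up to be the bookkeeping of which punctures/nodes contribute and the citation of the relevant compactness and non-degeneracy facts (e.g.\ from \cite{BEHWZ}) rather than any new estimate.
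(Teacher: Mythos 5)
There is a genuine gap, and it sits at the heart of your argument: the uniform per-floor energy threshold $\hbar$. The proposition is about $Z=|{\mathcal S}|$, the orbit space of \emph{all} stable maps of class $(3,\delta_0)$, not of $\widetilde{J}$-holomorphic ones, so no analytic energy quantization (monotonicity, minimal bubble threshold, BEHWZ-type estimates) is available. For an arbitrary map Stokes' theorem pins the $d\lambda$-energy of each component $C$ to the combinatorial quantity $\sum_{z\in\Gamma^+\cap C}T_z-\sum_{z\in\Gamma^-\cap C}T_z$, and the stability condition only requires, per floor, \emph{one} non-trivial-cylinder component satisfying either $\int_{\dot C}u^\ast\omega>0$ or the domain condition $2g(C)+\sharp(C\cap(M\cup|D|))\geq 3$. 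Hence a floor may be stable purely for domain reasons while its distinguished component has zero (or negative) $d\lambda$-energy, so "On $C_i$ the $d\lambda$-energy is strictly positive" does not follow; and even on floors where the distinguished component does have positive energy, the other components of that floor can have negative $d\lambda$-energy (nothing forbids this for non-holomorphic maps), so the telescoped identity $\sum_i E_i=E(a)$ does not yield $(k+1)\hbar\leq E(a)$. Note also that $E(a)$ can be zero or negative (e.g.\ a component whose top and bottom asymptotics have equal total period) while such components still contain buildings of height $\geq 2$, so an inequality of the form $\bar d_a\leq E(a)/\hbar-1$ cannot be the right statement: the floors that are stable only through the domain condition must be counted topologically (via the constancy on $a$ of the arithmetic genus, the marked points, and the puncture data), not energetically.

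A second, related gap is the extraction of $\hbar$ from "the finitely many orbits occurring in $a$". What is locally constant on $a$ (and hence constant) is the asymptotic data at the punctures in $\Gamma^-_0$ and $\Gamma^+_k$; the orbits occurring at the \emph{interfaces} of boundary buildings of $a$ are not fixed, and showing that they are drawn from a finite (or period-bounded) list is essentially equivalent to the finiteness of faces you are trying to prove, so the argument becomes circular at this point; moreover $a$ itself is not compact -- only $a\cap\overline{\mathcal M}_J$ is. What survives of your plan is the Stokes/telescoping bookkeeping and the observation that every floor contains a non-trivial-cylinder component; and indeed the contact hypothesis enters exactly where you sense it does, since for a contact form a cylinder with both ends on the same orbit can never satisfy $\int u^\ast d\lambda>0$, whereas for a general stable Hamiltonian structure it can carry positive $\omega$-energy and such floors can be stacked indefinitely -- this is the mechanism behind the paper's remark that the statement fails in the general stable Hamiltonian case. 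But to close the proof you must combine the period bookkeeping for condition-(1) floors with a separate topological bound on the number of condition-(2) floors, and you must do so without assuming holomorphicity, positivity of component energies, or a pre-established finite list of interface orbits.
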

With other words, every connected component only has a finite number of faces.  This does not hold in general
for the stable Hamiltonian case.
\begin{definition}
The \textbf{complexity} is the map $\bar{d}:\pi_0(Z)\rightarrow \{0,1,2,...\}:a\rightarrow \bar{d}_a$ defined by 
$\bar{d}_a:= \text{max}\ \{d_Z(z)\ |\ z\in a\}$. 
\qed
\end{definition}

\subsection{Preparation}
We describe the ingredients, notions,  and procedures which are part of an inductive construction which produces tools for a quantitative control
of compactness.
\begin{definition}
Given a subset $A$ of $Z$ we denote by ${\mathcal S}_A$ the full subcategory associated to objects 
having isomorphism class in $A$. Further we define ${\mathcal E}_A$ to be the full subcategory associated to all $(\alpha,e)$ with $\alpha$ being an object in ${\mathcal S}_A$.
\qed
\end{definition}
We need the following notion.
\begin{definition}
Let $A\subset \wh{Z}$ be a closed subset for $\wh{\mathcal T}$.
A  functor $\wh{N}_A:{\mathcal E}_A\rightarrow [0,\infty]$ is said to be \textbf{auxiliary norm-like}, provided
for every object $\alpha$ with $|\alpha|\in A$ the following holds.
\begin{itemize}
\item[(1)] $\wh{N}_A(\alpha,\tau\cdot e)=|\tau|\cdot \wh{N}_A(\alpha,e)$, where we use the convention $\tau\cdot\infty=0$ for $\tau=0$ and $\tau\cdot\infty=\infty$ for $\tau>0$. 
\item[(2)] $\wh{N}_A(\alpha,e+e')\leq \wh{N}_A(\alpha,e)+ \wh{N}_A(\alpha,e')$, where we use the convention
that $c+\infty=\infty$ for $c\in [0,\infty]$.
\item[(3)] If $\wh{N}_A(\alpha,e)<\infty$ then $(\alpha,e)$ belongs to ${\mathcal E}_{(0,1)}$, i.e. is on the $(0,1)$-bi-level.
\item[(4)] For fixed $\alpha$, $|\alpha|\in A$,  $\wh{N}_A$ restricted to the vector space of all $\{(\alpha,e)\in {\mathcal E}\ |\ \wh{N}_A(\alpha,e)<\infty\}$ is a complete norm.
\end{itemize}
\qed
\end{definition}
When considering an auxiliary norm-like functor $\wh{N}_A$,  it will frequently occur that it behaves on certain 
parts of ${\mathcal E}_A$ as a ps-norm, i.e. having some continuity properties with respect to $|\alpha|$.
 The appropriate notion capturing this behavior is given as follows.
\begin{definition}
Let $A$ be a closed subset of $(\wh{Z},\wh{\mathcal T})$.  Assume that $\wh{N}_A:{\mathcal E}_A\rightarrow [0,\infty]$ is an auxiliary norm-like functor.
We say that $\wh{N}_A$ is \textbf{ps-like over} $A$ provided $\wh{N}_A: {\mathcal E}_A\rightarrow [0,\infty]$ is a ps-norm.
\qed
\end{definition}
We shall constructed inductively \^open (i.e. open with respect to $\wh{\mathcal T}$) neighborhoods $\wh{U}$,  $\overline{\mathcal M}_J\subset \wh{U}\subset \wh{Z}$,
with suitable properties so that with $A=\text{cl}_{\wh{Z}}(\wh{U})$ we can also construct a ps-norm
over $A$, i.e. $\wh{N}_A:{\mathcal E}_A\rightarrow [0,\infty]$, again with suitable properties.

\begin{definition}
Assume that $a\in \pi_0(Z)$. We say a pair $(\wh{U}_a,\wh{N}_a)$ \textbf{controls compactness}  (of $\bar{\partial}_{\wt{J}}$) provided the following holds.
\begin{itemize}
\item[(1)] $\wh{U}_a\subset a$ is an \^open neighborhood
of $a\cap \overline{\mathcal M}_J$ in $\wh{Z}$,  and $\wh{N}_a:{\mathcal E}_{\text{cl}_{\wh{Z}}(\wh{U}_a)}\rightarrow [0,\infty]$ a ps-norm.   
\item[(2)] If $\wh{N}_a\circ\bar{\partial}_{\wt{J}}(\alpha)\leq 1$ and $|\alpha|\in \text{cl}_{\wh{Z}}(\wh{U}_a)$ then $|\alpha|\in \wh{U}_{a}$.
\item[(3)] The closure of all $|\alpha|\in \wh{U}_a$ in $\wh{Z}$ with $\wh{N}_a\circ\bar{\partial}_{\wt{J}}(\alpha)\leq 1$ is compact in $\wh{Z}$. 
\item[(4)] If $(z_k)$ is a sequence in $\text{cl}_{\wh{Z}}(\wh{U}_a)$ with the property
that $\text{liminf}_{k\rightarrow \infty} \wh{N}_a\circ \bar{\partial}_{\wt{J}}(\alpha_k)\leq 1$
where $|\alpha_k|=z_k$ then $(h_k)$, $h_k:=|\bar{\partial}_{\wt{J}}(\alpha_k)|$ has a mixed convergent subsequence $h_k\stackrel{m}{\rightharpoonup} h$
and $|\wh{N}_a|(h)\leq \text{liminf}\ |\wh{N}_a|(h_k)$. Moreover the underlying $(z_k)$ of this subsequence converges in $\wh{\mathcal T}$.
\end{itemize}
\qed
\end{definition}
The inductive procedure is done with respect to the elements in $\pi^{\leq \ell}_0(Z)$, $\ell\in \{0,1,2,...\}$. Here 
$$
\pi^{\leq \ell}_0(Z)=\{a\in \pi_0(Z)\ |\ \bar{d}_a\leq \ell\}.
$$
We construct \^open sets $\wh{U}_a\in \wh{\mathcal T}$ and ps-norms  $\wh{N}_a:{\mathcal E}_{\text{cl}(\wh{U}_a)}\rightarrow [0,\infty]$ for $a\in \pi_0(Z)$ with $\bar{d}_a\leq \ell$, so that $\wh{N}_a$ is ps-like over $\text{cl}_{\wh{Z}}(\wh{U}_a)$
and $(\wh{U}_a,\wh{N}_a)$ controls compactness of $\bar{\partial}_{\wt{J}}$.  The inductive step then uses the construction for $\leq \ell$
to extend it to data $a$ for $\bar{d}_a= \ell+1$ by keeping the data for $\leq \ell$ and adding the new data for $a$ with $\bar{d}_a=\ell+1$.
Choices during the constructions have to be made for $a\in \pi^p_0(Z)$ with $\bar{d}_a=\ell+1$.
Then the constructions for $a\in \pi^d_0(Z)\cup\pi^u_0(Z)$ with $\bar{d}_a=\ell+1 $  are canonically.  We need some additional input for carrying out the inductive constructions.

\subsection{Compactness and Extension Results}
We shall describe two features which are important  in inductive steps which occur later on when we 
construct an open neighborhood $\wh{U}$ of $\overline{\mathcal M}_J$ with suitable properties,
and a ps-norm $\wh{N}:{\mathcal N}_{\text{cl}_{\wh{Z}}(\wh{U})}\rightarrow [0,\infty]$ such that $(\wh{U},\wh{N})$ controls compactness.
and moreover  when we construct compatible special sc$^+$-multisection functors.
Proposition \ref{prop11.5} and some considerations about $\bar{\partial}_{\wt{J}}$ lead to the following version, where we note that a ps-norm defined on ${\mathcal E}_A$,
for a closed subset $Z$ of $Z$ can always be extended to ${\mathcal E}$. 
\begin{thm}
For every ps-norm $\wh{N}$ with associated $\wh{n}:|{\mathcal E}|\rightarrow [0,\infty]$ the following holds. Given a point $z\in \wh{Z}$ there exists 
an \^open neighborhood $\wh{U}(z)$ with the property that $\text{cl}_{\wh{Z}}(\{y\in \wh{U}(z)\ |\ \wh{n}(f(y))\leq 1 \})$ is compact
with respect to $\wh{\mathcal T}$.
\qed
\end{thm}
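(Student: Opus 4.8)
The statement is the \^open (i.e. $\wh{\mathcal T}$-open) localization of the earlier reflexive local compactness property (the Theorem in Lecture 11 stated for $N$ a reflexive auxiliary norm), now transported to $(\wh{Z},\wh{\mathcal T})$ and phrased for ps-norms. The plan is to deduce it from two inputs already available: (a) the standard reflexive local compactness theorem for $f=|\bar{\partial}_{\wt{J}}|$ on $(Z,{\mathcal T})$, and (b) the characterizing property (2) of the strong topology $\wh{\mathcal T}$ on $\wh{Z}$ stated at the end of Lecture 12, which says precisely that sequences in $\text{cl}_Z(U(z))$ with $\wh{n}\circ f(z_k)\le 1$ have ${\mathcal T}$-convergent subsequences and that every such subsequence is in fact $\wh{\mathcal T}$-convergent with limit in $\wh Z\cap\text{cl}_Z(U(z))$. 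The role of Proposition~\ref{prop11.5} (``$f>1$ is \^open'') is the auxiliary fact that the sublevel set is ``honest'' near points where $f$ itself is large.

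\textbf{Key steps.} First I would fix $z\in\wh Z$, say $z\in a\in\pi_0(Z)$. Using property (2) of $\wh{\mathcal T}$, pick a ${\mathcal T}$-open neighborhood $U(z)\subset a$ and a ps-norm $\wh N$ (defined on ${\mathcal E}_{\text{cl}_Z(U(z))}$, which as noted always extends to ${\mathcal E}$) with the stated subsequence property; set $\wh U(z)$ to be a $\wh{\mathcal T}$-open neighborhood of $z$ contained in $U(z)$ — here one uses that $\wh{\mathcal T}$ refines ${\mathcal T}|\wh Z$, so such $\wh U(z)$ exists. (If the hypothesis of the theorem supplies a given $\wh N$ rather than letting us choose, one instead invokes the sandwiching property (0) in the definition of a general ps-norm: $f'\cdot\wh N'\le\wh N\le\frac1{f'}\wh N'$ for a model ps-norm $\wh N'$, and works with sublevel set $\{\wh n'\circ f(y)\le C\}$ for a continuous $C$; the two sets are comparable by a locally bounded factor, so compactness of one gives compactness of the other after shrinking the neighborhood.) Next, take any sequence $(y_k)\subset A:=\text{cl}_{\wh Z}(\{y\in\wh U(z)\ |\ \wh n(f(y))\le 1\})$; I want a $\wh{\mathcal T}$-convergent subsequence with limit in $A$. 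Each $y_k$ is a $\wh{\mathcal T}$-limit of points $y_{k,j}$ with $\wh n(f(y_{k,j}))\le 1$ in $\wh U(z)$; by a diagonal argument together with the continuity of $\wh n$ on $|{\mathcal E}_{(0,1)}|$ (property (5) of ps-norms) and the lower semicontinuity property (2), it suffices to treat the case $\wh n(f(y_k))\le 1$ with $y_k\in\text{cl}_Z(U(z))$ directly. Apply property (2) of $\wh{\mathcal T}$: pass to a subsequence ${\mathcal T}$-converging to some $y_\infty\in\wh Z\cap\text{cl}_Z(U(z))$, and this subsequence also $\wh{\mathcal T}$-converges. It remains to check $\wh n(f(y_\infty))\le1$ so that $y_\infty\in A$: by property (3) of ps-norms the corresponding $h_k=|\bar{\partial}_{\wt J}(\alpha_k)|$ have a mixed-convergent subsequence $h_k\stackrel{m}{\rightharpoonup}h$, and since the underlying orbit points converge to $y_\infty$ we get $h=f(y_\infty)$; then property (2) gives $\wh n(f(y_\infty))\le\liminf \wh n(h_k)\le 1$. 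Finally, $y_\infty\in\wh U(z)$ or $y_\infty$ lies on the $\wh{\mathcal T}$-boundary, but in either case $y_\infty\in\text{cl}_{\wh Z}(\wh U(z))$, and combined with $\wh n(f(y_\infty))\le1$ we conclude $y_\infty\in A$. This gives sequential compactness of $A$; since $(\wh Z,\wh{\mathcal T})$ is (shown elsewhere to be) metrizable, sequential compactness is compactness.

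\textbf{Main obstacle.} The delicate point is not the abstract functional analysis but ensuring the diagonal reduction in the middle step is legitimate: a generic element $y_k\in A$ is only a $\wh{\mathcal T}$-\emph{limit} of sublevel points, and I must produce from the doubly-indexed family $\{y_{k,j}\}$ a single sequence that still has $\wh n\circ f\le1$ (up to an $\varepsilon_k\to0$ slack) and to which property (2) of $\wh{\mathcal T}$ applies verbatim. Handling this cleanly requires using the continuity of $\wh n$ in the one-point-compactification topology to choose $j(k)$ with $\wh n(f(y_{k,j(k)}))\le 1+1/k$, then replacing ``$\le1$'' by ``$\le 1+1/k$'' throughout — which property (2) of $\wh{\mathcal T}$ and properties (2),(3) of ps-norms tolerate since their hypotheses are stated with $\liminf\le1$ (equivalently, eventually $\le1+\varepsilon$). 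Once this bookkeeping is in place, the rest is a routine assembly of the quoted properties; I would allocate the bulk of the write-up to making that reduction precise and to the (standard) upgrade from sequential compactness to compactness via metrizability of $\wh{\mathcal T}$.
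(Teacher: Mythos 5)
The paper gives no written proof of this theorem: it is stated with a \qed\ and attributed in one line to Proposition \ref{prop11.5}, the Lecture-11 local compactness theorem, and ``some considerations about $\bar{\partial}_{\wt{J}}$'', with the defining property (2) of $\wh{\mathcal T}$ from Lecture 12 as the substantive input. Your assembly is exactly this intended route: localize at $z$ using property (2) of $\wh{\mathcal T}$, pass from the ps-norm it furnishes to the arbitrarily given $\wh{N}$ via the sandwiching/comparability of ps-norms, and then run the ps-norm axioms (mixed-convergence compactness, lower semicontinuity, continuity of $\wh{n}$ on the $(0,1)$-level) together with continuity of $f$ into $|{\mathcal E}_{(0,0)}|$, as in the proof of Proposition \ref{prop11.5}, to get sequential compactness of the closure of the sublevel set. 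So there is no genuinely different decomposition to contrast; the argument is correct in structure. One simplification: once your subsequence has a $\wh{\mathcal T}$-limit, membership in $A=\text{cl}_{\wh{Z}}(\{y\in\wh{U}(z)\,|\,\wh{n}(f(y))\le 1\})$ is automatic because $A$ is by definition $\wh{\mathcal T}$-closed; the verification $\wh{n}(f(y_\infty))\le 1$ is only needed for the sharper claim that the closure stays in the sublevel set.

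Two places where you lean on facts not established in this text should be flagged as inputs rather than treated as known. First, both the diagonal/sequential-closure reduction and the final upgrade from sequential compactness to compactness require first countability (in effect metrizability) of $(\wh{Z},\wh{\mathcal T})$; the notes only assert that $\wh{\mathcal T}$ refines ${\mathcal T}|\wh{Z}$ and defer its construction to \cite{FH-book}, so this is a quoted property of the strong topology, not something you can derive here. Second, property (2) of $\wh{\mathcal T}$ supplies \emph{one} ps-norm $\wh{N}_0$ with the subsequence property at level $1$, while comparability only gives $\wh{N}_0\le C\,\wh{N}$ locally with a constant $C$ that may exceed $1$; your reduction therefore needs the level-$C$ version of that property (equivalently, that $C^{-1}\wh{N}_0$ is again admissible for it). This is surely what ``property (2) essentially characterizes $\wh{\mathcal T}$'' is meant to cover, but it is not literally what is stated, so a sentence acknowledging the rescaling is needed to close the bookkeeping; note that the paper's own later use (multiplying by a continuous $\sigma\ge 1$) only ever shrinks the sublevel set, which is the easy direction.
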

Suppose we are given a parent class $a\in \pi_0(Z)$ with $\bar{d}_a=\text{max}\ \{d_Z(z)\ |\ z\in a\} =0$. We know that $a\cap \overline{\mathcal M}_J$ is compact.
We can fix a ps-norm $\wh{N}_a:{\mathcal E}_a\rightarrow [0,\infty]$. Employing the previous theorem and the compactness of $a\cap \overline{\mathcal M}_J$ in $(\wh{Z},{\mathcal T})$
we can find finitely many points $z_1,..,z_j$ and $\wh{U}(z_i)$ such that 
$$
\wh{U}:= \bigcup_{i=1}^j \wh{U}(z_i) \supset a\cap \overline{\mathcal M}_J
$$
is an \^open covering and for every $i\in \{1,..,j\}$ it holds that $\text{cl}_{\wh{Z}}(\{y\in \wh{U}(z_i)\ |\ \wh{n}(f(y))\leq 1 \})$ is compact
with respect to $\wh{\mathcal T}$. Then the same holds for $\wh{U}$, i.e. 
$$
\text{cl}_{\wh{Z}}(\{y\in \wh{U}\ |\ \wh{n}(f(y))\leq 1 \})
\ \ \text{
is compact.}
$$
We observe that $c:=\text{inf} \{\wh{n}(f(z))\ |\ z\in \partial \wh{U}\}>0$. Indeed otherwise we find $(z_\ell)\subset \partial \wh{U}$ such that 
$\wh{n}(f(z_{\ell}))\rightarrow 0$ and after taking a subsequence we may assume that $z_\ell\rightarrow z\in \partial\wh{U}$
and that $f(z_{\ell})\stackrel{m}{\rightharpoonup}\xi$, where $\xi$ is the class of the zero vector above $z$, i.e. $f(z)=0$ which gives a contradiction
since $\partial\wh{U} \cap \overline{\mathcal M}_J=\emptyset$. Take an \^open neighborhood $\wh{V}$ of $a\cap \overline{\mathcal M}_J$ with
$\text{cl}(\wh{V})\subset \wh{U}$. We can take a continuous function $\sigma:\text{cl}_{\wh{Z}}(\wh{U})\rightarrow [1, \infty)$ which on $\wh{V}$ takes the value $1$ and on 
$\partial \wh{U}$ a value greater than $2/c$. The $\sigma$ defines a continuous functor $\sigma:{\mathcal S}_{\text{cl}_{\wh{Z}}(\wh{U})}\rightarrow [1,\infty)$
and we can define a new reflexive auxiliary norm by $\wh{N}':=\sigma\cdot \wh{N}$ over $\text{cl}(\wh{U})$. Then 
$\wh{n}'\circ f(z)\leq 1$ for some $z\in \partial \wh{U}$ implies $1\geq \wh{n}'(f(z))= \sigma(z)\cdot \wh{n}(f(z))\geq \sigma(z)\cdot c> 2$ giving a contradiction,
i.e. the elements satisfying $\wh{n}'(f(z))\leq 1$ and $z\in \text{cl}_{\wh{Z}}(\wh{U})$ belong to $\wh{U}$.
Hence we have shown the following.
\begin{thm}
Given a parent class $a\in \pi^p_0(Z)$ with $\bar{d}_a=0$ there exists an \^open neighborhood $\wh{U}$ of $a\cap \overline{\mathcal M}_J$  and a ps-norm defined over $\text{cl}_{\wh{Z}}(\wh{U})$
such that $(\wh{U}_a,\wh{N}_a)$ controls compactness.
\qed
\end{thm}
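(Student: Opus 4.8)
The statement to be proved is the last displayed theorem: given a parent class $a\in\pi^p_0(Z)$ with $\bar d_a=0$, there is an \^open neighborhood $\wh U$ of $a\cap\overline{\mathcal M}_J$ and a ps-norm over $\mathrm{cl}_{\wh Z}(\wh U)$ such that $(\wh U_a,\wh N_a)$ controls compactness. The key point that makes this the base case of the induction is the hypothesis $\bar d_a=0$, i.e. every object with isomorphism class in $a$ has degeneracy index $0$; equivalently there are no faces inside $a$ and no boundary compatibility constraints to respect, so the construction is ``free''. The plan is to assemble the argument already sketched in the paragraph preceding the theorem into a clean proof, invoking the local compactness theorem (the ps-norm version of the Reflexive Local Compactness Property) and the compactness of $a\cap\overline{\mathcal M}_J$.

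First I would fix an arbitrary ps-norm $\wh N_a\colon{\mathcal E}_a\to[0,\infty]$ (these exist by continuous partitions of unity since $Z$ is metrizable) and set $f:=|\bar\partial_{\wt J}|$ with induced $\wh n$. By the \^open local compactness theorem just stated, every $z\in a\cap\overline{\mathcal M}_J$ admits an \^open neighborhood $\wh U(z)\subset a$ with $\mathrm{cl}_{\wh Z}\bigl(\{y\in\wh U(z)\mid\wh n(f(y))\le 1\}\bigr)$ compact in $\wh{\mathcal T}$. Since $a\cap\overline{\mathcal M}_J$ is compact (this is the input fact recalled at the start of Lecture~11, true because $a\cap|\bar\partial_{\wt J}^{-1}(0)|$ is compact), finitely many $\wh U(z_1),\dots,\wh U(z_j)$ cover it; put $\wh U_0:=\bigcup_{i=1}^j\wh U(z_i)$. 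A finite union of sets with the stated compactness property again has it, so $\mathrm{cl}_{\wh Z}\bigl(\{y\in\wh U_0\mid\wh n(f(y))\le1\}\bigr)$ is compact. This takes care of properties (1), (3) and (4) of ``controls compactness'' for $\wh U_0$, where (4) is exactly the mixed-convergence / lower-semicontinuity package built into the notion of a ps-norm restricted to the compact closure.

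Next I would rescale to enforce property (2), that $\wh N_a\circ\bar\partial_{\wt J}(\alpha)\le1$ and $|\alpha|\in\mathrm{cl}_{\wh Z}(\wh U_a)$ force $|\alpha|\in\wh U_a$. Arguing as in the excerpt: $c:=\inf\{\wh n(f(z))\mid z\in\partial\wh U_0\}>0$, for otherwise a sequence $z_\ell\in\partial\wh U_0$ with $\wh n(f(z_\ell))\to0$ would, after passing to a subsequence using the mixed-compactness property (3) of ps-norms, converge to some $z\in\partial\wh U_0$ with $f(z_\ell)\stackrel{m}{\rightharpoonup}$ the zero vector over $z$, hence $f(z)=0$, i.e. $z\in a\cap\overline{\mathcal M}_J$, contradicting $\partial\wh U_0\cap\overline{\mathcal M}_J=\emptyset$. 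Choose an \^open $\wh V$ with $a\cap\overline{\mathcal M}_J\subset\wh V$ and $\mathrm{cl}_{\wh Z}(\wh V)\subset\wh U_0$, and a continuous $\sigma\colon\mathrm{cl}_{\wh Z}(\wh U_0)\to[1,\infty)$ equal to $1$ on $\wh V$ and $>2/c$ on $\partial\wh U_0$; this defines a continuous functor $\sigma\colon{\mathcal S}_{\mathrm{cl}_{\wh Z}(\wh U_0)}\to[1,\infty)$. Set $\wh N_a:=\sigma\cdot\wh N_a$ (still a ps-norm, since multiplication by a positive continuous functor preserves all the ps-norm axioms) and $\wh U_a:=\wh U_0$. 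If $\wh n_a(f(z))\le1$ with $z\in\partial\wh U_a$ then $1\ge\sigma(z)\wh n(f(z))\ge\sigma(z)c>2$, absurd; so the sublevel set stays inside $\wh U_a$, giving (2), and since $\sigma\ge1$ the compactness conclusions of the previous paragraph are unaffected. Thus $(\wh U_a,\wh N_a)$ controls compactness.

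The main obstacle, such as it is, is not in the logical skeleton — which is essentially transcribed from the paragraph preceding the theorem — but in verifying carefully that the chosen $\wh N_a=\sigma\cdot\wh N$ is genuinely ps-like over $\mathrm{cl}_{\wh Z}(\wh U_a)$, i.e. that scaling a model ps-norm by a continuous positive functor lands one again in the sandwich class of ps-norms and preserves the mixed-convergence lower semicontinuity and the continuity of $\wh n$ on $|{\mathcal E}_{(0,1)}|$ for the $1$-point-compactification topology; here one uses that $\sigma$ is continuous and bounded on the compact set $\mathrm{cl}_{\wh Z}(\wh U_a)$. A secondary, more topological point to be careful about is that all neighborhoods and closures must be taken with respect to the strong topology $\wh{\mathcal T}$ rather than ${\mathcal T}$, and that the local compactness theorem is precisely the $\wh{\mathcal T}$-statement; since $\bar d_a=0$ we are in the situation of property (1) of $\wh{\mathcal T}$ on the interior, so no subtlety about mismatched topologies on faces arises. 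With these verifications the proof is complete.
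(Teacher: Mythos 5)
Your proof is correct and follows essentially the same route as the paper: cover $a\cap\overline{\mathcal M}_J$ by finitely many \^open sets from the local compactness theorem, note $c=\inf\{\wh n(f(z))\mid z\in\partial\wh U\}>0$, and rescale the ps-norm by a continuous functor $\sigma$ to force the sublevel set back into $\wh U$. The only point to tighten is your appeal to property (3) of ps-norms for extracting the convergent subsequence of $(z_\ell)$ — that axiom presupposes convergence of the underlying points, so the subsequence should instead come from the compactness of the closed sublevel set, exactly as in the paper's argument.
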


We need an extension result along the same line. 
\begin{thm}
Assume that $a\in \pi^p_0(Z)$ and $\wh{U}^{\partial}_a$ is an \^open neighborhood  of $\partial a\cap \overline{\mathcal M}_J$ in $\partial a$
and $\wh{N}$ is a ps-norm defined over $\text{cl}_{\wh{Z}}(\wh{U}^{\partial}_a)$ such that $(\wh{U}^{\partial}_a,\wh{N}^{\partial}_a)$ controls compactness.
Then there exists an \^open neighborhood $\wh{U}_a$ of $a\cap \overline{\mathcal M}_J$ and a ps-norm $\wh{N}_a$ defined over $\text{cl}_{\wh{Z}}(\wh{U}_a)$ so that 
following holds.
\begin{itemize}
\item[(1)] The restriction of $(\wh{U}_a,\wh{N}_a)$ to the boundary is $(\wh{U}_a^\partial,\wh{N}_a^\partial)$.
\item[(2)] $(\wh{U}_a,\wh{N}_a)$ controls compactness.
\end{itemize}
\qed
\end{thm}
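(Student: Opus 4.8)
The statement is an extension result: we are handed compactness-controlling data $(\wh{U}^{\partial}_a,\wh{N}^{\partial}_a)$ over the boundary $\partial a$ and must produce data $(\wh{U}_a,\wh{N}_a)$ over all of $a$ restricting to the given one. The strategy is a combination of the two techniques already displayed just above in the excerpt: the local-compactness theorem for ps-norms (existence of $\wh{U}(z)$ with $\text{cl}_{\wh{Z}}(\{y\in\wh{U}(z)\ |\ \wh{n}(f(y))\le 1\})$ compact), a partition-of-unity gluing of such local pieces, and the rescaling trick $\wh{N}':=\sigma\cdot\wh{N}$ that pushes the sublevel set off the artificial boundary of the chosen neighborhood. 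The new ingredient is that the gluing must be done \emph{rel boundary}, i.e.\ without disturbing the already-fixed data on $\partial a$.

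First I would fix a ps-norm $\wh{N}^{\circ}_a$ on $\mathcal{E}_a$ extending $\wh{N}^{\partial}_a$ (possible since a ps-norm on $\mathcal{E}_A$ for $A$ closed extends to all of $\mathcal{E}$, as remarked before the last compactness theorem; apply this to $A=\text{cl}_{\wh{Z}}(\wh{U}^{\partial}_a)$ inside $a$). Next, cover the compact set $a\cap\overline{\mathcal M}_J$: near boundary points use the given $\wh{U}^{\partial}_a$, collared into $a$; away from $\partial a$ apply the local-compactness theorem at finitely many interior points $z_1,\dots,z_j$ to get $\wh{U}(z_i)$ on which $\{y\ |\ \wh{n}^{\circ}_a(f(y))\le 1\}$ has compact $\wh{Z}$-closure. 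Take $\wh{U}'_a$ to be the union of these; by finiteness the compactness of sublevel sets persists for $\wh{U}'_a$, and by construction $\wh{U}'_a\cap\partial a$ can be arranged to equal $\wh{U}^{\partial}_a$ (choosing the collar thin and the interior $\wh{U}(z_i)$ disjoint from $\partial a$). Then run the rescaling argument exactly as in the $\bar d_a=0$ case: set $c:=\inf\{\wh{n}^{\circ}_a(f(z))\ |\ z\in\partial\wh{U}'_a\}>0$ (nonzero because $\partial\wh{U}'_a\cap\overline{\mathcal M}_J=\emptyset$, using mixed convergence and lower semicontinuity of $\wh{N}$ to derive a contradiction from $\wh{n}^{\circ}_a(f(z_\ell))\to0$), choose a continuous $\sigma:\text{cl}_{\wh{Z}}(\wh{U}'_a)\to[1,\infty)$ equal to $1$ on a smaller \^open neighborhood $\wh{V}$ of $a\cap\overline{\mathcal M}_J$ and exceeding $2/c$ on $\partial\wh{U}'_a$, and put $\wh{N}_a:=\sigma\cdot\wh{N}^{\circ}_a$, $\wh{U}_a:=\wh{V}$. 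Properties (2)--(3) of "controls compactness" then follow as before, and (4) is inherited from the mixed-convergence and liminf properties of the ps-norm $\wh{N}^{\circ}_a$ (which $\sigma$, being a positive continuous functor, does not destroy). For (1), one must additionally demand that $\sigma\equiv1$ on $\text{cl}_{\wh{Z}}(\wh{U}^{\partial}_a)$ and that $\wh{V}\cap\partial a=\wh{U}^{\partial}_a$; since $\wh{U}^{\partial}_a\subset\wh{V}$ can be arranged and $\sigma$ is chosen by hand, this is a matter of being careful with the partition of unity near $\partial a$.

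The main obstacle I anticipate is the \emph{compatibility at the boundary} — making the freshly-chosen $\wh{U}_a$ and $\wh{N}_a$ restrict \emph{exactly} to $(\wh{U}^{\partial}_a,\wh{N}^{\partial}_a)$ rather than merely agreeing up to the sandwiching constant of ps-norms. This forces the collar-gluing and the cutoff $\sigma$ to be rigid along $\partial a$, and one must check that the compactness theorem's local neighborhoods can be chosen either entirely inside $\mathring a$ or already contained in $\wh{U}^{\partial}_a$ (using that $\partial a$ is itself closed for $\wh{\mathcal T}$ and that $a$ is collared near its boundary, a consequence of the tame polyfold / face-structured setup). A secondary technical point is that $c>0$: this uses Proposition \ref{prop11.5}-type reasoning together with the reflexive local compactness of $\bar\partial_{\wt J}$ with respect to ps-norms, plus the fact that $\partial\wh{U}'_a$ avoids $\overline{\mathcal M}_J$ by construction. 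Once these are in place the proof is essentially the $\bar d_a=0$ argument carried out relative to a nonempty boundary, so I would present it as such, emphasizing only the two modifications (rel-boundary gluing and the rigid cutoff).
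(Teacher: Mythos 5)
Your plan is essentially the paper's intended argument: the theorem is stated there without proof and is introduced as an extension result ``along the same line'' as the preceding $\bar{d}_a=0$ case, and your relative version of that covering-plus-rescaling argument (extend the ps-norm from $\text{cl}_{\wh{Z}}(\wh{U}^{\partial}_a)$, cover $a\cap\overline{\mathcal M}_J$ by a thickening of the boundary data together with finitely many local-compactness neighborhoods, then rescale by a cutoff $\sigma\equiv 1$ near the boundary data and large on the artificial frontier) is exactly that. The delicate points you flag are the right ones — arranging $\text{cl}_{\wh{Z}}(\wh{U}_a)\cap\partial a\subset \text{cl}(\wh{U}^{\partial}_a)$ so that the boundary hypothesis handles the frontier points in $\partial a$, and securing sublevel-set compactness on the thickened boundary piece (which does not follow from the boundary hypothesis alone, but can be obtained by intersecting the thickening with local-compactness neighborhoods for the extended ps-norm) — and that is where the remaining work lies.
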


\subsection{Main Assertion}
In this subsection we shall  state the basic result about compactness control.

\begin{thm}\label{thm13.2}
For every $a\in \pi_0(Z)$ there exists $\wh{U}_a\in \wh{\mathcal T}$, which contains $a\cap \overline{\mathcal M}_J$,
and a ps-norm $\wh{N}_a:{\mathcal E}_{\text{cl}_{\wh{Z}}(\wh{U}_a)}\rightarrow [0,\infty]$ so that the following property {\em\textbf{(P$_{\ell}$)} }holds for every $\ell\geq 0$, where we define $\wh{U}^{\ell}$, $\wh{N}^{\ell}$ and
${\mathcal E}^{\ell}:={\mathcal E}_{a^{\ell}}$, $a^{\ell}=\bigcup_{\bar{d}_a\leq \ell} a$, as follows.
\begin{eqnarray*}
&\wh{U}^{\ell}=\bigcup_{a\in \pi_0(Z),\ \bar{d}_a\leq \ell} \wh{U}_a&\\
&\wh{N}^{\ell}:{\mathcal E}_{\text{cl}_{\wh{Z}}(\wh{U}^{\ell})}\rightarrow [0,\infty]\ \text{with}\ \wh{N}^{\ell}|{\mathcal E}_{\text{cl}_{\wh{Z}}(\wh{U}_a)}= \wh{N}_a,\ \bar{d}_a\leq \ell.&
\end{eqnarray*}

\noindent  {\em\textbf{(P$_{\ell}$)}} For every $a\in \pi_0^{\leq \ell}(Z)$:
\begin{itemize}
\item[(1)] $(\wh{U}_a,\wh{N}_a)$ controls compactness.
\item[(2)] For $a\in \pi_0(Z)$ where $\bar{d}_a\leq \ell$ the following holds.
Namely for  $\alpha=(\alpha_0,\wh{b}_1,..,\wh{b}_k,\alpha_k)$ in ${\mathcal S}_{a\cap\wh{Z}}$
the statement $|\alpha|\in \wh{U}^{\ell}$is equivalent to the statement $|\alpha_{i,\mathfrak{c}}|\in \wh{U}^{\ell}$ for
$i\in \{0,...,k\}$ and $\mathfrak{c}\in \pi_0^{\text{ntriv}}(\bar{S}_i)$.
\item[(3)] For 
$$
(\alpha,e)=((\alpha_0,e_0),\wh{b}_1,..,\wh{b}_k,(\alpha_k,e_k))
$$
with $|\alpha|\in \text{cl}_{\wh{Z}}(\wh{U}_a)$ the following holds.
If for some $i\in \{0,...,k\}$ the map $e_i$ is non-zero on some trivial cylinder segment it holds that 
$\wh{N}^{\ell}(\alpha,e)=\infty$,  otherwise 
$$
\wh{N}^{\ell}(\alpha,e) =\text{\em max}_{i=0}^k \ \text{\em max}_{\mathfrak{c}\in \pi^{\text{ntriv}}_0(\bar{S}_i)}\ \wh{N}^{\ell}(\alpha_{i,\mathfrak{c}},e_{i,\mathfrak{c}_i}).
$$
\end{itemize}
\qed
\end{thm}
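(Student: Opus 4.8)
The plan is to prove Theorem \ref{thm13.2} by induction on $\ell$, exactly as the setup in this section has been engineered to permit. The base case $\ell = 0$ already has its essential content in hand: by the theorems in the subsection ``Compactness and Extension Results,'' for a parent class $a \in \pi_0^p(Z)$ with $\bar{d}_a = 0$ there exists an \^open neighborhood $\wh{U}_a$ of $a \cap \overline{\mathcal M}_J$ and a ps-norm $\wh{N}_a$ over $\text{cl}_{\wh{Z}}(\wh{U}_a)$ controlling compactness. For descendents and disjoint-union classes with $\bar{d}_a = 0$ one does not make free choices: a descendent $a \in \pi_0^d(Z)$ shares the non-trivial component of its parent, so one transports $\wh{U}_{\bar{a}}$ and $\wh{N}_{\bar{a}}$ along the forgetful identification; a parent union class $a \in \pi_0^{up}(Z)$ is built from its components $a_{\mathfrak c}$ via the Disjoint Union Structure, and one \emph{defines} $\wh{U}_a$ by the equivalence in (P$_\ell$)(2) — i.e. $|\alpha| \in \wh{U}_a$ iff $|\alpha_{\mathfrak c}| \in \wh{U}_{a_{\mathfrak c}}$ for all $\mathfrak c$ — and $\wh{N}_a$ by the max-formula in (P$_\ell$)(3). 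Then (P$_0$)(1) for these canonical classes reduces to (P$_0$)(1) for parents plus the already-established behavior of ps-norms and mixed convergence under disjoint unions; properties (2) and (3) hold by construction. Finally $\wh{U}^0$ and $\wh{N}^0$ are glued from the $\wh{U}_a$, $\wh{N}_a$ over all $a$ with $\bar d_a = 0$ — these live in distinct connected components of $Z$, so the gluing is literally a disjoint union and $\wh{N}^0$ is well-defined by the stated restriction condition.

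For the inductive step, assume (P$_{\ell}$) and construct the data for classes $a$ with $\bar{d}_a = \ell + 1$, retaining all previously constructed $\wh{U}_a, \wh{N}_a$ for $\bar{d}_a \le \ell$. The order of construction follows the scheme $\pi_0^{p} \Rightarrow \pi_0^{d} \Rightarrow \pi_0^{up} \Rightarrow \pi_0^{ud}$ within level $\ell+1$. For a parent class $a \in \pi_0^{p}(Z)$ with $\bar{d}_a = \ell+1$: its boundary $\partial a$ (in the face sense — each face comes from some $(a', a'')$ with $\bar d_{a'}, \bar d_{a''} < \bar d_a$) is covered by classes of complexity $\le \ell$, so by the inductive hypothesis we already have an \^open neighborhood $\wh{U}^\partial_a$ of $\partial a \cap \overline{\mathcal M}_J$ and a ps-norm $\wh N^\partial_a$ over its closure controlling compactness; the Extension Theorem in the ``Compactness and Extension Results'' subsection then yields $\wh{U}_a \supset a \cap \overline{\mathcal M}_J$ and a ps-norm $\wh{N}_a$ restricting to $(\wh{U}^\partial_a, \wh{N}^\partial_a)$ on the boundary and controlling compactness. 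For descendent classes $a \in \pi_0^d(Z)$ with $\bar d_a = \ell+1$ one again transports data from the parent. For union classes $a \in \pi_0^{up}(Z)$ with $\bar d_a = \ell+1$ the components $a_{\mathfrak c}$ all have $\bar d_{a_{\mathfrak c}} \le \ell+1$, and one defines $\wh{U}_a$, $\wh{N}_a$ by the formulas in (P$_\ell$)(2) and (3) just as in the base case; $\pi_0^{ud}$-classes are handled by transport from their $\pi_0^{up}$-parents. Then $\wh{U}^{\ell+1}$ and $\wh{N}^{\ell+1}$ are defined by adding the new pieces, and one checks that $\wh{N}^{\ell+1}$ restricts correctly — which it does, since the new classes again lie in distinct connected components from the old ones, and the max-formula (P$_\ell$)(3) is consistent with itself.

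The main obstacle — and the step that requires genuine work rather than bookkeeping — is verifying (P$_{\ell+1}$)(1), that $(\wh{U}_a, \wh{N}_a)$ controls compactness, simultaneously with the consistency of the max-formula for $\wh{N}^{\ell+1}$. The delicate point is precisely the one flagged in Lecture 3 around Figures \ref{FIG 11122}, \ref{FIG 111122}: for a disjoint-union class the perturbation (and here the ps-norm and the \^open set) are determined on the boundary by lower-complexity data and in the interior by the component data, and one must check these fit together compatibly with respect to $\wh{\mathcal T}$ rather than ${\mathcal T}$. Concretely, one must show: (i) condition (2) in ``controls compactness'' — that $\wh{N}_a \circ \bar\partial_{\wt J}(\alpha) \le 1$ with $|\alpha| \in \text{cl}_{\wh Z}(\wh U_a)$ forces $|\alpha| \in \wh U_a$ — survives the max-formula, which works because $\wh N_a(\alpha, e) = \max_{i, \mathfrak c} \wh N^{\le \ell}(\alpha_{i,\mathfrak c}, e_{i,\mathfrak c})$ and each factor individually forces $|\alpha_{i,\mathfrak c}|$ into the corresponding smaller $\wh U$, whence (P$_\ell$)(2) for the union class; (ii) the compactness in condition (3) for the union class follows from Tychonoff-type reasoning applied to the components together with the defining fiber-product description of ${\mathcal S}_a$ via the Disjoint Union Structure and the Concatenation Structure — here one uses that the $(0,1)$-fibers are Hilbert and that $\wh N$ is ps-like so mixed convergence is available componentwise, then reassembles; (iii) condition (4), the mixed-convergence and lower-semicontinuity statement, likewise passes through the max of finitely many terms since $\text{liminf}$ of a max dominates the max of the $\text{liminf}$s. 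The single genuinely non-formal ingredient threading all three is that the \^open sets are built to be \emph{strongly} open — i.e. compatible with the special sc$^+$-sections and the topology $\wh{\mathcal T}$ of Lecture 12, whose property (2) is exactly what guarantees that a ${\mathcal T}$-convergent subsequence arising from the bound $\wh n \circ f \le 1$ actually converges in $\wh{\mathcal T}$ with limit in $\wh Z$ — and one must check this is preserved when passing to the union class, which is where the care in defining $\wh{\mathcal T}$ in \cite{FH-book} is needed.
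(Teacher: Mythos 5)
Your proposal is correct and follows essentially the same route as the paper: induction on the complexity $\bar{d}$, proceeding parent $\Rightarrow$ descendent $\Rightarrow$ union within each level, with boundary data pulled back from lower complexity via the faces and extended by the extension theorem for parents, canonical transport for descendents, the component-wise formulas of (P$_\ell$)(2)--(3) for union classes, and the strong topology $\wh{\mathcal T}$ carrying the load in verifying that the union-class pair still controls compactness. The only points the paper makes explicit that you gloss over are minor: there are no union classes with $\bar d_a=0$ (so that part of your base case is vacuous), and the consistency of the boundary data on intersections of distinct faces of a single parent class, which the paper derives from the associativity of the concatenation structure.
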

We see that the statement suggests an inductive construction with respect to 
the elements $a\in \pi^{\leq \ell}_0(Z)$.

\subsection{Inductive Construction}

We use the complexity  $\bar{d}:\pi_0(Z)\rightarrow \{0,1,...\}$ for the induction, meaning that 
we construct in the $\ell$-th step (starting with $\ell=0$) $(\wh{U}_a,\wh{N}_a)$  for $a\in \pi_0^{\leq\ell}(Z)=\{a\in\pi_0(Z)\ |\ \bar{d}_a\leq \ell\}$ using data associated to  $\pi^{\leq \ell-1}_0(Z)$ and new choices.\\

\begin{center}
$\bm{[\ell=0]}$
\end{center}

\noindent  $\bm{\ell=0:p)}$ We consider $a\in \pi^p_0(Z)$ with $\bar{d}_a=0$. 
We find a ps-norm $\wh{N}$ over ${\mathcal S}_a$ and 
a strong \^open  neighborhood $\wh{U}_a$ of $a\cap \overline{\mathcal M}_J$ such that 
$(\wh{U}_a,\wh{N}_a)$ controls compactness. Here $\wh{N}_a$ is the restriction
of $\wh{N}$ to $\text{cl}_{\wh{Z}}(\wh{U}_a)$.
We note that in this case $\wh{U}_a\in {\mathcal T}$ as well and 
$\wh{N}_a$ is taken over $a$ as a reflexive auxiliary norm.\\ 

\noindent  $\bm{\ell=0:d)}$ We consider  $a\in \pi^d_0(Z)$ with $\bar{d}_a=0$ and parent $\bar{a}$. We take an \^open subset  $\wh{U}_a$ of $a\cap\overline{\mathcal M}_J$ with parent $\wh{U}_{\bar{a}}$, and $\wh{N}_a$ with parent $\wh{N}_{\bar{a}}$. Then $(\wh{U}_a,\wh{N}_a)$ controls compactness.\\

\noindent  $\bm{\ell=0:u)}$ We note that $\pi^u_{0}(Z)$ does not contain an element with $\bar{d}_a=0$.\\

We define $\wh{U}^0=\bigcup_{\{a\in \pi_0(Z)\ |\ \bar{d}_a=0\}}\wh{U}_a$ and obtain $\wh{N}^0$
so that $(\wh{U}^0,\wh{N}^0)$ has the obvious properties.
By construction \textbf{(P$_0$)} holds.  \\
 
 We do also the case $\ell=1$ before giving the general step. Two new complications enter. The first is that we need to extend neighborhoods on the boundary 
 together with a ps-norm to the interior so that compactness is controlled. The second issue is concerned with union classes. This is the point which makes the use of special 
 sc$^+$-sections necessary.
  \begin{center}
 $$
\bm{[ \ell =1]}
 $$
 \end{center}
 In this case we have boundaries but no corners.  The boundary faces come with covering functors 
 $$
 {\mathcal S}_{\theta}\rightarrow \left({\mathcal S}\times_{\mathfrak{P}}{\mathcal S}\right)_{c(\theta)}:(\alpha,\wh{b},\alpha')\rightarrow (\alpha,b,\alpha').
 $$
 The class $c(\theta)$ determines classes $a',a''$ with 
$1=\bar{d}_a\geq \bar{d}_{a'}+\bar{d}_{a''}+1$, where $\theta\subset a$, so that in particular 
$d_{a'}=d_{a''}=0$. .\\

\noindent$\bm{\ell=1:p)}$
We take the parent class $a$ with $\bar{d}_a=1$.
Since  $\wh{U}_{a'}$ and $\wh{U}_{a''}$ are already  given by the step $\ell=0$
one can define $\wh{U}^{\partial}_{\theta}$ to consist of all $(\alpha',\wh{b},\alpha'')$ with $|\alpha'|\in \wh{U}_{a'}$ and 
$|\alpha''|\in \wh{U}_{a''}$. The elements in the closure of $\wh{U}^{\partial}_{\theta}$ consists  of all $|\alpha|$
with $|\alpha'|$ and $|\alpha''|$ belonging to the respective closures.  The union of all $\wh{U}^{\partial}_{\theta}$
is by definition $\wh{U}^{\partial}_a$, i.e.
$$
\wh{U}^{\partial}_a = \bigcup_{\theta\in \text{face}_a} U^{\partial}_{\theta}
$$
We define $\wh{N}_a^{\partial}$ over the closure of $\wh{U}^{\partial}$ as follows.
Over the relevant parts of the  face $\theta\subset a$ 
$$
\wh{N}_a^{\partial }((\alpha',e'),\wh{b},(\alpha'',e''))=\text{max}\ \{\wh{N}_{a'}(\alpha',e'),\wh{N}_{a''}(\alpha'',e'')\}.
$$
It is evident that $(\wh{U}^{\partial}_a,\wh{N}^{\partial}_a)$ controls compactness.  Using the extension result
we can extend $\wh{N}^{\partial}_a$ to a ps-norm
$\wh{N}_a$ and an \^open neighborhood of $a\cap \overline{\mathcal M}_J$
such that $(\wh{U}_a,\wh{N}_a)$ controls compactness. \\

\noindent$\bm{\ell=1:p)} $ The extension of the data to a descendent is obvious, i.e. as in the case $\ell=0$.\\

\noindent$\bm{\ell=1:u)}$ This time we also have union classes and start with a union parent. Let $a$ be the union parent.
Since $\bar{d}_a=1$ it follows that for an object $\alpha$ with $d(\alpha)=0$ and $|\alpha|\in a$ it holds
$$
\sharp \pi^{\text{ntriv}}_0(\bar{S})=\sharp \pi_0(\bar{S})  =2.
$$
The following Figure \ref{FIG 199}  illustrates this.
 \begin{figure}[h]
\begin{center}
\includegraphics[width=5.5cm]{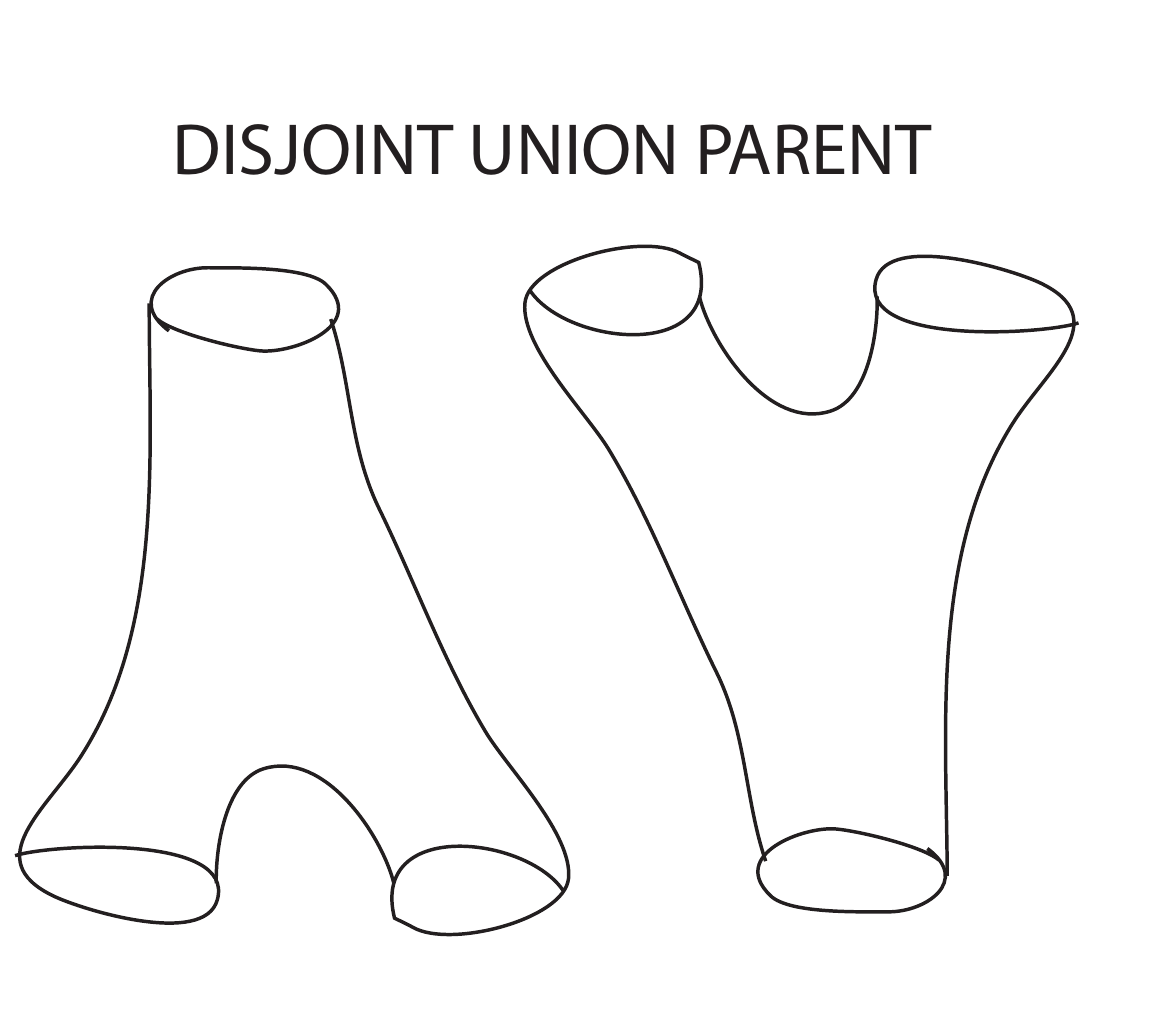}
\end{center}
\caption{An element of union parent type of height $1$.}\label{FIG 199}
\end{figure}

Configurations representing elements in $\partial a$  look as in the following Figure \ref{FIG 200}.
 \begin{figure}[h]
\begin{center}
\includegraphics[width=6.5cm]{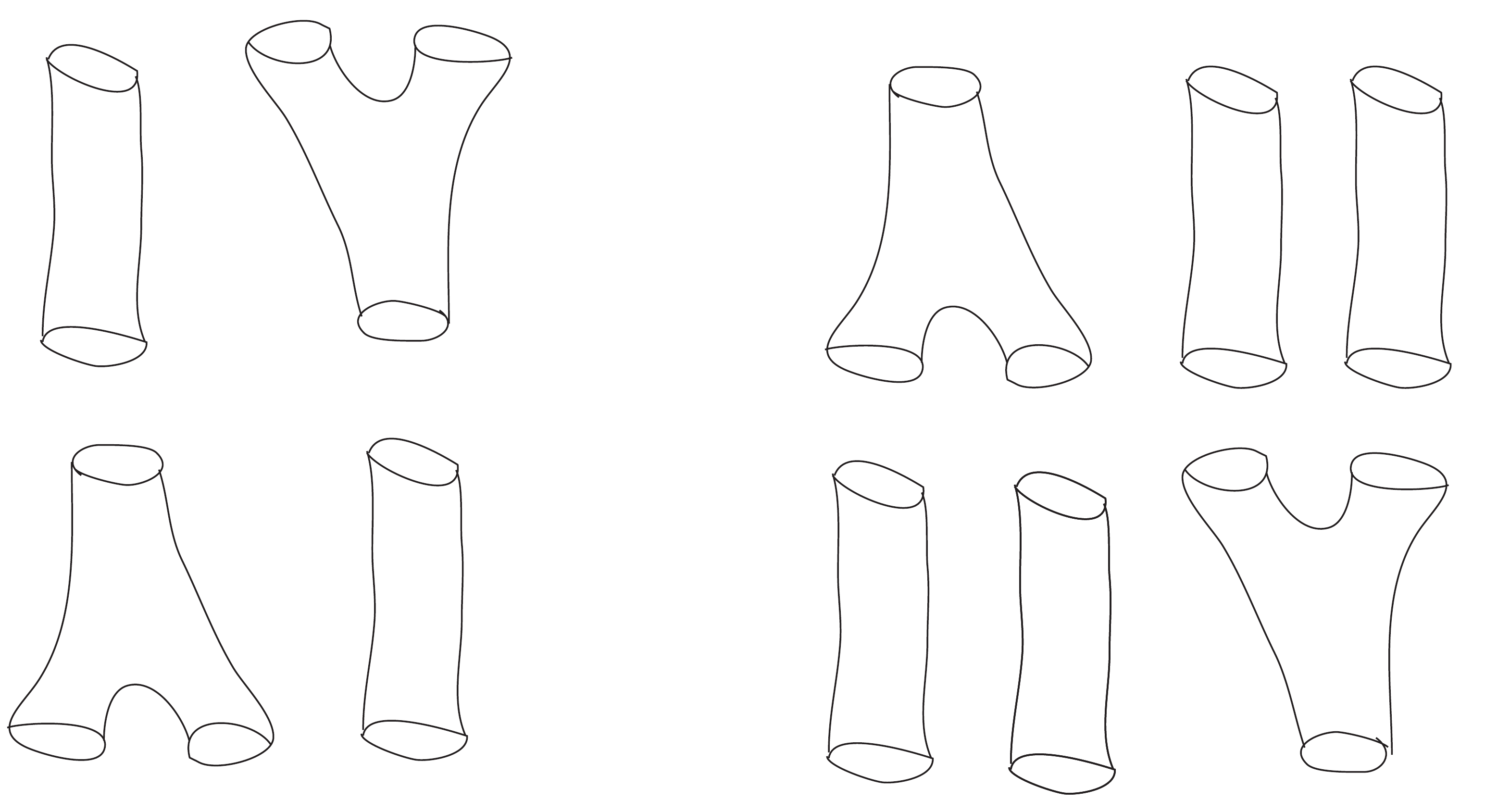}
\end{center}
\caption{Occurring configurations in the boundary. The cylinders are trivial cylinders, but not necessarily $\wt{J}$-holomorphic.
However, in the intersection $\partial a\cap \wh{Z}$ such configurations only have $\wt{J}$-holomorphic cylinders.
}\label{FIG 200}
\end{figure}
We note that the data from $\ell=0$ determines not only data for $\partial a$ but also for $\dot{a}=a\setminus\partial a$.
For the boundary we can define as before $\wh{N}^{\partial}_a$ and $\wh{U}^{\partial}_a$. 
The natural definition over $\dot{a}$ is $\wh{U}_{\dot{a}}$ which consists 
of elements $|\alpha|\in \wh{Z}$ so that the associated two $\alpha',\alpha''$ satisfy
$|\alpha'|,|\alpha''|\in \wh{U}^0$. Further
$$
\wh{N}_{\dot{a}}(\alpha,e)= \text{max}\ \{N_{a_{\mathfrak{c}}}(\alpha_{\mathfrak{c}},e_{\mathfrak{c}})
\ |\ \mathfrak{c}\in \pi_0(\bar{S}) (=\pi_0^{\text{ntriv}}(\bar{S}))\}
$$
We define $\wh{U}_a= \wh{U}_a^{\partial} \cup\wh{U}_{\dot{a}}$.
With these two natural definitions we obtain a functor $\wh{N}_a:{\mathcal E}_{\text{cl}_{\wh{Z}}(\wh{U}_a)}\rightarrow [0,\infty]$ which is auxiliary norm-like. 
\begin{prop}
$\wh{U}_a$ is \^open, contains $\overline{\mathcal M}_J$, $\wh{N}_a$ is a ps-norm and $(\wh{U}_a,\wh{N}_a)$ controls compactness.
\qed
\end{prop}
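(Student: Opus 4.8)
The plan is to verify the four bullets — that $\wh{U}_a$ is \^open, contains $\overline{\mathcal M}_J$, that $\wh{N}_a$ is a ps-norm, and that $(\wh{U}_a,\wh{N}_a)$ controls compactness — by reducing each to the corresponding properties already established at level $\ell=0$ for the pieces $a_{\mathfrak{c}}$, together with the known boundary data $(\wh{U}^\partial_a,\wh{N}^\partial_a)$. The conceptual point is that a union parent class $a$ of complexity $1$ has the feature that every element of $\dot a$ literally is (up to the forgetful ``disjoint union structure'' discussed in Lecture 4) a pair of elements of $\wh{U}^0$, so everything is built coordinatewise and the needed analytic facts transfer from the factors.

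First I would treat \^openness and the containment $\overline{\mathcal M}_J\subset \wh{U}_a$. For the boundary part, $\wh{U}^\partial_a=\bigcup_{\theta\in\mathrm{face}_a}\wh{U}^\partial_\theta$ is a union of preimages of the \^open sets $\wh{U}_{a'}\times\wh{U}_{a''}$ under the (continuous, by the proper covering functor picture) map ${\mathcal S}_\theta\to({\mathcal S}\times_{\mathfrak P}{\mathcal S})_{c(\theta)}$, hence \^open in $\partial a$; here one uses that the strong topology $\wh{\mathcal T}$ restricted to the boundary agrees with the corresponding product of strong topologies on the factors. For $\dot a$: the assignment $|\alpha|\mapsto(|\alpha'|,|\alpha''|)$ is continuous for $\wh{\mathcal T}$ on $\dot a$, and $\wh{U}_{\dot a}$ is its preimage of $\wh{U}^0\times\wh{U}^0$, hence \^open in $\dot a$; the only subtlety is gluing the boundary and interior descriptions along $\partial a$, which follows because on $\partial a\cap\wh{Z}$ the trivial cylinders are $\wt J$-holomorphic and the two descriptions of the factors coincide there. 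That $\wh{U}_a\supset a\cap\overline{\mathcal M}_J$ is immediate since a $\wt J$-holomorphic element has $\wt J$-holomorphic factors $\alpha_{\mathfrak c}$ and hence $|\alpha_{\mathfrak c}|\in a_{\mathfrak c}\cap\overline{\mathcal M}_J\subset\wh{U}^0$.

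Next, to see $\wh{N}_a$ is a ps-norm, I would check the defining properties one at a time: the homogeneity, triangle inequality, and completeness of the finite part follow from the maximum formula $\wh{N}_a(\alpha,e)=\max_{\mathfrak c}\wh{N}_{a_{\mathfrak c}}(\alpha_{\mathfrak c},e_{\mathfrak c})$ because a maximum of norms is a norm on the corresponding direct-sum-type vector space (finiteness of $\wh{N}_a$ forces finiteness of each factor, hence that $e$ vanishes on trivial cylinder components — giving property (4)/(4$'$) of a ps-norm for free); lower semicontinuity under mixed convergence and the local-compactness property (3) of a ps-norm pass to the maximum of finitely many factors since mixed convergence of $(\alpha_k,e_k)$ is equivalent to mixed convergence of each $(\alpha_{k,\mathfrak c},e_{k,\mathfrak c})$; the continuity of $\wh n_a$ on $|{\mathcal E}_{(0,1)}|$ and the sandwiching between model ps-norms (property (0)) follow from the corresponding statements for the $\wh{N}_{a_{\mathfrak c}}$. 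Over the boundary the same argument applies with $\wh{N}^\partial_a$ in place, and compatibility of the two formulas over $\partial a$ is what makes $\wh{N}_a$ well defined.

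Finally, that $(\wh{U}_a,\wh{N}_a)$ controls compactness is the heart of the matter and where I expect the real work. Conditions (1)--(4) in the definition of ``controls compactness'' must be checked. Condition (2) (if $\wh{N}_a\circ\bar\partial_{\wt J}(\alpha)\le1$ and $|\alpha|\in\mathrm{cl}_{\wh Z}(\wh U_a)$ then $|\alpha|\in\wh U_a$) and (3) (compactness of the relevant sublevel closure) are the delicate ones: by the maximum formula, $\wh{N}_a\circ\bar\partial_{\wt J}(\alpha)\le1$ forces $\wh{N}_{a_{\mathfrak c}}\circ\bar\partial_{\wt J}(\alpha_{\mathfrak c})\le1$ for each $\mathfrak c$, so each $|\alpha_{\mathfrak c}|\in\wh U^0$ by the $\ell=0$ control, whence $|\alpha|\in\wh U_a$; for compactness one takes a sequence in the sublevel set, extracts for each of the (boundedly many, since $\bar d_a=1$) factor sequences a $\wh{\mathcal T}$-convergent subsequence via the $\ell=0$ compactness, and reassembles — the obstacle being to check that the reassembled limit again lies in $\wh U_a$ rather than escaping to a configuration in $\partial a$ with non-$\wt J$-holomorphic cylinders, which is precisely excluded by working inside $\wh Z$ (this is why the strong topology and special sc$^+$-sections were introduced). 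Conditions (1) and (4) then follow by the same factorwise extraction together with the lower-semicontinuity of each $\wh{N}_{a_{\mathfrak c}}$ under mixed convergence. I would expect the write-up's main technical burden to be the careful bookkeeping that mixed convergence, $\wh{\mathcal T}$-convergence, and the maximum norm all interact compatibly across the disjoint-union decomposition, uniformly over the (finitely many) strata of $a$.
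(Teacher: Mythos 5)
The paper states this proposition without proof (only the remark immediately following it, explaining why the construction must be carried out over $(\wh{Z},\wh{\mathcal T})$ rather than over an open set in $Z$), and your componentwise reduction to the $\ell=0$ data via the maximum formula — with the strong topology supplying the continuity of $\wh{N}_a$ and the compactness control at the interface between $\wh{U}_{\dot a}$ and $\wh{U}^{\partial}_a$ — is exactly the argument the surrounding text intends, so your approach is correct. The one caveat is that the compatibilities you invoke as known facts (that the component decomposition $|\alpha|\mapsto(|\alpha'|,|\alpha''|)$ is continuous for $\wh{\mathcal T}$ up to and including $\partial a$, that mixed convergence is detected factorwise, and that $\text{cl}_{\wh{Z}}(\wh{U}_a)$ consists of elements whose components lie in the corresponding closures of the $\ell=0$ sets) are precisely the properties of $\wh{\mathcal T}$ which the paper only characterizes axiomatically in Lecture 12 and defers to \cite{FH-book}, so a complete write-up must cite or establish these rather than treat them as automatic.
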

The fact that $\wh{N}_a$ is a ps-norm would generally not be true for an open neighborhood of $a\cap \overline{\mathcal M}_J$ in $Z$.
The reason is that it would in general not have the required continuity properties.\\

Next follows the general argument.

 \begin{center}
 $$
\bm{[ \ell \Longrightarrow \ell+1]}
 $$
 \end{center}

Assuming \textbf{(P$_{\ell}$)} we shall show that choices can be made so that \textbf{(P$_{\ell+1}$)} holds.
So by assumption we have $\wh{N}_a$ and $\wh{U}_a$ given for all $a\in \pi_0^{\leq \ell}(Z)$
where $(\wh{U}_a,\wh{N}_a)$ controls compactness and $\wh{N}^{\ell}$ over $\text{cl}_{\wh{Z}}(\wh{U}_a)$  is
a ps-norm. Further some additional properties as previously listed hold.
In order to prepare for the argument we carry out some preparations. The data associated to $\ell$ determines
certain data for all $a$ with $\bar{d}_a=\ell+1$ which has to be derived first.
 Important ingredients are the covering functors 
 $$
 {\mathcal S}_{\theta}\rightarrow \left({\mathcal S}\times_{\mathfrak{P}}{\mathcal S}\right)_{c(\theta)}:(\alpha,\wh{b},\alpha')\rightarrow (\alpha,b,\alpha').
 $$
 The class $c(\theta)$ determines classes $a',a''$ with 
$\bar{d}_a= \bar{d}_{a'}+\bar{d}_{a''}+1$, where $\theta\subset a$. If $\wh{U}_{a'}$ and $\wh{U}_{a''}$ are given 
one can define $\wh{U}^{\partial}_{\theta}$ to consist of all $(\alpha',\wh{b},\alpha'')$ with $|\alpha'|\in \wh{U}^{\ell}$ and 
$|\alpha''|\in \wh{U}^{\ell}$.
Assume that $\theta,\theta'\in \text{face}_a$ are different and  intersect.
Then we have two functors
$$
{\mathcal S}_{\theta\cap\theta'}\rightarrow {\mathcal S}\times_{\mathfrak{P}}{\mathcal S}\times_{\mathfrak{P}}{\mathcal S}
$$
given by 
\begin{eqnarray}
&(\alpha,\wh{b},\alpha',\wh{b}',\alpha'')\rightarrow ((\alpha,\wh{b},\alpha'),b',\alpha'')\rightarrow (\alpha,b,\alpha',b',\alpha'')&\\
&(\alpha,\wh{b},\alpha',\wh{b}',\alpha'')\rightarrow (\alpha,{b},(\alpha',\wh{b}',\alpha''))\rightarrow (\alpha,b,\alpha',b',\alpha'')&\nonumber
\end{eqnarray}
This associativity has some important consequences. For example given $\wh{U}_a$, $\wh{U}_{a'}$, and $\wh{U}_{a''}$
it will follow that 
$$
\wh{U}^{\partial}_{\theta}\cap \theta' = \wh{U}^{\partial}_{\theta'}\cap \theta.
$$
We have the following consequence.
\begin{prop}
The set $\wh{U}^{\partial}_a$ defined for some $a\in \pi_0(Z)$  with $\bar{d}_a=\ell+1$ by 
$$
\wh{U}^{\partial}_a=\bigcup_{\theta\in \text{face}_a} \wh{U}^{\partial}_{\theta}
$$
is \^open in $ \wh{Z}\cap \partial a $ and $\wh{N}^{\partial}_a$ is a ps-auxiliary norm over $\text{cl}(\wh{U}^{\partial}_a)$ on the boundary 
and $(\wh{U}^{\partial}_a,\wh{N}^{\partial}_a)$ controls compactness on the boundary.
\qed
\end{prop}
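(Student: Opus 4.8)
The statement is essentially a bookkeeping assertion: the boundary data $(\wh U^\partial_a,\wh N^\partial_a)$ for a class $a$ with $\bar d_a=\ell+1$, assembled from the faces $\theta\in\mathrm{face}_a$ via the covering functors ${\mathcal S}_\theta\to({\mathcal S}\times_{\mathfrak P}{\mathcal S})_{c(\theta)}$, is \^open in $\wh Z\cap\partial a$, $\wh N^\partial_a$ is a ps-norm over $\mathrm{cl}(\wh U^\partial_a)$, and the pair controls compactness on the boundary. The plan is to verify the four conditions in the definition of ``controls compactness'' one at a time, using the inductive hypothesis \textbf{(P$_\ell$)} applied to $a'$ and $a''$ (which have $\bar d_{a'},\bar d_{a''}\le\ell$), together with the face-structure of the polyfold and the properties of the proper covering functors. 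The key structural input is the associativity identity displayed just above the statement, which yields $\wh U^\partial_\theta\cap\theta'=\wh U^\partial_{\theta'}\cap\theta$ and hence makes the gluing of the $\wh U^\partial_\theta$ along shared sub-faces well defined.

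First I would check that $\wh U^\partial_a$ is \^open in $\wh Z\cap\partial a$: each $\wh U^\partial_\theta$ is the preimage under the covering functor ${\mathcal S}_\theta\to({\mathcal S}\times_{\mathfrak P}{\mathcal S})_{c(\theta)}$ of the product $\wh U^\ell\times_{\mathfrak P}\wh U^\ell$, which is \^open because each $\wh U_{a'},\wh U_{a''}$ is \^open by \textbf{(P$_\ell$)}(1) and the evaluation maps $\mathrm{ev}^\pm$ are continuous; since $\partial a\cap\wh Z$ is covered by the ${\mathcal S}_\theta$'s (the polyfold structure on ${\mathcal S}$ is face-structured, so every boundary point lies in at least one face), the union is \^open. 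Second, $\wh N^\partial_a$ is well defined because on an overlap $\theta\cap\theta'$ the two ways of reading the ps-norm agree, by the associativity of the lift construction and the fact that $\wh N$ itself is defined floor-by-floor via a maximum (this is exactly the content of \textbf{(P$_\ell$)}(3)); that it is a ps-norm over $\mathrm{cl}(\wh U^\partial_a)$ then follows from the ps-norm axioms for $\wh N_{a'},\wh N_{a''}$ because taking a pointwise maximum of two ps-norms whose underlying spaces vary continuously over a metrizable base preserves all the required properties — lower semicontinuity under mixed convergence, the vector-space-and-complete-norm property on each fiber, and the penalization on trivial cylinder components.

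Third and fourth, I would establish the two compactness conditions. For the closedness/compactness statement (conditions (2),(3) in ``controls compactness'') the point is that a sequence $(z_k)\subset\mathrm{cl}(\wh U^\partial_a)$ with $\liminf\wh N^\partial_a\circ\bar\partial_{\wt J}(\alpha_k)\le1$ has, after passing to the finitely many faces, representatives $(\alpha_k',\wh b_k,\alpha_k'')$ whose components satisfy the same bound (because the norm is a max), so by \textbf{(P$_\ell$)}(1) applied to $a'$ and $a''$ each component sequence has a mixed-convergent subsequence whose underlying sequence converges in $\wh{\mathcal T}$; reassembling, using that only finitely many decoration lifts $\wh b$ project to a given $b$ (the covering functor is finite-to-one) and that the $\wh b_k$ therefore eventually stabilize along a subsequence, produces a mixed-convergent subsequence of $(h_k)$ with limit in $\wh Z\cap\mathrm{cl}(\wh U^\partial_a)$; the $\liminf$ inequality for $|\wh N^\partial_a|$ passes through the max and the lower semicontinuity of each $|\wh N_{a'}|,|\wh N_{a''}|$. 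The main obstacle here — and the step I expect to take real care — is the interaction with trivial cylinder components: on the boundary the connecting cylinders are trivial but not necessarily $\wt J$-holomorphic, yet on $\partial a\cap\wh Z$ they \emph{are} $\wt J$-holomorphic, so the penalization clause in the ps-norm must be shown to be harmless exactly on the relevant locus. I would handle this by invoking the observation, already recorded in the excerpt, that $\wh N(\alpha,e)=\infty$ whenever $e$ is nonzero on a trivial cylinder component, together with the fact that $\overline{\mathcal M}_J\subset\wh Z$ and the perturbed moduli space stays in $\wh Z$; on the set where the norm is finite, $e$ vanishes on all trivial cylinder segments and the max-decomposition of $\wh N^\partial_a$ reduces everything cleanly to the nontrivial components $\mathfrak c\in\pi_0^{\mathrm{ntriv}}$, where \textbf{(P$_\ell$)} directly applies.
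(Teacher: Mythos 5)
Your argument is correct and follows exactly the route the paper intends: the paper itself states this proposition without proof, but its surrounding discussion (the covering functors onto $({\mathcal S}\times_{\mathfrak P}{\mathcal S})_{c(\theta)}$, the associativity identity giving $\wh{U}^{\partial}_{\theta}\cap\theta'=\wh{U}^{\partial}_{\theta'}\cap\theta$, and the $\ell=1$ model case where $\wh{N}^{\partial}_a$ is the max over face components) is precisely the skeleton you flesh out via the inductive hypothesis \textbf{(P$_{\ell}$)} applied componentwise to $a'$ and $a''$. Your extra care with the finitely many decoration lifts $\wh{b}$ and with the penalization on trivial cylinder components is consistent with the paper's use of $\wh{\mathcal T}$ and the ps-norm axioms, so no gap.
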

Now we are in the position to discuss the different cases.\\

\noindent {\textbf{Parent:}}
We can extend  for $a\in \pi^p_0(Z)$ with $\bar{d}_a=\ell+1$  the data $(\wh{U}^{\partial},\wh{N}^{\partial}_a)$
to a pair $(\wh{U}_a,\wh{N}_a)$ controlling compactness and restricting on the boundary to the given one. \\

\noindent\textbf{Descendant:}  For a descendent $a$ with $\bar{d}_a=\ell+1$  let $\bar{a}$  be the associated parent.
The data $(\wh{U}_a,\wh{N}_a)$  is obtained from $(\wh{U}_a,\wh{N}_a)$ in the previously, i.e. $\ell$-case, discussed way. \\

\noindent{\textbf{Union:}  Let us consider an union parent class $a$ with $\bar{d}_a=\ell+1$. If $a$ is the union of $a_1,..,a_e$
we must have the identity
$$
\ell+2 = \bar{d}_a +1 =\sum_{i=1}^e (\bar{d}_{a_i}+1)\ \ \text{and}\ \  \sharp\pi^{\text{ntriv}}_0(\bar{S}) =\sharp \pi_0(\bar{S})\geq 2.
$$
The second inequality is by definition and the equality can be seen as follows and generalizing the contents of the Figure  \ref{FIG 111122}.
Indeed for each $i\in \{1,...,e\}$ we can take a configuration with with degeneracy being maximal, i.e. $\bar{d}_{a_i}$ and from which 
we can extract $\bar{d}_{a_i}+1$ many parent pieces. Lining all this pieces up suitably by using trivial cylinder segments 
we can construct an element $\alpha$ representing $a$ satisfying 
\begin{eqnarray}\label{equ*lity}
d(\alpha)+1= \sum_{i=1}^e (\bar{d}_{a_i}+1).
\end{eqnarray}
 It is easy to see
that for this specific element $d(\alpha)=\bar{d}_{a}$ because otherwise we obtain a contradiction to the value of the expression
on the right-hand side of (\ref{equ*lity}).

We  construct new data in terms of data in lower grading. Assume that $a$  has underlying parent classes $a_1,...,a_{e}$.
It holds $\bar{d}_a+1 =\sum_{i=1}^{e} (\bar{d}_{a_i}+1)$. We define with $\dot{a}=a\setminus\partial a$
the \^open subset $\wh{U}_{\dot{a}}$ of $\dot{a}$ to consists of all $|\alpha|\in \wh{Z}$ with 
the underlying $|\alpha_{\mathfrak{c}}|\in \wh{U}^{\ell}$ for $\mathfrak{c}\in \pi^{\text{ntriv}}_0(Z)$ and we 
 define  $\wh{U}^{\partial}_a$ as the union over all faces $\theta\in \text{face}_a$ of the sets $\wh{U}^{\partial}_{\theta}$
which are obtained by using the associated proper covering functors. Finally  we define 
$$
\wh{U}_a:=\wh{U}^{\partial}_a\cup \wh{U}_{\dot{a}}.
$$
\begin{lem}
The set $\wh{U}$ is \^open and contains $a\cap \overline{\mathcal M}_J$.
\qed
\end{lem}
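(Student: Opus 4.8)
The assertion is that $\wh{U}_a = \wh{U}^{\partial}_a \cup \wh{U}_{\dot a}$ is \^open (open in $\wh{\mathcal T}$) and contains $a\cap\overline{\mathcal M}_J$. The containment is the easy half: if $|\alpha|\in a\cap\overline{\mathcal M}_J$ then $\alpha$ is $\wt J$-holomorphic, hence so are all of its pieces $\alpha_{i,\mathfrak c}$ (the disjoint-union structure preserves $\wt J$-holomorphicity, as already noted in Lecture~4), and each piece has lower complexity so lies in some $a_j$ with $\bar d_{a_j}\le\ell$. By the inductive hypothesis \textbf{(P$_\ell$)}, $\overline{\mathcal M}_J\cap a_j\subset\wh U_{a_j}\subset\wh U^\ell$, so all pieces lie in $\wh U^\ell$; unwinding the definitions of $\wh U_{\dot a}$ (if $d(\alpha)=0$) and of $\wh U^{\partial}_\theta$ via the proper covering functors (if $d(\alpha)\ge 1$), this places $|\alpha|$ in $\wh U_a$. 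So $a\cap\overline{\mathcal M}_J\subset\wh U_a$.

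For \^openness I would argue separately at interior points and at boundary points. First, $\dot a = a\setminus\partial a$ is itself \^open in $\wh Z$ (it is the top stratum $d_Z=0$ inside $a$, which is $\wh{\mathcal T}$-open by the face-structure results and Proposition on semicontinuity of $d_Z$), and $\wh U_{\dot a}$ is the preimage of $\prod_{\mathfrak c}\wh U^\ell$ under the disjoint-union forgetful assignment $|\alpha|\mapsto(|\alpha_{\mathfrak c}|)_{\mathfrak c}$; since this assignment is continuous for $\wh{\mathcal T}$ on $\dot a$ (a property of the strong topology to be recorded earlier) and $\wh U^\ell$ is \^open by \textbf{(P$_\ell$)}, $\wh U_{\dot a}$ is \^open in $\dot a$, hence in $\wh Z$. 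Next, for a boundary point $z_0\in\wh U^{\partial}_a$, pick a face $\theta\in\mathrm{face}_a$ with $z_0\in\wh U^{\partial}_\theta$; the proper covering functor ${\mathcal S}_\theta\to({\mathcal S}\times_{\mathfrak P}{\mathcal S})_{c(\theta)}$ and the \^openness of $\wh U_{a'},\wh U_{a''}$ give a $\wh{\mathcal T}$-neighborhood of $z_0$ inside $\theta$ lying in $\wh U^{\partial}_\theta\subset\wh U_a$, but this only produces a relatively open set \emph{within the boundary face}. The real content is to thicken such a neighborhood into a full $\wh{\mathcal T}$-neighborhood in $\wh Z$; here one uses that near $z_0$ every nearby point $z$ either lies on the same face(s) (handled by the $\theta$-argument and the compatibility $\wh U^{\partial}_\theta\cap\theta'=\wh U^{\partial}_{\theta'}\cap\theta$ from associativity) or lies in the interior $\dot a$, where it is controlled by $\wh U_{\dot a}$, and the two descriptions must be shown to be consistent on the overlap. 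This consistency is exactly the degeneration picture of Figures \ref{FIG 11122}--\ref{FIG 111122}: an interior element of the union class near $z_0$ degenerates, in the limit, to the boundary configuration $z_0$, and one checks that $|\alpha_{\mathfrak c}|\in\wh U^\ell$ for all $\mathfrak c$ (the interior condition) is implied by, and implies, membership of the boundary pieces $|\alpha'|,|\alpha''|$ in $\wh U^\ell$ (the face condition) — because the $\mathfrak c$-pieces of an interior element are precisely the pieces obtained from the boundary pieces by breaking trivial-cylinder necks, none of which changes the non-trivial components or their complexity.

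The step I expect to be the main obstacle is precisely this last gluing/consistency at $\partial a$: verifying that the two locally-defined pieces $\wh U^{\partial}_a$ and $\wh U_{\dot a}$ overlap to an \^open set, i.e. that a $\wh{\mathcal T}$-neighborhood of a boundary point meets $\dot a$ only in $\wh U_{\dot a}$. This is where the strong topology $\wh{\mathcal T}$ (rather than ${\mathcal T}$) is essential, as flagged in Lecture~3 and Lecture~12: with the coarse topology ${\mathcal T}$ the interior approximations of a boundary configuration need not have their non-trivial pieces converging in a way that preserves membership in $\wh U^\ell$, whereas $\wh{\mathcal T}$ is engineered so that convergence $z_k\to z_0$ forces the piece-wise data to converge too, making the set $\{|\alpha|:|\alpha_{\mathfrak c}|\in\wh U^\ell\ \forall\mathfrak c\}$ \^open. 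Concretely I would: (i) fix $z_0\in\wh U^{\partial}_a$ and a representative $\alpha_0$ with $d(\alpha_0)=\bar d_a$; (ii) use the uniformizer $\Psi$ at $\alpha_0$ together with the degeneracy index description to see that a small $\wh{\mathcal T}$-ball around $z_0$ consists of elements whose non-trivial pieces are $\wh{\mathcal T}$-close to the corresponding pieces of $\alpha_0$; (iii) invoke \textbf{(P$_\ell$)} \^openness of $\wh U^\ell$ on each such piece; (iv) conclude that the whole ball lies in $\wh U^{\partial}_a\cup\wh U_{\dot a}=\wh U_a$. The containment $a\cap\overline{\mathcal M}_J\subset\wh U_a$ then follows as already sketched, completing the lemma.
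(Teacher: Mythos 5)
The paper gives no argument for this lemma at all: it is stated with a closing tombstone and the construction of the strong topology $\wh{\mathcal T}$ is explicitly deferred to \cite{FH-book}, so there is nothing in the text to compare your proof against line by line; I can only assess the proposal on its own terms. The containment half is complete and correct: a $\wt{J}$-holomorphic building has $\wt{J}$-holomorphic pieces, the classes of those pieces have complexity at most $\ell$ (for interior points because $\bar{d}_a+1=\sum_i(\bar{d}_{a_i}+1)$ with at least two summands, for boundary points because $\bar{d}_a=\bar{d}_{a'}+\bar{d}_{a''}+1$), and the inductive hypothesis \textbf{(P$_{\ell}$)} then places them in $\wh{U}^{\ell}$, whence $|\alpha|\in\wh{U}_{\dot{a}}$ or $|\alpha|\in\wh{U}^{\partial}_{a}$ according to $d(\alpha)$.

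For openness, your split into interior and boundary cases and your identification of the crux (matching the face description with the interior description across $\partial a$, and the role of $\wh{\mathcal T}$) are the right picture, with two caveats. First, the reconciliation is cleaner if you cite \textbf{(P$_{\ell}$)}(2) explicitly: since $a'$ and $a''$ have complexity at most $\ell$, the face condition $|\alpha'|,|\alpha''|\in\wh{U}^{\ell}$ is equivalent to all nontrivial pieces of $\alpha$ lying in $\wh{U}^{\ell}$, so $\wh{U}^{\partial}_{a}$ and $\wh{U}_{\dot{a}}$ are both cut out by the single condition that every nontrivial piece lies in $\wh{U}^{\ell}$, and openness reduces to one statement about the piece assignment. (Your phrase about pieces ``obtained from the boundary pieces by breaking trivial-cylinder necks'' has the direction reversed: interior elements arise from boundary configurations by gluing; the correct combinatorial point is that the nontrivial components and their classes agree, and within $\wh{Z}$ the glued-in cylinders are genuine orbit cylinders, which is why the pieces vary continuously.) Second, and this is the genuine gap, which the text shares: steps (ii)--(iii) of your concrete plan assume that a small $\wh{\mathcal T}$-neighborhood of a point of $\wh{Z}$, including a boundary point, consists of elements whose nontrivial pieces are $\wh{\mathcal T}$-close to those of the center. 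Nothing available in this paper gives that: $\wh{\mathcal T}$ is only characterized through properties (1)--(2) of Lecture 12, which concern ps-norms and compactness control, not compatibility with the disjoint-union/face decomposition. You flag this honestly in the interior case but then use it silently at the boundary, which is precisely where it is hardest and where the coarse topology ${\mathcal T}$ would fail; as it stands your proof is conditional on extracting this property from the actual construction of $\wh{\mathcal T}$ in \cite{FH-book}.
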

Using $\wh{N}^{\ell}$ we can define $\wh{N}_a$ on $\text{cl}_{\wh{Z}}(\wh{U}_a)$ by 
$\wh{N}_a(\alpha,e)=\infty$ if $e$ is nonzero on a trivial cylinder component and otherwise by 
$$
\wh{N}_a(\alpha,e) =\text{max}_{i=0}^k \text{max}_{\mathfrak{c}\in \pi_0^{\text{ntriv}}(\bar{S}_i)}\ \wh{N}^{\ell}(\alpha_{i,\mathfrak{c}},e_{i,\mathfrak{c}}).
$$
\begin{prop}
$\wh{N}_a:{\mathcal E}_{\text{cl}(\wh{U}_a)}\rightarrow [0,\infty]$ is a ps-norm and $(\wh{U}_a,\wh{N}_a)$ controls compactness.
\qed
\end{prop}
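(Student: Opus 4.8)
The statement to prove is the final Proposition: for a union parent class $a$ with $\bar{d}_a=\ell+1$, with $\wh{U}_a=\wh{U}^{\partial}_a\cup\wh{U}_{\dot a}$ and $\wh{N}_a$ defined by the max over nontrivial components (and $\infty$ on trivial cylinder components), the functor $\wh{N}_a:{\mathcal E}_{\text{cl}_{\wh Z}(\wh U_a)}\to[0,\infty]$ is a ps-norm and $(\wh{U}_a,\wh{N}_a)$ controls compactness. The plan is to verify the defining items of a ps-norm (properties (0)--(5) from the definition in Lecture 12) and then the four items in the definition of ``controls compactness,'' systematically reducing each to the inductive hypothesis \textbf{(P$_\ell$)} applied to the underlying parent classes $a_1,\dots,a_e$ and the proper covering functors for the faces $\theta\in\text{face}_a$.

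First I would handle the ps-norm properties. Properties (1) (homogeneity and completeness of the finite-norm subspace), (2) (subadditivity) and (4) ($\wh N_a=\infty$ as soon as $e$ is nonzero on a nontrivial cylinder component) follow fiberwise and pointwise from the fact that a maximum of norms is a norm and that each $\wh N^{\ell}(\alpha_{i,\mathfrak c},\cdot)$ is, by \textbf{(P$_\ell$)}, a ps-norm; property (4) is literally built into the definition of $\wh N_a$. Property (0) --- being sandwiched between model ps-norms up to a continuous positive functor --- follows because $\wh N^{\ell}$ has this property over each $\wh U_{a_i}$ and the ``forgetful'' decomposition $\alpha\mapsto\{\alpha_{i,\mathfrak c}\}$ is compatible with the local models (here one uses the disjoint union structure box \eqref{Good} and the fact that the $\alpha_{i,\mathfrak c}$ live on disjoint nontrivial components, so the model ps-norms glue as a max). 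The genuinely structural points are (2)/(3)/(5): lower semicontinuity under mixed convergence, the bounded-plus-convergent-base $\Rightarrow$ mixed-convergent-subsequence property, and continuity of $\wh n_a$ on $|{\mathcal E}_{(0,1)}|$ with the one-point-compactification topology on $[0,\infty]$. For these I would argue: a sequence $k_j=|(\alpha_j,e_j)|\overset{m}{\rightharpoonup}k$ with converging base in $\text{cl}_{\wh Z}(\wh U_a)$ forces, after passing to a subsequence, each underlying $|\alpha_{j,i,\mathfrak c}|$ to converge in $\wh{\mathcal T}$ and each $e_{j,i,\mathfrak c}$ to converge weakly; then lower semicontinuity of each $\wh N^{\ell}$ plus lower semicontinuity of $\max$ gives (2), extracting a diagonal subsequence gives (3), and continuity of each $\wh n^{\ell}$ on the $(0,1)$-level together with the continuity of $\max$ into $[0,\infty]$ (with the $1$-point compactification, where $\max$ is continuous) gives (5). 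The key input that the decomposition $|\alpha|\mapsto\{|\alpha_{i,\mathfrak c}|\}$ is itself continuous for $\wh{\mathcal T}$, and that the $\wh{\mathcal T}$-closure of $\wh U_a$ is captured by the respective closures of the $\wh U_{a_i}$, is exactly the content of \textbf{(P$_\ell$)(2)} together with the description of $\text{cl}(\wh U^{\partial}_\theta)$ given just before; I would cite these rather than re-derive them.

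Next, ``controls compactness.'' Item (1), that $\wh U_a$ is an \^open neighborhood of $a\cap\overline{\mathcal M}_J$ and $\wh N_a$ a ps-norm over its closure, is the previous Lemma plus the ps-norm verification just completed. For item (2) --- if $\wh N_a\circ\bar\partial_{\wt J}(\alpha)\le1$ and $|\alpha|\in\text{cl}_{\wh Z}(\wh U_a)$ then $|\alpha|\in\wh U_a$ --- I would use that $\bar\partial_{\wt J}$ respects the floor/component decomposition (the concatenation and disjoint union structure maps in Lecture 4), so $\wh N_a\circ\bar\partial_{\wt J}(\alpha)=\max_{i,\mathfrak c}\wh N^{\ell}\circ\bar\partial_{\wt J}(\alpha_{i,\mathfrak c})\le1$ forces each $\wh N^{\ell}\circ\bar\partial_{\wt J}(\alpha_{i,\mathfrak c})\le1$; since $|\alpha_{i,\mathfrak c}|$ lies in the closure of $\wh U^{\ell}$ and $(\wh U_{a_i},\wh N_{a_i})$ controls compactness by \textbf{(P$_\ell$)}, each $|\alpha_{i,\mathfrak c}|\in\wh U^{\ell}$, and then $|\alpha|\in\wh U_a$ by the very definition of $\wh U_a$ (via $\wh U_{\dot a}$ or $\wh U^{\partial}_\theta$ according to $d(\alpha)$). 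Items (3) and (4) --- compactness of $\text{cl}_{\wh Z}\{|\alpha|\in\wh U_a:\wh N_a\circ\bar\partial_{\wt J}(\alpha)\le1\}$ and the mixed-convergence/liminf statement for sequences --- reduce the same way: boundedness of $\wh N_a\circ\bar\partial_{\wt J}$ bounds each $\wh N^{\ell}\circ\bar\partial_{\wt J}$ on the corresponding components, so each component sequence subconverges in $\wh{\mathcal T}$ with mixed-convergent $\bar\partial_{\wt J}$-image by \textbf{(P$_\ell$)(1)}, and a diagonal argument over the finitely many indices $i\in\{0,\dots,k\}$, $\mathfrak c\in\pi_0^{\text{ntriv}}(\bar S_i)$ assembles a subsequence of $(\alpha_k)$ itself; lower semicontinuity of $\max$ yields the liminf inequality. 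The one place where I would be careful is the interaction of the boundary piece $\wh U^{\partial}_a$ and the interior piece $\wh U_{\dot a}$: along a sequence whose degeneracy index jumps in the limit (elements of $\dot a$ degenerating to $\partial a$), one must check that the max-formula for $\wh N_a$ is consistent across this jump --- i.e. that the components seen ``in the interior'' reorganize into the components seen ``on the boundary face'' compatibly with $\wh N^{\ell}$. This is exactly the associativity of the covering functors $\mathcal S_{\theta\cap\theta'}\to\mathcal S\times_{\mathfrak P}\mathcal S\times_{\mathfrak P}\mathcal S$ recorded above, combined with \textbf{(P$_\ell$)(3)}; that associativity is what forces $\wh U^{\partial}_\theta\cap\theta'=\wh U^{\partial}_{\theta'}\cap\theta$ and makes the glued object well-defined.

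\textbf{Main obstacle.} The genuinely hard point is not any single one of these verifications in isolation but the compatibility at the stratum where the interior of $a$ meets its boundary: proving that $\wh N_a$ --- defined by the component-wise max --- is actually a \emph{ps-norm}, i.e. has the required $\wh{\mathcal T}$-continuity, across the degeneracy jump. For an ordinary $\mathcal T$-open neighborhood this would fail (as the paper explicitly warns, right after the $\ell=1$ union case: ``$\wh N_a$ is a ps-norm would generally not be true for an open neighborhood of $a\cap\overline{\mathcal M}_J$ in $Z$''), and it is precisely here that one must use that $\wh U_a$ is \^open, that the perturbation data live over $\wh Z$ with the strong topology $\wh{\mathcal T}$, and that the model ps-norm $\wh N$ assigns weight $\equiv\infty$ on (chains of) trivial cylinder components --- so that passing to the limit a glued trivial cylinder chain ``absorbs'' no finite-norm mass. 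Making this rigorous means tracing through the weight-function bookkeeping of Lecture 12 (the four cases of Figure \ref{FIG XXX67}) to see that the max-formula on the interior and the max-formula on each face agree in the overlap, which is the real content behind the last Proposition.
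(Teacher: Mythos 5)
Your overall strategy --- reducing every ps-norm axiom and every item of ``controls compactness'' to the inductive hypothesis \textbf{(P$_{\ell}$)} on the constituent parent classes via the component decomposition, and locating the real content in the behaviour of $\wh{N}_a$ across the stratum where $\wh{U}_{\dot{a}}$ meets $\wh{U}^{\partial}_a$ --- is the argument the text intends; the paper states this proposition without proof, and its only hint (the remark after the $\ell=1$ union case that the ps-norm property would fail for a ${\mathcal T}$-open neighborhood) is exactly the obstacle you single out. The algebraic ps-norm axioms and item (2) of compactness control are fine as you reduce them, though in item (2) you should also record that every element of $\text{cl}_{\wh{Z}}(\wh{U}_a)$ lies in $\wh{Z}$, so its trivial cylinder parts are $\wt{J}$-holomorphic, $\bar{\partial}_{\wt{J}}(\alpha)$ vanishes there, and the max formula really does compute $\wh{N}_a\circ\bar{\partial}_{\wt{J}}(\alpha)$ from the nontrivial components.

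The genuine gap is in items (3) and (4) of ``controls compactness.'' There the base sequence $(z_k)$ is not assumed to converge, and component-wise subconvergence plus a diagonal argument does not assemble a convergent subsequence of $(z_k)$ itself: for a disjoint-union class the isomorphism class of $\alpha$ records, beyond the classes of its nontrivial components, the relative ${\mathbb R}$-shifts between them (this is exactly why the union moduli space in Figure \ref{FIG 111122} is two-dimensional rather than a product of an interval and a point), and these shifts are a priori unbounded under the norm bound $\wh{N}_a\circ\bar{\partial}_{\wt{J}}\leq 1$. When they diverge, the sequence leaves $\wh{U}_{\dot{a}}$ and converges, if at all, to a multi-floor building in which the components are distributed over floors and the gaps are filled by chains of trivial cylinders, as in Figure \ref{FIG 200}; one must then argue that these limit cylinders are $\wt{J}$-holomorphic, so the limit lies in $\wh{Z}$ and, via the face description, in $\text{cl}_{\wh{Z}}(\wh{U}^{\partial}_a)$, and that the norm bound survives the limit because the defect vanishes on the inserted cylinders. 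This is property (2) of the strong topology $\wh{\mathcal T}$ at work, and it is needed already for the compactness items, not only for the ps-norm continuity where your ``main obstacle'' paragraph places it. Since that paragraph names the right mechanism, the repair is to move it (together with the trivial-cylinder weight bookkeeping) into the proof of items (3)/(4) and to treat the relative-shift degenerations explicitly; as written, the ``diagonal argument over the finitely many indices'' is incomplete.
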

If $a$ is a union descendent and $\bar{a}$ the underlying parent there is the standard extension $(\wh{U}_a,\wh{N}_a)$ using the data $(\wh{U}_{\bar{a}},\wh{N}_{\bar{a}})$\\

Finally we set $\wh{U}^{\ell+1}$ as the union of $\wh{U}^{\ell}$ and all $\wh{U}_a$ with $\bar{d}_a=\ell+1$. Further we define $\wh{N}^{\ell+1}$ in the obvious way.
The induction is complete. At this point we have constructed a system of \^open neighborhoods $\wh{U}_a$, $a\in \pi_0(Z)$
and ps-norms $\wh{N}_a$ defined on ${\mathcal E}_{\text{cl}_{\wh{Z}}(\wh{U})}$ so that for every $a$
the pair $(\wh{U}_a,\wh{N}_a)$ controls in a quantitative way compactness. At this point we are ready to begin with the perturbation and transversality theory.

\newpage

\part*{Lecture 14}
\section{Perturbation Theory}
The standard question of extending a multisection from a boundary with corners to the interior did not get the attention it deserved. 
In fact the problem is subtle. In \cite{HWZ2017} an abstract  method in the polyfold framework is given and the method is further refined in \cite{HS} for applications in inductive procedures.
The difficulty of extending multi\-sections was also observed in \cite{FOOO2017II} but seem not to have 
been adequately addressed previously, see \cite{FOOO,FOOO1}, specifically page 479, but also the introduction concerning \cite{Fuk}. More details about the general difficulties are given in Jake Solomons lecture, \cite{Solomon}\\

In this section we shall carry out the perturbation based on special  sc$^+$-multi\-sections,
which we have introduced previously. Their basic feature boils down to a growth condition related to $\wt{J}$-holomorphic cylinder 
segments,  and which also vanish near nodal points and punctures in a suitable way.
\subsection{Main Perturbation Result}
Recall from the previous section $(\wh{U},\wh{N})$, where $\wh{U}\in\wh{\mathcal T}$ is an \^open neighborhood
of $\overline{\mathcal M}$ in $(\wh{Z},\wh{\mathcal T})$ and $\wh{N}:{\mathcal E}_{\text{cl}_{\wh{Z}}(\wh{U})}\rightarrow [0,\infty]$
is a ps-norm. Further, when
$\wh{U}_a$ defines the intersection of $a$ with $\wh{U}$ we have the property that $(\wh{U}_a,\wh{N}_a)$ controls compactness of $\bar{\partial}_{\wt{J}}$. 
\begin{thm}
Let $(\wh{U},\wh{N})$ control compactness. Given $\varepsilon\in (0,1) $  there exists a special sc$^+$-multisection functor 
$$
\Lambda:{\mathcal E}_{\wh{U}}\rightarrow [0,1]\cap {\mathbb Q}
$$
which has the  following properties.
\begin{itemize}
\item[(1)] $\Lambda$  satisfies 
$$
\Lambda(\alpha,e)=\left(\Lambda\circ(\text{Id}-\Pi)(\alpha,e)\right)\cdot\left( \sum_{i=0}^{d(\alpha)}\sum_{\mathfrak{c}\in \pi^{\text{ntriv}}_0(\bar{S}_i)} \Lambda(\alpha_{i,\mathfrak{c}},e_{i,\mathfrak{c}})\right).
$$
\item[(2)] $\wh{N}(\Lambda)(\alpha) <\varepsilon$ and consequently  for every $a\in \pi_0(Z)$
the set $a\cap |\text{supp}(\Lambda\circ\bar{\partial}_{\wt{J}})|$ is a compact subset contained in $a\cap \wh{U}$.
\item[(3)] $(\Lambda,\bar{\partial}_{\wt{J}})$ are in general position over $\wh{U}$ and 
$\Theta:{\mathcal S}_{\wh{U}}\rightarrow [0,1]\cap {\mathbb Q}$ is a weighted tame branched orbifold so that 
$|\text{supp}(\Theta)| $ intersected with each $a$ is compact.
\end{itemize}
\qed
\end{thm}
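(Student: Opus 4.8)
The plan is to construct $\Lambda$ by induction along the complexity grading $\bar d:\pi_0(Z)\to\{0,1,2,\dots\}$, exactly mirroring the inductive compactness control of Theorem~\ref{thm13.2}, and at each stage to choose the new ingredients generically so that general position holds. Thus one maintains, as the inductive hypothesis on level $\ell$, a special sc$^+$-multisection functor $\Lambda^\ell$ defined on ${\mathcal E}_{\wh U^\ell}$ satisfying the recursion in (1) on all $|\alpha|\in\wh U^\ell$, with $\wh N^\ell(\Lambda^\ell)<\varepsilon_\ell$ for a chosen sequence $\varepsilon_\ell\nearrow\varepsilon$, and with $(\Lambda^\ell,\bar\partial_{\wt J})$ in general position over $\wh U^\ell$ and $\Theta^\ell:=\Lambda^\ell\circ\bar\partial_{\wt J}$ a tame weighted branched orbifold transverse to $\partial{\mathcal S}$ restricted to $\wh U^\ell$. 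The base case $\ell=0$ treats parents $a\in\pi_0^p(Z)$ with $\bar d_a=0$: over the compact set $a\cap\overline{\mathcal M}_J$ one uses the strong bundle uniformizers $\bar\Psi$ and a finite partition of unity (the orbit space $Z$ being metrizable) to produce finitely many special sc$^+$-sections $s_i$ supported near $\overline{\mathcal M}_J$, whose associated local multisection is generic for $\bar\partial_{\bar\Psi}$; smallness in $\wh N$ is arranged by the rescaling operation $\beta\odot\Lambda$. The descendant and union cases with $\bar d_a=0$ then inherit their data canonically (a descendant copies its parent's data, a union class is assigned the product/convolution structure dictated by the right-hand side of (1)), so that (1) holds by construction.

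The inductive step $\ell\Rightarrow\ell+1$ is the heart. For each $a$ with $\bar d_a=\ell+1$ the boundary data $(\Lambda^\ell$ restricted to faces$)$ is already determined by the covering functors ${\mathcal S}_\theta\to({\mathcal S}\times_{\mathfrak P}{\mathcal S})_{c(\theta)}$, just as $(\wh U^\partial_a,\wh N^\partial_a)$ was; the associativity relations among faces $\theta\cap\theta'$ guarantee these boundary prescriptions are consistent. For a parent $a$ one then invokes an extension theorem for special sc$^+$-multisection functors — the analogue in the $\bm V$-structurable class of the extension results cited from \cite{HWZ2017} and \cite{HS} — to extend the boundary multisection to an \^open neighborhood $\wh U_a$ of $a\cap\overline{\mathcal M}_J$ inside $a$, and then perturbs the extension generically in the interior to achieve general position there while keeping it fixed near the boundary (using compactness of $\text{cl}_{\wh Z}(\{\wh n\circ f\le 1\})\cap\wh U_a$ from the compactness-control package, one needs only finitely many local perturbations). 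Smallness $\wh N(\Lambda)<\varepsilon_{\ell+1}$ is again enforced by rescaling. Descendants and union classes on level $\ell+1$ receive their data canonically from $\Lambda^\ell$ via the recursion (1) — this is where it is essential that the $s_i$ vanish strongly near special points and vanish on trivial cylinder segments, since only then do the pieces $\Lambda(\alpha_{i,\mathfrak c},e_{i,\mathfrak c})$ glued by (1) fit together sc$^+$-smoothly over $\wh U^{\ell+1}$, precisely the subtlety signalled around Figures~\ref{FIG 11122} and \ref{FIG 111122}. Finally one sets $\Lambda:=\bigcup_\ell\Lambda^\ell$ on ${\mathcal E}_{\wh U}$; property (2) follows because $a\cap|\text{supp}(\Theta)|$ is contained in $\{|\alpha|\in\wh U_a\ :\ \wh N_a\circ\bar\partial_{\wt J}(\alpha)\le 1\}$ after rescaling, whose closure is compact and lies in $\wh U_a$ by the compactness-control clauses, and (3) holds by the genericity built in at every stage.

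The main obstacle I expect is the extension step combined with preserving the structural recursion (1): an arbitrary special sc$^+$-multisection on the boundary need not extend to the interior while remaining special, and one must work inside the more rigid subclass of $\bm V$-structurable sc$^+$-multisection functors where a controlled extension result is available, simultaneously checking that this subclass is still rich enough to realize general position for $\bar\partial_{\wt J}$. Verifying that genericity can be achieved within this restricted class — i.e. a Sard–Smale-type statement relative to $\bm V$-structures, applied fiberwise over the ep-groupoid charts and then descended — is the delicate point; everything else (metrizability giving partitions of unity, rescaling controlling $\wh N$, the compactness-control package supplying finiteness and the needed closures) is routine once the local transversality and extension machinery from \cite{HWZ2017} and \cite{HS} is in place.
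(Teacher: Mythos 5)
Your proposal follows essentially the same route as the paper: an induction over the complexity $\bar{d}_a$ with the parent/descendant/union ordering, boundary data on faces pulled back via the proper covering functors, an extension result for the restricted (structurable/$\bm{V}$-structurable) class of special sc$^+$-multisections from \cite{HWZ2017,HS}, smallness enforced through the sequence $\varepsilon_0<\varepsilon_1<\dots<\varepsilon$ and the compactness-control package, and the recursion (1) propagated canonically to descendants and unions. You also correctly identify the genuine delicacy — achieving general position and extension simultaneously inside the ``good'' subclass, the overhead the paper itself says must be carried through the induction — so there is nothing substantive to add.
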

\subsection{Extension Result}
As already mentioned there is more to the extension results than it seems and the relevant reference is \cite{HS}.
The essential message from \cite{HS} is that for any reasonable class of sc$^+$-sections one can define a class
of sc$^+$-multisections, let us call them ``good'' for the moment. Being ``good"  is invariant under standard operations 
like $\Lambda\oplus\Lambda$, $\beta\odot\Lambda$, pull-backs by proper covering maps and 
moreover a good sc$^+$-section on the boundary has a good extension.  Further, for inductive proofs, very often the following occurs, which we also have seen in our application. 
In the inductive step one constructs, using previous data, new data on the faces, and it is important that if faces intersect the data
on these intersections coincides. This should, of course, be also true for the overhead, i.e. the ``goodness" whatever that means in a given context. Therefore, in general there has to be some localization of the notion of being good to the boundary for example. 
The realization of such a good system in \cite{HS} has all these properties. In general there might be many different realizations 
of good systems of multisections. 

The issues we just raised are, of course, very important, 
but unfortunately require  a larger amount of time be explained properly.  We refer to the lecture by Jake Solomon and the upcoming 
\cite{HS}.  Everything we describe now can be carried out this way, but to be a complete proof it requires to carry some overhead through the induction.  

We require the reader to be familiar with the usual finite-dimensional transversality theory as well as parameterized versions,
see for example \cite{Hirsch} or \cite{Abraham-Robbin}.
The basic fact about perturbations by multisections based on a class of sections  is that what ever can be achieved in the case without symmetries by a perturbation 
using the given class, can be achieved in the case of symmetries by a multisection. This is the guiding principle. 

\subsection{Induction}
By the previous discussion we have a pair $(\wh{U},\wh{N})$ controlling compactness and this data satisfies certain compatibility conditions. 
The perturbation is constructed by induction with respect to $\bar{d}_a$.  For the given $\varepsilon\in (0,1)$
we pick a sequence $0<\varepsilon_0<\varepsilon_1<...<\varepsilon_i<\varepsilon_{i+1}<... < \varepsilon$. 
Define 
$$
a^{\ell} =\bigcup_{a\in\pi_0(Z),\ \bar{d}_a\leq \ell} a
$$
and 
$$
\wh{U}^{\ell} :=\wh{U}\cap a^{\ell}\ \text{and}\ \  \wh{N}^{\ell} := \wh{N}|{\mathcal E}_{\text{cl}_{\wh{Z}}(\wh{U}^{\ell})}.
$$
\vspace{0.5cm}

\noindent\textbf{Induction Statement}
Assume there exists  for some $\ell\in \{0,1,2,..\}$ a special sc$^+$-multisection functor $\Lambda^{\ell}:{\mathcal E}_{\wh{U}^{\ell}}\rightarrow [0,1]\cap {\mathbb Q}^+$  having the following properties 
\begin{itemize}
\item[(1)] $(\Lambda^{\ell},\bar{\partial}_{\wt{J}})$ is in general position over $\wh{U}^{\ell}$.
\item[(2)] The coarse moduli space $|\text{supp}(\Theta^{\ell})|$ associated to $\Theta^{\ell}:\wh{U}^{\ell}\rightarrow [0,1]\cap {\mathbb Q}$, $\Theta^{\ell}=\Lambda^{\ell}\circ\bar{\partial}_{\wt{J}}$, intersected with every $a\in \pi_0(Z)$, $\bar{d}_a\leq \ell$, 
is compact.
\item[(3)] $\wh{N}^{\ell}(\Lambda^{\ell})(\alpha)<\varepsilon_{\ell}$ for $|\alpha|\in \wh{U}^{\ell}$.
\item[(4)] $\Lambda^{\ell}(\alpha,e)=\left(\Lambda_0\circ(\text{id}-\Pi)(\alpha,e)\right)\cdot \left( \sum_{i=0}^{d(\alpha)}\sum_{\mathfrak{c}\in \pi^{\text{ntriv}}_0(\bar{S}_i)} \Lambda^{\ell}(\alpha_{i,\mathfrak{c}},e_{i,\mathfrak{c}})\right).$
\end{itemize}
Then there exists $\Lambda^{\ell+1}:{\mathcal E}_{\wh{U}^{\ell+1}}\rightarrow [0,1]\cap {\mathbb Q}$ which satisfies
\begin{itemize}
\item[(0)] $\Lambda^{\ell+1} |{\mathcal E}_{\wh{U}^{\ell}}=\Lambda^{\ell}$.
\item[(1)] $(\Lambda^{\ell+1},\bar{\partial}_{\wt{J}})$ is in general position over $\wh{U}^{\ell+1}$.
\item[(2)] The coarse moduli space associated to $\Theta^{\ell+1}:\wh{U}^{\ell}\rightarrow [0,1]\cap {\mathbb Q}$, $\Theta^{\ell+1}=\Lambda^{\ell+1}\circ\bar{\partial}_{\wt{J}}$, when intersected  with $a\in \pi_0(Z)$,  $\bar{d}_a \leq \ell+1$
is compact.
\item[(3)] $\wh{N}^{\ell+1}(\Lambda^{\ell+1})(\alpha)<\varepsilon_{\ell+1}$ for $|\alpha|\in \wh{U}^{\ell+1}$.
\item[(4)] $\Lambda^{\ell+1}(\alpha,e)=\left(\Lambda_0\circ(\text{Id}-\Pi)(\alpha,e)\right)\cdot \left( \sum_{i=0}^{d(\alpha)}\sum_{\mathfrak{c}\in \pi^{\text{ntriv}}_0(\bar{S}_i)} \Lambda^{\ell+1}(\alpha_{i,\mathfrak{c}},e_{i,\mathfrak{c}})\right).$
\end{itemize}
\vspace{0.5cm}

\begin{center}
$\bm{[\ell=0]}$
\end{center}
 We pick a parent class $a\in \pi_0(Z)$ and take $\Lambda_a$ defined over $\wh{U}_a$ such that $\wh{N}(\Lambda_a)(\alpha)<\varepsilon_0$ for 
$|\alpha|\in \wh{U}_a$ and so that $(\bar{\partial}_{\wt{J}},\Lambda_a)$ is in general position over $\wh{U}_a$. Then the perturbed moduli space is a compact branched weighted orbifold. 

Next pick a descendent class $a\in \pi^d_0(Z)$ with $\bar{d}_a=0$ and parent $\bar{a}$. We define $\Lambda_a$ over $\wh{U}_{a}$ by 
$$
\Lambda_a(\alpha,e) =\left(\Lambda_0\circ (\text{Id}-\Pi)(\alpha,e)\right)\cdot \Lambda_{\bar{a}}(\alpha_{\mathfrak{c}},e_{\mathfrak{c}}).
$$
Here $\mathfrak{c}$ is the unique nontrivial component. We note that $\wh{N}_a(\Lambda_a)(\alpha)<\varepsilon_0$. 

There are no disjoint union classes.  

The data defines a $\Lambda^0:{\mathcal E}_{\wh{U}^0}\rightarrow [0,1]\cap {\mathbb Q}$ which is in general position to $\bar{\partial}_{\wt{J}}$. Further the solution set 
is compact without boundary and defines a tame branched weighted orbifold. It holds that
$$
\wh{N}(\Lambda^0)(\alpha)<\varepsilon\ \ \text{for}\ \ \alpha\ \text{in}\ {\mathcal S}_{\wh{U}^0}.
$$
\vspace{0.5cm}
\begin{center}
$\bm{[\ell\Longrightarrow \ell+1]}$
\end{center}
Assume $\Lambda^{\ell}$ has been constructed with the previously described properties.\\

\noindent\textbf{parent:} Pick a parent class $a\in \pi_0(Z)$ with $\bar{d}_a=\ell+1$. For every face $\theta\in \text{face}_a$ we can use the proper covering functor and pull-back data
from the fibered product which only involves data  from the $\ell$-case. It  defines $\Lambda^{\ell+1,\partial}$ on ${\mathcal E}_{\wh{U}^{\ell+1}_{\partial}}$.
Then $\wh{N}^{\ell+1}(\Lambda^{\ell+1,\partial})(\alpha)<\varepsilon_{\ell}$ for $\alpha$ with $|\alpha|\in \wh{U}^{\ell+1}_{\partial}$.
Then an extension result gives $\Lambda_a$ defined on $\wh{U}_a$ with $\wh{N}^{\ell+1}(\Lambda_a)(\alpha)<\varepsilon_{\ell+1}$ and $|\alpha|\in \wh{U}_a$.\\

\noindent\textbf{descendant:}  This is automatic from the previous choices.\\

\noindent\textbf{union:}  Also automatic.\\

Finally we define $\Lambda^{\ell+1}:{\mathcal E}_{\wh{U}^{\ell+1}}\rightarrow [0,1]\cap {\mathbb Q}$ by setting it equal to $\Lambda^{\ell}$ on ${\mathcal E}_{\wh{U}^{\ell}}$
and for $a\in \pi_0(Z)$ with $\bar{d}_a=\ell+1$ we define it as $\Lambda_a$.
The induction is complete and this proves the existence of a general position $\Lambda:{\mathcal E}_{\wh{U}}\rightarrow [0,1]\cap {\mathbb Q}$.

\subsection{Conclusion}
We summarize where we stand at this point. Starting with a pair $(\wh{U},\Lambda)$ controlling compactness
we have constructed a special sc$^+$-multisection $\Lambda:{\mathcal E}_{\wh{U}}\rightarrow [0,1]\cap {\mathbb Q}$ 
satisfying $\wh{N}(\Lambda)(\alpha)<\varepsilon<1$ for all objects $\alpha$ satisfying $|\alpha|\in \wh{U}$.  Moreover
$(\Lambda,\bar{\partial}_{\wt{J}})$ is in general position over $\wh{U}$.  As a consequence we obtain the moduli category
$\text{supp}(\Theta)$
which consists of all objects in ${\mathcal S}_{\wh{U}}$ with $\Theta(\alpha) :=\Lambda\circ\bar{\partial}_{\wt{J}}(\alpha)>0$.
For these properties it follows that the associated orbit space $\mathfrak{M}:= |\text{supp}(\Theta)|$, the coarse moduli spaces,
has the property that for every $a\in \pi_0(Z)$ the intersection $\mathfrak{M}_a:=a\cap \mathfrak{M}$ is compact. Further
$\Theta$ is of tame branched weighted orbifold type. Let us define $\Theta_a$ as the restriction of $\Theta$ to ${\mathcal S}_a$.
Since $\Lambda$ has the property
$$
\Lambda(\alpha,e)=\left(\Lambda\circ(\text{Id}-\Pi)(\alpha,e)\right)\cdot\left( \sum_{i=0}^{d(\alpha)}\sum_{\mathfrak{c}\in \pi^{\text{ntriv}}_0(\bar{S}_i)} \Lambda(\alpha_{i,\mathfrak{c}},e_{i,\mathfrak{c}})\right).
$$
there are many relationships between the different $\Theta_a$.

Denote by $Z'$ the orbit space of the full subcategory associated to all stable maps without marked points.
Clearly $Z'$ is the union of all $a$ coming from a suitable subset $\pi'$ of $\pi_0(Z)$. We also have a decomposition 
of $\pi'$ into parent, descendant, and union classes. We can associate to each $a\in \pi'$ an integer $\text{ind}(a')$
which is the Fredholm index of $\bar{\partial}_{\wt{J}}$ over $a$.
We observe the following theorem.
\begin{prop}
For  a union class $a'\in \pi'$ with $\mathfrak{M}_a\neq \emptyset$ it holds $\text{ind}(a)\geq 1$. 
\qed
\end{prop}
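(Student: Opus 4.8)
The statement asserts that for a union class $a \in \pi'$ (a class of stable maps without marked points whose underlying surface, after contracting nodal disk pairs, has at least two nontrivial components) with nonempty perturbed moduli space $\mathfrak{M}_a = a \cap |\mathrm{supp}(\Theta)|$, the Fredholm index satisfies $\mathrm{ind}(a) \ge 1$. The plan is to exploit the product structure of the moduli space of a union class together with the one extra dimension coming from the freedom to shift the components relative to one another along the target $\mathbb{R}$-direction. First I would reduce to a representative $\alpha$ with $d(\alpha) = 0$ and $|\alpha| \in a$, so that $\bar S$ decomposes as $S^{tc} \sqcup \bar S^{ntc}$ with $\sharp \pi_0^{\mathrm{ntriv}}(\bar S) = e \ge 2$, and the forgetful "Disjoint Union Structure" produces $\widetilde J$-holomorphic-type pieces $\alpha_{\mathfrak c}$, $\mathfrak c \in \pi_0^{\mathrm{ntriv}}(\bar S)$, each lying in some parent class $a_{\mathfrak c} \in \pi'$. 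Since $\mathfrak{M}_a \ne \emptyset$, one of the corresponding points survives the perturbation, and by the defining property (1) of $\Lambda$ in the Main Perturbation Result — which for $d(\alpha)=0$ reads $\Lambda(\alpha,e) = (\Lambda_0 \circ (\mathrm{Id}-\Pi)(\alpha,e)) \cdot \prod_{\mathfrak c} \Lambda(\alpha_{\mathfrak c}, e_{\mathfrak c})$ — each $\alpha_{\mathfrak c}$ is a nonempty point of $\mathfrak{M}_{a_{\mathfrak c}}$.

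Next I would compute indices additively. The Fredholm index of $\bar\partial_{\widetilde J}$ over a disjoint union is the sum of the indices of the pieces plus a correction for the matching data at the interface: because the components are joined at nodal pairs (interface nodes of the contracted surface) and each such node matches an asymptotic periodic orbit, gluing $e$ pieces at $e-1$ internal "matchings" in the target imposes constraints while the relative $\mathbb{R}$-shifts contribute free parameters. Concretely, on the top floor each $\alpha_{\mathfrak c}$ carries its own $\mathbb{R}$-shift equivalence, but when they are all assembled into a single height-$0$ object $\alpha$ there is only one overall $\mathbb{R}$-translation being quotiented; hence $e-1$ of the relative shift parameters become genuine moduli. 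This yields
\begin{equation*}
\mathrm{ind}(a) = \sum_{\mathfrak c} \mathrm{ind}(a_{\mathfrak c}) + (e-1),
\end{equation*}
where I would verify the book-keeping carefully by comparing the linearized operators: the linearization over $\alpha$ is block-diagonal over the $\alpha_{\mathfrak c}$ after imposing the asymptotic-orbit matching conditions, and the cokernel/kernel count differs from $\sum \mathrm{ind}(a_{\mathfrak c})$ precisely by the $e-1$ extra $\mathbb{R}$-directions not already quotiented in each factor. Since $e \ge 2$, we get $e - 1 \ge 1$; and since each $\alpha_{\mathfrak c}$ is a nonempty point of a general-position perturbed moduli space, $\mathrm{ind}(a_{\mathfrak c}) \ge 0$ (a weighted branched orbifold with nonempty support has nonnegative dimension at a surviving point). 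Adding these, $\mathrm{ind}(a) \ge 0 + 1 = 1$.

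\textbf{Main obstacle.} The delicate point is the index-additivity formula, specifically getting the "$+(e-1)$" correction exactly right rather than, say, $0$ or $+e$. This requires a clean statement of how the interface (decorated nodal) matching conditions and the target $\mathbb{R}$-action interact: one must see that in the ambient workhorse M-polyfold the anchor-average normalizations $\mathrm{av}_{\rotatebox[origin=c]{180}{$\Upsilon$}_i}(\widetilde u)$ pin down the floor-wise $\mathbb{R}$-positions only up to the global shift, so exactly $e-1$ relative positions remain, and simultaneously that no spurious cokernel is introduced by the matching of asymptotic orbits (these are cut out transversally, by nondegeneracy of $\lambda$ and by the choice of weights $0 < \delta_0 < \bar\delta_J$, so the matching is a submersive constraint). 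I would handle this by writing the linearized $\bar\partial_{\widetilde J}$ in a uniformizer $\Psi \in F(\alpha)$ adapted to the decomposition $H^1(\bar\sigma) \equiv \bigoplus H^1(\bar\sigma_i^{tc}) \oplus H^1(\bar\sigma_i^{ntc})$, using that the $G$-action is diagonal there, and reading off the Fredholm index from the exact sequence relating the operator on $\alpha$ to the direct sum of the operators on the $\alpha_{\mathfrak c}$ plus the finite-dimensional space of relative $\mathbb{R}$-translations. The remaining inequalities ($\mathrm{ind}(a_{\mathfrak c}) \ge 0$ at a surviving point, $e \ge 2$ by definition of a union class) are then immediate.
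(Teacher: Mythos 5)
The paper states this proposition as an observation with no written proof (the \qed follows the statement directly), so there is no argument of the authors to compare against line by line; judged on its own, your route is the natural intended one and is consistent with the paper's bookkeeping: your identity $\text{ind}(a)+1=\sum_{\mathfrak c}(\text{ind}(a_{\mathfrak c})+1)$, coming from one ${\mathbb R}$-shift per nontrivial piece versus a single overall shift, is exactly the Fredholm-index analogue of the paper's stated relation $(d^J_a+1)=\sum_{\mathfrak c}(d^J_{a_{\mathfrak c}}+1)$ for union classes, and your use of requirement (1) on $\Lambda$ to see that every nontrivial piece survives, together with general position giving $\text{ind}(a_{\mathfrak c})\geq 0$ at a surviving point, is the expected mechanism.

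Two points need tightening. First, your reduction to a representative with $d(\alpha)=0$ conflates two different things: the index of $a$ may of course be computed at a $d=0$ representative, but the point of $\mathfrak M_a$ that survives the perturbation need not lie in the $d=0$ stratum, and for a boundary point the factorization you quote is the full floorwise one, $\Lambda(\alpha,e)=(\Lambda_0\circ(\text{Id}-\Pi)(\alpha,e))\cdot\prod_i\prod_{\mathfrak c}\Lambda(\alpha_{i,\mathfrak c},e_{i,\mathfrak c})$, whose factors are floor pieces, not the $\alpha_{\mathfrak c}$ themselves. You then have to regroup the surviving floor pieces into buildings representing points of the classes $a_{\mathfrak c}$ (using that the trivial-cylinder parts are forced to be $\wt J$-holomorphic by the $\Lambda_0\circ(\text{Id}-\Pi)$ factor, and that the formula holds over $\wh U\supset|\text{supp}(\Theta)|$) to conclude $\mathfrak M_{a_{\mathfrak c}}\neq\emptyset$; also, the inequality $\text{ind}(a_{\mathfrak c})\geq 0$ at a surviving boundary point uses that the perturbation is tame and in general position to $\partial{\mathcal S}$, which is part of the perturbation theorem and should be cited. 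Second, a union class in $\pi'$ may be a union descendant, so the $d=0$ representative can carry trivial cylinder components which your additivity formula silently ignores; you need the (standard, but worth stating) fact that such components contribute $0$ to the Fredholm index — the Conley--Zehnder contributions at their two ends cancel, equivalently descendants have the same index as their parents, matching items (1) and (4) of the paper's grading theorem — so that $\text{ind}(a)=\sum_{\mathfrak c\in\pi_0^{\text{ntriv}}}\text{ind}(a_{\mathfrak c})+(e-1)$ holds for descendants as well. With these repairs the argument is complete: $e\geq 2$ and $\text{ind}(a_{\mathfrak c})\geq 0$ give $\text{ind}(a)\geq 1$.
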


\newpage
\part*{Lecture 15}
\section{Orientations}
A good source for the  orientation question is \cite{Wendl}, which follows general ideas as given in 
\cite{BM,EGH,FloerH1}.   For the discussion we use the set up  of the orientation questions for the sc-Fredholm problem for $\wt{\mathcal E}\rightarrow \wt{\mathcal S}$ associated to $\bar{\partial}_{\wt{J}}$, which can be viewed as a lift of the problem ${\mathcal E}_J\rightarrow {\mathcal S}$. Recall  the definition of $\wt{\mathcal S}$ from Subsection \ref{covering}.

\begin{remark}
(1) We also point out that there is an alternative way where we use the functor  $\Theta:=\Lambda\circ \bar{\partial}_{\wt{J}}$
$$
\Theta:{\mathcal S}_{\wh{U}}\rightarrow [0,1]\cap {\mathbb Q},
$$
which is of tame branched weighted orbifold type,
and by adding asymptotic marked and numberings of top and bottom punctures construct a finite-to-one covering of it on which we deal with the orientation questions. \\

\noindent(2) Another possible way is to start right from the beginning with $\wt{\mathcal E}\rightarrow \wt{\mathcal S}$ and use the functorial actions of rotating markers and 
renumbering which are defined on the respective subcategories. Then the perturbation theory has to be compatible with these actions. 
\qed
\end{remark}
\subsection{Lift to ${\wt{\mathcal E}\rightarrow \wt{\mathcal S}}$}
Recall that $\wt{\mathcal E}$ is the pull-back of ${\mathcal E}\rightarrow {\mathcal S}$
by the forgetful functor, which is a proper covering functor $\wt{\mathcal S}\rightarrow {\mathcal S}$.
Passing to orbit space we obtain $|\wt{\mathcal S}|\rightarrow |{\mathcal S}|$ and denote the preimage
of $\wh{U}$ by $\wt{U}$. Then $\wt{\mathcal S}_{\wt{U}}$ corresponds to ${\mathcal S}_{\wh{U}}$ under the proper covering functor.
We also pull-back the previously constructed $\Lambda$  to obtain
$$
\wt{\Lambda}: \wt{\mathcal E}_{\wt{U}}\rightarrow [0,1]\cap {\mathbb Q}^+.
$$
  This sc$^+$-multisection together with the CR-operator is in general position and the weighted
moduli category $\wt{\Lambda}\circ \bar{\partial}_{\wt{J}}$ is a proper covering of $\Lambda\circ \bar{\partial}_{\wt{J}}$.
The new set-up has some advantages concerning the orientation question.  
\begin{remark}
One should be able to construct the SFT potential directly for the original set-up.  In the context of coarse moduli spaces there is a discussion related to this point in \cite{Wendl}.
\qed
\end{remark}
\subsection{Linearisations}
For the orientation question the following considerations are important.
Assume that $\wt{\alpha}'$ and $\wt{\alpha}''$ are two smooth objects in $\wt{\mathcal S}$ 
which lie over the same object $\alpha$ in ${\mathcal S}$ (via the proper covering).  
We have three associated linearization spaces, namely 
$$
\text{Lin}(\bar{\partial}_{\wt{J}},\wt{\alpha}'),\ \text{Lin}(\bar{\partial}_{\wt{J}},\wt{\alpha}''),\ \text{Lin}(\bar{\partial}_{\wt{J}},{\alpha}).
$$
There are natural bijections between these spaces
$$
\text{Lin}(\bar{\partial}_{\wt{J}},\wt{\alpha}')\rightarrow \text{Lin}(\bar{\partial}_{\wt{J}},{\alpha})\leftarrow \text{Lin}(\bar{\partial}_{\wt{J}},\wt{\alpha}'')
$$
so that  a given $L:T_{\alpha}{\mathcal S}\rightarrow {\mathcal E}_{\alpha}$ corresponds 
to a $L': T_{\wt{\alpha}'}{\wt{\mathcal S}}\rightarrow {\mathcal E}_{\alpha}$ and
a $L'': T_{\wt{\alpha}''}{\wt{\mathcal S}}\rightarrow {\mathcal E}_{\alpha}$. Note that the targets are all the same. 
We can use this to push forward an orientation of $L'$ to an orientation of $L$ and then to $L''$.
Hence we can compare orientations under the change of numbering of positive or negative punctures,
the change of numbering of marked points, as well as rotation of asymptotic markers.
A so-called coherent orientation would give orientations to the linearization spaces $\text{Lin}(\bar{\partial}_{\wt{J}},\wt{\alpha})$ satisfying some rules. However, there are some subtleties associated to so-called bad orbits. We shall discuss some of the issues in the next subsection.

\subsection{Conley-Zehnder Index and Parity}
We start with $([\gamma],T_0,1)$, which is a prime periodic orbit, i.e. $k=1$,  so that $T_0$ is the minimal period.   Then $\dot{\gamma}= T_0\cdot R(\gamma)$ and with $x=\gamma(0)$
we obtain the linearized return map $A:\xi_x\rightarrow \xi_x$ associated to $([\gamma],T_0,1)$.  The linearized return map 
associated to $([\gamma],k\cdot T_0,k)$ is $A^k$. The non-degeneracy assumption implies that $1\not\in \sigma(A^m)$ for all $m\geq 1$.
In particular $\pm 1\not\in \sigma(A)$. Since $A$ is symplectic the real eigenvalues appear in pairs $\tau,\tau^{-1}\subset {\mathbb R}\setminus\{1,-1\}$. 
We can count the number $e$  of eigenvalues in $(-1,0)$  with multiplicity.
\begin{definition}
A periodic orbit $([\gamma],T,k)$ is \textbf{bad} provided $k$ is even and $e$ associated to $([\gamma],T/k,1)$ is odd. 
\qed
\end{definition}
Denote by $c_1:=c_1(\xi)$ the first Chern number of the contact structure $\xi$ associated to $\lambda$. Then the Conley-Zehnder index is defined in 
${\mathbb Z}/ 2c_1$ and we have the parity relation
$$
(-1)^{CZ([\gamma],T,k)+n+1} =  \text{sign}(\text{det}(\text{Id}-A_{([\gamma],T,k)}))
$$
\begin{definition}
Given a periodic orbit $([\gamma],T,k)$, we define a number in ${\mathbb Z}_2=\{0,1\}$ called \textbf{parity}
by $\text{parity}([\gamma],T,k])= \text{CZ}([\gamma],T,k)+n-3$ mod $2$.
\qed
\end{definition}
\subsection{Orientation Bundle}
The notion of coherent orientation is well-known, see \cite{FloerH1,BM,Wendl}. We shall not go into precise details but describe 
how it looks in the current formalism.
We consider now $\wt{\mathcal E}\rightarrow \wt{\mathcal S}$. 
For every smooth object $\wt{\alpha}$ there exists the contractible space of linearizations 
of $\text{Lin}(\bar{\partial}_{\wt{J}}, {\alpha})$ which is a convex set of linear sc-Fredholm operators
$$
L: T_{\wt{\alpha}}\wt{\mathcal S}\rightarrow \wt{\mathcal E}_{\wt{\alpha}}
$$
Over the convex set of Fredholm operators we have the associated determinant bundle with  two possible 
orientations. As a consequence we have the following gadget, where $\wt{S}^{\infty}$ is the subcategory of smooth objects
$$
{\wt{\text{Or}}}\rightarrow \wt{\mathcal S}^{\infty}.
$$
We can view $\wt{\text{Or}}$ as the category with objects $(\mathfrak{o},\wt{\alpha})$, where $\mathfrak{o}$
is an orientation of the family of sc-Fredholm operators in  $\text{Lin}(\bar{\partial}_{\wt{J}}, {\alpha})$.  A morphism 
${\Phi}$ defines 
$$
(\mathfrak{o},\wt{\Phi}): (\mathfrak{o},s(\wt{\Phi}))\rightarrow (\wt{\Phi}_{\ast}\mathfrak{o},t(\wt{\Phi})).
$$
\begin{prop}\label{prop333}
Given a smooth $\wt{\alpha}$ its isotropy group $\wt{G}$ acts on $\wt{Or}_{\wt{\alpha}}$ in an orientation preserving way.
\qed
\end{prop}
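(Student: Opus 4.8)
The plan is to show that the action of the isotropy group $\wt{G}$ of a smooth object $\wt{\alpha}$ on the orientation line $\wt{Or}_{\wt{\alpha}}$ is trivial as a $\mathbb{Z}_2$-action, which is exactly the statement that the action is orientation preserving. First I would recall that $\wt{Or}_{\wt{\alpha}}$ is the $\mathbb{Z}_2$-torsor of orientations of the contractible (convex) family $\text{Lin}(\bar{\partial}_{\wt{J}},\alpha)$ of linear sc-Fredholm operators $L\colon T_{\wt{\alpha}}\wt{\mathcal S}\to \wt{\mathcal E}_{\wt{\alpha}}$; since the family is contractible, an orientation is the same thing as an orientation of the determinant line $\det(L)=\Lambda^{\max}(\ker L)\otimes (\Lambda^{\max}(\mathrm{coker}\,L))^\ast$ of any single $L$ in the family, and the claim reduces to the assertion that each $g\in\wt{G}$ acts on this determinant line by a positive scalar. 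The key reduction is that $\wt{G}$ acts on $\wt{\alpha}$ by a biholomorphism $\phi_g$ of the domain together with the identity of the target data (the reparametrization fixes the asymptotic limits and the marker/numbering data, since these are part of the object structure in $\wt{\mathcal S}$), so the induced action on $\text{Lin}(\bar{\partial}_{\wt{J}},\alpha)$ is by pullback $L\mapsto \phi_g^\ast L$, a continuous family of sc-Fredholm isomorphisms covering $g$ on base and fiber.

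The steps I would carry out, in order, are the following. (i) Fix a representative $L$ that is complex-linear — the operators $\bar{\partial}_{\wt{J}}$ linearize to perturbations of Cauchy–Riemann type operators, and after a homotopy inside the contractible family one may take $L$ to be $\mathbb{C}$-linear on the relevant Sobolev completions. A $\mathbb{C}$-linear Fredholm operator has a canonical (complex) orientation on $\det(L)$ coming from the complex structures on $\ker L$ and $\mathrm{coker}\,L$. (ii) Observe that the $\wt{G}$-action, being by pullback under a biholomorphism $\phi_g$, commutes with the complex structures on domain and target; hence it sends a complex-linear representative to a complex-linear representative and preserves the canonical complex orientation. (iii) Since the canonical complex orientation is $\wt{G}$-invariant and any two orientations of a contractible family differ by a global sign, the action of each $g$ on $\wt{Or}_{\wt{\alpha}}$ fixes this orientation, hence is trivial. (iv) Finally, note that this is compatible with the morphism structure described just before the proposition: the morphism $(\mathfrak{o},\wt{\Phi})\colon(\mathfrak{o},s(\wt{\Phi}))\to(\wt{\Phi}_\ast\mathfrak{o},t(\wt{\Phi}))$ restricted to $\wt{\Phi}=(\mathfrak{o},g)$ with $g\in\wt{G}$ gives $g_\ast\mathfrak{o}=\mathfrak{o}$, which is the assertion.

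The main obstacle I expect is step (i)–(ii) in the presence of trivial cylinder components and the decorations/markers: one must check that the reparametrizations in $\wt{G}$ genuinely act by honest pullbacks that respect the $(0,1)$-structure of $\wt{\mathcal E}$ and do not, for instance, introduce a sign through the way asymptotic markers or the decorations at punctures transform — recall that on trivial cylinder segments the two asymptotic markers are uncorrelated, so one has to be careful that $\wt{G}\subset G^\ast$ acts through biholomorphisms that are compatible with the marker data used to define $\wt{\mathcal S}$. The way I would handle this is to use the splitting of the linearized operator according to the decomposition of the domain into trivial-cylinder and non-trivial pieces (analogous to the $H^1$-splitting in the DM-discussion): over trivial cylinder components the operator is the standard surjective $\bar{\partial}$ on an exponentially weighted space with complex kernel, whose complex orientation is manifestly invariant under rotation of the domain, and over the remaining pieces the argument of steps (i)–(ii) applies verbatim; the product orientation is then $\wt{G}$-invariant because $\wt{G}$ preserves the splitting. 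A secondary (routine) point is to verify that the homotopy to a complex-linear representative can be chosen $\wt{G}$-equivariantly, which follows by averaging over the finite group $\wt{G}$, so no orientation is lost in the reduction.
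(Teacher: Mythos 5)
Your overall strategy --- choose the data $\wt{G}$-equivariantly and show that each $g\in\wt{G}$ acts by a positive factor on the determinant line of a linearization --- has the right shape, but the step everything rests on, your (i), is not available in the SFT setting. The family $\text{Lin}(\bar{\partial}_{\wt{J}},\alpha)$ consists of linearizations of the same section at the same object; all of them have the same asymptotic operators at the punctures (the operators $\bm{L}^{\phi}$ of the appendix, determined by $(\lambda,\omega,J)$ and the asymptotic orbits), and these are in general only real-linear. So no complex-linear operator lies in this convex family, and you cannot reach one by a homotopy either: making the zeroth-order term complex-linear changes the asymptotic operators, and along such a deformation eigenvalues cross the exponential weight, so the path leaves the space of sc-Fredholm operators. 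This spectral-flow obstruction is precisely the source of the coherent-orientation subtleties and of bad orbits; averaging over the finite group $\wt{G}$ makes choices equivariant but does nothing to produce complex linearity. For the same reason your fallback for trivial cylinder components is wrong as stated: there the linearized operator is of the form $\partial_s+\bm{L}^{\phi}$ on exponentially weighted spaces, its kernel and cokernel are not complex vector spaces, and the sign with which a domain rotation acts on the corresponding determinant is exactly the quantity that can be $-1$ (this is what ``bad orbit'' means); it is not ``manifestly'' positive.

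What makes the proposition true --- and what any proof must use as the engine rather than as a side remark --- is the extra structure of $\wt{\mathcal S}$: morphisms preserve the orderings, the asymptotic markers at the punctures in $\Gamma^-_0\cup\Gamma^+_k$, and the decorations of the interface nodal pairs. Hence an element of $\wt{G}$ cannot rotate the asymptotic data freely, and it is exactly this constraint (absent for the isotropy groups in ${\mathcal S}$) that excludes the $(-1)$ contributions. The standard argument, which the paper defers to the orientation literature \cite{FloerH1,BM,Wendl}, proceeds by an equivariant choice of linearization and a linear gluing/coherent-orientation analysis: the determinant is expressed through contributions of interior pieces, nodes, and asymptotic model operators; the asymptotic contributions are acted on trivially because the markers and decorations are fixed, and the interior contributions can then be handled by the complex-linear deformation you describe, which is legitimate only there because no asymptotics are involved (this is the closed-curve situation of \cite{HWZ5}). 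As written, your proof asserts positivity precisely where the genuine difficulty sits, so the gap is essential rather than cosmetic; reorganizing the argument around marker- and decoration-preservation, with complex linearity confined to the asymptotics-free part, would repair it.
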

Next we introduce the notion that two smooth objects are related by a path and similarly if they are equipped with orientations.
\begin{definition}
We say two smooth objects $\wt{\alpha}',\wt{\alpha}''$ are \textbf{related by a path} if there exists a third object $\wt{\alpha}$ and $\Psi\in F(\wt{\alpha})$, say $\Psi:G\ltimes O\rightarrow \wt{\mathcal S}$
and a sc-smooth path $\gamma:[0,1]\rightarrow O$ with $\Psi(\gamma(0))=\wt{\alpha}'$ and $\Psi(\gamma(1))=\wt{\alpha}''$.
If $(\mathfrak{o}',\wt{\alpha}')$ and $(\mathfrak{o}'',\wt{\alpha}'')$ are given, we say that they are \textbf{related by a path}, provided $\wt{\alpha}'$ and $\wt{\alpha}''$ are related by a path
and the prolongation of the orientation $\mathfrak{o}'$ along $\gamma$ gives $\mathfrak{o}''$.  
\qed
\end{definition}
The Cauchy-Riemann section functor  $\bar{\partial}_{\bar{\Psi}}$  defines for $\bar{\Psi}$   an orientation bundle over $O_{\infty}$, so that the the prolongation of the orientation along a path is well-defined, see \cite{HWZ2017} for the precise argument.

In view of Proposition \ref{prop333} we can pass to orbit space and we obtain a ${\mathbb Z}_2$-bundle over the metrizable space $\wt{Z}_\infty$ say
$$
\wt{\mathsf{O}}\rightarrow \wt{Z}_\infty,
$$
where 
$\wt{Z}:=|\wt{\mathcal S}|$ and $\wt{Z}_{\infty}$ is the metrizable space of smooth points.  We shall call it
\textbf{orientation bundle}. If $\wt{a}\in \pi_0(\wt{Z})$ we can restrict the orientation bundle to $\wt{a}$ and denote this restriction by 
$\wt{\mathsf{O}}_{\wt{a}}$.
\begin{thm}
For every class $\wt{a}\in \pi_0(\wt{Z})$ the restricted orientation bundle $\wt{\mathsf{O}}_{\wt{a}}$ is orientable.
\qed
\end{thm}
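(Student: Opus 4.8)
The plan is to show that the obstruction to orienting $\wt{\mathsf{O}}_{\wt a}$ is a homomorphism $\pi_1(\wt a_\infty)\to\mathbb{Z}_2$ which vanishes, and to do this by reducing the question — via the lifting to $\wt{\mathcal S}$ and the ``related by a path'' relation — to a purely linear-algebraic computation involving Conley–Zehnder indices and parities along loops. First I would fix a smooth object $\wt\alpha$ with $|\wt\alpha|\in\wt a_\infty$ and recall that, since $\wt a$ is connected, any two smooth objects over $\wt a$ are related by a path in the sense of the excerpt's definition; the orientation bundle $\wt{\mathsf{O}}_{\wt a}$ is then determined by parallel transport of orientations of the determinant line of the family $\mathrm{Lin}(\bar\partial_{\wt J},\alpha)$ along loops, and orientability is equivalent to triviality of the monodromy representation $\rho:\pi_1(\wt a_\infty,|\wt\alpha|)\to\mathbb{Z}_2$. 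Here I use that the space of sc-Fredholm linearizations over $\wt\alpha$ is convex, hence contractible, so the determinant line is canonically a line bundle over $\wt a_\infty$ up to the $\mathbb Z_2$-action described in Proposition \ref{prop333}, and that this action is orientation-preserving so the bundle descends to orbit space.

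Next I would compute $\rho$ on loops. A loop in $\wt a_\infty$ can be decomposed (after perturbing to general position and using the polyfold/face structure) into elementary pieces: paths that stay in the top stratum $d_Z=0$, degenerations to a face and back, and the combinatorial moves that are now trivial because we are on $\wt{\mathcal S}$ (renumbering of marked points, renumbering of top/bottom punctures, rotation of asymptotic markers), whose effect on orientations is controlled by the natural bijections
$$
\mathrm{Lin}(\bar\partial_{\wt J},\wt\alpha')\to\mathrm{Lin}(\bar\partial_{\wt J},\alpha)\leftarrow\mathrm{Lin}(\bar\partial_{\wt J},\wt\alpha'')
$$
from Subsection on Linearisations. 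For the analytic pieces one uses the standard gluing/linear-gluing formula for the determinant line under splitting at a periodic orbit: the determinant of the CR-operator on a building is, up to a sign depending only on the parities of the orbits at which one glues, the tensor product of the determinant lines of the pieces. Thus the monodromy of a loop is expressed as a sum of local contributions, each being a well-defined element of $\mathbb Z_2$ attached either to a Conley–Zehnder index (via the parity relation $(-1)^{CZ+n+1}=\mathrm{sign}\,\det(\mathrm{Id}-A)$) or to the combinatorial moves. Because all of these local contributions depend only on the fixed asymptotic data of the class $\wt a$ (the periodic orbits, their multiplicities, the genus, the number of marked points), and because a loop returns to the same data, the signs cancel in pairs: each face-degeneration is matched by the inverse re-gluing, and each marker rotation by $2\pi$ contributes trivially on a prime orbit and, on a $k$-fold cover, contributes a sign governed by $\mathrm{parity}$ that is cancelled by the corresponding contribution when one returns. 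Hence $\rho$ is trivial and $\wt{\mathsf{O}}_{\wt a}$ is orientable.

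The main obstacle I expect is exactly the bad-orbit phenomenon: for a periodic orbit $([\gamma],T,k)$ with $k$ even and $e$ odd, rotating an asymptotic marker can reverse the orientation of the local linearization space, so one must check that in any loop such a reversal occurs an even number of times. I would handle this by organizing the bookkeeping so that markers at a given puncture are tracked continuously, using that on $\wt{\mathcal S}$ the set of decorations at a puncture is a torsor over $\mathbb Z_k$ and a closed loop induces a permutation of this torsor whose sign is constrained; combined with the fact that the total Conley–Zehnder contribution around a loop is a null-homotopy invariant of the asymptotic data (it lies in $\mathbb Z/2c_1$ and the loop is trivial in that quotient since it returns to the same orbits), the parity contributions must sum to zero. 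A secondary technical point is making the ``decompose a loop into elementary pieces'' step precise in the polyfold setting; for this I would invoke the tame polyfold structure on $\wt{\mathcal S}$, the face-structured property, and the local models of the CR-section near the boundary, reducing each elementary degeneration to the linear gluing analysis already available in \cite{HWZ2017,BM,Wendl}.
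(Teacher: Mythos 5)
You should know at the outset that the paper itself offers no proof of this theorem: it is stated with the details deferred to the coherent-orientation literature (\cite{FloerH1,BM,Wendl}) and to \cite{HWZ2017} for the prolongation of orientations along sc-smooth paths, so your attempt can only be measured against that standard route. Your central insight is the right one: the whole point of working on $\wt{\mathcal S}$ is that asymptotic markers, orderings of marked points and numberings of punctures are part of the objects, the asymptotic orbits are locally constant on a component by non-degeneracy, morphisms preserve decorations, and the isotropy acts in an orientation-preserving way (Proposition \ref{prop333}); consequently the classical obstruction coming from bad orbits under marker rotation has no chance to appear over $\wt{a}$.

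The execution, however, has a genuine gap exactly where the work should be. You reduce orientability to vanishing of a monodromy homomorphism into ${\mathbb Z}_2$ and then assert that, after decomposing a loop into ``elementary pieces'', the local sign contributions ``cancel in pairs'' because the asymptotic data of $\wt{a}$ is fixed; this is precisely the statement to be proved, and no mechanism for the cancellation is given. Two of your supporting remarks also do not hold up: a loop in $\wt{a}$ never effects a marker rotation at all, since every closing morphism in $\wt{\mathcal S}$ maps decoration to decoration, so there is no bad-orbit sign ``cancelled by the corresponding contribution when one returns''; and the observation that the total Conley--Zehnder contribution lies in ${\mathbb Z}/2c_1$ and ``the loop is trivial in that quotient'' does no work, because the asymptotic orbits, hence their indices, are constant on the component. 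The standard argument, which the cited references follow and which avoids any loop decomposition, is constructive: fix once and for all an orientation of a capping operator for each decorated periodic orbit occurring in the asymptotic data of $\wt{a}$; the linear gluing theorem then induces an orientation of $\det(L)$ for every $L\in \text{Lin}(\bar{\partial}_{\wt{J}},\wt{\alpha})$ (using convexity of the linearization space), independent of the choices by associativity of gluing and varying continuously with the smooth object; combined with Proposition \ref{prop333} this descends to a nowhere-vanishing section of $\wt{\mathsf{O}}_{\wt{a}}\rightarrow \wt{a}\cap \wt{Z}_{\infty}$, i.e.\ an orientation. If you insist on the monodromy formulation you will still need essentially this gluing input to evaluate the monodromy on an arbitrary loop, since a general loop in the orbit space need not admit the stratum-by-stratum decomposition you describe without additional polyfold arguments.
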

This is a good start, but still not enough for constructing SFT. In fact, what we need is a system of orientations which is compatible
in a precise sense. For this we need some preparation.
Given a smooth object $\wt{\alpha}$ in the interior of a face of $\wt{\mathcal S}$ we can consider 
the space of linearizations of $\bar{\partial}_{\wt{J}}$ and take a representative $L$.  
\begin{itemize}
\item[(1)]   Then  the restriction $L'$ to the tangent space of the face at $\wt{\alpha}$ can be related to a specific product of linearizations via the proper covering map (pick a convention here).
\end{itemize}
Assume for simplicity $L'$ is surjective (In the general case there is  a formula). Then 
\begin{itemize}
\item[(2)]    The kernel of $L$ will have a vector $h$  which is outward pointing. We write the orientation of $L$  in terms of $h$ as a first vector  followed
by those in the kernel of $L'$ tensored by the cokernel.
\end{itemize}
This allows to use the concatenation structure to define a notion of something like a coherent orientation, see \cite{Wendl,BM,FloerH1}.
It is particularly important to understand the rules which have to be implemented, when renumbering periodic orbits as
well as rotation asymptotic markers.  This is discussed in detail in \cite{Wendl}.
\subsection{Remark about Invariants}
At this point we would be able to introduce the Hamiltonian $H$ introduced in \cite{EGH}
and prove its property $H\circ H=0$ (if using closed forms) or more generally $dH+H\circ H=0$. 
One can follow \cite{EGH}.   We intend to expand this lecture note to a graduate text, which would 
cover this.  Specialists know at this point know what to do and beginning  graduate students find details  in \cite{Wendl} or \cite{EGH}. 
For example if $Q$ is simply connected and the first Chern class of the contact structure vanishes
we can take for every periodic orbit $([\gamma],T,k)$ a cap and compute the CZ-index with respect to an associated trivialization. Every periodic orbit will have a well-defined CZ-index, 
since under our assumption the  result is independent of the choices.   Given a stable map  we can glue in the caps 
and we obtain second homology class $A\in H_2(Q,{\mathbb Z})$.  
Consider  a parent class $a\in \pi_0^p(\wt{Z})$ and 
take a non-nodal representative and glue in the caps.  This allows to associate to $a$ a second 
homology class $A$, the genus of the underlying domain, the positive asymptotic periodic orbits and the negative 
asymptotic periodic orbits.  We have the evaluations functors at the marked points which can be used to  pull-back differential forms on $Q$ to sc-smooth differential forms on $\wt{\mathcal S}$, see \cite{HWZ2017} for the underlying theory. Similarly we have a forgetful functor
into DM-space associating to a stable map  the stable part of the underlying domain.  We can define 
so-called correlators by integrating certain expressions over moduli spaces 
having no bad orbits. Bundling these expressions suitably, see \cite{EGH}, we obtain the so-called Hamiltonian.
The orientation properties imply $[H,H]=0$ for a suitably defined $[.,.]$.

\newpage
\part*{Lecture 16}
\section{Homotopy}
We go back to ${\mathcal E}\rightarrow {\mathcal S}$ and investigate the relationship between $(\wh{U},\wh{N},\Lambda)$
and $(\wh{U}', \wh{N}',\Lambda')$, where $\Lambda$ and $\Lambda'$ are transversal to $\bar{\partial}_{\wt{J}}$.  
Define $\wh{U}''=\wh{U}'\cap \wh{U}'$ and equip with $\wh{N}'' =\text{max}\ \{ \wh{N},\wh{N}''\}$. 
We can find a transversal $(\wh{U},\wh{N},\Lambda'')$ such that the solutions associated to $(\Lambda'',\bar{\partial}_{\wt{J}})$
in $\wt{U}$ belong to $\wt{U''}$. We can do the same for $(\wh{U}', \wh{N}',\Lambda')$. The upshot is that 
we may assume without loss of generality that $\wt{N}=\wt{N}'$ and $\wh{U}=\wh{U}'$.

\subsection{Set-Up}
We consider the category ${\mathcal S}_{[0,1]}=[0,1]\times {\mathcal S}$ equipped with the pull-back bundle still denoted by ${\mathcal E}$. We denote the transversal perturbations 
by $\Lambda_0$ and $\Lambda_1$.  Compactness is not really the issue since it is guaranteed provided the perturbations are small enough. The set controlling compactness
will be $[0,1]\times \wh{U}$ and the ps-norm is $\wh{N}$ on ${\mathcal E}_{(t,\alpha)}\equiv {\mathcal E}_{\alpha}$ provided
$|\alpha|\in \text{cl}_{\wh{Z}}(\wh{U})$.
We again proceed inductively using $\overline{d}$ and the parent, descendent, and union classification of classes in $\pi_0(Z)$.

We have a projection $\overline{t}:[0,1]\times {\mathcal S}\rightarrow [0,1]$. Whatever, the extension is of $\Lambda_0\sqcup \Lambda_1$ it will be transversal to the boundaries
$(\{0\}\times {\mathcal S}) \sqcup (\{1\}\times {\mathcal S})$.  Recall that as part of the inductive procedure 
already in the second step we consider boundary faces and pull back data associated to some 
$(a',a'')$ which we dealt with in the previous step.   This time we have to deal with with families 
$$
{(\Lambda_{a',t})}_{t\in I}\ \ \text{and}\ \ {(\Lambda_{a'',t})}_{t\in I}
$$
 and we have to take a fibered product with respect to $t$.  This, of course, works
if $\bar{t}$ is a submersion on at least one of the moduli spaces. Not surprisingly this can generally not be expected.
One can get away with somewhat less assumptions than that, but it is still not feasible in our case. 
However, there is a simple trick to achieve this which can be adapted from  Kuranishi framework, see \cite{FOOO2010,FOOO2010b,FOOO2013,FOOO2015I} (in the last reference particularly Section 7).  The issue in these references
is the same since it involves fibered product constructions.  

 \begin{remark}
 The idea is a logical consequence of understanding the problems which went into 
 the decision to use  multisections when dealing with problems having local
symmetries.  Symmetries generally  obstruct transversality. To deal with this, rather than considering $\Lambda_0\circ\bar{\partial}_{\wt{J}}$
($\Lambda_0$ supported on the zero-section),
we considered a suitable small perturbation $\Lambda\circ\bar{\partial}_{\wt{J}}$, where we recall $\sum_{(\alpha,e)}\Lambda(\alpha,e)=1$
for every fixed $\alpha$.  We  broke the symmetry by a perturbation, but then  took a symmetric family of problems
locally by using a symmetric family of perturbations. Each problem in the finite family is allocated an appropriate weight, or alternatively 
we `average' over the problems.  One introduces a notion when two such local families  coincide locally and consequently obtains 
a notion of a global problem.  Usually homological information over ${\mathbb Q}$ will be preserved.

 When we deal with the homotopy problem the issue arises 
from  the diagonal in $I\times I$ due to the $\bar{t}$-projection.  
\begin{figure}[h]
\begin{center}
\includegraphics[width=6.5cm]{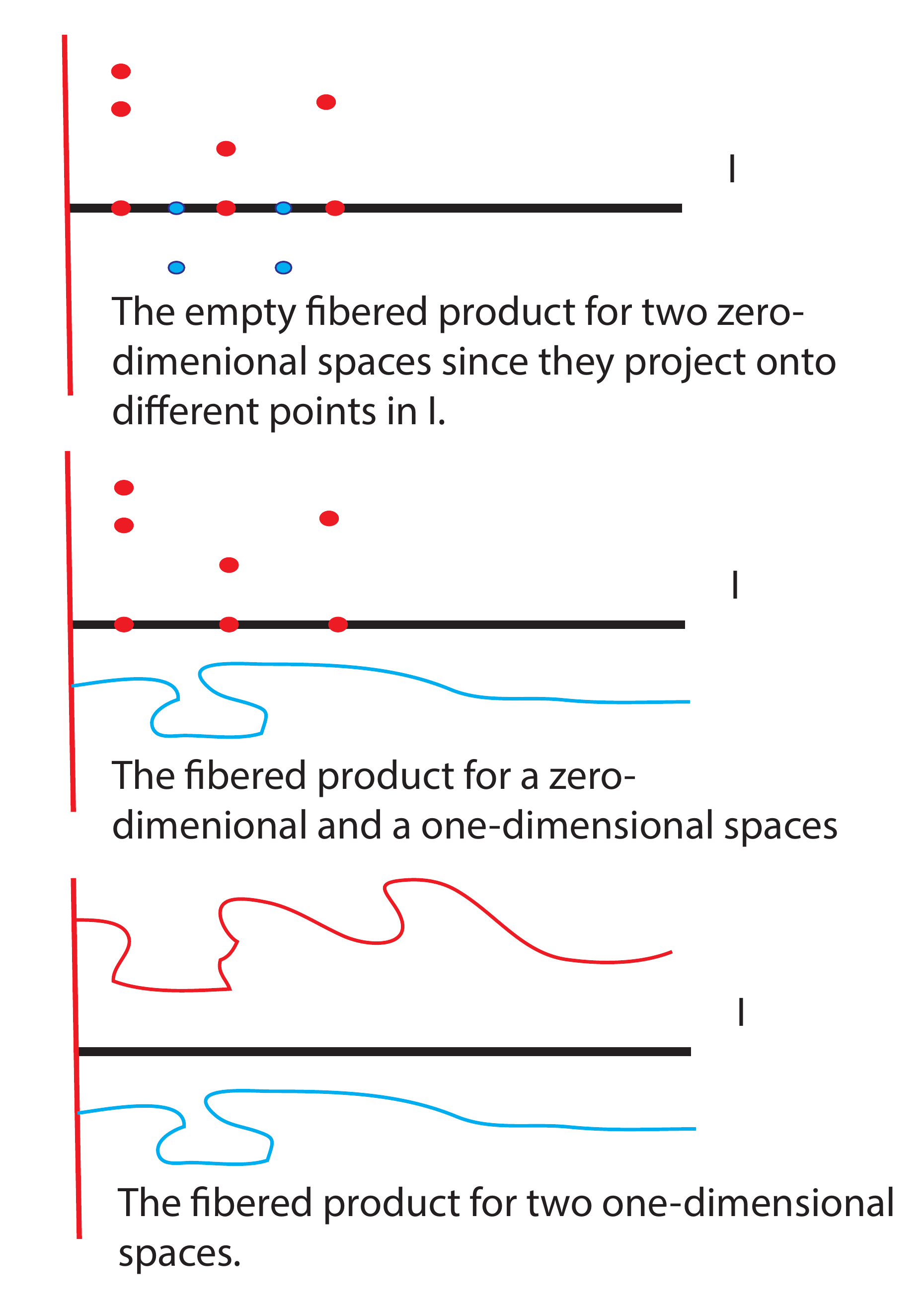}
\end{center}
\caption{Some of the fibered product issues.}\label{FIG X1}
\end{figure} 

\noindent Namely we have to consider boundaries
obtained by concatenation of descendants of the same parent, which are essentially perturbed 
in the same way!  Similarly there is a problem when considering disjoint unions!  
A model for the issues is a taking a fibered product of two identical situations.  In this case the fibered 
product construction fails in general.  
 Indeed, we need that if we have an element in the fibered product then at least one of the $t$-projections 
is submersive near the appropriate points. In a first naive attempt
 we can  try a family of homotopies, say 100 sections in sufficiently 
general position producing 100 different solution spaces with maps to $[0,1]$.
When we take a fibered product of the problem with itself,  but use different pairs of homotopies
in our $100\times 100$ collection of pairs we obtain  10000 homotopies of which  $ 100$ have the diagonal problem. Hence, roughly speaking 9900 are good compared to 100 bad ones.  Of course,  taking the limit in the number of 
such homotopies to infinity  the percentage of bad problems with respect to the overall problems goes potentially  to 
$0$.  Of course, still the bad problems could spoil the algebraic averages and there is no proof along these lines, which gets away with 
a finite number of sections. However, the idea works for smooth families together with an averaging, provided we set things up appropriately.
 \qed
 \end{remark}

Coming back to the SFT problem, the right implementation of this idea is to take suitable smooth families
 of special sc$^+$-multisection functors. We also have a class of such sc$^+$-multisections  $\bm{\Lambda}$
for which we have extension theorems, but we shall not discuss this here. The discussion in 
\cite{HWZ2017} and in the forth-coming paper \cite{HS} can be generalized to this parameter depending context.
Here is what we do.
Define ${\mathcal I}=(-1,1)$ and let $\beta:(-1,1)\rightarrow [0,\infty)$ be a map with the following properties.
\begin{itemize}
\item[(1)] $\beta$ is   smooth with compact support 
in $(-1,1)$.
\item[(2)]  $\beta(s)-\beta(-s)=0$.
\item[(3)]  $\int_{-1}^1 \beta(s) ds=1$. 
\end{itemize}
 Then we obtain the compactly supported one form 
$\tau^1=\beta(s)\cdot ds$. We define ${\mathcal I}^N$ as the $N$-fold product and 
also we put 
$$\tau^N 
:=  \beta(s_1)\cdot.. \cdot \beta(s_N)\cdot  ds_1\cdot..\cdot ds_N.
$$
  The special  sc$^+$-multisections we are interested in, can be viewed as maps which associate to $s\in {\mathcal I}^N$ a family of special sc$^+$-multisection functors of the strong polyfold bundle, or alternatively as a sc$^+$-multisection functor over ${\mathcal I}^N\times [0,1]\times {\mathcal S}$
by pulling back ${\mathcal E}\rightarrow {\mathcal S}$ via the obvious projection.
Hence given $\bm{\Lambda}$  and  strong bundle uniformizer $\bar{\Psi}:G\ltimes K\rightarrow  {\mathcal E}$ we can write 
$$
\bm{\Lambda}\circ (\text{Id}_{{\mathcal I}^N}\times [0,1]\times \bar{\Psi}) (s,t,k) = \frac{1}{\sharp I}\cdot \sharp\{ i\in I\ |\ s_i(s,t,p(k))=k\}
$$
for $s\in {\mathcal I}^N$ and all $k\in K$ with $p(k)$ near $\bar{o}$. In order to have extension theorems we need
to add some additional conditions which we shall suppress and which are similar to those in the non-parameterized case. There is an obvious operation to extend the parameter space given $\bm{\Lambda}$, namely we just multiply the domain with some additional ${\mathcal I}^K$
with parameters, which  however are ineffective.  We shall refer to this as the \textbf{trivial enrichment}.

A quick example without symmetry.  Assume that $p:E\rightarrow M$ is a smooth oriented vector bundle over a connected  smooth oriented manifold 
without boundary, so that $\dim(M)= \dim(E_m)$ and  that $a:M\rightarrow I$ is a  submersion.  Denoted by $\Omega$ the Thom form. 
 Then 
$$
\int_M f^{\ast}\Omega = e(p),
$$
where $e(p)$ is the Euler number. If $f$ is transversal to the zero section $f^{-1}(0)$ consists  of finitely many points and $s: f^{-1}(0)\rightarrow I$
is nowhere a submersion. 
Again starting with the given $f$  in the previous discussion, we can defined   $\bar{f}:{\mathcal I}^N\times M\rightarrow E$ 
by
$$
\bar{f}(s,m)=f(m).
$$
Taking a sufficiently large $N$ we find a small perturbation $\wt{f}$ of $\bar{f}$ such that
\begin{itemize}
\item[(1)]   $\wt{f}^{-1}(0)$ is a smooth manifold of dimension $N$.
\item[(2)]  $a:\wt{f}^{-1}(0)\rightarrow [0,1]$ is a submersion.
\end{itemize} 
We do not(!) claim that the map in (2) is surjective or proper.  However, we note that 
$$
\wt{f}^{-1}(0)\rightarrow {\mathcal I}^N
$$
is proper.   Then we consider the compactly supported form 
$$
a^{\ast}\tau^N \wedge \wt{f}^{\ast}\Omega
$$
 on ${\mathcal I}^N\times M$ of degree $N+\dim{M}$.
We note that 
$$
\int_{{\mathcal I}^N\times M} a^{\ast}\tau^N \wedge \wt{f}^{\ast}\Omega =e(p)
$$
This is some kind of averaging.  Hence by passing from $f$ to $\wt{f}$ we did not lose the Euler class information,
but gained the additional property that $\wt{f}^{-1}(0)\rightarrow I$ is a submersion, which could be used for further constructions.
More precisely in our SFT-case, before we actually carry out  the integration over ${\mathcal I}^N$ (i.e. the averaging), we can make further constructions, for example  fibered product constructions and only at the very end we average using the accumulated averaging 
parameters.

\subsection{Consequences of Compactness Control}
Recall that we work with $(\wh{U},\wh{N})$ controlling compactness. Assume that we are given 
a small perturbation $\bm{\Lambda}$ in the sense that  $\wh{N}(\bm{\Lambda})(s,t,\alpha)\leq \delta <1$.
We denote for $a\in \pi_0(Z)$ by ${\mathcal I}_a$ the finite product ${\mathcal I}^{n_a}$.
We assume that  $\bm{\Lambda}$ for $a\in \pi_0(Z)$ defines
$$
\bm{\Lambda}_a:{\mathcal I}_a\times [0,1]\times {\mathcal E}_{\wh{U}_a}\rightarrow [0,1]\cap {\mathbb Q}
$$
by  $(s,t,(\alpha,e))\rightarrow \bm{\Lambda}(s,t,(\alpha,e))$. Next we consider
$$
\bm{\Theta}_a:{\mathcal I}_a\times [0,1]\times {\mathcal S}_{\wh{U}_a}\rightarrow [0,1]\cap {\mathbb Q}: (s,t,\alpha)\rightarrow \bm{\Lambda}(s,t,\bar{\partial}_{\wt{J}}(\alpha)).
$$
Consider the moduli category which is generated by the objects $(s,t,\alpha)$ on which $\bm{\Theta}_a$ is positive 
and take its orbit space denoted by  $\mathfrak{M}_a$. Then $\mathfrak{M}_a\rightarrow {\mathcal I}_a$ is a proper map, i.e.
the preimage of a compact subset of ${\mathcal I}_a$ is compact.
If we take the dimension ${\mathcal I}_a$ large enough we can achieve that 
the $t$ projection into $[0,1]$ is a submersion (not necessarily onto).  

\subsection{The Perturbation Scheme}
All the perturbations are assumed to be  small which can be measured by $(\wh{U},\wh{N})$. 
The key is to  take enough parameters. The organization of the perturbation scheme is the same as the one  already used
and previously discussed.\\

\noindent{$\bm{\ell=0:}$} We start with $a\in \pi_0^p(Z)$ and $\bar{d}_a=0$.  We can find $\bm{\Lambda}_a$ with parameter 
set ${\mathcal I}_a:={\mathcal I}^{n_a}$ 
extending  the corresponding trivial enrichment of $\Lambda_{0,a}$ and $\Lambda_{1,a}$. We can do this in such a way that 
$(\bm{\Lambda},\bar{\partial}_{\wt{J}})$ is in general positive and the projection onto $[0,1]$ is a submersion.
This perturbation extends trivially to descendants and there are no union classes. \\

\noindent{$\bm{\ell=1:}$} We start again with a parent $a\in \pi^p_0(Z)$ and assume $\bar{d}_a=1$.
Pick a face $\theta$ and recall that two different faces are disjoint. The face $\theta$ corresponds
to $(a',a'')$. The relevant set $\wh{U}^{\partial}_{\theta}$ consists of all $|(\alpha',\wh{b},\alpha'')|$ 
such that $|\alpha'|\in \wh{U}_{a'}$ and $|\alpha'|\in \wh{U}_{a''}$ and using  the proper covering functor we   define 
$$
\bm{\Lambda}_{a}:{\mathcal I}_{a'}\times{\mathcal I}_{a''}\times[0,1]\times {\mathcal E}_{\wh{U}^{\partial}_{\theta}}\rightarrow {\mathbb Q}\cap [0,1]
$$
by
$$
\bm{\Lambda}_a(s',s'',t,((\alpha',e'),\wh{b},(\alpha'',e'')))= \bm{\Lambda}_{a'}(s',t,(\alpha',e'))\cdot\bm{\Lambda}_{a''}(s'',t,(\alpha'',e'')).
$$
The moduli subcategory of ${\mathcal S}_{\theta}$ associated to $(\bm{\Lambda},\bar{\partial}_{\wt{J}})$
is precisely a fibered product with respect to the submersive $t$-projections. We do this for every face 
and then extend and perhaps we need to use a trivial enrichment over the boundary. Of course, this extension 
has to be done carefully in order to obtain transversality, the submersion property, and preservation of compactness.
Then we extend to descendants in a trivial way.  The disjoint union is given by the standard formula and automatically has the desired
properties. \\

\noindent$\bm{[\ell\Longrightarrow \ell+1:]}$  The construction proceeds in the obvious way. \\

\subsection{Concluding Remarks}
  Finally we can lift everything to $\wt{\mathcal S}$
and introduce orientations. Then everything proceeds as expected. The discussion in the (generally non-existing)
regular case used by authors in \cite{EGH}, see also \cite{Wendl},  to explain the philosophy of SFT will carry through  in an averaged sense.
The integrals or correlators have a factor of a pullback of  $\tau^{n_a} $ in them.

We add some more detail and we assume for simplicity that  we work with the full subcategory of stable maps without marked points.
The associated orbit space is an open subset $Z'$ of $Z$ which is the union of suitable elements $a\in \pi'\subset  \pi_0(Z)$. Denote by
$\wt{Z}'$ the lift of $Z'$ and by $\wt{\pi}'$ the lift of $\pi'$.
  We shall call a class $\wt{a}\in \wt{\pi}'$ with even Fredholm index an \textbf{odd class} (this seems somewhat odd, but 
recall that we have divided out by the ${\mathbb R}$-action) and a class with odd Fredholm index an \textbf{even} class.

Suppose the orientations have been worked out.  
For a parent class $\wt{a}\in \wt{\pi}'$ with $\text{ind}(\wt{a})=0$ (the Fredholm index on $\wt{a}$) we denote by $H_i(\wt{a})$ for $0,1$
the count of solutions (The coarse moduli space consists of isolated points with rational multiplicities.).  Hence we obtain maps $H_i: \wt{\pi}_i\rightarrow {\mathbb Q}$.  We are only interested in the restriction of $H_i$ to classes $\wt{a}$ which are parent classes
and which in addition do not have bad orbits at the positive and negative punctures. Denote these restrictions by $\wt{H}_i$
and the domain by $\Pi$, i.e. $\wt{H}_i:\Pi\rightarrow {\mathbb R}$.

 Using the lift of our transversal 
perturbation we can take for fixed $t\in [0,1]$, using the fact that the $t$-projection is submersive,
 the moduli category consisting of all objects $(s,t,\alpha)$, where $\bm{\Theta}_a(s,t,\alpha):= \bm{\Lambda}_a(s,t,\bar{\partial}_{\wt{J}}(\alpha))>0$
and we define for $\wt{a}\in \Pi$
$$
\wt{H}_t(\wt{a}):=\oint_{\bm{\Theta}_{\wt{a},t}} p^{\ast}\tau^{n_{\wt{a}}},
$$
see \cite{HWZ2017} for the definition. This is essentially a sophisticated integral over $\mathfrak{M}_{\wt{a},t}$, i.e. 
the coarse moduli space above $t$, which uses some of the overhead to define it.
Hence 
$$
\wt{H}_t:\Pi\rightarrow {\mathbb R}.
$$
Since we have a ${\mathbb Z}_2$-grading by even and odd elements in $\Pi$ we can write 
a map on $\Pi$ as a sum of an even and an odd map.  A odd (even) map vanishes on even (odd) elements.

Then, for fixed $\wt{a}\in \Pi$,  $t\rightarrow H_t(\wt{a})$ is a smooth map interpolating between $H_i(\wt{a})$
for $i\in \{0,1\}$ (for this we need that the (parametrized) moduli spaces are in general position).  It is important that the latter satisfies a linear differential equation.  To explain this,  we note that 
one can define for $\wt{a}$ with $\text{ind}(\wt{a})=-1$ a family $t\rightarrow L_t(\wt{a})$. Then we can also define 
a super commutator $[.,.]$ which is defined as follows 
$$
[g,f]_{\wt{a}} = \sum_{\theta\in \text{face}_{\wt{a}}} g(\wt{a}')\cdot f(\wt{a''}) \pm f(\wt{a}')\cdot g(\wt{a}'')
$$
where $(a',a'')$ is associated to $\theta$. The signs are picked in such a way that $[.,.]$ is graded commutative.
Then we obtain the flow 
$$
\frac{d}{dt} \Phi_t =[L_t,\Phi_t]
$$
and it has the important property that $H_t = (\Phi_t)_\ast H_0$.  SFT (in this case without marked points) is then a suitable representation 
of this data, where we recall that associated $\wt{a}$ we have its genus as well as the ingoing and outgoing periodic orbits 
and a relative 2nd homology class in $Q$. (One can extend  this procedure  to the case when we have marked points.)
In the non-marked case $L_t$ is obtained from classes with $\text{ind}(\wt{a})=-1$.

Finally a remark about the way $L_t$ is defined in a simple model. Assume that $M$ is an oriented compact manifold with smooth boundary with corners and $E\rightarrow M$ an oriented vector bundle with $\dim(E_m)-1 =\dim(M)$. 
Consider the projection $p:[0,1]\times M \rightarrow M$ and take the pull-back of the bundle $E\rightarrow M$.
We consider ${\mathcal I}^N \times [0,1]\times M\rightarrow E: (s,t,m)\rightarrow f(s,t,m)$ tranversal to the zero section
so that $W:=f^{-1}(0)$ is a smooth manifold so that in addition the $t$-projection 
is submersive and for every $t\in [0,1]$ the fiber $W_t$ of $W\rightarrow [0,1]$ is in general position to the boundary
in ${\mathcal I}^N\times [0,1]\times M$.  Then the $t$-fiber $W_t$ is a smooth manifold of dimension $N$ and the projection $W_t\rightarrow {\mathcal I}^N$ is
proper. The pull-back of the $1$-form  $dt$ on $[0,1]$ to $W$ defines a point-wise non-zero one-form $\sigma$ on $W$.
We can take a vectorfield $X$ on $W$, so that $\sigma(X)\equiv 1$.  We define $\Sigma:=i_X((p|W)^{\ast}\tau^N)$ which is 
$(N-1)$-form on $W$. This form does not depend on the choice of $X$. Now we can integrate $\Sigma$ over every $W_t$
to obtain a function $L_t$. In the SFT-case this has to be done in the branched case and to be successful it requires some properties
from the perturbation $\bm{\Lambda}$, which are also used to make sure that one has an extension result.

\newpage

\part*{Appendices and References}

In this appendix we collect known facts and introduce notation and notions which are used throughout this text.

\section{Structures Associated to Riemann Surfaces}\label{Sec16}
The main purpose of this appendix is to recall basic facts about Riemann surfaces and most importantly fix notions
and notation which will be used throughout this lecture note. a
\subsection{Basic Notions}\label{APP5.1}
We recall some facts feeding into the DM-theory and refer the reader to \cite{HWZ5} for more detail, particularly with respect to the modified version using the exponential gluing profile.
For the latter there are also important details in \cite{HWZ8.7}.

\subsubsection*{Disk Pairs}
We are interested in compact disk-like Riemann surfaces $D_x$ with smooth boundary and interior point $x$, which we shall write as $(D_x,x)$. We refer to $x$ as a {\bf nodal point}.  An {\bf un-ordered nodal disk pair}
${\mathcal D}$ has the form $(D_x\sqcup D_y,\{x,y\})$, where $D_x$ and $D_y$ are as just described. An {\bf ordered nodal disk pair}
has the form $(D_x\sqcup D_y,(x,y))$. The ordered pair $(x,y)$ is called an {\bf ordered nodal pair} and $\{x,y\}$ is called an {\bf un-ordered nodal pair}.
In the case $(x,y)$ we shall refer to $x$ as the {\bf lower nodal point} and $y$ as the {\bf upper nodal point}.
Given $(D_x,x)$, a {\bf decoration}  $\wh{x}$ of the nodal point $x$ is a an oriented real line $\wh{x}\subset T_xD_x$.
The circle 
 $S^1={\mathbb R}/{\mathbb Z}$ acts naturally on the tangent spaces using their complex structures and therefore
 it acts also on the possible decorations for $x$ by 
\begin{eqnarray}
 (\theta,\wh{x})\rightarrow \theta\ast \wh{x}:=e^{2\pi i\theta}\cdot \wh{x}.
\end{eqnarray}
 Next we consider unordered pairs $\{\wh{x},\wh{y}\}$ which we call a {\bf decorated unordered nodal pair} or a {\bf decoration}
 of the nodal pair $\{x,y\}$.
 We declare $\{\wh{x},\wh{y}\}$  to be {\bf equivalent} to
$\{\theta\ast \wh{x},\theta^{-1}\ast \wh{y}\}$ where $\theta\in S^1$. The symbol $[\wh{x},\wh{y}]$
denotes the equivalence class associated to $\{\wh{x},\wh{y}\}$
\begin{eqnarray}
[\wh{x},\wh{y}] =\left\{ \{\theta\cdot\wh{x},\theta^{-1}\cdot \wh{y}\}\ |\ \theta\in S^1\right\}.
\end{eqnarray}
We call $[\wh{x},\wh{y}]$ a  {\bf natural angle} or {\bf argument} associated to $\{x,y\}$.
We denote by ${\mathbb S}_{\{x,y\}}$ the collection of all $[\wh{x},\wh{y}]$  associated to $\{x,y\}$ 
and call it  the {\bf set of  arguments or angles} associated to $\{x,y\}$.
Denote by ${\mathbb S}^1$ the standard unit circle in ${\mathbb C}$.
Fixing $z=[\wh{x}_0,\wh{y}_0]$ the map 
\begin{eqnarray}
\text{ar}_z: {\mathbb S}_{\{x,y\}}\rightarrow {\mathbb S}^1: [\theta\ast \wh{x}_0,\theta'\ast \wh{y}_0]\rightarrow e^{2\pi i(\theta+\theta')}
\end{eqnarray}
 is a bijection and
any two such maps, say $\text{ar}_z$ and $\text{ar}_{z'}$  have a transition map $\text{ar}_{z'}\circ \text{ar}_z^{-1}$ which is a rotation on ${\mathbb S}^1$.
Hence ${\mathbb S}_{\{x,y\}}$ has a natural smooth structure. It also has a natural orientation by requiring that $\text{ar}_z$ is orientation preserving,
where ${\mathbb S}^1$ is equipped with the orientation as a boundary of the unit disk.

Consider formal expressions $r\cdot [\wh{x},\wh{y}]$, where $r\in [0,1/4)$ (the choice of 1/4 has no deeper meaning other
than that certain constructions need a bound on the choice of $r$ and in our case $1/4$ is always a good bound). We say that $r\cdot[\wh{x},\wh{y}]=r'\cdot [\wh{x}',\wh{y}']$ provided either  $r=r'=0$,  or $r=r'>0$ and $[\wh{x},\wh{y}]=[\wh{x}',\wh{y}']$. In the following we shall call $r\cdot [\wh{x},\wh{y}]$ a {\bf natural gluing parameter} associated  to $\{x,y\}$. The collection ${\mathbb B}_{\{x,y\}}$  of these formal gluing parameters $r\cdot [\wh{x},\wh{y}]$ has a natural one-dimensional holomorphic 
manifold structure, so that fixing any $\{\wh{x}_0,\wh{y}_0\}$ the map
$$
r\cdot [\theta\cdot \wh{x}_0,\theta'\cdot \wh{y}_0]\rightarrow r\cdot e^{2\pi i(\theta +\theta')}
$$
 onto the standard
open disk in ${\mathbb R}$ of radius $1/4$ is a biholomorphic map.

When we deal with a finite number of disk pairs we can take their unordered or ordered 
nodal pair as an index set. We shall for example write $D$ for the whole collection of all occurring $\{x,y\}$
and we shall write ${\bm{D}}$ for the collection, i.e.
\begin{eqnarray}
{\bm{D}}=\left\{ (D_x\sqcup D_y,\{x,y\})\ |\ \{x,y\}\in D\right \}.
\end{eqnarray}
Sometimes, always clear from the context, we also view $\bm{D}$ as defining the disjoint union of all $D_x$, where $x$ varies
over $|D|=\cup_{\{x,y\}\in D} \{x,y\}$, together with the collection $D$ of nodal pairs
\begin{eqnarray}
\bm{D}=  \left(\coprod_{z\in |D|} D_z,\ D\right).
\end{eqnarray}
This is a specific compact nodal Riemann surface with smooth boundary.
Associated to every $\{x,y\}$ we have the set of natural gluing parameter ${\mathbb B}_{\{x,y\}}$
and we shall write ${\mathbb B}_D$ for the set of total gluing parameters, which are maps
$\mathfrak{a}$ associating to $\{x,y\}$ an element $a_{\{x,y\}}\in {\mathbb B}_{\{x,y\}}$. We can view ${\mathbb B}_D$ as sections of  a bundle over the finite set $D$, namely
$$
\coprod_{\{x,y\}\in D}{\mathbb B}_{\{x,y\}}   \rightarrow D: a_{\{x,y\}}\rightarrow \{x,y\}.
$$
 From this viewpoint a natural gluing parameter is a section.

\subsubsection*{Gluing Disks}

Consider an un-ordered nodal disk pair ${\mathcal D}:=(D_x\sqcup D_y,\{x,y\})$, consisting  of disk-like Riemann surfaces  $D_x$ and $D_y$, with smooth boundaries
containing the interior points $x$ and $y$, respectively, so that $(D_x,x)$ and $(D_y,y)$ are biholomorphic 
to $({\mathbb D},0)$, where ${\mathbb D}\subset {\mathbb C}$ is the closed unit disk. These biholomorphic maps are not unique but any two of them differ by a rotation which is biholomorphic. Denote for $0\in {\mathbb D}$
by $\wh{0}$ the standard decoration given by ${\mathbb R}\subset T_0{\mathbb D}$ with the standard orientation of the real numbers. If $\wh{x}$ is a decoration of $x$ there exists a unique biholomorphic map
$$
h_{\wh{x}}:(D_x,\wh{x})\rightarrow ({\mathbb D},\wh{0}).
$$
In the following we need the {\bf exponential gluing profile} $\varphi:(0,1]\rightarrow [0,\infty)$ defined by 
$$
\varphi(r)=e^{\frac{1}{r}}-e.
$$
\begin{definition}
Given an unordered disk pair $(D_x\sqcup D_y,\{x,y\})$ and a non-zero gluing parameter $a_{\{x,y\}}=r\cdot [\wh{x},\wh{y}]$
define  the set $Z_{a_{\{x,y\}}}$ by
\begin{eqnarray}\label{KOPTY20.1}
Z_{a_{\{x,y\}}}&=&\left\{ \{z,z'\}\ |\  z\in D_x,\ z'\in D_y,\ \right.\\
&&\left.\phantom{XXXX} h_{\wh{x}}(z)\cdot h_{\wh{y}}(z') = e^{-2\pi \varphi(r)}\right\}.\nonumber
\end{eqnarray}
Here  $\{\wh{x},\wh{y}\}$  is a representative of $[\wh{x},\wh{y}]$, but the 
 definition of the set does not depend on its choice.  If the gluing parameter vanishes, i.e. if $a_{\{x,y\}}=0$
 we define $Z_0=(D_x\sqcup D_y,\{x,y\})$.   We note that $Z_a=Z_b$ if and only if $a=b$, and in fact $Z_a\cap Z_b\neq \emptyset$ if and only if $a=b$.
 $Z_{a_{\{x,y\}}}$ is said to be obtained from $(D_x\sqcup D_y,\{x,y\})$ by gluing with gluing parameter
 $a_{\{x,y\}}$.
 \qed
 \end{definition}
 \begin{remark}
 We use this special gluing profile $\varphi$ in order to have compatibility with the sc-Freholm theory.
We obtain the classical Deligne-Mumford theory when we use the gluing profile $r\rightarrow -\frac{1}{2\pi}\cdot \ln(r)$.
 \qed
 \end{remark}
 Given a non-zero gluing parameter $a_{\{x,y\}} =r_{\{x,y\}}\cdot [\wh{x},\wh{y}]$ put $R=\varphi(r_{\{x,y\}})$
 and define the closed annuli $A_x(R)\subset D_x$ and $A_y(R)\subset D_y$ of modulus $2\pi R$  by 
\begin{eqnarray*}
 A_x(R)&=&\left\{ z\in D_x\setminus\{x\}\ |\ |h_{\wh{x}}(z)|\geq e^{-2\pi R}\right\}\\
  A_y(R)&=&\left\{ z'\in D_y\setminus\{y\}\ |\ |h_{\wh{y}}(z')|\geq e^{-2\pi R}\right\}.
\end{eqnarray*}
 The set $Z_{a_{\{x,y\}}}$ defined in (\ref{KOPTY20.1}) for non-zero gluing parameter  has a natural holomorphic manifold structure making it biholomorphic to a closed annulus of modulus $2\pi \cdot \varphi(r_{\{x,y\}})$ so that in addition the maps
\begin{eqnarray}\label{EQNA1.1}
 A_x(R) \xleftarrow{\pi_x^{a_{\{x,y\}}}} Z_{a_{\{x,y\}}}\xrightarrow{\pi_y^{a_{\{x,y\}}} }A_y(R)
\end{eqnarray}
 defined by $\pi_x(z,z')=z$ and $\pi_y(z,z')=z'$  are biholomorphic. Hence
 \begin{lem}
 $Z_{a_{\{x,y\}}}$ has a natural structure as a Riemann surface.\qed
 \end{lem}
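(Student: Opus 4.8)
The statement to prove is that the glued set $Z_{a_{\{x,y\}}}$ for a nonzero gluing parameter carries a natural structure as a Riemann surface. Let me sketch a proof.

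The plan is to equip $Z_{a_{\{x,y\}}}$ with the complex structure transported from either of the disk-like pieces via the projection maps, and then check that this is well-defined and holomorphic. First I would use the biholomorphic identifications $h_{\wh{x}}:(D_x,\wh{x})\to(\mathbb{D},\wh{0})$ and $h_{\wh{y}}:(D_y,\wh{y})\to(\mathbb{D},\wh{0})$ coming from a chosen representative $\{\wh{x},\wh{y}\}$ of $[\wh{x},\wh{y}]$. Under these, the defining relation $h_{\wh{x}}(z)\cdot h_{\wh{y}}(z')=e^{-2\pi\varphi(r)}$ becomes $\zeta\cdot\zeta'=e^{-2\pi R}$ with $R=\varphi(r)$, so $Z_{a_{\{x,y\}}}$ is identified with the subset of $\mathbb{D}\times\mathbb{D}$ cut out by this equation, which is a smooth closed annulus $\{e^{-2\pi R}\le|\zeta|\le 1\}$ via $\zeta$ (equivalently the coordinate change $\zeta\mapsto(\zeta,e^{-2\pi R}/\zeta)$). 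This gives a model annulus of modulus $2\pi R$, and pulling back its complex structure defines a candidate Riemann surface structure on $Z_{a_{\{x,y\}}}$.

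Next I would verify that the structure does not depend on the choice of the representative $\{\wh{x},\wh{y}\}$ within the class $[\wh{x},\wh{y}]$: replacing $\{\wh{x},\wh{y}\}$ by $\{\theta\ast\wh{x},\theta^{-1}\ast\wh{y}\}$ multiplies $h_{\wh{x}}$ by $e^{-2\pi i\theta}$ and $h_{\wh{y}}$ by $e^{2\pi i\theta}$, which preserves the product $h_{\wh{x}}\cdot h_{\wh{y}}$ and hence the defining equation, and the induced change of coordinate on the model annulus is a rotation, which is biholomorphic. So the structure is intrinsic. Then I would check that the two projection maps $\pi_x^{a_{\{x,y\}}}$ and $\pi_y^{a_{\{x,y\}}}$ of (\ref{EQNA1.1}) are biholomorphisms onto the annuli $A_x(R)$ and $A_y(R)$: in the $\zeta$-coordinate, $\pi_x$ is $\zeta\mapsto h_{\wh{x}}^{-1}(\zeta)$ restricted to $\{|\zeta|\ge e^{-2\pi R}\}$, which is biholomorphic onto $A_x(R)$ by definition of that annulus, and similarly $\pi_y$ corresponds to $\zeta\mapsto h_{\wh{y}}^{-1}(e^{-2\pi R}/\zeta)$, also biholomorphic onto $A_y(R)$ since $\zeta\mapsto e^{-2\pi R}/\zeta$ is an (anti-involutive) biholomorphism of the annulus. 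This simultaneously records that $Z_{a_{\{x,y\}}}$ is biholomorphic to a closed annulus of modulus $2\pi\varphi(r_{\{x,y\}})$ and that the topology coincides with the one induced from $D_x\sqcup D_y$ (identifying the two boundary circles of the annuli).

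The argument is essentially a routine coordinate computation, so I do not expect a serious obstacle; the only point requiring a little care is the well-definedness under the $S^1$-action on decorations, since the set $Z_{a_{\{x,y\}}}$ is defined using the equivalence class $[\wh{x},\wh{y}]$ rather than a fixed representative, and one must confirm the transition is a rotation (hence holomorphic) rather than something that only preserves the underlying set. Once that is settled, the natural orientation and the modulus statement follow immediately from the model annulus. I would present the proof by choosing the representative, writing the model, and then observing invariance and the biholomorphy of the two projections in one stroke.
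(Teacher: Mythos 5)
Your proof is correct and follows essentially the same route the paper takes (the paper states the lemma without a written proof, relying on exactly the observation that the identifications $h_{\wh{x}},h_{\wh{y}}$ turn $Z_{a_{\{x,y\}}}$ into the annulus $\{e^{-2\pi R}\le |\zeta|\le 1\}$ with the projections onto $A_x(R)$ and $A_y(R)$ biholomorphic), and your extra check of independence of the representative of $[\wh{x},\wh{y}]$ via opposite rotations of the two coordinates is exactly the right point to verify. Only a cosmetic remark: the map $\zeta\mapsto e^{-2\pi R}/\zeta$ is a holomorphic involution of the annulus, so \emph{anti-involutive} is a misnomer; nothing antiholomorphic occurs.
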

 Assume that $a_{\{x,y\}}$ and $a'_{\{x,y\}}$ are two nonzero gluing parameters in ${\mathbb B}_{\{x,y\}}$ with the same modulus. We abbreviate
 $$
 R:=\varphi(|a_{\{x,y\}}|)=\varphi(a_{\{x',y'\}}|).
 $$
 In this case we obtain two copies of the diagram (\ref{EQNA1.1}), say, with $a=a_{\{x,y\}}$ and $a'=a'_{\{x,y\}}$
\begin{eqnarray}
&A_x(R) \xleftarrow{\pi^a_x} Z_{a}\xrightarrow {\pi^a_y} A_y(R)&\\
&A_x(R) \xleftarrow{\pi^{a'}_x} Z_{a'}\xrightarrow {\pi^{a'}_y} A_y(R).&\nonumber
\end{eqnarray}
We can compare the following two maps $A_x(R)\rightarrow A_y(R)$
$$
\pi_y^a\circ {(\pi^a_x)}^{-1}\ \ \text{and}\ \ \pi_y^{a'}\circ {(\pi^{a'}_x)}^{-1}.
$$
Given $(D_x,x)$ there is a well-define notion of a {\bf rotation} by $\theta\in S^1$. Namely take any biholomorphic map
$h: (D_x,x)\rightarrow ({\mathbb D},0)$ and define $R^x_\theta(z) = h^{-1}( e^{2\pi i\theta}\cdot h(z))$. This definition does not depend on the choice of $h$, and it follows immediately that the following identity holds.
\begin{eqnarray}
h_{\wh{x}} \circ  R^x_{\theta}  =   h_{e^{2\pi i\theta}\cdot \wh{x}}  = e^{2\pi i\theta}\cdot h_{\wh{x}}.
 \end{eqnarray}
We obtain the following lemma, which can be verified by a straight forward calculation.
 \begin{lem}
 Writing $a=|a|\cdot [\wh{x},\wh{y}]$ and $a'=|a'|\cdot [\wh{x}, e^{2\pi i\theta}\cdot \wh{y}]$, where  $|a|=|a'|\neq 0$, it holds that 
 $$
 \pi_y^a\circ {(\pi^a_x)}^{-1} = R_{\theta}^y  \circ\pi_y^{a'}\circ {(\pi^{a'}_x)}^{-1}.
$$
\end{lem}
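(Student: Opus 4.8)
The plan is to unwind both sides of the claimed identity in terms of the explicit coordinate maps $h_{\wh{x}}, h_{\wh{y}}$ and the defining equation $h_{\wh{x}}(z)\cdot h_{\wh{y}}(z')=e^{-2\pi\varphi(|a|)}$ for $Z_a$, and then read off the equality as a one-line computation using the identity $h_{e^{2\pi i\theta}\cdot\wh{y}} = e^{2\pi i\theta}\cdot h_{\wh{y}}$ together with the definition $R^y_\theta(w)=h_{\wh{y}}^{-1}(e^{2\pi i\theta}\cdot h_{\wh{y}}(w))$. First I would fix $R:=\varphi(|a|)=\varphi(|a'|)$ and pick a representative $\{\wh{x},\wh{y}\}$ of $[\wh{x},\wh{y}]$; then $\{\wh{x}, e^{2\pi i\theta}\cdot\wh{y}\}$ is a representative of $[\wh{x}, e^{2\pi i\theta}\cdot\wh{y}]$, so that $Z_a$ is cut out by $h_{\wh{x}}(z)h_{\wh{y}}(z')=e^{-2\pi R}$ while $Z_{a'}$ is cut out by $h_{\wh{x}}(z)h_{e^{2\pi i\theta}\cdot\wh{y}}(z')=e^{-2\pi R}$, i.e. by $h_{\wh{x}}(z)\,e^{2\pi i\theta}h_{\wh{y}}(z')=e^{-2\pi R}$.

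Next I would make the two transition maps explicit. Given $z\in A_x(R)$, the point $(\pi^a_x)^{-1}(z)$ is the unique $\{z,z'\}\in Z_a$ with first coordinate $z$, so $z'=h_{\wh{y}}^{-1}\!\left(e^{-2\pi R}/h_{\wh{x}}(z)\right)$, hence
\[
\pi^a_y\circ(\pi^a_x)^{-1}(z)=h_{\wh{y}}^{-1}\!\left(\frac{e^{-2\pi R}}{h_{\wh{x}}(z)}\right).
\]
Similarly, the point $(\pi^{a'}_x)^{-1}(z)$ has second coordinate $z''$ with $e^{2\pi i\theta}h_{\wh{y}}(z'')=e^{-2\pi R}/h_{\wh{x}}(z)$, so
\[
\pi^{a'}_y\circ(\pi^{a'}_x)^{-1}(z)=h_{\wh{y}}^{-1}\!\left(\frac{e^{-2\pi R}}{e^{2\pi i\theta}\,h_{\wh{x}}(z)}\right).
\]
Abbreviating $w:=\pi^{a'}_y\circ(\pi^{a'}_x)^{-1}(z)$, we have $h_{\wh{y}}(w)=e^{-2\pi i\theta}\cdot e^{-2\pi R}/h_{\wh{x}}(z)$, and therefore $R^y_\theta(w)=h_{\wh{y}}^{-1}(e^{2\pi i\theta}h_{\wh{y}}(w))=h_{\wh{y}}^{-1}(e^{-2\pi R}/h_{\wh{x}}(z))=\pi^a_y\circ(\pi^a_x)^{-1}(z)$. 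This is exactly the asserted identity, so I would conclude by noting it holds for all $z\in A_x(R)$.

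I do not expect any serious obstacle here: the only things to be careful about are (i) checking that $z''$ indeed lands in the relevant annulus so that $w\in A_y(R)$ and the composition $R^y_\theta\circ\pi^{a'}_y\circ(\pi^{a'}_x)^{-1}$ makes sense (this follows since $|h_{\wh{y}}(w)|=|h_{\wh{y}}(z')|\geq e^{-2\pi R}$, the modulus being unchanged by multiplication by the unimodular factor $e^{-2\pi i\theta}$), and (ii) making sure the independence of the defining sets $Z_a, Z_{a'}$ on the chosen representatives $\{\wh{x},\wh{y}\}$ is invoked correctly, which is already recorded in the excerpt right after the definition of $Z_{a_{\{x,y\}}}$. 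Everything else is the bookkeeping of composing explicit holomorphic coordinate charts, and the identity $h_{\wh{x}}\circ R^x_\theta = e^{2\pi i\theta}h_{\wh{x}}$ stated just above does all the real work.
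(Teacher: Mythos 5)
Your proposal is correct and follows essentially the same route as the paper's proof: both unwind the defining equation of $Z_a$ and $Z_{a'}$ via $h_{\wh{x}},h_{\wh{y}}$, invoke the identity $h_{e^{2\pi i\theta}\cdot\wh{y}}=e^{2\pi i\theta}\cdot h_{\wh{y}}$, and observe that the two transition maps differ by the rotation $R^y_{\theta}$; the paper merely phrases the calculation as a rewriting of the set $Z_{a'}$ as $\{\{z,R_{-\theta}(z')\}\}$ rather than writing out the explicit formulas $h_{\wh{y}}^{-1}\bigl(e^{-2\pi R}/h_{\wh{x}}(z)\bigr)$ as you do.
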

\begin{proof}
By definition $Z_a=\{\{z,z'\}\ |\ h_{\wh{x}}(z)\cdot h_{\wh{y}}(z')=e^{-2\pi \varphi(r)}\}$ and with $a'=|a| \cdot [\wh{x},e^{2\pi i\theta}\cdot \wh{y}]$ we see that
\begin{eqnarray}
Z_{a'} &= &\{\{z,z'\}\ |\ h_{\wh{x}}(z)\cdot e^{2\pi i\theta}\cdot h_{\wh{y}}(z')= e^{-2\pi \varphi(r)}\}\\
&=& \{\{z,z'\}\ |\ h_{\wh{x}}(z) \cdot h_{\wh{y}}(R_{\theta}(z'))= e^{-2\pi \varphi(r)}\}.\nonumber\\
&=& \{\{z,R_{-\theta}(z')\}\ |\ h_{\wh{x}}(z) \cdot h_{\wh{y}}(z')= e^{-2\pi \varphi(r)}\}.\nonumber
\end{eqnarray}
From this it follows that if  $  \pi_y^a\circ {(\pi^a_x)}^{-1} (z)= z'$ then  $\pi_y^{a'}\circ {(\pi^{a'}_x)}^{-1}(z) = R_{-\theta}(z')$, and consequently 
$$
R_{\theta}\circ \pi_y^{a'}\circ {(\pi^{a'}_x)}^{-1}(z) =z' =  \pi_y^a\circ {(\pi^a_x)}^{-1} (z).
$$
\end{proof}

 \begin{definition}
If $a=a_{\{x,y\}}\neq 0$ with $R=\varphi(|a_{\{x,y\}}|)$ we denote by $M^p_{{\mathcal D},a}$ the collection of all $\{z,z'\}\in Z_a$
 such that $-\frac{1}{2\pi}\cdot \ln(|h_{\wh{x}}(z)|\in (R/2 -p/2,R/2+p/2)$. Note that this is only well-defined if $|a_{\{x,y\}}|$ is sufficiently small given $p$. 
 We call $M^p_{{\mathcal D},a}$ the {\bf middle annulus of width} $2p$ of $Z_a$.
 \qed
 \end{definition}
 For example with $a\in {\mathbb B}_{\{x,y\}}$ this is well-defined as long as $0<p<25$.
 When we are given a finite family of unordered disk pairs ${{\bm{D}}}$ and a gluing parameter
 $\mathfrak{a}\in {\mathbb B}_D$,  we denote by ${{\bm{Z}}}_{\mathfrak{a}}$ the disjoint union of all
$Z^{\{x,y\}}_{a_{\{x,y\}}}$. In the case that $a_{\{x,y\}}=0$ we have that $Z^{\{x,y\}}_{a_{\{x,y\}}}=(D_x\sqcup D_y,\{x,y\})$ and consequently, if $\mathfrak{a}\equiv 0$
we see that ${{\bm{Z}}}_{\mathfrak{a}}=\bm{D}$.
In the case of  ordered disk pairs we use a similar formalism. It will be clear from the context in which situation we are.
\subsubsection*{Holomorphic Polar Coordinates} 
Given $(D_x,x)$  let $\wh{x}$ be a decoration.
 Take the associated $h_{\wh{x}}:(D_x,x)\rightarrow ({\mathbb D},0)$ satisfying $Th_x(\wh{x})=\wh{0}$.
 We introduce the biholomorphic maps 
 $$
 \sigma_{\wh{x}}^+:[0,\infty)\times S^1\rightarrow D_x\setminus\{x\} :(s,t)\rightarrow h_{\wh{x}}^{-1}\left(e^{-2\pi (s+it)}\right)
 $$
 and
 $$
 \sigma^-_{\wh{x}}:(-\infty,0]\times \rightarrow D_x\setminus\{x\}:(s',t')\rightarrow h_{\wh{x}}^{-1}\left( e^{2\pi (s'+it')}\right).
 $$
 We shall call $\sigma_{\wh{x}}^\pm$ {\bf positive and negative holomorphic polar coordinates} on $D_x$ around $x$ 
 associated to the decoration $\wh{x}$.
 
If $(D_x\sqcup D_y,\{x,y\})$ is nodal disk pair and $a\in {\mathbb B}_{\{x,y\}}$ we obtain through gluing the space
 $Z_a$. With $a=|a|\cdot [\wh{x},\wh{y}]$  fix  a representative  $\{\wh{x},\wh{y}\}$.  We have the special biholomorphic maps,
 where $R=\varphi(|a|)$
\begin{eqnarray}\label{ERT31}
&&\\
&\sigma_{\wh{x}}^{+,a}:  [0,R]\times S^1\rightarrow Z_a: (s,t) \rightarrow \{\sigma^+_{\wh{x}}(s,t), \sigma^-_{\wh{y}}(s-R,t)\}&\nonumber\\
&\sigma_{\wh{y}}^{a,-}:[-R,0]\times S^1\rightarrow Z_a: (s',t')\rightarrow \{\sigma^+_{\wh{x}}(s'+R,t'),\sigma^-_{\wh{y}}(s',t')\}&\nonumber
\end{eqnarray}
 There are also maps $\sigma_{\wh{x}}^{-,a}$ and $\sigma_{\wh{y}}^{+,a}$ obtained by interchanging the roles of 
 $x$ and $y$.  In the case of an ordered disk pair the maps in (\ref{ERT31}) are the relevant ones, i.e. we take positive holomorphic polar coordinates
 for the lower disk and negative one for the upper disk.
 
As part of the constructions we shall consider maps $u:Z_a\rightarrow {\mathbb R}^N$ and  sometimes need to evaluate the average 
 over the loop in the middle. For this we can pick a nodal point in $\{x,y\}$, say $x$,  and take $\sigma^{+,a}_{\wh{x}}$ and calculate with $R=\varphi(|a|)$
 $$
 \int_{S^1} u\circ\sigma^{+,a}_{\wh{x}}(R/2,t) dt.
 $$
 The integral does not depend on the choice of $x$ or $y$ in $\{x,y\}$, and after the choice of $x$, it does not depend
 on the decoration $\wh{x}$.  We call the integral
 the {\bf middle loop average}.
 We call any of the maps $t\rightarrow \sigma^{\pm,a}_{\wh{x}}(\pm R/2,t)$ or $t\rightarrow \sigma^{\pm,a}_{\wh{y}}(\pm R/2,t)$  a  {\bf middle-loop map}. 
 If $a=0$ and $u$ is defined on the disk pair, being continuous over the nodal value (i.e. $u(x)=u(y)$),  then we can define the associated 
 middle loop average as $u(x)$ or $u(y)$, which are the same. 

A related concept is that of an {\bf $a$-loop}.  Assume we have a map defined on $D_x$ or the punctured $D_x\setminus\{x\}$.
Pick positive holomorphic polar coordinates centered at $x$, i.e.
$$
\sigma^+:{\mathbb R}^+\times S^1\rightarrow D_x\setminus\{x\}.
$$
Assume that $u:D_x\rightarrow {\mathbb R}^N$ is a continuous map and $a\in {\mathbb B}$ a nonzero gluing parameter.
We define an {\bf $a$-loop} as the map
$$
S^1\rightarrow D_x\setminus\{x\}:  t\rightarrow \sigma^+_a(t):= \sigma^+(R/2,t)
$$
where $R=\varphi(|a|)$. There is a whole $S^1$-family of $a$-loops. However the integral
$$
\int_{S^1}  u\circ\sigma_a^+(t)\cdot dt
$$
does not depend on the choice of the specific $a$-loop.

\subsection{Riemann Surfaces}\label{APPENDIX-RS}
After some preparation we shall  describe the category of stable Riemann surfaces and introduce auxiliary structures
for the DM-theory.

\subsubsection{Nodal Riemann Surfaces}
We shall consider tuples $\alpha=(S,j,M,D)$ consisting of a compact  Riemann surface $(S,j)$ without boundary,
but possibly disconnected, where  $M$ is a finite (unordered) subset of $S$ called {\bf (unordered) marked points}, and $D$ is a finite collection  of unordered pairs $\{x,y\}$, where $x,y\in S$ are different points. We require that $D$ has the property that $\{x,y\}\cap \{x',y'\}\neq \emptyset$ implies that
$\{x,y\}=\{x',y'\}$. We shall write $|D|$ for the union of all the $\{x,y\}$, i.e.
$$
|D| =\bigcup_{\{x,y\}\in D} \{x,y\},
$$
 and require that $|D|\cap M=\emptyset$. We refer to the elements $\{x,y\}$ as {\bf nodal pairs} and to $x$ and $y$ as {\bf nodal points}.  We can view the tuples as objects of a category.
For the following discussion we assume $M$ and the elements of $D$ to be unordered.
A morphism $\Phi:\alpha\rightarrow \alpha'$ is given by a tuple $\Phi=(\alpha,\phi,\alpha')$,
where $\phi:(S,j)\rightarrow (S',j')$ is a biholomorphic map such that $\phi(M)=M'$  and $\phi_\ast(D)=D'$,
where
$$
\phi_\ast (D) =\{\{\phi(x),\phi(y)\}\ |\ \{x,y\}\in D\}.
$$
We denote the category with objects $(S,j,M,D)$ and morphisms $\Phi$ by $\bar{\mathcal R}$.
 \begin{remark}
We shall sometimes consider modifications, namely we may allow $M$ to be ordered and in this case referred to as the set of {\bf ordered}
marked points. We also sometimes allow some of the nodal pairs to be ordered, i.e. the object are $(x,y)$ rather than $\{x,y\}$ and we refer
to an {\bf ordered} nodal pair. 
\qed
\end{remark}
We shall discuss later on in more detail objects in $\bar{\mathcal R}$ together with a finite group 
$G$ acting on it by biholomorphic maps preserving the additional structure. 

\subsubsection{The Category of Stable Riemann Surfaces}\label{RRRR---}
Denote by ${\mathcal R}$ the  full subcategory  of $\bar{\mathcal R}$ associated to objects, which satisfy 
an additional condition. Namely we impose the {\bf stability condition} (\ref{STABBB}) that for every connected component $C$ of $S$ its genus
$g(C)$ and the number $\sharp C:= C\cap (M\cup |D|)$ satisfies
\begin{eqnarray}\label{STABBB}
2g(C) + \sharp C\geq 3.
\end{eqnarray}
 From the classical Deligne-Mumford theory, see \cite{DM,RS} and also \cite{HWZ-DM}, it follows that
${\mathcal R}$ is what we shall call a  {\bf groupoidal category}. Namely every morphism is an isomorphism, between two objects are at most finitely morphisms (a consequence of the stability condition),
and the collection of isomorphism classes $|{\mathcal R}|$ is a set. It is also an important fact that $|{\mathcal R}|$  has a natural metrizable topology for which the connected components are compact. We shall call ${\mathcal R}$ the {\bf category of stable Riemann surfaces} with unordered marked points and nodal pairs.

\subsubsection{Glued Riemann Surfaces} 
Let $\alpha$ be an object in $\bar{\mathcal R}$ and denote by $G$ a finite 
group  acting by automorphisms of $\alpha$.   
\begin{definition}
A pair $(\alpha,G)$, where $\alpha$ is an object in $\bar{\mathcal R}$ and $G$ a finite 
group acting by automorphisms will be called a {\bf Riemann surface with a finite group action}.
\qed
\end{definition}
\begin{remark}
Note that the biholomorphic automorphism group might be infinite. However, $G$ utilizes only a finite part of the existing 
symmetries. An obvious example is the Riemann sphere, where we can take a finite subgroup of the rational transformations.
\qed
\end{remark}
Assume $(\alpha,G)$ is given, where $\alpha$ is an object in $\bar{\mathcal R}$.
Define ${\mathbb B}_\alpha$
by
$$
{\mathbb B}_\alpha =\prod_{\{x,y\}\in D} {\mathbb B}_{\{x,y\}},
$$
which as a product of one-dimensional complex manifolds is a complex manifold.  There is a natural projection
$\pi:{\mathbb B}_\alpha\rightarrow D$, and 
a {\bf gluing parameter} for an object $\alpha$ is a section $\mathfrak{a}$ of $\pi$, i.e. it  associates to $\{x,y\}\in D$
a symbol $a_{\{x,y\}}\in {\mathbb B}_{\{x,y\}}$
$$
\mathfrak{a}: D\rightarrow {\mathbb B}_\alpha: \{x,y\}\rightarrow a_{\{x,y\}}.
$$
The natural action of $G$ on $D$ by $g\ast \{x,y\}=\{g(x),g(y)\}$ lifts to a {\bf natural action}
of $G$ on the complex manifold of natural gluing parameters
$$
G\times {\mathbb B}_\alpha\rightarrow {\mathbb B}_\alpha,
$$
by $g\ast \mathfrak{a}=\mathfrak{b}$, where, writing $a_{\{x,y\}}=r_{\{x,y\}}\cdot [\wh{x},\wh{y}]$ we have
$$
b_{\{g(x),g(y)\}} = r_{\{x,y\}}\cdot [(Tg)\wh{x},(Tg) \wh{y}]\ \ \text{for}\ \ \{x,y\}\in D.
$$
We fix for every $z\in |D|$
a closed disk-like neighborhood $D_z$ with smooth boundary and $z$ an interior point
so that the union of these $D_z$ is invariant under $G$. We also require that $M\cap D_z=\emptyset$
for all $z\in |D|$. This choice gives for every $\{x,y\}\in D$ an unordered nodal disk pair 
${\mathcal D}_{\{x,y\}}=(D_x\sqcup D_y,\{x,y\})$. 
\begin{definition}
The collection ${{\bm{D}}}$ of these disk pairs,
having the properties stated above is called a {\bf small disk structure} for $\alpha$. 
\end{definition}
In case we have an ordered nodal pair $(x,y)$ we obtain an ordered disk pair written as $(D_x\sqcup D_y,(x,y))$.
We shall refer to $D_x$ as the {\bf lower disk} and $D_y$ as the {\bf upper disk}. We usually would also assume that
the action of $G$ would map an ordered nodal pair to an ordered nodal pair and also preserve the ordering. 

Given an object $\alpha=(S,j,M,D)$ in $\bar{\mathcal R}$ and a small disk structure 
${{\bm{D}}}$ it is convenient to note that given ${\bm{D}}$ we can recover $D$.  Hence, we introduce the
objects $(S,j,M,{\bm{D}})$ which are compact Riemann surfaces with small disk structure 
as well as $((S,j,M,{\bm{D}}),G)$, which is $((S,j,M,D),G)$ equipped with a small disk structure 
so that the union of the disks is invariant.

Assume that $\alpha=(S,j,M,D)$ is a nodal stable Riemann surface with unordered marked points and nodal points and $G$ is a finite group acting on $\alpha$ as previously described.  Fix a small disk structure ${\bm{D}}$, which for every $\{x,y\}$ gives us an unordered nodal disk pair $(D_x\sqcup D_y,\{x,y\})$.  Hence we consider $((S,j,M,{\bm{D}}),G)$.  Given a gluing parameter $\mathfrak{a}$ for $\alpha$ we obtain the  $a_{\{x,y\}}$ and obtain by disk-gluing
 $$
 Z^{\{x,y\}}_\mathfrak{a} := Z_{a_{\{x,y\}}},
 $$
 which is obtained from $(D_x\sqcup D_y,\{x,y\})$ by gluing with $a_{\{x,y\}}$. If $a_{\{x,y\}}=0$ we recover the nodal
 disk pair. Remove from $S$ for every nonzero $a_{\{x,y\}}$ the complement in $D_x\sqcup D_y$ 
 of $A_x(R)\sqcup A_y(R)$, where $R=\varphi(|a_{\{x,y\}}|)$. Here $A_x=\{z\in D_x\ |\ z=\sigma^+_{\wh{x}}(s,t),\ s\in [0,R]\}$
 and similarly for $A_y$.
 We define a new surface $S_{\mathfrak{a}}$, using for every $\{x,y\}$ with non-zero gluing parameter
   (\ref{EQNA1.1}),  the holomorphic   equivalence relation on $A_x(R)\sqcup A_y(R)$
   identifying $z\in A_x(R)$ with $z'\in A_y(R)$ provided $\{z,z'\}\in Z_\mathfrak{a}^{\{x,y\}}$.
 If the gluing parameter for some $\{x,y\}$ vanishes we do not do anything.
Having carried out this for every nodal pair  we obtain a nodal Riemann surface  surface $S_\mathfrak{a}$
with associated almost complex structure $j_\mathfrak{a}$.  We denote by $D_\mathfrak{a}$ the collection of all
 $\{x,y\}\in D$ with $a_{\{x,y\}}=0$. We shall write $M_\mathfrak{a}$ for the set $M$ viewed as a subset of $S_\mathfrak{a}$. Finally we set
 $$
 \alpha_\mathfrak{a}=(S_\mathfrak{a},j_\mathfrak{a},M_\mathfrak{a},D_\mathfrak{a}).
 $$
\begin{definition}
 We call $\alpha_\mathfrak{a}$ the stable Riemann surface {\bf obtained from $\alpha$ by gluing with $\mathfrak{a}$}.
 \qed
 \end{definition}
For $g\in G$ one easily verifies that the construction of $S_\mathfrak{a}$ allows to construct in a natural way a biholomorphic map
 $g_\mathfrak{a}:\alpha_\mathfrak{a}\rightarrow \alpha_{g\ast\mathfrak{a}}$. Given $h,g\in G$ we have the functorial properties
 $$
 h_{g\ast \mathfrak{a}}\circ g_\mathfrak{a} = (h\circ g)_{\mathfrak{a}}
 $$
 and $1_\mathfrak{a} = Id_{\alpha_\mathfrak{a}}$.

\subsection{Riemann Surface Buildings}
The building blocks are tuples $(\Gamma^-,S,j,D,\Gamma^+)$, where $(S,j)$ is a not necessarily connected compact Riemann surface, $D$ is a a set of nodal pairs,
and $\Gamma^\pm$ are a finite set of so-called positive and negative punctures.  The sets $|D|$, $\Gamma^+$, and $\Gamma^-$ are mutually disjoint. Let us denote such an object
by $\alpha$.  We allow finite groups $G$ acting on such a $\alpha$ by biholomorphic maps, where we impose the restriction that $G$ preserves the sets $\Gamma^+$, $\Gamma^-$,
and the set of nodal pairs $D$. A small disk structure ${\bm{D}}$ for $\alpha$ consists of a small disk structure associated to the nodal pairs in $D$ so that the union of the 
disks is invariant under $G$. The disks are assumed to be mutually disjoint and not to contain the points in $\Gamma^\pm$.  
Denoting by ${\mathbb B}_{\alpha}$ the set of natural gluing parameters we obtain through gluing $\alpha_{\mathfrak{a}}=(\Gamma^-,S_\mathfrak{a},j_\mathfrak{a},D_\mathfrak{a},\Gamma^+) $, where we identify $\Gamma^\pm$ naturally as a subset of $S_\mathfrak{a}$.  We shall call $\alpha_\mathfrak{a}$ a ${\bm{D}}$-descendent of $\alpha$.
It is convenient to consider the smooth manifold of gluing parameters ${\mathbb B}_D$ together with the $G$-action as a translation groupoid
$G\ltimes {\mathbb B}_D$.  We can also consider the groupoid whose objects are the glued $\alpha_\mathfrak{a}$ and the morphisms 
are the $(\alpha_{\mathfrak{a}},g_{\mathfrak{a}},\alpha_{g\ast\mathfrak{a}})$.  Obviously the two groupoids are isomorphic via $\mathfrak{a}\rightarrow \alpha_\mathfrak{a}$
and $(g,\mathfrak{a})\rightarrow (\alpha_\mathfrak{a},g_\mathfrak{a},\alpha_{g\ast\mathfrak{a}})$
We also note that 
$G$ defines actions $G\times \Gamma^\pm\rightarrow \Gamma^\pm$.  Denote by $G\ltimes \Gamma^\pm$ the associated translation groupoids, for which we have the  equivariant
diagram of inclusions
$$
\Gamma^+ \rightarrow \alpha_{\mathfrak{a}}\leftarrow \Gamma^-
$$
We generalize this now as follows.
We first consider tuples 
$$
(\alpha_0,b_1,...,b_{k-1},\alpha_{k-1}),
$$
 where $\alpha_i=(\Gamma^-_i,S_i,j_i,D_i,\Gamma^+_i)$ is as just described and $b_i:\Gamma^+_{i-1}\rightarrow \Gamma^-_{i}$
is a bijection.  We assume $G$ is a finite group acting acting on each  $\alpha_0,...,\alpha_{k-1}$ by biholomorphic maps  as previously described. Moreover, 
$G$ defines actions on the $\Gamma^\pm_i$ and  we assume that these actions are such that every $b_i:\Gamma^+_{i-1}\rightarrow \Gamma^-_i$ is equivariant.
We fix for every $(z,b_i(z))$ an ordered disk pair ${\mathcal D}_{(z,b_i(z))}=(D_z\sqcup D_{b_i(z)}, (z,b_i(z)))$ and assume that the union of these 
disks is invariant under $G$. Of course, the disks are mutually disjoint and do not intersect the floor disks.
An interface gluing parameter for the $(i-1,i)$-interface, $i=1,..,k$, is a map $\mathfrak{a}_{i-1,i}$,  which assigns to $(z,b_i(z))$ a gluing parameter ${\mathfrak{a}}_{i-1,i}(z)$,
having the additional property that either all its components are zero or all of its components are non-zero.
A total gluing parameter is given
as $(2k+1)$-tuple  $(\mathfrak{a}_0,{\mathfrak{a}}_{0,1},\mathfrak{a}_1,...,{\mathfrak{a}}_{k-1,k},\mathfrak{a}_{k})$. Denote by $1\leq i_1<..<i_\ell\leq k$
the ordered sequence of indices such that ${\mathfrak{a}}_{i-1,i}=0$.  We say that we have nontrivial interfaces $(i_1-1,i_1)$,..,$(i_{\ell}-1,i_{\ell})$.  Given $\alpha$ and the small disk structure, applying the total gluing parameter, we obtain $\alpha_\mathfrak{a}$ which again is a Riemann surface building.
The collection of all  such Riemann surface buildings we shall refer to as the set of ${\bm{D}}$-descendents of $\alpha$.  For example 
if $\mathfrak{a}$ has the nontrivial interfaces $1\leq i_1<..<i_\ell\leq k$ let us introduce $i_0=0$ and $i_{\ell+1}=k+1$. Then define 
$$
S_e= \coprod_{i\in \{i_e,i_{e+1}-1\}} S_i,
$$
and
$$
D_e =\coprod_{i\in \{i_e,i_{e+1}-1\}} D_i \coprod_{i\in \{i_e+1, i_{e+1}-1\}} D_{i-1,i}.
$$
  As punctures we take $\Gamma_{i_e}^-$ and $\Gamma_{i_{e+1}-1}^+$ so that we obtain
$(\Gamma_{i_e}^-,S_e,j_e,D_e,\Gamma^+_{i_{e+1}-1})$. The gluing parameters $(\mathfrak{a}_{i_e},\mathfrak{a}_{i_e,i_e+1},...,\mathfrak{a}_{i_{e+1}-1})$ allow us
to glue this surface and obtain $\alpha_{\mathfrak{a},e}$.
Together with the $b_{i_e}$ we obtain the Riemann surface building
$$
(\alpha_{\mathfrak{a},0},b_{i_1},\alpha_{\mathfrak{a},1},b_{i_2},...,b_{i_\ell},\alpha_{\mathfrak{a},\ell}).
$$
\section{Stable Hamiltonian Structures and  Periodic Orbits}\label{APP11}
We recall the notion of a stable Hamiltonian structure and derive useful results which are needed in this text  and
quite well-known.
\subsection{Stable Hamiltonian Structures}

One of the important objects is that of a stable Hamiltonian structure. A detailed study of these structures can be found in 
\cite{CV1}.
\subsubsection{Basic Definition}
We begin with the definition of a stable Hamiltonian structure.
\begin{definition} \label{28.1QQ}
Let $Q$ be a closed odd-dimensional manifold of dimension $\dim(Q)=2n-1$. A stable Hamiltonian structure on $Q$ is given by a pair
$(\lambda,\omega)$, where $\omega$ is a closed two-form of maximal rank on $Q$ and $\lambda$ a one-form such that
\begin{itemize}
\item[(1)] $\lambda\wedge \omega^{n-1}$ is a volume-form.
\item[(2)] The vector field $R$, called the {\bf Reeb vector field},  defined by
$$
i_R\lambda =1\ \ \hbox{and}\ \  {i_R} \omega=0
$$
 satisfies
$$
L_R\lambda =0.
$$
\end{itemize}
\qed
\end{definition}
The latter condition implies by Cartan's formula
$$
0=L_R\lambda= di_R\lambda +i_Rd\lambda = i_Rd\lambda.
$$
Since $R$ spans the kernel of $\omega$ this implies
\begin{eqnarray}\label{EQP28}
\text{ker}(\omega)\subset \text{ker}(d\lambda).
\end{eqnarray}
 The fact that $i_R\omega=0$ implies again by the Cartan formula that $L_R\omega=0$.
\begin{remark}
The standard example for a stable Hamiltonian structure is $(\lambda,d\lambda)$, where $\lambda$ is a contact form on $Q$.
\qed
\end{remark}
Stable Hamiltonian structures are an interesting object to study, see \cite{CV1}.  It is important to have such structures when studying pseudoholomorphic curves,
see \cite{EGH}. These structures allow to control certain area-based energy functionals, which is important in obtaining a priori estimates.

Associated to a stable Hamiltonian structure $(\lambda,\omega)$ on $Q$
we have the distribution $\xi=\ker(\lambda)$ and the natural splitting
of the tangent bundle
$$
TQ={\mathbb R}R\oplus \xi.
$$
We observe that the line bundle ${\mathbb R}R$ has a distinguished section $R$ and $\xi$ is in a natural way a symplectic vector bundle with symplectic structure being  $\omega|\xi\oplus \xi$. Let us observe that the flow $\phi_t$ associated to $R$ maps a vector in $\xi_x$ to a vector in $\xi_{\phi_t(x)}$
\begin{eqnarray}
T\phi_t:\xi\rightarrow \xi_{\phi_t(x)}.
\end{eqnarray}
\subsubsection{Symplectic Forms Associated to $(Q,\lambda,\omega)$}
We discuss stable Hamiltonian manifolds $(Q,\lambda,\omega)$ in somewhat more detail.
Denote by $p:{\mathbb R}\times Q \rightarrow Q $ the obvious projection and given a smooth map $\phi:{\mathbb R}\rightarrow {\mathbb R}$ 
we denote by $\wh{\phi}$ the map ${\mathbb R}\times Q\rightarrow {\mathbb R}$ defined by $\wh{\phi}(s,q)=\phi(s)$.
Given $(Q,\lambda,\omega)$ and $\phi$ we  can consider the two-form
$\Omega_{\phi}$  on ${\mathbb R}\times Q$ defined by
$$
\Omega_{\phi} = p^\ast \omega + d(\wh{\phi}\cdot p^\ast\lambda)
$$
which we shall write sloppily as $\omega+d(\phi\lambda)$. 
We observe that $\Omega_{\phi}(s,q)= \omega_q +\phi(s) \cdot d\lambda_q + \phi'(s) ds\wedge \lambda$. 
If $|\phi(s)|$ is small enough we see that $\omega_q+\phi(s) d\lambda_q$ as a two-form on $Q$ is maximally non-degenerate
and $\text{ker}(\omega+\phi(s) d\lambda)=\text{ker}(\omega)$ in view of (\ref{EQP28}).
The maximal non-degeneracy implies that $\omega+\phi(s) d\lambda$ restricted to 
$\{0\}\times\xi_q \subset T_{(s,q)}({\mathbb R}\times Q)$ is non-degenerate, i.e. a symplectic form.
 If $\phi$ satisfies this smallness condition 
and in addition $\phi'(s)>0$ for all $s$ then $\Omega_{\phi}$ is a symplectic form.
Hence we have obtain.
\begin{lem}\label{LEMMA-epsilon}
Given a smooth manifold $Q$ equipped with a stable Hamiltonian structure $(Q,\lambda,\omega)$ 
there exists $\varepsilon>0$ such that for every smooth $\phi:{\mathbb R}\rightarrow [-\varepsilon,\varepsilon]$ 
with $\phi'(s)>0$ for all $s\in {\mathbb R}$ the two-form $\Omega_{\phi}$ is symplectic.
\qed
\end{lem}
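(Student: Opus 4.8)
The statement to prove is Lemma \ref{LEMMA-epsilon}: for a closed stable Hamiltonian manifold $(Q,\lambda,\omega)$ there is $\varepsilon>0$ so that for every smooth $\phi:{\mathbb R}\rightarrow[-\varepsilon,\varepsilon]$ with $\phi'>0$ everywhere, the two-form $\Omega_\phi=p^\ast\omega+d(\wh\phi\cdot p^\ast\lambda)$ on ${\mathbb R}\times Q$ is symplectic. The plan is to verify the two defining properties of a symplectic form separately: closedness (which is immediate and uniform in $\phi$), and non-degeneracy (which is where the smallness of $\varepsilon$ and the sign of $\phi'$ enter). Throughout I would use the pointwise expression, already recorded just before the lemma,
\[
\Omega_\phi(s,q)=\omega_q+\phi(s)\,d\lambda_q+\phi'(s)\,ds\wedge\lambda_q
\]
as a two-form on $T_{(s,q)}({\mathbb R}\times Q)\cong{\mathbb R}\partial_s\oplus T_qQ$, and the splitting $T_qQ={\mathbb R}R_q\oplus\xi_q$ coming from the stable Hamiltonian structure.

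First I would dispose of closedness: $p^\ast\omega$ is closed since $\omega$ is closed, and $d(\wh\phi\cdot p^\ast\lambda)$ is exact, hence closed; so $d\Omega_\phi=0$ for every $\phi$, with no constraint. Next, the non-degeneracy. Fix $(s,q)$ and write $c=\phi(s)$, $c'=\phi'(s)>0$. I would show the top power $\Omega_\phi^n$ is a volume form on ${\mathbb R}\times Q$. Expanding, the only term containing $ds$ is $n\,c'\,ds\wedge\lambda_q\wedge(\omega_q+c\,d\lambda_q)^{n-1}$, because $ds\wedge ds=0$ kills any further $ds$-factors; thus
\[
\Omega_\phi^n=n\,c'\,ds\wedge\bigl[\lambda_q\wedge(\omega_q+c\,d\lambda_q)^{n-1}\bigr]+\bigl(\text{terms without }ds\bigr),
\]
and the terms without $ds$ are $(2n)$-forms built only from forms on the $(2n-1)$-dimensional $Q$, hence vanish. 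So $\Omega_\phi^n=n\,c'\,ds\wedge\lambda_q\wedge(\omega_q+c\,d\lambda_q)^{n-1}$, and it remains to see that $\lambda_q\wedge(\omega_q+c\,d\lambda_q)^{n-1}\neq0$ for $|c|$ small. At $c=0$ this is $\lambda\wedge\omega^{n-1}$, which is a volume form on $Q$ by axiom (1) of a stable Hamiltonian structure, in particular nowhere zero; the map $(q,c)\mapsto\lambda_q\wedge(\omega_q+c\,d\lambda_q)^{n-1}$ evaluated against a fixed local volume form is continuous, $Q$ is compact, and it is nonzero on the compact set $Q\times\{0\}$, so by a standard compactness/continuity argument there is $\varepsilon>0$ with $\lambda_q\wedge(\omega_q+c\,d\lambda_q)^{n-1}\neq0$ for all $q\in Q$ and all $|c|\le\varepsilon$. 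Since $c'>0$, $\Omega_\phi^n$ is then a nowhere-vanishing $(2n)$-form, i.e. $\Omega_\phi$ is non-degenerate; together with closedness this makes $\Omega_\phi$ symplectic. This $\varepsilon$ depends only on $(Q,\lambda,\omega)$, not on $\phi$, which is exactly the assertion.

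The only genuine point requiring care — the main (mild) obstacle — is the uniformity of $\varepsilon$ over $Q$: one must extract a single $\varepsilon$ working simultaneously at every $q$. I would handle this by fixing a reference volume form $\mu$ on $Q$ and writing $\lambda_q\wedge(\omega_q+c\,d\lambda_q)^{n-1}=f(q,c)\,\mu_q$ with $f$ continuous, $f(q,0)\neq0$ for all $q$; by connectedness of $Q$ (or component by component) $f(\cdot,0)$ has a fixed sign, $\min_{q}|f(q,0)|>0$ by compactness, and then continuity of $f$ on the compact set $Q\times[-1,1]$ gives a uniform $\varepsilon$ with $f(q,c)\neq0$ for $|c|\le\varepsilon$. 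One should also note, as the text already does, that the sign condition $\phi'>0$ is used only to ensure the $ds$-coefficient $c'$ never vanishes; it plays no role in the $Q$-direction computation. Everything else is the routine multilinear-algebra expansion sketched above, which I would not write out in full.
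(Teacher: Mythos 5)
Your argument is correct, but it reaches non-degeneracy by a different mechanism than the paper. The paper works with the splitting $T_{(s,q)}({\mathbb R}\times Q)={\mathbb R}\partial_s\oplus{\mathbb R}R\oplus\xi$ and argues block-wise: for $|\phi(s)|$ small, $\omega+\phi(s)d\lambda$ is still maximally non-degenerate with $\ker(\omega+\phi(s)d\lambda)=\ker(\omega)={\mathbb R}R$ — this uses the structural inclusion $\ker(\omega)\subset\ker(d\lambda)$ coming from $L_R\lambda=0$, i.e.\ (\ref{EQP28}) — so the restriction to $\xi$ stays symplectic, while the term $\phi'(s)\,ds\wedge\lambda$ with $\phi'>0$ is non-degenerate on ${\mathbb R}\partial_s\oplus{\mathbb R}R$ and the cross-terms vanish. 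You instead invoke the top-power criterion, compute $\Omega_\phi^{\,n}=n\,\phi'(s)\,ds\wedge\lambda\wedge(\omega+\phi(s)d\lambda)^{n-1}$ (the remaining terms die for the degree reason you give), and reduce everything to $\lambda\wedge(\omega+c\,d\lambda)^{n-1}$ being nowhere zero for $|c|\le\varepsilon$, which follows from axiom (1) of the stable Hamiltonian structure plus continuity and compactness of $Q$. Your route needs neither (\ref{EQP28}) nor the splitting, only the volume-form axiom, and it makes explicit the uniformity of $\varepsilon$ over the closed manifold $Q$, which the paper leaves implicit in the phrase ``if $|\phi(s)|$ is small enough''; note that this compactness is genuinely needed, since the lemma as stated says ``smooth manifold'' but the standing convention (Definition of a stable Hamiltonian structure) is that $Q$ is closed. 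What the paper's block argument buys in exchange is the finer structural information used immediately afterwards — that $\Omega_\phi$ restricted to $\{0\}\oplus\xi$ is symplectic and that the degenerate directions of $\omega+\phi(s)d\lambda$ remain spanned by $R$ — which feeds into the compatibility computation with $\wt{J}$ in the following lemma; your determinant computation certifies non-degeneracy but does not by itself record that decomposition. Your side remarks (fixed sign of $f(\cdot,0)$ by connectedness) are unnecessary but harmless: $\min_q|f(q,0)|>0$ already suffices.
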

In view of this lemma we make the following definition.
\begin{definition}\label{DEFQQ00}
We denote for $\varepsilon>0$ by $\Sigma_{\varepsilon}$ the set of all smooth maps $\phi:{\mathbb R}\rightarrow [-\varepsilon,\varepsilon]
$ satisfying $\phi'(s)\geq 0$ for all $s\in {\mathbb R}$.\qed
\end{definition}
In the case of $Q$ equipped with a contact form we obtain the stable Hamiltonian structure $(Q,\lambda,d\lambda)$.
In this case we can take any smooth map $\phi:{\mathbb R}\rightarrow [-1,\infty)$ and assuming that $\phi'(s)>0$ 
it follows that
$$
\Omega_{\phi}= (1+\phi)d\lambda + \phi'(s)ds\wedge \lambda= d((1+\phi)\lambda)
$$
is symplectic.  A possible example is the map $\phi(s)=e^s-1$ which gives $\Omega_{\phi} = d(e^t\lambda)$ which is the usual symplectization form. We note that in the general case of stable Hamiltonian structures the upper bound on $\phi$ is important to get symplectic forms.

\subsubsection{Compatible Almost Complex Structures}
Starting with a stable Hamiltonian structure $(Q,\lambda,\omega)$ we take the manifold ${\mathbb R}\times Q$
and fix $\varepsilon>0$ with the properties guaranteed by Lemma \ref{LEMMA-epsilon}.
We consider the set of all 2-forms $\Omega_{\phi}$ on ${\mathbb R}\times Q$ with $\phi\in\Sigma_{\varepsilon}$.
Then this collection is invariant under the ${\mathbb R}$-action on ${\mathbb R}\times Q$ via addition on the first factor.
With $\xi=\ker(\lambda)$ we obtain the symplectic vector bundle $(\xi,\omega)\rightarrow Q$ and fix a complex structure 
for this vector bundle, i.e. a smooth fiber-preserving map $J:\xi\rightarrow \xi$ with the following two properties
\begin{itemize}
\item[(1)] $J^2=-Id$.
\item[(2)] $g_J:\xi\oplus\xi\rightarrow {\mathbb R}$ defined by $g_J(q)(h,k)=\omega_q(h,J(q)k)$ is fiber-wise a positive definite inner product.
\end{itemize}
With $R$ being the Reeb vectorfield we define a smooth ${\mathbb R}$-invariant  almost complex structure $\wt{J}$ for ${\mathbb R}\times Q$ by
\begin{eqnarray}\label{REDF28.3}
\wt{J}(a,q)(h,kR(q)+\Delta)= (-k, hR(q)+J(q)\Delta),
\end{eqnarray}
where $h,k\in {\mathbb R}$ and $\Delta\in\xi$.  Consider for $\phi\in\Sigma_{\varepsilon}$ the fiber-wise bilinear form
$$
\Omega_{\phi}\circ (Id\oplus \wt{J}).
$$
We compute with a vector $(h,kR(q)+\Delta)\in T_{(s,q)}({\mathbb R}\times Q)$
\begin{eqnarray}
&&\Omega_{\phi}\circ (Id\oplus \wt{J})((h,kR(q)+\Delta),(h,kR(q)+\Delta))\\
&=&(\omega+\phi(s)d\lambda)(\Delta,J(q)\Delta) + \phi'(s)(h^2 +k^2)\nonumber\\
&\geq & 0.\nonumber
\end{eqnarray}
Since $\omega+\phi(s)d\lambda$ is non-degenerate on $\xi$ we see that in case $\phi'(s)>0$ the expression is a positive definite
quadratic form.  
\begin{lem}
Given a smooth manifold with stable Hamiltonian structure $(Q,\lambda,\omega)$ and a ${\mathbb R}$-invariant 
almost complex structure $\wt{J}$ as described in (\ref{REDF28.3}) pick an admissible $\varepsilon>0$ as in Definition 
\ref{DEFQQ00}.  Then for every $\phi\in\Sigma_{\varepsilon}$ the (fiber-wise) symmetric quadratic form 
$\mathsf{Q}^{\phi}$ defined with $(h,kR(q)+\Delta)\in T_{(s,q)}({\mathbb R}\times Q)$ by 
$$
\mathsf{Q}^{\phi}_{(s,q)}(h,kR(q)+\Delta):= \Omega_{\phi}((h,kR(q)+\Delta,\wt{J}(s,q)(h,kR(q)+\Delta))
$$
satisfies $\mathsf{Q}^{\phi}_{(s,q)}\geq 0$.  If $\phi\in\Sigma_{\varepsilon}$ has the property $\phi'(s)>0$ for all $s\in {\mathbb R}$
it satisfies $\mathsf{Q}_{(s,q)}^{\phi}>0$.
\qed
\end{lem}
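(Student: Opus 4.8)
The plan is to prove the final Lemma by a direct pointwise computation, since the statement is purely algebraic at each point $(s,q)\in\mathbb R\times Q$ and involves no analysis. First I would fix a point $(s,q)$ and an arbitrary tangent vector $X=(h,kR(q)+\Delta)\in T_{(s,q)}(\mathbb R\times Q)$, where $h,k\in\mathbb R$ and $\Delta\in\xi_q$, using the splitting $TQ=\mathbb R R\oplus\xi$. By definition of $\wt J$ in \eqref{REDF28.3}, $\wt J(s,q)X=(-k,\,hR(q)+J(q)\Delta)$, so the computation reduces to evaluating $\Omega_\phi(X,\wt JX)$.

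Next I would expand $\Omega_\phi=p^\ast\omega+\phi(s)\,p^\ast d\lambda+\phi'(s)\,ds\wedge\lambda$ term by term on the pair $(X,\wt JX)$. The $ds\wedge\lambda$ piece contributes $\phi'(s)(h^2+k^2)$ after noting that $\lambda(R)=1$, $\lambda(\Delta)=0$, and the $ds$-components of $X$ and $\wt JX$ are $h$ and $-k$ respectively. The $\omega$ and $d\lambda$ pieces see only the $Q$-directions; since $\omega(R,\cdot)=0$ and, by the stability condition and \eqref{EQP28}, $d\lambda(R,\cdot)=0$ as well, the $R$-components $kR(q)$ and $hR(q)$ drop out entirely, leaving $(\omega+\phi(s)d\lambda)(\Delta,J(q)\Delta)$. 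This is exactly the computation already displayed in the excerpt just before the Lemma, so I would simply reproduce it cleanly. Putting the pieces together gives
\[
\mathsf Q^\phi_{(s,q)}(X)=(\omega+\phi(s)d\lambda)\big(\Delta,J(q)\Delta\big)+\phi'(s)(h^2+k^2).
\]

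Finally I would invoke the positivity facts established earlier. Since $\phi\in\Sigma_\varepsilon$ means $|\phi(s)|\le\varepsilon$ with $\varepsilon$ chosen as in Lemma~\ref{LEMMA-epsilon} (equivalently Definition~\ref{DEFQQ00}), the two-form $\omega_q+\phi(s)d\lambda_q$ restricts to a symplectic form on $\xi_q$ that is tamed by $J(q)$; more precisely $g_{J,\phi}(q)(\Delta,\Delta'):=(\omega+\phi(s)d\lambda)_q(\Delta,J(q)\Delta')$ is still positive definite because $J$ was chosen compatible with $\omega|\xi$ and the perturbation by $\phi(s)d\lambda$ is $C^0$-small with the same kernel-free restriction to $\xi$ (this is where the bound $\varepsilon$ is used — note $d\lambda|\xi$ need not be nondegenerate, so one genuinely needs smallness, not just $\xi$-nondegeneracy of $\omega$). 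Hence the first summand is $\ge0$, and $\ge0$ always for the second since $\phi'(s)\ge0$; this yields $\mathsf Q^\phi_{(s,q)}\ge0$. If moreover $\phi'(s)>0$ for all $s$, then the second summand is strictly positive unless $h=k=0$, and in that case $X=\Delta\in\xi_q$ and the first summand $g_{J,\phi}(q)(\Delta,\Delta)$ is strictly positive unless $\Delta=0$; so $\mathsf Q^\phi_{(s,q)}(X)>0$ for $X\ne0$, i.e. $\mathsf Q^\phi_{(s,q)}>0$. The only subtle point — the ``main obstacle'' such as it is — is making sure the positive-definiteness of the perturbed metric $g_{J,\phi}$ on $\xi$ is correctly attributed to the choice of $\varepsilon$; everything else is bookkeeping with the splitting $TQ=\mathbb R R\oplus\xi$ and the explicit form of $\wt J$.
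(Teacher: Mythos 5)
Your proposal is correct and follows essentially the same route as the paper: the identity $\mathsf{Q}^{\phi}_{(s,q)}(h,kR(q)+\Delta)=(\omega+\phi(s)d\lambda)(\Delta,J(q)\Delta)+\phi'(s)(h^2+k^2)$ is exactly the computation displayed immediately before the Lemma, and your case analysis for $\geq 0$ versus $>0$ matches the text. If anything you are more explicit than the paper on the one delicate step, namely that the semi-positivity of $(\omega+\phi(s)d\lambda)(\Delta,J(q)\Delta)$ rests on the smallness of $\varepsilon$ (so that the perturbed form still tames $J$ on $\xi$, using compactness of $Q$), rather than on mere non-degeneracy of $\omega+\phi(s)d\lambda$ on $\xi$, which is all the surrounding text invokes.
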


\subsection{Periodic Orbits}
We start with the(abstract) notion of a periodic orbit in a smooth manifold.
\begin{definition}
A  \textbf{periodic orbit} in a smooth manifold
$Q$ is a tuple $([\gamma],T,k)$, where
\begin{itemize}
\item[(1)]  $\gamma:S^1\rightarrow Q$ is a smooth embedding and $[\gamma]$ is the equivalence class of $\gamma$, where 
$\gamma\sim \gamma'$ provided $\gamma(t)=\gamma'(t+c)$ for all $t\in S^1$, where $c\in S^1$ is a suiable constant.
\item[(2)]  $T$ is a positive real number and $k\geq 1$ an integer.
\end{itemize}
Here $T$ is called the \textbf{period} and $k$ is the \textbf{covering number}. The fraction $T_0:= T/k$ is called the \textbf{minimal period}.
\qed
\end{definition}
It is an important fact that a given Hamiltonian structure $(\lambda,\omega)$ on an odd-dimensional manifold $Q$
produces automatically a set of periodic orbits. We shall explain this next, where we give a formulation compatible with the general definition.
\begin{definition}
Assume $Q$ is equipped with $(\lambda,\omega)$ and $R$ is the associated Reeb vector field. A periodic orbit for $(V,\lambda,\omega)$ is a tuple
$([\gamma],T,k)$ with $k$ being a nonzero positive integer, $T$ a positive number, $\gamma:S^1\rightarrow Q$ a smooth embedding 
and $[\gamma]$ the set of reparameterisations of $\gamma$, i.e. $t\rightarrow \gamma(t+c)$, where $c\in S^1$,
such that the following property holds
$$
\frac{d\gamma}{dt}(t) = \frac{T}{k}\cdot R(\gamma(t)).
$$
We call $T$ the {\bf period} and $k$ the {\bf covering number}, and $T/k$ the {\bf minimal period}.
\qed
\end{definition} 
\begin{remark}
The way to think about a periodic orbit for $(Q,\lambda,\omega)$ is as follows.  Take the Reeb vector field $R$ 
and solve $\dot{x}=R(x)$. Assume we have a periodic orbit $(x,T)$, i.e. there exists a $T>0$ such that $x(0)=x(T)$.
Then there exists an integer $k\geq 1$ such that $x(t)\neq x(0)$ for $0<t<T/k$, $x(0)=x(T/k)$, and $T/k$ is called the minimal period.
We can define  the embedding $\gamma:{\mathbb R}/{\mathbb Z}\rightarrow Q$ by 
$$
\gamma(t) = x(tT/k).
$$
Then $\frac{d\gamma}{dt}(t) = (T/k)\cdot R(\gamma(t))$. If we take another point on $x({\mathbb R})$ and solve the differential equation
with this as starting point we obtain a map $y$, which again can be viewed as a $T$-periodic solution $(y,T)$. Applying the same procedure 
we obtain another element in $[\gamma]$. Hence our notation keeps track of the period,  the minimal period, and the set $\gamma(S^1)$ with
a preferred class of parameterizations.
\qed
\end{remark}

\begin{definition}
Consider a periodic orbit $([\gamma],T,k)$ associated to $(Q,\lambda,\omega)$.
We say that $([\gamma],T,k)$ is {\bf non-degenerate} if the symplectic map $T\phi_{T}(p):\xi_p\rightarrow \xi_p$ for some fixed 
$p\in \gamma(S^1)$  does not have $1$ in its spectrum. The definition does not depend on the choice of $p$. If for a Hamiltonian structure $(\lambda,\omega)$ all periodic orbits are non-degenerate we say that $(\lambda,\omega)$ is a {\bf non-degenerate stable Hamiltonian structure}.
\qed
\end{definition}

\begin{remark}
It is always possible to perturb a contact form in $C^\infty$, by keeping the associated contact structure, so that   the new form is non-degenerate.
The situation for stable Hamiltonian structure is more subtle, see \cite{CFP,CV1}. It seems to be  possible to always perturb them to a Morse-Bott situation.
Our discussion of polyfold structures can be generalized to this case, but we shall not do it here and concentrate on the non-degenerate case.
\qed
\end{remark}

For the sc-Fredholm Theory  we are interested in the situation where the periodic orbits come from a 
non-degenerate stable Hamiltonian structure $(\lambda,\omega)$ on $Q$. 
\begin{definition}
Given a smooth compact manifold equipped with a non-degenerate stable Hamiltonian structure $(Q,\lambda,\omega)$ 
we denote by ${\mathcal P}(Q,\lambda,\omega)$ the collection of all
periodic orbits $([\gamma],T,k)$. We denote by  ${\mathcal P}^\ast(Q,\lambda,\omega)$ the union
$$
{\mathcal P}^\ast(Q,\lambda,\omega) := {\mathcal P}(Q,\lambda,\omega)\bigcup \{\emptyset\}.
$$
\qed
\end{definition}
For the sc-Fredholm theory it will be important  to associate to the elements in ${\mathcal P}^\ast(Q,\-\lambda,\omega) $
weight sequences. However, care has to be taken that these choices are compatible with spectral gaps coming from
a certain class of self-adjoint operators which occur naturally after a choice of almost complex structure compatible 
with $(\lambda,\omega)$ has been made. We shall discuss this at the end of the next subsection, after introducing
the before-mentioned class of self-adjoint operators, called asymptotic operators.

\subsection{Special Coordinates and Asymptotic Operators}
Consider  $(Q,\lambda,\omega)$,  a smooth manifold with a stable Hamiltonian structure. With $\xi:=\text{ker}(\lambda)$
we equip the symplectic vector bundle $(\xi,\omega)\rightarrow Q$ with a compatible complex structure $J:\xi\rightarrow \xi$, $J^2=-Id$,
so that $\omega\circ (Id\oplus J)$ equips each fiber with a positive definite inner product.  We equip  $Q$ with the Riemannian metric
\begin{eqnarray}
g_J:= \lambda\otimes \lambda +\omega\circ(Id\oplus J).
\end{eqnarray}
  As we already explained before, the data  $(\lambda,\omega,J)$ will determine a ${\mathbb R}$-invariant 
almost complex structure $\wt{J}$ on ${\mathbb R}\times Q$, see (\ref{REDF28.3}), and this structure  will be important in
the sc-Fredholm theory.  However, not every $\wt{J}$ will work for a pseudoholomorphic curve theory
in ${\mathbb R}\times Q$.  It will be important that the underlying $(Q,\lambda,\omega)$ is non-degenerate and that weight sequences
are picked appropriately.  One can express this in various ways. The choice here is to pick suitable special coordinates and to bring 
the study of a periodic orbit into the context of a special model. Of course, it will be important to verify that 
different choices of special coordinates lead to the same conclusion.
\subsubsection{Special Coordinates}
We are interested in the geometry near
a given periodic orbit 
$$
([\gamma],T,k).
$$
The idea is to transfer the general problem to the model case $Q_0:=S^1\times {\mathbb R}^{2n-2}$
with periodic orbit $([\gamma_0],T,k)$, where $\gamma_0(t)=(t,0)$.  We equip  $Q_0$ with $\lambda_0=dt$ and $\omega_0=\sum_{i=1}^{n-1} dx_i\wedge dy_i$, which is a degenerate structure.
We denote by  $J_0$  the standard structure on ${\mathbb R}^{2n-2}={\mathbb R}^2\oplus..\oplus {\mathbb R}^2$, where on the ${\mathbb R}^2$-factors 
$(1,0)$ is mapped to $(0,1)$. We define $\xi^0=\text{ker}(dt)$ and as inner product $dt\otimes dt + \langle.,.\rangle_{{\mathbb R}^{2n-2}}$.  Note that 
$\omega_0\circ (Id\oplus J_0)=\langle.,.\rangle_{{\mathbb R}^{2n-2}}$.  
What will be of interest to us are structures 
$(\lambda',\omega',J')$ defined near $\gamma_0(S^1)$, which coincide with $(\lambda_0,\omega_0,J_0)$ on $\gamma_0(S^1)$.
This data $(\lambda',\omega',J')$ will be obtained as the push forward of a restriction of $(\lambda,\omega,J)$ on $Q$ to a small 
open neighborhood of $\gamma(S^1)$ by a special choice of coordinates.
\begin{definition}
Let $(Q,\lambda,\omega)$ be a smooth manifold with a stable Hamiltonian structure.  Consider a periodic orbit
$([\gamma],T,k)$.
A {\bf special coordinate transformation} is a smooth diffeomorphism  $\phi:U(\gamma(S^1))\rightarrow U(\gamma_0(S^1))$ which has the following properties.
\begin{itemize}
\item[(1)] There exists a representative $\gamma^{\phi}\in[\gamma]$ such that $\phi\circ\gamma^{\phi}(t)=\gamma_0(t)$.
\item[(2)] For every $t\in S^1$ the tangent map 
$$
(T\phi)(\gamma^{\phi}(t)):T_{\gamma^{\phi}(t)}Q\rightarrow T_{\gamma_0(t)}Q_0
$$
induces  a linear isomorphism $\wh{\phi}_t:\xi_{\gamma^{\phi}(t)}\rightarrow \xi^0_{\gamma_0(t)}$
which is complex linear and isometric for the distinguished inner products.
\end{itemize}
\qed
\end{definition}
The following is left as an exercise.
\begin{lem}
Given $(Q,\lambda,\omega,J)$ and a periodic orbit $([\gamma],T,k)$ there exist for given representative $\gamma^{\phi}$
suitable open neighborhoods
$U(\gamma(S^1))$ and $U(\gamma_0(S^1))$ and a special coordinate transformation $\phi:U(\gamma(S^1))\rightarrow
U(\gamma_0(S^1))$ with the property $\phi\circ \gamma^{\phi}(t)=\gamma_0(t)$.
\qed
\end{lem}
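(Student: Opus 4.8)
The statement to be proven is the existence of a special coordinate transformation $\phi : U(\gamma(S^1)) \to U(\gamma_0(S^1))$ with $\phi \circ \gamma^\phi(t) = \gamma_0(t)$, given $(Q,\lambda,\omega,J)$, a periodic orbit $([\gamma],T,k)$, and a fixed representative $\gamma^\phi \in [\gamma]$. The plan is to build $\phi$ as the inverse of a tubular-neighborhood-type parametrization adapted to the distinguished splitting $TQ = \mathbb{R}R \oplus \xi$ along $\gamma^\phi(S^1)$. First I would fix the embedding $\gamma^\phi : S^1 \to Q$ satisfying $\tfrac{d\gamma^\phi}{dt} = \tfrac{T}{k} R(\gamma^\phi(t))$; this is a smooth embedded loop, so its image is an embedded circle in $Q$. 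The key observation is that $(\xi, \omega, J)$ restricted to $\gamma^\phi(S^1)$ is a rank-$(2n-2)$ Hermitian vector bundle over $S^1$, and the pulled-back metric $g_J$ makes this a Euclidean bundle of rank $2n-2$.

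The main step is a symplectic/unitary trivialization. Over $S^1$ every complex vector bundle is trivial, so there is a smooth bundle isomorphism $\Psi_t : \xi^0_{\gamma_0(t)} = \{0\}\times \mathbb{R}^{2n-2} \to \xi_{\gamma^\phi(t)}$ which is complex linear and isometric for each $t$; one constructs this by picking a unitary frame of $(\xi, \omega(\cdot, J\cdot))$ along the loop, using that $U(n-1)$ is connected so no monodromy obstruction arises. Next I would use this frame together with the Reeb direction to define a local diffeomorphism in the opposite direction, $\Theta : U(\gamma_0(S^1)) \to U(\gamma(S^1))$, by setting $\Theta(t, v) = \exp^{g_J}_{\gamma^\phi(t)}(\Psi_t v)$ for $v \in \mathbb{R}^{2n-2}$ small, where $\exp^{g_J}$ is the Riemannian exponential map of $g_J$. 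Since $\Psi_t$ maps into $\xi_{\gamma^\phi(t)}$ which is complementary to $\mathbb{R}R(\gamma^\phi(t)) = \mathbb{R}\tfrac{d\gamma^\phi}{dt}$, the differential of $\Theta$ along the zero section is an isomorphism $\mathbb{R} \oplus \xi^0 \to \mathbb{R}R \oplus \xi$; by compactness of $S^1$ and the inverse function theorem $\Theta$ is a diffeomorphism onto a neighborhood of $\gamma^\phi(S^1)$ after shrinking. Then $\phi := \Theta^{-1}$, and by construction $\phi \circ \gamma^\phi(t) = (t,0) = \gamma_0(t)$, which gives property (1). For property (2), along the zero section $T\Theta(t,0)$ carries $\{0\}\times \mathbb{R}^{2n-2}$ isometrically and $J$-linearly onto $\xi_{\gamma^\phi(t)}$ (this is exactly the defining property of $\Psi_t$, since $\exp^{g_J}$ has differential the identity at the base point), so $\wh{\phi}_t = (T\phi)(\gamma^\phi(t))|_\xi = \Psi_t^{-1}$ is complex linear and isometric onto $\xi^0_{\gamma_0(t)}$.

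The one point requiring care is ensuring $\Theta$ maps into the correct model target and respects the product structure $S^1 \times \mathbb{R}^{2n-2}$ near the loop, rather than merely some abstract tubular neighborhood; but since we only demand agreement of $(\lambda', \omega', J')$ with $(\lambda_0, \omega_0, J_0)$ \emph{on} $\gamma_0(S^1)$ and not in a full neighborhood, no Moser-type normal form is needed here — the freedom in the definition of "special coordinate transformation" is precisely calibrated so that only first-order (tangential) agreement along the orbit is required. The hard part, such as it is, is organizing the smooth dependence on $t \in S^1$ of the unitary frame $\Psi_t$: one should either invoke the general fact that complex vector bundles over $S^1$ are trivial, or construct the frame explicitly by parallel transporting an orthonormal $J$-complex basis of $\xi_{\gamma^\phi(0)}$ around the loop using a unitary connection on $(\xi, J, g_J|_\xi)$ and then correcting the resulting (generally nontrivial) monodromy in $U(n-1)$ by a smooth path in $U(n-1)$ from the monodromy to the identity, which exists by connectedness. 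Everything else is a routine application of the inverse function theorem with $S^1$-compactness, exactly as in the standard construction of a tubular neighborhood. I would present this as the exercise it is called, spelling out only the frame construction and the identification of $\wh{\phi}_t$.
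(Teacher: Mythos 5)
Your construction is correct and is exactly the intended one for this lemma, which the paper leaves as an exercise: a unitary trivialization of $(\xi,\omega(\cdot,J\cdot),J)$ along the embedded loop (trivial over $S^1$ since $U(n-1)$ is connected), fed into the $g_J$-exponential map transverse to the Reeb direction, inverted via the standard tubular-neighborhood argument, so that $T\phi$ along the orbit carries $\xi$ onto $\xi^0$ by $\Psi_t^{-1}$, giving properties (1) and (2) of the definition. The only detail worth spelling out when writing it up is the smooth closing of the frame at the seam $t=0\sim 1$: correct the parallel-transport monodromy by a path of the form $\exp(\chi(t)X)$ with $\exp(X)=M^{-1}$ and $\chi$ constant near the endpoints, so the corrected frame is smooth and periodic, not merely continuous.
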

\subsubsection{Asymptotic Operator}\label{ASYMOperator}
We start with $(Q,\lambda,\omega)$ and an associated periodic orbit $([\gamma],T_0,k_0)$
Consider the map which associates to an element $y$  in  $C^1(S^1,Q)$ the loop $\frac{d}{dt}y - T_0R(y)$.
The latter can be viewed as a $C^0$-section of $y^\ast TQ$. The section vanishes at $x=\gamma$ and consequently 
has a linearization $L_{\gamma}$ at every representative $\gamma$ of $[\gamma]$. 

Having fixed $\gamma:=\gamma^{\phi}\in [\gamma]$ take a special coordinate 
$\phi$ such that $\phi(\gamma(t))=\gamma_0(t)$. 
With this  choice of $\phi$ and given a loop $y$ near $\gamma$ we obtain a loop $\phi\circ y$ near $\gamma_0$.
We consider the following which associates to a smooth loop $z$ near $\gamma_0$ the smooth loop $\eta=\eta(z)$ in ${\mathbb R}^{2n-1}$
defined by
\begin{eqnarray}
t\rightarrow \text{pr}_2\circ T\phi\left(\frac{d}{dt}(\phi^{-1}(z(t)))- T_0\cdot R(\phi^{-1}(z(t)))\right).
\end{eqnarray}
We note that we can rewrite this as
$$
t\rightarrow \frac{d}{dt}z(t) - T_0\cdot [pr_2\circ T\phi \circ R\circ \phi^{-1}(z(t))]
$$
We differentiate this expression at $z=\gamma_0$ in the direction $h$ to obtain $L^{\phi}(h)$, which is a loop $S^1\rightarrow {\mathbb R}\times {\mathbb R}^{2n-2}$.
With the linear map 
$$
\wh{B}^{\phi}(t):{\mathbb R}\times {\mathbb R}^{2n-2}\rightarrow {\mathbb R}\times {\mathbb R}^{2n-2}
$$
 being obtained by differentiating $y\rightarrow T_0\cdot [pr_2\circ T\phi\circ  R\circ \phi^{-1}(y)$ at $\gamma_0(t)$ we see that
$L^{\phi}$ has the form 
$$
L^{\phi}(h)=\frac{d}{dt}h - \wh{B}(t)h.
$$
\begin{lem}
The following holds true.
\begin{itemize}
\item[(1)]  The map $\wh{B}^{\phi}$ has the form 
$$
\wh{B}^{\phi}(t)(h_1,\Delta)=(0,B^{\phi}(t)\Delta),
$$
where $h=(h_1,\Delta)\in {\mathbb R}\times {\mathbb R}^{2n-2}$.
\item[(2)] $-J_0B^{\phi}(t)$ is symmetric for the standard structure on ${\mathbb R}^{2n-2}$.
\item[(3)] The unbounded operator defined in $L^2(S^1,{\mathbb R}^{2n-2})$ with domain $H^1(S^1,{\mathbb R}^{2n-2})$
by $h\rightarrow -J_0[\frac{d}{dt}h -B^{\phi}(t)h]$ is self-adjoint and has a compact resolvent.
\end{itemize}
\end{lem}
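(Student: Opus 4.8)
\emph{Proof plan.} I would first fix the representative $\gamma=\gamma^{\phi}$ and transport the stable Hamiltonian data to the model by $\phi$, writing $R'=\phi_\ast R$, $\lambda'=\phi_\ast\lambda$, $\omega'=\phi_\ast\omega$ on the neighbourhood $U(\gamma_0(S^1))$ of the model orbit in $Q_0=S^1\times{\mathbb R}^{2n-2}$. These push-forwards inherit the pointwise identities $\iota_{R'}\lambda'\equiv 1$, $\iota_{R'}\omega'\equiv 0$, $d\omega'=0$, and — applying Cartan's formula to $L_R\lambda=0$ and $L_R\omega=0$ — also $L_{R'}\lambda'=0$, $L_{R'}\omega'=0$, whence $\iota_{R'}d\lambda'=0$. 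By construction $\wh B^{\phi}(t)=T_0\,DR'(\gamma_0(t))$, the Jacobian of $R'$ read in the standard trivialisation of $TQ_0$, since $L^{\phi}(h)=\tfrac{d}{dt}h-\wh B^{\phi}(t)h$. The facts I would read off from the special-coordinate axiom, along $\gamma_0(S^1)=\{z=0\}$, are: $R'=\tfrac1{T_0}\partial_t$; $\ker\lambda'_{\gamma_0(t)}=\phi_\ast\xi_{\gamma(t)}=\xi^0_{\gamma_0(t)}=\{0\}\times{\mathbb R}^{2n-2}$; and, because $T\phi$ is a $J$–$J_0$ complex-linear isometry there, $\omega'|_{\xi^0\times\xi^0}=\omega_0|_{\xi^0\times\xi^0}$ along $\gamma_0$, where $\omega_0(v,w)=\langle J_0v,w\rangle$ on ${\mathbb R}^{2n-2}$.

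For (1), I would work in coordinates $(t,z)$, $z\in{\mathbb R}^{2n-2}$, and write $R'=R'_1\partial_t+\sum_iR'_{2,i}\partial_{z_i}$ and $\lambda'=f\,dt+\sum_ig_i\,dz_i$. Since $R'\equiv\tfrac1{T_0}\partial_t$ on $\{z=0\}$ we get $\partial_tR'_1(t,0)=\partial_tR'_{2,i}(t,0)=0$, which already yields $\wh B^{\phi}(t)(h_1,0)=0$, i.e.\ no dependence on $h_1$. From $\ker\lambda'|_{\{z=0\}}=\xi^0$ together with $\iota_{R'}\lambda'\equiv1$ one obtains $g_i(t,0)=0$ and $f(t,0)=T_0$; evaluating $\iota_{R'}d\lambda'=0$ at $(t,0)$ against $\partial_{z_j}$ gives $\partial_tg_j(t,0)-\partial_{z_j}f(t,0)=0$, hence $\partial_{z_j}f(t,0)=0$; and differentiating $\iota_{R'}\lambda'\equiv1$ in the $z_j$-direction at $(t,0)$, using these vanishings, forces $\partial_{z_j}R'_1(t,0)=0$. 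Thus the first row of $DR'(\gamma_0(t))$ vanishes and its first column contributes nothing, so $\wh B^{\phi}(t)(h_1,\Delta)=(0,B^{\phi}(t)\Delta)$ with $B^{\phi}(t)\Delta:=T_0\,\mathrm{pr}_2\big(DR'(\gamma_0(t))(0,\Delta)\big)$, a $t$-smooth loop of endomorphisms of ${\mathbb R}^{2n-2}$.

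For (2), I would differentiate the identity $\iota_{R'}\omega'\equiv0$ once in a normal direction and pair the result with a second normal direction at $(t,0)$; using $R'=\tfrac1{T_0}\partial_t$, part (1), and $\omega'|_{\xi^0}=\omega_0$, this gives $\omega_0(B^{\phi}(t)v,w)=-(\nabla_v\omega')(\tfrac1{T_0}\partial_t,w)\cdot T_0$ for normal $v,w$ at $\gamma_0(t)$. The right-hand side is symmetric in $(v,w)$: from $d\omega'=0$ one has the cyclic relation $\partial_{v}\omega'(\partial_t,w)-\partial_{w}\omega'(\partial_t,v)=\partial_t\omega'(v,w)$, and $\omega'(v,w)$ restricted to the normal bundle equals the constant $\omega_0(v,w)$ along $\gamma_0$, so the $t$-derivative vanishes. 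Hence $B^{\phi}(t)$ is $\omega_0$-symmetric, which — since $\omega_0(v,w)=\langle J_0v,w\rangle$ — is exactly the assertion that $-J_0B^{\phi}(t)$ is symmetric for the standard inner product. Conceptually this is the symmetry of the Hessian at $\gamma$ of the reduced Reeb action functional on the loop space, cf.\ \cite{HWZ2017,Wendl}; the above is the coordinate bookkeeping behind it.

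For (3), I would write $A:=-J_0\big(\tfrac{d}{dt}-B^{\phi}(t)\big)$ on $L^2(S^1,{\mathbb R}^{2n-2})$ with domain $H^1(S^1,{\mathbb R}^{2n-2})$, and split $A=A_0+K$ with $A_0:=-J_0\tfrac{d}{dt}$ and $K:=J_0B^{\phi}(t)$. Integration by parts on $S^1$ (no boundary terms) shows $A_0$ is symmetric, and a Fourier-series computation shows $A_0$ is self-adjoint on $H^1$ with discrete spectrum, so $(A_0\pm i)^{-1}\colon L^2\to H^1\hookrightarrow L^2$ is compact by the Rellich embedding on the circle; $K$ is bounded and, by (2), symmetric, so the Kato–Rellich theorem makes $A=A_0+K$ self-adjoint on the same domain $H^1$, and $\mathrm{dom}(A)=H^1\hookrightarrow\hookrightarrow L^2$ gives compactness of the resolvent. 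The substantive part is (1)–(2): converting the three first-order identities $L_{R'}\lambda'=0$, $L_{R'}\omega'=0$, $\iota_{R'}\omega'=0$ into the vanishing of precisely the right entries of $DR'(\gamma_0(t))$ and into $\omega_0$-symmetry, while keeping in mind that $\phi_\ast\lambda$ agrees with $\lambda_0$ only up to the factor $T_0$ along $\gamma_0$ and is not normalised on a full neighbourhood; part (3) is then routine functional analysis.
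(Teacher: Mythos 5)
Your proof is correct, but it is worth pointing out that the paper itself does not prove this lemma at all: its ``proof'' consists of the single sentence that these are known results, with a referral to \cite{BEHWZ,Dragnev,Wendl}. So your argument supplies details the paper omits rather than taking a different route through an existing one, and the details check out. Pushing the data forward by $\phi$ and using $\iota_{R'}\lambda'\equiv 1$ and $\iota_{R'}d\lambda'\equiv 0$ together with the special-coordinate fact $\ker\lambda'_{(t,0)}=\xi^0$ (hence $g_j(t,0)=0$, $f(t,0)=T_0$) does kill both the first row and the first column of $T_0\,DR'(\gamma_0(t))$, giving (1); differentiating $\iota_{R'}\omega'\equiv 0$ in a normal direction and combining $d\omega'=0$ with the fact that $\omega'$ restricted to $\xi^0$ along $\gamma_0$ is the constant form $\omega_0$ (a consequence of $\wh{\phi}_t$ being $J$--$J_0$ complex linear and isometric, hence symplectic) yields the $\omega_0$-symmetry of $B^{\phi}(t)$, which is exactly (2); and (3) is indeed routine Kato--Rellich plus Rellich compactness. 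Two steps you use implicitly and should write out if this is to stand as a complete proof: in (1), the conclusion $\partial_{z_j}f(t,0)=0$ uses $\partial_t g_j(t,0)=0$, which follows from $g_j(\cdot,0)\equiv 0$ along the whole orbit; and in (3), compactness of the resolvent needs the resolvent to be bounded from $L^2$ into $H^1$, i.e.\ the equivalence of the graph norm of $A$ with the $H^1$-norm, before invoking the compact embedding $H^1(S^1)\hookrightarrow L^2(S^1)$. Finally, note that the normalization $\frac{d}{dt}y-T_0R(y)$ in the paper matches its definition of a periodic orbit only when one reads $T_0$ as the constant making this section vanish at $\gamma$ (the minimal period times the covering number divided by $k_0$, i.e.\ $T/k$ in the paper's earlier notation); you adopted the paper's convention, so this does not affect your argument, but it is worth flagging.
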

\begin{proof}
These are known results and we refer for a discussion  to \cite{BEHWZ,Dragnev,Wendl}.
\end{proof}
With the above discussion in mind we make the following definition.
\begin{definition}
Given $(Q,\lambda,\omega,J)$ and a periodic orbit $([\gamma],T_0,k_0)$ we denote for given special coordinates 
by $\bm{L}^{\phi}$ the linear unbounded  self-adjoint operator
\begin{eqnarray}
&\bm{L}^{\phi}: L^2(S^1,{\mathbb R}^{2n-2})\supset H^1(S^1,{\mathbb R}^{2n-2})\rightarrow L^2(S^1,{\mathbb R}^{2n-2}):&\\
&h\rightarrow 
-J_0[\frac{d}{dt}h -B^{\phi}(t)h].&\nonumber
\end{eqnarray}
\qed
\end{definition}
It is important to know the relationship between $\bm{L}^{\phi}$ and $\bm{L}^{\psi}$ given two different choices 
of special coordinates.   Given an ordered pair $(\phi,\psi)$ we have that 
$$
\phi\circ \gamma^{\phi}(t)=\gamma_0(t)=\psi\circ\gamma^{\psi}(t),\ t\in S^1.
$$
There exists a well-define element $c=c(\phi,\psi)\in S^1$, called {\bf phase},  such that $\gamma^{\psi}(t)=\gamma^{\phi}(t+c)$ for $t\in S^1$.
Next we study a pair $(\phi,\psi)$ with phase $c=c(\phi,\psi)$,  We consider the transition map
$\sigma:=\psi\circ \phi^{-1}$ which is defined on an open neighborhood of $\gamma_0(S^1)$ in $S^1\times {\mathbb R}^{2n-2}$.
\begin{lem}
The following identity holds  for a pair $(\phi,\psi)$ with phase $c$
$$
\psi\circ\phi^{-1}(\gamma_0(t+c))=\gamma_0(t)
$$
\end{lem}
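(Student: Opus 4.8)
The plan is to unwind the definitions of the phase $c = c(\phi,\psi)$ and of special coordinates, and to observe that the asserted identity is essentially a tautology once one tracks the parametrizations carefully. Recall that by definition of a special coordinate transformation we have representatives $\gamma^\phi, \gamma^\psi \in [\gamma]$ with
$$
\phi\circ\gamma^\phi(t) = \gamma_0(t) = \psi\circ\gamma^\psi(t), \qquad t\in S^1,
$$
and that the phase $c = c(\phi,\psi)\in S^1$ is the unique element with $\gamma^\psi(t) = \gamma^\phi(t+c)$ for all $t\in S^1$ (uniqueness follows from $\gamma^\phi$ being an embedding). The transition map is $\sigma := \psi\circ\phi^{-1}$, defined on a neighborhood of $\gamma_0(S^1)$.

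First I would compute $\sigma(\gamma_0(t+c))$ directly. Since $\phi\circ\gamma^\phi(t+c) = \gamma_0(t+c)$, we have $\phi^{-1}(\gamma_0(t+c)) = \gamma^\phi(t+c)$, at least for the representative $\gamma^\phi$ used to define $\phi$ and for $t$ such that $\gamma_0(t+c)$ lies in the chart domain (which is all of $S^1$ along the orbit). Then using the phase relation $\gamma^\phi(t+c) = \gamma^\psi(t)$ we get
$$
\sigma(\gamma_0(t+c)) = \psi\circ\phi^{-1}(\gamma_0(t+c)) = \psi(\gamma^\phi(t+c)) = \psi(\gamma^\psi(t)) = \gamma_0(t),
$$
which is exactly the claimed identity $\psi\circ\phi^{-1}(\gamma_0(t+c)) = \gamma_0(t)$.

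The only subtlety worth flagging is the consistency of the branch of $\phi^{-1}$: one must know that $\phi$ is a diffeomorphism onto a neighborhood $U(\gamma_0(S^1))$, so that $\phi^{-1}$ is single-valued there and $\phi^{-1}(\gamma_0(t+c)) = \gamma^\phi(t+c)$ is unambiguous; this is part of the definition of special coordinate transformation. A second routine point is that $\gamma^\phi(t+c)$ lies in $\mathrm{dom}(\psi) = U(\gamma(S^1))$ so that $\psi(\gamma^\phi(t+c))$ makes sense — but $\gamma^\phi(t+c) \in \gamma(S^1)$ and the orbit is contained in every such neighborhood, so this is automatic. I do not expect any real obstacle here: the statement is a bookkeeping lemma whose entire content is the definition of the phase, and the proof is the three-line chain of equalities above. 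If anything, the mild care needed is just to record which representative of $[\gamma]$ is being used at each step and to note that all evaluations stay inside the relevant chart domains along the orbit.
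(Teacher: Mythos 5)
Your proof is correct and is the same computation as the paper's: the paper writes $\gamma_0(t)=\psi\circ\gamma^{\psi}(t)=\psi\circ\gamma^{\phi}(t+c)=\psi\circ\phi^{-1}\circ\gamma_0(t+c)$, which is exactly your chain of equalities read in the opposite direction. Your extra remarks about the single-valuedness of $\phi^{-1}$ on its chart domain are fine but not needed beyond what the definition of special coordinates already provides.
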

\begin{proof}
We compute
$$
 \gamma_0(t) =\psi\circ \gamma^{\psi}(t)=\psi\circ \gamma^{\phi}(t+c) = \psi\circ\phi^{-1} \circ\gamma_0(t+c).
$$
\end{proof}
Abbreviate $\sigma=\psi\circ\phi^{-1}$.  From the properties of $\phi$ and $\psi$ we know that for $(t,z)\in S^1\times {\mathbb R}^{2n-2}$
it holds that $\sigma(t+c,0)=(t,0)$ and moreover $D\sigma(t,0)(\{0\}\times {\mathbb R}^{2n-2})\subset \{0\}\times {\mathbb R}^{2n-2}$.
Further the induced map ${\mathbb R}^{2n-2}\rightarrow {\mathbb R}^{2n-2}$ is unitary for using the complex structure coming from $J_0$.
Denoting this map by $U_{(\phi,\psi)}(t)$ we obtain a loop of unitary matrices. We note that 
$$
D\sigma(t,0)(h,k) =(h,U(t)k),\ (h,k)\in {\mathbb R}\times {\mathbb R}^{2n-2}.
$$
Unitary here means that the operators commute with $J_0$ and are isometric.
Given the unitary loop  
$U_{(\phi,\psi)}$ we shall write $U_{(\phi,\psi)}^{-1}$  for the point-wise inverted loop.
\begin{lem}
For given $(\phi,\psi,\sigma)$
we have the identity 
\begin{eqnarray}\label{Gldfg}
U_{\phi,\sigma}(t) = U_{\psi,\sigma}(t-c(\phi,\psi))\circ U_{\phi,\psi}(t).
\end{eqnarray}
\end{lem}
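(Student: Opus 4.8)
\textbf{Plan for the proof of the cocycle identity (\ref{Gldfg}).}

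The statement is a cocycle (chain-rule) identity for the unitary loops $U_{(\phi,\psi)}$ extracted from the transition maps between special coordinate systems, so the plan is to reduce it entirely to the chain rule for differentials of the composite diffeomorphisms, restricted to the normal directions along the orbit. First I would record the three transition maps in play: with $\sigma_1 = \psi\circ\phi^{-1}$, $\sigma_2 = \sigma\circ\psi^{-1}$ (where $\sigma$ here denotes a third special coordinate system, abusively re-used), and $\sigma_{13} = \sigma\circ\phi^{-1} = \sigma_2\circ\sigma_1$, each $\sigma_{ij}$ is a diffeomorphism defined near $\gamma_0(S^1)$ in $S^1\times{\mathbb R}^{2n-2}$, each sends (a shift of) $\gamma_0(S^1)$ to $\gamma_0(S^1)$, and each has differential along $\gamma_0$ of the block form $D\sigma_{ij}(t,0)(h,k) = (h, U_{ij}(t)k)$ for a loop of unitaries $U_{ij}$, by the Lemma just preceding. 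The phases must be bookkept carefully: $c(\phi,\sigma) = c(\phi,\psi) + c(\psi,\sigma)$, since $\gamma^{\sigma}(t) = \gamma^{\psi}(t + c(\psi,\sigma)) = \gamma^{\phi}(t + c(\phi,\psi) + c(\psi,\sigma))$.

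The key step is then to differentiate the identity $\sigma_{13} = \sigma_2\circ\sigma_1$ at the point $(t, 0)$ where it is legitimate, i.e.\ at a point on the relevant translate of $\gamma_0$. By the preceding Lemma applied to the pair $(\phi,\psi)$ we have $\sigma_1(t, 0)$ lying on $\gamma_0(S^1)$; more precisely $\sigma_1\big(\gamma_0(t+c(\phi,\psi))\big) = \gamma_0(t)$, so at the base point $\gamma_0\big(t+c(\phi,\psi)\big)$ the chain rule gives
\begin{equation*}
D\sigma_{13}\big(\gamma_0(t+c(\phi,\psi))\big) = D\sigma_2\big(\gamma_0(t)\big)\circ D\sigma_1\big(\gamma_0(t+c(\phi,\psi))\big).
\end{equation*}
Now substitute the block forms: the left-hand side is $(h,k)\mapsto (h, U_{(\phi,\sigma)}(t+c(\phi,\psi))\,k)$ after reindexing, the rightmost factor contributes $U_{(\phi,\psi)}(t+c(\phi,\psi))$ acting on the normal component, and $D\sigma_2$ at $\gamma_0(t)$ contributes $U_{(\psi,\sigma)}(t)$. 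Reading off the $\{0\}\times{\mathbb R}^{2n-2}$ block and then shifting the variable $t \mapsto t - c(\phi,\psi)$ to match the normalization in the statement yields exactly
\begin{equation*}
U_{(\phi,\sigma)}(t) = U_{(\psi,\sigma)}(t - c(\phi,\psi))\circ U_{(\phi,\psi)}(t),
\end{equation*}
which is (\ref{Gldfg}). The fact that each resulting $U_{ij}$ is unitary for $J_0$ is inherited from the defining property of special coordinates and need not be re-derived; it only has to be checked that the composition of the two block-diagonal differentials is again block-diagonal with the bottom-right block unitary, which is automatic since the product of unitaries is unitary.

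The main obstacle I anticipate is purely notational: the argument hinges on getting the phase shifts in the arguments of the $U$'s exactly right, and the source uses the letter $\sigma$ for two different things (a transition map and a third coordinate chart), so I would first fix unambiguous names (say $\phi$, $\psi$, $\rho$ for the three charts, with $c_1 = c(\phi,\psi)$, $c_2 = c(\psi,\rho)$) and verify the additivity of phases before doing anything else. Once the base points of differentiation are correctly identified as points of $\gamma_0(S^1)$ — so that the block-triangular structure from the preceding Lemma is available — the identity is just the chain rule read in coordinates, with no analytic content. A secondary point worth one sentence is to confirm that all three transition maps are simultaneously defined on a common neighborhood of the relevant translates of $\gamma_0(S^1)$, which holds after shrinking the neighborhoods $U(\gamma(S^1))$, so the composition $\sigma_2\circ\sigma_1$ makes sense there.
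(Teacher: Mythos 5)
Your proposal is correct and follows essentially the same route as the paper: both factor the transition map as $\sigma\circ\phi^{-1}=(\sigma\circ\psi^{-1})\circ(\psi\circ\phi^{-1})$, differentiate along $\gamma_0$ using the block form $D(\cdot)(t,0)(h,k)=(h,U(t)k)$ from the preceding lemma, and read off the ${\mathbb R}^{2n-2}$-block; the only cosmetic difference is that you differentiate at $\gamma_0(t+c(\phi,\psi))$ and shift the variable afterwards, while the paper differentiates at $\gamma_0(t)$ directly, and your remark on phase additivity is correct but not actually needed for the identity.
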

\begin{proof}
We shall write $\sigma\circ\phi^{-1} =(\sigma\circ\psi^{-1})\circ (\psi\circ\phi^{-1})$ and 
 recall  that $\psi\circ\phi^{-1}(\gamma_0(t)=\gamma_0(t-c(\phi,\psi))$.  Differentiating the first expression along $\gamma_0(t)$
and taking the ${\mathbb R}^{2n-2}$-part we obtain at $\gamma_0(t)$
$$
U_{(\phi,\sigma)}(t) = U_{(\psi,\sigma)}(t-c(\phi,\psi))\circ U_{(\phi,\psi)}(t).
$$
\end{proof}
Finally we show the following.
\begin{prop}
For given $(Q,\lambda,\omega,J)$ and a choice of periodic orbit $([\gamma],T_0,k_0)$ consider 
for associated special coordinates $\phi,\psi$ the asymptotic operators $\bm{L}^{\phi}$ and $\bm{L}^{\psi}$.
Then the following equality holds
$$
\bm{L}^{\psi} = \mathsf{U}_{(\phi,\psi)}\circ \bm{L}^{\phi}\circ \mathsf{U}^{-1}_{(\phi,\psi)}.
$$
\end{prop}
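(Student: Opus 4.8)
The goal is the conjugation formula $\bm{L}^{\psi} = \mathsf{U}_{(\phi,\psi)}\circ \bm{L}^{\phi}\circ \mathsf{U}^{-1}_{(\phi,\psi)}$, where $\mathsf{U}_{(\phi,\psi)}$ is the bounded operator on $L^2(S^1,{\mathbb R}^{2n-2})$ (restricting to a bounded operator on $H^1$) induced by the unitary loop $U_{(\phi,\psi)}$ together with the phase shift by $c=c(\phi,\psi)$; concretely $(\mathsf{U}_{(\phi,\psi)}h)(t) = U_{(\phi,\psi)}(t)\,h(t)$ possibly precomposed with $t\mapsto t+c$ according to the convention already fixed. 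I would first pin down this operator precisely: since $U_{(\phi,\psi)}$ is a smooth loop of unitary matrices and translation on $S^1$ is an isometry of $L^2$, $\mathsf{U}_{(\phi,\psi)}$ is unitary on $L^2(S^1,{\mathbb R}^{2n-2})$ and restricts to a topological isomorphism of $H^1(S^1,{\mathbb R}^{2n-2})$; hence the right-hand side is again an unbounded self-adjoint operator with the same domain $H^1$, and it suffices to check the two operators agree on the dense subspace $C^\infty(S^1,{\mathbb R}^{2n-2})$.

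\textbf{Main computation.} The heart of the argument is a change-of-variables / chain-rule identity. Recall $\bm{L}^{\phi}$ was obtained by linearizing, at $z=\gamma_0$, the map $z\mapsto \frac{d}{dt}z - T_0\,[\mathrm{pr}_2\circ T\phi\circ R\circ\phi^{-1}(z)]$, and similarly for $\psi$. Writing $\sigma=\psi\circ\phi^{-1}$ one has the tautology
\[
\mathrm{pr}_2\circ T\psi\circ R\circ\psi^{-1} = \mathrm{pr}_2\circ T\sigma\circ\big(T\phi\circ R\circ\phi^{-1}\big)\circ\sigma^{-1},
\]
and $\sigma$ intertwines the two model periodic orbits via $\sigma(\gamma_0(t+c))=\gamma_0(t)$ with $D\sigma(\gamma_0(t))(h,k)=(h,U_{(\phi,\psi)}(t)k)$. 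I would feed a loop $z = \sigma\circ z'$ (so $z'$ lives near $\gamma_0$ in the $\psi$-picture and $z$ near $\gamma_0$ in the $\phi$-picture, up to the phase shift) into the $\phi$-functional, differentiate at $z'=\gamma_0$, and use the chain rule together with $D\sigma$ having the stated block form. The derivative of $\frac{d}{dt}$ under this substitution produces, via Leibniz, a term $\dot U_{(\phi,\psi)}(t)$ acting on $z'$ plus $U_{(\phi,\psi)}(t)$ acting on $\dot z'$; the derivative of the $R$-term produces $B^{\phi}$ conjugated by $U_{(\phi,\psi)}$ plus exactly the compensating $\dot U_{(\phi,\psi)}$ term (this is forced because the symmetric operators $-J_0 B^{\phi}$, $-J_0 B^{\psi}$ are intrinsic spectral data of the linearized Reeb flow and must transform by unitary conjugation). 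After multiplying through by $-J_0$ and using that $U_{(\phi,\psi)}(t)$ commutes with $J_0$, these identities collapse to $\bm{L}^{\psi} = \mathsf{U}_{(\phi,\psi)}\bm{L}^{\phi}\mathsf{U}^{-1}_{(\phi,\psi)}$ on smooth loops, with the phase shift by $c$ accounted for by the translation part of $\mathsf{U}_{(\phi,\psi)}$ and consistent with the cocycle relation (\ref{Gldfg}).

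\textbf{Where the difficulty lies.} The routine parts are the functional-analytic preliminaries (boundedness/unitarity of $\mathsf{U}_{(\phi,\psi)}$, invariance of $H^1$, reduction to smooth loops) and the bare chain rule. The genuinely delicate point is bookkeeping the phase shift $c(\phi,\psi)$ correctly and consistently with whatever convention was used to define $\mathsf{U}_{(\phi,\psi)}$ from $U_{(\phi,\psi)}$; a sign or a $t\mapsto t+c$ versus $t\mapsto t-c$ slip would break the formula, and one wants the final identity to be compatible with the already-established cocycle law (\ref{Gldfg}) $U_{(\phi,\sigma)}(t) = U_{(\psi,\sigma)}(t-c(\phi,\psi))\circ U_{(\phi,\psi)}(t)$ and with $\bm{L}^{\phi}=\mathrm{Id}$-conjugation when $\phi=\psi$. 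The other mild subtlety is verifying that the "extra" $\dot U_{(\phi,\psi)}$ terms coming from differentiating $\tfrac{d}{dt}$ and from differentiating the Reeb term really do cancel; the cleanest way to see this without grinding is to observe that both $\bm{L}^{\phi}$ and $\bm{L}^{\psi}$ arise as linearizations of the \emph{same} geometric section $y\mapsto \dot y - T_0 R(y)$ of $y^\ast TQ$ at the same orbit, merely read off in two different local trivializations related by $D\sigma$, so the conjugation is automatic once one tracks which trivialization is being used — the $\dot U$ terms are precisely the connection-change terms and must match.
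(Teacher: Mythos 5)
Your proposal is correct and follows essentially the same route as the paper: write $\sigma=\psi\circ\phi^{-1}$, intertwine the two nonlinear functionals via $\sigma$ together with the phase shift $c(\phi,\psi)$, linearize at $\gamma_0$ using the block form of $D\sigma(\gamma_0(t))$ (the paper's $\mathsf{U}_{(\phi,\psi)}$ does include the shift, through the relation $k(t-c)=U_{(\phi,\psi)}(t)h(t)$), and then multiply by $-J_0$ using that the unitary loop commutes with $J_0$. One small correction: your parenthetical that the compensating $\dot U$-term is forced because $-J_0B^{\phi}$ and $-J_0B^{\psi}$ are intrinsic and ``must transform by unitary conjugation'' is false as stated --- the coefficient loops satisfy $B^{\psi}=U\,B^{\phi}\,U^{-1}+\dot U\,U^{-1}$ (up to the phase shift in the argument), so they transform affinely like a connection and only the full operators $\bm{L}^{\phi},\bm{L}^{\psi}$ are unitarily conjugate, which is precisely what your own closing ``connection-change'' remark says, so the argument itself is unaffected.
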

\begin{proof}
We have the following two expressions  from which $\bm{L}^{\phi}$ and $\bm{L}^{\psi}$ are being derived.
\begin{eqnarray}
& z\rightarrow \frac{d}{dt} z -T_0\cdot [\text{pr}_2\circ T\phi\circ R\circ\phi^{-1}(z)]&\\
& y\rightarrow \frac{d}{dt} y -T_0\cdot [\text{pr}_2\circ T\psi\circ R\circ\psi^{-1}(y)]&\nonumber
\end{eqnarray}
Let us define $\sigma=\psi\circ\phi^{-1}$. We note that $\sigma\circ\gamma_0(t) = \gamma_0(t-c)$
where $c=c(\phi,\psi)$. We define the map $z\rightarrow y$ by $y(t-c)=\sigma\circ z(t)$.

We compute
\begin{eqnarray*}
 &&\left(\frac{d}{dt} y -T_0\cdot [\text{pr}_2\circ T\psi\circ R\circ\psi^{-1}(y)]\right)(t-c)\\
&=& \frac{d}{dt} y(t-c) -T_0\cdot [\text{pr}_2\circ T\psi\circ R\circ\psi^{-1}(y(t-c))]\\
&=& \frac{d}{dt} (\sigma\circ z(t)) - T_0 \cdot  [\text{pr}_2\circ T\sigma\circ T\phi\circ R\circ\phi^{-1}(z(t))]\\
&=& D\sigma(z(t))[ \frac{d}{dt}z(t) -T_0 \cdot T\phi\circ R\circ\phi^{-1}(z(t))].
\end{eqnarray*}
Differentiating the relationship $y(t-c)=\sigma\circ z(t)$ between the input loop $z$
and the output loop $y$ with respect to $z$ at $\gamma_0$ in the direction of $h$ gives
$$
k(t-c) = D\sigma(t,0)h(t).
$$
Hence we obtain
\begin{eqnarray*}
&&  \frac{d}{dt}k(t-c) - \wh{B}^{\psi}(t-c) k(t-c)\\
&=& D\sigma(t,0)[\frac{d}{dt}h(t) - \wh{B}^{\phi}(t) h(t)]
\end{eqnarray*}
Specializing we obtain from this for $h\in H^1(S^1,{\mathbb R}^{2n-2})$ also the relation ship
$$
\frac{d}{dt}k(t-c) -B^{\psi}(t-c)k(t-c)= U_{(\phi,\psi)}(t) [\frac{d}{dt}h(t) - B^{\phi}(t) h(t)]
$$
This means that
$$
(\frac{d}{dt} -B^{\psi})\mathsf{U}_{(\phi,\psi)}h = \mathsf{U}_{(\phi,\psi)} (\frac{d}{dt}h- B^{\phi}h),
$$
and after multiplying by $-J_0$ we obtain the desired result.
\end{proof}
As a consequence of the previous discussion we can define the $J$-spectral interval around $0$ 
associated to a periodic orbit $([\gamma],T,k)$ associated to $(Q,\lambda,\omega)$ and $J$.
\begin{definition}\label{SPECTrum}
Let $(Q,\lambda,\omega)$ be a closed manifold equipped with a stable Hamiltonian structure 
and $J$ an admissible complex multiplication for $\text{ker}(\lambda)\rightarrow Q$. 
The $J$-{\bf spectral interval} associated to a periodic orbit $\bm{\gamma}=([\gamma],T_0,k_0)$
is the largest interval $(a,b)\subset {\mathbb R}$,  $a\leq 0\leq b$, such that $\sigma(\bm{L}^{\phi})\cap (a,b)=\emptyset$.
Here $\sigma{\bm{L}}^{\phi})$ is the spectrum associated to this self-adjoint operator. 
We also define  $\sigma(\bm{\gamma},J):= \sigma(\bm{L}^{\phi})$, which, of course, does not depend 
on $\phi$, and call it the $J$-{\bf spectrum} of $\bm{\gamma}$.
\qed
\end{definition}
\begin{remark}
Since for $\phi,\psi$ the associated operators are unitarily conjugated the $J$-interval does not depend 
on the choice of special coordinate.  In the case that $\bm{\gamma}$ is non-degenerate the spectral interval will be 
nonempty, containing $0$ in the interior.
\qed
\end{remark}
As just mentioned the non-degeneracy assumption implies that $0\not\in \sigma (\bm{\gamma},J)$.
Since the operator $\bm{L}^{\phi}$  has a compact resolvent we have a spectral gap around
$0$. We call a positive number $\delta$ associated to $\bm{\gamma}=([\gamma],T,k)$  admissible, provided
$$
\sigma(\bm{\gamma},J)\cap [-\delta,\delta]=\emptyset.
$$
Note that the admissibility of $\delta$ depends on the original choice of $J$, which most of the time is fixed
from the beginning. If we want to stress the dependence on $J$ we call $\delta$ admissible for $(\bm{\gamma},J)$.
There are several notions and results associated to periodic orbits, which are frequently used in constructions.
\begin{definition}\label{DEFSpectrum}
Given a non-degenerate stable Hamiltonian structure $(\lambda,\omega)$ on the closed
manifold $	Q$ and a compatible $J$ we call a map
$$
\delta_o:{\mathcal P^\ast}\rightarrow (0,\infty)
$$
a {\bf weight selector} associated to $J$ provided for every $\bm{\gamma}=([\gamma],T,k) \in {\mathcal P}$,  the  number $\delta(\bm{\gamma},J)$ associated to $(\bm{\gamma},J)$ is admissible and bounded strictly by $2\pi$.  In addition we require that $\delta(\emptyset)\in (0,2\pi)$. 
A {\bf weight sequence} $(\delta_i)$ for $(\lambda,\omega,J)$ is a sequence
of weight functions $\delta_m$ so that for every periodic orbit $\gamma$ or $\gamma=\emptyset $ we have
$$
0<\delta_0(\gamma)<\delta_1(\gamma)<....
$$
\qed
\end{definition}

\subsection{Conley-Zehnder and Maslov Index}\label{CZMaslov}
In the (Fredholm) index theory for the CR-oper\-ator the Conley-Zehnder index plays an important role. We follow 
\cite{Dragnev}, which is based on \cite{FloerH1,FloerH2,HWZ-Em,SZ}. We view ${\mathbb R}^2$ as a symplectic vector space with coordinates 
$(x,y)$ and symplectic form $dx\wedge dy$.  Then ${\mathbb R}^{2n}$ is identified with the direct sum ${\mathbb R}^2\oplus..\oplus {\mathbb R}^2$,
coordinates $(x_1,y_1,...,x_n,y_n)$, and symplectic form $\omega=\sum_{i=1}^n dx_i\wedge dy_i$. 
\subsubsection{Conley-Zehnder Index} Denoting by $\text{Sp}(n)$ the group of linear symplectic maps 
${\mathbb R}^{2n}\rightarrow {\mathbb R}^{2n}$ we consider the space of continuous arcs $\Phi:[0,1]\rightarrow \text{Sp}(n)$ starting at the identity $\text{Id}_{2n}$ 
at $t=0$ and ending at $\Phi(1)$ which is a symplectic map not having $1$ in the spectrum. We denote by $ \Sigma(n)$ the maps 
$\alpha:[0,1]\rightarrow \text{Sp}(n)$ starting and ending at $\text{Id}_{2n}$. The map
$$
\text{G}(n)\times\Sigma(n)\rightarrow \Sigma(n): (\alpha,\Phi)\rightarrow \alpha\cdot\Phi, \ (\alpha\cdot\Phi)(t)=\alpha(t)\circ\Phi(t)
$$
is well-defined. We also have the inversion map
$$
\Sigma(n)\rightarrow \Sigma(n): \Phi\rightarrow \Phi^{-1},\ \Phi^{-1}(t):= (\Phi(t))^{-1}.
$$
Finally there is the obvious map
$$
\Sigma(n)\times\Sigma(m)\rightarrow\Sigma(n+m): (\Phi,\Psi)\rightarrow \Phi\oplus\Psi.
$$
A classical map $\mu^n_M:\text{G}(n)\rightarrow {\mathbb Z}$ is the Maslov index which is characterized by the following theorem.
\begin{thm}
The maps $\mu^n_M$ for $n\in \{1,....\}$ are characterized by the following requirements.
\begin{itemize}
\item[(1)] Two loops  $\alpha_1,\alpha_2\in \text{G}(n)$ are homotopic in $\text{G}(n)$ if and only if $\mu^n_M(\alpha_1)=\mu^n_M(\alpha_2)$.
\item[(2)] The map induced on $\pi_1(\text{Sp}(n),Id)$ by $\mu^n_M$ is a group isomorphism to ${\mathbb Z}$, i.e in this particular case equivalently
$$
\mu^n_M(\alpha_1\cdot \alpha_2) =\mu^n_M(\alpha_1)+\mu^n_M(\alpha_1).
$$
Here $(\alpha_1\cdot \alpha_2)(t)=\alpha_1(t)\circ\alpha_2(t)$.
\item[(3)] It holds 
$$
\mu^M_n\left(\left[t\rightarrow\left(e^{2\pi it}Id_2 \oplus Id_{2n-2}\right)\right]\right)=1.
$$
\end{itemize}
\qed
\end{thm}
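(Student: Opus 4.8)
The plan is to establish the three characterizing properties by pinning the Maslov index down via the standard degree-theoretic description of $\pi_1(\mathrm{Sp}(n),\mathrm{Id})$. First I would recall that $\mathrm{Sp}(n)$ deformation retracts onto its maximal compact subgroup $U(n)$, and that the map $\det_{\mathbb C}: U(n)\to S^1$ induces an isomorphism $\pi_1(U(n))\to\pi_1(S^1)=\mathbb Z$. One then \emph{defines} $\mu^n_M(\alpha)$ for a loop $\alpha\in\mathrm{G}(n)$ to be the winding number of $t\mapsto \det_{\mathbb C}(\rho(\alpha(t)))$, where $\rho:\mathrm{Sp}(n)\to U(n)$ is the retraction; this is visibly a homotopy invariant and a homomorphism, which will give items (1) and (2) essentially for free once I check that the retraction $\rho$ is itself canonical up to homotopy so that the winding number does not depend on the choice of $\rho$. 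The normalization (3) is then a direct computation: on $e^{2\pi i t}\mathrm{Id}_2\oplus\mathrm{Id}_{2n-2}$ the complex determinant of the unitary part equals $e^{2\pi i t}$, whose winding number is $1$.

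The key steps in order would be: (i) record the retraction $\mathrm{Sp}(n)\simeq U(n)$ and the resulting identification $\pi_1(\mathrm{Sp}(n),\mathrm{Id})\cong\mathbb Z$ via $\det_{\mathbb C}$, taking care that different choices of compatible inner product (equivalently, of retraction) are homotopic through retractions, hence give the same winding number; (ii) \emph{define} $\mu^n_M$ by the winding number of the determinant of the unitary part along the loop, and verify it is well-defined on homotopy classes; (iii) verify property (2), the homomorphism property, using that $\det_{\mathbb C}(AB)=\det_{\mathbb C}(A)\det_{\mathbb C}(B)$ and that winding numbers of pointwise products of $S^1$-valued loops add — this also forces the induced map on $\pi_1$ to be an isomorphism since $\pi_1(\mathrm{Sp}(n))=\mathbb Z$ is generated by the loop in item (3), whose index is $1$; (iv) verify property (1): if $\mu^n_M(\alpha_1)=\mu^n_M(\alpha_2)$ then the determinant loops are homotopic in $S^1$, and since the fiber of $\det_{\mathbb C}$ over a point, namely $SU(n)$, is simply connected, the homotopy of determinant loops lifts to a homotopy in $U(n)$, hence in $\mathrm{Sp}(n)$ after conjugating by the retraction; (v) compute the normalization (3) explicitly. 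Finally, I would remark on uniqueness: any family of maps satisfying (1)–(3) must agree with $\mu^n_M$, because (1) says such a map is a complete homotopy invariant, (2) says it is a homomorphism identifying $\pi_1$ with $\mathbb Z$, and (3) fixes the generator, so the two homomorphisms $\pi_1(\mathrm{Sp}(n))\to\mathbb Z$ agree on a generator and hence everywhere.

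\textbf{Main obstacle.}
The step I expect to be the genuine sticking point is (iv), the surjectivity/injectivity argument showing that $\mu^n_M$ \emph{detects} homotopy classes rather than merely being invariant under them. The cleanest route is the fibration $SU(n)\hookrightarrow U(n)\xrightarrow{\det_{\mathbb C}} S^1$ together with $\pi_1(SU(n))=\pi_0(SU(n))=0$; the long exact sequence of this fibration then gives $\pi_1(U(n))\xrightarrow{\cong}\pi_1(S^1)$, and one has to be careful that the identification of loops in $\mathrm{Sp}(n)$ with loops in $U(n)$ is compatible with basepoints and with the retraction chosen in step (i). A secondary, more bookkeeping-flavored obstacle is making precise that the construction is independent of the auxiliary $\omega$-compatible complex structure used to build the retraction: two such structures are joined by a path of compatible structures (the space of them is contractible), which produces a homotopy of retractions, hence equal winding numbers — but this needs to be stated carefully so that (1) is not accidentally circular. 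Everything else is routine linear-symplectic algebra and a one-line determinant computation for (3).
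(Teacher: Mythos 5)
Your construction is the standard one, and it is worth noting that the paper does not actually prove this theorem: it is stated with references to the classical literature (Floer--Hofer, Salamon--Zehnder, Dragnev), so there is no ``paper proof'' to compare against. Your route --- retract $\mathrm{Sp}(n)$ onto $U(n)$ by polar decomposition, set $\mu^n_M(\alpha)$ equal to the winding number of $t\mapsto \det_{\mathbb C}\bigl(\rho(\alpha(t))\bigr)$, use the fibration $SU(n)\hookrightarrow U(n)\xrightarrow{\det_{\mathbb C}} S^1$ with $SU(n)$ simply connected to see that this winding number is a complete invariant of $\pi_1(\mathrm{Sp}(n),\mathrm{Id})\cong\mathbb Z$, compute the normalization, and then get uniqueness because (1)--(3) force any competitor to induce the same isomorphism $\pi_1\to\mathbb Z$ on the generator --- is exactly how this characterization is established in the sources the paper cites. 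Your care about independence of the compatible complex structure is fine but not needed here, since the statement is formulated on the standard $(\mathbb R^{2n},\omega_0,J_0)$ and one may simply fix the standard polar retraction.

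One step, as you wrote it, does not go through literally: in (iii) you argue additivity under the pointwise product $(\alpha_1\cdot\alpha_2)(t)=\alpha_1(t)\alpha_2(t)$ from $\det_{\mathbb C}(AB)=\det_{\mathbb C}(A)\det_{\mathbb C}(B)$ together with additivity of winding numbers of products of $S^1$-valued loops. But the quantity you wind is $\det_{\mathbb C}\circ\rho$, and the polar retraction $\rho$ is \emph{not} a group homomorphism, so $\det_{\mathbb C}\rho\bigl(\alpha_1(t)\alpha_2(t)\bigr)$ is in general not the pointwise product of $\det_{\mathbb C}\rho(\alpha_1(t))$ and $\det_{\mathbb C}\rho(\alpha_2(t))$, and the displayed identity you invoke is unavailable. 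The repair is standard and short: either (a) first homotope each loop into $U(n)$ (possible since $U(n)\subset\mathrm{Sp}(n)$ is a deformation retract), where $\rho$ is the identity and $\det_{\mathbb C}$ \emph{is} multiplicative, so the pointwise-product argument applies verbatim; or (b) observe that in any topological group the pointwise product of two loops based at the identity is homotopic to their concatenation (Eckmann--Hilton), and the winding number of $\det_{\mathbb C}\circ\rho$ is additive under concatenation because it is a homomorphism on $\pi_1$. With either fix, property (2) follows, and the rest of your outline --- including the uniqueness argument, which only needs that the loop in (3) is a generator, a fact delivered by the existence computation --- is correct.
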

Having characterized the Maslov index we state the main result about the Conley-Zehnder index.
The Conley-Zehnder index refers to a family of maps $\mu_{CZ}^n:\Sigma(n)\rightarrow {\mathbb Z}$, $n\in \{1,2,..\}$.

\begin{thm}
There exists a unique family $\mu_{CZ}^n: \Sigma(n)\rightarrow {\mathbb Z}$ for $n\in\{1,2,..\}$
characterized by the following properties:
\begin{itemize}
\item[(1)] Homotopic maps in $\text{G}(n)$ have the same index $\mu_{CZ}^n$.
\item[(2)] For $\alpha\in \text{G}(n)$ and $\Phi\in \Sigma(n)$ the identity
$$
\mu_{CZ}^n(\alpha\cdot\Phi) =\mu_{CZ}^n(\Phi) + 2\cdot \mu_M^n(\alpha).
$$
\item[(3)] $\mu_{CZ}^n(\Phi^{-1})+\mu_{CZ}^n(\Phi)=0$.
\item[(4)] $\mu_{CZ}^1(\gamma)=1$, where $\gamma(t)=e^{\pi it}Id_{{\mathbb R}^2}$.
\item[(5)] $\mu^{n+m}_{CZ}(\Phi\oplus \Psi)=\mu^n_{CZ}(\Phi) +\mu^m_{CZ}(\Psi)$.
\end{itemize}
\qed
\end{thm}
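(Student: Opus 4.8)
The plan is to prove uniqueness and existence separately, following the standard approach of Salamon--Zehnder \cite{SZ} and Floer--Hofer \cite{FloerH1,FloerH2}; since the corresponding theorem for the Maslov index has already been stated, I will treat that as a black box and use it heavily. For \emph{uniqueness}, suppose $\mu$ and $\mu'$ are two families satisfying (1)--(5). Given $\Phi \in \Sigma(n)$, I would first show that $\Phi$ can be homotoped within $\Sigma(n)$ (i.e.\ keeping endpoints fixed at $\mathrm{Id}_{2n}$, wait --- here $\Phi$ only need end at a matrix without eigenvalue $1$; let me re-read: $\Sigma(n)$ are loops, but the ``(5)'' and ``(2)'' items use the broader arc class) --- more carefully, I would use the polar/normal form decomposition: any symplectic path with no eigenvalue $1$ at the endpoint is homotopic, through such paths, to a concatenation of a loop $\alpha \in \mathrm{G}(n)$ and one of the standard ``normal form'' paths whose index is pinned down by axioms (4) and (5) together with the direct sum axiom. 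Then axiom (2) computes $\mu(\Phi) = \mu(\text{normal form}) + 2\mu_M^n(\alpha)$, which is the same for $\mu$ and $\mu'$. The key reduction here is the classification up to homotopy of arcs in $\mathrm{Sp}(n)$ rel the subset of matrices with $1 \notin \sigma$; this is a known piece of symplectic linear algebra.

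For \emph{existence}, I would give the explicit construction. The cleanest route is: for a path $\Phi$ ending at $\Phi(1)$ with $1 \notin \sigma(\Phi(1))$, deform $\Phi$ near $t=1$ so that $\Phi(1)$ lies in one of two preferred conjugacy classes $W^\pm = -\mathrm{Id}_2^{\oplus k} \oplus (\text{diagonal hyperbolic})$, count the number of ``crossings'' of the eigenvalue $1$ (or equivalently use the rotation-number/winding definition of $\mu_{CZ}$ via the map $\rho: \mathrm{Sp}(n) \to S^1$ of Salamon--Zehnder), and define $\mu^n_{CZ}(\Phi)$ as twice that winding number corrected by a $\pm$ term. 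Then I would verify axioms (1)--(5) one by one: (1) is immediate from homotopy invariance of winding number; (4) is a direct computation with $\gamma(t) = e^{\pi i t}\mathrm{Id}_{{\mathbb R}^2}$ (the eigenvalue crosses $1$ ``halfway'', giving index $1$); (5) follows since winding numbers add under direct sums and the correction terms add; (2) follows from the defining property of the Maslov index $\mu_M^n$ as the total rotation of a loop, since composing with a loop $\alpha$ adds $\mu_M^n(\alpha)$ full turns hence $2\mu_M^n(\alpha)$ to the doubled count; (3) follows from the fact that $t \mapsto \Phi(t)^{-1}$ reverses orientation of the winding.

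I expect the \textbf{main obstacle} to be the normal-form / crossing-count bookkeeping that makes axioms (2) and (3) compatible with (4): one has to be careful about the ``half-integer'' behavior of the rotation number at the endpoint (since $\Phi(1)$ is only required to avoid the eigenvalue $1$, not to be the identity), and about the sign conventions in the correction term so that $\mu^1_{CZ}(\gamma) = 1$ rather than $-1$ or $0$. Concretely, the technical heart is the claim that the space of symplectic matrices without eigenvalue $1$ has exactly two connected components (distinguished by $\mathrm{sign}\det(\mathrm{Id} - A)$, cf.\ the parity relation quoted earlier in the lecture), each of which deformation-retracts onto an explicit model, and that the winding number extends continuously and half-integer-valuedly over each component. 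Once that linear-algebraic skeleton is in place, verifying the five axioms is routine, and uniqueness follows from the same skeleton plus the already-granted characterization of $\mu_M^n$. I would organize the write-up so that all the genuinely delicate work is isolated in a single lemma about $\mathrm{Sp}(n) \setminus \{A : 1 \in \sigma(A)\}$, and reference \cite{SZ,Dragnev} for the details of that lemma rather than reproving it in full.
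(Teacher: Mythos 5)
The paper does not actually prove this theorem: it appears in the appendix as a quoted characterization, stated with a bare \qed and justified only by the references given at the head of the subsection (\cite{Dragnev}, which rests on \cite{FloerH1,FloerH2,HWZ-Em,SZ}). Your plan is, in outline, precisely the standard argument contained in those references: existence via the rotation number $\rho$ of Salamon--Zehnder and the fact that $\mathrm{Sp}^\ast(n)=\{A\in \mathrm{Sp}(n)\ |\ \det(\mathrm{Id}-A)\neq 0\}$ has exactly two components, each retracting onto an explicit model endpoint; uniqueness by homotoping an arbitrary arc, through arcs ending in $\mathrm{Sp}^\ast(n)$, to a loop multiplied by a normal-form path and then invoking axiom (2) together with the characterization of $\mu_M^n$. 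You also correctly resolved the notational slip in the statement as printed ($\Sigma(n)$ must be the space of arcs starting at $\mathrm{Id}_{2n}$ and ending at a matrix without eigenvalue $1$, not the space of loops, and property (1) should refer to homotopies within that arc space); axiom (4) already forces this reading, since $\gamma(1)=-\mathrm{Id}$.

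One point to repair in your uniqueness sketch: axioms (4) and (5) only pin down normal-form paths ending in the component of $\mathrm{Sp}^\ast(n)$ containing $-\mathrm{Id}$, i.e.\ where $\det(\mathrm{Id}-A)>0$. For the other component you need a further input, and it is exactly axiom (3), which your uniqueness paragraph does not use. Concretely, the hyperbolic model path $\Phi(t)=\mathrm{diag}(2^{t},2^{-t})$ in $\Sigma(1)$ is conjugate, by the symplectic rotation $S(x,y)=(y,-x)$, to its pointwise inverse; joining $S$ to the identity by a path in $\mathrm{Sp}(1)$ gives a homotopy of arcs whose endpoints keep the same spectrum, hence stay in $\mathrm{Sp}^\ast(1)$, so $\Phi$ and $\Phi^{-1}$ are homotopic in $\Sigma(1)$ and (1) together with (3) forces $\mu^1_{CZ}(\Phi)=0$. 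With that value fixed, (5) and (2) determine the index on all of $\Sigma(n)$ as you describe, and uniqueness closes. Everything else in your plan, including isolating the two-component lemma about $\mathrm{Sp}(n)\setminus\{A\ |\ 1\in\sigma(A)\}$ and outsourcing it to \cite{SZ,Dragnev}, is exactly the route the paper itself implicitly takes by citation.
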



\printindex
\end{document}